\pgfplotsset{compat=newest}
\newcommand\oast{\stackMath\mathbin{\stackinset{c}{0ex}{c}{0ex}{\ast}{\bigcirc}}}
\newtheorem{theorem}{Theorem}[section]
\newtheorem*{theorem*}{Theorem}
\newtheorem{corollary}[theorem]{Corollary}
\newtheorem*{corollary*}{Corollary}
\newtheorem{lemma}[theorem]{Lemma}
\newtheorem*{lemma*}{Lemma}
\newtheorem{proposition}[theorem]{Proposition}
\newtheorem*{proposition*}{Proposition}
\newtheorem{assumption}[theorem]{Assumption}
\newtheorem*{assumption*}{Assumption}
\newtheorem{definition}[theorem]{Definition}
\newtheorem*{definition*}{Definition}
\newtheorem{remark}[theorem]{Remark}
\newtheorem{example}[theorem]{Example}
\newenvironment{customassum}[1]
  {\innercustomassum}
  {\endinnercustomassum}
\newenvironment{customthm}[1]
  {\innercustomthm}
  {\endinnercustomthm}
\DeclareMathOperator{\codim}{codim}
\DeclareMathOperator{\Lag}{Lag}
\DeclareMathOperator{\defe}{def}
\DeclareMathOperator{\Det}{Det}
\DeclareMathOperator{\Span}{span}
\DeclareMathOperator{\im}{im}
\DeclareMathOperator{\rank}{rank}
\newcommand{\Q}{\mathbb{Q}}
\newcommand{\C}{\mathbb{C}}
\newcommand{\Z}{\mathbb{Z}}
\newcommand{\R}{\mathbb{R}}
\title{Lagrangian Cobordism, Lefschetz Fibrations and Quantum Invariants}
\author{Berit Singer}
\thanks{The author was partially supported by the Swiss National Science Foundation (grant number 200021-156000).}
\date{\today}                                           % Activate to display a given date or no date
\begin{document}

\maketitle

% !TEX root = main_arXiv.tex

\section{Abstract}

In this thesis we study Lagrangian cobordisms with the tools provided by Lagrangian quantum homology. In particular, we develop the theory for the setting of Lagrangian cobordisms or Lagrangians with cylindrical ends in a Lefschetz fibration, and put the different versions of the quantum homology groups into relation by a long exact sequence. We prove various practical relations of maps in this long exact sequence and we extract invariants that generalize the notion of discriminants to Lagrangian cobordisms in Lefschetz fibrations. We prove results on the relation of the discriminants of the ends of a cobordism and the cobordism itself.
We also give examples arising from Lagrangian spheres and relate the discriminant to open Gromov Witten invariants. We show that for some configurations of Lagrangian spheres the discriminant always vanishes. 

We study a set of examples that arise from Lefschetz pencils of complex quadric hypersurfaces of $\mathbb{C}P^{n+1}$.
These quadrics are symplectic manifolds endowed with real structures and their real part are the Lagrangians of interest.
Using the results established in this thesis, we compute the
discriminants of all these Lagrangians by reducing the calculation to
the previously established case of a real Lagrangian sphere in the quadric.

\tableofcontents

%\mainmatter

% !TEX root = main_arXiv.tex

\section{Introduction}

The main idea of Lagrangian quantum homology is to define a so called pearl complex, which is based on counting pearly trajectories. These can schematically be thought of as Morse trajectories with finitely many points replaced by pseudo-holomorphic disks. 
The idea of the construction of the quantum homology first appeared in~\cite{Oh96}. It was then developed by Biran and Cornea in~\cite{BC07} and~\cite{BC09a} as a technique to allow effective computations in Lagrangian Floer homology as well as some other applications. 
For monotone, closed Lagrangians we can define on the chain level the PSS morphism, which induces an isomorphism between the quantum homology $QH(L)$ and $HF(L)$. (See~\cite{PSS96} and, for the Lagrangian case,~\cite{BaC06},~\cite{CL06} and~\cite{Alb08}.) In particular the quantum homologies are the endomorphism groups of the objects in the Fukaya category. 
Floer homology involves the count of perturbed holomorphic strips, which makes it generally very hard to calculate. Since quantum homology and its structures are defined by unperturbed pseudo-holomorphic curves, this is an advantage allowing the explicit computation of examples and identities.

In~\cite{BM15} Biran and Membrez found ways to express certain invariants (such as discriminants of Lagrangians) of the quantum homology via the quantum structure on the ambient manifold. In some cases these invariants are sufficient to understand the ring structures. Exploiting the structure on the ambient quantum homology they found the numerical values of the discriminants for many Lagrangian spheres, which would have been way more complicated to calculate directly.

The notion of embedded Lagrangian cobordisms was initially introduced by Arnold (\cite{Arn80a} and~\cite{Arn80b}) and studied by many people since. Biran and Cornea studied Floer theory of Lagrangian cobordism in~\cite{BC13} and outlined the construction of quantum homology for Lagrangian cobordisms. It was further developed in~\cite{Sin15}. 
They also extended the theory to Lagrangian cobordism in tame Lefschetz fibrations (see~\cite{BC15} and also section~\ref{sec:tameLFandCob}). 

The thesis below is concerned with extensions of the Lagrangian quantum homology theory to Lagrangian cobordisms in Lefschetz fibrations and applications to the theory of discriminants. 

\subsection{Quantum structures}

The first observation is that Lagrangian quantum homology can be extended to a setting for manifolds with cylindrical ends and in particular to Lagrangian cobordism in a Lefschetz fibration $\pi:E\to \mathbb{C}$. 
As we will see, we may assume that the Lefschetz fibration is trivial over a subset $\mathcal{W}\subset \mathbb{C}$. (Also called a tame Lefschetz fibration. See section~\ref{sec:tameLFandCob}.) By this we mean
$$E|_{\mathcal{W}}\cong \mathcal{W}\times M, \quad \Omega|_{\mathcal{W}}\cong\omega_{\mathbb{C}}\oplus \omega_M \text{ and } J|_{\mathcal{W}}\cong i\oplus J_M.$$
A Lagrangian cobordism is an embedded Lagrangian submanifold $V$ in $E$ such that outside of some compact subset $K\subset \mathcal{W}^c$ the projection of $V$ under $\pi$ is a union of rays of the type $\ell_i=(-\infty,a_i]\times \{i\}$, $i\in\{1,\cdots,r_-\}\subset \mathbb{N}$ and $\ell_j=[a_j,\infty)\times \{j\}$, $j\in\{1,\cdots,r_+\}\subset \mathbb{N}$, and $V$ looks like $\ell_i\times L_i$ and $\ell_j\times L_j$ over $\mathcal{W}$, where $L_{i}^{\pm}$ and $L_j^{\pm}$ are closed Lagrangian submanifols of $M$. We refer to $L_1^-,\ldots, L_{r_-}^-$ as the negative ends of $V$ and to $L_1^{+},\ldots,L_{r_+}^+$ as the positive ends, and we write $V:(L_1^-,\ldots, L_{r_-}^-) \to (L_1^{+},\ldots,L_{r_+}^+)$. Assume that $V$ is a monotone Lagrangian cobordism in $E$. 
Set $$\Lambda^+:=\mathbb{Z}_2[t] \text{ and }\Lambda:=\mathbb{Z}_2[t,t^{-1}].$$
We grade these rings such that $\deg(t)=-N_V$, where $N_V$ denotes the minimal Maslov number of $V$.
Similarly as for closed Lagrangians we can define a pearl complex of $\Lambda^+$ modules.
For a collection of connected components $S\subset \partial V$ this defines a version of quantum homology $Q^+H_*(V,S)$ which is a (possibly non-unital) ring and a two-sided algebra over the ambient quantum homology $Q^+H_*(E,\partial^v E)$, where $\partial^vE$ is the vertical part of the boundary of $E$ (see Section~\ref{sec:boundaryE} for details and Theorem~\ref{thm:quantum_structures_for_cobordism} for a precise statement.)
If we work with coefficients in $\Lambda$ we denote the resulting homology by $QH_*(V,S)$.
One can then relate the quantum homology of the cobordism to the quantum homology of its ends via a long exact sequence. (See Theorem~\ref{thm:longexact}).

\begin{customthm}{A}\label{thmA}
Let $S\subset \partial V$ be a union of connected components of $\partial V$.
There exists a long exact sequence
\begin{equation*}
\xymatrix{
 \dots \ar[r]^{\delta} & Q^+H_*(S) \ar[r]^{i_*} & Q^+H_*(V)\ar[r]^{j_*} & 
Q^+H_*(V,S)\ar[r]^{\delta} & Q^+H_{*-1}(S)\ar[r]^{i_*} & \dots,
}
\end{equation*}
which has the following properties:
\begin{enumerate}
\item Suppose that $S=\partial V$. Let $e_{(V,\partial V)}$ denote the unit of 
$Q^+H_*(V,\partial V)$. Furthermore, let $e_{L_i^-}$ and $e_{L_j^+}$ denote the units of $Q^+H_*(L_i^-)$ and $Q^+H_*(L_j^+)$ respectively.
Then 
$$\delta(e_{(V,\partial V)})= \oplus_{i} e_{L_i^-}\oplus_{j} e_{L_j^+}.$$
\item  The map $\delta$ is multiplicative with respect to the quantum product $*$, 
namely 
\begin{equation}
 \delta(x*y)=\delta(x)*\delta(y) \ \ \forall x,y \in Q^+H_*(V,S).
\end{equation}
\item  The product $*$ on $Q^+H_*(V)$ is trivial on the image of the map $i_*$.
In other words, for any two elements $x$ and $y$ in $Q^+H_*(S)$ we have that 
$$i_*(x)*i_*(y)=0.$$
\item The map $j_*$ is multiplicative with respect to the quantum product, namely
\begin{equation*}
 j_*(x*y)=j_*(x)*j_*(y)\ \ \forall x,y \in Q^+H_*(V).
\end{equation*}
Moreover, $j_*(Q^+H_*(V))\subset Q^+H_*(V,\partial V))$ is a two-sided ideal.
\item If the Lefschetz fibration is trivial (i.e. $(E,\Omega)=(\mathbb{C} \times M,\omega_{std}\oplus \omega)$) and the fibers are closed, there exists a 
ring isomorphism $\Phi: Q^+H_*(M) \rightarrow Q^+H_{*+2}(E,\partial E)$ and the following identities hold:
\begin{enumerate}
 \item[(i)] $i_*(a \star x)= \Phi(a) \star i_*(x)$, $\forall x\in Q^+H(\partial V)$ and $\forall a\in Q^+H(M)$.
 \item[(ii)] $j_*(a \star x)=a\star j_*(x)$, $\forall x\in Q^+H(V)$ and $\forall a\in 
Q^+H(E,\partial E)$.
 \item[(iii)] $\delta(a \star x)=\Phi^{-1}(a) \star \delta(x)$, $\forall x\in 
Q^+H(V,\partial V)$ and $\forall a\in Q^+H(E,\partial E)$.
\end{enumerate}
In other words, the maps in the long exact sequence are module maps over the ambient quantum homology rings 
$Q^+H_*(E,\partial E)$ and $Q^+H_*(M)$.
\begin{remark}
 Here the operation $*$ denotes the quantum product and $\star$ denotes the operation coming from the module structure of the various versions of the quantum homology of $V$.
\end{remark}

\item All statements remain true if we use $QH_*$ instead of $Q^+H_*$.
\item If $V$ is orientable and spin, we may work with $\Lambda_{\mathcal{R}}^+:={\mathcal{R}}[t]$ or
$\Lambda_{\mathcal{R}}:={\mathcal{R}}[t,t^{-1}]$, where ${\mathcal{R}}$ is any commutative unital ring and the analogous statements to $1.$-$6.$ above hold. 
\end{enumerate}
\end{customthm}

\subsection{Cobordisms and discriminants}

Let $(M,\omega)$ be a monotone symplectic $2n$-dimensional manifold and $L$ a closed, monotone, spin Lagrangian submanifold.
Suppose that the minimal Maslov number of $L$ divides $n$.
Let $QH_*(L;\mathbb{Z})$ denote the quantum homology with coefficients in $\mathbb{Z}$. (I.e. $QH(L;\Z)$ is obtained by setting $t=1$ at the chain level.)
By the duality properties of the quantum homology (see~\cite{BC07},~\cite{BC09a},~\cite{BC09b})
there exists a natural map 
$$\epsilon: QH_0(L;\Z)\to H_0(L;\mathbb{Z}),$$
which is surjective (and it is called the augmentation).
If the rank of $QH_0(L;\Z)$ is two, we have a short exact sequence
$$\xymatrix{0 \ar[r] & \ker(\epsilon)\ar[r] & QH_0(L;\Z) \ar[r]^{\epsilon} & H_0(L;\mathbb{Z}) 
\ar[r] & 0},$$
and the unit $e_L$ is a generator of $\ker(\epsilon)$. Choose $x\in QH_0(L;\Z)$ a lift of 
the class of a point in $H_0(L;\mathbb{Z})$. Then $QH_0(L;\Z)\cong 
\mathbb{Z}x\oplus \mathbb{Z}e_L$ is a quadratic algebra, which implies that we can write $QH_0(L;\Z)\cong\mathbb{Z}[T]/(f(T)),$ where $f(T)\in \mathbb{Z}[T]$ is a quadratic, monic polynomial. The discriminant $\Delta(f)$ of $f$ is an invariant of $QH_0(L;\Z)$ and moreover determines the ring structure of $QH_0(L;\Z)$ uniquely up to isomorphism. We write $\Delta_L=\Delta(f)$.

\begin{customassum}{$\mathcal{A}$}\label{assum:main_theorem}
 Let $V:L_0 \to (L_1,\cdots, L_r)$ be a connected, monotone, oriented and spin $(n+1)$-dimensional Lagrangian cobordism in a Lefschetz fibration. Suppose that the minimal Maslov number $N_V$ divides $n$ and that the ends of $V$ are closed Lagrangians with $\rank(QH_0(L_i;\Z))=2$ for all $i$.
\end{customassum}
Under these assumptions the ring structures of the ends of the cobordism are isomorphic (see Theorem~\ref{thm:equal_discr_rank2}). 

\begin{customthm}{B}\label{thmB}
Let $V$ be a cobordism as in~\ref{assum:main_theorem}.
Then,
$$\Delta_{L_i}=\Delta_{L_j} \text{ for all } i, j.$$
If in addition $r\geq 2$, then this number is a square.
\end{customthm}

A nice set of examples comes from considering the Lefschetz pencil of complex, quadric hypersurfaces in $\mathbb{C}P^{n+1}$.
More precisely, to the very ample line bundle $\mathscr{L}:= \mathcal{O}_{\mathbb{C}P^{n+1}}(2)$ we associate the real Lefschetz fibration $\pi:E\to \mathbb{C}P^1$, whose fibers are the hyperplane sections
$$\Sigma^{(\lambda)}:=\{[X_0:\ldots:X_{n+1}]\in \mathbb{C}P^{n+1}|\lambda(X_0,\ldots,X_{n+1})=0\},$$
where $[\lambda]\in \mathbb{P}(H^0(\mathscr{L}))$ is represented by a homogeneous quadratic polynomial $\lambda$. 
If $\lambda$ is a polynomial with real coefficients, i.e. $[\lambda]$ lies in the real part $\mathbb{P}_{\mathbb{R}}(H^0(\mathscr{L}))$, the hyperplane section $\Sigma^{(\lambda)}$ is endowed with a real structure and its real part is a Lagrangian submanifold. 
The real part of the discriminant locus $\Delta_{\mathbb{R}}(\mathscr{L})$ divides $\mathbb{P}_{\mathbb{R}}(H^0(\mathscr{L}))$ into $n/2+1$ chambers, which are in one-to-one correspondence with the signature of the matrix representing the quadratic polynomial. 
One of the chambers of $\mathbb{P}_{\mathbb{R}}(H^0(\mathscr{L}))\setminus\Delta_{\mathbb{R}}(\mathscr{L})$ contains the quadric whose real part is a Lagrangian sphere. Using the Lagrangian cubic equation Biran and Membrez showed that its discriminant is $(-1)^{\frac{n(n-1)}{2}+1}4$ (see~\cite{BM15}).

Given two real homogeneous, quadratic polynomials $\lambda_i=X_0^2+\ldots+X_i^2-X_{i+1}^2+\ldots +X_{n+1}^2$ and $\lambda_{i+1}=X_0^2+\ldots+X_{i+1}^2-X_{i+2}^2+\ldots +X_{n+1}^2$ we can define a Lefschetz pencil passing through them. This gives a description of a real Lefschetz fibration $E^{(i,i+1)}\to \mathbb{C}$ with one critical point. Its real part defines a monotone Lagrangian cobordism $V^{(i,i+1)}$ between the Lagrangians $\Sigma^{(\lambda_i)}_{\R}$ and $\Sigma^{(\lambda_{i+1})}_{\R}$. For $n=2 \mod4$ we prove that these cobordisms are orientable and spin for all $0\leq i\leq n$. Together with Theorem\ref{thmB} this implies the following result.

\begin{customthm}{C}\label{thmC}
Let $n=2 \mod 4$.
For any two real quadratic surfaces $\Sigma^{(\lambda)}$ and $\Sigma^{(\lambda')}$ in $\mathbb{P}_{\mathbb{R}}(H^0(\mathscr{L}))\setminus\Delta_{\mathbb{R}}(\mathscr{L})$ there exists a Lagragian cobordism between the real parts $\Sigma_{\R}^{(\lambda)}$ and $\Sigma_{\R}^{(\lambda')}$ that fulfils assumption~\ref{assum:main_theorem}.
In particular, $\Delta_{\Sigma^{(\lambda)}}=4$ for all $\lambda \in \mathbb{P}_{\mathbb{R}}(H^0(\mathscr{L}))\setminus\Delta_{\mathbb{R}}(\mathscr{L})$.
\end{customthm}

A more precise statement is given in Theorem~\ref{thm:discs_real_parts}.

\subsection{Strategy for the proof of the quantum structures and of Theorem~\ref{thmA}}

Most of the ingredients of the construction of the quantum homologies $Q^+H(V,S)$, $Q^+H(V)$ and also $QH(E,\partial^{v}E)$ are similar to the case of closed Lagrangians submanifolds. The main two differences are that now we work with Lagrangians that have boundaries (or cylindrical ends), and more significantly, that we are in a non-compact setting. The latter requires a way to ensure that the spaces of holomorphic disks with boundary on $V$ are compact. Our approach to solving this problem, following~\cite{BC13}, is to use the open mapping theorem.
As we will see in section~\ref{sec:preliminaries} we may assume that we can find a set $\mathcal{W}$ 
containing the cylindrical ends of $V$, a symplectic structure $\Omega$ on $E$ 
and an almost complex structure $J$ such that over $\mathcal{W}$ we have 
$$E|_{\mathcal{W}}\cong \mathcal{W}\times M, \quad \Omega|_{\mathcal{W}}\cong\omega_{\mathbb{C}}\oplus \omega_M \text{ and } J|_{\mathcal{W}}\cong i\oplus J_M.$$
Moreover, $\pi:E\to \mathbb{C}$ is $(J,i)$-holomorphic. A pseudo-holomorphic disk with boundary on the cylindrical part of $V$ is thus forced to be constant under the projection $\pi$ as a consequence of the open mapping theorem. 

This description allows describe the compactifications of the moduli spaces of pearly trajectories in an analogous way to the closed setting treated in~\cite{BC07} and~\cite{BC09a} and hence, also their proof can be adopted with small modifications.
An algebraic count of the boundary of the compactification of the one-dimensional moduli space of pearly trajectories gives by definition the square of the differential of the pearl complex. It is then not hard to see that this number vanishes, hence we obtain a chain complex. Its homology groups are the quantum homology groups of $V$.
The various other structures are proved in similar ways.

The long exact sequence of Theorem~\ref{thmA} is the long exact sequence in homology coming from a short exact sequence that looks roughly like
 \begin{equation}
\xymatrix{
0 \ar[r] & C^+_k(U;f|_U,J) \ar[r]^-j \ar[d]^{d_U} & 
C^+_k((V,S);f,J) \ar[r]^-{\delta} \ar[d]^{d_{(V,S)}} & 
C^+_{k-1}(S;f|_S,J) \ar[d]^{d_S} \ar[r] & 
0\\
0 \ar[r] & C^+_{k-1}(U;f|_U,J) \ar[r]^-{j} & 
C^+_{k-1}((V,S);f,J) \ar[r]^-{\delta} & 
C^+_{k-2}(S;f|_{S},J) \ar[r] & 0. \\
}
\end{equation}
Here $C^+_*(U,f|_U,J)$ is the subcomplex of $C^+_*((V,S),f,J)$ obtained by restricting to an open set $U\subset V$, such that $V\setminus U$ is contained in the cylindrical ends of $V$. The idea is that $-\nabla f$ points inwards along the whole boundary $\partial U$ such that $C_*(U,f|_U,J)$ actually computes $QH_*(V)$.
The complex $C^+_*(S,f|_S,J)$ is just the restriction to $S$, which is homotopy equivalent to $V\setminus U$.
An outline of the construction of this long exact sequence has already been given in~\cite{BC13}.

\subsection{Strategy of the proof of Theorem~\ref{thmB} and Theorem~\ref{thmC}}

Fix elements $p_i\in QH_0(L_i;\Z)$ that are lifts of a point in $H_0(L_i;\mathbb{Z})$ under the augmentation map. Using the long exact sequence from Theorem~\ref{thmA} we show that for any $i\neq j$ there exists an element $\alpha_{ij}\in QH_1(V,\partial V)$ with the property that $\delta(\alpha_{ij})=p_i-p_j$, where $\delta$ is the connecting homomorphism of the long exact sequence and moreover that the $\alpha_{0i}$ together with the unit form a basis of $ QH_1(V,\partial V)$. For any given end $L_i$ of the cobordisms $V$, we then consider the maps in the long exact sequence of Theorem~\ref{thmA} that have image in the quantum homology ring $QH_0(L_i)$. Exploiting the multiplicativity of the map $\delta$ and properties of other maps in the long exact sequence, we are able to show that for each $i\in\{1,\cdots, r\}$ the ring $QH_0(L_i)$ is isomorphic to the subring of $QH_1(V,\partial V)$ spanned by $\alpha_{0i}$ and the unit $e_{(V,\partial V)}$. Utilizing again the identities presented in Theorem~\ref{thmA} one finds that in the basis $\{e_{(V,\partial V)},\alpha_{01},\ldots,\alpha_{0r}\}$ of $QH_1(V,\partial V)$ most quantum products vanish and the discriminants of $QH_0(L_i)$ can be extracted from the quantum product structure of $QH_1(V,\partial V)$. Finally, since $\delta(\alpha_{0i})=p_0-p_i$ and $\delta(e_{(V,\partial V)})=-e_{L_0}\oplus_{i=1,\ldots, r}e_{L_i}$ we can directly compare the quantum product structure of $QH_0(L_0)$ and $QH_0(L_i)$ and show that $\Delta_{L_0}=\Delta_{L_i}$.

The real parts $\Sigma^{(\lambda_i)}_{\R}$ and $\Sigma^{(\lambda_{i+1})}_{\R}$ of the real quadrics are Lagrangians. They are the fixed point locus of anti-symplectic involutions on the real quadrics. These Lagrangians can be interpreted as the total space of some Serre fibrations. With this Serre fibration the singular homology groups of the Lagrangians can be expressed as the singular homology groups with local coefficients in the singular homology of the fibers and moreover we determine orientability of the Lagrangians.  Another fibration together with the Gysin sequence allows us to determine the spinnability of the Lagrangians.
The fixed point locus of the anti-symplectic involution on the total space of the Lefschetz fibration $E^{(i,i+1)}\to \C$ gives a monotone Lagrangian cobordism $V^{(i,i+1)}$ between $\Sigma^{(\lambda_{i+1})}_{\R}$ and $\Sigma^{(\lambda_i)}_{\R}$. The homotopy type of the cobordisms $V^{(i,i+1)}$ relative to the subspace that is given by its intersection with the thimble is equal to the homotopy type of the singular fibre relative to the vanishing cycle. Relative Stiefel-Whitney classes, their naturality properties and the long exact sequence of $\Sigma^{(\lambda_i)}_{\R}$, respectively $\Sigma^{(\lambda_{i+1})}_{\R}$ and their vanishing cycles are needed to prove spinnability of $V^{(i,i+1)}$. Since smooth quadrics are homogeneous manifolds, their tangent bundle is nef. Reflecting holomorphic disks by the anti-holomorphic involution we get a holomorphic bundle over $\C P^1$ and we can split the latter into the direct sum of holomorphic line bundles. Since the tangent bundle of the quadrics are nef the line bundles, which are the summands of the splitting, are also nef. From this the regularity of the standard complex structure follows. 
Finally, regularity of the standard complex structure and arguments based on a spectral sequence enable us to compute the quantum homologies of the Lagrangians  $\Sigma^{(\lambda_i)}_{\R}$ and $\Sigma^{(\lambda_{i+1})}_{\R}$. The quantum homology groups turn out to be wide.
This shows that the necessary assumptions for Theorem~\ref{thmB} hold and it implies Theorem~\ref{thmC}. 

\subsection{Organization of the thesis}

In the first section of the thesis we review the definition of Lefschetz fibrations, the basic setting and concepts we are working with and the quantum homology for closed Lagrangians as well as some of its properties. We also recall the definition of discriminants as given by~\cite{BM15} and summarize some of their important results.
In the second section we build up the theory of quantum homology for Lagrangians with cylindrical ends in Lefschetz fibrations and prove their properties. In particular, we prove Theorem~\ref{thmA}.
The third section is dedicated to developing the more general notion of discriminants for cobordisms. It contains some results on the relation of the discriminants of the ends of a cobordism and the cobordism itself. Also the proof of Theorem~\ref{thmB} can be found in this section.
The fourth section proves Theorem~\ref{thmC}.
We also added an appendix on filtered chain complexes and spectral sequences, a brief appendix on spin structures and an appendix on projective duality, discriminants and hyperdeterminants.

% !TEX root = main_arXiv.tex

\section{Preliminaries}\label{sec:preliminaries}

In this section we outline the most important concepts used in this thesis.
The notations and definitions presented in section 1.1 to 1.6 are mainly borrowed 
from Biran and Cornea in~\cite{BC15},~\cite{BC17}, ~\cite{BC07} and~\cite{BC09a}.

\subsection{Lefschetz fibrations}\label{sec:LF}

One can find several versions of the notion of a Lefschetz fibration in the literature. 
We will stick to the version given by Biran and Cornea (\cite{BC15}) and 
which is also similar to the setup in~\cite{Sei08} and~\cite{Sei03}.

\begin{definition}[\cite{BC15}]\label{def:LefschetzFibration}
 A Lefschetz fibration with compact fibers consists of the following data:
\begin{enumerate}[i.]
  \item A symplectic manifold $(E,\Omega)$ without boundary, endowed with a compatible 
almost complex structure $J_E$.
  \item A Riemann surface $(S,j)$ (not necessarily compact, typically we choose 
$S=\mathbb{C}$).
  \item A proper $(J_E, j)$- holomorphic map $\pi: E\to S$. In particular, the regular (or smooth) fibers of 
$E$ are closed manifolds.
  \item Assume that $\pi$ has a finite number of critical points. Moreover, every 
critical value of $\pi$ corresponds to exactly one critical point of $\pi$. 
We denote the set of critical points by $Crit(\pi)$ and the critical values by 
$Critv(\pi)$. Sometimes the critical points of $\pi$ are also called singularities of $E$.
  \item For every critical point $p\in Crit(\pi)$ there exists a local $J_E$-holomorphic 
chart around $p$ and a $j$-holomorphic chart around $\pi(p)$ with respect to which $\pi$ 
is a holomorphic Morse function. (I.e. every critical point is an ordinary double point.) 
\end{enumerate}
Let $E_z:=\pi^{-1}(z)$ be the fiber over $z$. Fix a point $z_0\in S\setminus Critv(\pi)$. 
The symplectic manifold $(M,\omega_M):=(\pi^{-1}(z_0),{\Omega_E}|_M)$ is referred to as 
the fiber of the Lefschetz fibration. 
For a subset $T\subset S$ and a set $A\subset E$ let us denote 
$A|_T:=A\cap{\pi^{-1}(T)}$. 
\end{definition}

If the fibers are not compact we can adjust the preceding definition. The changes that we 
need to make are the following. First, the condition $iii.$ is changed. Namely, the map 
$\pi$ can no longer be proper as to allow non-compact fibers.
Second, we now need to assume explicitly that $\pi$ is a smooth locally trivial fibration 
away from its critical points and values and that we have triviality at infinity.

\begin{definition}\label{def:contact_type_boundary,symplectization}
 The boundary $\partial M$ of a symplectic manifold $(M,\omega)$ is said to be of contact 
type, if there exists a conformal symplectic vector field $X$, i.e. 
$\mathcal{L}_X\omega=d(\iota_X\omega)=\omega$, defined near $\partial M$ and everywhere 
transverse to $\partial M$.
In this case $(\partial M,ker(\iota_X\omega))$ is a contact manifold.
The symplectization of a contact manifold $(N,\alpha)$ is the symplectic manifold 
$(N\times \mathbb{R},d(e^{t}\alpha))$. 
A (non-compact) symplectic manifold $(M,\omega)$ is said to be convex at infinity, if 
there exists a compact subset $M^0\subset M$ with boundary of 
contact type and such that $M$ looks like a symplectization of $\partial M^0$ outside of 
$M^0$.
\end{definition}

\begin{definition}[\cite{BC15}]\label{def:LefschetzFibration_nc}
Let $(M,\omega_M)$ be a (non-compact) symplectic manifold which is convex at infinity.
A Lefschetz fibration with generic fiber 
$(M,\omega_M)=(E_{z_0},\Omega|_{E_{z_0}}):=(\pi^{-1}(z_0),\Omega|_{\pi^{-1}(z_0)})$ 
with $z_0\in S\setminus Critv(\pi)$ consists of the following data:
\begin{enumerate}[i.]
  \item A symplectic manifold $(E,\Omega)$ without boundary, endowed with a compatible 
almost complex structure $J_E$.
  \item A Riemann surface $(S,j)$.
  \item A $(J_E, j)$- holomorphic map $\pi: E\to S$.
  \item Assume that $\pi$ has a finite number of critical points. Moreover, every 
critical value of $\pi$ corresponds to exactly one critical point of $\pi$.
  \item For every critical point $p\in Crit(\pi)$ there exists a local $J_E$-holomorphic 
chart around $p$ and a $j$-holomorphic chart around $\pi(p)$ with respect to which $\pi$ 
is a holomorphic Morse function. (I.e. every critical point is an ordinary double point.) 
  \item The map $\pi:E\setminus \pi^{-1}(Crit(\pi)) \to S\setminus Critv(\pi)$ is a 
smooth locally trivial fibration.
\end{enumerate}
And additionally
\begin{customassum}{$\mathbf{T^{\infty}}$} [Triviality at infinity (\cite{BC15})]\label{assum:T_infty}
There exists a subset $E^0\subset E$ with the properties:
\begin{enumerate}[(1)]
 \item For every compact subset $K\subset S$ the set $E^0\cap \pi^{-1}(K)$ is also 
compact. I.e. $\pi|_{E^0}:E^0\to S$ is a proper map.
  \item Set $E^{\infty}:=E\setminus E^0$ and $E^{\infty}_{z_0}:=E^{\infty}\cap 
\pi^{-1}(z_0)$, where $z_0\subset S\setminus Critv(\pi)$ is a fixed base-point. 
Then there exists a trivialization $\phi:S\times E^{\infty}_{z_0}\to E^{\infty}$ of 
$\pi|_{E^{\infty}}:E^{\infty}\to S$ such that 
$$\phi^*\Omega_E=\omega_S\oplus \omega_M|_{E^{\infty}_{z_0}}, \text{ and } 
\phi^*J_E=j\oplus J_M$$
where $\omega_S$ is a positive (with respect to $j$) symplectic form on $S$ and $J_M$ is 
a fixed almost complex structure on $M=\pi^{-1}(z_0)$, compatible with $\omega_M$.
  \item $E^0\cap M=M^0$ and thus also $E^{\infty}\cap M 
\cong (\partial M^0,d(e^t\iota_X\omega))$. 
Moreover, this holds for any other regular 
fiber, i.e. if $z\in S\setminus Crit(\pi)$ then $(E^0\cap E_z,\Omega|_{E^0\cap E_z})\cong 
(M^0,\omega_M|_{M_0})$ and $(E^{\infty}\cap E_z,\Omega|_{E^{\infty}\cap E_z})\cong (\partial 
M^0,d(e^t\iota_X\omega))$.
\end{enumerate}
\end{customassum}
\end{definition}

\begin{remark}
 Definition~\ref{def:LefschetzFibration} is in fact a special case of 
definition~\ref{def:LefschetzFibration_nc} in the sense that a Lefschetz fibration with 
compact fiber is a Lefschetz fibration as in~\ref{def:LefschetzFibration_nc} if we take 
$E^0=E$ and $E^{\infty}=\emptyset$.
\end{remark}

Throughout this thesis, unless otherwise stated, by a Lefschetz fibration we mean one 
with compact fiber and satisfying Definition~\ref{def:LefschetzFibration} or one with 
non-compact fiber satisfying the more general Definition~\ref{def:LefschetzFibration_nc}.
From now on we also assume that all fibers of a Lefschetz fibration have positive 
dimension.

\begin{remark}
Compactness of the fiber $(M,\omega)$ or convexity at infinity together 
with~\ref{assum:T_infty} is assumed in order 
to make the fiber amenable to methods such as Gromov compactness for $J$-holomorphic 
curves. In particular these assumptions ensure that the pearly trees used in the 
definition of the quantum homology and its relations live inside a compact subset.
\end{remark}

\subsection{\texorpdfstring{$\Omega_E$}{Lg}-orthogonal connection and 
parallel transport}

The symplectic form $\Omega_E$ naturally defines a connection $\Gamma=\Gamma(\Omega_E)$ 
on $E\setminus Crit(\pi)$. This connection is most simply described by its 
associated horizontal distribution $\mathcal{H}\subset TE$, which is defined to be the 
$\Omega_E$-orthogonal complement to the tangent spaces of the fibers.
Let $T^v_xE$ be the vertical tangent space of $T_xE$ for some $x\in E\setminus Crit(\pi)$. We define the horizontal part 
of $T_xE$ by 
$$\mathcal{H}_x=\{u\in T_x(E)|\Omega_E(\xi, u)=0 \forall \xi \in T^v_xE\}.$$
For a path $\gamma:[a,b] \to  S\setminus Crit(\pi)$, let $\Pi_{\gamma}: E_{\gamma(a)} 
\to E_{\gamma(b)}$ denote the parallel transport along $\gamma$ with respect to the 
connection $\Gamma$.
Due to the assumption~\ref{assum:T_infty} the parallel transport becomes the 
identity at 
infinity with respect to the trivialization. In particular the parallel transport is also 
defined for the case of a Lefschetz fibration with non-compact fibers.

Here we summarize some facts about the parallel transport in symplectic Lefschetz 
fibrations. For more details we refer 
to~\cite{McS17},~\cite{McS12} 
and~\cite{Sei08}.
The parallel transport is a symplectomorphism between the fibers endowed with the 
symplectic structures induced by $\Omega_E$.
For a loop $\gamma$ with $\gamma(a)=\gamma(b)=z$ the parallel transport 
$\Pi_{\gamma}:E_z\to E_z$ is called the holonomy of $\Gamma$ along $\gamma$. 
If $\gamma$ is contractible within $E\setminus Crit{\pi}$ one can show that 
$\Pi_{\gamma}$ is a Hamiltonian diffeomorphism.

The image of a Lagrangian $L$ under the parallel transport along $\gamma:[a,b]\to 
E\setminus Critv(\pi)$ is $L_t:=\Pi_{\gamma_{[a,t]}}(L)\subset E_{\gamma(t)}$ for 
$t\in[a,b]$. The union
$$\gamma L:=\bigcup_{t\in [a,b]}L_t$$ is a Lagrangian submanifold of $(E,\Omega_E)$ 
called the \emph{trail} of $L$ along $\gamma$.

\subsection{Lagrangians with cylindrical ends}
Let $\pi: E \to \mathbb{C}$ be a Lefschetz fibration and $\mathcal{U}\subset \mathbb{C}$ 
an open subset containing $Critv(\pi)$. We introduce the following useful terminology:
A horizontal ray $\ell\in \mathbb{C}$ is a half-line of the form $(-\infty, 
-a_{\ell}]\times \{b_{\ell}\} $ or $[a_{\ell},\infty)\times 
\{b_{\ell}\}$ with $a_{\ell}>0$, $b_{\ell}\in \mathbb{R}$.
The imaginary coordinate $b_{\ell}$ is also called the height of $\ell$.

\begin{definition}[\cite{BC15}]\label{def:cylindrical_ends}
 A Lagrangian submanifold (without boundary) $V\subset (E,\Omega_E)$ is said to have 
\emph{cylindrical ends} outside of $\mathcal{U}$ if 
\begin{enumerate}[i.]
 \item For every $R>0$, the subset $V\cap \pi^{-1}([-R,R]\times \mathbb{R})$ is compact
  \item $\pi(V)\cap \mathcal{U}$ is bounded
  \item $\pi(V)\setminus \mathcal{U}$ consists of a finite union of horizontal rays, 
$\ell_i\subset \mathbb{C}$, $i=1,\ldots,r$. For every $i$ we have 
$V|_{\ell_i}=\ell_iL_i$, the trail of some $L_i$ along $\ell_i$. Here $L_i\subset 
E_{\sigma_i}$ is some Lagrangian and $\sigma_i$ is the starting point of $\ell_i$ (i.e. $\{-a_{l_i}\}\times \{b_{l_i}\}$ or $\{a_{l_i}\}\times \{b_{l_i}\}$).
\end{enumerate}
If all rays $\ell_i$ have positive height, we call $V$ a \emph{cobordism} in $E$.
\end{definition}

\begin{remark}
 Condition $ii.$ in the above definition has the purpose that the set $\mathcal{U}$ does 
not contain any of the ends of $V$.
\end{remark}

\subsection{Tame Lefschetz fibrations and cobordisms}\label{sec:tameLFandCob}

Definition~\ref{def:cylindrical_ends} is almost a generalization of the notion of a 
Lagrangian cobordism in a trivial Lefschetz fibration, i.e. in $E=M\times \mathbb{C}$. 
However, there is some imprecision to this notation since we did not fix a trivialization 
of the Lefschetz fibration $\pi:E \to \mathbb{C}$ at infinity. Such a notion is necessary, 
since only then the ends of a cobordism are well-defined. 
Therefore, we introduce the notion of tame Lefschetz fibrations.

\begin{definition}[\cite{BC15}]\label{def:tameLF}
 Let $\pi:E\to \mathbb{C}$ be a Lefschetz fibration as in 
Definition~\ref{def:LefschetzFibration} or~\ref{def:LefschetzFibration_nc}. 
Let $U\subset \mathbb{C}$ be a closed subset, $z_0\in \mathbb{C}\setminus U$ a base 
point and $(M,\omega_M)$ the fiber over $z_0$. We say that the Lefschetz fibration is 
tame outside of $U$ if there exists a trivialization
$$\psi_{E,\mathbb{C}\setminus U}:(\mathbb{C}\setminus U) \times M \to 
E_{\mathbb{C}\setminus U} ,$$
such that $\psi^*_{E,\mathbb{C}\setminus U}(\Omega_E)=c\cdot \omega_{\mathbb{C}}\oplus 
\omega_M$ and for some $c>0$. We call $(M,\omega_M)$ the generic fiber of $\pi$. 
\end{definition}

Notice that this definition implies that $Critv(\pi)\subset U$. 
Let $\mathcal{W}:=\mathbb{C}\setminus U$. We also say that $\pi$ is tame over 
$\mathcal{W}$. 
Given a tame Lefschetz fibration, we fix the data $(U_E:=U, z_0,\psi_{E,\mathbb{C}\setminus U})$.
Moreover, we assume that there exists $a_U>0$ such that $U$ is disjoint from
$$Q^-_U:=(-\infty, -a_U]\times [0,\infty) \text{ and } Q^+_U:=[a_U,\infty)\times [0, 
\infty).$$

Now we can give a precise definition of a cobordism relation in a tame Lefschetz 
fibration.

\begin{definition}[\cite{BC15}]\label{def:cobordism}
 Fix a Lefschetz fibration that is tame outside of $U\subset \mathbb{C}$ with fiber 
$(M,\omega)$ over $z_0\in \mathbb{C}\setminus U$. Let $(L_i^-)_{1\leq i\leq k_-}$ and 
$(L^-_j)_{1\leq j\leq k_+}$ be two families of closed Lagrangian submanifolds of $M$. We 
say that these two families are Lagrangian cobordant in $E$, if there exists a Lagrangian 
submanifold $V\subset E$ with the properties
\begin{enumerate}[i.]
  \item There is a compact set $K\subset E$ so that $V\cap U\subset V\cap K $ and $ 
V\setminus K \subset \pi^{-1}(Q^-_U\cup Q^+_U)$
  \item $V\cap \pi^{-1}(Q^-_U)=\coprod _i((-\infty,-a_U]\times \{i\})\times L_i^-$
  \item $V\cap \pi^{-1}(Q^+_U)=\coprod _j([a_U, \infty)\times \{j\})\times L_j^+$,
\end{enumerate}
where the formulas at $ii.$ and $iii.$ are written with respect to the trivialization of 
the fibration over the complement of $U$.
\end{definition}

Clearly, the manifold $V$ in the definition is a Lagrangian cobordism in the sense of 
Definition~\ref{def:cylindrical_ends}. Because of the tameness condition its ends are 
well-defined, and we can therefore say that $V$ is a cobordism between the family $L_i^-$ 
and $L_j^+$. We write $V:L_i^- \to L_j^+$.

The next result, taken from~\cite{BC15} ensures that we can always pass from a general Lefschetz fibration to a 
tame one.

\begin{proposition}[\cite{BC15}]\label{prop:tameLF}
 Let $\pi:E \to \mathbb{C}$ be a Lefschetz fibration and let $\mathcal{N}\subset 
\mathbb{C}$ be an open subset that contains all the critical values of $\pi$ and has the 
shape depicted in figure~\ref{fig:tameLF}. Let $\mathcal{W}\subset \mathbb{C}$ be 
another open subset as depicted in figure ~\ref{fig:tameLF} with 
$\overline{\mathcal{W}}\cap\overline{\mathcal{N}}=\emptyset$ and 
$dist(\overline{\mathcal{W}},\overline{\mathcal{N}})>0$. 
Then there exists a symplectic structure $\Omega'=\Omega'_{E,\mathcal{N},\mathcal{W}}$ 
on $E$ and a trivialization $\phi:\mathcal{W}\times M \to E|_{\mathcal{W}}$ with the 
following properties:
\begin{enumerate}[(1)]
 \item On $\mathcal{W}\times M$ we have $\phi^*\Omega'=c\cdot\omega_{\mathbb{C}}\oplus 
\omega_M$ for some $c>0$.
  \item $\Omega'$ coincides with $\Omega_E$ on all the fibers of $E$
  \item $\Omega'=\Omega_E$ on $\pi^{-1}(\mathcal{N})$.
  \item There exists an $\Omega'$-compatible almost complex structure $J'_E$ on $E$ which 
coincides with $J_E$ on $\pi^{-1}(\mathcal{N})$ and such that the projection $\pi:E \to 
\mathbb{C}$ is $(J'_E,i)$-holomorphic.
\end{enumerate}
In particular, the Lefschetz fibration $\pi:E\to \mathbb{C}$ is tame over 
$\mathcal{W}$ when endowed with 
the symplectic structure $\Omega'$.
\end{proposition}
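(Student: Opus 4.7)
The plan is to first symplectically trivialize $\pi$ over $\mathcal{W}$ by parallel transport, and then to modify $\Omega_E$ outside of $\pi^{-1}(\mathcal{N})$ in order to obtain the desired split form. Since $\mathrm{Critv}(\pi)\subset\mathcal{N}$, the $\Omega_E$-orthogonal connection is well-defined on $\pi^{-1}(\mathbb{C}\setminus\mathcal{N})$. I would choose a contractible open neighborhood $\mathcal{W}'\supset\overline{\mathcal{W}}$ with $\overline{\mathcal{W}'}\cap\overline{\mathcal{N}}=\emptyset$ (working componentwise if $\mathcal{W}$ has several components, in accordance with the figure), fix a base point, and use symplectic parallel transport along a smoothly varying family of paths to obtain a trivialization $\phi:\mathcal{W}'\times M\to E|_{\mathcal{W}'}$. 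Because symplectic parallel transport restricts to a fiberwise symplectomorphism, $\phi^*\Omega_E$ coincides with $\omega_M$ on every fiber, and one may write
\[
\phi^*\Omega_E \;=\; p_M^*\omega_M + \eta,
\]
with $\eta$ a closed 2-form of vanishing fiber-fiber bi-degree component; decompose $\eta = \eta_{\text{mix}}+\eta_{\text{base}}$ into its mixed and pure base bi-degree parts.

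Next, pick a smooth cut-off $\rho:\mathbb{C}\to[0,1]$ equal to $1$ on an open neighborhood of $\overline{\mathcal{N}}$ and to $0$ on an open neighborhood of $\overline{\mathcal{W}}$, which exists since the two closed sets have positive distance. Set $\Omega':=\Omega_E$ on $\pi^{-1}(\mathcal{N})$ and, in the trivialization over $\pi^{-1}(\mathcal{W}')$, define
\[
\phi^*\Omega' \;:=\; p_M^*\omega_M + \rho\,\eta + (1-\rho)\,c\,\pi^*\omega_{\mathbb{C}} - \sigma,
\]
with $c>0$ a constant to be taken large and $\sigma$ a 2-form supported inside the transition region $\{0<\rho<1\}$. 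Near $\partial\mathcal{N}$ both $\rho\equiv 1$ and $d\rho=0$ collapse the formula to $\phi^*\Omega_E$, so the pieces glue smoothly; on $\mathcal{W}$ the conditions $\rho\equiv 0$ and $d\rho=0$ collapse it to $c\,\omega_{\mathbb{C}}\oplus\omega_M$, giving (1). A direct computation using $d\eta=0$, the vanishing of the base 3-forms $d\rho\wedge\eta_{\text{base}}$ and $d\rho\wedge\pi^*\omega_{\mathbb{C}}$ on the 2-dimensional base, and the equation $d\sigma=d\rho\wedge\eta_{\text{mix}}$ yields $d(\phi^*\Omega')=0$. Such $\sigma$ exists because $d\rho\wedge\eta_{\text{mix}}$ is a closed 3-form of K\"unneth bi-degree $(2,1)$ on $\mathcal{W}'\times M$ and $H^2(\mathcal{W}')=0$ for contractible $\mathcal{W}'$; $\sigma$ can moreover be arranged with the required support and bi-degree properties (in particular a vanishing fiber-fiber component), which secures properties (2) and (3). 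Non-degeneracy holds on $\mathcal{N}$ and $\mathcal{W}$ by construction and on the transition region by a Thurston-type argument upon taking $c$ large enough.

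For the compatible almost complex structure of (4), set $J'_E:=J_E$ over $\pi^{-1}(\mathcal{N})$ and, via the trivialization, $J'_E:=\phi_*(i\oplus J_M)$ over $\pi^{-1}(\mathcal{W})$; both choices are $\Omega'$-compatible and intertwine $d\pi$ with $i$, hence make $\pi$ a $(J'_E,i)$-holomorphic map. Extend these across the transition region via a partition of unity, using that the space of $\Omega'$-compatible almost complex structures $J$ satisfying $d\pi\circ J = i\circ d\pi$ is non-empty and contractible at every fiber (a standard fact for symplectic fibrations, cf.\ McDuff--Salamon). The hard part of the whole argument is the closedness of $\Omega'$ across the transition region: the naive interpolation is off precisely by the 3-form $d\rho\wedge\eta_{\text{mix}}$, which reflects the curvature of the $\Omega_E$-orthogonal connection, and producing the correction $\sigma$ with both the correct support and the vanishing of its fiber-fiber component (needed to keep the fiberwise restrictions of $\Omega'$ equal to those of $\Omega_E$) is the delicate technical step.
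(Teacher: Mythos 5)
The paper does not prove Proposition~\ref{prop:tameLF}; it cites it directly from~\cite{BC15}, so there is no proof in this document to compare against. Evaluating your argument on its own terms: the overall strategy (parallel transport trivialization followed by a cut-off interpolation and a Thurston-type non-degeneracy argument) is the correct one and matches the standard technique, but two points in your write-up do not actually close.

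First, the set over which $\Omega'$ is defined has a gap. You choose $\rho\equiv 1$ near $\overline{\mathcal N}$ and $\rho\equiv 0$ near $\overline{\mathcal W}$, so the transition region $\{0<\rho<1\}$ is essentially the whole middle region between $\mathcal N$ and $\mathcal W$. But the formula for $\phi^*\Omega'$ is only declared on $\pi^{-1}(\mathcal W')$ for a contractible neighborhood $\mathcal W'\supset\overline{\mathcal W}$ with $\overline{\mathcal W'}\cap\overline{\mathcal N}=\emptyset$. Since $\mathcal W'\cap\mathcal N=\emptyset$, the two pieces of the definition never overlap, and $\Omega'$ is simply undefined on $\pi^{-1}\bigl(\mathbb{C}\setminus(\mathcal N\cup\mathcal W')\bigr)$, which is nonempty and contains most of $\{0<\rho<1\}$. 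The remedy is either to enlarge $\mathcal W'$ so that $\{\rho<1\}\subset\mathcal W'$ (each component of $\mathbb{C}\setminus\overline{\mathcal N}$ is simply connected and critical-value free, so parallel transport does extend there), or, keeping $\mathcal W'$ small, to instead require $\rho\equiv1$ outside $\mathcal W'$ so that the interpolation takes place entirely inside the trivialized region; with your stated choices neither of these holds, and the "pieces glue smoothly" claim has nothing to glue along.

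Second, the existence of the correction term $\sigma$ with prescribed support, prescribed bi-degree (no fiber-fiber component), and $d\sigma=d\rho\wedge\eta_{\mathrm{mix}}$ is asserted, not proved, and you flag this yourself as the delicate step. It is cleaner to avoid $\sigma$ altogether. Since $\mathcal W'$ is contractible, $H^2(\mathcal W'\times M)\cong H^2(M)$ by K\"unneth; because $\eta$ has no fiber-fiber component its restriction to every fiber vanishes, hence $[\eta]=0$ and one may write $\eta=d\beta$. Setting
\[
\phi^*\Omega' := p_M^*\omega_M + d(\rho\,\beta) + (1-\rho)\,c\,\pi^*\omega_{\mathbb C}
\]
one gets closedness for free ($d\rho\wedge\pi^*\omega_{\mathbb C}$ vanishes for dimension reasons), one recovers $\phi^*\Omega_E$ where $\rho\equiv1$ and $c\,\omega_{\mathbb C}\oplus\omega_M$ where $\rho\equiv0$, and property~(2) is automatic because $d\rho\wedge\beta$ has bi-degree at most $(1,1)$ so the fiber-fiber component of $\phi^*\Omega'$ is exactly $\omega_M$. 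Non-degeneracy then follows by taking $c$ large, as you say, provided one checks that $\beta$ (hence $d\rho\wedge\beta$) can be taken bounded on the transition region; this is a point that should be addressed explicitly but is not an essential obstacle. Your construction of $J'_E$ in item~(4) is correct as stated.
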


\begin{figure}[H]
 \centering
\includegraphics[scale=0.15]{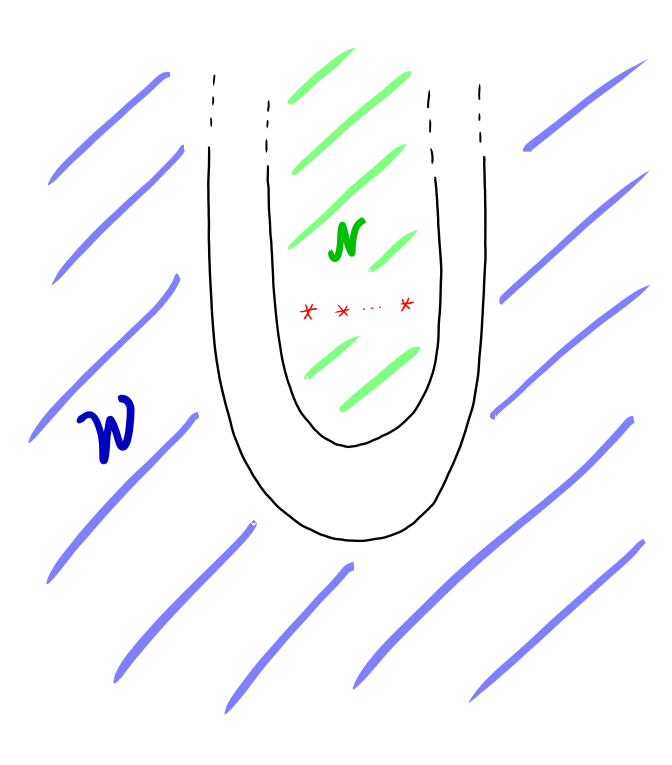}
\caption{We may assume that the Lefschetz fibration is tame over the set 
$\mathcal{W}$.}\label{fig:tameLF}
\end{figure}

\begin{remark}\label{rmk:cobordism_in_tame_Lefschetz}
 Using the above proposition it is easy to pass from a cobordism (or a Lagrangian with 
cylindrical ends) in a general Lefschetz 
fibration to a cobordism (or Lagrangian with 
cylindrical ends) in a tame Lefschetz fibration, and hence a cobordism with well-defined ends. (See also figure~\ref{fig:cylindrical_ends}.)
For this we simply need to perform the parallel transport corresponding to the 
connection $\Omega'$ of the corresponding Lagrangians along the horizontal 
rays. 
More precisely, let $\mathcal{N}\subset \mathbb{C}$ be a subset as in 
Proposition~\ref{prop:tameLF}. Suppose $\ell_i$ are the horizontal rays corresponding to 
the ends of $V$ and $L_i\subset E_{\sigma_i}$ are the corresponding Lagrangians over the 
starting points of the rays.
Performing parallel transport of the $L_i$ along $\ell_i$ with respect to the connection 
induces by $\Omega'$ yields a new Lagrangian submanifold $V'\subset (E,\Omega')$. 
One can easily check that 
\begin{enumerate}[(1)]
 \item $V'$ coincides with $V$ over $\mathcal{N}$
  \item $V'$ has cylindrical ends outside of $\mathcal{N}$
  \item The ends of $V'$ have the form described in $ii.$ and $iii.$ in 
Definition~\ref{def:cobordism}
  \item $V|_{\mathcal{N}}$ is compact and in particular there exists a compact set 
$K\subset \mathbb{C}$ such that $V|_K$ is compact and $V|_{K^c}$ is cylindrical.
  \item One can always find an almost complex structure $J$ compatible with 
$\Omega'$ and such that $J=i\oplus J_M$ over $\pi^{-1}(\mathcal{W})$.
\end{enumerate}
\end{remark}

\begin{figure}[H]
 \centering
\includegraphics[scale=0.15]{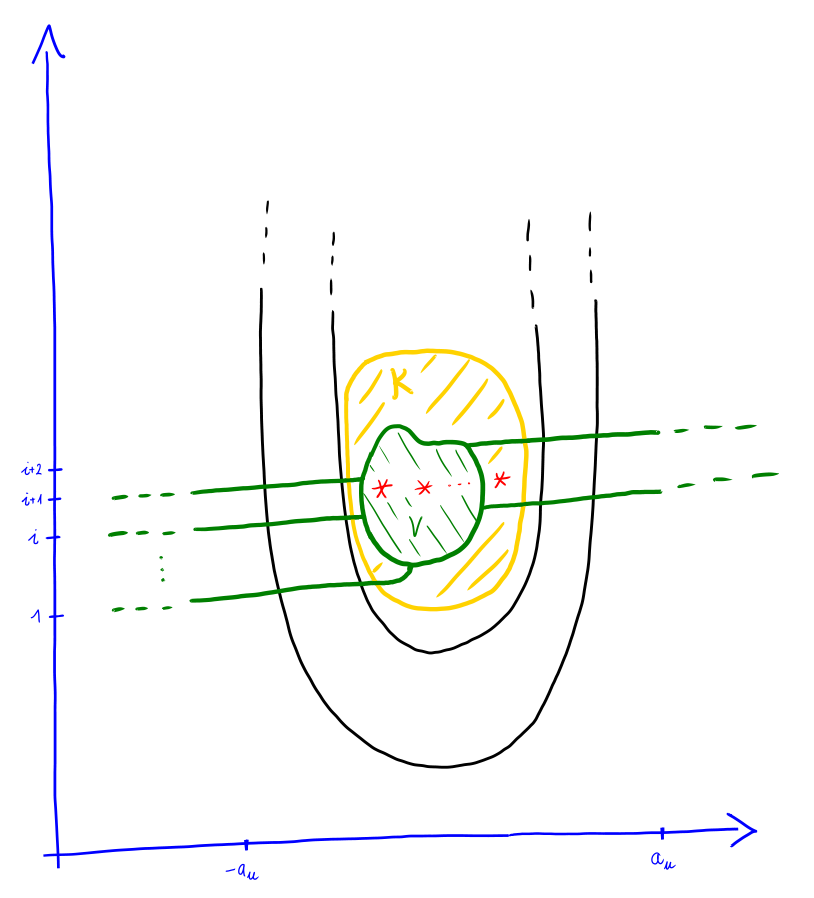}
\caption{Lagrangian cobordism in $E$ with well-defined ends.}\label{fig:cylindrical_ends}
\end{figure}

\subsection{Real Lefschetz fibrations}

By a real Lefschetz fibration we mean a symplectic Lefschetz fibration $\pi:E \to 
\mathbb{C}$ endowed with an anti-symplectic involution, i.e. a map $c_E:E\to E$ such that 
$c_E\circ c_E=id_E$, $c_E^*\Omega=-\Omega$ and $\pi\circ c_E=c_{\mathbb{C}}\circ \pi$, where $c_{\mathbb{C}}$ is the standard complex conjugation. 

Lagrangian cobordism are naturally related to the notion of real Lefschetz fibrations as 
the following Proposition from~\cite{BC15} shows.
\begin{proposition}[\cite{BC15}]\label{prop:real_part_is_cobordism}
 Let $V:=Fix(c_E)$ denote the fixed point locus of $c_E$. Under the above assumptions $V$ 
is a Lagrangian cobordism with at most one positive and at most one negative end (but 
possibly no ends at all). 
Its projection is of the form $\bigcup_{j\in\mathcal{S}} \overline{I_j}$, where 
$\mathcal{S}$ is a subset of the set of connected components of $\mathbb{R}\setminus 
Critv(\pi)$, $I_j$ stands for the path connected component corresponding to $j$ and 
$\overline{I_j}$ is its closure. In particular, $\partial V\subset Critv(\pi)\cap 
\mathbb{R}$.

Moreover, for every $z\in \mathbb{R}\setminus Critv(\pi)$ the part of $V$ lying over $z$, 
$V_z:=E_z\cap V$, coincides with the fixed point locus of the anti-symplectic involution 
$c_E|_{E_z}$, hence is either empty or a smooth Lagrangian submanifold of $E_z$ (possibly 
disconnected). In particular, the Lagrangians corresponding to the ends of $V$ (if they 
exist) are real with respect to the restriction of $c_E$ to the regular fibers over the 
real axis at $\pm \infty$.

If $(E,\Omega)$ is a monotone symplectic manifold then $V$ is a monotone Lagrangian 
submanifold of $E$. Further, denote by $c^{min}_1(E)$ the minimal Chern number on 
spherical classes in $E$ and by $N_V$ the minimal Maslov number of $V$. If $c^{min}_1(E)$ 
is odd then $c^{min}_1(E)|N_V$, and if $c^{min}_1(E)$ is even then 
$\frac{1}{2}c_1^{min}(E)|N_V$.

If $dim_{\mathbb{C}}M\geq 2$ and $(M,\omega)$ is monotone then $(E,\Omega)$ is monotone 
too and $c^{min}_1(E)=c^{min}_1(M)$, hence $V$ is a monotone Lagrangian cobordism.
\end{proposition}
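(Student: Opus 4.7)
My plan is to prove the proposition in four stages: the Lagrangian property and location of $\pi(V)$, the trail structure over the real axis, the monotonicity of $V$ via the doubling trick, and the transfer of monotonicity from $M$ to $E$. To see that $V$ is Lagrangian and that $\pi(V)\subset\R$, note that $c_E^*\Omega=-\Omega$ gives, for any $p\in V$ and $u,v\in T_pV$, $\Omega_p(u,v)=\Omega_p(dc_E u,dc_E v)=-\Omega_p(u,v)$, so $T_pV$ is isotropic; to obtain half-dimensionality I would pick a $c_E$-anti-invariant compatible almost complex structure by averaging any compatible $J$ with $-dc_E\cdot J\cdot dc_E$, whereupon $dc_E$ becomes an isometry satisfying $J_E\,dc_E=-dc_E\,J_E$, so $J_E$ exchanges the $\pm 1$ eigenspaces of $dc_E$ and they have equal dimension. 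The intertwining $\pi\circ c_E=c_{\C}\circ\pi$ then forces $\pi(V)\subset\R$, and the same argument applied to $c_E|_{E_z}$ identifies $V_z$ with a Lagrangian submanifold of $E_z$ for every $z\in\R\setminus\mathrm{Critv}(\pi)$.

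Next I would control the trail structure and the ends via parallel transport. The key observation is that $dc_E$ preserves the horizontal distribution of the $\Omega_E$-orthogonal connection: if $u$ is horizontal and $\eta$ vertical then $\Omega_E(dc_E u,dc_E\eta)=-\Omega_E(u,\eta)=0$. Since a segment $\gamma\subset\R$ is fixed pointwise by $c_{\C}$, the parallel transport $\Pi_\gamma$ commutes with $c_E$ and carries $V_{\gamma(a)}$ onto $V_{\gamma(b)}$. Consequently, over each connected component $I_j$ of $\R\setminus\mathrm{Critv}(\pi)$ the piece $V|_{I_j}$ is either empty or equal to a trail $I_j V_{z_j}$, and closedness of $V$ in $E$ together with openness of $\pi|_V$ away from critical points forces $\pi(V)=\bigcup_{j\in\mathcal{S}}\overline{I_j}$ and $\partial V\subset \mathrm{Critv}(\pi)\cap\R$. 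As there is exactly one unbounded real interval on each side of $\mathrm{Critv}(\pi)\cap\R$, it follows that $V$ has at most one positive and one negative end, each a horizontal ray of height zero; after the tameness adjustment of Proposition~\ref{prop:tameLF} this makes $V$ a Lagrangian cobordism in the sense of Definition~\ref{def:cobordism}.

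For the monotonicity and Maslov-divisibility claims I would invoke the classical doubling trick. Given $\beta\in\pi_2(E,V)$ represented by $u:(D,\partial D)\to(E,V)$, the reflected map $c_E\circ u\circ\sigma$ with $\sigma(z)=1/\bar z$ glues to $u$ along $\partial D$ into a sphere $\widehat u:S^2\to E$ satisfying $\Omega_E(\widehat u)=2\,\Omega_E(u)$ and, via a bundle-pair computation for the Maslov class of $u^*TE$ against the boundary condition $u|_{\partial D}^*TV$, $\mu_V(u)=c_1^E(\widehat u)$. Combined with monotonicity of $(E,\Omega)$ this yields monotonicity of $V$ with proportionality constant twice that of $E$. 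Moreover every Maslov number of $V$ is realised as a Chern number of a sphere in $E$; conversely, an arbitrary sphere class need only appear as a double up to a factor of two, producing exactly the stated divisibility $c_1^{min}(E)\mid N_V$ when $c_1^{min}(E)$ is odd and $\tfrac12 c_1^{min}(E)\mid N_V$ when it is even.

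Finally, for the transfer of monotonicity from $M$ to $E$ under $\dim_{\C} M\geq 2$, I would use the handle decomposition of a Lefschetz fibration over $\C$: $E$ is obtained from $M\times\C$ by attaching one $(n+1)$-handle per critical value along a Lagrangian vanishing sphere in a nearby fiber. Since $n\geq 2$ these handles have dimension at least $3$ and cannot introduce new elements in $\pi_2(E)$, so $\pi_2(E)$ is a quotient of $\pi_2(M)$; the classes possibly killed in the quotient are represented by vanishing cycles, which are Lagrangian spheres in $M$ and therefore have $\omega_M=0$ and, by monotonicity of $M$, also $c_1(TM)=0$. Combined with $c_1(TE)|_M=c_1(TM)$ (the symplectic normal bundle to a regular fiber being trivialised by $d\pi$), this gives monotonicity of $E$ with $c_1^{min}(E)=c_1^{min}(M)$, and the preceding paragraph then promotes this to monotonicity of $V$. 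The main difficulty I anticipate is the factor-of-two bookkeeping in the doubling argument — pinning down the exact Maslov--Chern relation for $\widehat u$ and determining the precise image of the doubling map $\pi_2(E,V)\to\pi_2(E)$ — which is what distinguishes the two parity cases in the divisibility statement; the rest follows automatically from the anti-symplectic involution together with its compatibility with the horizontal distribution.
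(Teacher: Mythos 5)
Your argument is correct and, for most of the statement, follows the same route as the paper: isotropy of $Fix(c_E)$ from $c_E^*\Omega=-\Omega$, the trail/parallel-transport description of $V$ over the components of $\R\setminus Critv(\pi)$ (the paper obtains the needed submersivity of $\pi|_V$ from Lemma~\ref{lem:crit_pt_on_lagrangian}, you obtain it from $c_E$-equivariance of horizontal lifts --- same content), and the reflection/doubling trick giving $\Omega(\widehat u)=2\,\Omega(u)$ and $\mu(u)=c_1(\widehat u)$, hence monotonicity of $V$ and the Maslov divisibility. Two small remarks: the monotonicity constant of $V$ comes out as \emph{half}, not twice, that of $E$ (the paper records $2\tau=\eta$), though this does not affect the claim; and once you have $\mu(u)=c_1(\widehat u)$ there is no factor-of-two bookkeeping left --- every Maslov number of $V$ lies in $c_1^{min}(E)\,\Z$, so $c_1^{min}(E)\mid N_V$ in both parity cases, which is stronger than (and implies) the stated dichotomy, whose phrasing is simply inherited from~\cite{BC15}. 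The genuinely different step is the final claim: you deduce monotonicity of $E$ and $c_1^{min}(E)=c_1^{min}(M)$ from the handle (thimble) decomposition of the Lefschetz fibration, so that $\pi_2(M)\to\pi_2(E)$ is onto because the attached cells have dimension $n+1\geq 3$, and $\Omega$, $c_1(TE)$ pull back to $\omega_M$, $c_1(TM)$ on fibre classes (normal bundle of a regular fibre trivialized by $d\pi$); the paper instead argues inside the fibration, using Blakers--Massey to get surjectivity of $\pi_2(E_{z_0})\to\pi_2(F)$ for a singular fibre $F\simeq E_{z_0}/C$ and the open mapping theorem to confine holomorphic spheres to fibres. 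Your route has the advantage of controlling all of $\pi_2(E)$ at once, so monotonicity is verified for arbitrary spherical classes rather than only those represented by holomorphic spheres, while the paper's route stays closer to the holomorphic category; both yield $c_1^{min}(E)=c_1^{min}(M)$ and hence the monotone cobordism statement.
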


In order to prove this proposition the following Lemma is useful.
\begin{lemma}\label{lem:crit_pt_on_lagrangian}
 Let $(M,\omega)$ be a symplectic manifold with compatible almost complex structure 
$J_M$, $N$ a smooth manifold endowed with an almost complex structure $J_N$ and $f:M\to N$ 
a $(J_M,J_N)$-holomorphic map. For any Lagrangian submanifold $L\subset M$ a point $x\in 
L$ is a critical point of $f$ if and only if it is a critical point of $f|_L$.
\end{lemma}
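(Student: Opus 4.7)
The plan is to prove the two directions separately, with the nontrivial content concentrated in showing that a critical point of $f|_L$ is automatically a critical point of $f$.

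The easy direction is immediate: if $df_x = 0$, then the restriction $d(f|_L)_x = df_x|_{T_xL}$ also vanishes, so $x$ is a critical point of $f|_L$.

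For the converse, first I would record the standard linear-algebraic decomposition associated to a Lagrangian in a symplectic manifold with compatible almost complex structure. Because $L$ is Lagrangian and $J_M$ is $\omega$-compatible, the metric $g_M(\cdot,\cdot) := \omega(\cdot, J_M \cdot)$ is positive definite, and for any nonzero $\xi \in T_xL$ one has $\omega(\xi, J_M\xi) = g_M(\xi,\xi) > 0$. In particular $J_M T_xL \cap T_xL = 0$, and a dimension count gives the splitting
\begin{equation*}
T_xM \;=\; T_xL \oplus J_M(T_xL).
\end{equation*}

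Now suppose $x \in L$ is a critical point of $f|_L$, i.e.\ $df_x$ vanishes on $T_xL$. For any $\xi \in T_xL$, the $(J_M,J_N)$-holomorphicity of $f$ gives
\begin{equation*}
df_x(J_M \xi) \;=\; J_N\, df_x(\xi) \;=\; J_N \cdot 0 \;=\; 0,
\end{equation*}
so $df_x$ also vanishes on $J_M(T_xL)$. Combining this with the splitting above, $df_x$ vanishes on all of $T_xM$, so $x$ is a critical point of $f$.

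There is essentially no obstacle here — the lemma is a short linear-algebra observation combining the compatibility of $J_M$ with $\omega$ (which provides the complement $J_M T_xL$ to $T_xL$) with the intertwining relation $df \circ J_M = J_N \circ df$. The only point to be slightly careful about is checking that $g_M$ is indeed positive definite on $T_xL$, which is immediate from compatibility, and that the dimension count $\dim T_xL = \tfrac{1}{2}\dim T_xM$ really does force the direct sum to exhaust $T_xM$.
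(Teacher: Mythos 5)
Your proof is correct and follows essentially the same route as the paper: one direction is trivial, and for the converse both arguments use that $J_M(T_xL)$ complements $T_xL$ in $T_xM$ (the paper phrases this as $J_M$ identifying $TL$ with the normal bundle of $L$) together with the intertwining relation $df_x(J_M\xi)=J_N\,df_x(\xi)$ to conclude $df_x=0$ on all of $T_xM$. Your spelled-out justification of the splitting via compatibility of $J_M$ with $\omega$ is just a more detailed version of the same step.
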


\begin{proof}
 One direction is obvious. Let us prove that for $x$ to be a critical point of $f$ it 
suffices that $Df|_{TL}=0$.
Suppose that for all $v\in T_xL$ we have $D_xf(v)=0$. Notice that $J_M|_{TL}:TL\to TN$ induces an 
isomorphism, where $NL$ denotes the normal bundle of $L$. 
But since
$$D_xf(J_Mv)=J_ND_xf(v)=0$$
we see that $Df$ also vanishes on the normal bundle of $L$.
\end{proof}

\begin{proof}[Proof of Proposition~\ref{prop:real_part_is_cobordism}]
 We show first that the fixed point locus of an anti-symplectic involution is Lagrangian. 
Notice that for every $v\in T(Fix(c_E))$ we have $Dc_Ev=v$. 
Thus, for every $v,w\in T(Fix(c_E))$ we have
$$\begin{array}{ccc}
   \Omega_E(v,w) &=& \Omega_E(Dc_Ev, Dc_E w)\\
  &=& c_E^*\Omega_E(v,w)\\
  &=& -\Omega_E(v,w),
  \end{array}$$
which implies $\Omega_E(v,w)=0$.

We now show that $V$ is a cobordism and prove the other statements about the projection 
$\pi(V)$. 
By Lemma~\ref{lem:crit_pt_on_lagrangian} we know that 
$D\pi_x|_{T_xV}:T_xV\to \mathbb{R}$ vanishes if and only if $x\in Crit(\pi)$. 
Hence, $\pi(V)\setminus Critv(\pi)$ is an open subset of $\mathbb{R}$ and all 
values in $\pi(V)\setminus Critv(\pi)$ are regular values of $\pi|V:V\to \mathbb{R}$. 
By construction $V\subset E$ is a closed subset. Therefore a connected component 
$I\subset \mathbb{R}\setminus Critv(\pi)$ with $\pi(V)\cap I\neq \emptyset$ must be 
contained in $\pi(V)$, i.e. $I\subset \pi(V)$. 
Since $V$ is Lagrangian it is also invariant with respect to parallel transport along any 
interval $I\subset \pi(V)\setminus Critv(\pi)$. 

It is clear from the definition that $Fix(c_E|_{E_z})=V_z$.

We prove monotonicity of $V$. It follows by a reflection argument and from the 
spherical monotonicity of $(E,\Omega)$.
Let $u:(D,\partial D)\to (E,V)$ be a holomorphic disk. Then also the disk 
$\overline{u}:=c_E\circ u \circ c_D$ is holomorphic and $v:=u+\overline{u}$ represents a 
holomorphic sphere in $E$. Suppose that $u$ represents the homology class $B\in 
H_2(E,V)$, $\overline{u}$ represents the homology class $B'\in H_2(E,V)$ (with possibly 
$B=B'$) and $v$ represents the class $A\in H_2(E)$. 
Now, $\Omega_E(B)=\Omega_E(B')$ and $\Omega_E(A)=\Omega_E(B)+\Omega_E(B')=2\Omega_E(B)$. 
Since $(E,\Omega)$ is monotone with monotonicity constant $\eta$ we have 
$\eta c_1(A)=\Omega_E(A)=2\Omega_E(B)$. 
Moreover, $\mu(u)=\mu(\overline{u})$ and 
$c_1(v)=\frac{1}{2}\mu(u)+\frac{1}{2}\mu(\overline{u})=\mu(u)$. 
Hence, we can see that $\omega(B)=\frac{1}{2}\eta c_1(A)=\frac{1}{2}\eta \mu(B)$ and the 
monotonicity constant $\tau$ of $V$ satisfies $2\tau=\eta$.
Assume now that $u:(D,\partial D)\to (M,L)$ is a holomorphic disk with minimal Maslov 
number $N_V$. 
Then the sphere $v=u+\overline{u}$ is holomorphic and has first Chern number equal to 
$k\cdot c_1^{min}$, with some $k\in \mathbb{Z}$ and $c^{min}_1$ is the minimal Chern 
number.
Then $k\cdot c_1^{min}=c_1(v)=\mu(u)=N_V$ and hence $c_1^{min}|N_V$. Clearly, if 
$c_1^{min}$ is even, then $\frac{1}{2}c_1^{min}|c_1^{min}|N_V$.

Let us prove the last statement that relates the spherical monotonicity of $M$ with that 
of $E$. Let $E_{z_0}\subset E$ be a smooth fiber of the Lefschetz fibration and let $F$ 
be a singular fiber of the Lefschetz fibration. It is know that $F$ is 
homotopy equivalent to $E_{z_0}/C$, where 
$C$ is a vanishing cycle in $E_{z_0}$. With the assumption that $dim_{\mathbb{C}}M\geq 2$ 
we know that $C\cong S^k$ with $k\geq2$. In particular, $C$ is simply-connected and hence 
by the Blakers–Massey theorem for homotopy groups  we know that the quotient map 
$q:E_{z_0}\to E_{z_0}/C$ induces a surjective group homomorphism $\pi_2(E_{z_0},C)\to 
\pi_2(E_{z_0}/C)\cong \pi_2(F)$. Moreover, $\pi_2(E_{z_0})\to \pi_2(E_{z_0},C)\to 
\pi_1(C)=0$ is exact and so $\pi_2(E_{z_0})\to \pi_2(E_{z_0},C)$ is surjective and hence also $\pi_2(E_{z_0})\to \pi_2(F)$. 
Let $v:S^2\to E$ be a holomorphic sphere.
Since $\pi$ is holomorphic the open mapping theorem implies that $\pi\circ v$ must be 
constant equal to $z\in \mathbb{C}$. In other words $im(v)\subset E_{z}$, where $z\in 
\mathbb{C}$ is either a regular or a critical value.
\end{proof}

\subsection{Quantum homology}

\subsubsection{Monotonicity}

For connected, closed, monotone Lagrangian submanifolds $L\subset (M^{2n},\omega)$ where 
$(M,\omega)$ is a tame monotone symplectic manifold, quantum homology has been studied by 
Biran and Cornea and in~\cite{BC07} the algebraic properties are 
described in detail. We will briefly recall the main algebraic structures.
There is a homomorphisms
$$\omega:\pi_2(M,L)\to \mathbb{R}: A\mapsto \int_{D^2}u^*\omega,$$ 
where $u:(D^2,\partial D^2)\to (M,L)$ represents the class $A\in \pi_2(M,L)$ and which is 
independent of a choice of a representative $u$ of $A$.
Another homomorphism is the Maslov index: $$\mu:\pi_2(M,L)\to \mathbb{Z}.$$
The monotonicity assumption means that 
$$\omega(A)>0 \text{ iff } \mu(A)>0, \quad \forall A\in \pi_2(M,L).$$
Clearly, this is equivalent to the existence of a constant $\tau>0$ such that 
$$\omega(A)=\tau \mu(A), \quad \forall A\in \pi_2(M,L).$$
The constant $\tau$ is called the monotonicity constant of $L\subset(M,\omega)$.
The minimal Maslov number is defined to be the integer
$$N_L:=\min\{\mu(A)>0|A\in \pi_2(M,L)\}.$$
Monotone Lagrangians are usually assumed to have $N_L\geq2$. 
Fix the notation
$$\overline{\mu}=\frac{\mu}{N_L}:\pi_2(M,L)\to \mathbb{Z},$$
which is possible since the Maslov index always comes in multiples of $N_L$. 
It is convenient to work with the coefficient ring of Laurent polynomials 
$\Lambda_{\mathcal{R}}:=\mathcal{R}[t,t^{-1}]$ and 
$\Lambda_{\mathcal{R}}^+:=\mathcal{R}[t]$ for some ring $\mathcal{R}$, where 
$|t|:=-N_L$. Often we choose $\mathcal{R}$ such that $L$ is 
$\mathcal{R}$-orientable.
\begin{remark}
 The commonly used coefficient ring in Floer homology is the Novikov ring over 
$\pi_2(M,L)$. Since we assume monotonicity we can simplify this and use the rings 
$\Lambda_{\mathcal{R}}$ and $\Lambda_{\mathcal{R}}^+$ defined above. 
\end{remark}
In particular, if the Lagrangian is oriented and spin with a fixed spin structure we can 
take $\mathcal{R}$ to be $\mathbb{Z}$. It is sometimes necessary or helpful to work with coefficients 
$\mathcal{R}=\mathbb{Z}_2$ (if the Lagrangian is not orientable or not spin for example), 
$\mathcal{R}=\mathbb{Q}$ or 
$\mathcal{R}=\mathbb{C}$ (see section~\ref{sec:disc_lagrangians}).
The grading induced on $\Lambda_{\mathcal{R}}$ and $\Lambda^+_{\mathcal{R}}$ by setting 
$|t|=-N_L$ also induces natural decreasing filtrations called the degree filtrations
$$\mathcal{F}^k\Lambda_{\mathcal{R}}=\{P\in 
\mathcal{R}[t,t^{-1}]|P(t)=a_kt^k+a_{k+1}t^{k+1}+\ldots\}$$
on $\Lambda_{\mathcal{R}}$ as well as on $\Lambda_{\mathcal{R}}^+$.
This degree filtration induces a natural degree filtration on modules over $\Lambda_{\mathcal{R}}$ and $\Lambda_{\mathcal{R}}^+$.

\subsubsection{The ambient quantum homology}

If the Lagrangian $L\subset M$ is monotone then $M$ is spherically monotone in the sense that there exists a constant $\nu>0$ such that
$$\omega(\lambda)=\nu c_1(\lambda)\quad \forall \lambda\in \pi_2(M),$$
where $c_1\in H^2(E)$ is the Chern class of the tangent bundle $TM$. 
This constant is related to the monotonicity constant by $\nu=2\tau$. 
Define the minimal Chern number of $M$:
$$C_M:=\min\{c_1(\lambda)>0|\lambda\in \pi_2(M)\}.$$
Let $h$ be a Morse function on $M$ and define $\Gamma:=\mathbb{Z}_2[s,s^{-1}]$, where $|s|:=-2C_M$.
If $C_M=\infty$ take $\Gamma:=\mathbb{Z}_2$.
We define $QH(M):=H(M,\mathbb{Z}_2)\otimes \Gamma$.
Since $N_L|2C_M$ we can also work with the coefficient ring $\Lambda:=\mathbb{Z}_2[t,t^{-1}]$, where $|t|=-N_L$.
Define the Morse complex 
$$C(M,h,\rho_M;\Lambda)=\mathbb{Z}\langle Crit(h)\rangle\otimes \Lambda.$$
Then the embedding $\Gamma\hookrightarrow \Lambda:s\mapsto t^{2C_M/N_L}$ induces an isomorphism 
 $$QH(M;\Lambda):=QH(M)\otimes_{\Gamma}\Lambda \cong H_*(C(M,h,\rho_M;\Lambda)).$$

% We briefly recall the definition of the ambient quantum homology. 
% Let $(M,\omega)$ be a tame monotone symplectic manifold.
% Denote by $c_1\in H^2(M)$ the first Chern class of the tangent bundle of $M$.
% In particular $(M,\omega)$ is spherically monotone with a constant $\eta >0$, such that 
% $\omega(A)=\eta c_1(A)$ for every $A\in H_2^S(M)$, where $H_2^S(M)$ denotes the image of 
% the Hurewicz map $\pi_2(M) \rightarrow H_2(M)$.
% One can show that the two constants $\eta$ and $\tau$ are related by $\eta=2\tau$.
% In particular, we have $N_L | 2 C_M$, where $C_M$ is the minimal Chern number on 
% $2$-dimensional spherical classes.
% 
% Let $\Lambda=\mathbb{Z}_2[t,t^{-1}]$ be the graded ring 
% of Laurent polynomials in the variable $t$ of $deg(t):=-N_L$.
% The ambient quantum homology is defined as $QH_*(M):= H_*(M;\mathbb{Z}_2)\otimes \Lambda$, 
% where the grading is induced by the grading of $H_*(M)$ and $\Lambda$.
% Or similarly $Q^+H_*(M):= H_*(M;\mathbb{Z}_2)\otimes \Lambda^+$.
% \begin{remark}
%  For the definition of the ambient quantum homology one often uses the grading 
% $deg(t):= -2C_M$. 
% However, since $N_L | 2C_M$ for $L\in(M,\omega)$ a monotone Lagrangian submanifold.
% The definition is therefore an extension of the quantum homology 
% with the grading $deg(t):=-2C_M$.
% \end{remark}
Let
\begin{equation*}
 \begin{array}{ccc}
  * : QH_{k}(M)\otimes QH_{l}(M) \ \rightarrow \ QH_{k+l-2n}(M),
 \end{array}
\end{equation*}
denote the \emph{quantum intersection product}.
The unit with respect to this product is the fundamental class $[M]\subset QH_{2n}(M)$.
For more details on this subject the reader is referred to~\cite{McS12}.

\subsubsection{Quantum structures}

The following theorem is the main result in~\cite{BC07} and it summarizes the most 
important algebraic structures of Lagrangian quantum homology.
Let $\Lambda:=\mathbb{Z}_2[t,t^{-1}]$ and $\Lambda^+:=\mathbb{Z}_2[t]$, where $|t|=-N_L$.

\begin{theorem}[\cite{BC07} and~\cite{BC09a}]\label{thm:quantum_structures_closed_Lagrangians}
Under the assumptions and conventions described above and for a generic choice of the 
triple $(f,\rho,J)$ there exists a chain complex
$$C^+(L;f,\rho,J)=(\mathbb{Z}_2\langle Crit(f)\rangle \otimes \Lambda^+,d)$$ 
with the following properties:
\begin{enumerate}[(i)]
 \item The homology of the chain complex is independent of the choices of $(f, \rho, J)$. 
It will be denoted by $Q^+H_*(L)$. There exists a canonical (degree preserving) 
augmentation $$\epsilon_L:Q^+H_*(L)\to \Lambda^+,$$ which is a $\Lambda^+$-module map.
\item The homology $Q^+H(L)$ has the structure of a two-sided algebra with unit over the 
quantum homology of $M$, $Q^+H(M)$. More specifically, for every $i, j, k$ there exist 
$\Lambda^+$-bilinear maps
$$Q^+H_i(L)\otimes Q^+H_j(L)\to Q^+H_{i+j-n}(L); x\otimes y \mapsto x 
*y$$
$$Q^+H_k(M)\otimes Q^+H_i(L)\to Q^+H_{k+i-2n}(L); a\otimes x \mapsto a\star x.$$
The first endows $Q^+H(L)$ with the structure of a ring with unit. This ring is in 
general not commutative. The second map endows $Q^+H(L)$ with the structure of a module 
over the quantum homology ring $Q^+H(M)$. Moreover, when viewing these two structures 
together, the ring $Q^+H(L)$ becomes a two-sided algebra over the ring $Q^+H(M)$. The 
unit $[M]$ of $Q^+H(M)$ has degree $2n=dim M$ and the unit of $Q^+H(L)$ has degree $n=dim 
L$.
\item There exists a  map
$$i_L:Q^+H_*(L)\to Q^+H_*(M),$$
which is a $Q^+H_*(M)$-module morphism and which extends the inclusion in singular 
homology. This map is determined by the relation
$$<h^*, i_L(x)>=\epsilon_L(h\star x)$$
for $x\in Q^+H(L)$, $h\in Q^+H(M)$, with $(-)^*$ the Poincar\'e duality and $<-,->$ the 
Kronecker pairing.
\item The differential $d$ respects the degree filtration
$$\mathcal{F}_pC_i:=\bigoplus_{j\geq p} C_{i+jN_L}t^j$$
and all the structures above 
are compatible with the resulting spectral sequence.
\item The homology of the complex:
$$C(L;f,\rho,J)=C^+(L;f,\rho, j)\otimes_{\Lambda^+} \Lambda$$
is denoted by $QH_*(L)$ and all the points above remain true if using $QH(-)$ instead of 
$Q^+H(-)$. The map $Q^+H(L)\to QH(L)$ induced in homology by the change of coefficients 
above is canonical. Moreover, there is an isomorphism
$$QH_*(L)\to HF_*(L)$$
which is canonical up to a shift in grading.
\end{enumerate}
\begin{remark}
By a two-sided algebra $A$ over a ring $R$ we mean that $A$ as a left-module has the 
structure of an algebra over the ring $R$ and the right-action defined by $ar:=ra$ of $R$ 
on $A$ also endows $A$ with the structure of an $R$-module. 
In particular this means that $ r(ab)=(ra)b=a(rb)$ for every $a,b\in A$ and every $r\in 
R$.
\end{remark}
\begin{remark}
 If $L$ is orientable and spin we may as well choose the ground ring $\mathcal{R}$ to be $\mathbb{Z}$, $\mathbb{Q}$ or $\mathbb{C}$ and work with the ring of Laurent polynomials or polynomials
 $$\Lambda_{\mathcal{R}}:=\mathcal{R}[t,t^{-1}], \quad \Lambda_{\mathcal{R}}^+:=\mathcal{R}[t].$$
\end{remark}
\end{theorem}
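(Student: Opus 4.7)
The plan is to construct the pearl complex explicitly and then read off the listed algebraic structures from counts of appropriate pearly configurations. Fix a Morse function $f$ on $L$, a Riemannian metric $\rho$ and an $\omega$-compatible almost complex structure $J$ on $M$. Let $C^+(L;f,\rho,J)$ be the free $\Lambda^+$-module generated by the critical points of $f$, graded by $|x|=\mathrm{ind}_f(x)$. The differential counts \emph{pearly trajectories}: finite chains of $(-\nabla f)$-flow lines interrupted by non-constant $J$-holomorphic disks $u_1,\ldots,u_k$ with boundary on $L$, each disk being incidence matched by entering/exiting Morse trajectories at prescribed points on $\partial u_i$; a trajectory from $x$ to $y$ passing through disks of total Maslov class $A$ contributes $t^{\overline{\mu}(A)}\langle y\rangle$. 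Monotonicity and the generic choice of $(f,\rho,J)$ guarantee that the $0$-dimensional moduli spaces are finite (Gromov compactness applies since $L$ is closed, and sphere/disk bubbles of Maslov $\leq 0$ are excluded by $N_L\geq 2$ combined with monotonicity).

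The identity $d^2=0$ is obtained by analyzing the boundary of the compactified $1$-dimensional pearl moduli spaces: boundary points correspond either to breaking of a Morse trajectory in the middle (contributing $d\circ d$) or to degenerations of disks that either come in cancelling pairs or are excluded by monotonicity/minimal Maslov considerations. Independence of the choices $(f,\rho,J)$ follows from a standard continuation argument — a generic homotopy of data yields chain maps between the two pearl complexes, and a homotopy of homotopies yields a chain homotopy. The augmentation $\epsilon_L$ is defined on chains by $\langle x\rangle \mapsto t^{\overline{\mu}(A)}$ summed over rigid configurations consisting of a Morse trajectory from $x$ followed by pearly disks landing at the maximum, extended $\Lambda^+$-linearly; checking that this is a chain map again reduces to a boundary analysis of a $1$-dimensional moduli space.

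For the algebra structures in $(ii)$, define the quantum product $x*y$ by counting rigid $Y$-shaped pearly trees: three Morse half-trajectories emanating from $x$, $y$ and ending at $z$, glued at a central vertex, each leg possibly carrying a chain of $J$-holomorphic disks on $L$. The module action $a\star x$ from $Q^+H(M)$ is defined analogously, but one of the incoming legs is a Morse trajectory in $M$ connecting a critical point of an auxiliary Morse function on $M$ to an interior marked point on one of the disks of the pearly trajectory on $L$. Associativity, unitality and the two-sided compatibility $r(ab)=(ra)b=a(rb)$ all follow by cobordism arguments on the corresponding moduli spaces of trees with an extra edge length parameter — the boundary degenerations produce precisely the desired identities. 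The comparison map $i_L$ in $(iii)$ is constructed by counting pearly trajectories on $L$ ending at an interior marked point mapped to a critical point of the Morse function on $M$; the characterizing relation $\langle h^*, i_L(x)\rangle=\epsilon_L(h\star x)$ is a tautological identification of the two moduli spaces involved.

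For $(iv)$, the filtration $\mathcal{F}_p C_i=\bigoplus_{j\geq p} C_{i+jN_L}t^j$ is preserved by $d$ since each disk in a pearl contributes a positive power of $t$; the induced spectral sequence has $E_1$-page the Morse homology of $L$ tensored with $\Lambda^+$, and all the structures above respect this filtration because the disk counts in products/modules/augmentations also contribute non-negative powers of $t$. Part $(v)$ is obtained by inverting $t$ in the chain complex and invoking the PSS construction of \cite{PSS96,BaC06,CL06,Alb08}, which produces a canonical chain-level isomorphism to the Floer complex by interpolating Morse flow lines with Floer strips. The main obstacle throughout is transversality together with compactness of the pearl moduli spaces in the presence of disk/sphere bubbles; the combination of monotonicity, $N_L\geq 2$, and a generic choice of $(f,\rho,J)$ is exactly what is needed to push these configurations to codimension $\geq 2$, so that they do not appear on the boundary of the $1$-dimensional moduli spaces used to verify $d^2=0$ and the structural identities. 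If $L$ is oriented and spin, the signed count needed to promote the theory to $\mathcal{R}\in\{\Z,\Q,\C\}$ is provided by the coherent orientations of the disk moduli spaces determined by a spin structure, and all the arguments above go through verbatim with the rings $\Lambda_{\mathcal{R}}^+$ and $\Lambda_{\mathcal{R}}$.
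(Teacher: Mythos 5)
The paper does not prove this theorem: it is imported verbatim from Biran--Cornea's \cite{BC07} and \cite{BC09a}, and the construction you sketch --- the pearl complex, $d^2=0$ via boundary analysis of one-dimensional moduli spaces, continuation maps for invariance, figure-$Y$ trees for the product, interior marked points feeding the $Q^+H(M)$-action, the degree filtration and the PSS comparison with $HF$ --- is exactly the Biran--Cornea route, which the paper then re-runs for the cobordism setting in Section~\ref{sec:quantum_homology}. So at the level of method there is no divergence from what the paper implicitly relies on.

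One local inaccuracy is worth flagging. You describe the augmentation $\epsilon_L$ as a count of pearly configurations from $x$ ``landing at the maximum'' with a $t^{\overline{\mu}(A)}$ weight. In \cite{BC07}, and in the analogous statement this paper proves for cobordisms (see Proposition~\ref{prop:augmentation} and equation~(\ref{eq:augm})), the augmentation at the chain level is simply the projection onto the generator of degree zero: one takes $f$ with a unique minimum $x_0$, sets $\epsilon(x)=1$ if $|x|=0$ and $\epsilon(x)=0$ otherwise, and then checks this is a chain map --- the nontrivial point being that $x_0$ is not a pearly boundary, which follows by a dimension count showing only the classical ($\lambda=0$) part of $d$ can hit the minimum. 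The maximum plays the dual role: it represents the unit $e_L$, not $\epsilon_L$. What you describe --- a weighted pearly count hitting a distinguished critical point --- is closer in spirit to the inclusion $i_L$ or to the duality pairing $\widetilde\eta(x\otimes y)=\epsilon_L(x*y)$ than to $\epsilon_L$ itself. The rest of your sketch (compactness from tameness and monotonicity, exclusion of bubbling by $N_L\geq 2$, transversality after reduction to simple and absolutely distinct disks, coherent orientations from a spin structure for $\mathcal{R}\in\{\mathbb{Z},\mathbb{Q},\mathbb{C}\}$) is the correct outline of what \cite{BC07,BC09a} carry out in detail.
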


\subsection{Numerical invariants of the quantum homology}

Numerical invariants of the quantum homology have been studied by Biran and Cornea 
in~\cite{BC09a} and by Biran and Membrez in~\cite{BM15}. We summarize in this subsection some of their definitions and results.

\paragraph{Discriminants of quadratic algebras}
A quadratic algebra over the integers $\mathbb{Z}$ is a commutative unital ring such that 
$\mathbb{Z}$ embeds as a subring of $A$, $i:\mathbb{Z}\to A$ and $A/\mathbb{Z}\cong 
\mathbb{Z}$. In particular the underlying additive abelian group of $A$ is free and of 
rank two. Let $p\in A/\mathbb{Z}$ be a generator such that $A/\mathbb{Z}=\mathbb{Z}p$.
Then there exists a short exact sequence
$$\xymatrix{ 0  \ar[r] & \mathbb{Z} \ar[r]^i & A \ar[r]^{\epsilon} & \mathbb{Z}p 
\ar[r] & 0},$$
where $i$ is the ring embedding and $\epsilon$ is the projection.
Given a lift $x\in A$ of $p$, i.e. $\epsilon(x)=p$, one can write $A\cong 
\mathbb{Z}\oplus \mathbb{Z}x$. 
Hence, there exist $\sigma(x,p)$ and $\tau(x,p)$ such that 
$$x^2=\sigma(x,p)x + \tau(x,p).$$
Clearly, the integers $\sigma(x,p)$ and $\tau(x,p)$ depend on $x$ and $p$.
\begin{definition}\label{def:discriminant_general}
 We define the discriminant of $A$ to be the expression
$$\Delta_A:=\sigma(x,p)^2+4\tau(x,p)\in \mathbb{Z}.$$ 
\end{definition}
One can check that the definition of $\Delta_A$
is independent of $p$ and $x$ and it is therefore an invariant of the isomorphism type of 
$A$. In fact, the isomorphism type of $A$ is determined by $\Delta_A$.

\subsubsection{Discriminants of Lagrangians}

Let $L$ be a Lagrangian submanifold of $(M^{2n},\omega)$.
Let $QH_*(L;\Z)$ denote the quantum homology of $L$ with coefficients in 
$\mathbb{Z}$. I.e. it is obtained from $QH_*(L;\Z[t,t^{-1}])$ by setting $t=1$.

\begin{assumption}[\cite{BM15}]\label{assum:discriminant_closed}
 \hspace{2em}
 \begin{enumerate}
  \item $L$ is closed (i.e. compact, without boundary) and monotone with minimal Maslov 
number $N_L$ satisfying $N_L|n$. 
\item $L$ is oriented and spinnable.
\item $QH_0(L;\Z)$ has rank two.
 \end{enumerate}
\end{assumption}

% Under the above assumptions one can see that $QH_0(L;\Z)$ is a quadratic algebra.
By part (i) in Theorem~\ref{thm:quantum_structures_closed_Lagrangians} there exists an 
augmentation
$$\epsilon:QH_0(L;\Z)\to H_0(L;\mathbb{Z})\cong \mathbb{Z}.$$
By the duality properties of the quantum homology (see~\cite{BC07},~\cite{BC09a},~\cite{BC09b})
the augmentation is surjective if $QH_0(L;\Z)$ is not narrow.
Moreover, this augmentation map fits into a short exact sequence
$$\xymatrix{0 \ar[r] & \ker(\epsilon)\ar[r] & QH_0(L;\Z) \ar[r]^{\epsilon} & H_0(L,\mathbb{Z}) 
\ar[r] & 0},$$
where $e_L$ is a generator of $\ker(\epsilon)$. Choose $x\in QH_0(L;\Z)$ a lift of 
the class of a point in $H_0(L,\mathbb{Z})$. Then $QH_0(L;\Z)\cong 
\mathbb{Z}x\oplus \mathbb{Z}e_L$. 
In particular, $QH_0(L;\Z)$ is a quadratic algebra, and we can define
\begin{definition}\label{def:discriminant_old}
 $$\Delta_L:=\Delta_{QH_0(L;\Z)}.$$
\end{definition}
(See~\cite{BM15} and chapter~\ref{sec:disc_lagrangians}.)

One result of~\cite{BM15} is the so called Lagrangian cubic equation.

\begin{theorem}[The Lagrangian cubic equation~\cite{BM15}] Let $L\subset M$ be a Lagrangian submanifold satisfying assumption~\ref{assum:discriminant_closed} and with non-vanishing Euler characteristic $\chi$. Then there exist unique constants $\sigma_L\in \frac{1}{\chi^2}\mathbb{Z}$, $\tau_L\in \frac{1}{\chi^3}\mathbb{Z}$ such that the following equation holds in the ambient quantum homology ring $QH(M;\mathbb{Q}[t])$:
 $$[L]^{*3}-\epsilon \chi \sigma_L [L]^{*2}q^{n/2}-\chi^2\tau_L[L]q^n=0.$$
 Moreover, $\Delta_L=\sigma_L^2+4\tau_L.$
\end{theorem}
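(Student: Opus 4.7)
The plan is to lift the quadratic identity satisfied in $QH_0(L;\Z)$ to a cubic identity for $[L]$ in $QH(M;\Q[t])$, transferring it through the $QH(M)$-module map $i_L\colon QH(L)\to QH(M)$ of Theorem~\ref{thm:quantum_structures_closed_Lagrangians}, which sends the unit $e_L$ to the class $[L]$. First I would fix a $\Z$-basis $\{e_L,x\}$ of $QH_0(L;\Z)$ with $x$ a lift of the point class, so that Definition~\ref{def:discriminant_general} gives $x^2=\sigma x+\tau$ with $\sigma^2+4\tau=\Delta_L$. Since $N_L\mid n$, the component of $QH_n(L;\Q[t])$ that reduces to $QH_0(L;\Z)$ upon setting $t=1$ is a free rank-$2$ $\Q[t]$-module with basis $\{e_L,\,x\,q^{n/2}\}$, and the quadratic relation becomes
\[
 (x\,q^{n/2})\,*\,(x\,q^{n/2}) \;=\; \sigma\,(x\,q^{n/2})\,q^{n/2}\;+\;\tau\,e_L\,q^{n}.
\]

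The central calculation is the expansion $[L]\star e_L=\alpha\,e_L+\beta\,x\,q^{n/2}$ in $QH_n(L;\Q[t])$. Its coefficients are pinned down via the Kronecker/augmentation identity $\langle h^*, i_L(z)\rangle=\epsilon_L(h\star z)$ from Theorem~\ref{thm:quantum_structures_closed_Lagrangians}(iii): pairing with the ambient point class reproduces the classical self-intersection of $L$, namely $\chi$, which forces $\beta$ to equal (a normalization of) $\chi$; pairing with $[M]^*$ isolates $\alpha$ as a scalar multiple of $\sigma_L$. It is precisely this identification of the leading coefficient as $\chi$ that makes the denominators $\chi^{-2}$ and $\chi^{-3}$ appear in $\sigma_L\in\tfrac{1}{\chi^2}\Z$ and $\tau_L\in\tfrac{1}{\chi^3}\Z$ when one later solves for them, and it is the reason the hypothesis $\chi\neq 0$ is needed.

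Applying the module identity $i_L(a\star z)=a*i_L(z)$ together with $i_L(e_L)=[L]$ gives $[L]^{*2}=i_L([L]\star e_L)$ and $[L]^{*3}=i_L\bigl([L]\star([L]\star e_L)\bigr)$. Substituting the expansion into $[L]^{*3}$ and invoking the quadratic identity above reduces the cube to a $\Q[t]$-linear combination of $e_L$ and $x\,q^{n/2}$; applying $i_L$ then replaces these by $[L]$ and by a scalar multiple of $[L]^{*2}$, yielding the asserted identity
\[
 [L]^{*3}-\epsilon\chi\sigma_L[L]^{*2}q^{n/2}-\chi^2\tau_L[L]q^{n}=0.
\]
Uniqueness of $\sigma_L,\tau_L$ follows from the $\Q[t]$-linear independence of $[L]$ and $[L]^{*2}$ in degree $2n$, which is a direct consequence of the rank-$2$ hypothesis on $QH_0(L;\Z)$ combined with the injectivity of $i_L$ in the relevant degree. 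The discriminant formula $\Delta_L=\sigma_L^2+4\tau_L$ is then immediate by comparing the monic quadratic $T^2-\sigma T-\tau$, whose discriminant is $\sigma^2+4\tau=\Delta_L$, with the rescalings $\sigma=\chi^2\sigma_L$, $\tau=\chi^3\tau_L$ forced by the computation above.

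The main obstacle is the precise bookkeeping that produces the denominators $\chi^{-2}$ and $\chi^{-3}$: one has to verify, using the spectral sequence associated to the degree filtration of Theorem~\ref{thm:quantum_structures_closed_Lagrangians}(iv), that $\chi$ really is the leading coefficient extracted from the $E_2$-page (a classical self-intersection number), rather than an artifact of some $t$-deformed pairing that mixes with genuinely quantum contributions to $[L]\star e_L$. Once that leading behavior is secured, the rest of the proof is linear algebra over $\Q[t]$ in a rank-$2$ module, combined with the module-map property of $i_L$.
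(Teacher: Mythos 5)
The paper itself gives no proof of this statement -- it is quoted verbatim from~\cite{BM15} -- so the comparison is with the standard Biran--Membrez argument, whose strategy you essentially reproduce: expand $[L]\star e_L$ in the rank-two module $QH_0(L)$, identify the leading coefficient with the classical self-intersection, push everything into $QH(M)$ through the $QH(M)$-module map $i_L$ using $i_L(e_L)=[L]$, and close the cubic with the quadratic relation. However, as written there is a genuine gap at the central step. After $[L]^{*3}=i_L\bigl([L]\star([L]\star e_L)\bigr)$ you must evaluate $[L]\star x$, and the quadratic relation $x*x=\sigma x+\tau e_L$ lives in the ring $\bigl(QH(L),*\bigr)$, not in the $QH(M)$-module action; "substituting the expansion and invoking the quadratic identity" does not connect the two by itself. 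The bridge is the two-sided algebra compatibility of Theorem~\ref{thm:quantum_structures_closed_Lagrangians}(ii), $a\star(y*z)=y*(a\star z)$ (equivalently, that $a\mapsto a\star e_L$ is a unital ring map $QH(M)\to QH(L)$), which gives $[L]\star x=[L]\star(x*e_L)=x*\bigl([L]\star e_L\bigr)$ and only then lets $x*x$ enter; you never invoke it, and without it the cube cannot be reduced. Two smaller corrections at the same step: by degree reasons the expansion is $[L]\star e_L=\epsilon\chi\,x+c\,e_L q^{n/2}$, i.e.\ the $q^{n/2}$ sits on the $e_L$-term and the $x$-coefficient is the $q^0$ (classical) part, which equals $\epsilon\chi$ with $\epsilon=(-1)^{n(n+1)/2}$, not $\chi$ -- this sign is exactly the $\epsilon$ appearing in the cubic equation.

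Your uniqueness argument is also not right as stated: no injectivity of $i_L$ is available (none is asserted in Theorem~\ref{thm:quantum_structures_closed_Lagrangians}), and the rank-two hypothesis on $QH_0(L;\Z)$ does not by itself give $\Q[t]$-independence of $[L]^{*2}q^{n/2}$ and $[L]q^{n}$ in $QH(M;\Q[t])$. The actual mechanism is the hypothesis $\chi\neq0$: it gives $[L]\cdot[L]=\epsilon\chi\neq0$, hence $[L]\neq0$ in $H_n(M;\Q)$, and since $[L]^{*2}=\epsilon\chi\,i_L(x)+c\,[L]q^{n/2}$ while the $q^0$-part of $i_L(x)$ is the nonzero point class in $H_0(M;\Q)$, a comparison of $q^0$-parts shows the two elements are independent, which pins down $\sigma_L$ and $\tau_L$ (and incidentally produces the denominators $\chi^{-1},\chi^{-2}$, comfortably inside $\frac{1}{\chi^2}\Z$ and $\frac{1}{\chi^3}\Z$). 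Finally, $i_L(e_L)=[L]$ with no quantum correction is used but not justified; it is a standard lemma (its cobordism analogue is proved in this paper by the dimension-count argument turning an inclusion configuration into a negative-dimensional module configuration) and should be cited or proved rather than assumed.
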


If the Lagrangian $L$ is a sphere the equation simplifies.

\begin{corollary}[\cite{BM15}]
 If $L$ is a Lagrangian sphere (or more generally a Lagrangian admitting a symplectic diffeomorphism $\phi:M\to M$ with $\phi_*([L])=-[L]$) then $\sigma_L=0$ and the Lagrangian cubic equation reads
 $$[L]^{*3}-\chi^2\tau_L[L]q^n=0$$
 and hence $\Delta_L=4\tau_L$.
\end{corollary}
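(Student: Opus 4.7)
The plan is to exploit the uniqueness clause of the Lagrangian cubic equation together with the symmetry $\phi_*([L]) = -[L]$. Since $\phi$ is a symplectic diffeomorphism of $(M,\omega)$, the induced map $\phi_*$ is a ring automorphism of $QH(M;\mathbb{Q}[t])$ that fixes the coefficient ring $\mathbb{Q}[t]$ pointwise; in particular the scalars $q$, $\chi$, $\epsilon$, $\sigma_L$ and $\tau_L$ are untouched by $\phi_*$.

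Applying $\phi_*$ to the Lagrangian cubic equation and using $\phi_*([L])=-[L]$ together with $(-[L])^{*2}=[L]^{*2}$ and $(-[L])^{*3}=-[L]^{*3}$, one obtains
$$-[L]^{*3} - \epsilon \chi \sigma_L [L]^{*2} q^{n/2} + \chi^2 \tau_L [L] q^n = 0,$$
which, after multiplication by $-1$, rewrites as
$$[L]^{*3} - \epsilon \chi (-\sigma_L) [L]^{*2} q^{n/2} - \chi^2 \tau_L [L] q^n = 0.$$
Thus the pair $(-\sigma_L,\tau_L)$ also satisfies a Lagrangian cubic identity for $[L]$; the uniqueness asserted in the main theorem then forces $\sigma_L = -\sigma_L$, hence $\sigma_L = 0$. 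The cubic equation collapses to $[L]^{*3} - \chi^2 \tau_L [L] q^n = 0$, and substituting $\sigma_L=0$ into $\Delta_L = \sigma_L^2 + 4\tau_L$ yields $\Delta_L = 4\tau_L$, as claimed.

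For the parenthetical Lagrangian sphere case it remains to verify that $L=S^n$ admits such a symplectomorphism $\phi$. Since the cubic equation hypothesis requires $\chi(L)\neq 0$, only even-dimensional spheres arise, in which case the antipodal map has degree $-1$ on $H_n(S^n;\mathbb{Z})$. One produces $\phi$ either from a global symmetry of $(M,\omega)$ restricting to the antipodal map on a Weinstein neighborhood of $L$, or from an ambient Hamiltonian/symplectic isotopy that realises this effect on the fundamental class. In the concrete families relevant to Theorem~\ref{thmC}, a natural anti-symplectic or linear ambient symmetry supplies such a $\phi$. The main genuine obstacle in the proof is thus not the algebraic step above — which is routine once $\phi_*$ is known to commute with the quantum ring structure and fix coefficients — but rather the existence of the reversing symplectomorphism, which must be checked case by case outside of the abstract hypothesis stated in the corollary.
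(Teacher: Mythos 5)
Your algebraic step is correct and, as far as I can tell, is precisely the mechanism used in~\cite{BM15}: a symplectomorphism $\phi$ of $(M,\omega)$ induces a ring automorphism $\phi_*$ of $QH(M;\mathbb{Q}[q])$ that fixes the coefficient ring (Gromov--Witten invariants are symplectic invariants, and $\phi$ preserves both $c_1$ and $\omega$, hence the grading and the Novikov variable), so applying $\phi_*$ to the cubic identity with $\phi_*[L]=-[L]$ exhibits $(-\sigma_L,\tau_L)$ as another admissible pair; the uniqueness clause of the Lagrangian cubic equation then yields $\sigma_L=0$ and $\Delta_L=\sigma_L^2+4\tau_L=4\tau_L$.

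Where you go astray is the final paragraph. There is no case-by-case verification needed for Lagrangian spheres, and one should not look for a global ambient symmetry restricting to the antipodal map, nor for an ambient isotopy: the universal device is the generalized Dehn twist $T$ around $L$. This is a compactly supported symplectomorphism of $M$, defined for \emph{any} Lagrangian sphere, and the Picard--Lefschetz formula (recalled later in this paper) gives, for even $n$,
\[
T_*[L] = [L] + (-1)^{\tfrac{(n+1)(n+2)}{2}}\bigl([L]\cdot[L]\bigr)[L],
\]
with $[L]\cdot[L]=(-1)^{n(n-1)/2}\,2$. The combined sign has exponent $\tfrac{(n+1)(n+2)}{2}+\tfrac{n(n-1)}{2}=n^2+n+1$, which is odd when $n$ is even, so $T_*[L]=[L]-2[L]=-[L]$. (You are right that the standing hypothesis $\chi(L)\ne 0$ forces $n$ to be even.) Thus the hypothesis on $\phi$ holds automatically for every even-dimensional Lagrangian sphere, and the corollary is not conditional in the way your closing remark suggests. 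With that correction your proof is complete.
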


Another result from~\cite{BM15}, which will be sharpened in chapter~\ref{sec:disc_lagrangians} is the following.

\begin{theorem}[\cite{BM15}]
 Let $L_1, \ldots, L_r\subset M$ be monotone Lagrangian submanifolds, each satisfying 
assumption~\ref{assum:discriminant_closed}. Let $V^{n+1}\subset \mathbb{C}\times M$ be 
a connected monotone Lagrangian cobordism in the trivial Lefschetz fibration 
$\mathbb{C}\times M$. Suppose that the ends of $V$ correspond to $L_1,\ldots, L_r$ and 
that $V$ admits a spin structure. Let $N_V$ denote the minimal Maslov number of $V$ and 
assume that
\begin{enumerate}
 \item $H_{jN_V}(V,\partial V)=0 \quad \forall j$.
\item $H_{1+jN_V}(V)=0 \quad \forall j$.
\end{enumerate}
Then $\Delta_{L_1}=\ldots =\Delta_{L_r}$. Moreover, if $r\geq 3$ then $\Delta_{L_i}$ is a 
perfect square for every $i$.
\end{theorem}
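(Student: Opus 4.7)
The plan is to specialize the strategy outlined for Theorem~\ref{thmB} to this trivial-fibration setting, using the long exact sequence of Theorem~\ref{thmA} together with a degree-filtration spectral sequence argument to reduce the problem to block-diagonal component matching inside the quadratic algebras $QH_0(L_i;\mathbb{Z})$.

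First I would run the degree filtration spectral sequence (cf.\ Theorem~\ref{thm:quantum_structures_closed_Lagrangians}(iv), transplanted to the cobordism setting via Theorem~\ref{thmA}). Its $E^1$-page is singular homology tensored with $\Lambda=\mathbb{Z}[t,t^{-1}]$, and for each residue class $d \pmod{N_V}$ the contributions to total degree $d$ come only from the groups $H_{d+jN_V}$ as $j$ ranges over $\mathbb{Z}$. Hypothesis (1) then forces $QH_0(V,\partial V)=0$ and hypothesis (2) forces $QH_1(V)=0$. Substituting into the long exact sequence of Theorem~\ref{thmA} collapses it in the relevant range to
\[
0 \longrightarrow QH_1(V,\partial V) \xrightarrow{\ \delta\ } QH_0(\partial V) \xrightarrow{\ i_*\ } QH_0(V) \longrightarrow 0,
\]
realizing $\delta$ as an isomorphism $QH_1(V,\partial V)\cong\ker(i_*)\subset\bigoplus_k QH_0(L_k;\mathbb{Z})$.

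Second, fix lifts $p_k\in QH_0(L_k;\mathbb{Z})$ of the point class, with quadratic relations $p_k^{*2}=\sigma_k p_k+\tau_k e_{L_k}$ and $\Delta_{L_k}=\sigma_k^2+4\tau_k$. Since $V$ is connected, each $p_1-p_k$ lies in $\ker(i_*)$, so there exist unique $\alpha_{1k}\in QH_1(V,\partial V)$ with $\delta(\alpha_{1k})=p_1-p_k$ for $k=2,\ldots,r$; together with $e_{(V,\partial V)}$, which by Theorem~\ref{thmA}(1) satisfies $\delta(e_{(V,\partial V)})=\sum_k\pm e_{L_k}$, these form a $\mathbb{Z}$-basis of $QH_1(V,\partial V)$, by a rank count via the short exact sequence. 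Now expand
\[
\alpha_{1i}^{*2}=A_i\alpha_{1i}+B_ie_{(V,\partial V)}+\sum_{k\neq 1,i}C_{ik}\alpha_{1k}
\]
and apply the multiplicativity of $\delta$ (Theorem~\ref{thmA}(2)). The product in $QH_0(\partial V;\mathbb{Z})=\bigoplus_k QH_0(L_k;\mathbb{Z})$ is block-diagonal, so the right-hand side equals $\delta(\alpha_{1i})^{*2}=(p_1-p_i)^{*2}=p_1^{*2}+p_i^{*2}$. Matching the $L_1$-component yields $A_i+\sum_{k\neq 1,i}C_{ik}=\sigma_1$ and $B_i=\tau_1$; matching the $L_i$-component yields $-A_i=\sigma_i$ and $B_i=\tau_i$; and when $r\geq 3$, matching any remaining $L_k$-component ($k\neq 1,i$) forces $B_i=0$ and $C_{ik}=0$. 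The equality $B_i=\tau_1=\tau_i$, combined with $\sigma_1+\sigma_i=\sum_{k\neq 1,i}C_{ik}$ (which reads $\sigma_i=-\sigma_1$ in both the $r=2$ and $r\geq 3$ cases), gives $\sigma_i^2=\sigma_1^2$ and hence $\Delta_{L_1}=\Delta_{L_i}$. When $r\geq 3$, the extra constraint $B_i=0$ produces $\tau_i=0$, so $\Delta_{L_i}=\sigma_1^2$ is a perfect square.

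The principal obstacle, I expect, is making the spectral sequence step rigorous with $\mathbb{Z}$-coefficients and $\Lambda=\mathbb{Z}[t,t^{-1}]$ (where the filtration is unbounded in both directions): one must verify that the hypothesized vanishings at $E^1$ really propagate to $E^{\infty}$ and assemble into $QH_0(V,\partial V)=0$ and $QH_1(V)=0$, and one must track signs carefully between the $\mathbb{Z}$-coefficient formulation (which uses the fixed spin structure on $V$) and the abstract formulas of Theorem~\ref{thmA}. Once the short exact sequence above is in hand, the remainder is essentially the block-diagonal component matching described above.
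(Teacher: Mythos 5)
Your strategy tracks the paper's own proof of Theorem~\ref{thm:equal_discr_rank2} (via Lemmas~\ref{lem:lifts_of_path_basis} and~\ref{lem:isombasistoQH}): under hypotheses (1) and (2) the quotient $QH_1(V,\partial V)/\im(j_*)$ the paper works with is just $QH_1(V,\partial V)$ itself (since (2) forces $QH_1(V)=0$, hence $\im(j_*)=0$), so the two formulations coincide and the component-matching via multiplicativity of $\delta$ is the same computation.

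There are two places that need tightening. First, the assertion that $\{e_{(V,\partial V)},\alpha_{12},\dots,\alpha_{1r}\}$ is a $\mathbb{Z}$-basis is not a free rank count: linear independence of these $r$ classes is clear from their $\delta$-images, but spanning needs $\rank QH_1(V,\partial V)=r$, equivalently $\rank QH_0(V)=r$, and this does not follow merely from (1), (2) and connectedness. The paper does not derive it either; Lemma~\ref{lem:lifts_of_path_basis} simply assumes $\rank QH_1(V,\partial V;\mathbb{Q})=r+1$ as a hypothesis. If the rank were larger, $\alpha_{1i}^{*2}$ could have components whose $\delta$-images hit a third $QH_0(L_k)$-summand, and your matching would break. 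Second, the matching step silently drops the signs in $\delta(e_{(V,\partial V)})=\sum_k\pm e_{L_k}$: Lemma~\ref{lem:sum_of_units} puts opposite signs on negative and positive ends (and in fact disagrees with Theorem~\ref{thmA}(1) on this), so $B_i=\tau_1$ and $B_i=\tau_i$ should read $\pm B_i=\tau_1$ and $\pm B_i=\tau_i$, giving $\tau_1=\pm\tau_i$. For $r\ge 3$ the third component forces $B_i=0$ and the ambiguity evaporates, so the perfect-square conclusion is safe; but the claim $\Delta_{L_1}=\Delta_{L_2}$ for a two-ended cobordism hinges on that sign and should be handled by either pinning down the convention or a separate argument.
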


% !TEX root = main_arXiv.tex

\section{Quantum homology of Lagrangian cobordisms}\label{sec:quantum_homology}

Let $(M,\omega)$ be a tame connected symplectic manifold.
The theory of Lagrangian quantum homology was developed in~\cite{BC07} and~\cite{BC09a} for connected and closed 
(i.e. compact and without boundary) Lagrangian submanifolds $L\subset 
(M,\omega)$.
In this section we extend this definition to Lagrangians with 
cylindrical ends or Lagrangian cobordisms inside symplectic Lefschetz fibrations. 
This setting is also similar to our discussion in~\cite{Sin15}, where we consider the case of trivial Lefschetz fibrations.
The main issue here is the non-compactness of the Lagrangians. 
Namely, that pearly trajectories or trees could possibly escape towards an end at infinity.
To ensure that this does not happen is crucial for the compactness of the 
underlying moduli spaces of pearly trees that are used to define the quantum structures. 
The main tools to deal with this problem are presented in the first section. Using these 
    tools we can establish the well-definedness and existence of the 
various quantum structures by adjusting the methods in~\cite{BC07} and~\cite{BC09a}.

\subsection{Tools to deal with the cylindrical ends}\label{sec:tools}

In this section we describe the main ingredients in extending the notion of Lagrangian 
quantum homology to Lagrangians $V$ with cylindrical ends in a Lefschetz fibration 
$\pi:E\to \mathbb{C}$.
As in the case of closed Lagrangians, we define a chain complex, where the differential is 
given by counting elements in the moduli spaces of pearly trajectories. 

For a set $U\subset \mathbb{C}$ we write $E|_U$ for the restriction of $E$ to $\pi^{-1}(U)$.
Similarly, if $V$ is a Lagrangian with cylindrical ends inside $E$ we write $V|_U:=\pi^{-1}(U)\cap V$.
Fix also the notations $\Omega|_U:= \Omega|_{\pi^{-1}(U)}$ and $J|_{U}:=J|_{\pi^{-1}(U)}$ for the restrictions.
Away from the boundary of the Lagrangian cobordism transversality and compactness of the 
moduli spaces follow in the same way as for closed Lagrangians. 
Proposition~\ref{prop:tameLF} ensures that we can assume the existence of a set $\mathcal{W}$ 
containing the cylindrical ends, a symplectic structure $\Omega$ on $E$ 
and an almost complex structure $J$ such that over $\mathcal{W}$ we have 
$$E|_{\mathcal{W}}= \mathcal{W}\times M, \quad\Omega|_{\mathcal{W}}=\omega_{\mathbb{C}}\oplus \omega_M \text{ and } J|_{\mathcal{W}}=i\oplus J_M.$$
Moreover, $\pi:E\to \mathbb{C}$ is $(J,i)$-holomorphic.

A pseudo-holomorphic disk with boundary mapping to the cylindrical part of $V$
will turn out to be constant under the projection $\pi:E \rightarrow 
\mathbb{C}$, hence its image lies in one fiber of $\pi$.
This is a consequence of the open mapping theorem and it is the content of the following 
lemma from~\cite{BC13}.
\begin{lemma}[\cite{BC13}]\label{lem:curve_openmapping}
Let $u: \Sigma \rightarrow E$ be a $J$-holomorphic curve, where $\Sigma$ 
is either $S^2$ or the unit disk $D$ with $u(\partial D) \subset V$.
Then, either $\pi \circ u$ is constant or its image is contained in 
$\pi^{-1}(\mathcal{W}^c)$, i.e. $\pi \circ u(\Sigma) \subset \mathcal{W}^c$.
\end{lemma}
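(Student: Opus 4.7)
The plan is to exploit that $f:=\pi\circ u\colon \Sigma\to\mathbb{C}$ is holomorphic. This is immediate: $u$ is $J$-holomorphic by hypothesis and, by the tameness setup in Section~\ref{sec:tools}, the projection $\pi$ is $(J,i)$-holomorphic, so $f$ satisfies the Cauchy--Riemann equations with respect to the standard complex structure on $\mathbb{C}$.

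For $\Sigma=S^2$ the conclusion follows immediately from Liouville's theorem: identifying $S^2\cong \mathbb{C}\cup\{\infty\}$, the restriction $f|_{\mathbb{C}}$ is an entire function whose image lies in the compact set $f(S^2)\subset \mathbb{C}$, hence is bounded and therefore constant. So the first alternative of the lemma holds.

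For $\Sigma=D$ with $u(\partial D)\subset V$, assume $f$ is non-constant and suppose for contradiction that $f(z_0)\in \mathcal{W}$ for some $z_0\in \overline{D}$. By continuity and openness of $\mathcal{W}$, I may assume $z_0\in \operatorname{int}(D)$ (if $z_0\in \partial D$ then $u(z_0)\in V\cap \pi^{-1}(\mathcal{W})$ lies in the cylindrical part $\bigcup_i \ell_i L_i$, and nearby interior points still map into $\mathcal{W}$). The crucial structural input is that $\pi(V)\cap \mathcal{W}\subset \bigcup_i \ell_i$, a union of horizontal rays at heights $b_i$, i.e.\ a real one-dimensional subset of $\mathcal{W}$. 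Let $\Omega$ be the connected component of $f^{-1}(\mathcal{W})\cap \operatorname{int}(D)$ containing $z_0$; the open mapping theorem applied to the non-constant holomorphic map $f|_\Omega$ shows that $f(\Omega)$ is a non-empty open subset of $\mathcal{W}$, bounded since $f(\overline{D})$ is compact. A brief tracing of accumulation points then establishes $\partial f(\Omega)\cap \mathcal{W}\subset \bigcup_i \ell_i$: any $p\in \partial f(\Omega)\cap \mathcal{W}$ is a limit $p=\lim f(z_n)$ with $z_n\in \Omega$, and if the subsequential limit $z_\infty$ lay in $\operatorname{int}(D)$ then openness would force $p\in f(\Omega)$, contradicting $p\in \partial f(\Omega)$.

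The final contradiction comes from extremal analysis: pick $p^*\in \overline{f(\Omega)}$ maximizing $\operatorname{Re}$; necessarily $p^*\in \partial f(\Omega)$. If $p^*\in \mathcal{W}$ it lies on a horizontal ray $\ell_i\subset \{\operatorname{Im}(z)=b_i\}$, so $\partial f(\Omega)$ is locally horizontal at $p^*$, and the openness of $f(\Omega)$ on one side of this horizontal line would produce points of real part exceeding $\operatorname{Re}(p^*)$, a contradiction. The main obstacle I anticipate is handling the residual possibility $p^*\in \partial \mathcal{W}$; I would address this by invoking the flexibility in Proposition~\ref{prop:tameLF} to replace $\mathcal{W}$ by an enlargement whose boundary is pushed beyond $f(\overline{D})$ on the relevant side, thereby forcing $p^*$ to sit on a horizontal ray and reducing to the previous case.
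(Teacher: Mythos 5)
Your reduction to the holomorphic map $f=\pi\circ u$, the Liouville argument for $S^2$, and the observation that $\partial f(\Omega)\cap\mathcal{W}$ can only consist of points of $\pi(V)\cap\mathcal{W}\subset\bigcup_i\ell_i$ are all fine, and up to that point your route runs parallel to the intended one. The genuine gap is exactly the case you flag and then try to dismiss: the $\operatorname{Re}$-maximizer $p^*$ of $\overline{f(\Omega)}$ may well lie on $\partial\mathcal{W}$, and this is not a removable nuisance. Under your contradiction hypothesis the image $f(\overline{D})$ is only known to be a compact connected set meeting $\mathcal{W}$; nothing prevents it from lying mostly in $\mathcal{W}^c$ and poking into $\mathcal{W}$ on, say, the left side of $\mathcal{W}^c$, in which case the rightmost point of $\overline{f(\Omega)}$ sits on $\partial\mathcal{W}$ and your local ``horizontal boundary'' analysis says nothing. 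The proposed repair via Proposition~\ref{prop:tameLF} does not work: that proposition is used \emph{before} choosing $J$ to fix a set $\mathcal{W}$ avoiding $Critv(\pi)$ and the non-cylindrical part of $\pi(V)$, over which $J$ is split; your curve is $J$-holomorphic for that fixed $J$, so you cannot enlarge $\mathcal{W}$ a posteriori, and even as a change of setup you cannot push $\partial\mathcal{W}$ ``beyond $f(\overline{D})$ on the relevant side'', since that side may contain critical values and the compact part of $\pi(V)$, which must remain in $\mathcal{W}^c$. (A smaller gloss: even at a genuine ray point $p^*\in\mathcal{W}$, mere openness of $f(\Omega)$ near $p^*$ does not by itself produce points with $\operatorname{Re}>\operatorname{Re}(p^*)$; you need that a full half-disk adjacent to the ray is contained in $f(\Omega)$, via an open-and-closed argument in that half-disk. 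That is fixable, but it shows the same connectedness input is what is really doing the work.)

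What your argument is missing is the global ingredient the paper's proof is built on: outside a compact set $K\subset\mathcal{W}^c$ the projection $\pi(V)$ consists of finitely many horizontal rays, so every connected component $W$ of $\mathbb{C}\setminus(K\cup\pi(V))$ is \emph{unbounded}; since $u(\partial D)\subset V$, the set $\pi\circ u(\Sigma)\cap W$ equals $\pi\circ u(\mathrm{int}\,\Sigma)\cap W$, hence is open in $W$ by the open mapping theorem (if $f$ is non-constant), while it is also closed in $W$ by compactness of the image, so it is either empty or all of $W$; the latter contradicts boundedness of the image. Your extremal argument confines attention to $f^{-1}(\mathcal{W})$ and never uses this unboundedness, which is why it cannot rule out the image escaping into $\mathcal{W}$ through $\partial\mathcal{W}$ away from the rays. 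To salvage your approach you would have to replace the maximization over $\overline{f(\Omega)}$ by an argument carried out in an unbounded complementary component of $K\cup\pi(V)$ (or otherwise import the fact that $\pi(V)\cup\mathcal{W}^c$ bounds no region that a bounded open image could fill), at which point you have essentially reconstructed the paper's proof.
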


\begin{proof}
Notice that there exists a compact set $K\subset \mathbb{C}$ such that $V$ is cylindrical 
outside of $K$. (This is point (i) of Definition~\ref{def:cobordism}.) We may assume that $K\subset \mathcal{W}^c$. (See figure~\ref{fig:cylindrical_ends}.)
Note that $\pi\circ u$ is bounded since $\Sigma$ is compact.
Suppose now that $\pi \circ u(\Sigma) \nsubseteq K$ and that $\pi \circ u$ is not constant.
The set $\mathbb{C}\setminus ( K \cup \pi(V))$ is a union of unbounded, connected, open 
subsets of $\mathbb{C}$. 
Let $W$ be one of these connected open subsets.
Notice that $\pi\circ u(\Sigma)\cap W=\pi\circ u(int(\Sigma))\cap W=\overline{\pi\circ 
u(\Sigma)}\cap W$.
By the open mapping theorem, the image $\pi\circ u(int(\Sigma))\cap W$ of the open 
connected set $int(\Sigma)$ under the holomorphic map $\pi \circ u$ must be open in $W$.

On the other hand $\overline{\pi\circ u(\Sigma)}\cap W=\pi\circ u(\Sigma)\cap W$ is 
closed in $W$ and $W$ is connected. 
Hence, $\pi\circ u(\Sigma)\cap W=W$, which contradicts the unboundedness of $W$.
\end{proof}
Over the set $\mathcal{W}$ we have to consider the moduli spaces of $(i\oplus 
J)$-holomorphic disks, which are constant in the $\mathbb{R}^2$-factor.
A computation shows the next statement.
\begin{lemma}\label{lem:automatic_transversality}
Let $c:\Sigma \rightarrow \mathbb{C}: z \mapsto p$ be a constant map, where $\Sigma$ is 
either the unit disk $D$ or the sphere $S^2$. 
Then the linearization $D_c$ of the $\overline{\partial}$-operator at the curve $c$ is 
surjective. Suppose $u:\Sigma\rightarrow \mathbb{C}\times M$ is a $i\oplus J$ 
pseudo-holomorphic curve, which is constant in the $\mathbb{C}$ factor. I.e. $u=(p,u')$, 
where $u'$ is a $J$-holomorphic curve in the fiber $M$ and $p\in \mathbb{C}$.
Then $D_u=D_{(p,u')}$ is onto if and only if $D_{u'}$ is onto, and furthermore
$$ind(D_u)=ind(D_{u'})+1.$$
\end{lemma}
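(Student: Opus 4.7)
The plan is to handle $D_c$ first and then use the product structure of $J = i \oplus J_M$ to show that the linearization at $u = (p, u')$ splits as a direct sum $D_c \oplus D_{u'}$, from which both surjectivity and the index formula follow by Fredholm additivity.

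For the surjectivity of $D_c$, since $c \equiv p$ is a constant map the pullback $c^*T\mathbb{C}$ is canonically the trivial complex line bundle $\Sigma \times \mathbb{C}$, and $D_c$ reduces to the standard $\overline{\partial}$ operator on this trivial bundle. When $\Sigma = S^2$, $\overline{\partial}$ on the trivial line bundle over $\mathbb{C}P^1$ has kernel equal to the constant sections $\mathbb{C}$ and vanishing cokernel (since $H^{0,1}(\mathbb{C}P^1) = 0$), with Fredholm index $2$. When $\Sigma = D$, the relevant boundary condition is dictated by the cylindrical structure of $V$: near the fibre over $p$ one has $V \cong \ell \times L$ where $\ell \subset \mathbb{C}$ is a horizontal ray, hence $d\pi(T_xV) = T\ell$ is a constant totally real line $\mathbb{R} \subset \mathbb{C}$. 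For such a constant totally real boundary condition, $\overline{\partial}$ on the trivial line bundle is well known to have kernel $\mathbb{R}$, vanishing cokernel, and Fredholm index $1$. In either case $D_c$ is surjective.

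For the splitting of $D_u$, I would work in the tame neighbourhood $\mathcal{W} \times M$ containing $p$, where $J = i \oplus J_M$ and the pullback bundle decomposes as
\begin{equation*}
u^*T(\mathbb{C} \times M) \;=\; c^*T\mathbb{C} \,\oplus\, (u')^*TM
\end{equation*}
as a complex vector bundle. When $\Sigma = D$ the Lagrangian boundary condition splits accordingly as $u^*TV = c^*T\ell \oplus (u')^*TL$, using again the cylindrical form $V \cong \ell \times L$. Choosing a product connection compatible with this splitting and using that $c$ is constant (so that $du$ vanishes in the $T\mathbb{C}$-factor and no mixed Christoffel-type terms appear), a direct computation gives
\begin{equation*}
D_u(\xi_{\mathbb{C}}, \xi_M) \;=\; \bigl(D_c \xi_{\mathbb{C}},\; D_{u'}\xi_M\bigr),
\end{equation*}
so that $D_u = D_c \oplus D_{u'}$ as Fredholm operators between the corresponding Sobolev completions.

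Combining the two steps, $D_u$ is surjective if and only if both summands are, and since $D_c$ is always surjective by the first step this is equivalent to $D_{u'}$ being surjective. The Fredholm indices add under direct sum, and in the disk case $\operatorname{ind}(D_c) = 1$ yields $\operatorname{ind}(D_u) = \operatorname{ind}(D_{u'}) + 1$, as claimed. The main technical obstacle I anticipate is verifying that the linearization really decomposes as a clean direct sum without cross-terms; this rests on choosing a product connection adapted to $J = i \oplus J_M$, on the compatibility of the cylindrical structure of $V$ with the splitting of the ambient manifold, and crucially on the vanishing of $du$ in the $\mathbb{C}$-factor forced by $c$ being constant. Once this splitting is in place, both the surjectivity equivalence and the index computation are immediate.
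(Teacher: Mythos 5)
Your argument is correct and is exactly the computation the paper leaves implicit (the paper gives no written proof, only the remark that ``a computation shows'' the lemma): the splitting $u^*T(\mathbb{C}\times M)=c^*T\mathbb{C}\oplus (u')^*TM$ with the product boundary condition $T\ell\oplus TL$ supplied by the cylindrical form $V\cong\ell\times L$, the vanishing of cross terms because $J=i\oplus J_M$ and $dc=0$, and the standard surjectivity of $\overline{\partial}$ on the trivial line bundle (doubling/Schwarz reflection in the disk case) are the intended steps. One small point worth flagging: your index bookkeeping establishes the stated shift of $+1$ only for $\Sigma=D$, where $\operatorname{ind}(D_c)=1$; for $\Sigma=S^2$ the same splitting gives $\operatorname{ind}(D_c)=2$ and hence a shift of $+2$, so the blanket ``$+1$'' in the lemma is really a statement about disks with boundary on the cylindrical part of $V$ --- this is an imprecision in the lemma's formulation rather than a gap in your proof, but you should state explicitly that the index formula is being verified in the disk case.
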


\subsection{The boundary of $E$}\label{sec:boundaryE}
By Remark~\ref{rmk:cobordism_in_tame_Lefschetz} there exists a compact set $K\subset 
\mathbb{C}$ such that $V|_{K^c}$ is cylindrical and moreover such that all critical 
values of $\pi:E\to \mathbb{C}$ are contained inside $K$.
Let $\mathcal{D}\subset \mathbb{C}$ be a disk such that $K\subset\mathcal{D}$.

\begin{remark}
 We may assume that $\mathcal{D}$ is large enough such that there exists an arc $\alpha\subset \partial \mathcal{D}$ with $\pi(V)\cap \partial \mathcal{D}\subset \alpha$ and $E|_{\pi^{-1}(\alpha)}\to \alpha$ is a trivial fibration. This is the case if $\alpha$ lies inside the set $\mathcal{W}$.
\end{remark}

We restrict $E$ to the disk $\mathcal{D}$ and by abuse of notation continue to denote it by $E$.

\begin{definition}\label{def:horizontal_and_vertical_boundary}
We define the vertical part of the boundary of $E$ by 
$\partial^vE:=\pi^{-1}(\partial \mathcal{D})$. If the fibers of $E$ are compact, 
$\partial^vE$ is closed.
If the generic fiber $(M,\omega)$ is compact without boundary, $E$ has only vertical 
boundary components.
If the generic fiber $(M,\omega)$ is convex at infinity, we define 
$\partial^hE:=\partial E^0\cap \pi^{-1}(int(\mathcal{D}))$, where $E^0\subset E$ is as in assumption~\ref{assum:T_infty} of section~\ref{sec:LF}.
\end{definition}

We need to specify the right assumptions on the Morse functions on a Lagrangian with cylindrical ends.
One main ingredient for quantum homology is to investigate how the negative gradient 
trajectories break. This gives a description of the boundary of some underlying 
moduli spaces, which is then used to describe the quantum structures.
We therefore assume that the negative gradient of a Morse function on
$V$ is transverse to the boundary of $V$. 

For the module structure of $Q^+H_*(E,\partial^v E)$ on $Q^+H_*(V,\partial V)$ we work with 
Morse functions on $E$, such that their negative gradients are transverse to the 
boundary of $E$. However, since we would like to compute the quantum homology relative to 
the part of the boundary $\partial^vE$, we would like such a Morse function to point 
outwards along $\partial^vE$ and inwards along $\partial^hE$ if the fibers are convex at 
infinity. (Compare with definition~\ref{def:horizontal_and_vertical_boundary}.)

\subsection{Quantum structures for Lagrangians with cylindrical ends}

Let $\pi:E\to \mathbb{C}$ be a Lefschetz fibration that is tame over an open 
set $\mathcal{W}\subset \mathbb{C}$ as in figure~\ref{fig:cylindrical_ends}, endowed with a symplectic structure $\Omega$ and with fiber 
$(M,\omega)$.
Let $\mathcal{J}_{\mathcal{W}}$ denote the space of $\Omega$-compatible almost complex structures with the property
$$J|_{\mathcal{W}}=i\oplus J_M, \quad \forall J\in \mathcal{J}_{\mathcal{W}}.$$
Suppose that $V\subset E$ is a monotone Lagrangian cobordism between families $(L_i^-)_{1\leq i\leq 
r_-}$ and $(L_j^+)_{1\leq j \leq r_+}$. (See Definition~\ref{def:cobordism}.)
Denote $H^D_2(E,V)\subset H_2(E,V)$ the image of the Hurewicz homomorphism $\pi_2(E,V)\to H_2(E,V)$.
Monotonicity of the Lagrangian cobordism means the following.
The homomorphism
$$\Omega:H^D_2(E,V)\to \mathbb{R}: \lambda \mapsto \int_{\lambda} \Omega$$
and the Maslov index
$$\mu:H^D_2(E,V)\to \mathbb{Z}$$
satisfy  
$$\Omega(\lambda)>0 \quad \iff \quad \mu(\lambda)>0, \quad \forall \lambda \in H^D_2(E,V).$$
This is equivalent to the existence of a constant $\tau>0$ such that 
$$\Omega(\lambda)=\tau \mu(\lambda),\quad \forall \lambda\in H^D_2(E,V).$$
The constant $\tau$ is referred to as the monotonicity constant of $V\subset (E,\Omega)$.
We also define the integer
$$N_V:=\min\{\mu(\lambda)>0|\lambda \in H^D_2(E,V)\},$$
called the minimal Maslov number of $V$.
We assume throughout the paper that $N_V\geq2$. 
The Maslov numbers come in multiples of $N_V$, and we fix the notation
$$\overline{\mu}:=\frac{\mu}{N_V};H^D_2(E,V)\to \mathbb{Z}.$$
Let 
$$\Lambda^+:=\mathbb{Z}_2[t] \text{ and }\Lambda:=\mathbb{Z}_2[t,t^{-1}].$$
We grade these rings so that $\deg(t)=-N_V$.

Fix a constant $R\in \mathbb{R}$ such that $V$ is cylindrical outside of $[-R,R]\times 
\mathbb{R}\subset\mathcal{W}^c\subset  \mathbb{C}$. Recall from~\ref{def:cobordism} that
\begin{equation*}
\begin{array}{lll}
 V\cap \pi^{-1}((-\infty,-R]\times\mathbb{R}) &=&\coprod _i((-\infty,-R]\times \{i\})\times L_i^-\\
 V\cap \pi^{-1}([R,\infty)\times \mathbb{R})\quad&=&\coprod _j([R, \infty)\times \{j\})\times L_j^+.
 \end{array}
\end{equation*}
Consider $V_R:=V|_{[-R,R]\times \mathbb{R}}$ as well as $E_R:=E|_{[-R,R]\times \mathbb{R}}$.
For subsets $I_- \subset \{1, \cdots, r_- \}$ and $J_+ \subset \{ 1, \cdots, r_+\}$.
Let $S_{R,I_-,I_+}$ be the union
\begin{equation*}
 S_{R,I_-,J_+}:= (\coprod_{i \in I_-} \{(-R,i)\}\times L_i^-) \cup (\coprod_{j\in J_+} 
\{(R,j)\}\times L_j^+)\subset \partial V_R
\end{equation*}
of the boundary components of $V_R$ corresponding to $I_-$ and $J_+$.
Let $f:V_R\rightarrow \mathbb{R}$ be a Morse function and $\rho$ a 
Riemannian metric on $V$. Denote by $\nabla f$ the gradient vector field of $f$ with respect to $\rho$.
We assume that $-\nabla f$ points outwards along $S_R$ and inwards along $\partial V_R \setminus S_{R,I_-,J_+}$ and in particular it is transverse to $\partial V_R$.

\begin{remark}
 For simplicity, we will mostly omit the subscript $(I_+,J_-)$ and $R$ and write $S$, $V$ and $E$ instead, if there is no ambiguity.
 As a Lagrangian with cylindrical ends $V$ has no boundary, but the corresponding cobordism $V_{R}$ does. 
 We use these two descriptions exchangeably. We will switch between $V$ and the cobordism $V_{R}$, and we write $\partial V$ instead of $\partial V_R$.
\end{remark}

\begin{definition}\label{def:MF_respecting_exit_region}
 We call such a function $f$ a Morse function respecting the exit region $S$.
\end{definition}
Let $\phi_t$ denote the flow of $-\nabla f$ for $-\infty\leq t\leq \infty$.
For a critical point $x\in Crit(f)$ we denote by $W^u_x(f)$ and $W^s_x(f)$ the unstable and stable submanifolds of the flow $\phi_t$.
For the Morse index of $x$ we write $|x|$.

\begin{theorem}\label{thm:quantum_structures_for_cobordism}
Under the assumptions and conventions described above and for a generic choice of the 
triple $(f,\rho,J)$ there exists a chain complex
$$C^+((V,S);f,\rho,J)=(\mathbb{Z}_2\langle Crit(f) \rangle \otimes \Lambda^+,d)$$ 
with the following properties:
\begin{enumerate}[(i)]
 \item The homology of the chain complex is independent of the choices of $(f, \rho, J)$. 
It will be denoted by $Q^+H_*(V,S)$. If $S=\emptyset$ there exists a canonical (degree 
preserving) augmentation $$\epsilon_V:Q^+H_*(V)\to \Lambda^+,$$ which is a 
$\Lambda^+$-module map.
\item For every $i, j, 
k$ there exist $\Lambda^+$-bilinear maps
$$Q^+H_i(V,S)\otimes Q^+H_j(V,S)\to Q^+H_{i+j-(n+1)}(V,S): x\otimes y \mapsto x 
*y$$
$$Q^+H_k(E,\partial^vE)\otimes Q^+H_i(V,S)\to Q^+H_{k+i-(2n+2)}(V,S): a\otimes x 
\mapsto a\star x.$$
The first endows $Q^+H(V,S)$ with the structure of a (possibly non-unital) ring. 
This ring is in general not commutative. It is unital if and only if $S=\partial 
V$ and in this case we denote its unit by $e_{(V,\partial V)} \in Q^+H_{n+1}(V,\partial V)$, and it corresponds to the fundamental class of $V$ under the natural map $QH_{n+1}^+(V,\partial V)\to H_{n+1}(V,\partial V)$. The second map endows $Q^+H(V,S)$ with the structure of a module over the quantum 
homology ring $Q^+H(E,\partial^vE)$. Moreover, when viewing these two structures 
together, the ring $Q^+H(V,S)$ becomes a two-sided algebra over the ring 
$Q^+H(E,\partial^v E)$. If the fibers of $E$ are closed then $\partial^h E=\emptyset$ and $Q^+H(E,\partial E)$ is 
unital with unit $e_{(E,\partial E)} \in Q^+H_{2n+2}(E,\partial E)$.
\item There exists a map
$$i_{(V,S)}^{(E,\partial^vE)}:Q^+H_*(V,S)\to Q^+H_*(E,\partial^v E),$$
which is a $Q^+H_*(E,\partial^vE)$-module morphisms and which extend the inclusion in 
singular homology.
There also exists a map
$$i_{V}^{E}:Q^+H_*(V)\to Q^+H_*(E),$$
which extend the inclusions in singular homology. However, the later is not a $Q^+H_*(E,\partial^vE)$-module morphism. If $S=\emptyset$ this map is determined by the relation
$$<h^*, i_{V}^E(x)>=\epsilon_{V}(h\star x)$$
for $x\in Q^+H(V)$, $h\in Q^+H(E,\partial^v E)$, with $(-)^*$ the Poincar\'e duality and $<-,->$ the 
Kronecker pairing.
\item The differential $d$ respects the degree filtration and all the structures above 
are compatible with the resulting spectral sequences.
\item The homology of the complex:
$$C((V,S);f,\rho,J)=C^+((V,S);f,\rho, j)\otimes_{\Lambda^+} \Lambda$$
is denoted by $QH_*(V,S)$ and all the points above remain true if using $QH(-)$ instead 
of $Q^+H(-)$. (Except for the correspondence of $e_{(V,\partial V)}\in Q^+H_{n+1}(V,\partial V)$ and $[V,\partial V]\in H_{n+1}(V,\partial V)$.) The map $Q^+H(V,S)\to QH(V,S)$ induced in homology by the change of 
coefficients above is canonical. Moreover, there is an isomorphism
$$QH_*(V,S)\to HF_*((V,S),(V,S))$$
which is canonical up to a shift in grading. (See~\cite{BC13} for the definition of Floer homology for cobordisms.)
\item If $V$ is orientable and spin, we may work with $\Lambda_{\mathcal{R}}^+:={\mathcal{R}}[t]$ or
$\Lambda_{\mathcal{R}}:={\mathcal{R}}[t,t^{-1}]$, where ${\mathcal{R}}$ is any commutative unital ring and the analogous statements hold.
\end{enumerate}
\end{theorem}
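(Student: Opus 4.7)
The plan is to adapt the construction of the pearl complex from~\cite{BC07, BC09a} to the cobordism setting, using the tools from Section~\ref{sec:tools} to control the behaviour near the cylindrical ends. The starting point is to fix a generic triple $(f,\rho,J)$ where $f$ is a Morse function respecting the exit region $S$ (so $-\nabla f$ is transverse to $\partial V_R$, outward along $S$ and inward along its complement), $\rho$ is a Riemannian metric, and $J\in \mathcal{J}_{\mathcal{W}}$. For each tuple of critical points and each homotopy class $A\in H^D_2(E,V)$, I would define the moduli space $\mathcal{P}(x,y;A;f,\rho,J)$ of pearly trajectories: sequences of negative gradient trajectories of $f$ interspersed with non-constant $J$-holomorphic disks with boundary on $V$ whose total Maslov class is $A$. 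The differential is then the usual algebraic count $d(x) = \sum y\, t^{\overline{\mu}(A)}$, graded by $\deg(t)=-N_V$.

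The central issue, as noted in the excerpt, is compactness of these moduli spaces, and this is where I expect the main work to lie. First, the flow of $-\nabla f$ cannot carry a trajectory out through $\partial V_R\setminus S$ because the gradient points inward there; trajectories can only ``exit'' through $S$, which is built into the definition of the relative complex (critical points on $S$ are excluded from the chain module, and outgoing flow lines to $S$ simply terminate). Second, disk bubbles with boundary on $V$ are controlled by Lemma~\ref{lem:curve_openmapping}: under the projection $\pi$, any such disk is either constant or has image in $\pi^{-1}(\mathcal{W}^c)$, so its image lies in the compact set $E_R$ together with, possibly, fibers over points in the rays. Combined with the monotonicity and the standard energy bound $\Omega(A)=\tau\mu(A)$ bounding the energy by $\overline{\mu}(A)$, Gromov compactness then applies inside a compact subset of $E$. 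Transversality for non-constant disks is achieved by generic perturbation of $J$ away from $\mathcal{W}$, while for disks lying in a single fiber over $\mathcal{W}$, Lemma~\ref{lem:automatic_transversality} provides automatic transversality in the horizontal direction, so regularity reduces to the fiberwise statement for $(L_i^{\pm},J_M)$ in $(M,\omega)$; this is arranged by choosing $J_M$ generic.

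Once the moduli spaces are cut out transversely and have compactifications of the expected form, the identity $d^2=0$ follows from the standard codimension-one boundary analysis: breakings of Morse flow lines and bubbling of a single disk, both appearing in cancelling pairs, exactly as in the closed case. Independence of $(f,\rho,J)$ is obtained by a cobordism argument using generic homotopies of the data, again with the open mapping theorem ensuring that all parametric moduli stay within a compact region. The product $x*y$ is defined by counting pearly $Y$-shaped trees with two incoming and one outgoing critical point; the module action $a\star x$ by counting pearly trees whose ``incoming'' point from $E$ is a critical point of a Morse function $h$ on $E$ respecting $\partial^vE$ as the exit region; all the structural identities (associativity, unitality when $S=\partial V$, two-sided module structure) follow from cobording the relevant moduli spaces of configurations. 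The augmentation $\epsilon_V$ for $S=\emptyset$ is the pearl count into the minimum of $f$, and the maps $i_{(V,S)}^{(E,\partial^v E)}$ and $i_V^E$ are defined by pearly trees carrying a marked interior point constrained to the unstable manifold of a critical point of $h$ on $E$, with the duality relation $\langle h^*, i_V^E(x)\rangle=\epsilon_V(h\star x)$ following by a chain-level homotopy matching the two cap-product pairings.

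Compatibility with the degree filtration $\mathcal{F}_pC_i=\bigoplus_{j\ge p}C_{i+jN_V}t^j$ is automatic because the differential and operations preserve the Maslov grading up to powers of $t$; the resulting spectral sequence compatibility is formal. Tensoring with $\Lambda$ gives $QH_*(V,S)$, and the PSS-type identification $QH_*(V,S)\cong HF_*((V,S),(V,S))$ (using the Floer homology of cobordisms from~\cite{BC13}) is proved by the standard hybrid moduli space construction interpolating pearly and Floer trajectories. Finally, for the orientable-and-spin case, one equips every moduli space with coherent orientations built from the spin structure of $V$ in the usual fashion, and all counts are lifted to $\mathcal{R}[t]$ or $\mathcal{R}[t,t^{-1}]$; the arguments above go through verbatim. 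The main obstacle throughout is the compactness control at the cylindrical ends: every step relies crucially on Lemma~\ref{lem:curve_openmapping} plus the inward/outward behaviour of $-\nabla f$ on $\partial V$ to keep all relevant moduli spaces inside the compact region $V_R$.
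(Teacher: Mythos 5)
Your proposal follows essentially the same route as the paper: confine all pearly configurations to a compact region using the tameness of the fibration, Lemma~\ref{lem:curve_openmapping} and the transversality of $-\nabla f$ along $\partial V$ (the paper packages this as Lemma~\ref{lem:traj_away_from_boundary_general} and Theorem~\ref{thm:moduli_as_in_closed_setting}), use Lemma~\ref{lem:automatic_transversality} for regularity of fiberwise disks, and then import the transversality/compactness/gluing machinery of~\cite{BC07,BC09a} verbatim for $d^2=0$, invariance, the product, module structure, augmentation, inclusion, filtration, $\Lambda$-coefficients and spin orientations. The only detail you gloss over is the special case $\mathcal{P}_{mod}$ with $\delta=1$, $N_V=2$, where simple-disk reduction fails and Hamiltonian perturbations are needed, but the paper likewise only flags this and defers to~\cite{BC07}.
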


\begin{theorem}\label{thm:longexact}
Let $S\subset \partial V$ be a union of connected components of $\partial 
V$.
There exists a long exact sequence
\begin{equation*}
\xymatrix{
 \dots \ar[r]^{\delta} & Q^+H_*(S) \ar[r]^{i_*} & Q^+H_*(V)\ar[r]^{j_*} & 
Q^+H_*(V,S)\ar[r]^{\delta} & Q^+H_{*-1}(S)\ar[r]^{i_*} & \dots,
}
\end{equation*}
which has the following properties:
\begin{enumerate}
\item Suppose that $S=\partial V$. Let $e_{(V,\partial V)}$ denote the unit of 
$Q^+H_*(V,\partial V)$. Furthermore, let $e_{L_i^-}$ and $e_{L_j^+}$ denote the units of $Q^+H_*(L_i^-)$ and $Q^+H_*(L_j^+)$ respectively.
Then 
$$\delta(e_{(V,\partial V)})= \oplus_{i} e_{L_i^-}\oplus_{j} e_{L_j^+}.$$
\item  The map $\delta$ is multiplicative with respect to the quantum product $*$, 
namely 
\begin{equation}
 \delta(x*y)=\delta(x)*\delta(y) \ \ \forall x,y \in Q^+H_*(V,S).
\end{equation}
\item  The product $*$ on $Q^+H_*(V)$ is trivial on the image of the map $i_*$.
In other words, for any two elements $x$ and $y$ in $Q^+H_*(S)$ we have that 
$$i_*(x)*i_*(y)=0.$$
\item The map $j_*$ is multiplicative with respect to the quantum product, namely
\begin{equation*}
 j_*(x*y)=j_*(x)*j_*(y)\ \ \forall x,y \in Q^+H_*(V).
\end{equation*}
Moreover, $j_*(Q^+H_*(V))\subset Q^+H_*(V,\partial V))$ is a two-sided ideal.
\item If the Lefschetz fibration is trivial (i.e. $(E,\Omega)=(\mathbb{C} \times M,\omega_{std}\oplus \omega)$) and the fibers are closed, there exists a 
ring isomorphism $\Phi: Q^+H_*(M) \rightarrow Q^+H_{*+2}(E,\partial E)$ and the following identities hold:
\begin{enumerate}
 \item[(i)] $i_*(a \star x)= \Phi(a) \star i_*(x)$, $\forall x\in Q^+H_*(\partial V)$ and $\forall a\in Q^+H_*(M)$.
 \item[(ii)] $j_*(a\star x)=a\star j_*(x)$,  $\forall x\in Q^+H_*(V)$ and $\forall a\in 
Q^+H_*(E,\partial E)$.
 \item[(iii)] $\delta(a\star x)=\Phi^{-1}(a)\star \delta(x)$, $\forall x\in 
Q^+H_*(V,\partial V)$ and $\forall a\in Q^+H_*(E,\partial E)$.
\end{enumerate}
In other words, the maps in the long exact sequence are module maps over the ambient quantum homology rings 
$Q^+H_*(E,\partial E)$ and $Q^+H_*(M)$.
\item All statements remain true if we use $QH_*(-)$ instead of $Q^+H_*(-)$.
\item If $V$ is orientable and spin, we may work with $\Lambda_{\mathcal{R}}^+:={\mathcal{R}}[t]$ or
$\Lambda_{\mathcal{R}}:={\mathcal{R}}[t,t^{-1}]$, where ${\mathcal{R}}$ is any commutative unital ring and the analogous statements to $1.-6.$ above hold. 
\end{enumerate}
\end{theorem}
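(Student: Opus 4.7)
The plan, following the strategy sketched in the introduction, is to derive Theorem~\ref{thmA} from a short exact sequence of pearl complexes. I would fix a Morse function $f$ on $V$ respecting the exit region $\partial V$ and an open subset $U\subset V$ whose complement $V\setminus U$ lies in the cylindrical ends and deformation retracts onto $S$. On each cylindrical end $f$ is taken in split form $f_0(t)+f_S(p)$, with $f_0$ having a single maximum placed so that $-\nabla f_0$ points outward along the slice of the end corresponding to $S$ and inward along the slice defining $\partial U$. A Morse-index computation then puts $\mathrm{Crit}(f)\cap(V\setminus U)$ in bijection with $\mathrm{Crit}(f|_S)$ with index shifted up by one, and identifies the quotient of $C^+((V,S);f,J)$ by $C^+(U;f|_U,J)$ with a degree-shifted copy of $C^+(S;f|_S,J)$. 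Lemma~\ref{lem:curve_openmapping} traps the $\mathbb{C}$-coordinate of every $J$-holomorphic disk involved in the pearl differential, which, combined with the inward-pointing flow along $\partial U$, makes $C^+(U)$ a genuine subcomplex with homology $Q^+H_*(V)$; the compactness and transversality arguments then go through as in the closed case of~\cite{BC07},~\cite{BC09a}, and passing to homology yields the long exact sequence.

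Property~(1) follows by restricting $\delta$ to the lowest level of the degree filtration, where it reduces to the topological connecting map sending $[V,\partial V]\mapsto[\partial V]=\bigoplus_i[L_i^-]\oplus\bigoplus_j[L_j^+]$, and by observing that the units in the various quantum homologies are lifts of the corresponding fundamental classes. Property~(4) is immediate at the chain level: the inward-pointing condition combined with Lemma~\ref{lem:curve_openmapping} ensures that the pearl product with any element of $C^+(U)$ stays in $C^+(U)$, so $C^+(U)$ is a two-sided pearl-product ideal of $C^+((V,S))$ and $j_*$ is multiplicative with image a two-sided ideal. For the multiplicativity of $\delta$ in~(2), I would lift cycles $\alpha,\beta$ in the quotient complex to $\tilde\alpha,\tilde\beta\in C^+((V,S))$, apply the Leibniz rule
\[
d(\tilde\alpha*\tilde\beta)=d\tilde\alpha*\tilde\beta\pm\tilde\alpha*d\tilde\beta,
\]
and project to the quotient; a moduli-space analysis using the split form of $f$ near the cylindrical ends and Lemma~\ref{lem:curve_openmapping} (which pins the disk components to a single fibre $\{z\}\times M$) identifies the projected expression with the pearl product $d\alpha*\beta\pm\alpha*d\beta$ on $S$. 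Property~(3) then follows: representing $i_*(x)=[d\tilde\alpha]$ and $i_*(y)=[d\tilde\beta]$ in $C^+(U)$, the identity $d\tilde\alpha*d\tilde\beta=d(\tilde\alpha*d\tilde\beta)$ in $C^+((V,S))$ together with the multiplicativity established in~(2) forces $\tilde\alpha*d\tilde\beta$ to lie in $C^+(U)$, which makes the product exact inside the subcomplex.

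For property~(5) in the trivial fibration $E=\mathbb{C}\times M$, define $\Phi$ by Künneth multiplication with the fundamental class of $(\mathcal{D},\partial\mathcal{D})$, which shifts degrees by $2$. Each of the identities (i)--(iii) compares the $Q^+H(E,\partial E)$-module action on a quantum homology of $V$ with the $Q^+H(M)$-module action on the corresponding group, and at the chain level the two actions agree under $\Phi$ because the pearly trees involved are forced by Lemma~\ref{lem:curve_openmapping} to have disk components living in a single fibre $\{z\}\times M$. Property~(6) is formal, using the exactness of $-\otimes_{\Lambda^+}\Lambda$; property~(7) follows from the standard coherent orientation conventions for monotone Lagrangians with a fixed spin structure, under which all pearl counts refine to $\mathcal{R}$-linear maps for any commutative unital ring $\mathcal{R}$.

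The main technical obstacle is the moduli-space analysis underlying the chain-level multiplicativity of $\delta$ in~(2) (and its $\Phi^{-1}$-twisted analogue in~(5)(iii)): one must show that the pearl product on $C^+((V,S))$ descends cleanly to the shifted quotient copy of $C^+(S)$ and matches the pearl product on $S$. This requires careful control of the boundaries of the one-dimensional moduli spaces of pearly trees with two inputs and one output when the inputs accumulate on $S$, and it rests again on the open-mapping lemma together with the tame structure of $\pi:E\to\mathbb{C}$.
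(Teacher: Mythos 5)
Your overall architecture coincides with the paper's: the same short exact sequence $0\to C^+(U)\to C^+((V,S))\to C^+(S)[-1]\to 0$ built from a Morse function adapted to the exit region (Definitions~\ref{def:f_adapted_S} and~\ref{def:special_MF}), with Lemma~\ref{lem:curve_openmapping} confining the disks to fibres over the collars, the ideal property of $C^+(U)$ for item (4), and the same K\"unneth-type isomorphism $\Phi$ of Section~\ref{sec:QH_trivial_LF} for the trivial fibration. Two of your derivations do differ from the paper's and are attractive: you obtain (3) formally from the zig-zag description of $i_*$ as the connecting homomorphism together with chain-level multiplicativity of $\delta$ (the paper argues geometrically with perturbed collar functions, Lemma~\ref{lem:prod_and_i}), and you obtain (1) from the degree filtration plus the identification of the unit with the fundamental class, instead of the explicit cycle representative of Lemma~\ref{lem:sum_of_units}. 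Note, however, that for the adapted Morse function the unit is \emph{not} represented by the interior maximum alone, and over general rings the formula in (1) acquires signs on the negative ends; your filtration shortcut therefore still needs the top-degree injectivity of the classical-part map and the sign bookkeeping that the explicit representative makes visible.

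Two concrete problems. First, your stated mechanism for (2) conflates $\delta$ with a connecting homomorphism: lifting cycles of the quotient complex and applying the Leibniz rule is precisely the recipe that computes $i_*$ (the resulting $d\tilde\alpha$ lies in $C^+(U)$ and its projection to the quotient vanishes), so as written it proves nothing about $\delta$, which is induced by the chain-level restriction map itself. What is actually needed, and what your argument for (3) already invokes, is the chain-level identity $\delta(x*y)=\delta(x)*\delta(y)$; this is the paper's Lemma~\ref{lem:delta_multiplicative}, proved by a bijection of zero-dimensional figure-$Y$ moduli spaces (an output in the collar forces both inputs into the collar, the disks are fibre-constant by Lemma~\ref{lem:curve_openmapping}, and no gradient line runs from $U$ into the collar). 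Your closing paragraph does identify this descent as the key point, so the idea is recoverable, but the lift-and-Leibniz step should be discarded. Second, item (7) is not merely ``standard coherent orientation conventions'': for arbitrary commutative $\mathcal{R}$ one must prove that the orientation of the moduli spaces of fibre disks induced by the spin structure on $V$ agrees with the orientation induced by the spin structure it restricts to on the ends; this is exactly what makes the restriction map a chain map, and a ring map, with correct signs, and the paper establishes it by splitting the linearized operator and its determinant line (Lemma~\ref{lem:orientation_match}, Corollary~\ref{cor:orientationsmoduli}). Without that compatibility your short exact sequence, and hence the whole argument, is only justified over $\mathbb{Z}_2$.
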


We will now go over the construction of $Q^+H_*(-)$ and the products and module structures and prove Theorems~\ref{thm:quantum_structures_for_cobordism} and~\ref{thm:longexact}.

\subsubsection{Pearly trajectories}\label{sec:pearly_trajectories}

Denote by $D$ the unit disk in $\mathbb{C}$. Let $f$ be a Morse function respecting the exit region $S$.
To define the pearl complex for Lagrangian cobordism we introduce the following moduli 
spaces.
\begin{definition}\label{def:P_prl}
Let $x$ and $y$ be two critical points of $f$ in $V$.
Let $l \geq 1$ and let $\lambda$ be a non-zero class in $H^D_2(E,V)\subset 
H_2(E,V)$.
Consider the space of all sequences $(u_1, \dots, u_l)$ that satisfy:
\begin{itemize}
  \item[(i)] For every $i$, $u_i:(D, \partial D) \rightarrow (E,V)$is a 
non-constant $J$-holomorphic disk.
  \item[(ii)] $(u_1(-1))\in W^u_x$.
  \item[(iii)] $(u_l(1))\in W^s_y$.
  \item[(iv)] For every $i$ there exists $0 < t_i < \infty$ such that 
$\phi_{t_i}(u_i(1))=u_{i+1}(-1)$.
  \item[(v)] The sum of the classes of the $u_i$ equals $\lambda$, i.e. 
$\sum\limits_{i=1}^{l} [u_i]=\lambda \in H^D_2(E,V)$.
\end{itemize}
Let $\mathcal{P}_{prl}(x,y,\lambda;f,\rho,J)$ be the space of all such 
sequences, modulo the following equivalence relation.
Two elements $(u_1, \dots, u_l)$ and $(u'_1, \dots, u'_k)$ are equivalent if $l=k$ and for 
every $i$ there exists an automorphism $\sigma_i\in Aut(D)$ with $\sigma_i(-1)=-1$, 
$\sigma_i(1)=1$ and $u_i'=u_i\circ \sigma$.
We call elements in $\mathcal{P}_{prl}(x,y,\lambda;f,\rho,J)$ pearly 
trajectories connecting $x$ and $y$ of class $\lambda$.
If $\lambda=0$ set $\mathcal{P}_{prl}(x,y,0;f,\rho,J)$ to be the space of 
unparametrized trajectories of $\phi$ connecting $x$ and $y$. These are just the usual Morse trajectories.
We sometimes denote $\mathcal{D}:=(f,\rho,J)$.
\end{definition}
The virtual dimension of $\mathcal{P}_{prl}(x,y,\lambda;f,\rho,J)$ is 
$$\delta_{prl}(x,y,\lambda):= |x|-|y|+\mu(\lambda)-1.$$
If $\lambda=0$ then $\mathcal{P}_{prl}(x,y,0,f,\rho,J)$ are the usual Morse trajectories and $\delta_{prl}(x,y,0,f,\rho,J)=|x|-|y|$.
Lemma~\ref{lem:curve_openmapping} and~\ref{lem:automatic_transversality} together with 
the methods in~\cite{BC07} ensure that if $\delta_{prl}(x,y,\lambda)\leq1$, the 
space $\mathcal{P}_{prl}(x,y,\lambda;f,\rho,J)$ is a smooth manifold of 
the dimension equal to its virtual dimension for a generic triple 
$(f,\rho,J)$. See also chapter~\ref{sec:main_floer_techniques}.

\subsubsection{The pearl complex}
Put
$$C^+_i((V,S);f,\rho,J):=\mathbb{Z}_2(\langle Crit(f) 
\rangle \otimes \Lambda^+)_i.$$
Abbreviate $\overline{\mu}:=\frac{\mu}{N_V}$ and define the differential for $x\in Crit(f)$ by
\begin{equation}\label{def:differential}
 d(x):=\smashoperator{\sum\limits_{\begin{subarray}{c} y,\lambda \\  
\delta_{prl}(x,y,\lambda)=0 \end{subarray}}}
\#_{\mathbb{Z}_2} \mathcal{P}_{prl} 
(x,y,\lambda;\mathcal{D}) t^{\overline{\mu}(\lambda)}.
\end{equation}

The homology of this chain complex is independent of the choice of 
the data $\mathcal{D}=(f,\rho,J)$.

\begin{proposition}\label{prop:LQHforLC}
If $\mathcal{D}_S:=(f,\rho,J)$ is generic, then the pearl complex
\begin{equation}
 C^+((V,S);\mathcal{D}_S):=(\mathbb{Z}_2\langle Crit(f) \rangle \otimes 
\Lambda^+,d)
\end{equation}
is a well-defined chain complex and its homology is independent of the choices 
of $\mathcal{D}_S$.
The homology is called the quantum homology of $(V,S)$ and is denoted by $Q^+H_*(V,S)$.
\end{proposition}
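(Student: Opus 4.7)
The plan follows the strategy for closed Lagrangians in~\cite{BC07} and~\cite{BC09a}, the only novelty being how to handle the non-compactness of $V$. I would structure the argument in four steps: regularity of the moduli spaces $\mathcal{P}_{prl}(x,y,\lambda;\mathcal{D}_S)$ for generic data; compactness of the zero- and one-dimensional moduli spaces; identification of the boundary of the compactified one-dimensional spaces to conclude $d\circ d=0$; and invariance under the choice of $\mathcal{D}_S=(f,\rho,J)$ via continuation maps.

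For regularity I would invoke the standard transversality results of~\cite{BC07} away from $\mathcal{W}$, together with the following fiber-wise argument over the cylindrical region. By Lemma~\ref{lem:curve_openmapping}, a non-constant $J$-holomorphic disk with boundary on $V|_{\mathcal{W}}$ is forced by the open mapping theorem to be constant in the $\mathbb{C}$-factor, so it has the form $(p,u')$ with $u'$ a $J_M$-disk in the fiber $M$. Lemma~\ref{lem:automatic_transversality} then reduces transversality for such configurations to a generic choice of $J_M$, which is available because $M$ is tame. Combined with the usual Sard-Smale argument for the Morse flow and for perturbations of $J$ off $\mathcal{W}$, this yields the required genericity statement.

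The heart of the matter, and the step I expect to be the main obstacle, is compactness. Two potential failures must be excluded. Bubbling of disks or spheres is controlled in the usual way by monotonicity and the assumption $N_V\geq 2$: in the zero- and one-dimensional moduli spaces only simple components can arise, and standard dimension counts rule out bubbling configurations that would contribute strata beyond broken trajectories. Escape to infinity, which is the genuinely new issue, is prevented as follows. By Lemma~\ref{lem:curve_openmapping}, every non-constant $J$-holomorphic disk with boundary on $V$ either projects to a single point of $\pi(V)\cap\mathcal{W}$ (and so lies in one fiber over a bounded subset of $\mathbb{C}$) or has image contained in the fixed compact region $\pi^{-1}(\mathcal{W}^c)$. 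Meanwhile, the Morse flow segments of a pearly trajectory cannot escape either: since $f$ respects the exit region $S$ in the sense of Definition~\ref{def:MF_respecting_exit_region}, the vector field $-\nabla f$ is transverse to $\partial V_R$, pointing outwards along $S$ and inwards along $\partial V_R\setminus S$, so every flow line connecting critical points of $f$ is confined to $V_R$. Consequently all elements of $\mathcal{P}_{prl}(x,y,\lambda;\mathcal{D}_S)$ with $\delta_{prl}\leq 1$ are contained in a fixed compact subset of $E$, and standard Gromov compactness applies.

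For $d\circ d=0$, the boundary of the compactified one-dimensional moduli space $\overline{\mathcal{P}}_{prl}(x,y,\lambda;\mathcal{D}_S)$ decomposes, exactly as in the closed case, into two-level broken pearly trajectories in $\mathcal{P}_{prl}(x,z,\lambda_1;\mathcal{D}_S)\times \mathcal{P}_{prl}(z,y,\lambda_2;\mathcal{D}_S)$ with $\lambda_1+\lambda_2=\lambda$ and $|x|-|z|+\mu(\lambda_1)=1$, each appearing with multiplicity one; counting mod $2$ yields the coefficient of $z\otimes t^{\overline{\mu}(\lambda)}$ in $d^2(x)$, which is therefore zero. Finally, invariance under $\mathcal{D}_S$ is proved by the standard continuation argument: a generic homotopy $\mathcal{D}_S^\tau$, $\tau\in[0,1]$, between two choices of data produces parametrized pearly moduli spaces, for which the same compactness estimates apply uniformly in $\tau$ by the open-mapping and exit-region arguments above, and whose counts define chain maps inducing mutually inverse isomorphisms on homology. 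The grading by Morse index shifted by multiples of $N_V$ and the $\Lambda^+$-linearity of $d$ are automatic from the definition of $\delta_{prl}$ and from $\deg(t)=-N_V$.
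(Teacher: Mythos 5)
Your treatment of the genuinely new issue (escape to infinity) is right and matches the paper: the open mapping theorem (Lemma~\ref{lem:curve_openmapping}) confines the disks, and the exit-region condition on $-\nabla f$ confines the Morse segments, which is exactly Lemma~\ref{lem:traj_away_from_boundary}. The gap is in your identification of the boundary of the compactified one-dimensional moduli space. You assert that it consists only of two-level broken pearly trajectories and that ``standard dimension counts rule out bubbling configurations that would contribute strata beyond broken trajectories.'' That is false even in the closed case: two further codimension-one degenerations survive every dimension count, namely (i) a connecting gradient trajectory between two consecutive disks shrinking to zero length, and (ii) a disk splitting at a boundary point into two disks, each still carrying one of the two incidence marked points $\pm1$. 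These are the strata denoted $\mathcal{P}_{prl,(P2)}$ and $\mathcal{P}_{prl,(P3a)}$ in the boundary description (\ref{eq:boundary_1dim_prl_moduli}); only the configurations of type (P3b), (P4), (P5) (side bubbles, sphere bubbles, combinations) are excluded by monotonicity and transversality. If the boundary really were only broken trajectories, $d\circ d=0$ would follow as you say; but since it is not, your argument as written does not close.

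The missing idea, which is the heart of the proof here (following Biran--Cornea), is that the two extra families (P2) and (P3a) are formally the same space --- a configuration of two disks touching at a point with the entry and exit marked points on different components --- arising as limits of two different one-parameter degenerations, so each such configuration accounts for exactly two ends of the one-dimensional moduli space and their contributions cancel (mod $2$, or with signs in the oriented case). Only after this cancellation does the algebraic count of the remaining boundary, the genuine broken trajectories, vanish and give $\langle d\circ d(x),y\rangle=0$. Your invariance step is fine modulo the same caveat: the paper uses a Morse cobordism on $V\times[0,1]$ and concludes via the degree filtration (the comparison morphism is an isomorphism on the Morse-level $E^0$-page, hence on homology), whereas you propose mutually inverse continuation maps; either route works once the boundary analysis above is done correctly for the comparison moduli spaces as well.
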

The proof of this proposition is postponed to section~\ref{sec:proof_chain_complex}. One essential 
ingredient is the following lemma.
\begin{lemma}\label{lem:traj_away_from_boundary}
 Let $\mathbf{u}\in \mathcal{P}_{prl}(x,y,\lambda;f,\rho,J)$ be a pearly 
trajectory connecting two critical points of $f$.
Then $\mathbf{u}$ does not reach the boundary $\partial V$.
\end{lemma}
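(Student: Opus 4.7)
The plan is to analyze separately the holomorphic disks and the Morse flow segments composing $\mathbf{u}$, and then combine both analyses inductively along the pearly trajectory.

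For the disks, I would apply Lemma~\ref{lem:curve_openmapping} to each $u_i$: either $\pi\circ u_i$ is constant at some $p_i\in\mathbb{C}$, so that $u_i$ lies entirely in the fiber $E_{p_i}$, or the image $u_i(D)$ is contained in $\pi^{-1}(\mathcal{W}^c)$. Since we may assume $R$ is chosen so that $\{\pm R\}\times\mathbb{R}\subset\mathcal{W}$, we have $\partial V\subset \pi^{-1}(\mathcal{W})$, and disks of the second type are automatically disjoint from $\partial V$.

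For the Morse flow segments, I would exploit the fact that $f$ respects the exit region $S$: $-\nabla f$ is outward-transverse along $S$ and inward-transverse along $\partial V\setminus S$. A trajectory of $\phi_t$ starting at an interior point cannot reach $\partial V\setminus S$ in forward time, since this would require $-\nabla f$ to have an outward component at the boundary, contradicting inward-transversality. It cannot reach $S$ either: if it did so at some $t_0\leq t_i$, then for $t>t_0$ the trajectory would exit $V_R$, and the endpoint $u_{i+1}(-1)=\phi_{t_i}(u_i(1))$ would lie in the cylindrical part of $V$ outside $V_R$; but then the next disk $u_{i+1}$, whose boundary passes through this cylindrical region, would by the open-mapping dichotomy be constant in $\pi$ at a point outside $[-R,R]\times\mathbb{R}$, and the subsequent flow segment from $u_{i+1}(1)$ would require $\phi_t$ to be defined on $V\setminus V_R$, contradicting the fact that $f$ lives on $V_R$.

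Combining these two analyses inductively yields the lemma. The starting point $x$ is an interior critical point, so $u_1(-1)\in W^u_x$ is interior. The flow-segment argument keeps each Morse piece in the open interior of $V_R$, so every $u_i(\pm 1)$ is interior. For disks $u_i$ with $\pi\circ u_i$ constant at $p_i$, the interior position of $u_i(-1)$ forces $p_i\in (-R,R)\times\mathbb{R}$, placing the entire fibered disk away from $\partial V$. Iterating through $u_1(1)$, the next flow segment, $u_2$, and so on, no part of $\mathbf{u}$ ever meets $\partial V$. The main obstacle is controlling the behavior near the exit region $S$, where $-\nabla f$ actively pushes trajectories out of $V_R$: ruling out even a grazing encounter with $S$ is what requires combining the open-mapping rigidity of the subsequent disk with the fact that $f$ is only defined on $V_R$, and this interplay is the key step on which the induction rests.
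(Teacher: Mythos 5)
Your overall plan (open-mapping rigidity for the disks, transversality of $-\nabla f$ for the flow segments, then an induction along the trajectory) is the same as the paper's, but the step that carries the disk analysis has a genuine gap. You dispose of the non-$\pi$-constant disks by claiming one ``may assume'' $\{\pm R\}\times\mathbb{R}\subset\mathcal{W}$, so that $\partial V\subset\pi^{-1}(\mathcal{W})$ and any disk falling into the second alternative of Lemma~\ref{lem:curve_openmapping} automatically misses $\partial V$. This is not available: the paper fixes $R$ with $[-R,R]\times\mathbb{R}\subset\mathcal{W}^c$, so the truncation lines --- and hence $\partial V_R$ --- lie over $\mathcal{W}^c$, which is exactly where the second alternative places the image of the disk; moreover $R$ is part of the fixed data of the lemma ($V_R$, the exit region $S$ and the Morse function $f$ are all defined relative to it), so you cannot re-choose it inside the proof, and $\mathcal{W}$ is only guaranteed to contain the quadrants $Q^{\pm}_U$, not entire vertical lines (its complement may be vertically unbounded). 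So ``disks of the second type are automatically disjoint from $\partial V$'' is precisely the assertion that still has to be proved. What is actually needed (and what the paper invokes) is that a disk whose boundary meets the cylindrical region near $\partial V_R$ must have \emph{constant} projection: there $\pi(V)$ is locally a horizontal ray, a punctured neighbourhood of such a point lies in unbounded components of $\mathbb{C}\setminus(K\cup \pi(V))$, and the open mapping theorem then forbids a nonconstant $\pi\circ u_i$ from coming near it. Only after this reduction do all disks touching $\partial V$ lie in single boundary fibers, which is the configuration your Morse-theoretic argument can then exclude.

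A secondary problem is the treatment of the exit region. Your argument flows the trajectory past $S$ and then inspects the next disk, but the flow of $-\nabla f$ is only defined on $V_R$, so if the segment meets $S$ at some $t_0<t_i$ the incidence condition $\phi_{t_i}(u_i(1))=u_{i+1}(-1)$ simply cannot be satisfied --- there is nothing to continue, and no appeal to the next disk is needed or legitimate. The case you do need, namely the segment meeting $S$ exactly at time $t_i$ (or the touching segment being the first one inside $W^u_x$ or the last one constrained by $u_l(1)\in W^s_y$), is not covered by your continuation argument. The clean version is: by transversality a segment starting in the interior can never meet $\partial V\setminus S$; if it meets a component of $S$ it leaves $V_R$ immediately afterwards, so it can neither reach the entry point of a further disk nor lie in $W^s_y$ with $y$ interior; and if a $\pi$-constant disk is attached at a boundary point, its \emph{entire} boundary, including the exit marked point, lies on that same boundary component, so the adjacent segments give the contradiction for both inward- and outward-pointing components. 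With these two repairs your induction goes through and reproduces the paper's proof.
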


\begin{remark}
 If we regard $V$ as a Lagrangian with cylindrical ends, this means that the pearly trajectories do not escape towards infinity along the cylindrical ends. 
\end{remark}

\begin{proof}[Proof of Lemma~\ref{lem:traj_away_from_boundary}]
By definition $-\nabla f$ is transverse to the boundary $\partial 
V$. In particular the critical points $x$ and $y$ lie in the interior of 
$V$. For a fixed boundary component $A$ of $\partial V$ the negative gradient $-\nabla f$ 
points either always out along $A$ or always in along $A$.
Moreover, by Lemma~\ref{lem:curve_openmapping} any $J$-holomorphic disk $u$ near the 
boundary must be constant under the projection $\pi$, hence it must lie in one fiber.
Therefore, no pearly trajectory between two critical points can reach the boundary.
\end{proof}

\subsection{Main Morse and Floer theoretic techniques}\label{sec:main_floer_techniques}

This section follows~\cite{BC09a} and the same issues are diskussed in depth in~\cite{BC07}.
The scheme of most of the proofs of 
theorem~\ref{thm:quantum_structures_closed_Lagrangians} and
theorem~\ref{thm:quantum_structures_for_cobordism} follow standard arguments form Morse 
and Floer theory with the following main ingredients:
\begin{enumerate}[(i)]
 \item transversality
 \item compactness and
 \item gluing
\end{enumerate}
of some specific moduli space. (For example $\mathcal{P}_{prl}$ or other moduli spaces 
modeled on planar trees.)
Transversality shows that for generic choices of some of the defining data, these moduli 
spaces are smooth manifolds if their virtual dimension is at most $1$ and moreover, their 
dimension is equal to the virtual dimension. These manifolds are typically not 
compact. Compactness and gluing are used to describe their compactification, which 
still has the structure of a $1$-dimensional manifold and whose 
boundary can then be expressed in terms of compact $0$-dimensional manifold of the same 
type, i.e. a finite even number of points in these moduli spaces. 
The boundary descriptions of these manifolds yield expressions, which can be interpreted as the chain level quantum structures. 
For example the boundary description of the compactification of the $1$-dimensional $\mathcal{P}_{prl}$ proves that $d$ is a 
differential.

A very elaborate consideration of these methods for the case of a closed, monotone 
Lagrangian is given in~\cite{BC07}. We give a sketch of them below.

\subsubsection{Transversality}\label{sec:transversality}

We defined $\mathcal{P}_{prl}$ in sections~\ref{sec:pearly_trajectories} and we are 
going to introduce the moduli spaces $\mathcal{P}_{prod}$, $\mathcal{P}_{mod}$ and 
$\mathcal{P}_{inc}$, which are moduli spaces modeled over planar trees.
Let $\mathcal{P}$ denote any type of these moduli spaces and $\mathcal{D}$ the Morse data 
that is needed to define them. Let $\lambda\in H_2(E,V)$ and denote by $I$ the tuple 
consisting of the critical points and the class $\lambda$. We have seen or will see that:
\begin{enumerate}
 \item If $\mathcal{P}=\mathcal{P}_{prl}$, then $\mathcal{D}=(f,\rho)$, 
$I=(x,y,\lambda)$, where $f:V\to \mathbb{R}$ is a Morse function respecting the exit 
region $S$, $\rho$ is a Riemannian metric on $V$ and $x,y\in Crif(f)$.
 \item If $\mathcal{P}=\mathcal{P}_{prod}$, then $\mathcal{D}=(f,f',f'',\rho)$, 
$I=(x,y,z;\lambda)$, where $f,f'$ and $f''$ are Morse functions on $V$ respecting the 
exit region $S$, $x$, $y$ and $z$ are critical points of $f,f'$ and $f''$ respectively and
$\rho$ is a Riemannian metric on $L$.
 \item If $\mathcal{P}=\mathcal{P}_{mod}$, then 
$\mathcal{D}=(h,\rho_E,f,\rho_V)$, $I=(a,x,y;\lambda)$, where $h$ is a Morse function on 
$E$ respecting the exit region $\partial^vE$. (sometimes we also require the negative gradient to point inwards along the whole boundary $\partial E$. See section~\ref{sec:inclusion}.) Furthermore, $f$ is a Morse function on 
$V$ respecting the exit region $S$, $\rho_V$ is a Riemannian metric on $V$, $\rho_E$ 
is a Riemannian metric on $E$, $a\in Crit(h)$ and $x,y\in Crit(f)$.
 \item If $\mathcal{P}=\mathcal{P}_{inc}$, then $\mathcal{D}=(h,\rho_E, f, \rho_V)$, 
$I=(x,a;\lambda)$, where $h$ is a Morse function on $E$ respecting the exit region 
$\partial^vE$, $f$ is a Morse function on 
$V$ respecting the exit region $S$, $\rho_V$ is a Riemannian metric on $V$, $\rho_E$ 
is a Riemannian metric on $E$, $a\in Crit(h)$ and $x\in Crit(f)$.
\end{enumerate}

Let $\delta_{\mathcal{P}}(I)$ denote the virtual dimension of the moduli space 
$\mathcal{P}(I,\mathcal{D},J)$. We need some 
genericity assumptions on the Morse data $\mathcal{D}$ to establish the transversality results.

\begin{customassum}{G}[Genericity of the Morse data. Compare~\cite{BC07} and~\cite{BC09a})]\label{assum:genericity}
$\text{   }$
\begin{enumerate}
 \item When $\mathcal{P}=\mathcal{P}_{prl}$ assume that $(f,\rho)$ is Morse-Smale.
 \item When $\mathcal{P}=\mathcal{P}_{prod}$ assume that $(f,\rho)$, $(f',\rho)$ and $(f'',\rho)$ are Morse-Smale and for all critical points $p,p'$ 
and $p''$ of $f,f'$ and $f''$ respectively, the triple intersection
$$W^u_p;(f)\cap W^u_{p'}(f')\cap W^s_{p''}(f'')$$ is transverse.
 \item When $\mathcal{P}=\mathcal{P}_{mod}$ or $\mathcal{P}=\mathcal{P}_{inc}$ assume 
that both $(f,\rho_V)$ and $(h,\rho_E)$ are Morse-Smale and
 \begin{enumerate}[(i)]
    \item $h$ is proper, bounded from below and $|Crit(h)|< \infty$. (Since $E$ may be non-compact.)
    \item $Crit(h)\cap V=\emptyset$.
    \item For all $a\in Crit(h)$ the stable and unstable manifolds $W^s_a(h)$ and 
$W^u_a(h)$ are transverse to $V$.
    \item For all $a\in Crit(h)$ and all $x,y\in Crit(f)$, $W^u_a(h)$ is transverse to 
$W^u_x(f)$ and $W^s_y(f)$. 
\end{enumerate}
\end{enumerate}
\end{customassum}

From Morse theory it is well-known that if $\mathcal{D}$ is generic in the usual sense 
then also Assumption~\ref{assum:genericity} holds.
The indispensable transversality results for the proof of the quantum structures are 
stated below.

\begin{proposition}[Transversality]\label{prop:transversality_general}
 Let $\mathcal{P}$ and $\mathcal{D}$ be of one type listed above and assume that 
$\mathcal{D}$ fulfills~\ref{assum:genericity}. 
If $\delta_{P}(I)=1$, $N_V=2$ then assume additionally that $\mathcal{P}\neq 
\mathcal{P}_{mod}$. Then there exists a second category subset $\mathcal{J}_{reg}\subset 
\mathcal{J}$ such that for all $J\in \mathcal{J}_{reg}$ the following holds.
If $\delta_{\mathcal{P}}(I)\leq 1$ then $\mathcal{P}(I,\mathcal{D},J)$ is either empty or 
a smooth manifold of dimension $\delta_{\mathcal{P}}$. If $\delta_{\mathcal{P}}=0$ then 
this $0$-dimensional manifold is compact, i.e. consists of a finite number of points. 
\end{proposition}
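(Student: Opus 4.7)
The plan is to follow the familiar Morse--Floer strategy: for each moduli space $\mathcal{P}$, construct a universal moduli space $\mathcal{P}^{univ}$ parametrising simultaneously the pearly data and the almost complex structure $J \in \mathcal{J}_{\mathcal{W}}$, verify that $\mathcal{P}^{univ}$ is a smooth Banach manifold cut out by a Fredholm section, and apply the Sard--Smale theorem to the projection $\mathcal{P}^{univ} \to \mathcal{J}_{\mathcal{W}}$ to extract the second-category set $\mathcal{J}_{reg}$. The only genuine novelty over the closed case treated in~\cite{BC07} and~\cite{BC09a} is the presence of the cylindrical region $\mathcal{W}$: I would first decompose each pearly configuration into the sub-configurations lying over $\mathcal{W}^c$ (where $J$ may be freely perturbed) and those lying over $\mathcal{W}$. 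By Lemma~\ref{lem:curve_openmapping}, any disk of the latter type is constant in the $\mathbb{C}$-factor, hence a genuine fibrewise $J_M$-holomorphic disk in $M$, to which the automatic-transversality statement of Lemma~\ref{lem:automatic_transversality} applies.

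For the Fredholm theory of individual disks I would first establish smoothness of the universal moduli space of somewhere injective $J$-holomorphic disks with boundary on $V$ in class $\lambda \in H_2^D(E,V)$. The standard argument applies: the vertical differential of $\bar{\partial}_J$ at $(u,J)$ is surjective once one allows $J$ to vary on a neighbourhood of an injective point of $u$. Since $J$ is unconstrained on $\mathcal{W}^c$, either $u$ has an injective point projecting to $\mathcal{W}^c$ and the usual argument produces surjectivity, or $u$ is constant in the $\mathbb{C}$-direction and Lemma~\ref{lem:automatic_transversality} reduces the question to the corresponding fibrewise problem for generic $J_M$. The Lazzarini/Kwon--Oh decomposition, together with $N_V \geq 2$ and monotonicity, upgrades transversality for simple disks to transversality for arbitrary disks, ruling out the familiar negative-Maslov obstructions.

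With the disk components controlled, the remaining transversality requirement is that the evaluation maps at the boundary marked points $-1, +1 \in \partial D$ are transverse to the appropriate products of flow submanifolds on $V$ and, in the cases $\mathcal{P}_{mod}$ and $\mathcal{P}_{inc}$, to the stable/unstable manifolds of $h$ on $E$. Assumption~\ref{assum:genericity} provides the necessary Morse-theoretic transversality on the base, and a further Sard--Smale step inside $\mathcal{J}_{reg}$ (shrinking it to a smaller second-category set if needed) ensures that the evaluation maps are transverse to these cut-outs. Intersecting the resulting transverse loci produces a smooth manifold of the claimed virtual dimension $\delta_{\mathcal{P}}(I)$.

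The main obstacle will be compactness of the $0$-dimensional strata, and it is here that the hypothesis $\mathcal{P} \neq \mathcal{P}_{mod}$ when $\delta_{\mathcal{P}}(I) = 1$ and $N_V = 2$ becomes essential. By Gromov compactness the only possible degenerations are Morse breaking, disk bubbling on $V$, sphere bubbling in $E$, and escape along the cylindrical ends. Escape to infinity is ruled out by the Lemma~\ref{lem:traj_away_from_boundary} argument (since $-\nabla f$ is transverse to $\partial V$) together with Lemma~\ref{lem:curve_openmapping} for the disk components. Sphere bubbling in $E$ costs at least $2 c_1^{min}(E) \geq 2 N_V$ of Maslov/Chern index and is therefore of codimension $\geq 2$, so it never occurs in a $0$-dimensional stratum. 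A disk bubble costs $N_V$; for $\mathcal{P}_{prl}$, $\mathcal{P}_{prod}$ and $\mathcal{P}_{inc}$ a dimension count shows that configurations with a bubble live in codimension $\geq 2$ in the relevant moduli spaces. For $\mathcal{P}_{mod}$, however, the incidence condition between the ambient flow line $W^u_a(h)$ and a bubble forming at the corresponding marked point on a $V$-disk lowers this codimension by one, so when $N_V = 2$ the bubbled configurations have codimension exactly $1$ and can appear as boundary strata of a $1$-dimensional moduli space. Excluding this case is therefore both necessary and sufficient to guarantee that all $0$-dimensional strata are compact, concluding the proof.
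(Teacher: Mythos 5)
Your overall route is the same as the paper's: a universal moduli space and Sard--Smale for somewhere-injective configurations, Lemma~\ref{lem:curve_openmapping} to force disks near the cylindrical part into the fibres, Lemma~\ref{lem:automatic_transversality} to reduce the fibrewise problem to the closed case, and the Lazzarini/Kwon--Oh decomposition plus monotonicity to pass from simple to arbitrary disks, exactly as in~\cite{BC07},~\cite{BC09a}. The genuine problem is where you place the hypothesis ``$\mathcal{P}\neq\mathcal{P}_{mod}$ when $\delta_{\mathcal{P}}(I)=1$, $N_V=2$''. You invoke it only in the compactness step, and even claim it is ``necessary and sufficient to guarantee that all $0$-dimensional strata are compact''. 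That cannot be right: the proposition asserts compactness of \emph{every} $0$-dimensional moduli space, including $\mathcal{P}_{mod}$ with $N_V=2$, with no exclusion (the exclusion only appears in the hypothesis for $\delta_{\mathcal{P}}(I)=1$, where the conclusion is smoothness, not compactness). Indeed, even if a configuration with a bubble at the interior marked point only loses one dimension, it then sits in a space of virtual dimension $-1$ when $\delta=0$, so compactness of the $0$-dimensional spaces is unaffected.

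The actual reason for the exclusion sits in the step you wave through in your second paragraph: the reduction to simple disks. For $\mathcal{P}_{mod}$ the disks carry an interior marked point constrained to $W^u_a(h)$, and when a non-simple disk is replaced by its simple components the position of that interior marked point can gain one degree of freedom, so the dimension drop is only $N_V-1$ rather than $N_V$. When $N_V=2$ and $\delta_{\mathcal{P}}=1$ this drop is $1$, the resulting simple moduli space has dimension $0$ and need not be empty, so non-simple configurations cannot be excluded and the Sard--Smale argument (which only covers simple configurations) does not yield regularity of the $1$-dimensional $\mathcal{P}_{mod}$; this is precisely why the paper (following~\cite{BC07},~\cite{BC09a}) invokes Hamiltonian perturbations there and excludes the case from the statement. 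Your blanket claim that the decomposition ``upgrades transversality for simple disks to transversality for arbitrary disks'' for all types therefore needs the interior-marked-point caveat, and the hypothesis should be used at that point, not in the compactness discussion. A minor additional slip: $2c_1^{min}(E)\geq 2N_V$ is not justified (one only has $N_V\mid 2C_E$, hence $2C_E\geq N_V$); fortunately $2c_1\geq 2$ already gives the codimension-two estimate you need for sphere bubbles.
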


\begin{remark}[See also~\cite{BC07} and~\cite{BC09a}]
 \begin{enumerate}
  \item The case $\mathcal{P}=\mathcal{P}_{mod}$, $\delta_{P}=1$, $N_V=2$ needs a special 
consideration. In fact, in this case one needs to work with Hamiltonian perturbations. 
Roughly speaking, this is due to the fact that elements in $\mathcal{P}_{mod}$ involve 
holomorphic disks with interior marked points and reduction to simple disks (as it is the 
subject the next paragraph) might increase the degree of freedom of the marked point by 
one.
 \item For the proof of associativity, linearity and the other properties of the quantum 
structures, we have to introduce new types of moduli spaces. Their description is however 
very similar to the moduli spaces considered above and transversality statements are 
comparable.
 \end{enumerate}
\end{remark}

\paragraph{Reduction to simple disks}\label{sec:reduction_to_simple_disks}

To prove that the moduli spaces involving holomophic curves are smooth 
manifolds, we need to verify that certain evaluation maps of the moduli space of 
holomorphic curves are transverse to certain submanifolds in the target manifold. 
By standard arguments (\cite{McS12}) this is possible if we restrict to simple (or 
somewhere injective) curves. It is well-known that for a generic almost complex 
structure $J$ the space of simple $J$-holomorphic curves in a fixed class is a smooth 
manifold with dimension equal to the virtual dimension. Simple curves can also be 
arranged to be transverse to any given submanifold.
It is therefore sufficient to show that (at least for dimension $\leq 1$)
$$\mathcal{P}=\mathcal{P}^*,$$
where by $\mathcal{P}^*$ we denote the corresponding moduli space that involves only 
simple curves.
Reducing to simple $J$-holomorphic disks is fairly easy in the monotone case.

In the case of closed curves, i.e. holomorphic maps $u:\Sigma\to M$, where $\Sigma$ is a 
closed Riemann surface, reduction to simple curves is done as follows. Suppose the 
curve $u$ is not simple, then it factors through a simple curve $u':\Sigma' \to M$ in the 
sense that $u=u'\circ \phi$, where $\phi:\Sigma\to \Sigma$ is branched covering and 
$u':\Sigma'\to M$ is a simple curve. Now we can simply replace $u$ by $u'$. 

Now suppose $u:(D,\partial D)\to (M,L)$ is a holomorphic disk and assume it is not 
simple. Unlike for closed curves, $u$ does not necessarily factor through a simple curve. 
However,~\cite{Laz00}, ~\cite{Laz11} and ~\cite{KY00} showed that one can decompose $D$ 
into subdomains $\mathcal{D}_i$, such that each $u|_{\mathcal{D}_i}$ factors through 
simple disks $v_i:(D,\partial D)\to (M,L)$ via a branched covering $\mathcal{D}_i\to D$ 
of some degree $m_i$. Moreover, $[u]=\sum_{i=1,\ldots, r} m_i [v_i]$. 

Now the procedure is roughly speaking the following. Suppose we are given an element of a 
moduli space $\mathcal{P}(\mathcal{D},I,J)$ in the homology class $\lambda$, which has 
dimension at most $1$. Suppose furthermore that this element involves a non-simple disk 
$u$. Suppose that $u$ has only two marked points on the boundary.
By the diskussion above we can find simple disks $v_{i_1}, \cdots, v_{i_r}$ and replace 
$u$ by these disks. We end up with an element in $\mathcal{P}^*(\mathcal{D},I',J)$ and in 
a new homology class $\lambda'$. By monotonicity the dimension of the moduli space must 
have decreased by at least two, since 
$$\mu(\lambda')\leq \mu(\lambda)-N_L\leq \mu(\lambda)-2.$$
But $\delta_{\mathcal{P}}((\mathcal{D},I,J))\leq 1$ and hence 
$\delta_{\mathcal{P}}((\mathcal{D},I',J))\leq -1$. It follows by transversality that 
$\mathcal{P}(\mathcal{D},I,J)=\emptyset$, which contradicts our assumption. Thus, all 
elements in $\mathcal{P}(\mathcal{D},I,J)$ involve only simple disks. Similar arguments 
work for simple spheres and for disks that have more than two marked points and possibly 
also interior marked points. The arguments need to be carried out more carefully, 
considering all combinatorial possibilities on how the marked points are distributed 
among the disks $v_i$'s. 

The last complication that arises on the way to proving transversality is the following. 
We must also show that the disks in an object of $\mathcal{P}$ of dimension less or 
equal to $1$ are absolutely distinct. 
As above if the disks are not absolutely distinct we can omit some suitable choice 
of disks, and we obtain a new element. Using monotonicity again we can show that the 
moduli space containing the new element has negative dimension and thus is empty.

\subsubsection{Compactness and gluing}

Assume the almost complex structure and the Morse functions are as in 
section~\ref{sec:tools} and~\ref{sec:pearly_trajectories}. Then compactification and 
gluing can be carried out.

We diskuss the case that $\mathcal{P}=\mathcal{P}_{prl}$. (Compare~\cite{BC07} and~\cite{BC09a}.)
The compactness and gluing for 
the cases of other moduli spaces are very similar. 
Let $\mathbf{u_k}:=(u_{1,k},\ldots, u_{l,k})$ be a sequence in 
$\mathcal{P}_{prl}(x,y;\lambda;f;\rho,J)$ that does not have a convergent subsequence in 
that space. 
We may restrict to a subsequence of $\mathbf{u_k}$ and describe its limit.
There are five possibilities:
\begin{enumerate}
 \item[(P1)] One of the gradient trajectories of $f$ breaks at a new critical point $z$, 
such that $\mathbf{u_k}$ converges to a concatenation of two elements 
$u'\in\mathcal{P}_{prl}(x,z;\lambda';f;\rho,J)$ and $u''\in 
\mathcal{P}_{prl}(z,y;\lambda'';f;\rho,J)$, where $\lambda'+\lambda''=\lambda$.
  \item[(P2)] One of the gradient trajectories of $f$ shrinks to a point $p$. Assume 
that this trajectory is the one located between $u_i$ and $u_{i+1}$. In other words, 
$\mathbf{u}_k$ converges to $(\mathbf{u}',\mathbf{u}'')$, where 
$\mathbf{u}'=(u_1',\cdots,u_{l'}')\in\mathcal{P}_{prl}(x,p;\lambda';f;\rho,J)$ 
and $\mathbf{u}''=(u_1'',\cdots,u_{l''}'')\in\mathcal{P}_{prl}(p,y;\lambda'';f;\rho,J)$ 
with $\lambda'+\lambda''=\lambda$, $l,l''\geq1$, $l'+l''=l$ and 
$p=u_{l'}'(1)=u_{1}''(-1)$. Notice however that $p$ is not a critical point in general.
Let us denote this space by $\mathcal{P}_{prl,(P-2)}(x,y;(\lambda',\lambda'');I,J)$.
A computation of the virtual dimensions shows that 
$$\delta_{prl,(P-2)}(x,y;\lambda',\lambda'';I;J)=\delta_{prl}(x,y;\lambda;I;J)-1.$$
  \item[(P3)]
A $J$-holomorphic disk bubbles off at a boundary point of some holomorphic disks.
In other words $\mathbf{u_k}$ converges to a sequence, where one $u_{i,k}$ converges to a 
reducible $J$-holomorphic curve consisting of two $J$-holomophic disks $u_{i,\infty}$ 
and $u_{i,\infty}'$ attached to each other at a boundary point.
We distinguish two types of bubbling:
\begin{enumerate}
  \item[(P3a)] The two disks both have two marked points and the gradient trajectory 
arrives at the first disk at the point $-1$ leaves the second disk from the point $1$.
The limit is a pair of sequences $(\mathbf{u}_{k,\infty}',\mathbf{u}_{k,\infty}'')$ 
of total length $l+1$. 
The description of this space is formally the same as the one in $(P2)$. But we 
distinguish them, since they arise as the limit of different sequences. Let us denote 
this space by $\mathcal{P}_{prl,(P3a)}(x,y;(\lambda',\lambda'');I,J)$.
  \item[(P3b)] Only one disk $u_{i,\infty}'$ has two marked points where the 
gradient trajectories arrive and leave. The second disk $u _{i,\infty}''$ is 
attached to the first one at a boundary point not equal to $\pm1$. Hence, in this case we 
can remove $u_{i,\infty}''$ from the limit to obtain a new pearly trajectory $v$ 
connecting $x$ and $y$. However, again by the monotonicity argument the virtual dimension 
of the moduli space containing $v$ decreases by at least two:
$$\delta_{prl}(x,y;\lambda-[u_{i,\infty}''])\leq\delta_{prl}(x,y;\lambda)-2.$$
\end{enumerate}
  \item[(P4)] Bubbling of a $J$-holomorphic sphere occurs at one of the disks $u_{i,k}$ 
either at an interior point or a boundary point. Let $\tilde{\lambda}$ denote the class 
of the sphere. By monotonicity:
$$\delta_{prl}(x,y;\lambda-\tilde{\lambda})=\delta_{prl}(x,y;\lambda)-2.$$
  \item[(P5)] Any combination of these possibilities and repetitions can occur.
\end{enumerate}

We outline the main steps for the proof of Proposition~\ref{prop:transversality_general} 
for the moduli spaces $\mathcal{P}_{prl}$.

\begin{lemma}
 If $\delta_{prl}(x,y;A;I;J)=0$ then $\mathcal{P}_{prl}(x,y;\lambda;I;J)$ is compact for 
generic $J$. 
\end{lemma}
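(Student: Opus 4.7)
The plan is to take a sequence $\mathbf{u}_k$ in $\mathcal{P}_{prl}(x,y;\lambda;\mathcal{D},J)$ and show that, when $\delta_{prl} = 0$ and $J$ is generic, some subsequence converges in the same moduli space. The first step is to secure a uniform $C^0$-bound on the whole trajectory. The holomorphic disk components remain in a compact region of $E$ by Lemma~\ref{lem:curve_openmapping}: any disk whose boundary meets the cylindrical part of $V$ is forced by the open mapping theorem to lie in a single fiber, which is either closed or convex at infinity, so the disks cannot escape into $\partial^h E$ either. The gradient-flow segments are controlled via Lemma~\ref{lem:traj_away_from_boundary}: since $-\nabla f$ is transverse to $\partial V$ and the critical points lie in the interior, no trajectory between two critical points runs off to the boundary (equivalently, into the cylindrical end). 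With fixed homology class $\lambda$, the $\Omega$-area of the disks is bounded by $\tau\mu(\lambda)$, giving a uniform energy bound.

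Given the energy bound and the confinement, I would then apply Gromov compactness for $J$-holomorphic disks and spheres together with the standard compactness for broken Morse trajectories. A subsequence of $\mathbf{u}_k$ Gromov-converges to a limit object, which is a configuration of one of the types (P1)--(P5) listed above: a broken Morse trajectory, a shrunken flow segment, disk bubbling on the boundary, sphere bubbling in the interior, or a combination.

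The core of the argument is a dimension count ruling each degeneration out when $\delta_{prl}(x,y;\lambda) = 0$. For (P1), a broken configuration decomposes as an element of $\mathcal{P}_{prl}(x,z;\lambda')\times \mathcal{P}_{prl}(z,y;\lambda'')$ with $\lambda'+\lambda''=\lambda$ and virtual dimensions summing to $-1$, so at least one factor has negative virtual dimension and is empty for generic $J$ by Proposition~\ref{prop:transversality_general} (applied in the reduced simple-disk setting of Section~\ref{sec:reduction_to_simple_disks}). For (P2) and (P3a), the same splitting arises, but the shared marked point $p$ is now unconstrained in $V$; this adds $\dim V = n+1$ parameters and subtracts the same, yielding combined virtual dimension $\delta_{prl}-1 = -1$, again impossible. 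For (P3b) and (P4), a bubble in class $\tilde{\lambda}$ with $\mu(\tilde{\lambda}) \ge N_V \ge 2$ splits off; the residual pearly trajectory lies in a moduli space of virtual dimension $\le -2$, hence empty. Case (P5) is handled inductively: any combination simultaneously applies the above estimates, so the total virtual dimension of the limit stratum is strictly negative.

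The main technical obstacle will be the bookkeeping in case (P5), where one must check that monotonicity forces \emph{every} stratum of the Gromov--Morse compactification that is not $\mathcal{P}_{prl}(x,y;\lambda;\mathcal{D},J)$ itself to have negative virtual dimension. This requires applying the reduction-to-simple-disks procedure of Section~\ref{sec:reduction_to_simple_disks} to each disk in the limit, because a priori a bubbled disk could be multiply covered and therefore fail to live in a transverse moduli space; after passing to the underlying simple disks one uses $\mu(\text{bubble}) \ge N_V$ for each distinct simple disk. Once every non-trivial degeneration has been excluded, the limit must itself be an element of $\mathcal{P}_{prl}(x,y;\lambda;\mathcal{D},J)$, which is precisely the desired sequential compactness of a zero-dimensional smooth manifold, and hence finiteness of $\#_{\mathbb{Z}_2}\mathcal{P}_{prl}(x,y;\lambda;\mathcal{D},J)$.
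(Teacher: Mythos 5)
Your argument is correct and follows essentially the same strategy as the paper's sketch: first confine the pearly trajectory to a compact region using the open mapping lemma and the transversality of $-\nabla f$ to $\partial V$, then invoke Gromov/Morse compactness to obtain a limit in one of the strata (P1)--(P5), and finally rule out every non-trivial stratum by a dimension count once $\delta_{prl}=0$ (the paper delegates the count to the transversality proposition and states that each degenerate stratum has negative virtual dimension). You make the confinement step and the energy bound explicit, which the paper keeps implicit in Lemmas~\ref{lem:curve_openmapping}, \ref{lem:traj_away_from_boundary}, and Theorem~\ref{thm:moduli_as_in_closed_setting}, but the logical skeleton is the same.
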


\begin{proof}[Sketch of the Proof]
 A transversality argument similar to the one in section~\ref{sec:transversality} shows 
that for $\delta_{prl}(x,y;\lambda;I;J)\leq 1$ the moduli spaces describing case $(P2)$ 
or $(P3a)$ are smooth manifold of dimension equal to their virtual dimension. 
Now we notice that none of the possibilities $(P1)$ to $(P5)$ can occur. Indeed, each case
involves a smooth manifold, whose dimension is less than $\delta_{prl}(x,y,;\lambda;I;J)$ and hence negative.
This manifold must therefore be empty. 
\end{proof}

\begin{lemma}
 If $\delta_{prl}(x,y;A;I;J)=1$ then $\mathcal{P}_{prl}(x,y;\lambda;I;J)$ can be 
compactified in to a manifold with boundary:
\begin{equation}\label{eq:boundary_1dim_prl_moduli}
 \begin{array}{ccc}
  \partial \overline{\mathcal{P}_{prl}}(x,y;\lambda;I;J) &=& \bigcup\limits_{
	\begin{subarray}{c} 
	  z\in Crit(f), \lambda'+\lambda''=\lambda\\
	  \delta_{prl}(x,z;\lambda')=0\\
	  \delta_{prl}(z,y;\lambda''=0 
    \end{subarray}} 
\mathcal{P}_{prl}(x,z;\lambda';I,J)\times \mathcal{P}_{prl}(z,y;\lambda'';I;J) \\ 
&& \coprod\bigcup\limits_{\lambda'+\lambda''=\lambda} 
\mathcal{P}_{prl,(P2)}(x,y;\lambda',\lambda'';I;J) \\
 & & \quad \coprod \bigcup\limits_{\lambda'+\lambda''=\lambda}\mathcal{P}_{prl,(P3a)}(x,y;\lambda',
\lambda'';I;J)
 \end{array}
\end{equation}
\end{lemma}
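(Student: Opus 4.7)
The plan is to combine the compactness analysis that was carried out for the $0$-dimensional case with a gluing argument, using monotonicity and the transversality results from Proposition~\ref{prop:transversality_general} to rule out all degenerations that are not listed on the right-hand side of~\eqref{eq:boundary_1dim_prl_moduli}.

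First I would let $\mathbf{u}_k$ be a sequence in $\mathcal{P}_{prl}(x,y;\lambda;I;J)$ without a convergent subsequence, and invoke the classification of possible limits (P1)--(P5) from the previous section. Cases (P3b) and (P4) are immediately ruled out by monotonicity: after removing the extraneous disk or sphere bubble, one obtains a pearly trajectory in a moduli space whose virtual dimension is at most $\delta_{prl}(x,y;\lambda)-2=-1$, which by the transversality result of Proposition~\ref{prop:transversality_general} is empty. Combinations of several degenerations (case (P5)) are similarly ruled out by iterated dimension counting. The remaining degenerations (P1), (P2), (P3a) all correspond to codimension $1$ strata, and one checks via the index formulas that they appear exactly as in the three unions on the right-hand side of~\eqref{eq:boundary_1dim_prl_moduli}: the Morse breaking case (P1) contributes $\mathcal{P}_{prl}(x,z;\lambda';I,J)\times \mathcal{P}_{prl}(z,y;\lambda'';I;J)$ with both factors $0$-dimensional, while (P2) and (P3a) give the two loci $\mathcal{P}_{prl,(P2)}$ and $\mathcal{P}_{prl,(P3a)}$ which are $0$-dimensional after enforcing the virtual dimension drop computed earlier.

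Next, one has to show that every element of the right-hand side of~\eqref{eq:boundary_1dim_prl_moduli} is in fact attached as a boundary point of the compactification, which requires a gluing argument. For case (P1) this is the standard Morse-theoretic gluing applied to the two terminal gradient half-trajectories at the critical point $z$. For case (P2), one glues along a short gradient trajectory of arbitrarily small length $\varepsilon$; inserting such a trajectory between $u_{l'}'$ and $u_1''$ yields a family of pre-glued pearly trajectories, and the usual implicit function theorem argument (as carried out in the closed setting in~\cite{BC07}) produces a smooth one-parameter family converging to the broken configuration. For case (P3a), the gluing is the standard boundary disk gluing at the marked point $p = u_{l'}'(1)=u_1''(-1)$, parametrized by a gluing parameter $\varepsilon>0$ that blows up the neck between the two disks. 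In both (P2) and (P3a) the parameter traces out a half-open interval whose closure gives the desired boundary point.

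The main obstacle will be the gluing step in case (P3a), where one must carefully handle the fact that disk bubbling happens exactly at the marked points serving as entry and exit of the gradient flow. This requires a Fredholm setup in which the gluing parameter is coupled to the position of the marked point on the glued disk and matches the position of the Morse flow endpoints; one needs to verify that the linearized gluing operator is surjective, which follows from the transversality results in Proposition~\ref{prop:transversality_general} together with Lemma~\ref{lem:automatic_transversality} in the region where the disks may be constant in the $\mathbb{C}$-direction. The non-compactness of $V$ is not an obstacle, since Lemma~\ref{lem:traj_away_from_boundary} and Lemma~\ref{lem:curve_openmapping} confine the entire compactified configuration to a fixed compact subset of $E$, so that standard Gromov compactness and gluing apply verbatim once transversality is in place.
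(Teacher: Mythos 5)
Your proposal follows essentially the same route as the paper's own sketch: you rule out the degenerations $(P3b)$, $(P4)$, $(P5)$ by the monotonicity-induced dimension drop and the transversality statement of Proposition~\ref{prop:transversality_general}, identify $(P1)$, $(P2)$, $(P3a)$ as the codimension-one strata giving the right-hand side of~(\ref{eq:boundary_1dim_prl_moduli}), and obtain the reverse inclusion from the standard (surjective) gluing arguments of~\cite{BC07}, \cite{McS12}, \cite{FOOO09}, with Lemmas~\ref{lem:curve_openmapping} and~\ref{lem:traj_away_from_boundary} confining everything to a compact region. This matches the paper's argument, with your treatment of the gluing cases being only a slightly more detailed version of what the paper delegates to the references.
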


\begin{proof}[Sketch of the Proof]
 Possibilities $(P3b)$, $(P4)$ and $(P5)$ cannot occur, since they give rise to 
elements in moduli spaces of negative dimension by transversality. Hence, such elements 
cannot occur. This proves one inclusion $\partial \overline{\mathcal{P}_{prl}}\supset 
(\text{RHS of ~\ref{eq:boundary_1dim_prl_moduli}})$.

The other inclusion is a consequence of the gluing procedure. Details can be found 
in~\cite{BC07}. Another important feature is the surjectivity of the gluing map, which 
ensures that each element on the RHS of~\ref{eq:boundary_1dim_prl_moduli} corresponds to 
a unique end of $\overline{\mathcal{P}_{prl}}$.
A detailed elaboration of the gluing procedure for pseudo-holomorphic curves is to be found in~\cite{McS12}. 
Gluing of pseudo-holomorphic disks is developed in~\cite{FOOO09} and further elaborated in~\cite{BC07}.
\end{proof}

\subsection{Filtration of the pearl complex and the spectral sequence}

Let $CM_*:=(CM((V,S);f),\partial_0)$ denote the Morse complex of $(V,S)$ associated to 
$f$. 
The pearl complex $C_i:=C_i((V,S);\mathcal{D}_S)$ has an increasing degree filtration:
$$\mathcal{F}_pC_i=\bigoplus_{j\geq-p}C_{i+jN_V}t^j.$$
This is a bounded and increasing filtration and due to~\ref{thm:spectral_seq_from_filtered_complex} it 
gives rise to a spectral sequence, $\{E^r_{p,q},d_r\}_{r\geq0}$ that converges to 
$QH_*(V,S)=H_*(C_i)$. 
More precisely, we have 
\begin{enumerate}
 \item $E^0_{p,q}=CM_{p+q-pN_V}t^{-p},d_0=\partial_0$.
 \item $E^1_{p,q}=H_{p+q-pN_V}((V,S);\mathbb{Z}_2)t^{-p}$.
 \item The sequence collapses after $\leq [\frac{n+2}{N_V}]+1$ steps.
\end{enumerate}

In particular, the filtration on $C_*$ induces a filtration on the quantum homology
$QH_*(V,S)=H_*(C_*(V,S;\mathcal{D}_S),d)$, where 
$$\mathcal{F}_pH_i=\bigoplus_{j\geq -p}H_{i+jN_V}((V,S);\mathbb{Z}_2)t^{-p}$$
and there is an isomorphism

$$QH_i\cong \bigoplus_{p+q=i}E^{\infty}_{p,q}.$$

\begin{remark}
Notice that the spectral sequence above is multiplicative. To see this, let $f'$ be a 
small perturbation of the Morse function $f$, such that $f'$ is still a Morse function 
respecting the exit region $S$. 
In section~\ref{sec:product} we will define the quantum product $*$. It is easy to see 
that this respects the filtrations in the sense that
$$*:\mathcal{F}_{p_1}C_{i_1}(f)\otimes 
\mathcal{F}_{p_2}C_{i_2}(f')\to 
\mathcal{F}_{p_1+p_2}C_{i_1+i_2-n}(f).$$
If the perturbation $f'$ is close enough to $f$ then the chain complexes $C((V,S),(f,\rho,J))$ and 
$C((V,S),(f',\rho,J))$ are isomorphic by a base preserving isomorphism (see 
section~\ref{sec:invariance} below).
Thus, we can identify the two resulting spectral sequences and we denote them both 
by $\{E^r_{p,q},d_r\}$. Hence, this induces a product $*_r$ on $E^r_{p,q}$, which shows 
that the spectral sequence is multiplicative.
Let $*_{\infty}$ denotes the product induced on $E^{\infty}_{p,q}$, then this also 
defines a product on $QH_i(V,S)\cong \bigoplus_{p+q=i}E^{\infty}_{p,q}$. 
Notice however that $*_{\infty}$ and the quantum product $*$ (as defined 
in~\ref{sec:product} below) are in general not isomorphic.
\end{remark}

\subsection{Proof of Proposition~\ref{prop:LQHforLC}}\label{sec:proof_chain_complex}

\subsubsection{$d\circ d=0$}\label{sec:d^2=0}
By Lemma~\ref{lem:traj_away_from_boundary} and Lemma~\ref{lem:automatic_transversality} 
we can use the methods of~\cite{BC07} to show that the moduli spaces of 
pearly trajectories of virtual dimension at most $1$ form smooth manifolds and that they 
are compact if their dimension is zero.

Let $\mathcal{P}_{prl}$ be a one dimensional moduli space of pearly trajectories on a 
Lagrangian cobordism $V\subset E$. By Lemma~\ref{lem:traj_away_from_boundary_general} its compactification has the form described in~(\ref{eq:boundary_1dim_prl_moduli}).
Let $x$ and $y$ be two critical points of the Morse function $f$.
From the definition of the differential we compute:
\begin{equation*}
\begin{split}
 \langle d\circ d(x),y\rangle = \sum\limits_{
  \begin{subarray}{c} z\in 
    Crit(f),\lambda',\lambda''\\ 
    \delta_{prl}(x,z,\lambda')=0\\
    \delta_{prl}(z,y,\lambda'')=0
  \end{subarray}}
\#_{\Z_2} \mathcal{P}_{prl}(x,z;\lambda';I;J)\#_{\Z_2} 
\mathcal{P}_{prl}(z,y;\lambda'';I;J)t^{\overline{\mu}(\lambda)+\overline{\mu}(\lambda')}\\
= \left( \sum\limits_{
  \begin{subarray}{c}
    z\in Crit(f),\lambda',\lambda''\\
    \mu(\lambda'+\lambda'')=2-|x|+|y| \\
    \delta_{prl}(x,z,\lambda')=0
  \end{subarray}}
\#_{\Z_2} \mathcal{P}_{prl}(x,z;\lambda';I;J)\#_{\Z_2}\mathcal{P}_{prl}(z,y;\lambda'';I;J) \right) 
t^{(2-|x|+|y|)/N_V},
\end{split}
\end{equation*}
where the last equality holds, since $\delta_{prl}(x,z;\lambda)=\delta(z,y;\lambda')=0$ 
if and only if $\mu(\lambda)+\mu(\lambda')=2-|x|+|y|$ and $\delta_{prl}(x,z,\lambda)=0$.
In particular $\mu(\lambda)+\mu(\lambda')$ is constant.

Now we use the description~(\ref{eq:boundary_1dim_prl_moduli}) to show that the above sum 
vanishes. The boundary of a compact $1$-dimensional manifold consists of an 
even number of points. (Moreover, if this manifold is oriented it induces orientations on 
its boundary and the boundary points cancel each other when counted with signs.)
Hence:
\begin{equation*}
 \begin{array}{ccc}
  0 &=& \# \mathcal{P}_{prl}(x,y,\lambda;I;J)\\
 &=& \sum\limits_{
\begin{subarray}{c}
    z\in Crit(f),\lambda'+\lambda''=\lambda\\
    \delta_{prl}(x,z;\lambda')=0\\
    \delta_{prl}(z,y;\lambda'')=0 
\end{subarray}}
\#\mathcal{P}_{prl}(x,z;\lambda';I;J)\#\mathcal{P}_{prl}(z,y;\lambda'';I;J)\\
  &+& \sum\limits_{
\begin{subarray}{c}
\lambda'+\lambda''=\lambda
\end{subarray}}
\# \mathcal{P}_{prl,(P2)}(x,y;\lambda';\lambda'';I;J)
+\#\mathcal{P}_{prl,(P3a)}(x,y;\lambda';\lambda'';I;J)\\
&=& \sum\limits_{
\begin{subarray}{c}
    z\in Crit(f),\lambda'+\lambda''=\lambda\\
    \delta_{prl}(x,z;\lambda')=0\\
    \delta_{prl}(z,y;\lambda'')=0 
\end{subarray}}
\#\mathcal{P}_{prl}(x,z;\lambda';I;J)\#\mathcal{P}_{prl}(z,y;\lambda'';I;J),
 \end{array}
\end{equation*}
where the last equality holds, since we saw that the two spaces 
$\mathcal{P}_{prl,(P2)}(x,y;\lambda';\lambda'';I;J)$ and 
$\mathcal{P}_{prl,(P3a)}(x,y;\lambda';\lambda'';I;J)$ are identical.

\subsubsection{Invariance}\label{sec:invariance}

In this section we define chain morphisms
$$\phi:C^+((V,S),\mathcal{D}_S)\to C^+((V,S), \mathcal{D}_S')$$ between the chain complexes 
associated to two distinct generic data $\mathcal{D}_S=(f,\rho,J)$ and 
$\mathcal{D}_S'=(f', \rho',J')$.
The aim is then to show that these morphisms induce canonical isomorphisms in homology.
I.e. given two such morphisms $\phi,\phi':C^+((V,S),\mathcal{D}_S)\to C^+((V,S), \mathcal{D}_S')$ 
we need to prove that they are chain homotopic.
By Theorem~\ref{thm:moduli_as_in_closed_setting} the proof follows~\cite{BC07} and we only give a sketch.
The main idea is to use a Morse cobordism $(F,G)$ relating $(f,\rho)$ and 
$(f',\rho')$ and a homotopy of almost complex structures between $J$ and $J'$. A careful 
diskussion of the construction and properties of Morse cobordisms is given in~\cite{CR03}.

Fix a smooth family of almost complex structures $\overline{J}_t$, $t\in[0,1]$ connecting 
$J$ and $J'$. 
For every $t$, we may choose $J_t=i\oplus J_M$ outside of $\pi^{-1}(U)$, where $J_M$ is 
an almost complex structure on the fiber $(M,\omega)$.
Then the homotopy $J_t$ is constant equal $i\oplus J_M$ over $\pi^{-1}(\mathcal{W})$, where the fibration is tame.
Let $\overline{J} := \{J_t\}_{t\in[0,1]}$.
Let $G$ be a Riemannian metric on $V\times [0,1]$ so that $G|_{V\times 
\{0\}}=\rho$ and $G_{V\times \{1\}}=\rho'$. We also need a Morse function $H:V\times 
[0,1]\to \mathbb{R}$ with $H_{V\times \{0\}}(x)=f(x)+c$ and $H_{V\times 
\{1\}}(x)=f'(x)$, where $c$ is some constant and such that $-\nabla H$ points out along 
$S\times [0,1]$. Moreover, we can find such a 
Morse-Smale pair $(H,G)$ with the additional properties that 
$$Crit_i(H)=Crit_{i-1}(f)\times \{0\} \cup Crit_i(f')\times \{1\}$$ and $\frac{\partial 
H}{\partial t}(x,t)=0$ for $t=0,1$, $\frac{\partial H}{\partial t}(x,t)< 0$ for $t\in 
(0,1)$. Such a Morse function is easy to construct and more details are given 
in~\cite{CR03}. The pair $(F,G)$ is a special case of a Morse cobordism.

The comparison moduli space is now constructed in a very similar way as the moduli space 
of pearly trajectories (~\ref{def:P_prl}). 
Let $x\in Crit(f)$ and $y'\in Crit(f')$ be two critical points. We consider the space of 
pearly trajectories connecting $x$ and $y'$ but with the following modifications compared 
to~\ref{def:P_prl}.
Namely,
\begin{enumerate}
 \item The holomorphic disks map to the fibers $(E,V)\times {t_i}$, $t_i\in[0,1]$, i.e. $(D,\partial D)\to (E,V)\times \{t_i\}\subset (E,V)\times 
[0,1]$ and they are $J_{t_i}$-holomorphic,
\item the incidence relations of point $(ii),(iii)$ and $(iv)$ take place in $(E,V)\times 
[0,1]$, where we consider the flow of $-\nabla_G F$. 
\end{enumerate}
The space of all such pearly trajectories after dividing by the reparametrization 
group is what we call the comparison moduli space 
$\mathcal{P}_{comp}(x,y',\lambda;\overline{J},H,G)$. 
Similar to case of $\mathcal{P}_{prl}$ one can show that for 
$|x|-|y'|+\mu(\lambda)\leq 1$ the set $\mathcal{P}_{comp}$ is a manifold of dimension 
$\delta_{comp}=|x|-|y'|+\mu(\lambda)$, it is compact if $\delta=0$ and void if $\delta\leq -1$.
We define the chain morphism 
$$\phi^{\overline{J},F,G}:C^+((V,S),\mathcal{D}_S)\to C^+((V,S), \mathcal{D}_S')$$
by 
\begin{equation*}
\phi^{\overline{J},F,G}(x):=
\smashoperator{ \sum\limits_{\begin{subarray}{c} y'\in 
Crit(f'), \lambda \\ 
|x|-|y'|+\mu(\lambda)=0\end{subarray}}} \#_{\Z_2} \mathcal{P}_{comp}(x,y',\lambda;\overline{J},H,G)y' 
t^{\overline{\mu}(\lambda)}.
\end{equation*}
By the same methods as before we can describe the boundary of the compactification of 
$\mathcal{P}_{comp}$ for $\delta=1$ and the resulting relation proves that 
$\phi^{\overline{J},F,G}$ is a chain morphism. 

The morphism $\phi=\phi^{\overline{J},F,G}$ respects the degree filtration of 
the two pearl complexes $C^+((V,S);\mathcal{D})$ and $C^+((V,S);\mathcal{D}')$.
Since the same is true for the differential of the pearl complex, $\phi$ is a morphism 
between the spectral sequences associated to the filtration of $C^+((V,S);\mathcal{D})$ and 
$C^+((V,S);\mathcal{D}')$. The zeroth page $E^0_{p,q}$ is the complex itself with the 
differential the Morse differential. From Morse theory we already know that this induces 
an isomorphism in homology. Hence, $\phi$ induces an isomorphism on the whole 
spectral sequence and thus an isomorphism of the quantum homologies.

To show that this isomorphism is canonical we use an argument
similar to the one above. Suppose 
$\phi^{\overline{J},F,G}$ and $\tilde{\phi}^{\tilde{\overline{J}},\tilde{F},\tilde{G}}$ 
are morphisms between the complexes $C^+((V,S),\mathcal{D}_S)$ and $C^+((V,S), \mathcal{D}_S')$. 
Now we can construct a suitable homotopy between $(\overline{J},F,G)$ and 
$(\tilde{\overline{J}},\tilde{F},\tilde{G})$ and use it to define moduli spaces of 
appropriate pearly trajectories. A count of such pearly trajectories then defines a chain 
homotopy. 
For precision see~\cite{BC07}.

\subsection{Behaviour of pearly trees near \texorpdfstring{$\partial V$}{Lg}}

In this section we adapt the previous ideas to a more general setting and show that 
similar results hold for different kinds of moduli spaces modeled on planar trees.
These methods was introduces in~\cite{BC07} and similar constructions 
were used already in~\cite{McS12} and in~\cite{CL06}.
The remaining quantum structures can be defined by such moduli spaces. 
\paragraph{Moduli spaces modeled on planar trees}

If the following we assume that a Morse function $f$ on the cobordism $V$ is such that 
$-\nabla f$ is transverse to $\partial V$. Similarly a Morse function $g$ on $E$ is 
assumed to have $-\nabla g$ transverse to $\partial E$, namely pointing inwards along the 
horizontal boundary $\partial^hE$ (if it is not empty) and pointing outwards along the 
vertical part $\partial^vE$.
By a moduli spaces modeled on planar tree we mean the following. It consists of 
configurations modeled on planar trees with oriented edges. 
The edges correspond to negative gradient trajectories of Morse functions on either the 
Lagrangian or the ambient 
manifold and with the orientation corresponding to the direction of the negative gradient 
flow. We mark the edges with the corresponding function. The vertices of valence one 
correspond to critical points of the Morse function used along the unique 
edge ending there. Vertices of valence two ore more correspond to $J$-holomorphic spheres 
or $J$-holomorphic disk with boundary in the Lagrangian and with some marked points 
(situated on the boundary or in the interior) equal to the valency of 
that vertex. These marked points correspond to the end points of the edges ending at 
the vertex. In particular, interior marked points correspond to end points of edges 
labeled by Morse functions on the ambient manifold and marked points on the boundary 
of the disk correspond to Morse functions on the Lagrangian. For transversality reasons 
it is important to assume that if a disks has 
more that one entry point, then the corresponding edges ending there must be marked by 
pairwise distinct Morse functions in generic position. The vertices of the tree are 
marked with the corresponding Maslov indices of the $J$-holomorphic curves. 
We also allow constant $J$-holomorphic curves as long as they are stable in the sense 
that the valence of the vertex is at least three for spheres and the valence is at 
least two for disks, with one marked point on the boundary and one marked point in the 
interior. 

The number of entries and exits as well as the valence of the vertices and the 
position of their marked points depends on the structures or relations that the moduli 
spaces are describing. For example, differentials or morphisms are described by trees 
with 
one entry and one exit. An operation has two entries, associativity requires three 
entries and structures involving the ambient quantum homology requires some internal 
marked points and some edges labeled with Morse functions on the ambient manifold.
The trees have typically one exit. We will refer to elements of such moduli spaces as 
\emph{pearly trees}.

\begin{lemma}~\label{lem:traj_away_from_boundary_general}
Let $\mathcal{P}$ be a moduli space modeled over planar trees of some type described 
above.
Assume that all the edges corresponding to either flow lines of Morse functions on
$V$ respecting the exit region $S\subset \partial V$ or to Morse functions on $E$, such 
that its gradient points out along $\partial^v E$ and in along $\partial^h E$.
Suppose that the almost complex structure is $J\cong j\oplus J_M$ near the cylindrical 
ends of $V$. (See~\ref{rmk:cobordism_in_tame_Lefschetz}.)
Then, the elements of the moduli space $\mathcal{P}$ cannot reach the boundary $\partial 
V$ nor the boundary $\partial E$. In particular, these elements cannot escape to infinity along $E^{\infty}$.
\end{lemma}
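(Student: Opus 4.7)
My strategy is to extend the proof of Lemma~\ref{lem:traj_away_from_boundary} by separately treating each type of element that can appear in a pearly tree: the Morse trajectories of $f$ on $V$, the Morse trajectories of $h$ on $E$, and the $J$-holomorphic disks and spheres occupying the vertices of valence at least two. The two pillars are (a) the transversality of the gradient vector fields to $\partial V$ and $\partial E$, and (b) the open mapping dichotomy provided by Lemma~\ref{lem:curve_openmapping}.

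First, I would dispatch the gradient trajectories. Since $-\nabla f$ is outward along $S$ and inward along $\partial V\setminus S$, and likewise $-\nabla h$ is outward along $\partial^v E$ and inward along $\partial^h E$, critical points of these functions lie in the interiors of $V$ and $E$ respectively. A complete trajectory $\gamma$ of $-\nabla f$ with limits at two interior critical points cannot touch $\partial V$: if it touched $S$ at some $t_0$, then $\dot\gamma(t_0)$ would point out of $V$, so $\gamma(t)\notin V$ for $t$ slightly above $t_0$, contradicting the fact that $\gamma$ flows to an interior critical point; symmetrically, touching $\partial V\setminus S$ forces $\gamma$ out of $V$ for $t<t_0$. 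The same argument applied to $-\nabla h$ keeps trajectories of $h$ away from $\partial E$.

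Next I would handle the $J$-holomorphic components. For any disk $u\colon (D,\partial D)\to(E,V)$ or sphere $v\colon S^2\to E$ appearing in the tree, Lemma~\ref{lem:curve_openmapping} gives the dichotomy: either $\pi\circ u$ is constant, or $\pi\circ u(\Sigma)\subset\mathcal{W}^c$. In the second case, since $\partial V$ and $\partial^v E$ are contained in $\pi^{-1}(\mathcal{W})$ (the cylindrical end region and the boundary of the base disk $\mathcal{D}$ both lie in $\mathcal{W}$), the curve cannot meet either boundary. In the first case $u$ is confined to a single fiber $E_{z_0}$; if $z_0\in\mathcal{W}^c\subset K$, then $V\cap E_{z_0}$ lies in the interior of $V_R$ and $E_{z_0}\cap\partial V=\emptyset$. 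The only remaining danger is $z_0\in\mathcal{W}$ at one of the end heights $\pm R$ of $V$, where $V\cap E_{z_0}\subset\partial V$. But then the marked points of $u$ (on $\partial D$ for a disk, interior for a sphere) would lie in $\partial V$, while by the first paragraph these marked points are endpoints of trajectories of $f$ or $h$, hence strictly interior --- a contradiction.

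For the last clause about escape along $E^\infty$, I would argue that trajectories of $h$ remain in a compact sublevel set since $h$ is proper, bounded below, and $-\nabla h$ points inward along $\partial^h E$; for the $J$-holomorphic curves, the nonconstant case forces the image into $\mathcal{W}^c\cap\mathcal{D}$ which is bounded, while the constant case places the curve in a single fiber, where convexity at infinity together with the monotonicity energy bound confines the curve to a compact subset. The main obstacle will be the constant-projection case of the previous paragraph: one has to exploit the combinatorial datum of how curves glue to trajectories at marked points to rule out fibers at the cylindrical-end heights, and this requires the dichotomy of Lemma~\ref{lem:curve_openmapping} together with the interior placement of all trajectories established in the first step.
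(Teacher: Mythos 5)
Your proposal is correct and takes essentially the same route as the paper, whose proof of this lemma just says it is ``similar to the proof of Lemma~\ref{lem:traj_away_from_boundary}'', i.e.\ it rests on exactly your two pillars: transversality of the negative gradients along $\partial V$ and $\partial E$ (so all trajectory endpoints stay interior) and the open-mapping dichotomy of Lemma~\ref{lem:curve_openmapping} forcing disks and spheres near the ends to lie in a single fiber. Your write-up merely spells out the resulting case analysis (non-constant projection versus a constant-projection curve sitting in a fiber over an end height, plus the compactness remark for $E^{\infty}$) in more detail than the paper does.
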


\begin{proof}
The proof is similar to the proof of~\ref{lem:traj_away_from_boundary}. 
\end{proof}

Lemma~\ref{lem:traj_away_from_boundary_general} together with 
Lemma~\ref{prop:transversality_general} imply:

\begin{theorem}\label{thm:moduli_as_in_closed_setting}
 Let $\mathcal{P}$ be a moduli space modeled over planar trees of some type described 
above and let 
$\mathcal{P}^c$ be the analogous moduli space on planar trees but for the setting 
described in~\cite{BC07}, i.e. for closed Lagrangians. 
Suppose we know that if the virtual dimension of $\mathcal{P}^c$ is at most $1$, it is a 
smooth manifold of dimension equal to its virtual dimension and it is compact if 
$0$-dimensional.
Then the same holds for $\mathcal{P}$ and the compactification of a one dimensional such 
moduli space has the same description as the compactification
of the analogous moduli space $\mathcal{P}^c$.
\end{theorem}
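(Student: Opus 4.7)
The plan is to reduce the analysis to the compact setting of~\cite{BC07} by exploiting the open mapping theorem together with the tameness of the Lefschetz fibration. By Lemma~\ref{lem:traj_away_from_boundary_general}, the configurations in $\mathcal{P}$ stay away from $\partial V$, from $\partial^h E$ and from $\partial^v E$, and in particular they lie in a bounded region of $E$. Over $\pi^{-1}(\mathcal{W})$ we have the splitting $E|_{\mathcal{W}} \cong \mathcal{W} \times M$ with $J = i \oplus J_M$, so Lemma~\ref{lem:curve_openmapping} forces any $J$-holomorphic disk or sphere that meets $\pi^{-1}(\mathcal{W})$ to have $\pi \circ u$ constant; such curves therefore lie entirely in a single fibre of the form $\{z\} \times M$.

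First I would check transversality. Inside a compact piece of $E$ away from the cylindrical region the standard arguments of~\cite{BC07} apply verbatim: Assumption~\ref{assum:genericity} together with the reduction to simple disks (see section~\ref{sec:reduction_to_simple_disks}) and a Sard--Smale argument produce a second-category subset $\mathcal{J}_{reg} \subset \mathcal{J}_{\mathcal{W}}$ on which the relevant evaluation maps meet the stable/unstable manifolds of the Morse functions transversally. In the cylindrical region, curves are constant in the $\mathbb{C}$-factor, and Lemma~\ref{lem:automatic_transversality} tells us that the linearised $\bar\partial$-operator is surjective if and only if its fibrewise version is surjective, with the correct index shift. Combining the two regions, $\mathcal{P}$ has the same smooth-manifold structure and dimension count as $\mathcal{P}^c$ whenever its virtual dimension is at most one.

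For compactness of the zero-dimensional strata and for the boundary of the one-dimensional ones, I would run Gromov compactness in the usual way. Since elements of $\mathcal{P}$ remain in a fixed compact subset of $E$ (no escape along the cylindrical ends is possible), any Gromov limit must be of one of the standard types: Morse breaking at a new critical point, shrinking of a gradient segment, disk bubbling at a boundary point, or sphere bubbling at an interior or boundary point. Monotonicity, genericity and dimension counting exclude the non-principal degenerations exactly as in the closed case, so only the principal boundary strata survive and they match those enumerated for $\mathcal{P}^c$. The gluing step in the opposite direction is purely local: it is performed either inside the compact part of $E$ or within a single fibre over $\mathcal{W}$, so the gluing constructions of~\cite{BC07},~\cite{McS12} and~\cite{FOOO09} transfer unchanged, and surjectivity of the gluing map gives the full matching.

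The main subtlety I expect lies at the interface between the compact part and the cylindrical region $\pi^{-1}(\mathcal{W})$: one must make sure that no new phenomenon can arise from configurations whose curves straddle both regions, nor from sequences of disks that migrate into the cylindrical end in the limit. Lemma~\ref{lem:curve_openmapping} is exactly what controls this dichotomy, since any curve entering $\mathcal{W}$ is forced to sit in one fibre; once this is combined with the a priori boundedness of $\pi \circ u$ on any convergent subsequence, one obtains a uniform $C^0$-bound on the image of every configuration in $\mathcal{P}$, and the remainder of the argument is identical to the closed setting.
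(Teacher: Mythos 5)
Your proposal is correct and follows exactly the route the paper takes: the paper states Theorem~\ref{thm:moduli_as_in_closed_setting} as an immediate consequence of Lemma~\ref{lem:traj_away_from_boundary_general} (no escape to $\partial V$, $\partial^h E$, $\partial^v E$, hence a uniform compact region) and Proposition~\ref{prop:transversality_general}, with Lemma~\ref{lem:curve_openmapping} and Lemma~\ref{lem:automatic_transversality} supplying the fibrewise/automatic-transversality analysis over $\mathcal{W}$. Your write-up simply fleshes out the compactness, bubbling-exclusion, and gluing steps that the paper leaves implicit by deferring to~\cite{BC07}, and correctly flags the interface between $\pi^{-1}(\mathcal{W})$ and the compact core as the only place a new phenomenon could arise.
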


\subsection{The augmentation}\label{sec:augmentation}

A useful feature called the augmentation map comes from a chain morphism 
$$\epsilon_{V,f}:C^+(V;f)\to \Lambda^+,$$
where $\Lambda^+$ is viewed as a chain complex with trivial differential.
The following theorem was taken from~\cite{Hir94}.

\begin{theorem}\label{thm:diffeo_to_Dn}
 Let $f:M\to \mathbb{R}$ be a Morse function on a compact connected $n$-manifold $M$ and 
such that $-\nabla f$ is transverse to $\partial M$. Suppose that $f$ has exactly three 
critical points of index $0,0$ and $1$. Then $M$ is diffeomorphic to a $n$-dimensional 
disk $D^n$. 
\end{theorem}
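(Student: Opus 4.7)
The plan is to use the Morse-theoretic handle decomposition for manifolds with boundary. Since $-\nabla f$ is transverse to $\partial M$ at every boundary point, the boundary splits as a disjoint union $\partial M = \partial_- M \sqcup \partial_+ M$ of unions of connected components, where $-\nabla f$ points outward along $\partial_- M$ and inward along $\partial_+ M$. I would first invoke the standard result that $M$ is diffeomorphic to the manifold obtained from a collar $\partial_- M \times [0,1]$ by successively attaching handles, one for each critical point of $f$, with the index of each handle equal to the Morse index of the critical point. By Milnor's handle-rearrangement lemma I may then order the attachments by non-decreasing index, so the two $0$-handles are attached first, followed by the single $1$-handle.

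Attaching a $0$-handle produces a disjoint copy of $D^n$, so just before the $1$-handle is glued in, the manifold has the form $(\partial_- M \times [0,1]) \sqcup D^n \sqcup D^n$, with $|\pi_0(\partial_- M)| + 2$ connected components. Attaching a $1$-handle $D^1 \times D^{n-1}$ along its attaching sphere $S^0 \times D^{n-1}$ decreases the number of components by exactly $1$ if the two feet land in distinct components and by $0$ otherwise.

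The decisive step is then to invoke connectedness of $M$: the final component count must equal $1$, so
$$|\pi_0(\partial_- M)| + 2 - 1 \leq 1,$$
which forces simultaneously that $\partial_- M = \emptyset$ and that the two feet of the $1$-handle lie in distinct $0$-handles. Consequently $M$ is diffeomorphic to the boundary connected sum $D^n \natural D^n$, which is classically diffeomorphic to $D^n$ (an explicit diffeomorphism can be written down using the standard model of the $1$-handle $D^{n-1}\times[-1,1]$ joining two half-disks into a single disk).

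I expect the main obstacle to be setting up the handle decomposition carefully in the regime where $-\nabla f$ is only assumed transverse to $\partial M$, rather than uniformly inward-pointing, and justifying the handle rearrangement in that setting. This is classical but requires some bookkeeping about collars and about how the gradient flow interacts with the mixed boundary $\partial_- M \sqcup \partial_+ M$. Once those foundations are secure, the combinatorial argument via connectedness that pins down the topology of $M$ is entirely straightforward.
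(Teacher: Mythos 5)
The paper never proves this statement at all: it is imported as a black box from Hirsch's book (\cite{Hir94}), so there is no in-paper argument to measure yours against, and your proposal has to be judged on its own merits. On those merits it is essentially correct, and it is the natural handle-theoretic proof. Three points deserve to be made explicit. First, the step you flag as the main obstacle is indeed the only nonroutine one: the classical theorem producing a handle decomposition with one handle per critical point is usually stated for cobordisms in which $\partial M$ is a union of regular level sets, whereas here $-\nabla f$ is only transverse to $\partial M$. Transversality plus compactness does give that $\partial_-M$ and $\partial_+M$ are unions of components, and one then reduces to the classical situation by modifying $f$ in a collar of $\partial M$ (or by attaching external collars and extending $f$ without new critical points) so that $\partial_-M$ and $\partial_+M$ become regular level sets; this is standard but should be said, since it is exactly the hypothesis that distinguishes this statement from the textbook one. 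Second, you do not need the full strength of Milnor's rearrangement lemma: a $0$-handle has empty attaching region, so the two $0$-handles can trivially be attached first. Third, the final identification of $D^n$ glued to $D^n$ along a $1$-handle with $D^n$ should be justified by the disk theorem: embeddings of $D^{n-1}$ into a connected $(n-1)$-manifold are unique up to isotopy and orientation, and since $D^n$ admits an orientation-reversing diffeomorphism the boundary connected sum is independent of the choices and agrees with the standard model, which is convex and hence a disk. With these points filled in, your component count, which forces $\partial_-M=\emptyset$ and forces the $1$-handle to join the two $0$-handles, completes the proof.
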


\begin{lemma}\label{lem:single_min}
There exists a Morse function $f$ on $V$, with $-\nabla f$ pointing inwards 
along the boundary $\partial V$ and with a single minimum, which we call $x_0$.
\end{lemma}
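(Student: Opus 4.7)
The plan is to produce $f$ in two steps: first construct some Morse function on $V$ whose negative gradient points inward along $\partial V$, and then apply Morse-theoretic cancellation to reduce the number of minima to a single one.

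For the first step, choose a collar neighborhood $\partial V \times [0,\varepsilon) \hookrightarrow V$ with the coordinate $t \in [0,\varepsilon)$ such that $\{t=0\}$ corresponds to $\partial V$ and $\partial_t$ points into the interior of $V$. Fix any Morse function $h$ on $\partial V$ and define $g(x,t) := h(x) - t$ on the collar. Extend $g$ smoothly to a Morse function on all of $V$ (using a cutoff away from the boundary and a generic perturbation in the interior). Along $\partial V$ one has $\nabla g = \nabla h - \partial_t$, so $-\nabla g$ has positive $\partial_t$-component, meaning it points inward. Thus $g$ already satisfies the boundary condition required of $f$.

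For the second step, apply the classical Smale--Milnor cancellation lemma iteratively. Since $V$ is connected and compact, if $g$ has two local minima $x_1, x_2$, then they can be joined by a path in $V$; after a small generic perturbation of $g$ supported in the interior, this path can be assumed to pass through an index-$1$ critical point $y$ whose two descending gradient trajectories terminate at $x_1$ and $x_2$ respectively. The cancellation lemma then produces a new Morse function on $V$ in which both $y$ and $x_1$ are removed and no other critical points are introduced. Iterating this procedure yields a Morse function $f$ with a unique minimum $x_0$, which we call the single minimum.

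The only subtlety is making sure that the cancellation steps do not destroy the prescribed boundary behavior. This is immediate once one notes that each cancellation modification can be supported in an arbitrarily small neighborhood of the broken trajectory $x_1 \leftarrow y \rightarrow x_2$, which lies entirely in the interior of $V$ (where $g$ has its critical points), and hence can be chosen disjoint from the collar where the inward-pointing condition is enforced.
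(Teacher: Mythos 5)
Your proposal is correct and its overall skeleton matches the paper's: start from a Morse function whose negative gradient points inward along $\partial V$, then eliminate superfluous minima one at a time by pairing each with an index-$1$ critical point, with the modification supported in the interior so the boundary condition is untouched. The implementations differ in two places. You make the initial step explicit via the collar function $h(x)-t$, which the paper simply assumes; and for the elimination step the paper argues homologically (connectedness gives $H_0(V)\cong\mathbb{Z}_2$, so some index-$1$ critical point $x$ has Morse differential $m_0+m_1$ with $m_0\neq m_1$) and then invokes Theorem~\ref{thm:diffeo_to_Dn} to find an embedded disk containing exactly $x,m_0,m_1$, replacing the function there by one with a single minimum, whereas you cancel the pair $(y,x_1)$ directly by the Smale--Milnor first cancellation theorem, using that the unstable manifold of $y$ meets $x_1$ along exactly one trajectory. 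Both reduce the number of minima by one per step, and in both cases the support of the modification is a compact subset of the interior, so the inward-pointing condition survives. One caveat: your justification for the existence of $y$ --- that after a small generic perturbation the chosen path ``can be assumed to pass through an index-$1$ critical point whose two descending trajectories terminate at $x_1$ and $x_2$'' --- is not a genuine mechanism: a generic perturbation does not create such a critical point on a prescribed path, and one cannot arrange which pair of minima is joined. The correct (and standard) justification is the paper's: the $\mathbb{Z}_2$ Morse complex computes $H_0(V)\cong\mathbb{Z}_2$, so as long as there are at least two minima, some index-$1$ critical point has differential $m_0+m_1$ with $m_0\neq m_1$, i.e.\ its two descending trajectories end at distinct minima; cancelling that pair (whichever pair of minima it happens to join) is all the iteration requires.
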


\begin{proof}
Let $f:V\to \mathbb{R}$ be a Morse function on with finitely many critical points and pointing inwards along $\partial V$ and suppose 
$f$ has more than one local minimum. 
Since we know that $H_0(V)\cong \mathbb{Z}_2$ there must be two local minima $m_0$ and 
$m_1$ and a critical point $x$ of index $1$ such that the Morse differential of $x$ is 
$m_0-m_1$. By theorem~\ref{thm:diffeo_to_Dn} there exists an embedded $n$-dimensional ball 
containing only the critical points $x$, $m_0$ and $m_1$. 
We can replace $x$, $m_0$ and $m_1$ by a single critical point of index $0$ to get 
a new Morse function on $V$ with one local minimum less. 
The lemma is proved by iteration of this process.
\end{proof}

\begin{proposition}\label{prop:min_not_boundary}
 Let $f$ be a Morse function on $V$ with $-\nabla f$ pointing inwards along 
the boundary $\partial V$ and with a single minimum $x_0$.
Then the minimum $x_0$ is not a boundary of the pearl complex of $V$.
\end{proposition}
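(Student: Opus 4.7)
The plan is to show that for every chain $\beta \in C^+_1((V,S); f, \rho, J)$ the coefficient of $x_0$ in $d(\beta)$ vanishes, which directly gives $x_0 \notin \mathrm{im}(d)$. The argument splits into a degree calculation that reduces the problem to the Morse part of the differential, followed by an elementary Morse-theoretic count.

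First I would rule out genuine disk contributions by a degree calculation. A generator of $C^+_1$ has the form $y \cdot t^k$ with $y \in \mathrm{Crit}(f)$, $k \geq 0$, and $|y| - k N_V = 1$. Since $d(y \cdot t^k) = d(y) \cdot t^k$, a summand $x_0 \cdot t^0$ in $d(\beta)$ can only come from a zero-dimensional pearly trajectory space $\mathcal{P}_{prl}(y, x_0, \lambda)$ with $k + \overline{\mu}(\lambda) = 0$. Combined with $k \geq 0$, $\overline{\mu}(\lambda) \geq 0$, and $\mu(\lambda) \geq N_V \geq 2$ for every non-zero class $\lambda \in H_2^D(E, V)$, these constraints force $k = 0$, $\lambda = 0$, and $|y| = 1$. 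Hence only the Morse-theoretic part of the pearl differential can contribute, namely honest negative gradient trajectories from an index-$1$ critical point $y$ to $x_0$.

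Second I would finish with a standard Morse-theoretic observation. For $y$ of Morse index one, the unstable manifold $W^u_y(f)$ is a smoothly embedded $1$-dimensional submanifold of $V$. Because $-\nabla f$ points inwards along $\partial V$, the positive-time flow stays inside $V$, so each of the two ends of $W^u_y(f) \setminus \{y\}$ must limit to a critical point of strictly smaller index, i.e.\ to an index-$0$ critical point. By hypothesis $x_0$ is the unique such critical point, so both ends land at $x_0$, yielding exactly two gradient trajectories from $y$ to $x_0$. Their count modulo $2$ is zero, and summing over the relevant terms of $\beta$ shows that the $x_0$-coefficient of $d(\beta)$ vanishes. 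The only place where any real care is needed is the degree bookkeeping in the first step, which uses in an essential way the standing assumption $N_V \geq 2$ to prevent a non-constant disk from cancelling the factor of $t^k$; the rest is classical Morse theory.
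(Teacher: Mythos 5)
Your argument is correct, and its first half is exactly the paper's: a dimension/degree count using monotonicity (non-constant disks contributing to the differential carry $\mu(\lambda)\geq N_V\geq 2$, hence a strictly positive power of $t$) reduces the $x_0$-coefficient of $d\beta$ to the classical Morse part coming from index-one critical points. Where you diverge is the final step: you count directly that the two ends of the one-dimensional unstable manifold $W^u_y(f)$ of any index-one point $y$ both flow (inside $V$, since $-\nabla f$ points inward along $\partial V$) to the unique index-zero point $x_0$, so the Morse coefficient of $x_0$ in $\partial y$ is $2\equiv 0$ in $\mathbb{Z}_2$; the paper instead simply notes that Morse homology of $V$ is nontrivial in degree zero and $x_0$ is the only index-zero generator, hence cannot be a Morse boundary. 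Both finishes are fine; yours is more hands-on and chain-level (and extends with signs to $\Lambda^+_{\mathcal{R}}$), but note that the claim "each end limits to a critical point of strictly smaller index" uses the Morse--Smale condition, which is indeed part of the standing genericity assumptions, so no gap. Two small imprecisions worth fixing in the write-up: it is not every non-zero class in $H_2^D(E,V)$ that has $\mu\geq N_V$, but every class actually represented in a non-empty moduli space $\mathcal{P}_{prl}$ (a sum of classes of non-constant holomorphic disks, positive by monotonicity); and what the degree bookkeeping really needs is this positivity, rather than $N_V\geq 2$ specifically.
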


\begin{proof}
Consider moduli spaces $\mathcal{P}_{prl} 
(x,x_0,\lambda)$ of dimension zero.
Suppose first that $\mu(\lambda)\neq 0$ and let $x\in Crit(f)\setminus \{ x_0 \}$.
Then 
$$dim \mathcal{P}_{prl}(x, x_0; \lambda)=|x|-|x_0|+\mu(\lambda)-1\geq 1-0+2-1=2>0.$$
If $\lambda \neq 0$ then $\mathcal{P}_{prl}(x, x_0, \lambda)$ is not zero 
dimensional. 
Therefore we may assume that $\lambda = 0$ and in particular the index of $x$ is $1$. 
The Morse homology of $V$ is non trivial in degree zero and since $x_0$ is the only 
critical point of index zero it is not a boundary of the Morse differential.
\end{proof}
\begin{proposition}\label{prop:augmentation}
There exists a canonical, degree preserving augmentation 
\begin{equation*}
 \epsilon_{V}:Q^+H_*(V) \rightarrow \Lambda^+,
\end{equation*}
which is a $\Lambda^+$-module map.
\end{proposition}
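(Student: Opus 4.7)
The plan is to define a chain-level augmentation
$\epsilon_{V,f}\colon C^+(V;f,\rho,J) \to \Lambda^+$
by choosing a Morse function $f$ with a single minimum $x_0$ as guaranteed by Lemma~\ref{lem:single_min}, and setting $\epsilon_{V,f}(x_0):=1$, $\epsilon_{V,f}(x):=0$ for every other critical point, extended $\Lambda^+$-linearly. The map is $\Lambda^+$-linear and degree preserving by construction, since $|x_0|=0=|1|$ in $\Lambda^+$, and its $\Lambda^+$-linear extension respects the shift introduced by multiplication by $t$.

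The first task is to verify $\epsilon_{V,f}\circ d = 0$. For a critical point $y$, the expression $\epsilon_{V,f}(d(y))$ is the sum over classes $\lambda$ of $\#_{\Z_2}\mathcal{P}_{prl}(y,x_0,\lambda)\, t^{\overline{\mu}(\lambda)}$ restricted to those $\lambda$ with $\delta_{prl}(y,x_0,\lambda)=|y|+\mu(\lambda)-1=0$. Since $|y|\geq 0$ and $\mu(\lambda)\in N_V\Z_{\geq 0}$ with $N_V\geq 2$, the only possibility is $|y|=1$ with $\lambda=0$, i.e.\ pure Morse trajectories; the case $|y|=0$, $\mu(\lambda)=1$ is excluded by $N_V\geq 2$. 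The mod-$2$ count of the remaining trajectories equals the coefficient of $x_0$ in the Morse differential of $y$, which vanishes by the very argument of Proposition~\ref{prop:min_not_boundary}: since $x_0$ is the unique index-$0$ critical point and $H_0(V;\Z_2)\neq 0$ is nontrivial, $x_0$ cannot lie in the image of the Morse differential. Thus $\epsilon_{V,f}$ is a chain map, inducing a degree-preserving $\Lambda^+$-linear map $\epsilon_V\colon Q^+H_*(V)\to \Lambda^+$.

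The remaining point is canonicity. Given a second generic triple $(f',\rho',J')$ with $f'$ also single-minimum (minimum $x_0'$), I would show $\epsilon_{V,f'}\circ\phi=\epsilon_{V,f}$ at the chain level, where $\phi=\phi^{\overline{J},F,G}$ is the comparison morphism of Section~\ref{sec:invariance}. For $y\in\mathrm{Crit}(f)$ the coefficient of $x_0'$ in $\phi(y)$ is a sum over zero-dimensional comparison moduli $\mathcal{P}_{comp}(y,x_0',\lambda)$ of virtual dimension $|y|+\mu(\lambda)$; this vanishes only for $y=x_0$ and $\lambda=0$, reducing the calculation to the mod-$2$ count of Morse continuation trajectories from $x_0$ to $x_0'$ in the Morse cobordism $(F,G)$. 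Since both $x_0$ and $x_0'$ generate $H_0(V;\Z_2)=\Z_2$ and the standard Morse continuation morphism induces the identity on $H_0$, this count equals $1$ modulo $2$. For a general Morse function one interpolates through an intermediate single-minimum function, using the invariance package of Section~\ref{sec:invariance}.

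The main obstacle is the canonicity step: both the dimension count on the comparison moduli $\mathcal{P}_{comp}$ and the Morse-theoretic identity that unique-minimum Morse functions are related by an odd number of continuation trajectories are standard, but they must be threaded through the cobordism setting with its transversality constraints (Section~\ref{sec:transversality}) and the boundary behaviour of the functions, using Theorem~\ref{thm:moduli_as_in_closed_setting} to transfer the moduli-space arguments from the closed case.
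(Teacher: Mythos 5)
Your proposal is correct and follows essentially the same route as the paper: the same chain-level definition via a single-minimum Morse function, the same dimension count forcing $\lambda=0$ so that only classical Morse trajectories contribute, and canonicity via the comparison morphism reduced to degree-zero Morse continuation data. The only cosmetic differences are that you justify $\epsilon\circ d=0$ by the "$x_0$ is not a boundary since $H_0(V;\mathbb{Z}_2)\neq 0$" argument of Proposition~\ref{prop:min_not_boundary} and canonicity by the fact that the continuation map is an isomorphism on $H_0$, whereas the paper counts the two descending trajectories of an index-one point and does an explicit index computation for $x_0$ inside the Morse cobordism $(F,G)$ — both amount to the same classical Morse theory.
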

\begin{proof}
For a choice of data $\mathcal{D}=(f,\rho,J)$ we define 
$\epsilon_{V,\mathcal{D}}$ on the chain level as follows.
\begin{equation}\label{eq:augm}
 \begin{array}{cccc}
  \epsilon_{V,\mathcal{D}}: & Crit(f) & \rightarrow & \Lambda\\
              & x 	&\mapsto & \begin{cases}
				  1 \text{, if }  |x| =0\\
				0 \text{, else }\\
                  	         \end{cases},
 \end{array}
\end{equation}
and extend it linearly over $C^+(V;\mathcal{D})$.
Assume now that $f$ is a Morse function with a single minimum, which exists by Lemma~\ref{lem:single_min}.
Then $\epsilon_{V,\mathcal{D}}$ is a chain map, as the following calculation shows.
Clearly $\epsilon_{V,\mathcal{D}}(d(x_0))=0=d(\epsilon_{V,\mathcal{D}}(x_0))$.
If $x\neq x_0$ then $d\circ \epsilon_{V,\mathcal{D}}(x)=0$ and
\begin{equation*}
\begin{array}{ccl}
  \epsilon_{V,\mathcal{D}} \circ d(x) & = & \epsilon_{V,\mathcal{D}} \left( 
\sum\limits_{\begin{subarray}{c} y\\
\delta_{prl}=0\\
                                                                        \end{subarray}} 
\#_{\Z_2} \mathcal{P}_{prl}(x,y,0)y + 
\sum\limits_{\begin{subarray}{c} y,\lambda\neq0\\
\delta_{prl}=0\\
                                                                        \end{subarray}}
(-1)^{|y|} \#_{\Z_2} \mathcal{P}_{prl}(x,y,\lambda)y 
t^{\overline{\mu}(\lambda)} \right) \\
			& = & \epsilon_{V,\mathcal{D}} \left(  
 
\#_{\Z_2} \mathcal{P}_{prl}(x,x_0,0)x_0 + 
\sum\limits_{\begin{subarray}{c} \lambda\neq0\\
                                                                         
\delta_{prl}=0\\
                                                                        \end{subarray}}
(-1)^{|x_0|} \#_{\Z_2}v \mathcal{P}_{prl}(x,x_0,\lambda)x_0 
t^{\overline{\mu}(\lambda)} \right) \\
			& = & 0,\\
\end{array}
\end{equation*}
where the second equality holds since $f$ has the unique minimum $x_0$ and 
$\epsilon_{V,\mathcal{D}}(y)=0$ whenever $y\neq x_0$.
The last equality holds by a similar argument as in the previous lemma. We saw that $\mathcal{P}_{prl}(x,x_0,\lambda)$ is either void or $\lambda=0$.
Moreover, the Morse differential of a critical point of index one is an even number of critical points of index zero and $x_0$ is the only minimum.

Let $\mathcal{D'}$ be another generic data.
We verify $\epsilon_{V,\mathcal{D'}}\circ 
\phi^{\mathcal{D'}}_{\mathcal{D}} = \epsilon_{V,\mathcal{D}} $,
where $\phi^{\mathcal{D'}}_{\mathcal{D}}$ is the comparison morphism between the chain 
complexes.
A dimension calculation shows that the comparison moduli space 
$\mathcal{P}_{comp}(x,y',\lambda)$, with $|y'|=0$ has dimension zero if and only if 
$|x|=-\mu(\lambda)=0$. 
In other words, there are no non-empty moduli spaces of dimension zero connecting a 
critical point $x\neq x_0$ of $f$ to a critical point $y'$ of $f'$ of 
index zero, and for $x\neq x_0$ 
$$\epsilon_{V,\mathcal{D'}}\circ 
\phi^{\mathcal{D'}}_{\mathcal{D}}(x)=0=\epsilon_{V,\mathcal{D}}(x).$$
If $x=x_0$ and $|y'|=0$, then $\mu(\lambda)$ must be zero, and
\begin{equation}
\begin{array}{ccc}
  \epsilon_{V,\mathcal{D'}}\circ \phi^{\mathcal{D'}}_{\mathcal{D}}(x_0) &=&
\epsilon_{V,\mathcal{D'}} \left( \sum\limits_{\begin{subarray}{c} |y'|=0\\
\delta_{comp}=0 \end{subarray}} \# \mathcal{P}_{comp}(x_0,y',0)y'\right)\\
 &=& \sum\limits_{\begin{subarray}{c} |y'|=0\\
\delta_{comp}=0 \end{subarray}} \# \mathcal{P}_{comp}(x_0,y',0).\\
\end{array}
\end{equation}
Recall that the moduli spaces were defined using a Morse cobordism $(F,G)$ between 
$(f,\rho)$ and $(f',\rho')$. 
Notice that $\sum\limits_{\begin{subarray}{c}|y'|=0 \\ \delta_{comp}=0 \end{subarray}} 
\# \mathcal{P}_{comp}(x_0,y',0)y'$ 
is the Morse part of the differential for the Morse function $F$ of the critical point 
$x_0$ of $F$.
As a critical point of $F$, $x_0$ has index one and it lies on the boundary.
Standard Morse theoretic arguments show that the Morse differential of $x_0$ is exactly one 
critical point of $F$ of index zero. 
Therefore, $\sum\limits_{\begin{subarray}{c} |y'|=0\\ \delta_{comp}=0 
\end{subarray}} 
\#\mathcal{P}(x_0,y',0)=1$, which proves the statement.
\end{proof}

\subsection{The quantum product}\label{sec:product}

\begin{proposition}\label{prop:product_and_unit}
 There exists a $\Lambda$-bilinear map
\begin{equation*}
\begin{array}{ccc}
  *:Q^+H_i(V,S)\otimes Q^+H_j(V,S) & \rightarrow & Q^+H_{i+j-(n+1)}(V,S) \\
  \alpha \otimes \beta & \mapsto & \alpha * \beta,\\
\end{array}
\end{equation*}
which endows $Q^+H_*(V,S)$ with the structure of a (possibly non-unital) ring.
Moreover, if $S=\partial V$ the product turns $Q^+H_*(V,\partial V)$ into a ring with 
a unit.
\end{proposition}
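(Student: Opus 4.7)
The plan is to construct $*$ by counting Y-shaped pearly trees, in direct analogy with~\cite{BC07} adapted to the cobordism setting via the tools of Section~\ref{sec:main_floer_techniques}. Fix three Morse functions $f,f',f''$ on $V$ respecting the exit region $S$ and satisfying Assumption~\ref{assum:genericity}(2), a common metric $\rho$, and a generic $J\in\mathcal{J}_{\mathcal{W}}$. For $x\in Crit(f)$, $y\in Crit(f')$, $z\in Crit(f'')$ and $\lambda\in H^D_2(E,V)$, introduce $\mathcal{P}_{prod}(x,y,z;\lambda;f,f',f'',\rho,J)$ as the space of configurations consisting of a chain of pearly disks beginning at $x$ and flowing along $-\nabla f$, a second chain beginning at $y$ and flowing along $-\nabla f'$, both terminating at two distinct boundary marked points of a common central holomorphic disk, together with an outgoing chain leaving a third boundary marked point of the central disk, flowing along $-\nabla f''$ and ending at $z$, with total homology class $\lambda$. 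The virtual dimension is $\delta_{prod}=|x|+|y|-|z|+\mu(\lambda)-(n+1)$.

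Next I would invoke Lemma~\ref{lem:traj_away_from_boundary_general} to rule out configurations escaping along $\partial V$ or out to infinity, and Theorem~\ref{thm:moduli_as_in_closed_setting} to transfer the transversality, compactness and gluing analysis from the closed case of~\cite{BC07}. The outcome is that for generic data $\mathcal{P}_{prod}$ is a smooth manifold of the expected dimension when $\delta_{prod}\leq 1$, compact when $\delta_{prod}=0$, and the one-dimensional compactification has boundary corresponding to splitting one of the three legs at an intermediate critical point (or to a short gradient edge shrinking to a point on a disk, both of which are combinatorially identical). Setting
$$x*y \;:=\; \sum_{z,\lambda,\,\delta_{prod}=0}\#_{\mathbb{Z}_2}\mathcal{P}_{prod}(x,y,z;\lambda)\,z\,t^{\overline{\mu}(\lambda)}$$
and extending $\Lambda^+$-bilinearly, the boundary decomposition gives the Leibniz rule $d(x*y)=dx*y+x*dy$, so $*$ descends to a well-defined product on $Q^+H_*(V,S)$. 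Independence of the choices $(f,f',f'',\rho,J)$, as well as associativity, are established by analogous moduli spaces built respectively over Morse cobordisms between triples of Morse data and over four-pointed central configurations, each reduced to the closed case via Theorem~\ref{thm:moduli_as_in_closed_setting}.

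For the unit, assume $S=\partial V$, so that $-\nabla f$ points outwards along the whole boundary. Applying Lemma~\ref{lem:single_min} to $-f$ (with orientations reversed in the obvious way) produces a Morse function on $V$ with a single critical point of top index $n+1$; call it $x_{max}$. Since there is no critical point of higher index, $x_{max}$ is automatically a cycle, and under the spectral sequence of Section~2.5 it represents the fundamental class $[V,\partial V]\in H_{n+1}(V,\partial V;\mathbb{Z}_2)$ on the $E^1$-page. I would then show that $[x_{max}]$ is a two-sided unit by computing $x_{max}*y$ using a triple of small generic perturbations of this Morse function, each still carrying a unique top-index critical point. The constraint $\delta_{prod}=0$ forces $|z|=|y|+\mu(\lambda)$; the $\lambda=0$ contribution is purely Morse-theoretic and yields the classical intersection product with $[V,\partial V]$, which is the identity on $E^1$. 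The main obstacle, and the heart of the proof, is to upgrade this from the $E^1$-level to an honest chain-level identity by showing that the higher-filtration contributions with $\mu(\lambda)\geq N_V$ cancel against a boundary. This is done by a dimensional argument analogous to Proposition~\ref{prop:min_not_boundary}: the unstable manifold of $x_{max}$ is open and dense in $V$, so after reduction to simple disks (as in Section~\ref{sec:reduction_to_simple_disks}) the relevant evaluation maps from the moduli of $J$-holomorphic disks must be surjective onto $V$, forcing the count to coincide with the Morse-theoretic one modulo $d$-exact terms. The symmetric identity $y*x_{max}=y$ is proved by the same argument applied to the other input, which concludes the proposition.
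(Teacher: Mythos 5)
Your construction of the product is the same as the paper's: figure-$Y$ pearly trees (Definition~\ref{def:P_prod}), with Lemma~\ref{lem:traj_away_from_boundary_general} and Theorem~\ref{thm:moduli_as_in_closed_setting} transferring transversality, compactness and gluing from the closed case, and associativity and invariance handled by the analogous auxiliary moduli spaces (Lemmas~\ref{lem:prod} and~\ref{lem:assoc}). The gap is in the unit. The paper (Lemma~\ref{lem:unit_rel}) proves the \emph{chain-level} identity $m*y=y$ with no quantum corrections whatsoever, and the mechanism is a specific choice of data: take the second and third Morse functions equal, $f'=f''$. Then any figure-$Y$ configuration contributing to $m*y$ induces a pearly trajectory from $y$ to $z$ in the same total class $\lambda$; since $\delta_{prod}(m,y,z,\lambda)=0$ and $|m|=n+1$ force $|y|-|z|+\mu(\lambda)=0$, that pearly moduli space has virtual dimension $|y|-|z|+\mu(\lambda)-1=-1$ whenever $\mu(\lambda)\neq 0$, hence is empty. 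So $\lambda=0$, $y=z$, and the count is $1$ because $m$ is the unique maximum. Your substitute for this step --- ``the unstable manifold of $x_{max}$ is open and dense, so the evaluation maps are surjective, forcing the count to coincide with the Morse-theoretic one modulo $d$-exact terms'' --- is not an argument: surjectivity of evaluation maps neither makes the $\mu(\lambda)\geq N_V$ counts vanish nor exhibits the needed $d$-exact correction, and in fact no cancellation is required once the data are chosen as above. As written, the assertion that $[x_{max}]$ acts as the identity is unproven; you need the $f'=f''$ dimension trick or an explicit chain homotopy built from a concrete moduli space.

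Two smaller points. First, the pearl differential \emph{lowers} degree, so ``no critical point of higher index'' does not by itself make $x_{max}$ a cycle: the quantum terms of $d(x_{max})$ would have to land on critical points of index $\geq n+2$ (which indeed do not exist), but the Morse part lands in index $n$ and vanishes for a different reason, namely $H_{n+1}(V,\partial V;\mathbb{Z}_2)\cong\mathbb{Z}_2$ together with the fact that $x_{max}$ spans the top-degree chain group. Second, the paper also verifies that the unit is canonical --- the comparison morphism between two such Morse functions sends $m$ to the sum of the index-$(n+1)$ critical points of the other function --- a verification your outline omits.
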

The underlying moduli spaces are:
\begin{definition}\label{def:P_prod}
Fix three Morse functions $f$, $f'$ and $f''$ on $V$ respecting 
the exit region $S$, a Riemannian metric $\rho$ and an almost complex 
structure $J\in \mathcal{J}_{\mathcal{W}}$.
Let $x$, $y$ and $z$ be critical points of $f$, $f'$ and 
$f''$, respectively. 
Let $\lambda$ be a class in $H^D_2(E,V)\subset H_2(E,V)$.
Consider the space of all tuples $(\mathbf{u}, \mathbf{u'}, \mathbf{u''}, v)$ that 
satisfy:
\begin{itemize}
  \item[(i)] $v:(D, \partial D) \rightarrow (E,V)$ is a 
$J$-holomorphic disk, possibly constant.
$v$ is also called the core of $(\mathbf{u}, \mathbf{u'}, \mathbf{u''}, v)$.
  \item[(ii)] Denote $\tilde{x}=v(e^{-2\pi i/3})$, $\tilde{y}=v(e^{2\pi i/3})$ and 
$\tilde{z}=v(1)$. The points $\tilde{x}$, $\tilde{y}$ and $\tilde{z}$ are not critical 
points. Then:
	      $\mathbf{u}\in 
\mathcal{P}_{prl}(x,\tilde{x},\mathbf{\xi};J,\rho,f)$,
	      $\mathbf{u'}\in 
\mathcal{P}_{prl}(y,\tilde{y},\mathbf{\xi'};J,\rho,f')$,
	      $\mathbf{u''}\in 
\mathcal{P}_{prl}(\tilde{z},z,\mathbf{\xi''};J,\rho,f'')$,
      for some $\mathbf{\xi},\mathbf{\xi'}, \mathbf{\xi''} \in H^D_2(E,V)$.
  \item[(iii)] $\mathbf{\xi}+ \mathbf{\xi'}+ \mathbf{\xi''}+[v]=\mathbf{\lambda}$.
\end{itemize}
Denote by 
$\mathcal{P}_{prod}(x,y,z,\lambda;f,f',f'',\rho,J)$ the space of all such sequences.
An element in this space is called a figure-$Y$ pearly trajectory from $x$ and $y$ to $z$.
In other words
$\mathcal{P}_{prod}(x,y,z,\lambda;f,f',f'',\rho,J)$ is a moduli space modeled over a 
planar tree with two entries $x$ and $y$, one exit $z$ and one vertex of valence three 
with three marked points on the boundary.
\end{definition}
The virtual dimension of 
$\mathcal{P}_{prod}(x,y,z,\lambda;f,f',f'',\rho,J
)$ is 
$$\delta_{prod}(x,y,z,\mathbf{\lambda}):=|x|+|y|-|z|+\mu(\lambda)-(n+1).$$

Suppose that Assumption~\ref{assum:genericity} 2. holds. 
By Theorem~\ref{thm:moduli_as_in_closed_setting} for generic $J$ and if the virtual dimension
$\delta_{prod}(x,y,z,\mathbf{\lambda})\leq1$, the moduli spaces 
$\mathcal{P}_{prod}(x,y,z,\lambda)$ form smooth manifolds of dimension equal to their 
virtual dimension, and they are compact if $\delta_{prod}(x,y,z,\mathbf{\lambda})=0$. 
The quantum product on the chain level is defined by counting the elements in the zero 
dimensional moduli spaces of figure-$Y$ pearly trajectories.
More precisely, for every $x\in Crit(f)$, $y \in Crit(f')$ we set
\begin{equation}\label{eq:product}
 x * y := \sum\limits_{ \begin{subarray}{c}
                         z,  \lambda \\ \delta_{prod}=0\\
                         \end{subarray}
} \# \mathcal{P}_{prod}(x, y, z, 
\mathbf{\lambda};f,f',f'',\rho, J) z
t^{\overline{\mu}(\lambda)},
\end{equation}
where the sum runs over all $z\in Crit(f'')$ and $\mathbf{\lambda} \in 
H^D_2(E,V)$, such that $\delta_{prod}(x,y,z,\mathbf{\lambda})=0$.

\begin{lemma}\label{lem:prod}
For a generic choice of data, the operation in~(\ref{eq:product}) is well-defined and it 
is a chain map.
It induces a product in homology, which is independent of the choices made in the 
construction.
\end{lemma}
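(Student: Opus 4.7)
The plan is to follow the same three-step pattern used for the differential: (a) dimension zero moduli spaces are finite, so the sum defining $x\ast y$ is well-defined; (b) a boundary analysis of one-dimensional moduli spaces yields the Leibniz identity; and (c) independence of auxiliary data follows from a comparison argument analogous to the one in Section~\ref{sec:invariance}.

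For (a), fix generic $(f,f',f'',\rho,J)$ satisfying Assumption~\ref{assum:genericity} 2. By Lemma~\ref{lem:traj_away_from_boundary_general} no element of $\mathcal{P}_{prod}$ escapes through $\partial V$, so standard Gromov compactness applies in the interior, and Theorem~\ref{thm:moduli_as_in_closed_setting} identifies the compactness/transversality picture with the closed case of~\cite{BC07}. In particular, whenever $\delta_{prod}(x,y,z,\lambda)=0$ the moduli space $\mathcal{P}_{prod}(x,y,z,\lambda;f,f',f'',\rho,J)$ is a finite set. Monotonicity together with the degree filtration shows that for fixed $x,y,z$ only finitely many classes $\lambda$ can contribute in each fixed Maslov-degree bracket, so the series on the right of~(\ref{eq:product}) is an element of $C^+((V,S);f'',\rho,J)$.

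For (b), apply Theorem~\ref{thm:moduli_as_in_closed_setting} to the one-dimensional component $\mathcal{P}_{prod}(x,y,z,\lambda)$ with $\delta_{prod}=1$. Its boundary decomposes into codimension-one strata of three types, corresponding to breaking of the incoming trajectory from $x$, of the incoming trajectory from $y$, or of the outgoing trajectory to $z$, at an intermediate critical point. (Additional bubbling/shrinking configurations arise in pairs that cancel mod $2$, exactly as in the argument of Section~\ref{sec:d^2=0}, and the cases $(P3b), (P4)$ are excluded by monotonicity and transversality.) Counting boundary points mod $2$ gives
\begin{equation*}
\sum_{w}\#\mathcal{P}_{prl}(x,w,\lambda_1)\,\#\mathcal{P}_{prod}(w,y,z,\lambda_2)
+\sum_{w}\#\mathcal{P}_{prl}(y,w,\lambda_1)\,\#\mathcal{P}_{prod}(x,w,z,\lambda_2)
=\sum_{w}\#\mathcal{P}_{prod}(x,y,w,\lambda_1)\,\#\mathcal{P}_{prl}(w,z,\lambda_2),
\end{equation*}
summed over all admissible $w$ and splittings $\lambda_1+\lambda_2=\lambda$. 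Reading this as the matrix coefficient of $z\,t^{\overline\mu(\lambda)}$ in $d(x)\ast y + x\ast d(y) + d(x\ast y)$, we conclude that $\ast$ is a chain map $C^+(V,S;f)\otimes C^+(V,S;f') \to C^+(V,S;f'')$ of degree $-(n+1)$. $\Lambda^+$-bilinearity is immediate from the definition, since $t^{\overline\mu(\lambda)}$ enters multiplicatively.

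For (c), given two generic choices $\mathcal{D}_i=(f_i,f'_i,f''_i,\rho_i,J_i)$, $i=0,1$, connect them by a path of data and introduce a parametric figure-$Y$ moduli space $\mathcal{P}_{prod,comp}$ whose cores are $J_t$-holomorphic disks and whose incoming/outgoing legs flow under Morse cobordisms $(F,G),(F',G'),(F'',G'')$, exactly as in Section~\ref{sec:invariance}. A boundary analysis of the one-dimensional strata shows that the count of zero-dimensional comparison configurations defines a chain homotopy between $\phi^{\mathcal{D}_1}_{\mathcal{D}_0}\circ \ast_{\mathcal{D}_0}$ and $\ast_{\mathcal{D}_1}\circ(\phi^{\mathcal{D}_1}_{\mathcal{D}_0}\otimes \phi^{\mathcal{D}_1}_{\mathcal{D}_0})$, so the induced product on homology is independent of choices. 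A further homotopy-of-homotopies argument, again as in~\cite{BC07}, shows that the comparison chain homotopies themselves are chain-homotopic, yielding canonicity.

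The main obstacle is arranging transversality for the three distinct Morse functions simultaneously and for the evaluation maps at the three boundary marked points $e^{\pm 2\pi i/3},1$ of the core disk $v$: one needs the reduction to simple disks outlined in Section~\ref{sec:reduction_to_simple_disks} to handle multiply covered cores, and one needs Assumption~\ref{assum:genericity} 2 (triple transversality of stable and unstable manifolds) to guarantee that the evaluation of a pearly tree at the core is transverse to $W^u_x(f)\times W^u_y(f')\times W^s_z(f'')$. As noted after Proposition~\ref{prop:transversality_general}, the case $N_V=2$ of certain boundary strata is not problematic for $\mathcal{P}_{prod}$ because there are no interior marked points on the core, so no Hamiltonian perturbation is required here.
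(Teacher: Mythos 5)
Your proof is correct and follows the same route the paper takes, namely invoking Lemma~\ref{lem:traj_away_from_boundary_general} and Theorem~\ref{thm:moduli_as_in_closed_setting} to reduce to the closed-Lagrangian arguments of~\cite{BC07}; the paper's proof is a one-line citation, and you have simply spelled out the compactness/transversality/boundary-breaking details and the comparison-moduli-space argument that~\cite{BC07} supplies. Your observations that no interior marked points occur on the core of $\mathcal{P}_{prod}$ (so the $N_V=2$ subtlety does not arise) and that triple transversality from Assumption~\ref{assum:genericity} is what makes the evaluation at the three boundary marked points transverse are both accurate.
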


\begin{proof}
 By Theorem~\ref{thm:moduli_as_in_closed_setting} we can refer to~\cite{BC07} for details.
\end{proof}

\begin{lemma}\label{lem:assoc}
 The product
\begin{equation*}
 *:Q^+H_i(V,S)\otimes Q^+H_k(V,S)\rightarrow Q^+H_{i+k-(n+1)}(V,S)
\end{equation*}
is associative.
\end{lemma}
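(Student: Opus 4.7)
The plan is to follow the standard Biran–Cornea scheme (\cite{BC07}) and express associativity as the boundary relation of a one–parameter family of figure-$Y$ type configurations with four external edges. Concretely, fix four Morse functions $f, f', f'', f'''$ on $V$ respecting the exit region $S$, generic enough so that Assumption~\ref{assum:genericity} holds pairwise. Define a moduli space $\mathcal{P}_{assoc}(x,y,z,w,\lambda;\mathcal{D},J)$ of pearly trees with three entries at critical points $x\in Crit(f)$, $y\in Crit(f')$, $z\in Crit(f'')$ and one exit at $w\in Crit(f''')$, whose core is a $J$-holomorphic disk $v:(D,\partial D)\to(E,V)$ carrying four boundary marked points $p_1,p_2,p_3,p_4\in\partial D$ (respecting the cyclic order). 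Three of these marked points host the incoming pearly chains from $x,y,z$ and the fourth hosts the outgoing pearly chain to $w$, with the total class summing to $\lambda$. The cross-ratio of $(p_1,p_2,p_3,p_4)$ is allowed to range in the open interval $(0,1)\cong\mathcal{M}_{0,4}(\mathbb R)$, adding one to the virtual dimension compared with $\mathcal{P}_{prod}$.

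By Lemma~\ref{lem:traj_away_from_boundary_general} no element of $\mathcal{P}_{assoc}$ escapes along the cylindrical ends, so the analysis is identical to the closed case. Hence Theorem~\ref{thm:moduli_as_in_closed_setting} applies: for generic $(\mathcal{D},J)$ the space $\mathcal{P}_{assoc}$ is a smooth manifold of dimension equal to its virtual dimension $|x|+|y|+|z|-|w|+\mu(\lambda)-2(n+1)+1$, and when this number is $1$ it admits a natural compactification $\overline{\mathcal{P}_{assoc}}$ whose boundary is described exactly as in~\cite{BC07}. The relevant boundary strata come in three flavours: (a) standard Morse/pearly breakings at the four external edges, which assemble into the differential $d$ applied to the chain-level triple product; (b) degenerations of the cross-ratio to $0$, producing a pair of configurations of type $\mathcal{P}_{prod}$ glued along a shared boundary marked point and realizing the iterated product $(x\ast y)\ast z$; (c) degenerations of the cross-ratio to $1$, realizing the other bracketing $x\ast(y\ast z)$. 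Other possible limits (disk bubbling at a non-special point, sphere bubbling, multiply-covered disks) are excluded by the usual monotonicity/simple-disk argument reviewed in Section~\ref{sec:reduction_to_simple_disks}.

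Counting the boundary points of $\overline{\mathcal{P}_{assoc}}$ modulo $2$ and using that the total must vanish gives, on the chain level, the relation
\begin{equation*}
(x \ast y) \ast z \;+\; x \ast (y \ast z) \;=\; d(T(x,y,z)) \;+\; T(dx,y,z) \;+\; T(x,dy,z) \;+\; T(x,y,dz),
\end{equation*}
where $T$ is the chain homotopy defined by counting the zero-dimensional components of $\mathcal{P}_{assoc}$ weighted by $t^{\overline{\mu}(\lambda)}$. Passing to homology, the right-hand side vanishes and associativity of $\ast$ on $Q^+H_*(V,S)$ follows; the same argument with $\Lambda$ in place of $\Lambda^+$ yields associativity on $QH_*(V,S)$.

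The main obstacle I expect is purely organizational rather than conceptual: verifying that in the non-compact Lefschetz fibration setting the degenerations at the ends of the $1$-parameter family of cross-ratios really correspond bijectively to the two bracketings. This requires a gluing theorem for the broken $\mathcal{P}_{prod}\times\mathcal{P}_{prod}$ configurations together with surjectivity of gluing, but once the combinatorial description is identified with that of~\cite{BC07} the argument transfers verbatim via Theorem~\ref{thm:moduli_as_in_closed_setting}, since Lemma~\ref{lem:traj_away_from_boundary_general} guarantees that all curves and trajectories stay in a fixed compact region of $E$.
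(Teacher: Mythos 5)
Your overall strategy is the right one and matches the paper's: build a one--parameter moduli space connecting the two bracketings, use Lemma~\ref{lem:traj_away_from_boundary_general} and Theorem~\ref{thm:moduli_as_in_closed_setting} to transfer the closed-case analysis of~\cite{BC07} to the cobordism setting, and read the associativity relation off the boundary of the compactified $1$-dimensional moduli space.

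However, there is a gap in your description of $\mathcal{P}_{assoc}$ and hence in your boundary identification. You take $\mathcal{P}_{assoc}$ to consist only of configurations whose core is a single disk with four boundary marked points whose cross-ratio varies in $(0,1)$. When the cross-ratio degenerates to $0$ or $1$, the limit is a \emph{nodal} configuration: two disks, each with three boundary marked points, meeting at a boundary node. This is \emph{not} the same as an $(x*y)*z$ configuration. By Definition~\ref{def:P_prod} and Equation~(\ref{eq:product}), the iterated product $(x*y)*z$ counts composites where a $\mathcal{P}_{prod}$ configuration ends at a critical point $u$ and a second $\mathcal{P}_{prod}$ configuration starts from $u$; concretely, the two core disks are joined by a pearly chain that \emph{breaks at a critical point}, not by a shared boundary marked point. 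A nodal disk and a broken-at-$u$ chain are distinct codimension-one phenomena. With your moduli space as stated, the true bracketing configurations never appear on the boundary, so the chain-level identity you write down does not actually follow from $\#\partial\overline{\mathcal{P}_{assoc}}=0$.

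The fix, and what the paper actually does, is to enlarge $\mathcal{P}_{assoc}$ so that the underlying tree type $\tau$ includes \emph{both} kinds of cores: a single vertex of valence four (one disk with four boundary marked points, cross-ratio free), \emph{and} two vertices of valence three (two disks with three boundary marked points each) joined by an edge carrying a pearly chain whose length is a free parameter. The cross-ratio degenerations then become \emph{interior} transition points between the two strata (edge length $\to 0$ glues to cross-ratio $\to 0$ or $1$), while the boundary of the one-dimensional moduli space consists of Morse/pearly breakings at the external edges (giving $d\circ T + T\circ d$) together with the intermediate edge breaking at a critical point (giving $(x*y)*z$ and $x*(y*z)$). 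With that correction the rest of your argument, including the appeal to monotonicity to exclude bubbling and the reduction to simple disks, goes through as in~\cite{BC07}.
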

\begin{proof}
We want to define a moduli spaces $\mathcal{P}_{\tau}$ modeled over trees $\tau$ 
of some specific type and show that
$$\xi:C^+((V,S);f)\otimes C^+((V,S);f''^+((V,S);f'')\to 
C^+((V,S);f)$$
defined by
$$\xi(x\otimes y\otimes z):=\sum\limits_{ \begin{subarray}{c}
                         w,  \lambda \\ \delta_{\tau}=0\\
                         \end{subarray}
} \# \mathcal{P}_{\tau}(x, y, z, w) w
t^{\overline{\mu}(\lambda)},$$
is a chain homotopy $\xi:((-*-)*-)\simeq (-*(-*-))$.
A tree of type $\tau$ have three entries, labeled by critical points of $f$, $f'$ and 
$f''$ respectively. The tree has one exit, labeled by a critical point of $f$. It is easy 
to see that the tree must have either two vertices of index three, that correspond to 
disks with three boundary marked points, or one vertex, corresponding to a disk with four 
boundary marked points. The vertices are all labeled by elements of $H_2(E,V)$. 
See~\cite{BC07} for details.
\end{proof}

For the existence of a unit $-\nabla f$ has to point outwards along the boundary $\partial V$.
\begin{lemma}\label{lem:single_max}
There exists a Morse function $f$ on $V$ with a single maximum $m$ and such that
$-\nabla f$ points out along $\partial V$.
\end{lemma}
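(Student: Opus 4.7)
The plan is to obtain this statement as a direct consequence of Lemma~\ref{lem:single_min}, by a simple duality. First I would apply Lemma~\ref{lem:single_min} to produce a Morse function $g: V \to \mathbb{R}$ with $-\nabla g$ pointing inward along $\partial V$ and having a unique minimum $x_0$. Then I would set $f := -g$ and check that it has the required properties.

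The verification proceeds in two steps. First, $\nabla f = -\nabla g$, so $-\nabla f = \nabla g$; since $-\nabla g$ is transverse to $\partial V$ and points inward, $\nabla g$ is transverse and points outward, and hence $-\nabla f$ is transverse to $\partial V$ and points outward, as required. Second, $\operatorname{Crit}(f) = \operatorname{Crit}(g)$, and the Hessian of $f$ at any $x \in \operatorname{Crit}(f)$ is the negative of the Hessian of $g$, so Morse indices are swapped by $k \mapsto (n+1) - k$. In particular $x_0$, being the unique index $0$ critical point of $g$, is the unique index $(n+1)$ critical point of $f$, i.e. the unique maximum.

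Alternatively, one could mirror the proof of Lemma~\ref{lem:single_min} directly: start from an arbitrary Morse function $f$ with $-\nabla f$ pointing outward (such functions exist by a standard construction using a collar of $\partial V$), observe that in this setting the Morse complex computes $H_*(V, \partial V)$, note that $H_{n+1}(V, \partial V; \mathbb{Z}_2) \cong \mathbb{Z}_2$ since $V$ is a connected $(n+1)$-manifold with boundary, and iteratively cancel pairs of excess local maxima via Theorem~\ref{thm:diffeo_to_Dn}. The first route is cleaner, so I would adopt it. There is no real obstacle here; the content of the lemma is contained in the previous one, and the argument is a one-line duality.
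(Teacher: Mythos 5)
Your proposal is correct and follows exactly the paper's own argument: the paper's proof of this lemma is precisely to take the negative of the Morse function constructed in Lemma~\ref{lem:single_min}, so your duality route (with the index flip $k \mapsto (n+1)-k$ turning the unique minimum into a unique maximum and $-\nabla f$ now pointing outward) is the same proof, just with the verification spelled out.
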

\begin{proof}
We can use the Morse function $-f$, where $f$ was defined in~\ref{lem:single_min}.
\end{proof}

\begin{lemma}\label{lem:unit_rel}
There exists a canonical element $e_{(V,\partial V)}\in Q^+H_{n+1}(V,\partial V)$, 
which is a unit with respect to the quantum product, i.e. $e_{(V,\partial V)}*x=x$ for 
every $x\in 
Q^+H_*(V,\partial V)$.
\end{lemma}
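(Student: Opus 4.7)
The plan is to define $e_{(V,\partial V)}$ as the pearl-homology class of the unique maximum of a Morse function with outward-pointing gradient, verify that this is a cycle by a degree argument, and deduce the unit property via Theorem~\ref{thm:moduli_as_in_closed_setting} together with the corresponding closed-Lagrangian proof in~\cite{BC07}.

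By Lemma~\ref{lem:single_max}, choose a Morse function $f$ on $V$ with $-\nabla f$ pointing outward along $\partial V$ and with a single maximum $m$, necessarily of Morse index $n+1$. For generic auxiliary data $(\rho, J)$, set $e_{(V,\partial V)} := [m] \in Q^+H_{n+1}(V,\partial V)$. To see that $m$ is a pearl cycle, write
$$d(m) = \sum_{y, \lambda} \#_{\mathbb{Z}_2} \mathcal{P}_{prl}(m, y, \lambda; f, \rho, J)\, y\, t^{\overline{\mu}(\lambda)},$$
where the sum ranges over $(y,\lambda)$ satisfying $\delta_{prl}(m, y, \lambda) = (n+1) - |y| + \mu(\lambda) - 1 = 0$, i.e.\ $|y| = n + \mu(\lambda)$. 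If $\lambda \neq 0$, monotonicity forces $\mu(\lambda) \geq N_V \geq 2$, hence $|y| \geq n+2 > \dim V$, which is impossible. If $\lambda = 0$, the contribution is the Morse differential $\partial_0(m)$, which vanishes because $m$ is the only index-$(n+1)$ critical point while $H_{n+1}(V,\partial V;\mathbb{Z}_2) \cong \mathbb{Z}_2$ is nonzero, so $[m]$ must represent the nontrivial fundamental class $[V,\partial V]$. Canonicity of $[m]$ in $Q^+H_{n+1}(V,\partial V)$ follows from the continuation arguments of Section~\ref{sec:invariance}, since any two admissible Morse functions with a single maximum can be joined by an admissible Morse cobordism.

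For the unit property, let $y$ be a critical point of an auxiliary Morse function $f'$ respecting $\partial V$ and compute $m * y$ from the moduli space $\mathcal{P}_{prod}(m, y, z, \lambda; f, f', f, \rho, J)$, which has virtual dimension $|y| - |z| + \mu(\lambda)$. The $\lambda = 0$ contribution forces the core disk and every pearl strand to be constant or Morse (by monotonicity), reducing to the triple intersection count $W^u_m(f) \cap W^u_y(f') \cap W^s_z(f)$; since $W^u_m$ is open and dense in $V$, this coincides with the standard Morse continuation chain map $\phi : CM(f') \to CM(f)$ evaluated on $y$, which induces the identity on $H_*(V,\partial V;\mathbb{Z}_2)$. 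The contributions with $\mu(\lambda) > 0$ lie in strictly lower filtration. By Theorem~\ref{thm:moduli_as_in_closed_setting}, the moduli-theoretic analysis of $\mathcal{P}_{prod}$ is formally the same as in the closed-Lagrangian case treated in~\cite{BC07}, so the cobordism/chain-homotopy argument used there to prove the unit property for the analogous class in $Q^+H_*(L)$ carries over verbatim, yielding $[m] * [y] = [y]$ in $Q^+H_*(V,\partial V)$.

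The main obstacle is showing that the higher-Maslov contributions to $m * y$, which generally do not vanish on the chain level, are pearl-complex boundaries rather than merely lower-filtration terms. This is handled as in~\cite{BC07} by an auxiliary one-parameter family of figure-$Y$ moduli spaces whose boundary description produces the required correction; the key geometric input is that $W^u_m$ has codimension zero in $V$, so that the incoming trajectory from $m$ is essentially unconstrained and the dimension counts remain clean. The setup of Section~\ref{sec:tools}---in particular $J = i \oplus J_M$ over $\mathcal{W}$ and the open mapping Lemma~\ref{lem:curve_openmapping}---ensures that no pearly configuration in this cobordism escapes through the cylindrical ends of $V$, so that compactness and gluing proceed exactly as in the closed case.
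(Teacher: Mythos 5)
Your first half (the class of the unique maximum $m$ is a cycle because quantum terms in $d(m)$ would force $|y|\geq n+2$, the Morse part vanishes since $m$ is the sole index-$(n+1)$ point and $H_{n+1}(V,\partial V;\mathbb{Z}_2)\neq 0$, and canonicity follows from the comparison morphisms) is exactly the paper's argument. The gap is in the unit property. The paper's proof hinges on choosing the \emph{output} Morse function equal to the function of the other input, i.e.\ $f''=f'$: then any figure-$Y$ configuration contributing to $m*y$, after deleting the strand coming from $m$, is an honest pearly trajectory from $y$ to $z$ for the single function $f'$, whose virtual dimension $|y|-|z|+\mu(\lambda)-1$ is negative unless $\mu(\lambda)=0$ and $y=z$; hence \emph{all} quantum terms vanish on the chain level and $m*y=y$ exactly, with the unique $-\nabla f$ flow line from $m$ through $y$ supplying the count $1$.

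You instead take $f''=f$, and this choice creates the problem you then cannot close. With $f''=f$ the zero-dimensional condition $|y|-|z|+\mu(\lambda)=0$ admits genuine solutions with $\mu(\lambda)\geq 2$ and $|z|=|y|+\mu(\lambda)$, so the higher-Maslov terms in $m*y$ do not vanish for dimension reasons; your classical term is only the comparison morphism $C^+(f')\to C^+(f)$, not the identity. To conclude $[m]*[y]=[y]$ you would need a chain homotopy between $m*(-)$ and the canonical comparison morphism, and the assertion that this is produced ``as in~\cite{BC07} by an auxiliary one-parameter family of figure-$Y$ moduli spaces'' is not substantiated: no such moduli space or boundary description is given, and the argument in~\cite{BC07} (and in this paper) for the unit is precisely the chain-level one sketched above, not a homotopy-correction argument. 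Note also that the filtration observation only shows $m*(-)$ is a filtered chain map whose leading term induces an isomorphism, which gives that $m*(-)$ is an isomorphism in homology but not that it is the canonical identification; so the unit property does not follow from what you have written. Replacing your choice by $f''=f'$ repairs the proof and reduces it to the paper's dimension count.
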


\begin{proof}
Let $f$ be a Morse function with a single maximum $m$. Then $m$ represents the unit in $Q^+H_*(V,\partial V)$.
Any non-void moduli space $\mathcal{P}_{prod}(m,x,\lambda)$ with $\mu(\lambda)>0$ has dimension 
$$|m|-|x|+\mu(\lambda)-1=(n+1)-|x|+\mu(\lambda)-1 \geq (n+1)-n+2-1=2.$$
Thus, we may assume $\lambda=0$, which corresponds to computing the Morse differential. 
Since $H_{n+1}(V,\partial V)\cong \mathbb{Z}_2$ the critical point $m$ must be a cycle. 
By a similar dimension argument it is easy to see that $m$ is not a boundary either.
(This is true if we work with $\Lambda^+$. For coefficients in $\Lambda$ the element represented by $m$ might be zero, in which case the quantum homology is narrow.)

Choose $(f, f',f'',\rho)$ in generic position.
We may assume $f'=f''$.
On the chain level
\begin{equation*}
\begin{array}{cccc}
  m * y & = & \sum\limits_{\begin{subarray}{c}
			z,\lambda\\
			\delta_{prod}=0\\
			 \end{subarray}
} \# \mathcal{P}_{prod}(m,y,z,\lambda)zt^{\overline{\mu}(\lambda)}.\\
\end{array}
\end{equation*}
Since $f'=f''$, we see that the figure-$Y$ pearly trajectory gives rise 
to a pearly trajectory from $y$ to $z$.
The identity 
$$0=\delta_{prod}(m,y,z,\lambda)=|m|+|y|-|z|+\mu(\lambda)-(n+1)$$ 
implies
$$0=|y|-|z|+\mu(\lambda),$$ 
since $|m|=n+1$.
Assume that $|y|\neq|z|$ and $\mu(\lambda)\neq 0$, then
\begin{equation*}
 \delta_{prl}(y,z,\lambda)=|y|-|z|+\mu(\lambda)-1<0,
\end{equation*}
which is a contradiction.
We conclude that $\mu(\lambda)=0$ and $|y|=|z|$. 
Thus, $y=z$ and since $m$ is the only maximum of $f$ there exists a unique flow line of 
$-\nabla f$ starting at $m$ and going through $y=z$. 
Therefore
\begin{equation*}
\begin{array}{cccccc}
  m * y & = & \smashoperator{\sum\limits_{\begin{subarray}{c}
			z,\lambda\\
			\delta_{prod}=0\\
			 \end{subarray}}
} \# \mathcal{P}_{prod}(m,y,z,\lambda)zt^{\overline{\mu}(\lambda)}
	& = & y.
\end{array}
\end{equation*}

It is left to show that the definition of the unit on the chain level is canonical.
Choose another Morse function $f'$ and suppose $m'_1, \cdots, m'_k$ are all the index $n+1$ critical points of $f'$. Let $(F,G)$ be a Morse cobordism between $(f,\rho)$ and $(f',\rho')$.
Then $m$ is the unique critical point of $F$ of index $n+2$.
For a critical point $y$ of $f'$ the dimension of the comparison moduli space $\mathcal{P}_{comp}(m,y',\lambda)$ is 
$$|m|-|y'|+\mu(\lambda)=(n+1)-|y'|+\mu(\lambda).$$
This is only zero if $\mu(\lambda)=0$ and $|y'|=n+1$.
Moreover, Morse theoretic arguments show that for all $i=1,\cdots, k$ there exists exactly one gradient trajectory of $-\nabla F$ from $m$ to $m'_i$.
In other words, we have $\phi_{\mathcal{D}}^{\mathcal{D}'}(m)=m'_1+\cdots +m'_k$.
\end{proof}

\subsection{The ambient quantum homology}\label{sec:ambient_QH}

If the Lagrangian $V\subset E$ is monotone then $E$ is spherically monotone in the sense that there exists a constant $\nu>0$ such that
$$\Omega(\lambda)=\nu c_1(\lambda),\quad \forall \lambda\in \pi_2(E),$$
where $c_1\in H_2(E)$ is the Chern class of the tangent bundle $TE$. 
This constant is related to the monotonicity constant by $\nu=2\tau$. 
Define the minimal Chern number of $E$:
$$C_E:=\min\{c_1(\lambda)>0|\lambda\in \pi_2(E)\}.$$
Let $h$ be a Morse function on $E$ such that $-\nabla h$ points out along $\partial^vE$ and in along $\partial^hE$.
Let $\Gamma:=\mathbb{Z}_2[s,s^{-1}]$, where $|s|:=-2C_M$ and set $Q^+H(E,\partial^vE):=H(E,\partial^vE)\otimes \Gamma$.
(If $C_E=\infty$ take $\Gamma:=\mathbb{Z}_2$.)
Since $N_V|2C_E$ we can also work with the coefficient ring $\Lambda:=\mathbb{Z}_2[t,t^{-1}]$, where $|t|=-N_V$.
Define the Morse complex 
$$C^+((E,\partial^vE),h,\rho_E;\Lambda)=\mathbb{Z}\langle Crit(h)\rangle\otimes \Lambda.$$
Then the embedding $\Gamma\hookrightarrow \Lambda:s\mapsto t^{2C_E/N_V}$ induces an isomorphism 
 $$Q^+H(E,\partial^vE;\Lambda):=Q^+H(E,\partial^vE)\otimes_{\Gamma}\Lambda \cong H_*(C^+((E,\partial^vE),h,\rho_E;\Lambda)).$$

\begin{proposition}
There exists a bilinear map 
\begin{equation*}
 \oast:Q^+H_i(E,\partial^vE)\otimes Q^+H_j(E,\partial^vE)\to Q^+H_{i+j-(2n+2)}(E,\partial^vE)
\end{equation*}
that turns $Q^+H_*(E,\partial^v E)$ in to a (possibly non-unital) ring. If the 
generic fiber $(M,w)$ is compact without boundary (or equivalently $\partial^hE=\emptyset$ and $\partial E=\partial^vE$) then this ring is unital and the unit $e_{(E,\partial E)}$ 
corresponds to the fundamental class $[E,\partial E]\in H_*(E,\partial E)$.
\end{proposition}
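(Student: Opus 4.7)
The plan is to mirror the construction of the Lagrangian quantum product from Lemma~\ref{lem:prod}, replacing $J$-holomorphic disks with boundary on $V$ by $J$-holomorphic spheres in $E$, and Morse functions respecting the exit region $S\subset\partial V$ by Morse functions on $E$ whose negative gradient points outwards along $\partial^vE$ and inwards along $\partial^hE$ (the class fixed in section~\ref{sec:ambient_QH}). Concretely, for three generic Morse functions $h,h',h''$ on $E$ of this type and for a class $\lambda$ in the image of $\pi_2(E)\to H_2(E)$, I would define a moduli space $\mathcal{P}^{amb}_{prod}(a,b,c;\lambda;h,h',h'',\rho_E,J)$ consisting of figure-$Y$ configurations whose core is a (possibly constant but stable) $J$-holomorphic sphere $v:S^2\to E$ with three marked points, glued via three chains of non-constant $J$-holomorphic spheres and gradient segments of $-\nabla h$, $-\nabla h'$, $-\nabla h''$ to $W^u(a,h)$, $W^u(b,h')$ and $W^s(c,h'')$ respectively, with total spherical class $\lambda$. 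The virtual dimension is
\[
\delta^{amb}_{prod}(a,b,c;\lambda)=|a|+|b|-|c|+2c_1(\lambda)-(2n+2),
\]
and on the chain level I set
\[
a\oast b := \sum_{\substack{c,\lambda\\ \delta^{amb}_{prod}=0}} \#_{\Z_2}\mathcal{P}^{amb}_{prod}(a,b,c;\lambda)\, c\, t^{2c_1(\lambda)/N_V}.
\]

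The transversality, compactness and gluing statements needed to make this count a chain map reduce, via Theorem~\ref{thm:moduli_as_in_closed_setting}, to the standard arguments of~\cite{BC07} once the analogue of Lemma~\ref{lem:traj_away_from_boundary_general} is in place. The gradient-trajectory part is exactly the same: a flow line of $-\nabla h$ connecting two critical points of $h$ cannot cross $\partial^vE$ (where it points outwards) nor $\partial^hE$ (where it points inwards). For the holomorphic part, any $J$-holomorphic sphere $v:S^2\to E$ satisfies $\pi\circ v\equiv z_0\in\mathbb{C}$ by the open mapping theorem (the image of the compact Riemann surface $S^2$ under the holomorphic map $\pi\circ v$ must be a single point), so $v$ factors through the fiber $E_{z_0}$. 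Since the generic fiber $(M,\omega)$ is either closed or convex at infinity by Assumption~\ref{assum:T_infty}, $v$ is confined to a compact subset of $E_{z_0}$ and in particular cannot reach $\partial^hE$ or escape to infinity. Associativity, bilinearity and independence of the quasi-isomorphism class of the complex under changes of $(h,h',h'',\rho_E,J)$ then follow by the same Morse cobordism/comparison moduli argument used in section~\ref{sec:invariance} and Lemma~\ref{lem:assoc}.

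For the unit statement, assume $\partial^hE=\emptyset$, so $\partial E=\partial^vE$ and the admissible Morse functions $h$ on $E$ are simply those with $-\nabla h$ pointing outwards along the whole boundary. Reproducing the proof of Lemma~\ref{lem:single_max} (applied to $E$ in place of $V$) produces such an $h$ with a single interior maximum $m_E$ of index $2n+2$. Since $h$ has no other critical point of top index, $m_E$ is a Morse cycle representing a generator of $H_{2n+2}(E,\partial E;\Z_2)$, i.e. the relative fundamental class $[E,\partial E]$; a dimension count on $\mathcal{P}^{amb}_{prl}(m_E,y;\lambda)$ rules out higher-order quantum contributions to the differential of $m_E$, so $m_E$ survives as a class $e_{(E,\partial E)}\in Q^+H_{2n+2}(E,\partial E)$ mapping to $[E,\partial E]$. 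To verify that $e_{(E,\partial E)}\oast b=b$, observe that $\delta^{amb}_{prod}(m_E,b,c;\lambda)=0$ with $|m_E|=2n+2$ forces $c_1(\lambda)=0$ and $|b|=|c|$, so only the Morse $\lambda=0$ configurations contribute, and among those only the unique gradient trajectory from $m_E$ through the chosen marked point on the core disk to $b=c$ survives, exactly as in Lemma~\ref{lem:unit_rel}. Canonicity of $e_{(E,\partial E)}$ follows from the comparison chain morphism computation of that lemma.

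The main obstacle I expect is the careful identification of the boundary strata of one-dimensional components of $\mathcal{P}^{amb}_{prod}$ — in particular, handling stable constant sphere components at the vertices, and making sure the reduction-to-simple-curves argument from section~\ref{sec:reduction_to_simple_disks} goes through uniformly for configurations with several interior marked points. However, because the spheres never touch $V$ and are trapped inside single fibers, no phenomenon arises here that is not already present in the classical construction of the ambient quantum product on a closed monotone symplectic manifold, so the adaptation is essentially bookkeeping on top of Theorem~\ref{thm:moduli_as_in_closed_setting}.
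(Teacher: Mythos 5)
Your construction follows the paper's own route: the product on $Q^+H_*(E,\partial^vE)$ is defined by counting configurations modeled on trees with two entries, one exit and a single valence-three vertex carrying a $J$-holomorphic sphere, for Morse functions on $E$ whose negative gradient exits along $\partial^vE$ and enters along $\partial^hE$; well-definedness and invariance are reduced, via the confinement statement and Theorem~\ref{thm:moduli_as_in_closed_setting}, to the closed-manifold arguments of~\cite{McS12}, and the unit is represented by the unique maximum of a Morse function exactly as in Lemmas~\ref{lem:single_max} and~\ref{lem:unit_rel} (your extra pearly chains of spheres along the edges are harmless but not needed in the monotone setting).

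The one step you overstate is the assertion that every $J$-holomorphic sphere $v:S^2\to E$ has $\pi\circ v$ constant. That argument requires $\pi$ to be $(J,i)$-holomorphic on all of $E$, which holds for the particular structure produced by Proposition~\ref{prop:tameLF} but is not imposed on the structures $J\in\mathcal{J}_{\mathcal{W}}$ one must be free to perturb for transversality: these are only required to be split (so $\pi$ is only guaranteed holomorphic) over $\mathcal{W}$, and a sphere meeting $\pi^{-1}(\mathcal{W}^c)$ — e.g.\ one passing near a critical point of $\pi$ — need not be vertical. What the proof actually needs, and what the paper's Lemma~\ref{lem:curve_openmapping} together with Lemma~\ref{lem:traj_away_from_boundary_general} supplies, is the weaker dichotomy that $\pi\circ v$ is either constant or has image contained in $\mathcal{W}^c$, hence in a fixed compact part of the base once $E$ is restricted to the disk $\mathcal{D}$; this confinement is enough to keep all configurations away from $\partial^vE$, $\partial^hE$ and the region at infinity and to run the compactness and gluing arguments. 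With that substitution your proposal coincides with the paper's proof.
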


Let $g$, $g'$ and $g''$ be Morse functions on $(E, \partial^v E)$, and their negative 
gradient should point outwards along $\partial^v E$ and inwards along $\partial^h E$. 
The quantum product can be defined by counting the elements of a moduli spaces modeled 
over trees with two entries and one exit point and one vertex of valence three 
corresponding to a $J$-holomorphic sphere.
The entry points correspond to two critical points $x$ and $y$ of $g$ and $g'$ 
respectively and the exit point is a critical point $z$ of $g''$.
The edges of the tree correspond to the flow lines of the negative gradient of the corresponding Morse functions.
Compare this with Definition~\ref{def:P_prod}.
We denote these moduli spaces by 
$\mathcal{M}_{prod}(x,y,z,\lambda;g,g',g'',J,\rho)$.
Let $x\in Crit(g)$ and $y\in Crit(g')$ be two critical points.
Then
\begin{equation}\label{eq:qprod_E}
 x \oast y:=\sum\limits_{z,\lambda} 
\# \mathcal{M}_{prod}(x,y,z,\lambda;g,g',g'' ) 
z t^{\overline{\mu}(\lambda)},
\end{equation}
where the sum runs over all $z\in Crit(g)$ and $\lambda$, such that 
$\mathcal{M}_{prod}(x,y,z,\lambda)$ is $0$-dimensional.

\begin{lemma}
 For a generic choice of data, the operations~(\ref{eq:qprod_E})
are well-defined and their linear extensions define chain maps. 
They induce a product on $Q^+H_*(E,\partial^v E)$, which is 
independent of the choices made in the constructions.
\end{lemma}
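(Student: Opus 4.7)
The plan is to mirror the standard construction of the quantum product on a closed monotone symplectic manifold, adapted to our tame Lefschetz fibration setting. First, for generic data $(g, g', g'', \rho, J)$ satisfying the genericity hypotheses of Assumption~\ref{assum:genericity}, I would verify that the moduli spaces $\mathcal{M}_{prod}(x, y, z, \lambda; g, g', g'', J, \rho)$ are smooth manifolds of dimension equal to their virtual dimension whenever the latter is at most $1$, and consist of finitely many points when zero-dimensional. The underlying pearly trees here involve only $J$-holomorphic spheres in $E$ together with negative gradient trajectories of $g$, $g'$, $g''$; no boundary conditions on $V$ appear. By Lemma~\ref{lem:curve_openmapping} every such sphere projects to a constant under $\pi$, so its image lies in a single fiber, which is compact or convex at infinity, and Gromov compactness applies within the fiber. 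Combined with the fact that $-\nabla h$ points outward along $\partial^v E$ and inward along $\partial^h E$, no Morse trajectory escapes, and the arguments of Theorem~\ref{thm:moduli_as_in_closed_setting} guarantee that the moduli spaces behave exactly as in the closed setting treated in~\cite{BC07}.

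Granted this, well-definedness of formula~(\ref{eq:qprod_E}) is immediate from finiteness of the zero-dimensional moduli spaces. To show that the linear extension is a chain map, I would study the boundary of the compactification of a one-dimensional $\mathcal{M}_{prod}(x, y, z, \lambda; g, g', g'', J, \rho)$. Its ends are of three types: breaking of a Morse trajectory at a new critical point, which contributes the Morse-Smale concatenations corresponding to $(dx) \oast y$, $x \oast (dy)$, and $d(x \oast y)$; shrinking of a Morse flow segment adjacent to the sphere vertex; and bubbling of an additional $J$-holomorphic sphere. The standard monotonicity and simple-sphere reduction outlined in section~\ref{sec:reduction_to_simple_disks} forces the bubbling configurations to live in moduli spaces of negative virtual dimension or to pair up with the shrinking ends. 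The $\bmod\,2$ count of boundary points of the compact $1$-manifold then yields the Leibniz relation, so $\oast$ descends to a product in homology.

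For invariance, I would follow the strategy of section~\ref{sec:invariance}. Given two generic choices of data, choose a Morse cobordism for each of the three Morse functions on $E$ (with gradient transversality to $\partial E$ preserved throughout) together with a smooth path of almost complex structures in $\mathcal{J}_{\mathcal{W}}$. Counting zero-dimensional comparison moduli spaces with the appropriate tree topology defines a chain-level map between the two product structures, and analyzing the ends of the one-dimensional comparison moduli spaces provides a chain homotopy between the composed maps and the identity. This gives independence of the induced product from the choices.

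The main obstacle is ensuring compactness of the sphere moduli spaces and non-escape of Morse flow in the non-compact total space $E$. This is precisely what Lemma~\ref{lem:curve_openmapping} and the boundary behavior of $-\nabla h$ resolve: the open mapping theorem traps every holomorphic sphere inside a fiber, where the hypothesis that $(M, \omega)$ is closed or convex at infinity restores the usual compactness, and the gradient conditions keep all Morse trajectories inside a precompact region. Once this confinement is established, transversality and gluing arguments are identical to the closed monotone case, and no new analytic input is needed beyond what has already been set up in sections~\ref{sec:tools} and~\ref{sec:main_floer_techniques}.
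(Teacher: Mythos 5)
Your proposal is correct and takes essentially the same route as the paper: the paper's proof simply invokes Lemma~\ref{lem:traj_away_from_boundary_general} (whose content is exactly the confinement you derive from Lemma~\ref{lem:curve_openmapping} together with the boundary behaviour of $-\nabla h$) and Theorem~\ref{thm:moduli_as_in_closed_setting}, and then defers the transversality, compactness, gluing and invariance arguments to the closed-manifold treatment in~\cite{McS12}. You have merely written out the standard steps that the paper delegates to that reference, so there is no substantive difference in approach.
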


\begin{proof}
 By Lemma~\ref{lem:traj_away_from_boundary_general} and Theorem~\ref{thm:moduli_as_in_closed_setting} the proof is equivalent to the 
proof given in~\cite{McS12}.
\end{proof}

\subsection{The module structure}\label{sec:module_structure}

We want to endow the quantum homology $Q^+H(V,S)$ with the structure of a 
module over the ring $Q^+H_*(E,\partial^vE)$.

\begin{proposition}
 There exists a bilinear map
\begin{equation*}
 \star:Q^+H_i(E,\partial^v E) \otimes Q^+H_j(V,S) 
\rightarrow 
Q^+H_{i+j-(2n+2)}(V,S),
\end{equation*}
which endows $Q^+H_*(V,S)$ with the structure of a two-sided algebra over the 
(possibly non-unital) ring $Q^+H_*(E,\partial^v E)$.
\end{proposition}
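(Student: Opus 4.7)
The plan is to mimic the construction of the closed Lagrangian module structure from~\cite{BC07} and~\cite{BC09a}, replacing the closed Lagrangian moduli spaces by their cobordism analogues $\mathcal{P}_{mod}$ already sketched in Section~\ref{sec:transversality}. Concretely, fix a Morse function $h$ on $E$ with $-\nabla h$ pointing outward along $\partial^v E$ and inward along $\partial^h E$, a Morse function $f$ on $V$ respecting the exit region $S$, a Riemannian metric $\rho_V$ on $V$ and a Riemannian metric $\rho_E$ on $E$ in generic position, together with $J\in\mathcal{J}_{\mathcal{W}}$. For $a\in Crit(h)$, $x,y\in Crit(f)$ and $\lambda\in H_2^D(E,V)$ consider the moduli space $\mathcal{P}_{mod}(a,x,y,\lambda;\mathcal{D},J)$ of pearly trajectories from $x$ to $y$ in $V$ with a distinguished ``core'' disk carrying an interior marked point at which a $-\nabla h$ trajectory from $a$ arrives. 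Its virtual dimension is
\begin{equation*}
 \delta_{mod}(a,x,y,\lambda) = |a| + |x| - |y| + \mu(\lambda) - (2n+2).
\end{equation*}
On the chain level I would define
\begin{equation*}
 a \star x := \smashoperator{\sum_{\begin{subarray}{c} y,\lambda \\ \delta_{mod}=0 \end{subarray}}} \#_{\Z_2}\mathcal{P}_{mod}(a,x,y,\lambda;\mathcal{D},J)\, y\, t^{\overline{\mu}(\lambda)},
\end{equation*}
extended $\Lambda^+$-bilinearly.

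The next step is to invoke the general transversality and compactness machinery. By Assumption~\ref{assum:genericity}(3) the Morse data is admissible, and since $-\nabla h$ is transverse to $\partial E$ (outward on $\partial^vE$, inward on $\partial^hE$) while $-\nabla f$ respects $S$, Lemma~\ref{lem:traj_away_from_boundary_general} guarantees that no element of $\mathcal{P}_{mod}$ escapes to $\partial V$, $\partial E$ or the cylindrical ends. Theorem~\ref{thm:moduli_as_in_closed_setting} then transfers the closed-setting conclusions of~\cite{BC07,BC09a}: for generic $J\in\mathcal{J}_{\mathcal{W}}$ and $\delta_{mod}\leq 1$ the space $\mathcal{P}_{mod}$ is a smooth manifold of dimension $\delta_{mod}$, compact when $\delta_{mod}=0$, with the standard codimension-one boundary description when $\delta_{mod}=1$. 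The case $\delta_{mod}=1$, $N_V=2$ flagged in the remark after Proposition~\ref{prop:transversality_general} is handled by the usual Hamiltonian perturbation of the ambient Morse data.

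To prove that $\star$ descends to homology and is a chain map in both variables, I would analyze the boundary of the one-dimensional component of $\mathcal{P}_{mod}$. The codimension-one strata consist of: breaking of the incoming $-\nabla h$ trajectory at a critical point of $h$ (giving $d_E a\star x$), breaking of the Morse part of the pearly trajectory at a critical point of $f$ (giving $a\star d_V x$ or $d_V(a\star x)$ after the appropriate sign/grading bookkeeping over $\Z_2$), and pearl breakings of types $(P2)$ and $(P3a)$ which cancel in pairs exactly as in Section~\ref{sec:d^2=0}. No boundary contribution comes from the horizontal or vertical boundary of $E$ by the transversality of $-\nabla h$ there and by the open mapping arguments of Lemma~\ref{lem:curve_openmapping}. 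Summing over $\Z_2$ yields $d(a\star x) = (d_E a)\star x + a\star (d_V x)$, so $\star$ is a chain map. Independence of all choices follows from the standard comparison argument of Section~\ref{sec:invariance}, using a homotopy $(\overline{J},H,G)$ of Morse cobordisms on both $E$ and $V$ simultaneously, and an analogous comparison moduli space.

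The remaining structural properties are then verified by introducing moduli spaces modeled on planar trees with additional entries, as in Lemma~\ref{lem:assoc}. Associativity $(a\oast b)\star x = a\star(b\star x)$ of the $Q^+H(E,\partial^vE)$-action, compatibility $(a\star x)*y = a\star (x*y) = x*(a\star y)$ expressing the two-sided algebra structure, and $\Lambda^+$-bilinearity all come from trees with two entries in $E$ and one in $V$, or with one entry in $E$ and two in $V$; each identity is read off from the codimension-one boundary of the corresponding one-dimensional moduli space, the gluing theorem of~\cite{BC07} providing the surjectivity needed in both directions. The main obstacle in all of this is ensuring transversality for the moduli spaces involving the interior marked point of the core disk in the degenerate case $N_V=2$, $\delta_{mod}=1$; this is precisely where Hamiltonian perturbations and the reduction-to-simple-disks procedure of Section~\ref{sec:reduction_to_simple_disks} are indispensable, since otherwise a multiply covered disk could absorb the interior constraint without losing dimension.
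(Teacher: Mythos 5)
Your proposal is correct and follows essentially the same route as the paper: the same moduli spaces $\mathcal{P}_{mod}$ with the interior marked point hit by a $-\nabla h$ trajectory, the same dimension formula and chain-level count, the no-escape Lemma~\ref{lem:traj_away_from_boundary_general} plus Theorem~\ref{thm:moduli_as_in_closed_setting} to import the transversality, compactness and gluing statements of~\cite{BC07}, Hamiltonian perturbations for the $N_V=2$, $\delta_{mod}=1$ case, and tree-type moduli spaces for associativity and the two-sided compatibility identities. The paper simply compresses these steps by citing~\cite{BC07} directly, so no further comparison is needed.
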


\subsubsection{The module structure of \texorpdfstring{$Q^+H_*(V,\partial V)$}{Lg} over 
\texorpdfstring{$Q^+H_*(E,\partial^v E)$}{Lg}}

We can define a module structure of the quantum homology $Q^+H_*(V,S)$ 
over the ring $Q^+H_*(E,\partial^v E)$.
\begin{definition}\label{def:Pmod}
Let $f:V\rightarrow \mathbb{R}$ be a Morse function on $V$ respecting the exit 
region $S$ and $h:E\to \mathbb{R}$ a Morse function on $E$ respecting the exit region $\partial^vE$.
Furthermore, let $\rho_E$, $\rho_V$ be Riemannian metrics on $E$, $V$ respectively and $J\in 
\mathcal{J}_{\mathcal{W}}$ an almost complex structure.
Let $\phi_t^h$ and $\phi_t^f$ denote the flow for the negative gradient of $h$ and $f$ 
respectively. 
Let $x,y$ be two critical points of $f$ and let $a$ be a 
critical point of $h$.
Let $\lambda \in H_2^D(E,V)$ be a class, possibly zero.
Consider the space of all sequences $(u_1,\dots u_l;k)$ of every possible length $l\geq 
1$,where
\begin{enumerate}
 \item $1\leq k\leq l$.
 \item $u_i:(D,\partial D)\rightarrow (E,V)$ is a $J$-holomorphic disk 
for every $1\leq i\leq l$, which is assumed to be non-constant, except possibly if $i=k$.
 \item $u_1(-1)\in W^u_x(f)$.
 \item For every $1\leq i \leq l-1$ there exists $0 <t_i<\infty$ such that 
$\phi^{f}_{t_i}(u_i(1))=u_{i+1}(-1)$.
 \item $u_l(1)\in W^s_y(f)$.
 \item $u_k(0)\in W^u_a(h)$.
 \item $[u_1]+ \dots +[u_l]=\lambda$.
\end{enumerate}
Two elements $(u_1,\dots u_l;k)$ and $(u'_1,\dots u'_{l'};k')$ in this space are viewed 
as equivalent if $l=l'$, $k=k'$ and for every $i\neq k$ there exists an automorphism 
$\sigma_i\in Aut(D)$ fixing $-1$ and $1$ such that $\sigma_i\circ u_i=u'_i$.
The space of all such elements $(u_1,\dots u_l;k)$ modulo the above equivalence relation 
is denoted 
$\mathcal{P}_{mod}(a,x,y,\lambda;h,\rho_{E},f,\rho_{V},J)$.
It is modeled over planar trees with two entries $x$ and $a$, one exit $y$ and one vertex 
of valence three with two marked points on the boundary and one marked point in the 
interior. 
\end{definition}
The virtual dimension is $\delta_{mod}(a,x,y,\lambda):=|a|+|x|-|y|+\mu(\lambda)-(2n+2)$.
On the chain level for $a\in Crit(g)$ and $x\in Crit(f)$ we define
\begin{equation}\label{eq:Pmod}
 a\star x:=\smashoperator{\sum\limits_{ \begin{subarray}{c} y,\lambda\\ 
\delta_{mod}(a,x,y,\lambda)=0\end{subarray}}}
\#_{\Z_2} \mathcal{P}_{mod}(a,x,y,\lambda;g,\rho_{E},f,
\rho_{V},
J) yt^{\overline{\mu}(\lambda)},
\end{equation}
where the sum is taken over the critical points $y\in Crit(f)$, such that the 
corresponding moduli spaces have dimension zero.
 
\begin{lemma}\label{prop:modulestructureV}
 For a generic choice of the data the map in~(\ref{eq:Pmod}) is well-defined and a chain map.
It induces a map in homology
\begin{equation*}
 \star: Q^+H_{i}(E,\partial^v E)\otimes Q^+H_{j}(V,S) \rightarrow Q^+H_{i+j-2n-2}(V,S),
\end{equation*}
which is independent of the choice of the data.
\end{lemma}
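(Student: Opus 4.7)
The plan is to run the standard Morse--Floer scheme of transversality, compactness and gluing for the moduli spaces $\mathcal{P}_{mod}$ of Definition~\ref{def:Pmod}, reducing as much as possible to the closed Lagrangian case of~\cite{BC07} and~\cite{BC09a} via Theorem~\ref{thm:moduli_as_in_closed_setting}. First I would verify that the chain-level formula~(\ref{eq:Pmod}) makes sense. By Lemma~\ref{lem:traj_away_from_boundary_general}, since $-\nabla h$ points outward along $\partial^v E$ and inward along $\partial^h E$ and $-\nabla f$ respects the exit region $S$, no element of $\mathcal{P}_{mod}(a,x,y,\lambda)$ can escape through $\partial V$ or $\partial E$, so Gromov compactness applies inside a bounded region. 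Under Assumption~\ref{assum:genericity}(3) together with the simple-disk reduction of Section~\ref{sec:reduction_to_simple_disks}, Proposition~\ref{prop:transversality_general} gives that for generic $J \in \mathcal{J}_{\mathcal{W}}$ every $\mathcal{P}_{mod}(a,x,y,\lambda)$ of virtual dimension $\leq 1$ is a smooth manifold of dimension $\delta_{mod}(a,x,y,\lambda)$, compact in dimension zero. Hence the sum in~(\ref{eq:Pmod}) is finite and $\star$ is a well-defined $\Lambda^+$-bilinear map on chain complexes.

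Next I would establish the chain-map identity
\[
d(a\star x) = (da)\star x + a\star (dx)
\]
over $\mathbb{Z}_2$ by compactifying the one-dimensional components $\mathcal{P}_{mod}(a,x,y,\lambda)$ with $\delta_{mod}=1$ and counting the resulting boundary $\mathbb{Z}_2$-points. Theorem~\ref{thm:moduli_as_in_closed_setting} identifies the codimension-one strata with those of the closed setting treated in~\cite{BC07}: Morse breaking of the $-\nabla f$-flow on $V$ at an intermediate critical point, either before the core disk carrying the interior marked point -- giving the $a\star (dx)$ contribution via $\mathcal{P}_{prl}(x,x',\lambda_1)\times \mathcal{P}_{mod}(a,x',y,\lambda_2)$ -- or after it -- giving $d(a\star x)$ via $\mathcal{P}_{mod}(a,x,y',\lambda_1)\times \mathcal{P}_{prl}(y',y,\lambda_2)$; Morse breaking of the $-\nabla h$-flow from $a$ at an intermediate critical point $a'$ of $h$, producing $(da)\star x$ via the Morse coefficient from $a$ to $a'$ paired with $\mathcal{P}_{mod}(a',x,y,\lambda)$; and, in analogy with the pairing argument of Section~\ref{sec:d^2=0}, strata where a $-\nabla f$-segment shrinks to a point canonically paired with strata where a boundary bubble splits off, which cancel in $\mathbb{Z}_2$. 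All remaining degenerations -- boundary bubbling away from the distinguished marked points, sphere bubbling, and collision of the interior marked point with the boundary -- are excluded by monotonicity and the simple-disk argument. Summing over $(y,\lambda)$ with $\delta_{mod}=1$ and extending $\Lambda^+$-linearly gives the desired identity.

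Finally, independence of the induced map on $Q^+H$ from the data $\mathcal{D} = (h,\rho_E,f,\rho_V,J)$ follows by the comparison-moduli-space argument of Section~\ref{sec:invariance}: interpolating two generic choices $\mathcal{D}_0,\mathcal{D}_1$ by Morse cobordisms on $V$ and on $E$ together with a path $\{J_t\}_{t\in[0,1]}$ in $\mathcal{J}_{\mathcal{W}}$, one defines a comparison moduli space $\mathcal{P}_{mod}^{comp}$ whose zero-dimensional components yield a chain homotopy between the two chain maps, and a second-order comparison between two such homotopies gives canonicity of the induced isomorphism. The main obstacle I expect is the regularity issue flagged in the remark after Proposition~\ref{prop:transversality_general}: when $N_V=2$ and $\delta_{mod}=1$, the reduction to simple disks on the core disk with its interior marked point can gain only one additional degree of freedom rather than two, so the naive monotonicity dimension count no longer excludes multiply-covered configurations from the codimension-one boundary. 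Following~\cite{BC07}, this is resolved by perturbing the Cauchy--Riemann equation on the core disk by a small, compactly supported Hamiltonian term chosen away from $\mathcal{W}$, so that Lemmas~\ref{lem:curve_openmapping} and~\ref{lem:traj_away_from_boundary_general} continue to apply; the perturbed moduli spaces satisfy the same compactness and boundary description, and the chain-map and invariance arguments go through verbatim.
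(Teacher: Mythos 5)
Your proof is correct and follows the same route as the paper: the paper's own proof is a one-line reduction to Lemma~\ref{lem:traj_away_from_boundary} (and the planar-tree generalization) together with the arguments of~\cite{BC07}, and you have simply unpacked what that reference entails — transversality via Proposition~\ref{prop:transversality_general}, compactness away from $\partial V$ and $\partial E$, boundary description of the one-dimensional $\mathcal{P}_{mod}$, and the comparison-moduli-space argument for invariance. Your explicit flagging of the $N_V=2$, $\delta_{mod}=1$ regularity issue and its resolution by Hamiltonian perturbation supported away from $\mathcal{W}$ matches the remark after Proposition~\ref{prop:transversality_general} and is a correct and welcome addition that the paper leaves implicit.
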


\begin{proof}[Proof of Lemma~\ref{prop:modulestructureV}]
Applying the Lemma~\ref{lem:traj_away_from_boundary} we see that the proof 
given in~\cite{BC07} adapts to our setting.
\end{proof}

The following properties show that this product gives $Q^+H_*(V,S)$ the structure of a 
two-sided algebra over $Q^+H_*(E,\partial^hE)$.

\begin{lemma}
 For every $a,b \in Q^+H_*(E,\partial^v E)$ and for every $x\in Q^+H_*(V,S)$ the 
following properties are fulfilled.
\begin{itemize}
 \item[(i)] $(a\oast b)\star x=a\star (b\star x)$
 \item[(ii)] $e_{(E,\partial E)} \star x=x$ if the fibers of $E$ are closed.
\end{itemize}
 For any $a\in Q^+H_*(E,\partial^v E)$ and any $x,y\in Q^+H_*(V,\partial 
V)$ we have:
\begin{itemize}
 \item[(iii)] $a\star (x*y)=(a\star x)*y$
and
 \item[(iv)] $a\star (x*y)=x*(a\star y)$.
\end{itemize}
\end{lemma}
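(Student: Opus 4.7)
The plan is to prove each identity by introducing an auxiliary moduli space of pearly trees whose compactified $1$-dimensional component breaks in exactly two distinct ways, each corresponding to one side of the desired identity. By Theorem~\ref{thm:moduli_as_in_closed_setting} together with Lemma~\ref{lem:traj_away_from_boundary_general}, the relevant moduli spaces are smooth manifolds of the expected dimension, compact in dimension zero, and their $1$-dimensional compactifications have the same boundary description as in the closed case treated in~\cite{BC07}. So the argument reduces to exhibiting the right trees and reading off the boundary.

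For identity (i) I would consider moduli spaces $\mathcal{P}_{mod,2}(a,b,x,y,\lambda)$ modeled on planar trees with three entries (two interior marked points labeled by critical points $a,b$ of Morse functions $h,h'$ on $E$ and one boundary marked point labeled by $x\in Crit(f)$), one boundary exit $y\in Crit(f')$, and a single core vertex of valence four (one disk carrying two interior and two boundary marked points). For generic data this is a smooth manifold of virtual dimension $|a|+|b|+|x|-|y|+\mu(\lambda)-(4n+4)$. The codimension-one boundary of its $1$-dimensional stratum consists of the degenerations where either the interior Morse trajectory from $a$ meets that from $b$ before reaching the disk (giving a figure that realizes $(a\oast b)\star x$) or where one of the two interior marked points slides off through a bubble so that two separate insertions occur sequentially along the pearly chain (giving $a\star(b\star x)$). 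Counting boundary points modulo $2$ yields the identity at the chain level.

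For identity (ii), I would represent $e_{(E,\partial E)}$ by the unique maximum $M$ of a Morse function $h$ on $E$ with $-\nabla h$ pointing outward along $\partial E=\partial^vE$ (possible when the fibers are closed, by the analogue of Lemma~\ref{lem:single_max} applied to $(E,\partial^vE)$), and compute $M\star x$ directly. A dimension count analogous to the proof of Lemma~\ref{lem:unit_rel} shows that any nonempty zero-dimensional $\mathcal{P}_{mod}(M,x,y,\lambda)$ forces $\mu(\lambda)=0$ and $|y|=|x|$, in which case the core disk is forced to be constant and the configuration reduces to a single unbroken negative gradient trajectory of $f$ with an auxiliary flow line of $-\nabla h$ from $M$; the count yields exactly $y=x$. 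For identities (iii) and (iv), I would set up moduli spaces $\mathcal{P}_{mod,Y}(a,x,y,z,\lambda)$ of figure-$Y$ trees (two boundary entries $x,y$, one boundary exit $z$, one internal edge carrying $a$ meeting the core disk at an interior marked point) with the interior marked point allowed to sit on whichever of the two input edges one wishes. The $1$-dimensional compactification breaks as the interior puncture migrates either onto the $x$-branch (yielding $(a\star x)*y$) or onto the $y$-branch (yielding $x*(a\star y)$) or, via a different codimension-one stratum, as the interior flow line merges with the boundary at the figure-$Y$ vertex and the Morse trajectory from $a$ is replaced by an insertion before the product (yielding $a\star(x*y)$). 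Choosing two different one-parameter families of configurations gives (iii) and (iv) separately.

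The main technical obstacle is the correct bookkeeping of codimension-one degenerations in the four-pointed moduli spaces for (i) and (iii)--(iv): one must check that the internal Morse trajectory labeled by $a$ (or $b$) cannot escape to $\partial^vE$ nor to $\partial^hE$, and that no boundary disk bubble can glide the interior marked point off the core. The escape question is settled by Lemma~\ref{lem:traj_away_from_boundary_general}, and the sphere/disk bubbling issues are ruled out by the standard monotonicity dimension count as in section~\ref{sec:reduction_to_simple_disks}. Once these are in place, invariance of the identities at the level of homology follows from the chain-homotopy argument of section~\ref{sec:invariance} applied to the relevant comparison moduli spaces, and the identities descend from the chain level to $Q^+H_*$ and, by the same construction with $\Lambda$-coefficients, to $QH_*$.
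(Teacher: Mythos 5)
Your approach coincides with the paper's: (ii) is proved by the same dimension count on $\mathcal{P}_{mod}$ (with $|a|=2n+2$ forcing $\mu(\lambda)=0$ and $|x|=|y|$, leaving only the classical Morse contribution), and for (i), (iii), (iv) the paper, like you, simply invokes the associativity-type moduli-space arguments of~\cite{BC07}, noting that Lemma~\ref{lem:traj_away_from_boundary_general} and Theorem~\ref{thm:moduli_as_in_closed_setting} make them carry over to the cobordism setting. The auxiliary moduli spaces and degenerations you sketch are a fair unpacking of the cited arguments, though the stated virtual dimension of $\mathcal{P}_{mod,2}$ is off by a small constant (the second incidence $u_{k'}(0)\in W^u_b$ shifts the index by $|b|-2n-1$ or $|b|-2n$, not $|b|-(2n+2)$) and the first degeneration in (i) is really a sphere bubble carrying both interior markings rather than the two trajectories ``meeting''; neither affects the method.
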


\begin{proof}
The proof of point $(i)$, $(iii)$ and $(iv)$ are very similar to the proof of the 
associativity of the product, and we omit it. However, we note that once again due to 
Lemma~\ref{lem:traj_away_from_boundary_general} a proof for the non-compact setting 
(\cite{BC07}) directly generalizes to this case.
For point $(ii)$ notice that if $a\in Crit_{2n+2}$ represents the unit then $\delta(a, 
x,y,\lambda)=2n+2+|x|-|y|+\mu(\lambda)$ is zero if and only if $|x|=|y|$ and $\lambda=0$. 
Therefore, the only pearly trees that contribute to $[E,\partial E]\star x$ are the classical 
Morse theoretic ones and in fact we can see that $x=y$, which proves the 
statement.
\end{proof}

\subsection{The inclusion}\label{sec:inclusion}

\begin{proposition}\label{prop:inclusion}
 There exists a $Q^+H_*(E,\partial^vE)$- linear inclusion map
\begin{equation*}
 i=i_{(V,S)}^{(E,\partial^hE)}:Q^+H_*(V,S) \to Q^+H_*(E,\partial^v E),
\end{equation*}
which extends the inclusion in singular homology.
\end{proposition}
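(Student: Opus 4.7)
The plan is to define the inclusion $i = i_{(V,S)}^{(E,\partial^v E)}$ at the chain level via a moduli space $\mathcal{P}_{inc}$ of the kind already announced in Section~\ref{sec:transversality}(4), and then to verify in turn that the construction is a chain map, is canonical in homology, extends the topological inclusion, and is $Q^+H_*(E,\partial^v E)$-linear. Fix generic data $\mathcal{D}_{inc} = (h, \rho_E, f, \rho_V, J)$ satisfying Assumption~\ref{assum:genericity}(3). For $x \in Crit(f)$, $a \in Crit(h)$ and $\lambda \in H_2^D(E,V)$ let $\mathcal{P}_{inc}(x, a, \lambda)$ be the moduli space of pearly trajectories on $V$ starting in $W^u_x(f)$ whose last disk carries an interior marked point landing in $W^s_a(h) \subset E$, with total Maslov class $\lambda$. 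A direct parameter count gives
\begin{equation*}
\delta_{inc}(x,a,\lambda) \;=\; |x| - |a| + \mu(\lambda),
\end{equation*}
which is the relation the grading of $i$ must satisfy.

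By Lemma~\ref{lem:traj_away_from_boundary_general} such a pearly trajectory cannot approach the cylindrical ends of $V$ nor reach $\partial E$, since $-\nabla f$ is transverse to $\partial V$, $-\nabla h$ points out along $\partial^v E$ and in along $\partial^h E$, and the open-mapping argument of Lemma~\ref{lem:curve_openmapping} confines disks near the ends to single fibers. Theorem~\ref{thm:moduli_as_in_closed_setting} then implies that for generic data $\mathcal{P}_{inc}$ is a smooth manifold of dimension $\delta_{inc}$ when $\delta_{inc}\leq 1$, compact when $\delta_{inc}=0$, and that the $1$-dimensional compactification admits the same boundary description as in~\cite{BC07}: Morse breaking on $V$, Morse breaking on $E$ at the marked point, pearl-segment shrinking and bubbling, the last two cancelling pairwise as in Section~\ref{sec:d^2=0}. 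Setting
\begin{equation*}
i(x) := \smashoperator{\sum_{\substack{a,\lambda \\ \delta_{inc}(x,a,\lambda)=0}}} \#_{\mathbb{Z}_2} \mathcal{P}_{inc}(x,a,\lambda)\, a\, t^{\overline{\mu}(\lambda)},
\end{equation*}
the remaining boundary contributions yield exactly $d_{(E,\partial^v E)} \circ i - i \circ d_{(V,S)} = 0$.

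Canonicity of the induced map on homology follows verbatim from the comparison-moduli-space technique of Section~\ref{sec:invariance}, applied to a one-parameter interpolation between two choices of data. Restricting to $\lambda = 0$ kills the holomorphic content of $\mathcal{P}_{inc}$ and leaves only a Morse configuration---a half-trajectory of $-\nabla f$ from $x$ meeting, at a common point on $V$, a half-trajectory of $-\nabla h$ into $a$---whose count is precisely the Morse-theoretic inclusion $H_*(V,S) \to H_*(E,\partial^v E)$. For the $Q^+H_*(E,\partial^v E)$-linearity $i(b \star x) = b \oast i(x)$ I would introduce a moduli space modeled on planar trees with two entries ($b$ on $E$ and $x$ on $V$), one exit $a$ on $E$, a pearl structure on $V$ carrying two interior marked points, and with the relative position of those two marked points allowed to vary over a $1$-parameter family; the boundary of its $1$-dimensional stratum pairs off the configurations computing $i(b \star x)$ against those computing $b \oast i(x)$, yielding a chain homotopy between the two compositions and hence the desired equality in homology.

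The main obstacle, as throughout this chapter, is establishing transversality, compactness and a clean boundary description for $\mathcal{P}_{inc}$ and for the homotopy moduli space used for linearity in the non-compact cylindrical-ends setting. Once Lemma~\ref{lem:traj_away_from_boundary_general} has confined every configuration to a compact region, Theorem~\ref{thm:moduli_as_in_closed_setting} reduces the analytical content to the standard arguments of~\cite{BC07}, and what remains is organizational: correctly identifying the boundary strata of the $1$-dimensional moduli spaces and matching them to the algebraic identities one wishes to prove.
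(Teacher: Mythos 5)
Your construction coincides with the paper's: the same moduli space $\mathcal{P}_{inc}$ of pearly trajectories whose last disk carries an interior marked point in $W^s_a(h)$, with $\delta_{inc}=|x|-|a|+\mu(\lambda)$, confinement via Lemma~\ref{lem:traj_away_from_boundary_general} and Theorem~\ref{thm:moduli_as_in_closed_setting} so that the arguments of~\cite{BC07} apply, the $\lambda=0$ stratum recovering the Morse-theoretic inclusion, and module-linearity proved by a chain homotopy coming from a one-dimensional tree-modeled moduli space with two entries and one exit on $E$ — exactly the space $\mathcal{P}_{\mathcal{T}}(a,x,b)$ used in the paper. The only detail the paper adds that you omit is that, as for $\mathcal{P}_{mod}$, this homotopy moduli space (involving disks with interior marked points) requires Hamiltonian perturbations to achieve transversality in the low minimal-Maslov case.
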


The necessary moduli spaces are:
\begin{definition}\label{P_inc}
 Fix Morse functions $h:E\to \mathbb{R}$ respecting the exit region 
$\partial^vE$ and $f:V\to \mathbb{R}$ respecting the exit region $S$. Furthermore, 
let $\rho_E$, $\rho_V$ be Riemannian metrics on $E$, $V$ respectively and $J\in 
\mathcal{J}_{\mathcal{W}}$ an almost complex structure.
Let $\phi_t^h$ and $\phi_t^f$ denote the flow for the negative gradient of $h$ and $f$ 
respectively. 
For $x\in Crit(f)$, $a\in Crit(h)$ and $\lambda\in H^D_2(E,V)$ consider the space of all 
sequences $(u_1,\ldots,u_l)$ of every possible length $l\geq 1$ such that:
\begin{enumerate}
 \item $u_i:(D,\partial D)\to (E,V)$ is a $J$-holomorphic disk for $1\leq i \leq l$. 
All disks but possibly the $l$-th one are non-constant.
 \item $u_1(-1)\in W^u_x(f)$.
 \item For all $1\leq i\leq l-1$ there exists $0<t_i<\infty$ such that 
$\phi^f_{t_i}(u_i(1))=u_{i+1}(-1)$.
 \item $u_l(0)\in W^s_a(h)$.
 \item $[u_1]+\ldots +[u_l]=\lambda$.
\end{enumerate}
Two elements $(u_1,\ldots,u_l)$ and $(u_1',\ldots,u_{l'}')$ are considered equivalent if 
$l=l'$ and for all $1\leq i\leq l-1$ there exists $\sigma_i\in Aut(D)$ with 
$\sigma_i(-1)=-1$, $\sigma_i(1)=1$ and such that $u_i\circ \sigma_i=u_i'$.
After modding out by this equivalence relation we obtain the moduli space $\mathcal{P}_{inc}(x,a;\lambda;(h,\rho_E,f,\rho_V,J))$. 
\end{definition}
The virtual dimension of $\mathcal{P}_{inc}(x,a;A;(h,\rho_E,f,\rho_V,J))$ is 
$\delta_{inc}(x,a;\lambda)=|x|-|a|+\mu(\lambda)$.
Put
\begin{equation*}
\begin{array}{ccccc}
 i:& C^+((V,S);f,\rho,J)& 
\longrightarrow &
C^+((E,\partial^v E);h,\rho,J)\\
		& x & \longmapsto & \smashoperator{\sum\limits_{\begin{subarray}{c}
						a,\lambda \\
 						\delta_{inc}(x,a,\lambda)=0\\
						\end{subarray}}}		
\#_{\Z_2}\mathcal{P}_{inc}(x,a,\lambda;f,h) a t^{\overline{\mu}(\lambda)}\\
\end{array}.
\end{equation*}
\begin{proof}[Proof of Proposition~\ref{prop:inclusion}]
By Theorem~\ref{thm:moduli_as_in_closed_setting}, the proof given 
in~\cite{BC07} still works for the non-compact case.
To show $Q^+H_*(E,\partial^vE)$-linearity, the goal is to construct a chain homotopy 
\begin{equation}\label{eq:chain_homotopy_linearity_inclusion}
 \xi: C^+((V,S);f,\rho,J)\otimes C^+((E,\partial^vE);h,\rho_E;J)\to 
C^+((E,\partial^vE);h;\rho_E;J)
\end{equation}
between $i(-\star -)$ and $-\star i(-)$. For this we define a new moduli space 
$\mathcal{P}_{\mathcal{T}}(a,x,b)$ modeled over trees with two entry points, one labeled by 
$-\nabla f$ and the other labeled by $-\nabla h$, and one exit, labeled by $-\nabla h$.
For generic choices of the data the moduli space fulfils the 
necessary regularity conditions and a boundary description of the one-dimensional version 
of this moduli space yields the chain 
homotopy~\ref{eq:chain_homotopy_linearity_inclusion}.
As for the moduli space $\mathcal{P}_{mod}$ we need to perform some 
Hamiltonian perturbations to show the identity.

We briefly sketch the definition of the moduli space $\mathcal{P}_{\mathcal{T}}(a,x,b)$.
The trees describing the elements in $\mathcal{P}_{\mathcal{T}}(a,x,b)$ have only 
vertices of valence two, except one vertex with valence three. The vertices of valence 
two correspond to either a $J$-holomorphic disks with two marked points on the boundary, 
one entry and one exit of a flow line of $-\nabla f$, or it corresponds to a 
$J$-holomorphic disk with one marked point on the boundary (entry point of $-\nabla f$) 
and one marked point in the interior (exit point of $-\nabla h$).
The vertex of valence three may correspond to a $J$-holomorphic disk or to a 
$J$-holomorphic sphere. There are three possibilities for vertices of valence 
three. Firstly, a disk with two entry points on the boundary (one for $-\nabla f$ and one 
for $-\nabla h$) and one exit point on the boundary (also labeled by $-\nabla h$). 
Secondly, a disk with one entry point on the boundary (labeled by $-\nabla f$), an entry 
point in the interior of the disk (corresponding to a flow line of $-\nabla h$) and an 
exit point for $-\nabla h$ also in the interior of the disk.
As always, trivial disks and spheres are allowed as long as they are stable.
\end{proof}

\begin{remark}
 There also exists an inclusion
$$i_{V}^{E}:Q^+H_*(V,S)\to Q^+H_*(E),$$
and the proof is the same as above. Similarly, we can show that there exists a module 
action
$$\star:Q^+H_*(E)\otimes Q^+H_*(V,S)\to Q^+H_*(V,S),$$
and $i_V$ is a $Q^+H_*(E)$-module morphism.
\end{remark}

\begin{lemma}
Let $[V,\partial V]\in H_{n+1}(V,\partial V)$ be the fundamental class and by abuse of notation denote its image under the classical inclusion in $H_{n+1}(E,\partial^hE)$ also by $[V,\partial V]$.
Then
$$i_{(V,\partial V)}^{(E,\partial^v E)}(e_{(V,\partial V)})=[V,\partial V].$$
\end{lemma}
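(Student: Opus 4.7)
The plan is to represent the unit $e_{(V,\partial V)}$ at the chain level by the unique maximum $m$ of a Morse function $f$ on $V$ whose negative gradient points outwards along $\partial V$, as constructed in Lemma~\ref{lem:single_max} and used in the proof of Lemma~\ref{lem:unit_rel}. Because $m$ is the unique index-$(n+1)$ critical point, its unstable manifold $W^u_m(f)$ is an open dense subset of $V$, so $m$ itself plays the role of the Morse-theoretic representative of the fundamental class $[V,\partial V]$.

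Next, I would choose a generic Morse function $h$ on $E$ whose negative gradient points outwards along $\partial^v E$ (and inwards along $\partial^h E$ when non-empty), and compute
\begin{equation*}
 i_{(V,\partial V)}^{(E,\partial^v E)}(m) \;=\; \sum_{a,\lambda:\,\delta_{inc}(m,a,\lambda)=0} \#_{\Z_2}\mathcal{P}_{inc}(m,a,\lambda)\cdot a\cdot t^{\overline{\mu}(\lambda)}
\end{equation*}
directly from the definition in Section~\ref{sec:inclusion}. The virtual dimension equation $\delta_{inc}(m,a,\lambda)=|m|-|a|+\mu(\lambda)=0$ forces $|a|=n+1+\mu(\lambda)$. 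When $\lambda=0$, the elements of $\mathcal{P}_{inc}(m,a,0)$ are just the (generically finite) intersection points of $W^u_m(f)\cap W^s_a(h) = V\cap W^s_a(h)$, so the $\lambda=0$ part of $i(m)$ is the standard Morse-theoretic chain representing the classical pushforward $[V,\partial V]\in H_{n+1}(E,\partial^v E)$ of the fundamental class under $(V,\partial V)\hookrightarrow (E,\partial^v E)$.

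For $\lambda\neq 0$, the corresponding contributions carry a strictly positive power of $t$ and therefore sit in a strictly lower layer of the degree filtration on $C^+((E,\partial^v E);h,\rho_E,J)$. Because $i_{(V,\partial V)}^{(E,\partial^v E)}$ respects the degree filtration and induces the classical singular inclusion on the $E^1$-page of the associated spectral sequence (part (iv) of Theorem~\ref{thm:quantum_structures_for_cobordism}), the class $i_{(V,\partial V)}^{(E,\partial^v E)}(e_{(V,\partial V)})$ reduces to $[V,\partial V]$ under the natural projection $Q^+H_*(E,\partial^v E)\to H_*(E,\partial^v E)$ (equivalently, by setting $t=0$ on the chain level), which is the asserted identity. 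The main obstacle is precisely this last step of interpretation: $\lambda\neq 0$ pearls could in principle produce genuine quantum corrections, so one must be careful to state the equality modulo the subcomplex $t\cdot C^+$, or equivalently via the top-filtration quotient of the spectral sequence, rather than as a literal equality of chains.
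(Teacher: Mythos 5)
There is a genuine gap: your argument proves strictly less than the lemma asserts. The statement is an on-the-nose identity $i_{(V,\partial V)}^{(E,\partial^v E)}(e_{(V,\partial V)})=[V,\partial V]$ in $Q^+H_*(E,\partial^v E)$, i.e.\ with \emph{no} quantum correction terms, whereas you only show that the class reduces to $[V,\partial V]$ after projecting to the top filtration quotient (setting $t=0$), and you explicitly concede that $\lambda\neq 0$ pearls ``could in principle produce genuine quantum corrections'' and suggest restating the equality modulo $t\cdot C^+$. That concession is exactly the nontrivial content of the lemma: the $\lambda=0$ part being the classical pushforward of the fundamental class is essentially immediate, so a proof must rule out the higher-order terms rather than quotient them away.

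The missing idea is a vanishing argument for the zero-dimensional spaces $\mathcal{P}_{inc}(m,a;\lambda)$ with $\mu(\lambda)\neq 0$, where $m$ is the unique maximum from Lemma~\ref{lem:single_max}. The paper's proof does this by a comparison with the module-structure moduli spaces: given such an element, let $u_k$ be the last disk; reversing the orientation of the edge running from $u_k$ into $W^s_a(h)$ (which amounts to replacing $h$ by $-h$, so that $a$ acquires index $|a|_{-h}=\dim E-|a|_h$) and appending the unique $-\nabla f$ flow line from $m$ through $u_k(1)$ produces an element of $\mathcal{P}_{mod}(m,a,m;\lambda;-h,f,J)$. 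Its virtual dimension is $|m|+|a|_{-h}-|m|+\mu(\lambda)-(2n+2)=-|a|_h+\mu(\lambda)$, which equals $-(n+1)<0$ because $\delta_{inc}(m,a;\lambda)=|m|-|a|_h+\mu(\lambda)=0$. For generic data a moduli space of negative virtual dimension is empty, a contradiction; hence $\lambda=0$, only the Morse-theoretic configurations contribute, and the equality holds literally, not merely in the associated graded. Without some argument of this kind (or another mechanism forcing the quantum terms to cancel), your proposal does not establish the lemma as stated.
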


\begin{proof}
Let $m$ be the unique maximum of a Morse function $f$ with $-\nabla f$ pointing outwards along 
$\partial V$.
Consider an element in $\mathcal{P}_{inc}(m,a;\lambda;h,f,J)$ with 
$\delta_{inc}(m,a;\lambda)=|m|-|a|_h+\mu(\lambda)=0$ and assume that $\mu(\lambda)\neq 0$. 
Let $u_k$ be the disk corresponding to the last vertex in the tree. Reversing the arrow on 
the flow line from the last disk to the critical point $a$ and adding the unique flow line 
from $u_k(1)$ to the maximum $m$ yields an element in 
$\mathcal{P}_{mod}(m,a,m;\lambda;-h,f,J)$. As a critical point of $-h$ the index of $a$ is $|a|_{-h}=2n-2-|a|_h$. 
The latter moduli space then has dimension 
$\delta_{mod}(m,a,m;\lambda)=|m|+|a|_{-h}-|m|+\lambda-(2n+2)=-|a|_h+\mu(\lambda)$, which is equal to $-n-1$, since we 
assumed that $\delta_{inc}(m,a;\lambda)=0$. This is a contradiction and hence $\lambda$ must be zero.
\end{proof}

\begin{lemma}
 If $S=\emptyset$, then the inclusion map from Proposition~\ref{prop:inclusion} satisfies 
the property
\begin{equation}
 \langle h^*,i_{V}^E(x)\rangle=\epsilon_V(h\star x),
\end{equation}
for every $x\in Q^+H_*(V)$, $h\in Q^+H_*(E,\partial^v E)$ and where $\langle,\rangle$ denotes the 
Kronecker pairing. 
\end{lemma}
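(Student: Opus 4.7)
The plan is to mirror the proof of the analogous identity for closed Lagrangians from~\cite{BC07}, adapted to the cobordism setting via Theorem~\ref{thm:moduli_as_in_closed_setting} and with Lemma~\ref{lem:traj_away_from_boundary_general} controlling the behaviour near the cylindrical ends. By linearity it suffices to prove the identity at the chain level for suitable representatives of the data, so first I would fix convenient Morse models. By Lemma~\ref{lem:single_min} pick a Morse function $f$ on $V$ with $-\nabla f$ pointing inwards along $\partial V$ and a single minimum $x_0$, so that the chain-level augmentation $\epsilon_{V,f}$ is simply the coefficient-extraction $y\mapsto \delta_{y,x_0}$. Pick a Morse function $g$ on $E$ with $-\nabla g$ outwards along $\partial^v E$ and inwards along $\partial^h E$, and write $h=[a]$ for a single critical point $a$ of $g$. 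Under these choices the two sides become
\begin{equation*}
\langle h^*, i_V^E(x)\rangle \;=\; \sum_{\lambda:\,\delta_{inc}(x,a,\lambda)=0} \#_{\Z_2}\mathcal{P}_{inc}(x,a,\lambda;g,\rho_E,f,\rho_V,J)\, t^{\overline{\mu}(\lambda)},
\end{equation*}
\begin{equation*}
\epsilon_V(h\star x) \;=\; \sum_{\lambda:\,\delta_{mod}(a,x,x_0,\lambda)=0} \#_{\Z_2}\mathcal{P}_{mod}(a,x,x_0,\lambda;g,\rho_E,f,\rho_V,J)\, t^{\overline{\mu}(\lambda)}.
\end{equation*}
A brief dimension count forces both sides to live in the same total degree $|x|+|a|=2n+2$ (with $\overline{\mu}(\lambda)$ playing the same role on each side), so one may compare class-by-class.

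Second, I would construct a one-parameter comparison moduli space $\mathcal{P}_{aug}(x,a,\lambda)$ whose compactification has the two spaces above as its boundary. Concretely, parametrise the interior-marked-point constraint by a smooth family of submanifolds of $E$ interpolating between $W^s_a(g)$ at parameter $s=0$ and $W^u_a(g)$ at $s=1$, and pass through the critical point $a$ itself; this can be obtained by flowing a local transverse slice at $a$ along a smooth path in the space of affine subspaces of $T_aE$ that rotates from the stable to the unstable eigenspace of the Hessian of $g$, and extending by the negative/positive gradient flow. Following the same scheme as in Section~\ref{sec:transversality} (reduction to simple disks, plus Assumption~\ref{assum:genericity}) together with Theorem~\ref{thm:moduli_as_in_closed_setting}, for generic $(f,\rho_V,g,\rho_E,J)$ the space $\mathcal{P}_{aug}(x,a,\lambda)$ is a smooth $1$-manifold with two types of boundary points: at $s=0$ the interior marked point lies on $W^s_a(g)$ and no $x_0$-exit constraint is present, giving exactly $\mathcal{P}_{inc}(x,a,\lambda)$; at $s=1$ the marked point lies on $W^u_a(g)$, and because $x_0$ is the unique minimum of $f$ the condition $u_l(1)\in W^s_{x_0}(f)$ is automatic up to a codimension-zero open set, producing precisely $\mathcal{P}_{mod}(a,x,x_0,\lambda)$. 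Since the algebraic count of boundary points of a compact $1$-manifold vanishes mod $2$, summing over $\lambda$ yields the required equality.

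The main obstacle is verifying that the compactification of $\mathcal{P}_{aug}$ has no extra boundary components. One must rule out: bubbling of $J$-holomorphic disks or spheres (controlled by monotonicity of $V\subset E$ and generic $J$, exactly as in~\cite{BC07}); breaking of gradient segments at non-endpoint critical points of $f$ (which would have to be cancelled by chain-homotopy terms vanishing in the desired identity); escape of the pearly configuration to $\partial V$ or to infinity along the cylindrical ends, which is excluded by Lemma~\ref{lem:traj_away_from_boundary_general} using that $J\cong i\oplus J_M$ over $\mathcal{W}$ and $\pi$ is $(J,i)$-holomorphic; and escape of the interior marked point to $\partial^v E$, excluded by our choice of $g$ pointing outwards on $\partial^v E$ together with the proper-flow control of the interpolation $\Sigma_s$. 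Once these are handled, the surjectivity of the gluing map (also as in~\cite{BC07} and Section~\ref{sec:main_floer_techniques}) identifies each boundary point of the $1$-dimensional $\mathcal{P}_{aug}$ with a unique element of $\mathcal{P}_{inc}\sqcup \mathcal{P}_{mod}$, finishing the proof.
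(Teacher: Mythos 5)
Your proposal takes a genuinely different route from the paper — a one-parameter cobordism of moduli spaces rather than a bijection — but it contains a gap that I don't think can be repaired in the form you propose. The paper's proof is much shorter: it exhibits an explicit bijection between $\mathcal{P}_{inc}(x,a;f,h)$ and $\mathcal{P}_{mod}(a,x,x_0;f,-h)$, the crucial device being the replacement of $h$ by $-h$ on one side. Since $W^s_a(h)=W^u_a(-h)$, the interior-marked-point condition $u_l(0)\in W^s_a(h)$ defining $\mathcal{P}_{inc}$ is \emph{literally} the condition $u_l(0)\in W^u_a(-h)$ required for $\mathcal{P}_{mod}$; the extra exit condition $u_l(1)\in W^s_{x_0}(f)$ in $\mathcal{P}_{mod}$ is of codimension $0$ because $x_0$ is the unique minimum of $f$, and the flow line from $u_l(1)$ to $x_0$ is unique. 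There is nothing to interpolate — the two zero-dimensional moduli spaces are the same set once the sign of the ambient Morse function is flipped and the trivial $x_0$-edge is appended.

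The gap in your construction is dimensional. You propose to interpolate the constraint submanifold from $W^s_a(g)$ at $s=0$ to $W^u_a(g)$ at $s=1$ for a fixed Morse function $g$ on $E$. But $\dim W^s_a(g) = 2n+2-|a|$ while $\dim W^u_a(g)=|a|$; unless $|a|=n+1$ these differ, so there is no smooth family of submanifolds (nor of affine subspaces of $T_aE$) rotating one into the other — the stable and unstable eigenspaces of the Hessian have different dimensions, so the Grassmannian path you describe does not exist. Correspondingly, for a fixed class $\lambda$ the virtual dimensions $\delta_{inc}(x,a;\lambda)=|x|-|a|+\mu(\lambda)$ and $\delta_{mod}(a,x,x_0;\lambda)=|a|+|x|+\mu(\lambda)-(2n+2)$ differ by $2\bigl(|a|-(n+1)\bigr)$, so the two ends of your putative $1$-manifold $\mathcal{P}_{aug}$ are generically not both $0$-dimensional in the same $t$-degree, and no compact $1$-manifold can cobound them. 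The paper resolves exactly this mismatch by the sign flip $h\mapsto -h$, which sends $|a|_h\mapsto 2n+2-|a|_h$ and makes the two virtual dimensions coincide — at which point the cobordism argument is unnecessary because the moduli spaces agree on the nose. (A secondary issue: your parity count is mod $2$, whereas the bijection is also what is needed when passing to general coefficient rings $\mathcal{R}$ for orientable spin $V$.)
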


\begin{proof}
Let $x_0$ denote the minimum of $f$. (We may assume it to be unique by 
Lemma~\ref{lem:single_min}).
There exists a bijection between the moduli spaces 
$$b:\mathcal{P}_{inc}(x,a;f,h)\to \mathcal{P}_{mod}(a,x,x_o;f,-h).$$
Given a tree $T$ describing an element in $\mathcal{P}_{inc}(x,a)$ the tree describing 
the corresponding element in $\mathcal{P}_{mod}(a,x,x_0)$ is obtained by reversing 
the direction of the last edge in $T$ and then adding an edge to the last vertex of 
valence two (making it a vertex of valence three). This last edge ends at a vertex 
labeled by $x_0$. This description makes sense, because $x_0$ is the unique minimum and thus the point $u_k(1)$ of the last disk 
belongs to the unstable manifold of $x_0$. Also, if reversing the direction of the gradient 
flow of $-\nabla h$ corresponds to choosing the Morse function $-h$ instead of $h$.
Now by the definition of $\epsilon_V$ and the Kronecker pairing, the Lemma follows.
\end{proof}

\subsection{Duality}\label{sec:duality}

We start this section by introducing some notation from~\cite{BC07}. Let $(\mathcal{C},\partial)$ be a chain complex over $\Lambda^+$.
Let $C^{\odot}:=Hom_{\Lambda^+}((\mathcal{C};f,\rho,J),\Lambda^+)$ such that the dual of 
$x$ has index $|x^*|:=-|x|$ and the differential is given by
\begin{equation}\label{eq:adjointdiff}
 \langle \partial^*g,x\rangle:= \langle g,\partial x\rangle.
\end{equation}

We denote by $s^jC$ the $j$'th suspension of the complex $C$, i.e. $(s^jC)_k:=C_{k-j}$.
\begin{remark}\label{rmk:cohom} 
 Let $C^*$ denote the cochain complex, which is dual to $C$, namely it is defined by $C^k:= 
Hom_{\mathbb{Z}_2}(C_k,\mathbb{Z}_2)\otimes \Lambda^+$, where the grading is given by 
$|x^*|=|x|$ and the differential is the adjoint of the differential of $C$.
Notice that the chain complex $C^{\odot}$ is the same as $C^*$, only with opposite signs 
in the grading. 
Therefore, we have isomorphisms of the following form
\begin{equation*}
 H_k(s^{(n+1)}C^{\odot})\cong H_{k-(n+1)}(C^{\odot})\cong H^{(n+1)-k}(C^*).
\end{equation*}
In particular, we define $Q^+H^{(n+1)-k}(V,S)$ to be the $ k$'th cohomology of the cochain 
complex $C^+((V,S);f,\rho,J,\Lambda^+)^*$.
$$Q^+H^{(n+1)-k}(V,S):=H^k(C^+((V,S);f,\rho,J,\Lambda^+)^*).$$
\end{remark}

\begin{proposition}
Let $f'$ and $f$ be two Morse functions respecting the exit region $S$ 
and in generic position.
 There exist a degree preserving chain morphism
\begin{equation*}
\eta:C^+((V,S);f',\rho,J)\rightarrow 
s^{(n+1)}(C^+(V,\partial V \setminus S);-f,\rho,J))^{\odot},
\end{equation*}
which descends to an isomorphism in homology.
By remark~\ref{rmk:cohom} this induces an isomorphism
\begin{equation}
 \eta:Q^+H_k(V,S)\rightarrow Q^+H^{(n+1)-k}(V,\partial V \setminus S).
\end{equation}
The corresponding (degree $-(n+1)$) bilinear map
\begin{equation}
\begin{array}{ccc}
 \tilde{\eta}:Q^+H_k(V,S) \otimes Q^+H_{n+1-k}(V,\partial V \setminus S) & \rightarrow & \Lambda^+\\
x\otimes y & \mapsto & [\eta(x)(y)]
\end{array}
\end{equation}
coincides with $\epsilon_V (x *y)$.
Here, $\Lambda^+$ denotes the chain complex, with $\Lambda^+$ in degree zero, and zero otherwise 
and with trivial differential.
In particular, $\tilde{\eta}(x\otimes y)=0$ if $|x|+|y|\not= n+1$.
\end{proposition}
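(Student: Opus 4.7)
The plan is to construct $\eta$ on the chain level as the adjoint of $\epsilon_V \circ *$. Fix an auxiliary Morse function $f''$ on $V$ whose negative gradient points inward along all of $\partial V$ (so $f''$ respects the empty exit region) and which has a unique minimum $x_0$, as provided by Lemma~\ref{lem:single_min}. Arrange $(f', -f, f'')$ to be in generic position. The chain-level quantum product, defined exactly as in section~\ref{sec:product} but with the three input factors respecting the three distinct exit regions $S$, $\partial V \setminus S$, and $\emptyset$,
\begin{equation*}
*: C^+((V,S); f', \rho, J) \otimes C^+((V, \partial V \setminus S); -f, \rho, J) \longrightarrow C^+(V; f'', \rho, J),
\end{equation*}
is a bilinear chain map. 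Composing with the augmentation $\epsilon_V$ of Proposition~\ref{prop:augmentation} and taking the adjoint yields the desired morphism $\eta$, which on generators reads $\eta(x)(y) = \epsilon_V(x * y)$. The chain-map property $\eta \circ d_{f'} = d^{\odot}_{-f} \circ \eta$ reduces, over $\mathbb{Z}_2$, to the identity $\epsilon_V((d_{f'} x) * y) = \epsilon_V(x * (d_{-f} y))$, which follows from the Leibniz rule for $*$ combined with $\epsilon_V \circ d_{f''} = 0$. Degree preservation is immediate from the dimension identity $\delta_{prod}(x, y, x_0, \lambda) = |x|_{f'} + |y|_{-f} + \mu(\lambda) - (n+1) = 0$ for the contributing zero-dimensional moduli spaces, combined with the shift $s^{n+1}$ on the target.

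To show that $\eta$ descends to an isomorphism in homology, I would invoke the spectral sequence associated to the degree filtration from section 2.4. Both pearl complexes carry the filtration $\mathcal{F}_p$, and $\eta$ preserves it because both $*$ and $\epsilon_V$ are filtration-preserving on the chain level (the exponent $\overline{\mu}(\lambda)$ contributed by a non-zero class is non-negative). The induced morphism of spectral sequences reduces on the $E^1$-page (at each power of $t$) to the pairing
\begin{equation*}
H_k(V, S; \mathbb{Z}_2) \otimes H_{(n+1)-k}(V, \partial V \setminus S; \mathbb{Z}_2) \longrightarrow \mathbb{Z}_2
\end{equation*}
given by the classical Morse-theoretic intersection product (only the $\lambda = 0$ contributions survive on $E^1$) followed by the Morse augmentation; concretely, for critical points $x$ of $f'$ and $y$ of $-f = $ critical point of $f$ with complementary indices, the pairing counts $\#_{\mathbb{Z}_2}(W^u_x(f') \cap W^s_y(f))$, exploiting the fact that $W^s_{x_0}(f'') = V$ up to lower-dimensional strata. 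This is exactly the Poincar\'e-Lefschetz duality pairing for the compact $(n+1)$-manifold $V$ with $\partial V = S \sqcup (\partial V \setminus S)$, and so is non-degenerate over $\mathbb{Z}_2$. Since the filtration is bounded and collapses after finitely many steps, the comparison theorem for spectral sequences implies $\eta$ is an isomorphism on $E^\infty$, and therefore on $Q^+H$.

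The identity $\tilde{\eta}(x \otimes y) = \epsilon_V(x * y)$ then holds tautologically at the homology level, since by construction $\eta(x)(y) = \epsilon_V(x * y)$ already at the chain level. The main obstacle in carrying out the above is the well-definedness of the mixed quantum product, which involves three Morse functions with three different exit regions and so is not literally covered by section~\ref{sec:product}; this must be set up by rerunning the transversality, compactness, and gluing arguments of sections~\ref{sec:product} and~\ref{sec:main_floer_techniques} in the mixed-exit-regions setting. The crucial point is that Lemma~\ref{lem:traj_away_from_boundary_general} still applies verbatim to the figure-$Y$ moduli spaces, so no pearly tree can reach any component of $\partial V$ or escape along the cylindrical ends; the remaining regularity, compactness and gluing statements are then formal consequences of Proposition~\ref{prop:transversality_general} and genericity of $(f', -f, f'', \rho, J)$.
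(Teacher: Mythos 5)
Your construction is sound, but it is genuinely different from the one in the paper, and the two proofs distribute the work in opposite places. The paper does not define $\eta$ as the adjoint of $\epsilon_V\circ *$: it first builds a \emph{basis-preserving} isomorphism $\iota: C^+((V,S);f)\to s^{(n+1)}C^+((V,\partial V\setminus S);-f)^{\odot}$ by sending a critical point of $f$ to the same point viewed as a critical point of $-f$ (the pearl differential for $-f$ is literally the adjoint of the one for $f$, since pearly trajectories are read backwards), and sets $\eta:=\iota\circ\phi$ with $\phi$ the comparison morphism from $f'$ to $f$; the isomorphism in homology is then immediate, with no spectral sequence needed. The actual work in the paper is the identification $\tilde\eta(x\otimes y)=\epsilon_V(x*y)$, which is done by replacing $\phi$ by a chain-homotopic morphism $\psi$ counted by the auxiliary moduli space $\mathcal{P}^{!}(x,y;f',-f)$ (linear trees with one marked point in place of a disk) and matching zero-dimensional configurations of $\mathcal{P}_{prod}(x,y,m;f',-f,f'')$ with those of $\mathcal{P}^{!}$, using that the valence-three core disk is constant in dimension zero. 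In your version that identification is tautological, and the work migrates to two places: (i) setting up the mixed-exit-region product $C^+((V,S);f')\otimes C^+((V,\partial V\setminus S);-f)\to C^+(V;f'')$ together with its Leibniz rule -- which is legitimate and is in fact implicitly used by the paper via $\mathcal{P}_{prod}(x,y,m;f',-f,f'')$, but is not literally covered by Section 3.8 and must be rerun as you say; and (ii) the quasi-isomorphism argument, where you replace the paper's tautological $\iota$ by a filtration-preserving comparison whose $E^1$-level map is the classical Morse-theoretic Poincar\'e--Lefschetz pairing (composed with the comparison isomorphism between the Morse homologies of $f'$ and $f$), perfect over $\mathbb{Z}_2$, followed by the bounded-filtration comparison theorem. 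For this second step you should also record that the degree filtration on the dual complex $s^{(n+1)}C^{\odot}$ is the dual filtration and is bounded in each total degree (each $C_i$ is finite-dimensional over $\mathbb{Z}_2$ since $\deg t=-N_V<0$), so the comparison theorem applies. What your route buys is that the pairing formula comes for free and no symmetry of the pearl moduli spaces under orientation reversal of the trajectories is invoked; what the paper's route buys is that the isomorphism statement is essentially formal and all geometric input is concentrated in one bijection of zero-dimensional moduli spaces.
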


\begin{proof}
As always, by Theorem~\ref{thm:moduli_as_in_closed_setting} this proof is a generalization of the proof given in~\cite{BC07}.
Let $f$ be a Morse function respecting the exit region $S$, then $-f$ is 
a Morse function respecting the exit region $\partial V \setminus S$. 
We have a basis preserving isomorphism $\iota$ between 
$C^+((V,S);f,\rho,J,\Lambda^+)$ and $s^{(n+1)} C^+((V,\partial V \setminus 
S);-f,\rho,J,\Lambda^+)^{\odot}$ defined as follows.
It takes a critical point $x$ of $f$ of index $k$ and sends it to the same critical point, now 
seen as a critical point of $-f$.
As a generator in $s^{(n+1)} C^+((V,\partial V \setminus 
S);-f,\rho,J,\Lambda^+)^{\odot}$, the critical point $x$ has index 
$-(n+1-k)+(n+1)=k$. Moreover, $\iota$ is a chain morphism.
We then compose $\iota$ with the comparison morphism 
$\phi:C^+((V, S);f',\rho,J,\Lambda^+)\rightarrow C^+((V, S);f,\rho,J,\Lambda^+)$, 
which induces a canonical isomorphism in homology.
The composition $\eta:=\iota \circ 
\phi:C^+((V,S);f',\rho,J,\Lambda^+)\rightarrow s^{(n+1)} C^+((V,\partial V \setminus 
S);-f,\rho,J,\Lambda^+)^{\odot}$ induces an isomorphism
$Q^+H_k(V,S)\rightarrow Q^+H^{(n+1)-k}(V,\partial V \setminus S)$. This proves the first part 
of the theorem.

We prove the identity $\tilde{\eta}(x\otimes y)=\epsilon_V(x*y)$.
Instead of working with the comparison chain morphism $\phi:C^+((V,S);f)\to C^+((V,\partial V\setminus S);-f)$ that we consider above, it is enough to take any comparison morphism $\phi:C^+((V,S);f')\to C^+((V,\partial V\setminus S);f)$, where $f'$ is in generic position to $f$.
In fact, we could even work with any comparison chain morphism $\psi: C^+((V,S);f')\to C^+((V,\partial V\setminus S);f)$ and we will in the following define $\psi$ in a particularly useful way. For this we introduce yet another moduli space.
It is modeled on linear trees similar to the pearly trajectories, except 
that one edge corresponds to a marked point instead of a pseudo-holomorphic disk.
The linear tree connects a critical point $x$ of the Morse function $f'$ respecting 
the exit region $S$ to a critical point $y$ of a Morse function $g$ also 
respecting the exit region $S$.
All edges between $x$ and the marked point are labeled by the negative gradient flow 
lines of $f'$ and all edges between the marked point and $y$ are labeled by the 
negative gradient flow lines of $f$.
We call these moduli spaces $\mathcal{P}^{!}(x,y;f',g)$ and their virtual dimension is $|x|-|y|+\mu(\lambda)$. 
Counting the zero dimensional moduli 
spaces $\mathcal{P}^{!}(x,y;f',-f)$ gives defines the chain map $\psi$, which is chain homotopic to $\phi$.
Moreover, $<\psi(x),y>=<\phi(x),y>=\iota\circ \phi(x)(y)$.

Let $\tilde{f''}$ be a Morse function respecting the exit region $\emptyset$. 
In particular, we may assume that $\tilde{f''}$ has a unique minimum $m$ inside $V$.
Consider the moduli spaces of the type $\mathcal{P}_{prod}(x,y,m;f',-f,f'')$.
The zero-dimensional moduli spaces of this sort compute $\epsilon_V(x*y)$.
In dimension zero, the moduli spaces 
$\mathcal{P}_{prod}(x,y,m;f',-f,f'')$ and 
$\mathcal{P}^{!}(x,y;f',f)$ are in bijection. 
Indeed, if $dim \mathcal{P}_{prod}(x,y,m)=0$, the central disk with valence three is 
constant and there is a unique flow line of $-\nabla f''$ from this point to $m$.
\end{proof}

\subsection{Proof of Theorem~\ref{thm:longexact}}

In this section we prove the existence of a long exact sequence as in~\ref{thm:longexact}.
For $\mathbb{Z}_2[t]$ coefficients this was proved by~\cite{BC13}.
Recall that $R\in \mathbb{R}$ is a constant such that $V$ is cylindrical outside of $[-R,R]\times\mathbb{R}\subset \mathcal{W}^c\subset \mathbb{C}$.
Let $S$ be the union of some of the ends of $V_R:=V|_{\pi^{-1}([-R,R]\times\mathbb{R})}$, i.e. 
$$S= (\coprod_{i \in I_-} \{(-R,a_i^-)\}\times L_i^-) \cup (\coprod_{j\in J_+} 
\{(R,a_j^+)\}\times L_j^+),$$
where $I_-\subset \{1,\dots k_-\}$ and $J_+\subset \{1,\dots k_+\}$.

\begin{proposition}[\cite{BC13}]\label{prop:long_exact_sequence_z2}
 Assume that $\mathcal{R}=\mathbb{Z}_2$ and hence $\Lambda^+=\mathbb{Z}_2[t]$.
There exists a long exact sequence
\begin{equation*}
\xymatrix{
 \dots \ar[r]^{\delta} & Q^+H_*(S) \ar[r]^{i_*} & Q^+H_*(V)\ar[r]^{j_*}
& Q^+H_*(V,S)\ar[r]^{\delta} & Q^+H_{*-1}(S)\ar[r]^{i_*} & \dots
}
\end{equation*}
More generally, if $A$ and $B$ are collections of connected components of $\partial V$, such that $A\cap 
B=\emptyset$, then there exists a long exact sequence
\begin{equation*}
\xymatrix{
 \dots \ar[r]^{\delta} & Q^+H_*(A) \ar[r]^{i_*} & Q^+H_*(V,B)\ar[r]^{j_*}
& Q^+H_*(V,A)\ar[r]^{\delta} & Q^+H_{*-1}(A)\ar[r]^{i_*} &\dots
}
\end{equation*}
\end{proposition}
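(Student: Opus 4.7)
The plan is to construct a short exact sequence of pearl complexes
$$0 \to C^+_*(U; f|_U, \rho, J) \to C^+_*((V,S); f, \rho, J) \to C^+_{*-1}(S; f_S, \rho_S, J_M) \to 0,$$
where $U \subset V$ is the complement of a small collar of $S$, and to deduce the stated long exact sequence by taking the associated long exact sequence in homology. The construction leverages a Morse function with product structure near $S$, together with the open mapping theorem, to confine pearly trajectories.

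I choose a collar neighborhood $\mathcal{N}(S) \cong S \times [0,1)$ of $S$ inside the cylindrical ends of $V$, a Morse function $f_S$ on $S$, and extend to a Morse function $f$ on $V$ respecting the exit region $S$ by setting $f(x,s) = f_S(x) + g(s)$ on $\mathcal{N}(S)$, where $g:[0,1)\to\mathbb{R}$ has a unique critical point, a maximum at some $s_0 \in (0,1)$, with $g'(0)>0$ (so $-\nabla f$ has outward $s$-component at $s=0$). The critical points of $f$ then split into two classes: ``interior'' critical points in $U := V \setminus (S \times [0, s_1])$ for some $s_1 \in (s_0, 1)$, and ``near-boundary'' critical points of the form $(p, s_0)$ for $p \in Crit(f_S)$, with Morse index $|(p,s_0)|_V = |p|_{f_S} + 1$.

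The core step is a confinement lemma: any pearly trajectory ending at a near-boundary critical point $(p, s_0)$ lies entirely within the slice $\{s = s_0\} \cap V \cong S \times \{s_0\}$. This rests on two facts. First, since $s_0$ is a maximum of $g$, the slice $\{s=s_0\}$ is $-\nabla f$-invariant and unstable in the transverse $s$-direction, so any flow line asymptoting to $(p, s_0)$ must lie in this slice. Second, the slice is contained in the cylindrical end where $\pi$ is trivial and $J = i \oplus J_M$, so by Lemma~\ref{lem:curve_openmapping} any $J$-holomorphic disk with a boundary point in the slice is contained in the single fiber over that point. An induction on the length of the pearly trajectory, alternating between gradient segments (forced into $\{s=s_0\}$) and disks (forced into a single fiber with boundary on a component $L_i \subset S$), confines the whole trajectory to the slice. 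Consequently, no pearly trajectory starts at an interior critical point and ends at a near-boundary one, so $C^+_*(U; f|_U, \rho, J)$ is a genuine subcomplex, and simultaneously this identifies pearly trajectories between near-boundary critical points with pearly trajectories in $S$ for the data $(f_S, \rho_S, J_M)$. The quotient complex is thus isomorphic to $C^+_{*-1}(S; f_S, \rho_S, J_M)$, the shift reflecting the Morse-index relation above.

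Finally, $U$ is itself a Lagrangian cobordism with the same ends as $V$, and $f|_U$ has $-\nabla f|_U$ pointing inward along all of $\partial U$ (inward along $\partial V \setminus S$ by choice of $f$; inward along the new inner face $\{s = s_1\}$ because $g'(s_1) < 0$), so $C^+_*(U; f|_U, \rho, J)$ computes $Q^+H_*(V)$ by Proposition~\ref{prop:LQHforLC}. The long exact sequence in homology of the SES then yields the stated LES, with connecting map $\delta$ of degree $-1$. The generalized statement is obtained by the same argument applied to a Morse function respecting the exit region $A \cup B$ and collaring only the components of $A$: the subcomplex computes quantum homology with exit region $B$ and the quotient computes $Q^+H_{*-1}(A)$. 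The main technical obstacle is verifying the confinement lemma globally (not just in local collar coordinates); this is precisely where the collar being in the cylindrical end and the open mapping theorem for $J$-holomorphic disks are indispensable, since without them a pearly trajectory could in principle use a disk to jump off the slice $\{s=s_0\}$ and escape into the interior of $V$.
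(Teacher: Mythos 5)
Your proposal is correct and is essentially the same argument the paper gives (following \cite{BC13}): a Morse function adapted to the exit region as in Definition~\ref{def:f_adapted_S}, with product structure $f_S+\sigma$ on a collar of $S$ and an index-one critical point of $\sigma$ marking a near-boundary slice, together with the flow-invariance of that slice and Lemma~\ref{lem:curve_openmapping} to confine pearly trajectories, producing the short exact sequence of pearl complexes~\eqref{eq:short_exact_chain_Z} whose long exact homology sequence is the stated one.
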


We define a special class of Morse functions on the cobordisms, which is used to prove the existence of the long exact sequence in~\ref{thm:longexact}.

\begin{definition}[see~\cite{BC13}]\label{def:f_adapted_S}
Take a small extension of $V_R$, namely 
$V_{R+\epsilon}:=V\cap(\pi^{-1}[-R-\epsilon,R+\epsilon])$. 
% and similar for $E$ we consider the set $E_{R+\epsilon}:=E\cap(\pi^{-1}[-R-\epsilon,R+\epsilon])$
Denote by 
$$S_{R+\epsilon}:=(\coprod_{i \in I_-} \{(-R - \epsilon,a_i^-)\}\times L_i^-) \cup  
(\coprod_{j\in J_+} \{(R + \epsilon, a_j^+)\}\times L_j^+)$$ 
the corresponding union of connected components of $\partial V_{R+\epsilon}$.
Let $f: V_{R+\epsilon} \rightarrow \mathbb{R}$ be a Morse function on  
$V_{R+\epsilon}$, such that $-\nabla f$ is transverse to the boundary $\partial 
V_{R+\epsilon}$ and points out along $S_{R+\epsilon}$ and in along $\partial 
V_{R+\epsilon} \setminus S_{R+\epsilon}$.
Moreover, we want $f$ to fulfil the following properties.
\begin{equation*}
        \begin{array}{lll}
        f(t,a_j^+,p)=f_j^+(p)+\sigma_j^+(t) & 
\sigma_j^+:[R,R +\epsilon]\rightarrow \mathbb{R}, & p\in M, j=1,\dots, k_+, \\
	f(t,a_i^-,p)=f_i^-(p)+\sigma_i^-(t) & \sigma_i^-:[-R-\epsilon,-R 
]\rightarrow
\mathbb{R}, & p\in M, i=1,\dots, k_-, \\
       \end{array},
\end{equation*}
where $f_j^+:L_j^+\rightarrow \mathbb{R}$ and $ f_i^-:L_i^- \rightarrow \mathbb{R}$ are 
Morse functions on $L_j^+$ and $L_i^-$ respectively.
The functions $\sigma_j^+$ and $\sigma_i^-$ are also Morse, each with a unique critical 
point and satisfying
\begin{enumerate}
  \item [(i)] $\sigma_j^+(t)$ is a non-constant linear function for $t \in [R + 
\frac{3\epsilon}{4},R + \epsilon]$ and $\sigma_j^+(t)$ is decreasing in this interval 
if 
$j\in J_+$ and increasing if $j\in  \{1,\dots,k_+\}\setminus J_+$.
Furthermore $\sigma_j^+(t)$ has a unique critical point at $R + \frac{\epsilon}{2}$ of 
index $1$ if $i\in J_+$ and of index $0$ if $j\in \{1,\dots,k_+\}\setminus J_+$
  \item [(ii)] $\sigma_i^-(t)$ is a non-constant linear function for $t \in [-R 
-\epsilon, -R - \frac{3\epsilon}{4}]$ and $\sigma_i^-(t)$ is increasing in this interval 
if $i\in I_-$ and decreasing if $i\in  \{1,\dots,k_-\}\setminus I_-$.
Furthermore, $\sigma_i^-(t)$ has a unique critical point at $-R - \frac{\epsilon}{2}$ of 
index $1$ if $i\in I_-$ and of index $0$ if $i\in \{1,\dots,k_-\}\setminus I_-$
\end{enumerate}
We call such a function on $V_{R+\epsilon}$ a Morse function adapted to the exit region 
$S_{R+\epsilon}$. By abuse of notation we also say that $f$ is a \emph{Morse function on $V$ adapted to the exit region $S$}.
Let $N$ be the neighbourhood of $S_{R+\epsilon}$ given by
\begin{equation*}
 N:=\coprod_{i \in I_-} [-R -\epsilon,  -R -3\epsilon/8]\times \{a^-_i\} \times 
L_i^-\coprod_{j\in J_+} [R+3\epsilon/8,  R +\epsilon]\times \{a^+_j\} \times L_j^+.
\end{equation*}
Then the set $U$ is defined by $U:=V_{R+\epsilon}\setminus N$.
\end{definition}
\begin{figure}[H]
 \centering
\includegraphics{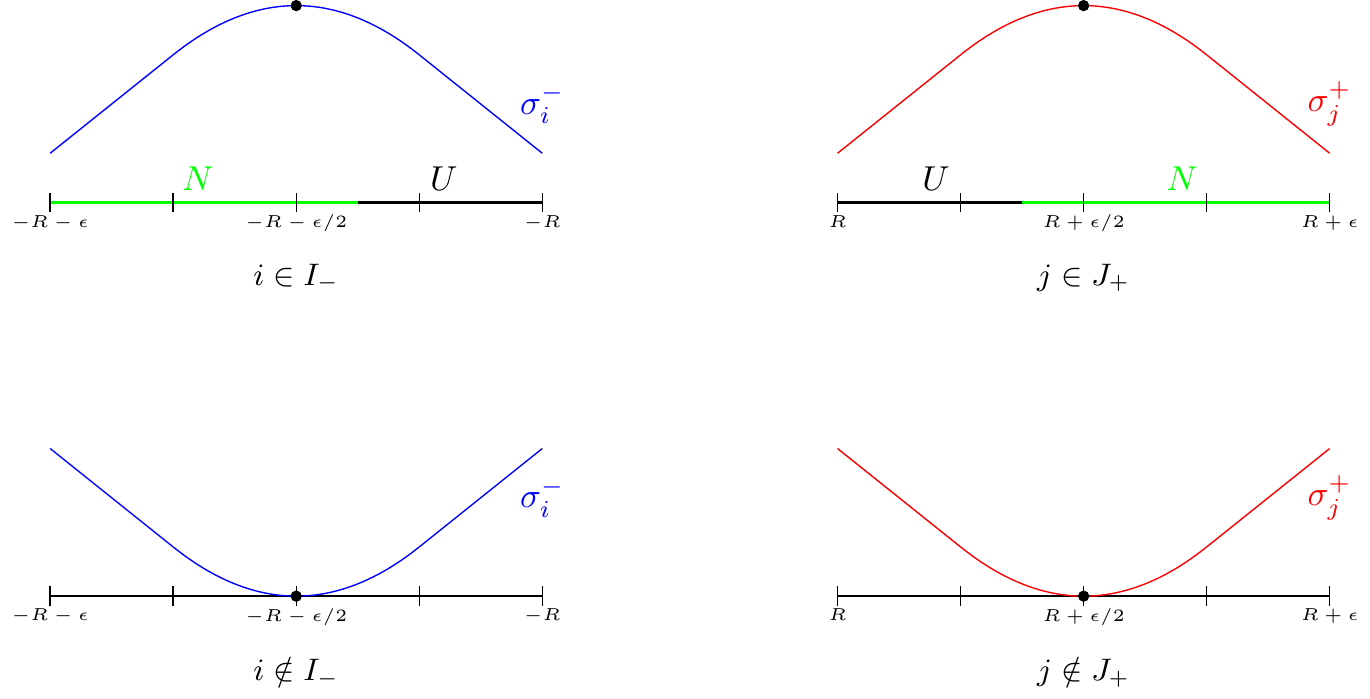}
\caption{The functions $\sigma_i^-$ and $\sigma_j^+$.}\label{fig:MFadaptedS}
 \end{figure}
 
If we take coefficients in $\Lambda^+:=\mathbb{Z}_2[t]$ this Morse function implies the existence of the following short exact sequence of chain complexes, which is used to prove~\ref{prop:long_exact_sequence_z2}. (See~\cite{BC13})

 \begin{equation}\label{eq:short_exact_chain_Z}
  \scalebox{0.9}{
\xymatrix{
0 \ar[r] & C^+_k(U;f|_U,J) \ar[r]^-j \ar[d]^{d_U} & 
C^+_k((V_{R+\epsilon},S_{R+\epsilon});f,J) \ar[r]^-{\delta} \ar[d]^{d_{(V_{R+\epsilon},S_{R+\epsilon})}} & 
C^+_{k-1}(S_{R+\epsilon/2};f|_{S_{R+\epsilon/2}},J) \ar[d]^{d_{S_{R+\epsilon/2}}} \ar[r] & 
0\\
0 \ar[r] & C^+_{k-1}(U;f|_U,J) \ar[r]^-{j} & 
C^+_{k-1}((V_{R+\epsilon},S_{R+\epsilon});f,J) \ar[r]^-{\delta} & 
C^+_{k-2}(S_{R+\epsilon/2};f|_{S_{R+\epsilon/2}},J) \ar[r] & 0. \\
}}
\end{equation}
\begin{remark}
 Notice that the connecting homomorphism in homology coming from the short exact sequence 
of chain complexes~(\ref{eq:short_exact_chain_Z}) gives us the inclusion map $i_*:Q^+H_*(S)  
\rightarrow Q^+H(V)$.
In singular homology the connecting homomorphism is the one between the homologies 
$H_*(V,S)$ and $H_{*-1}(S)$ and it is often denoted by $\delta$.
However, in the case of quantum homology the map $\delta$ is not the connecting 
homomorphism.
\end{remark}

In order to generalize this proof to arbitrary rings we need to bear in mind the orientations of the moduli spaces. 
The map $j:C^+_*(U;f|_U,J) \hookrightarrow C^+_*((V,S);f,J)$ is just an inclusion of subcomplexes and hence the orientations of the moduli spaces behave well. For the map $\delta$ this is not obvious and shall be investigated below. 

\paragraph{Orientations of the moduli spaces on the boundary of 
\texorpdfstring{$V$}{V}}\label{sec:orientation}

Working with rings of characteristic other than two the moduli spaces need to be endowed with orientations. 
Recall that the orientations of the moduli spaces of pseudo-holomorphic disks are defined 
using a fixed spin structure on the Lagrangian submanifold. For a precise definition we refer to~\cite{McS12}.
Let $\mathcal{M}(\lambda;J;(E,V))$ be the moduli spaces of 
$J_E$-holomorphic disks of class $\lambda$ in $(E,V)$ and endow it with the orientation induced by the spin structure on $V$.
Given a spin structure on the Lagrangian cobordism $V$ there is a canonical way to define 
a spin structure on its boundary components.
For a description the reader is referred to~\cite{LM89}.
Let $\mathcal{M}(\lambda;J;(M,L))$ be the moduli space of $J_M$-holomorphic disks of class $\lambda$ in 
$(M,L)$ and endow $\mathcal{M}(\lambda;J;(M,L))$ with the orientation coming from the spin structure on $L$.
The space $\mathcal{M}(\lambda;J;(M,L))$ is also oriented as a submanifold of $\mathcal{M}(\lambda;J;(E,V))$.
To establish the short exact sequence of chain complexes in Proposition~\ref{prop:long_exact_sequence} we have to 
show that these two orientations match.
\begin{lemma}\label{lem:orientation_match}
 The orientation of $\mathcal{M}(\lambda;J;(E,V))$ restricted to the set 
of $J$-holomorphic curves contained in $(M,L)$ is the same as the orientation of 
$\mathcal{M}(\lambda;J;(M,L))$ coming from the spin structure on $L$.
\end{lemma}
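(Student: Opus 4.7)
The plan is to reduce the equality of orientations to a comparison of determinant lines of linearised Cauchy--Riemann operators, exploiting the product structure of $E$ near the cylindrical ends of $V$.

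First, I would recall that the orientation on a moduli space of $J$-holomorphic disks with totally real boundary conditions is defined by combining the determinant line bundle of the linearised $\bar\partial$-operator with a trivialisation of the pull-back of the tangent bundle of the Lagrangian along $\partial D$ prescribed by a fixed spin structure, as in~\cite{FOOO09}. It therefore suffices to show that, for a disk $u:(D,\partial D)\to(E,V)$ whose image lies in a fibre, the trivialisation of $\det D_u$ coming from the spin structure on $V$ differs from the trivialisation of $\det D_{u'}$ coming from the induced spin structure on $L$ only by a canonically oriented factor.

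Next, by Lemma~\ref{lem:curve_openmapping} such a disk factors as $u=(z_0,u')$, with $z_0\in\ell$ and $u':(D,\partial D)\to(M,L)$, where $\ell$ is the horizontal ray carrying the relevant end of $V$. The tameness assumption supplies the splittings $TE|_{\mathcal{W}}=T\mathbb{C}\oplus TM$, $TV|_{\ell\times L}=T\ell\oplus TL$ and $J|_{\mathcal{W}}=i\oplus J_M$, which decompose the linearised operator as $D_u=D_0\oplus D_{u'}$, where $D_0$ is the standard $\bar\partial$-operator on $(D,\partial D)\to(\mathbb{C},T_{z_0}\ell)$. By Lemma~\ref{lem:automatic_transversality} the operator $D_0$ is surjective with one-dimensional real kernel, canonically identified with $T_{z_0}\ell\subset\mathbb{R}\subset\mathbb{C}$. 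Consequently $\det(D_0)$ carries a canonical orientation coming from the natural orientation of $\ell$, and one obtains a canonical isomorphism $\det(D_u)\cong\det(D_0)\otimes\det(D_{u'})$.

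Finally, I would invoke the Lawson--Michelsohn description of the boundary spin structure (see~\cite{LM89}): the spin structure on $L$ induced from $V$ is precisely characterised by the property that stabilisation by the outward normal direction $T\ell$ with its standard spin structure recovers the restriction of the spin structure on $V$ to the cylindrical slice. Under the product splitting $u^*TV|_{\partial D}=T_{z_0}\ell\oplus u'^*TL$, the trivialisation of $u^*TV|_{\partial D}$ defining the orientation of $\det D_u$ via the spin structure on $V$ therefore splits as the product of the canonical trivialisation of the constant factor $T_{z_0}\ell$ and the trivialisation of $u'^*TL$ defining the orientation of $\det D_{u'}$ via the induced spin structure on $L$. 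Under the decomposition $D_u=D_0\oplus D_{u'}$ this forces the two orientations on the subset of $\mathcal{M}(\lambda;J;(E,V))$ consisting of fibrewise disks to coincide with the orientation of $\mathcal{M}(\lambda;J;(M,L))$. The main obstacle I expect is the bookkeeping of sign conventions: one must verify that the Koszul rule governing the product orientation on $\det(D_0)\otimes\det(D_{u'})$ is compatible with the convention used to orient $\det D_u$ from the spin structure on $V$, and that the Lawson--Michelsohn stabilisation agrees, up to homotopy of trivialisations, with the stabilisation implicit in the Fukaya--Oh--Ohta--Ono construction. Once the conventions are aligned, the identification reduces to the tautological splitting above.
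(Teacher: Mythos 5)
Your proposal follows essentially the same route as the paper's proof: using the tameness/product structure near the ends to split the linearised operator as $D_{u_{\mathbb{C}}}\oplus D_{u'}$, identifying $\det(D_u)\cong\det(D_{u_{\mathbb{C}}})\otimes\det(D_{u'})$ with the $\mathbb{C}$-factor canonically oriented, and observing that the restriction of the spin-structure trivialisation of $(u|_{\partial D})^*TV$ is precisely the trivialisation induced by the boundary spin structure on $L$ (the paper likewise appeals to~\cite{FOOO09} and~\cite{LM89} for this). Your extra remarks on the one-dimensional kernel of the constant-factor operator and on Koszul sign conventions only make explicit what the paper leaves implicit, so no substantive difference remains.
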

In order to prove this lemma we need to briefly recall the construction of the 
orientation 
of $\mathcal{M}(\lambda;J;(E,V))$.
\begin{proposition}[\cite{FOOO09}]\label{prop:indexbundle}
 Let $E$ be a complex vector bundle over a disk $D^2$. Let $F$ be a totally real 
subbundle 
of $E|_{\partial D^2}$ over $\partial D^2$. 
We denote by $\overline{\partial}_{(E,F)}$ the Dolbeault operator on $D^2$ with 
coefficients in $(E,F)$,
\begin{equation}
 \overline{\partial}_{(E,F)}:W^{(1,p)}(D^2,\partial D^2;E,F)\rightarrow L^{p}(D^2;E).
\end{equation}
Assume $F$ is trivial and take a trivialization of $F$ over $\partial D^2$.
Then the trivialization induces a canonical orientation of the index bundle $Ker 
\overline{\partial}_{(E,F)}-Coker \overline{\partial}_{(E,F)}$.
\end{proposition}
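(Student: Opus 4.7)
The plan is to reduce to a standard model and then verify that the resulting orientation is independent of auxiliary choices. The starting observation is that $F$ is a totally real subbundle of $E|_{\partial D^2}$ of real rank equal to the complex rank $n$ of $E$, so the complexification $F\otimes_{\mathbb{R}}\mathbb{C}$ canonically identifies with $E|_{\partial D^2}$. Consequently a real trivialization of $F$ induces a complex trivialization of $E|_{\partial D^2}$. Since $D^2$ is contractible, every complex vector bundle on it is trivializable; hence this boundary trivialization extends to a complex trivialization $\Phi: E \to D^2 \times \mathbb{C}^n$ of $E$ over the whole disk.

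In the trivialization $\Phi$ the totally real subbundle $F$ becomes the constant subbundle $\partial D^2 \times \mathbb{R}^n \subset \partial D^2 \times \mathbb{C}^n$, and $\overline{\partial}_{(E,F)}$ is conjugated to the standard Cauchy--Riemann operator
\[
\overline{\partial}_{std}: W^{1,p}(D^2,\partial D^2; \mathbb{C}^n,\mathbb{R}^n) \to L^p(D^2;\mathbb{C}^n).
\]
By the Schwarz reflection principle, $\ker \overline{\partial}_{std}$ consists of the constant $\mathbb{R}^n$-valued maps, while $\mathrm{Coker}\,\overline{\partial}_{std} = 0$. The fixed real trivialization of $F$ supplies a distinguished basis of this kernel, hence a canonical orientation of $\ker \overline{\partial}_{(E,F)} - \mathrm{Coker}\,\overline{\partial}_{(E,F)}$, transported back through $\Phi$.

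The main obstacle is canonicity: showing the orientation does not depend on the choice of extension $\Phi$. Two extensions of the same boundary trivialization differ by a bundle automorphism of $E$ over $D^2$ restricting to the identity on $\partial D^2$, i.e.\ (in a fixed trivialization) a continuous map $g:(D^2,\partial D^2)\to (GL(n,\mathbb{C}),\mathrm{id})$. Collapsing $\partial D^2$, such $g$ represent elements of $\pi_2(GL(n,\mathbb{C}))$, which vanishes because $GL(n,\mathbb{C})$ deformation retracts onto the compact Lie group $U(n)$ and $\pi_2$ of any Lie group is trivial. Hence any two extensions are homotopic rel $\partial D^2$, yielding a continuous one-parameter family of boundary value problems; the associated index bundle over the homotopy parameter is a real line (or, at jumps in $\dim \ker$, a virtual bundle) in which orientations propagate by continuity, so the orientations at the two endpoints coincide. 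This $\pi_2$-vanishing is the essential step and the reason the complex, rather than real, structure on $E$ is used.
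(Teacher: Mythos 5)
The step ``this boundary trivialization extends to a complex trivialization $\Phi:E\to D^2\times\mathbb{C}^n$ over the whole disk'' is where your argument breaks. A real frame of $F$ does canonically induce a complex frame of $E|_{\partial D^2}$ (via $F\otimes_{\mathbb{R}}\mathbb{C}\cong E|_{\partial D^2}$), but the obstruction to extending that boundary trivialization over $D^2$ is exactly the relative first Chern number, i.e.\ the Maslov index $\mu(E,F)$ of the bundle pair, and this is in general nonzero even when $F$ is trivializable. Concretely, take $E=D^2\times\mathbb{C}$ and $F_{e^{i\theta}}=e^{i\theta}\mathbb{R}$: the frame $\theta\mapsto e^{i\theta}$ trivializes $F$, the induced loop in $GL(1,\mathbb{C})$ has winding number $1$, so it does not bound a trivialization of $E$ over $D^2$; here $\mu=2$ and the index of $\overline{\partial}_{(E,F)}$ is $n+\mu=3$, not $n=1$, so the operator cannot be conjugated to the standard problem whose kernel is the constants $\mathbb{R}^n$ with vanishing cokernel. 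Your canonicity argument via $\pi_2(GL(n,\mathbb{C}))=0$ is fine as far as it goes, but it only compares two extensions when at least one exists; it cannot repair the nonexistence of the extension, which is the actual content of the proposition.

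The missing idea is how to deal with the interior topology of $(E,F)$ once the boundary data has been standardized. The argument in~\cite{FOOO09} (the paper itself only cites the result and gives no proof) degenerates the disk by pinching off a sphere: using the frame of $F$ one splits the pair, up to deformation, into a trivial pair $(D^2\times\mathbb{C}^n,\partial D^2\times\mathbb{R}^n)$ --- whose operator has kernel $\mathbb{R}^n$ (oriented by the chosen frame) and zero cokernel --- glued to a Cauchy--Riemann operator on a closed sphere with coefficients in a complex bundle carrying the Chern number $\mu/2$. The sphere contribution is canonically oriented because the operator can be homotoped to a complex-linear one, so its kernel and cokernel are complex vector spaces; the gluing formula for determinant lines then produces the orientation of $\ker\overline{\partial}_{(E,F)}-\mathrm{Coker}\,\overline{\partial}_{(E,F)}$, and one checks it is independent of the choices of degeneration and homotopy. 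Your reduction would be a valid special case precisely when $\mu(E,F)=0$, but the general statement needs this extra splitting step.
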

If a Lagrangian submanifold $L\subset M$ is oriented then $(u|_{\partial D^2})^*TL$ is 
trivial.
Indeed, $TL$ is trivial over the $1$-skeleton of $V$ and by a cellular approximation 
argument we may assume that $TL$ is trivial over the image of $u|_{\partial D^2}$. 
Thus, we can apply proposition~\ref{prop:indexbundle} to the case 
$(E,F)=(u^*TM,(u|_{\partial D^2})^*TL)$.
This gives us a pointwise orientation of the index bundle of the Dolbeault operator 
$\overline{\partial}_{(E,F)}$.
Since the orientation of the index bundle of $\overline{\partial}_{(E,F)}$ depends only 
on the trivialization of $(u|_{\partial D^2})^*TL$, it therefore depends only on the spin 
structure on $L$.
In~\cite{FOOO09} it is shown that with a choice of a spin 
structure on $L$ the pointwise
orientation from the proposition~\ref{prop:indexbundle}
can be extended in a unique way to give a consistent orientation of the index bundle.
The orientation of the index bundle then induces an orientation of the determinant bundle 
of the linearized operator
$D\overline{\partial}_u$ of the $J$-holomorphic curve equation.
The determinant bundle of a Fredholm operator is defined as
\begin{equation*}
 det(D \overline{\partial}_u):=det(Coker D\overline{\partial}_u)^*\otimes det( Ker D 
\overline{\partial}_u).
\end{equation*}

\begin{proof}[Proof of Lemma~\ref{lem:orientation_match}]
By the construction of the orientation of the moduli spaces, 
(which is described in~\cite{FOOO09},) it suffices to show that the orientations of 
the determinant line bundles of the linearized operators coincide.
Let $\tilde{u}\in \mathcal{M}(\lambda;J;(E,V))$ be a 
$J$-holomorphic disk that is mapped to $\mathcal{W}\times M$. (Compare with section~\ref{sec:tools}.) 
In particular, we may assume that $J=i\oplus J$ and $\pi(\tilde{u})$ is a constant.
Then the linearization of the operator $D \overline{\partial}_{\tilde{u}}$ splits into $D
\overline{\partial}_{u_{\mathbb{C}}}\oplus D\overline{\partial}_{u}$, where $u\in 
\mathcal{M}(\lambda;J;(M,L))$ and $u_{\mathbb{C}}$ is a constant curve in $\mathbb{C}$.
We compute the determinant line bundle:
\begin{equation*}
\begin{split}
 &det \left( D \overline{\partial}_{\tilde{u}}\right) = det\left(Coker 
D\overline{\partial}_{\tilde{u}}\right)^*\otimes det\left( 
KerD\overline{\partial}_{\tilde{u}}\right)\\
 &= det\left(Coker D\overline{\partial}_{u_{\mathbb{C}}}\oplus Coker 
D\overline{\partial}_u\right)^*\otimes det\left(Ker 
D\overline{\partial}_{u_{\mathbb{C}}}\oplus Ker D\overline{\partial}_u\right)\\
 &= det\left( Coker D\overline{\partial}_u\right)^*\otimes det\left(Coker 
D\overline{\partial}_{u_{\mathbb{C}}}\right)^* \otimes
 det\left(Ker D\overline{\partial}_{u_{\mathbb{C}}}\right)\otimes det\left( Ker 
D\overline{\partial}_u\right)\\
 &= det\left(Coker D\overline{\partial}_{u_{\mathbb{C}}}\right)^* \otimes
 det\left(Ker D\overline{\partial}_{u_{\mathbb{C}}}\right)\otimes det\left( Coker 
D\overline{\partial}_u\right)^*\otimes det\left( Ker D\overline{\partial}_u\right),\\
\end{split}
\end{equation*}
where the last step follows, since $D\overline{\partial}_u$ is surjective and therefore 
the cokernel is trivial.
Hence, we have the identity
\begin{equation*}
 det \left(D 
\overline{\partial}_{\tilde{u}}\right)=det\left(D\overline{\partial}_{u_{\mathbb{C}}}
\right)\otimes det\left(D \overline{\partial}_u\right).
\end{equation*}
Restricting the trivialization of $(\tilde{u}|_{\partial D^2})^*TV$ to a trivialization 
of $(u|_{\partial D^2})^*TL$, implies that an orientation of the determinant bundle of $D 
\overline{\partial}_{\tilde{u}}$ induces an orientation of $D \overline{\partial}_u$.
But this trivialization is exactly the trivialization induced by the spin structure on 
$L$. 
Therefore, the two orientations are equivalent.
\end{proof}

Let $e_1:U\rightarrow X$ and $e_2: V\rightarrow X$ be smooth maps between oriented 
manifolds, that are transverse.
We denote by $U\sideset{{}_{e_1}}{{}_{e_2}}{\mathop{\times}}V:=\{(u,v)\subset U\times 
V|e_1(u)=e_2(v)\}$ the fiber product of $U$ and $V$ with the orientation induced by the 
orientations of $U$, $V$ and $X$.

\begin{remark}
Let $e_1:U\rightarrow X$ and $e_2: V\rightarrow X$ be smooth maps and $X$ a submanifold 
in 
$Y$. 
Denote by $i:X\rightarrow Y$ the inclusion and write $e_1':U\rightarrow Y$ and $e_2':V 
\rightarrow Y$ for the compositions $i\circ e_1$ and $i\circ e_2$ respectively.
Then:
\begin{equation}\label{eq:orientationFiberProduct}
 U\sideset{{}_{e_1}}{{}_{e_2}}{\mathop{\times}}V=
U\sideset{{}_{e_1'}}{{}_{e_2'}}{\mathop{\times}}V.
\end{equation}
\end{remark}

\begin{corollary}\label{cor:orientationsmoduli}
There are two ways to orient moduli spaces modeled over planar trees in $\partial V$.
One is induced by the spin structure on $V$ and the other one is induced by the spin 
structure on $L$.
These two orientations are the same.
\end{corollary}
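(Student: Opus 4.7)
The plan is to reduce the corollary to Lemma~\ref{lem:orientation_match} by expressing any planar-tree moduli space as an iterated fiber product of its elementary building blocks and then invoking the naturality property recorded in equation~(\ref{eq:orientationFiberProduct}). First I would decompose a moduli space $\mathcal{P}$ modelled over a planar tree as a fiber product whose factors are (i) moduli spaces of $J$-holomorphic disks or spheres, one for each vertex, together with their evaluation maps at the marked points, and (ii) (un)stable manifolds of the Morse functions labelling the edges. For pearly trees whose disks lie in $\partial V$, Lemma~\ref{lem:curve_openmapping} forces each disk to be contained in a single fiber $(M,L)$; consequently every boundary evaluation map lands in $L$ and every interior evaluation map lands in $M$.

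Next I would address the factors separately. For each disk (or sphere) factor, Lemma~\ref{lem:orientation_match} already shows that the orientation of $\mathcal{M}(\lambda;J;(E,V))$ restricted to curves lying in $(M,L)$ coincides with the orientation of $\mathcal{M}(\lambda;J;(M,L))$ induced by the spin structure on $L$, provided that the spin structure on $L$ is the one canonically inherited from the chosen spin structure on $V$ via the boundary convention of~\cite{LM89}. For the Morse-theoretic factors, the hypotheses on the Morse functions adapted to the exit region (Definition~\ref{def:f_adapted_S}) guarantee that near $\partial V$ the function has the split form $f(t,a,p)=f^{\pm}(p)+\sigma^{\pm}(t)$, so $W^{u/s}_x(f)$ restricted to the cylindrical end is $W^{u/s}_x(f^{\pm})$ times a flow interval of $\sigma^{\pm}$; these restrictions carry the standard product orientations, which are compatible with the boundary-induced orientation/spin structure.

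To assemble the pieces I would apply the remark containing equation~(\ref{eq:orientationFiberProduct}) inductively: at each fiber product step the two evaluation maps take values in $L$ (respectively $M$), but we may equally well view them as maps into the ambient $V$ (respectively $E$). Equation~(\ref{eq:orientationFiberProduct}) ensures that the induced orientation on the fiber product is unchanged by this reinterpretation of the target. Iterating across every edge of the tree shows that the oriented fiber product built using spin$(V)$-orientations for the disks and $V$-orientations for the (un)stable manifolds equals the oriented fiber product built using spin$(L)$-orientations for the disks and $L$-orientations for the (un)stable manifolds.

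The main obstacle I anticipate is purely bookkeeping: one must verify that the two orientation conventions assemble with the \emph{same} sequence of signs under the fiber product, and in particular that the boundary-spin convention of~\cite{LM89} is compatible both with Lemma~\ref{lem:orientation_match} at each vertex and with the induced Morse orientations at each edge. Once this compatibility is checked at the level of a single vertex/edge, naturality of the fiber-product orientation through an inclusion $X\hookrightarrow Y$, as expressed in~(\ref{eq:orientationFiberProduct}), automatically propagates the agreement to the whole tree, and no further sign computation is needed.
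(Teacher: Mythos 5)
Your proposal is correct and follows essentially the same route the paper intends: the corollary is stated as a direct consequence of Lemma~\ref{lem:orientation_match} (matching orientations on the disk moduli factors) combined with the fiber-product naturality in equation~(\ref{eq:orientationFiberProduct}), applied to the decomposition of planar-tree moduli spaces into disk moduli spaces and (un)stable manifolds. Your additional remarks on the split form of the Morse functions near the ends are exactly the bookkeeping the paper leaves implicit.
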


\paragraph{The long exact sequence}

\begin{proposition}\label{prop:long_exact_sequence}
Let $V$ be an orientable and spin cobordism. 
Let $\mathcal{R}$ be a commutative unital ring not necessarily with characteristic two. We work with $\Lambda_{\mathcal{R}}^+=\mathcal{R}[t]$ coefficients.
There exists a long exact sequence in homology
\begin{equation*}
\xymatrix{
 \dots \ar[r]^-{\delta} & Q^+H_*(S) \ar[r]^-{i_*} & Q^+H(V) \ar[r]^-{j_*}
& Q^+H_*(V,S) \ar[r]^-{\delta} & Q^+H_{*-1}(S)\ar[r]^-{i_*} &\dots
}
\end{equation*}
\end{proposition}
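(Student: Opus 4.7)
The plan is to mirror the construction in the proof of Proposition~\ref{prop:long_exact_sequence_z2}, producing the same short exact sequence of chain complexes but now over $\Lambda^+_{\mathcal{R}}$, and then pass to the associated long exact sequence in homology. The new content compared to the $\mathbb{Z}_2$ case is that every moduli space must be oriented and every count of pearly trajectories carries a sign; the crux of the argument is to check that the three chain-level maps are compatible with these signs.

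I start with a Morse function $f$ on $V_{R+\epsilon}$ adapted to the exit region $S_{R+\epsilon}$ in the sense of Definition~\ref{def:f_adapted_S}, together with the neighbourhood $N$ of $S_{R+\epsilon}$ and its complement $U=V_{R+\epsilon}\setminus N$. The product form of $f$ on $N$ together with the fact that $\sigma_i^-$ (for $i\in I_-$) and $\sigma_j^+$ (for $j\in J_+$) has its unique critical point of index one yields a canonical bijection between critical points of $f$ lying in $N$ and critical points $p$ of $f|_{L_i^-}$ or $f|_{L_j^+}$, with index shift
\begin{equation*}
|(x_{\sigma},p)|_{f}=|p|_{f|_S}+1.
\end{equation*}
Ends of $V_{R+\epsilon}$ not belonging to $S$ instead have $\sigma$ with a minimum, so their nearby critical points sit in $U$ with no shift. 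As graded $\Lambda^+_{\mathcal{R}}$-modules this supplies an identification
\begin{equation*}
C^+((V_{R+\epsilon},S_{R+\epsilon});f,J) \cong C^+(U;f|_U,J) \oplus C^+(S_{R+\epsilon/2};f|_{S_{R+\epsilon/2}},J)[-1].
\end{equation*}

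Next I verify that the associated inclusion and projection
\begin{equation*}
0 \longrightarrow C^+(U;f|_U,J) \xrightarrow{\ j\ } C^+((V_{R+\epsilon},S_{R+\epsilon});f,J) \xrightarrow{\ \delta\ } C^+(S_{R+\epsilon/2};f|_{S_{R+\epsilon/2}},J)[-1] \longrightarrow 0
\end{equation*}
are $\Lambda^+_{\mathcal{R}}$-chain maps. For $j$, Lemma~\ref{lem:traj_away_from_boundary_general} ensures that a pearly trajectory with both endpoints in $U$ cannot leave $U$: the negative gradient points inward along $\partial V_{R+\epsilon}\setminus S_{R+\epsilon}$, and any $J$-holomorphic disk near the cylindrical ends is forced to be constant under $\pi$ by Lemma~\ref{lem:curve_openmapping}. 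Hence the pearl differential respects $C^+(U)$ and $j$ is literally an inclusion of generators, so signs match trivially. For $\delta$, the same open mapping argument forces every pearly trajectory between two critical points in $N$ to decompose as Morse flow along the $\sigma$-direction glued to pearly trajectories whose disks lie in single fibres $E_z \cong M$ with boundary on $L_i^-$ or $L_j^+$. Because $\sigma$ has a single critical point, the Morse differential along its direction vanishes, and what remains is precisely the pearl differential of $L_i^\pm$, modulo the Koszul sign from the degree shift $[-1]$.

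The main obstacle is the orientation compatibility of this last identification. The generators of the quotient complex inherit orientation data from the spin structure on $V$ restricted to a neighbourhood of $N$, whereas as a pearl complex of $L_i^\pm$ they would be oriented using the spin structure on $L_i^\pm$ canonically induced by the one on $V$ (cf.\ the discussion preceding Lemma~\ref{lem:orientation_match}). The two resulting orientations on the relevant moduli spaces of $J$-holomorphic disks are shown to coincide by Lemma~\ref{lem:orientation_match} and Corollary~\ref{cor:orientationsmoduli}. With this compatibility in hand the displayed sequence is a short exact sequence of $\Lambda^+_{\mathcal{R}}$-chain complexes, and the snake lemma produces the long exact sequence in the statement, with the understanding, as in the remark following Proposition~\ref{prop:long_exact_sequence_z2}, that the connecting homomorphism of the short exact sequence is the map labelled $i_*$ in the long exact sequence while the chain-level projection $\delta$ descends to the map labelled $\delta$.
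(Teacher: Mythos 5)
Your proof is correct and follows essentially the same route the paper takes: set up the short exact sequence of Proposition~\ref{prop:long_exact_sequence_z2} over $\Lambda_{\mathcal{R}}^+$ using a Morse function adapted to $S$, observe that $j$ is an inclusion of subcomplexes (so sign compatibility is automatic), and invoke Lemma~\ref{lem:orientation_match}/Corollary~\ref{cor:orientationsmoduli} to see that the restriction map $\delta$ remains a chain map once orientations are present. The paper's own proof is just a terse three-sentence version of your argument, with the module-level identification and the open-mapping-theorem confinement of disks to fibres left implicit since they are carried over unchanged from the $\mathbb{Z}_2$ case.
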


\begin{proof}
 Let $f:V\to \mathbb{R}$ be a Morse function adapted to the exit region $S$.
 It is easy to see that $C^+_*(U,f|_U,J)$ is a subcomplex of $C^+_*((V,S;f,J)$ and hence the inclusion is a chain map. 
 The map $\delta$ is given by restriction and Lemma~\ref{lem:orientation_match} implies that it is also a chain map.
\end{proof}

\subsubsection{Properties of the long exact sequence and the 
quantum structures}\label{sec:properties_les}

We define a Morse function, which is a small modification of 
Definition~\ref{def:f_adapted_S}.
The functions $\sigma_j^+$ and $\sigma_i^-$ at the cylindrical ends of the cobordism 
corresponding to the set $S_{R+\epsilon}$ are defined with two instead of one critical 
points.

\begin{definition}[\cite{BC13}]\label{def:special_MF}
Let $f$ be a Morse function on $V_{R+\epsilon}$ as in 
Definition~\ref{def:f_adapted_S}, with the only difference that $\sigma_j^+$ and 
$\sigma_i^-$ satisfy
\begin{enumerate}
  \item [(i)] $\sigma_j^+(t)$ is a non-constant linear function for $t \in [R + 
\frac{3\epsilon}{4},R + \epsilon]$.
$\sigma_j^+(t)$ is decreasing in this interval if $j\in J_+$ and increasing if $j\in 
\{1,\dots,k_+\}\setminus J_+$.
Furthermore, if $j\in J_+$ then $\sigma_j^+(t)$ has a critical point $t_{j,1}^+:=R + 
\frac{\epsilon}{2}$ of index $1$, and a critical point $t_{j,0}^+:=R + 
\frac{\epsilon}{4}$ of index $0$.
If $j\in\{1,\dots,k_+\}\setminus J_+$ then $\sigma_j^+$ has a exactly one critical point 
$R + \frac{\epsilon}{2}$, which is of index zero.
  \item [(ii)] $\sigma_i^-(t)$ is a non-constant linear function for $t \in [-R 
-\epsilon, -R -\frac{3\epsilon}{4}]$.
$\sigma_i^-(t)$ is increasing in this interval if $i\in I_-$ and decreasing if $i\in 
\{1\dots,k_-\}\setminus I_-$.
Furthermore, if $i\in I_-$ then $\sigma_i^-(t)$ has a critical point $t_{i,1}^-:=-R - 
\frac{\epsilon}{2}$ of index $1$ and a critical point $t_{i,0}^-:=-R 
-\frac{\epsilon}{4}$ of index $0$. 
If $i\in \{1\dots,k_-\}\setminus I_-$ then $\sigma_i$ has a exactly one critical point 
$-R - \frac{\epsilon}{2}$ of index $0$.
\end{enumerate}
Now we define the sets $N$ and $U$ similarly as in Definition~\ref{def:f_adapted_S}.
\end{definition}

Throughout the rest of this section we will use the following notation.
On a cylindrical end $[-R -\epsilon, -R]\times \{a_i^-\}\times L_i^-$ corresponding to a component of
$S$ the critical points are of the form $(t_{i,1}^+,a_i^-,p)$ or $(t_{i,0}^-,a_i^-,p)$, where $p$ is a 
critical point of $f_i^-$ and $t^-_{i,1}=-R- \epsilon/2$, $t^-_{i,0}=-R- \epsilon/4$.
Similarly on a cylindrical end $[R, R +\epsilon]\times \{a_j^+\}\times L_j^+$ corresponding to a component of $S$ the 
critical points are of the form $(t_{j,1}^+,a_j^+,p)$ or $(t_{j,0}^+,a_j^+,p)$, where $p$ 
is a critical point of $f_j^+$ and $t^+_{j, 1}=R+\epsilon/2$ and $t^+_{j,0}=R+\epsilon/4$.
If we do not want to specify the connected component of $S$, we will sometimes write 
$(t_{\iota,0},a_\iota,p)$ and $(t_{\iota,1},a_{\iota},p)$, as well as $f_{\iota}$ and 
$L_{\iota}$ for $\iota\in I_- \cup J_+$.
The functions $\sigma_i^-$ and $\sigma_j^+$ are illustrated in figure~\ref{fig:specialMF}.
Notice that $S_{R+\epsilon/4}\subset U$.
\begin{figure}[H]
 \centering
\includegraphics[scale=0.9]{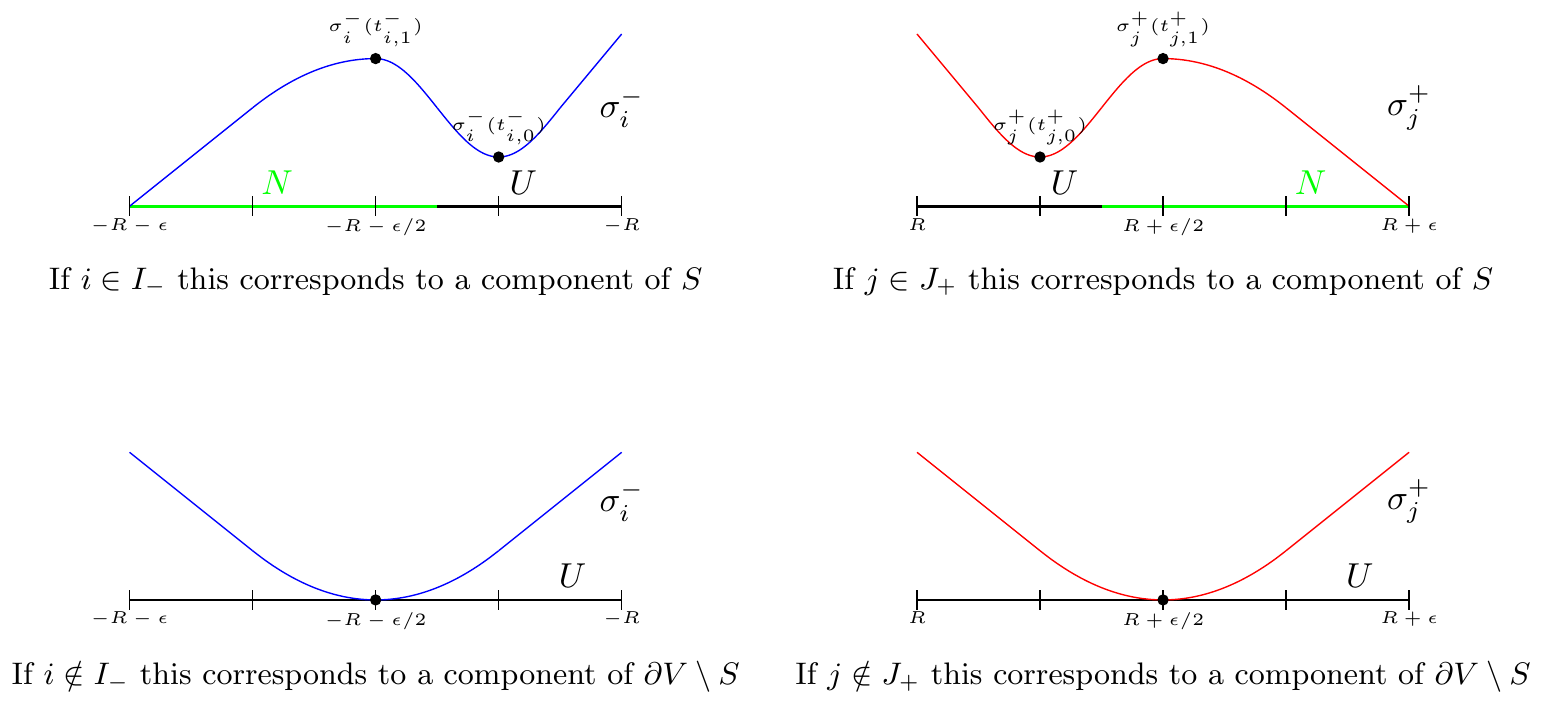}
\caption{The functions $\sigma_i^-$ and $\sigma_j^+$.}\label{fig:specialMF}
 \end{figure}
Consider the two sets $Crit(f)\cap S_{R+\epsilon/2}$ and $Crit(f)\cap 
S_{R+\epsilon/4}$, where $f$ is a function as given in 
Definition~\ref{def:special_MF}.
Then there is a natural identification between these two sets with a shift in degree by 
one and given by $(t_{\iota,1},a_{\iota},p) \mapsto (t_{{\iota},0},a_{\iota},p)$.
Note that the index of $(t_{{\iota},1},a_{\iota},p)$ is $|p|+1$ and the index of 
$(t_{{\iota},0},a_{\iota},p)$ is $|p|$.

\begin{lemma}\label{lem:sum_of_units}
Suppose that $S=\partial V$. Then $\delta(e_{(V,\partial V)})= \oplus_{i} -e_{L_i^-} 
\oplus_{j} e_{L_j^+}$.
\end{lemma}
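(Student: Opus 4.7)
The plan is to compute $\delta(e_{(V,\partial V)})$ at the chain level by choosing a Morse function adapted to the exit region $S=\partial V$, reducing to classical Morse theory at top degree, and then reading off the induced boundary orientation of $V$.

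First, I would choose a Morse function $f$ as in Definition~\ref{def:special_MF}, arranged so that each $f_\iota$ on $L_\iota$ has a unique maximum $p_\iota^{\max}$ (possible by the closed analog of Lemma~\ref{lem:single_max}) and so that the interior part of $f$ has a unique maximum $m$. The key observation is that on $C^+_{n+1}((V,\partial V);f,J)$ the pearl differential coincides with the classical Morse differential. Indeed, a term $y\,t^{\overline{\mu}(\lambda)}$ in $d(x)$ with $|x|=n+1$ has degree $|y|-\mu(\lambda)=n$, so $|y|=n+\mu(\lambda)$; since $\mu(\lambda)\in N_V\mathbb{Z}_{\geq 0}$, $N_V\geq 2$, and $|y|\leq n+1$, the only option is $\mu(\lambda)=0$. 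The same dimension count applied to $C^+_n(L_\iota;f_\iota,J)$ shows that $p_\iota^{\max}$ represents $e_{L_\iota}$. Moreover, by Theorem~\ref{thm:quantum_structures_for_cobordism}(ii) the unit $e_{(V,\partial V)}$ corresponds under the natural reduction to the fundamental class $[V,\partial V]\in H_{n+1}(V,\partial V;\mathcal{R})$.

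Second, I would compute $\delta$ from the short exact sequence used in Proposition~\ref{prop:long_exact_sequence}. At the chain level, $\delta$ projects onto the critical points in $S_{R+\epsilon/2}$ via the identification $(t_{\iota,1},a_\iota,p)\mapsto p$ (with degree shift $-1$); all interior critical points and the index-$n$ points $(t_{\iota,0},a_\iota,p)\in U$ are sent to zero. Writing a Morse cycle representing $[V,\partial V]=e_{(V,\partial V)}$ in the form
\[
c=\alpha\, m+\sum_\iota \beta_\iota (t_{\iota,1},a_\iota,p_\iota^{\max})+(\text{possible other index-}(n+1)\text{ terms killed by }\delta),
\]
one obtains $\delta(c)=\sum_\iota \beta_\iota\, p_\iota^{\max}$, representing $\sum_\iota \beta_\iota\, e_{L_\iota}$ in $Q^+H_n(S)$.

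Third, since at top degree the pearl $\delta$ is compatible with the singular connecting homomorphism $\delta_{\mathrm{sing}}\colon H_{n+1}(V,\partial V)\to H_n(\partial V)$, the coefficients $\beta_\iota$ are fixed by Poincar\'e--Lefschetz duality: $\delta_{\mathrm{sing}}[V,\partial V]=[\partial V]_{\mathrm{induced}}$, and the outward-normal-first convention gives $+\partial_t$ at a positive end (agreeing with the natural orientation of $L_j^+$, so $\beta_j=+1$) and $-\partial_t$ at a negative end (flipping the orientation of $L_i^-$, so $\beta_i=-1$). Translating through $p_\iota^{\max}\leftrightarrow [L_\iota]\leftrightarrow e_{L_\iota}$ yields $\delta(e_{(V,\partial V)})=\oplus_i(-e_{L_i^-})\oplus_j e_{L_j^+}$.

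The main obstacle is the sign analysis in the last step: namely, rigorously matching the chain-level $\delta$ coming from the pearl complex with the singular connecting homomorphism, so that the orientation conventions on the moduli spaces of Lemma~\ref{lem:orientation_match} and Corollary~\ref{cor:orientationsmoduli} agree with the outward-normal-first convention on the oriented cobordism $V$. This forces in particular $\partial_0(t_{\iota,1},a_\iota,p_\iota^{\max})=\epsilon_\iota(t_{\iota,0},a_\iota,p_\iota^{\max})$ with $\epsilon_\iota$ reflecting the orientation of the $t$-ray, and dualises to the sign in $\beta_\iota$. Over $\mathbb{Z}_2$ all signs are trivial, recovering Proposition~\ref{prop:long_exact_sequence_z2}.
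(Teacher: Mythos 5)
Your proposal is correct in its overall strategy and shares the paper's skeleton (the adapted Morse function of Definition~\ref{def:special_MF} with unique maxima, the observation that in top degree the pearl differential is purely Morse, and the description of $\delta$ as restriction to the critical points on $S_{R+\epsilon/2}$), but it resolves the decisive step differently. The paper never passes through singular homology: it exhibits an explicit chain-level representative of the unit by computing the differentials directly --- showing by dimension counts and the fact that trajectories starting on $S_{R+\epsilon/2}$ cannot re-enter $V|_{[-R,R]\times\mathbb{R}}$ that $d(\tilde m_{\iota,1})=\tilde m_{\iota,0}$, and that $d(m)$ hits exactly the collar points $\tilde m_{\iota,0}$ via the second flow line arriving there, with signs determined by the orientation comparison of Lemma~\ref{lem:orientation_match}/Corollary~\ref{cor:orientationsmoduli} --- so that $m-\sum_i\tilde m_{i,1}^-+\sum_j\tilde m_{j,1}^+$ is a cycle representing $e_{(V,\partial V)}$, and $\delta$ is then read off term by term. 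You instead take an arbitrary top-degree representative, invoke Theorem~\ref{thm:quantum_structures_for_cobordism}(ii) to identify its Morse class with $[V,\partial V]$, and determine the collar coefficients $\beta_\iota$ by comparing the Morse-level $\delta$ with the singular boundary map of the pair and the outward-normal-first orientation convention; your auxiliary observation that degree-$n$ chains over $\Lambda^+$ are $t^0$-combinations of the maxima is what makes this determination legitimate. Your route is shorter and conceptually clean, but be aware that the "main obstacle" you flag --- matching the chain-level $\delta$ with $\partial_{\mathrm{sing}}$ and the induced boundary orientations, including the asymmetry between negative and positive ends --- is not a peripheral check: it is exactly the content of the lemma, and the paper's explicit computation of $d(m)$ and $d(\tilde m_{\iota,1})$ is precisely the unpacking of that compatibility. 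So to make your argument complete you would either have to carry out that flow-line and orientation count yourself (at which point you have reproduced the paper's proof) or cite a precise Morse-theoretic statement identifying the quotient-map-induced morphism with the connecting homomorphism of the pair together with the sign conventions for the induced spin structures/orientations on the ends.
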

\begin{proof}
We choose a Morse function $f$ as in Definition~\ref{def:special_MF} with 
$S=\partial V$.
Then $-\nabla f$ is still transverse to $V_{R+\epsilon}$ and points out along 
$\partial V_{R+\epsilon}$.
Suppose $f$ has a unique maximum $m$ inside the set $V|_{[-R,R]}$, which is 
possible by Proposition~\ref{lem:single_max}.
In addition, assume that each $f_i^{-}$ and $f_j^{+}$ are Morse functions on $L_i^{-}$ and 
$L_j^{-}$, both with unique maxima $m_i^{-}$ and $m_j^{+}$ respectively.
Then
$$\tilde{m}_{i,1}^{-}:=(t_{i,1}^-,a_i^-,m_i^{-}) \text{ and } 
\tilde{m}_{j,1}^{+}:=(t_{j,1}^+,a_j^+,m_j^{+})$$
 are also maxima of $f$.
We want to show that 
$$m+(-\sum\limits_i \tilde{m}_{i,1}^{-} +\sum\limits_j \tilde{m}_{j,1}^{+})$$
 is a cycle, and
$$e=[m]+[-\sum\limits_i \tilde{m}_{i,1}^{-} + \sum\limits_j \tilde{m}_{j,1}^{+}]\in Q^+H_*(V,\partial V).$$
Fix a critical point $\tilde{m}_{\iota,1}=(t_{\iota,1},a_{\iota},m_{\iota})\in\{ 
\tilde{m}_{i,1}^{-} , \tilde{m}_{j,1}^{+}\}_{i,j}$ and compute $d(\tilde{m}_{\iota,1})$.
By the choice of the $\sigma_j^+$ and $\sigma_i^-$ we know that any pearly trajectory 
from a critical point in $Crit(f)\cap S_{R+\epsilon/2}$ cannot get into the set 
$V|_{[-R, R]\times \mathbb{R}}$.
Hence, it has to end at a point in $Crit(f)\cap ({S_{R+\epsilon/2}}\cup 
S_{R+\epsilon/4})$.
Moreover, $m_{\iota}$ is a cycle in the complex 
$C^+(S_{R+\epsilon/2};f|_{S_{R+\epsilon/2}},J)$, as we saw in the proof of 
Lemma~\ref{lem:unit_rel}.
Therefore it suffices to consider pearly trajectories in $0$-dimensional moduli spaces 
from $\tilde{m}_{\iota,1}\in Crit(f)\cap S_{R+\epsilon/2}$ to a critical point 
$x:=(t_{\iota,0},a_{\iota},p)\in Crit(f)\cap S_{R+\epsilon/4}$.
And we have 
$$\delta_{prl}(\tilde{m}_{\iota},x,\lambda;f)=|\tilde{m}_{\iota}
|-|x|+\mu(\lambda)-1=0.$$
Outside of the set $V|_{[-R,R]}$ the Morse function $f$ is the sum of $f_k$ 
and $\sigma_k$, and thus we can project to the last factor to get a pearly trajectory of 
$f_{\iota}$ in $L_{\iota}$, going from $m_{\iota}$ to $p$.
Assume $p\neq m_{\iota}$.
Computing the dimension of moduli space of the projected pearly trajectory, we get 
$$\delta_{prl}(m_{\iota},p,\lambda;f_{\iota})=|m_{\iota}|-|p|+\mu(\lambda)-1=(|\tilde{m}_{
\iota}|-1)-|x|+\mu(\lambda)-1=-1,$$
 which is a contradiction.
Thus, $d(\tilde{m}_{{\iota},1})=\tilde{m}_{{\iota},0}$ and therefore 
$$d(-\sum\limits_i \tilde{m}_{i,1}^{-} + \sum\limits_j \tilde{m}_{j,1}^{+})= 
-\sum\limits_i 
\tilde{m}_{i,0}^{-} +\sum\limits_j \tilde{m}_{j,0}^{+}.$$
It is left to show that $d(m)=\sum\limits_i {\tilde{m}_{i,0}^{-}} - \sum\limits_j 
{\tilde{m}_{j,0}^{+}}$.
A computation shows that for $\mu(\lambda)\neq0$ 
$$\delta_{prl}(m,z,\lambda;f)=(n+1)-|z|+\mu(\lambda)-1\geq (n+1)-n+2>0.$$
Thus, $d(m)$ is equal to the Morse part of the differential, and hence $|z|=n$.
At each critical point of index $n$ two trajectories of the negative gradient of 
$f$ end.
If $z$ is one of the critical points of index $n$ on $S_{R+\epsilon/4}$, then one negative 
gradient is coming from $\tilde{m}_{\iota}$ for some $\tilde{m}_{\iota}\in 
\{\tilde{m}_{i,1}^{-},\tilde{m}_{j,1}^{+}\}_{i,j}$, and the other one therefore has to 
come from $m$, because it is the only maximum within $V|_{[-R, R]\times \mathbb{R}}$ 
and $-\nabla f$ is transverse to $\partial V|_{[-R, R]\times \mathbb{R}}$ and 
pointing outwards.
Clearly the moduli spaces $\mathcal{P}_{prl}(m,\tilde{m}_{{\iota},0};0)$ and 
$\mathcal{P}_{prl}(\tilde{m}_{\iota},\tilde{m}_{{\iota},0};0)$ must have opposite 
orientations.
If $z$ is contained in $V|_{[-R, R]\times \mathbb{R}}$, then there are two flow lines 
from $m$ to $z$, counted with opposite signs.
This proves that 
$$d(m)=\sum\limits_i \tilde{m}_{i,0}^{-} - \sum\limits_j \tilde{m}_{j,0}^{+}= 
-d(-\sum\limits_i \tilde{m}_{i,1}^{-} +\sum\limits_j \tilde{m}_{j,1}^{+})$$
 and thus 
$$e_{(V,\partial V)}=[m]+[-\sum\limits_i \tilde{m}_{i,1}^{-} + \sum\limits_j 
\tilde{m}_{j,1}^{+}].$$
Now it is easy to conclude
$$\delta(e_{(V,\partial V)})=\delta([m+(-\sum\limits_i \tilde{m}_{i,1}^{-} + 
\sum\limits_j \tilde{m}_{j,1}^{+})])=\sum\limits_i [-m_i^{-}] + \sum\limits_j [m_j^{+}]= 
\oplus_{i} -e_{L_i^-}\oplus_{j} e_{L_j^+}.$$
\end{proof}

\begin{lemma}\label{lem:delta_multiplicative}
 The map $\delta$ in Proposition~\ref{prop:long_exact_sequence} is multiplicative with 
respect to the quantum product $*$.
In fact, the chain map $\delta$ satisfies $\delta(x*y)=\delta(x)*\delta(y)$.
\end{lemma}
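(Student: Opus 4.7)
The plan is to establish $\delta(x*y)=\delta(x)*\delta(y)$ at the chain level, for generators $x\in Crit(f)$ and $y\in Crit(f')$, with respect to three Morse functions $f,f',f''$ on $V_{R+\epsilon}$ adapted to the exit region $S$ in the sense of Definition~\ref{def:special_MF}, a generic Riemannian metric $\rho$, and an almost complex structure $J\in\mathcal{J}_{\mathcal{W}}$. Recall that at the chain level $\delta$ is the restriction map to critical points in $S_{R+\epsilon/2}$, sending a generator $(t_{\iota,1},a_\iota,p)$ to $p\in Crit(f_\iota)$ and vanishing on all other critical points. The product $\delta(x)*\delta(y)$ is in turn computed by counting figure-$Y$ pearly trajectories inside the boundary Lagrangians $L_\iota$.

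The crucial geometric step is a localization argument for zero-dimensional moduli spaces $\mathcal{P}_{prod}(x,y,z,\lambda;f,f',f'',J)$ whose exit $z$ lies in $S_{R+\epsilon/2}$. Writing $z=(t_{\iota,1},a_\iota,r)$, the output gradient trajectory of $-\nabla f''$ entering $z$ cannot originate in the interior of $V$, since $-\nabla f''$ points outward along $S$ and each $\sigma''_\iota$ is monotone on either side of its critical points on the end. Hence the last vertex of the underlying tree is located in the cylindrical end, and Lemma~\ref{lem:curve_openmapping} then forces the corresponding $J$-holomorphic disk to lie inside a single fiber of $\pi$. Propagating this observation backwards through the tree, using the monotonicity of the $\sigma$-functions along each edge and Lemma~\ref{lem:curve_openmapping} at each vertex, the entire trajectory is confined to a neighbourhood of a single cylindrical end of $V$: every disk lies in a fiber of $\pi$ over that end, and both $x$ and $y$ must be of the form $(t_{\iota,1},a_\iota,p)$ and $(t_{\iota,1},a_\iota,q)$ for the \emph{same} index $\iota$.

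Once the localization is established, projecting each disk onto the $L_\iota$-factor of its fiber and projecting each gradient trajectory using the product splitting $f=f_\iota+\sigma_\iota$ (and similarly for $f',f''$) yields a bijection
$$\mathcal{P}_{prod}(x,y,z,\lambda;f,f',f'',J) \;\longleftrightarrow\; \mathcal{P}_{prod}(p,q,r,\lambda;f_\iota,f'_\iota,f''_\iota,J_M),$$
where the $\sigma$-factors contribute unique short Morse segments along each edge. The right-hand count is precisely the coefficient of $r$ in $p*q$ inside $L_\iota$, which is the same as the coefficient of $\delta(z)$ in $\delta(x)*\delta(y)$. Summing over all $z\in Crit(f'')\cap S_{R+\epsilon/2}$ yields the identity on generators; for $x$ or $y$ not in $S_{R+\epsilon/2}$ the localization argument shows $\delta(x*y)=0$, while $\delta(x)*\delta(y)=0$ by definition.

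The principal obstacle is the matching of orientations under the bijection above. The quantum product in $V$ counts trajectories with signs determined by the spin structure on $V$, whereas the quantum product in $L_\iota$ uses signs coming from the induced spin structure on the boundary component $L_\iota$. That the two sign conventions agree is precisely the content of Lemma~\ref{lem:orientation_match} and Corollary~\ref{cor:orientationsmoduli}. A secondary technical subtlety is that the core disk in an element of $\mathcal{P}_{prod}$ may be constant; in that case Lemma~\ref{lem:automatic_transversality} ensures that the extra $\mathbb{C}$-factor in the splitting of the linearized $\bar\partial$-operator is automatically surjective and contributes only a trivial shift in the virtual dimension, so the dimension count for the bijection is preserved.
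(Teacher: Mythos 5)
Your proposal is correct and follows essentially the same route as the paper's own proof: localize the zero-dimensional $\mathcal{P}_{prod}$-moduli spaces with exit in $S_{R+\epsilon/2}$ to a single boundary fiber via the monotonicity of the $\sigma$-functions together with Lemma~\ref{lem:curve_openmapping}, identify the resulting moduli spaces with the figure-$Y$ moduli spaces in $(M,L_\iota)$, and invoke Corollary~\ref{cor:orientationsmoduli} for the sign-matching. The paper phrases the localization slightly more compactly (observing directly that a $-\nabla f$ flow line cannot travel from $U$ to $S_{R+\epsilon/2}$, while any disk not contained in $U$ is $\pi$-constant), but the content is identical to your propagation argument.
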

\begin{proof}
 Recall that the long exact sequence in Proposition~\ref{prop:long_exact_sequence} comes from the 
short exact sequence
\begin{equation*}
\scalebox{0.9}{
\xymatrix{
 0 \ar[r] & C^+(U;f|_U,J)\ar[r]^-{j} &
C^+((V_{R+\epsilon},S_{R+\epsilon});f,J)\ar[r]^-{\delta} &
C^+(S_{R+\epsilon/2};f|_{S_{R+\epsilon/2}},J)\ar[r] & 0. }
}
\end{equation*}
The map $\delta:C^+((V_{R+\epsilon},S_{R+\epsilon});f,J)\rightarrow 
C^+(S_{R+\epsilon/2};f|_{S_{R+\epsilon/2}},J)$ is given by restricting the critical 
points of $f$ to the critical points in $f|_{S_{R+\epsilon/2}}$.
Recall that we defined the product $x * y:= \smashoperator{\sum\limits_{\delta_{prod}(x,y,z,\lambda)=0}} \#_{Z_2} 
\mathcal{P}_{prod}(x,y,z,\lambda)  zt^{\overline{\mu}(\lambda)}$.
Note that for any point $z\in Crit(f)\cap S_{R+\epsilon/2}$ the space 
$\mathcal{P}_{prod}(x,y,z,\lambda)$ is non-empty if and only if $x,y\in 
Crit(f)\cap S_{R+\epsilon/2}$.
Indeed, the $J$-holomorphic curves which are not completely contained in $U$ have 
to be constant under $\pi$ by Lemma~\ref{lem:curve_openmapping}.
Moreover, a flow line of $-\nabla f$ can not go from $U$ to $S_{R+\epsilon/2}$ by the 
construction of $f$.
By Lemma~\ref{cor:orientationsmoduli} the signs of 
$\mathcal{P}_{prod}(x,y,z,\lambda;f)$ and 
$\mathcal{P}_{prod}(\delta(x),\delta(y),\delta(z),\lambda;f|_{S_{R+\epsilon/2}}))$ 
coincide and we have $\delta(x* y)= \delta(x)*\delta(y)$.
\end{proof}

\begin{lemma}\label{lem:prod_and_i}
 The product $*$ on $Q^+H_*(V)$ is trivial on the image of the inclusion $i_*$.
In other words, for any two elements $a$ and $b$ in $Q^+H_*(S)$ we have that 
$i_*(a)*i_*(b)=0$.
\end{lemma}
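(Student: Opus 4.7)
\emph{Proof plan.} The strategy is to realise $i_*(a)*i_*(b)$ as the boundary of an explicit chain lying in the subcomplex that computes $Q^+H_*(V)$, via a Leibniz identity on $C^+((V,S))$.

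Fix Morse functions $f$, $f'$, $f''$ on $V$ adapted to the exit region $S$ in the sense of Definition~\ref{def:special_MF}, in generic position. The associated complex $C^+((V,S);f,J)$ decomposes as a module as $C^+(U;f|_U,J)\oplus N(f)$, where $N(f)$ is generated by the type-3 critical points $(t_{\iota,1},a_\iota,p)$ sitting over $S_{R+\epsilon/2}$. The first summand is a subcomplex whose homology is $Q^+H_*(V)$, and the restriction map $\delta$ identifies the quotient with $C^+_{*-1}(S;f|_S,J)$. Given cycles $\alpha\in C^+(f|_S)$ and $\beta\in C^+(f'|_S)$ representing $a$ and $b$, I take the lifts $\tilde\alpha=\sum n_i(t_{\iota_i,1},a_{\iota_i},p_i)\in N(f)$ and analogously $\tilde\beta\in N(f')$; then $d\tilde\alpha$ and $d\tilde\beta$ lie in $C^+(U)$ and represent $i_*(a),\,i_*(b)\in Q^+H_*(V)$ by the standard description of the connecting homomorphism.

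The central geometric input is the following \emph{trapping claim}: any figure-$Y$ pearly trajectory in $V$ whose exit critical point lies on $S_{R+\epsilon/2}$ has both of its entry critical points on $S_{R+\epsilon/2}$ as well. Indeed, since $\sigma_\iota$ has a maximum at $t_{\iota,1}$, the $t$-factor of $-\nabla f$ is repelling there and the stable manifold of $(t_{\iota,1},a_\iota,p)$ is confined to $\{t_{\iota,1}\}\times\{a_\iota\}\times W^s_p(f_\iota)$. Tracing back from the exit, the last gradient segment is forced into the fiber at $t=t_{\iota,1}$; the exit boundary point $v(1)$ of the core disk then lies in this fiber, which sits inside the tame region where $J$ splits, so Lemma~\ref{lem:curve_openmapping} forces $\pi\circ v\equiv t_{\iota,1}$. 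Consequently all three boundary marked points of the core disk lie in the single Lagrangian copy $\{t_{\iota,1}\}\times\{a_\iota\}\times L_\iota$, and iterating the argument through every disk and every gradient segment of the three legs of the figure-$Y$ confines the whole object to that fiber copy of $L_\iota$, forcing the entries to be of type 3.

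Taking contrapositives, the chain-level quantum product on $C^+((V,S))$ restricts to a product on the subcomplex $C^+(U)$, which on homology agrees with the quantum product on $Q^+H_*(V)$ by canonicity. Since the chain-level product is a chain map (Lemma~\ref{lem:prod}), the Leibniz rule gives
$$(d\tilde\alpha)*(d\tilde\beta) \;=\; d\bigl(\tilde\alpha*d\tilde\beta\bigr) \;\pm\; \tilde\alpha*(d^2\tilde\beta) \;=\; d\bigl(\tilde\alpha*d\tilde\beta\bigr).$$
The chain $\tilde\alpha*d\tilde\beta$ has one entry in $N(f)$ and one in $C^+(U)$, so the contrapositive of the trapping claim places it in $C^+(U)$. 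Hence $(d\tilde\alpha)*(d\tilde\beta)$ is a boundary in $C^+(U)$, and $i_*(a)*i_*(b)=0$ in $Q^+H_*(V)$. The main obstacle will be executing the trapping claim rigorously for an arbitrary figure-$Y$: the open-mapping argument of Lemma~\ref{lem:curve_openmapping} controls one disk at a time, and one must propagate the constraint ``all boundary points lie over $t_{\iota,1}$'' inductively through each disk and each gradient segment, dealing with possible Maslov-zero bubbles and constant disks, before concluding that the two entry critical points are of type 3.
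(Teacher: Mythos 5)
Your argument is correct, but it is not the route the paper takes. You reduce the statement to pure homological algebra: writing $i_*(a)=[d\tilde\alpha]$, $i_*(b)=[d'\tilde\beta]$ via the connecting homomorphism of the short exact sequence, you invoke the Leibniz rule (the chain-level product is a chain map, Lemma~\ref{lem:prod}) together with the fact that $C^+(U)$ is a two-sided chain-level ideal in $C^+((V,S))$ -- your ``trapping claim'', which is exactly the geometric input the paper already uses for Lemma~\ref{lem:delta_multiplicative} and for the ideal statement in Lemma~\ref{lem:j_and_prod} -- to conclude $(d\tilde\alpha)*(d'\tilde\beta)=d''(\tilde\alpha*d'\tilde\beta)$ with $\tilde\alpha*d'\tilde\beta\in C^+(U)$, hence the product is exact in the subcomplex computing $Q^+H_*(V)$. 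The paper instead argues directly at chain level: it takes the special Morse functions of Definition~\ref{def:special_MF}, perturbing $f'$ so that the chain representatives of $i_*(a)$ and $i_*(b)$ sit over two distinct slices $S_{R+\epsilon/4}$ and $S_{R+\epsilon/8}$; since the two entry legs of any figure-$Y$ trajectory are confined to their respective slices and all disks near the ends are constant under $\pi$ (Lemma~\ref{lem:curve_openmapping}), the core disk would need boundary points in two different fibers, so the moduli spaces are empty and the product of the representatives is literally zero. Your approach buys formality and reuse -- once the ideal property and the chain-map property are in place (both needed elsewhere in Theorem~\ref{thm:longexact}), item (3) is the standard statement that the image of a connecting homomorphism squares to zero when the subcomplex is a multiplicative ideal -- whereas the paper's choice of representatives yields vanishing on the nose without appealing to the Leibniz identity. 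Do note two points of care in your version: the product pairs complexes for three different Morse data, so the Leibniz identity and the ideal containment must be stated for that mixed setting (the paper's Lemma~\ref{lem:j_and_prod} does assert the two-sided version), and your trapping claim needs the split form of $f$ and the metric over the ends plus the flow-invariance of the slices, which is the same level of detail the paper itself supplies.
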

\begin{proof}
 Recall that the map $i$ is the connecting homomorphism of the following short exact 
sequence
\begin{equation*}
\scalebox{0.9}{
\xymatrix{
0 \ar[r] & C^+_k(U;f|_U,J) \ar[r]^-j \ar[d]^{d_U} & 
C^+_k((V_{R+\epsilon},S_{R+\epsilon});f,J) \ar[r]^{\delta} \ar[d]^{d_{(V_{R+\epsilon},S_{R+\epsilon})}} & 
C^+_{k-1}(S_{R+\epsilon/2};f|_{S_{R+\epsilon/2}},J) 
\ar[d]^{d_{S_{R+\epsilon/2}}} \ar[r] & 0\\
0 \ar[r] & C^+_{k-1}(U;f|_U,J) \ar[r]^-{j} & 
C^+_{k-1}((V_{R+\epsilon},S_{R+\epsilon});f,J) \ar[r]^{\delta} & 
C^+_{k-2}(S_{R+\epsilon/2};f|_{S_{R+\epsilon/2}},J) \ar[r] & 0 \\
}}
\end{equation*}
We choose a Morse function $f$ as in Definition~\ref{def:special_MF}.
In particular $-\nabla f$ is transverse to $\partial V|_{[-R, R]\times 
\mathbb{R}}$ and points out along $\partial V|_{[-R, R]\times \mathbb{R}}$.
Let $p\in C^+_{k-1}(S_{R+\epsilon/2};f|_{S_{R+\epsilon/2}},J)$ be a cycle.
We consider the critical point $x_{{\iota},1}:=(t_{{\iota},1},a_{\iota},p)\in 
Crit(f)\cap S_{R+\epsilon/2}$.
As in the proof of Lemma~\ref{lem:sum_of_units} we can show that 
$d(x_{{\iota},1})=x_{{\iota},0}$, where $x_{{\iota},0}:=(t_{{\iota},0},a_{\iota},p)\in 
Crit (f)\cap S_{\frac{\epsilon}{4}}$.
Moreover, $x_{{\iota},0}$ is a cycle for the complex $C^+((V,S);f,J)$.
Indeed, the critical point $p$ is a cycle of $f_k$ and $t_{{\iota},0}$ is a minimum of 
the function $\sigma_{\iota}$.
By the definition of the connecting homomorphism it follows that $i(p)=x_{{\iota},0}$.

It is left to show that the product on the chain level for $C^+(U;f|_U,J)$ 
is zero for any two critical points in $S_{R+\epsilon/4}$. The product is defined using 
three Morse functions $f$, $f'$ and $f''$ on $U$ in generic position.
In particular the stable and unstable manifolds of $f$ and $f'$ have to be 
transverse.
We may choose $f=f''$.
Assume that $f$ is a function as in Definition~\ref{def:special_MF}.
To define $f'$ we first fix $f_{\iota}^{\pm}$ such that 
${f'}_{\iota}^{\pm}$ is transverse to $f_{\iota}^{\pm}$ for all $\iota$. 
We also have to perturb ${\sigma'}^{\pm}_{\iota}$, namely assume that they are as in~\ref{def:special_MF} except that 
now the unique minimum is perturbed to lie at $R +\epsilon/8$ or $-R-\epsilon/8$ respectively.

Let $x\in Crit(f|_U)\cap S_{R+\epsilon/4}$ and $y\in Crit(\tilde{f'}|_U)\cap S_{R+\epsilon/8}$.
Any trajectory of $-\nabla f'$ starting at a critical point in $S_{R+\epsilon/8}$ 
stays in $S_{R+\epsilon/8}$, by the choice of $\sigma_i'$ and $\sigma_j'$. Similarly, the trajectory of
$-\nabla f$ stays in $S_{R+\epsilon/4}$.
By Lemma~\ref{lem:curve_openmapping} all the $J$-holomophic disks involved in a 
figure-$Y$ pearly trajectory have to map to a constant point under $\pi$.
Thus, the moduli space of figure-$Y$ pearly trajectories starting at $x$ and $y$ is empty 
and therefore their product is zero.
\end{proof}

\begin{lemma}\label{lem:j_and_prod}
We have the following relation
\begin{equation*}
 j_*(x*y)=j_*(x)*j_*(y).
\end{equation*}
Moreover, $j_*(Q^+H_*(V))\subset Q^+H_*(V,S)$ is a two-sided ideal.
\end{lemma}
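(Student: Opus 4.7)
The plan is to prove the stronger statement that $j_*(Q^+H_*(V))$ is a two-sided ideal in $Q^+H_*(V,S)$, from which the multiplicativity $j_*(x*y) = j_*(x)*j_*(y)$ will follow as a formal consequence. I work with a triple of Morse functions $(f, f', f'')$ on $V_{R+\epsilon}$, each adapted to the exit region $S_{R+\epsilon}$ in the sense of Definition~\ref{def:special_MF}, chosen in generic position so that the quantum product on $C^+((V,S); f, J)$ is defined via the figure-$Y$ moduli spaces $\mathcal{P}_{prod}(\alpha,\beta,z,\lambda; f, f', f'', \rho, J)$ of Definition~\ref{def:P_prod}. The chain complex $C^+(U; f|_U, J)$ embeds as a subcomplex of $C^+((V,S); f, J)$ via $j$, and its product is computed by exactly the same moduli spaces, restricted to those whose three distinguished critical points all lie in $U$.

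The main geometric step is to show: if $\alpha \in C^+(U; f|_U, J)$ and $\beta \in C^+((V,S); f', J)$ is arbitrary, then every element of a zero-dimensional moduli space $\mathcal{P}_{prod}(\alpha,\beta,z,\lambda)$ has its output $z$ in $U$ (and symmetrically for the product $\beta * j(\alpha)$). By the construction of the adapted Morse functions, the negative gradient $-\nabla f$ points into $U$ along the internal boundary $\partial U \setminus \partial V_{R+\epsilon}$, that is, along the slices at $t = \pm(R + 3\epsilon/8)$, because in the collar neighborhood of $S_{R+\epsilon}$ the flow in the $t$-variable is driven toward the local minimum at $t = \pm(R+\epsilon/4)$. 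Hence negative gradient flow lines of $f$ or $f'$ issuing from critical points in $U$ remain in $U$ for all positive time.

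Next I locate the central disk of the figure-$Y$. By Lemma~\ref{lem:curve_openmapping}, any non-constant $J$-holomorphic disk with boundary on $V$ either has non-constant projection $\pi \circ u$ with image entirely in $\mathcal{W}^c$, in which case its boundary lies in the compact region $V \cap \pi^{-1}(K) \subset U$, or else $\pi \circ u$ is constant and the disk sits in a single fiber over some $(t_0, a) \in \mathcal{W}$. In either case, the fact that the first leg of the figure-$Y$ connects $\alpha \in U$ to the disk through a flow line of $-\nabla f$ that never leaves $U$ forces the disk boundary to meet $U$; and since for a fiber-contained disk the boundary sits at a single value of $t$, this value must lie in $U$, i.e.\ outside the collar $N$. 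The final leg of the figure-$Y$ is a $-\nabla f''$-trajectory from the disk boundary to $z$. If $z$ were in $N$ it would be one of the critical points at $t = \pm(R+\epsilon/2)$, which by construction are local maxima of the auxiliary one-variable function $\sigma_\iota$; the stable manifold of $z$ in the $t$-direction is then the single slice $\{t = \pm(R+\epsilon/2)\}$, forcing the disk boundary to lie in $N$, contradicting what was just shown. Hence $z \in U$.

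This closes both left and right multiplication by elements of $j(C^+(U; f|_U, J))$ inside the subcomplex, so $j_*(Q^+H_*(V))$ is a two-sided ideal in $Q^+H_*(V,S)$. Specializing further to $\alpha, \beta \in C^+(U; f|_U, J)$, the figure-$Y$ moduli spaces computing their product inside $C^+((V,S); f, J)$ coincide with those defining the product in $C^+(U; f|_U, J)$, so $j(\alpha * \beta) = j(\alpha) * j(\beta)$ on the chain level and $j_*$ is multiplicative in homology. The principal obstacle is the case analysis for the central disk: one must combine the open mapping theorem, the sign of $-\nabla f$ in the collar $N$, and the local index calculation at the critical points at $t = \pm(R+\epsilon/2)$, but no new compactness or transversality result beyond those already established is required.
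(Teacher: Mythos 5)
Your proof is correct and uses the same geometric idea as the paper: the adapted Morse function (via the collar structure, the choice of indices for the auxiliary functions $\sigma_\iota$, and Lemma~\ref{lem:curve_openmapping}) forces figure-$Y$ pearly trees with an input in $Crit(f|_U)$ to have output in $Crit(f|_U)$. The only organizational difference is that the paper first verifies multiplicativity (both inputs in $Crit(f|_U) \Rightarrow$ output in $Crit(f|_U)$) and then asserts, without spelling out the details, that the ideal property follows from the stronger claim (one input in $Crit(f|_U)$, the other arbitrary, $\Rightarrow$ output in $Crit(f|_U)$); you prove the stronger claim first and derive multiplicativity as a corollary, which is arguably the cleaner logical order. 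You also supply the explicit case analysis for the central disk --- constant projection versus projection contained in $\mathcal{W}^c$ --- and the stable-manifold argument at the critical points $t_{\iota,1}$, all of which the paper leaves implicit by pointing at the construction of the Morse function. In short: same approach, somewhat fuller justification of the ideal step.
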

\begin{proof}
 The map $j:C^+(U;f|_U, J)\rightarrow C^+((V,S);f,J)$ on the 
chain level is given by the inclusion. 
Any figure-$Y$ pearly trajectory in $C^+(U,f|_U, J)$ is also a trajectory in 
$C^+((V,S);f,J)$. 
By the construction of the function $f$ in Definition~\ref{def:f_adapted_S}, there 
exists no figure-$Y$ pearly trajectory starting at two points in $Crit(f|_U, )$ and ending at a point in $Crit(f)\setminus 
Crit(f|_U, J)$.

The second part of the lemma follows from the fact that on the chain level $j(x)*y$ as well as $y*j(x)$ must lie inside $C^+_*(U,f|_U,J)$ for all $y\in C^+_*((V,S),f,J)$ and for all $x\in im(j)=C^+_*(U,f|_U,J)$.
\end{proof}

\subsection{Trivial Lefschetz fibrations}\label{sec:QH_trivial_LF}

For the case of a trivial Lefschetz fibration, i.e. a fibration with no critical points, 
we have $E\cong \mathbb{C}\times M$. We also diskussed the theory for this case in~\cite{Sin15}.
In this case we can describe the quantum 
homology of $E$ in terms of the quantum homology of $M$ and the module structure of 
$Q^+H_*(V,\partial V)$ over $Q^+H_*(E,\partial^v E)$ becomes simpler.
These observations are listed below.

There are two natural ways to define the ambient manifold.
Let $c$ be big enough such that $V\subset \mathbb{R}\times (-c,c)$. Assume further that $R>0$ is such that $V$ is cylindrical outside of $[-R,R]\times \mathbb{R}$. Set 
$$H:=[-R-\epsilon,R+\epsilon]\times (-c,c)\subset\mathbb{R}^2$$
 and 
$$\square':=[-R-\epsilon,R+\epsilon]\times [-c,c] \subset\mathbb{R}^2.$$
Let $\square$ denote the set obtained from $\square'$ by smoothening 
its boundary. Define $\tilde{M}_H:=H\times M$ and $\tilde{M}_{\square}:=\square\times M$.
The aim of this section is to prove the following result
\begin{proposition}
 There exists a bilinear map
\begin{equation*}
 \star:Q^+H_i(\tilde{M}_{\square},\partial \tilde{M_{\square}}) \otimes Q^+H_j(V,\partial V) 
\rightarrow 
Q^+H_{i+j-(2n+2)}(V,\partial V),
\end{equation*}
which endows $Q^+H_*(V,\partial V)$ with the structure of a two-sided algebra over the 
unital quantum homology ring 
$Q^+H_*(\tilde{M}_{\square},\partial \tilde{M_{\square}})$.
\end{proposition}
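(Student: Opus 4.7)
The plan is to mimic the construction of Section~\ref{sec:module_structure} and Definition~\ref{def:Pmod}, but with the compact pair $(\tilde{M}_{\square},\partial\tilde{M}_{\square})$ playing the role of the pair $(E,\partial^{v}E)$. Concretely, I would choose an $\Omega$-compatible almost complex structure $J\in\mathcal{J}_{\mathcal{W}}$, a Morse function $f:V\to\mathbb{R}$ respecting the exit region $\partial V$, and a Morse function $h:\tilde{M}_{\square}\to\mathbb{R}$ whose negative gradient is transverse to $\partial\tilde{M}_{\square}$ and points outward along it, so that the resulting Morse complex computes $H_{*}(\tilde{M}_{\square},\partial\tilde{M}_{\square})$. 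For critical points $x,y\in\mathrm{Crit}(f)$, $a\in\mathrm{Crit}(h)$, and a class $\lambda\in H_{2}^{D}(E,V)$, I would define moduli spaces $\mathcal{P}^{\square}_{mod}(a,x,y,\lambda;h,\rho_{\tilde{M}_{\square}},f,\rho_V,J)$ by the literal analogue of Definition~\ref{def:Pmod} and set on the chain level
\begin{equation*}
a\star x := \sum_{\begin{subarray}{c} y,\lambda \\ \delta_{mod}=0 \end{subarray}} \#\,\mathcal{P}^{\square}_{mod}(a,x,y,\lambda)\,y\,t^{\overline{\mu}(\lambda)}.
\end{equation*}

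The key analytic step is to verify that these moduli spaces satisfy the hypotheses of Theorem~\ref{thm:moduli_as_in_closed_setting}, so that the compactness, transversality and gluing results of Section~\ref{sec:main_floer_techniques} carry over. Since $E=\mathbb{C}\times M$ is the trivial Lefschetz fibration, $\mathcal{W}=\mathbb{C}$, and Lemma~\ref{lem:curve_openmapping} forces every $J$-holomorphic disk with boundary on $V$ (and every $J$-holomorphic sphere arising in a bubbling configuration) to be $\pi$-constant; thus every such curve lies in a single fiber $\{z\}\times M$ with $z\in\pi(V)$, and by the choice of $\square$ this fiber is contained in $\tilde{M}_{\square}$. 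The flow segment from $a$ to the interior marked point $u_k(0)\in V$ is a portion of $W^{u}_{a}(h)$ starting in the interior of $\tilde{M}_{\square}$; since $-\nabla h$ points outward along $\partial\tilde{M}_{\square}$, any flow line of $-\nabla h$ crosses the boundary transversally and cannot re-enter, so whenever it reaches a point of $V\subset\mathrm{int}(\tilde{M}_{\square})$ the connecting segment remains inside $\tilde{M}_{\square}$. These two observations form the direct analogue of Lemma~\ref{lem:traj_away_from_boundary_general} in this setting and confine all pearly trees to $\tilde{M}_{\square}$.

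Granted this, the chain-map property of $\star$, its independence from the choices $(h,f,\rho_{\tilde{M}_{\square}},\rho_V,J)$, and the corresponding descent to homology follow from the boundary descriptions of the one-dimensional versions of $\mathcal{P}^{\square}_{mod}$, exactly as in the proof of Lemma~\ref{prop:modulestructureV}. To upgrade $\star$ to a two-sided algebra structure I would define an internal quantum product $\oast^{\square}$ on $C^{+}((\tilde{M}_{\square},\partial\tilde{M}_{\square});h,\rho_{\tilde{M}_{\square}},J)$ by counting figure-$Y$ sphere configurations, as in Section~\ref{sec:ambient_QH}, and check the identities $(a\oast^{\square}b)\star x=a\star(b\star x)$ together with $a\star(x*y)=(a\star x)*y=x*(a\star y)$ through the usual chain homotopies produced by moduli spaces modeled on planar trees with two higher-valence vertices. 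The unit is extracted by choosing $h$ with a unique maximum $m\in\mathrm{int}(\tilde{M}_{\square})$ of index $2n+2$: since $H_{2n+2}(\tilde{M}_{\square},\partial\tilde{M}_{\square})\cong\mathcal{R}$, the class $[m]$ represents the fundamental class and the argument of Lemma~\ref{lem:unit_rel} yields $[m]\star x=x$. I expect the main obstacle to be the careful verification of the compactness and gluing statements for $\mathcal{P}^{\square}_{mod}$ in the presence of the new boundary $\partial\tilde{M}_{\square}$; once the analogue of Theorem~\ref{thm:moduli_as_in_closed_setting} is established in this form, the algebraic identities follow along the lines of the already treated non-compact case.
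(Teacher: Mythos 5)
Your proposal is correct and follows essentially the same route as the paper: the paper obtains this module structure by specializing the general construction of Sections~\ref{sec:ambient_QH} and~\ref{sec:module_structure} (the moduli spaces $\mathcal{P}_{mod}$ of Definition~\ref{def:Pmod}, the confinement Lemmas~\ref{lem:curve_openmapping} and~\ref{lem:traj_away_from_boundary_general}, and Theorem~\ref{thm:moduli_as_in_closed_setting}) to the compact ambient pair $(\tilde{M}_{\square},\partial\tilde{M}_{\square})$, with unitality coming from the closed-fiber case exactly as in your maximum argument and as made explicit in Lemma~\ref{prop:Kunneth}. Your fiberwise confinement of holomorphic curves and the outward-pointing gradient argument reproduce the paper's reasoning, so no essential step is missing.
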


\subsubsection{The ambient quantum homology}\label{sec:ambient_QH_rel}

Before we start with the construction of the module structure, we give the definition of 
the ambient quantum homology and explain their structures and relations.
Choose a symplectic form $\omega_{\tilde{M}_H}=(\omega_{\mathbb{C}}\oplus 
\omega_M)|_H$ and $\omega_{\tilde{M}_{\square}}=(\omega_{\mathbb{C}}\oplus \omega_M)|_{\square}$ 
respectively.
Let $J$ be an $\omega$-compatible almost complex structure on $\tilde{M}_H$ and 
$\tilde{M}_{\square}$ respectively and assume that the projection $\pi$ is $(J,i)$-holomorphic 
and that $V$ is cylindrical outside of $[-R,R]\times \mathbb{R}\times M$.
Let $h:\tilde{M}_H\rightarrow \mathbb{R}$ be a Morse function on $\tilde{M}$ such 
that $-\nabla h$ points out along the boundary $\partial \tilde{M}_H$ and 
in at $\partial \overline{\tilde{M}_H}\setminus \partial \tilde{M}_H$.
Similarly, let $g:\tilde{M}_{\square}\rightarrow \mathbb{R}$ be a Morse function on 
$\tilde{M}$ such that $-\nabla g$ points out at the boundary $\partial 
\tilde{M}_{\square}$.
For instance, we can take $h$ to be of the form $\tau_H+ g_M$, where $g_M$ is a Morse function on $M$ and $\tau_H$ is a Morse function on $\mathbb{R}^2$ 
with a single critical point of index $1$ inside $T$, and such that $-\nabla \tau_H$ 
points out of $\partial T$ and in of $\partial\overline{T}\setminus\partial T$.
In the same way we could choose $g$ to be of the form $\tau_{\square} + g_M$, where $g_M$ is a Morse function on $M$ and $\tau_{\square}$ is a Morse function on $\mathbb{C}$ with a single critical point of index $2$ inside $R$ such that $-\nabla 
\tau_{\square}$ points out along $\partial R$.

As we will see below the product on $Q^+H_*(\tilde{M}_H,\partial\tilde{M}_H)$ turns out to be zero. 
The product on $Q^+H_*(\tilde{M}_{\square},\partial \tilde{M}_{\square})$ can be described using the 
homology $Q^+H_{*-2}(M)$.
\begin{lemma}\label{prop:Kunneth}
 The quantum homology $Q^+H_*(\tilde{M}_{\square},\partial \tilde{M}_{\square})$ is isomorphic as a ring to 
the quantum homology of $M$ by a shift in degree.
More precisely,
\begin{equation*}
 \begin{array}{cccc}
Q^+H_*(\tilde{M}_{\square},\partial \tilde{M}_{\square})&\cong &Q^+H_{*-2}(M).\\
 \end{array}
\end{equation*}
The quantum homology $Q^+H_*(\tilde{M}_{\square},\partial \tilde{M}_{\square})$ is a unital ring.
The quantum product on $Q^+H_*(\tilde{M}_H,\partial\tilde{M}_H)$ is trivial.
Additively it is isomorphic to $Q^+H_{*-1}(M)$.
\end{lemma}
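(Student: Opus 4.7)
The plan is to exploit the product structure: use split Morse functions together with a split almost complex structure, and apply the open mapping theorem to confine all holomorphic spheres to the fibers. This should reduce every relevant moduli space in $\tilde{M}_\bullet$ to a product of Morse-theoretic data on the base with genuine ambient quantum data on $M$.

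First I would fix a Morse function $g_M$ on $M$ and a generic $\omega_M$-compatible almost complex structure $J_M$, then work with $h = \tau + g_M$ and $J = i \oplus J_M$, where $\tau$ is $\tau_\square$ or $\tau_H$. Then $\mathrm{Crit}(h) = \mathrm{Crit}(\tau) \times \mathrm{Crit}(g_M)$: in the $\square$-case, $\tau_\square$ has a single index-$2$ critical point $p_0$, giving a degree-$(+2)$ bijection $\mathrm{Crit}(g_M) \xrightarrow{\sim} \mathrm{Crit}(h)$; in the $H$-case, similar with degree shift $+1$. The genericity hypotheses of Assumption~\ref{assum:genericity} for this split $(h,\rho,J)$ reduce to genericity of $(g_M,\rho_M,J_M)$ by Lemma~\ref{lem:automatic_transversality}, which makes the split $J$ automatically regular for any sphere constant in the $\mathbb{C}$-factor.

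The key observation is that every $J$-holomorphic sphere $v:S^2 \to \tilde{M}_\bullet$ is constant under $\pi$: by the version of Lemma~\ref{lem:curve_openmapping} applicable to spheres, together with the maximum principle (since $H$ and $\square$ are bounded), $\pi\circ v$ must be constant, so $v$ lies in a fiber $\{z\}\times M$ and is automatically $J_M$-holomorphic. Any pearly configuration contributing to the ambient quantum differential therefore decomposes as a Morse trajectory of $-\nabla\tau$ in the base together with a pearl configuration for $(M,g_M,J_M)$ in the fiber. Since $\tau$ has a single critical point $p_0$, the base factor is forced, and the chain complex $C^+((\tilde{M}_\bullet,\partial^v\tilde{M}_\bullet);h,\rho,J)$ is isomorphic to $C^+(M;g_M,\rho_M,J_M)$ with a degree shift equal to the index of $p_0$. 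Passing to homology gives the additive isomorphisms $Q^+H_*(\tilde{M}_\square,\partial \tilde{M}_\square)\cong Q^+H_{*-2}(M)$ and $Q^+H_*(\tilde{M}_H,\partial \tilde{M}_H)\cong Q^+H_{*-1}(M)$.

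For the ring structure on $\tilde{M}_\square$, I would apply the same analysis to the figure-$Y$ moduli space $\mathcal{M}_{prod}(x,y,z;\lambda)$ using three split Morse functions $h_i=\tau_i + g_{M,i}$ in generic position. Each $\tau_i$ has a unique index-$2$ critical point $p_i$, hence a maximum, so $W^u(p_1)=W^u(p_2)=\square$ up to a set of lower dimension while $W^s(p_3)=\{p_3\}$. A constant central sphere at $(z_0,m_0)$ satisfying the required incidences then forces $z_0=p_3$, and the remaining constraint in the fiber is exactly the defining incidence for the figure-$Y$ moduli space computing the ambient quantum product $*_M$ on $M$. Under the identification $(p_i,q)\leftrightarrow q$ the chain-level product $\oast$ on $\tilde{M}_\square$ is sent to $*_M$. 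The unit $e_{(\tilde{M}_\square,\partial \tilde{M}_\square)}=[\tilde{M}_\square,\partial \tilde{M}_\square]$ corresponds to $[M]\in Q^+H_{2n}(M)$ under the isomorphism, giving the ring isomorphism. For $\tilde{M}_H$ the same setup, now with each $\tau_i$ having a unique index-$1$ critical point $p_i$, gives three $1$-dimensional submanifolds $W^u(p_1),W^u(p_2),W^s(p_3)$ in the $2$-dimensional strip $H$. The expected dimension $1+1+1-2\cdot 2=-1$ forces $W^u(p_1)\cap W^u(p_2)\cap W^s(p_3)$ to be empty for generic $\tau_i$. Since every sphere is constant in $H$, every element of $\mathcal{M}_{prod}$ must project to this triple intersection; hence the moduli space is empty, the chain-level product vanishes identically, and the quantum product on $Q^+H_*(\tilde{M}_H,\partial \tilde{M}_H)$ is trivial.

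The main obstacle is the non-genericity of the split data $(h,J)$: a priori one would perturb to achieve transversality, but doing so would destroy the product structure that makes the whole argument work. The rescue is precisely Lemma~\ref{lem:automatic_transversality}, which reduces regularity of the linearized operator at curves constant in $\mathbb{C}$ to regularity in the fiber, and therefore holds for generic $J_M$ alone; together with a small perturbation of the $\tau_i$ in the class of Morse functions with the prescribed critical point structure, this is enough to make the base-side dimension count literally produce emptiness in the $H$-case and a single transverse intersection in the $\square$-case.
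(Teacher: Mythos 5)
Your proposal is correct and follows essentially the same route as the paper: split Morse data $h=\tau+g_M$ with split $J=i\oplus J_M$, confinement of spheres to fibers via the open mapping theorem, and the resulting factorization of the figure-$Y$ moduli spaces into a base Morse problem (forced in the $\square$-case, empty in the $H$-case) times the ambient quantum data on $M$. The only cosmetic differences are that the paper proves triviality over $H$ by taking $g''=g$ and choosing $\xi\notin W^u_{\xi'}(\tau')$ rather than your three-function dimension count, and the paper obtains the additive isomorphism directly from the singular K\"unneth theorem rather than from a chain-level Morse identification; both variations are equally valid.
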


\begin{proof}[Proof]
The quantum homologies $Q^+H_*(\tilde{M}_H,\partial 
\tilde{M}_H)$ and $Q^+H_*(\tilde{M}_{\square},\partial\tilde{M}_{\square})$ are additively the same as the 
singular homologies tensored with $\Lambda^+$, i.e.  
$H_*(\tilde{M}_H,\partial \tilde{M}_H)\otimes \Lambda^+$ and 
$H_*(\tilde{M}_{\square},\partial\tilde{M}_{\square})\otimes \Lambda^+$ respectively.
By the K\"unneth isomorphism 
$$H_i(\tilde{M}_H,\partial\tilde{M}_H)\cong H_{i-1}(M)\otimes_{\mathcal{R}} 
H_1(T,\partial T)\cong H_{i-1}(M)\otimes_{\mathcal{R}} {\mathcal{R}} \cong H_{i-1}(M)$$ 
and $$H_i(\tilde{M}_{\square},\partial\tilde{M}_{\square})\cong H_{i-2}(M)\otimes_{\mathcal{R}} 
H_2(R,\partial R) 
\cong H_{i-2}(M)\otimes_{\mathcal{R}} {\mathcal{R}}
\cong H_{i-2}(M).$$
Therefore, the isomorphism in Lemma~\ref{prop:Kunneth} can be described as follows.
\begin{equation*}
 \begin{array}{ccc}
\Phi_{\square}:Q^+H_{*-2}(M) & \xrightarrow{\cong} & Q^+H_*(\tilde{M}_{\square},\partial \tilde{M}_{\square})\\
b & \mapsto & \square \times b\\
\end{array},
\end{equation*}
where $b$ is a homology class in $Q^+H_{*-2}(M)$ and $\square$ represents the generator of 
$H_2(R,\partial R)$. 
In particular, for every $\square \times b$ and $\square\times b'$ elements of 
$Q^+H_*(\tilde{M}_{\square},\partial \tilde{M}_{\square})$ we have that
\begin{equation}\label{Phi_square}
 \square\times(b*b')=\Phi_{\square}(b * b')= \Phi_{\square}(b) * \Phi_{\square}(b') = (\square \times b)* (\square\times b').
\end{equation}
For $\tilde{M}_H$ we have
\begin{equation*}
\begin{array}{ccc}
\Phi_H:H_{*-1}(M) & \xrightarrow{\cong} & H_*(\tilde{M}_H,\partial \tilde{M}_H)\\
a & \mapsto
 & I\times a\\
\end{array},
\end{equation*}
where $a$ denotes some homology class in $Q^+H_{*-1}(M)$, and $I$ denotes the generator of 
$H_*(T,\partial T)$, which is represented by the 
interval $[-R-\epsilon,R+\epsilon]\times \{ 0 \} \subset T$.

It is left to show that the first isomorphism $\Phi_{\square}$ 
respects the quantum product.
Choose the almost complex structure on $\tilde{M}_{\square}$ to be split, i.e. $J=i\oplus 
J_M$ for some almost complex structure $J_M$ on $M$.
Consider an element in the moduli space 
$\mathcal{M}_{prod}(x,y,z,\lambda;g,g',g'' )$.
The projection of the $J$-holomorphic sphere, corresponding to the interior vertex 
of the tree, is constant. 
Hence, this sphere is completely contained in one of the fibers of $\pi:T\times M 
\rightarrow T$, say $\pi^{-1}(t)$.
We may assume that $g=g''$ and that $g=g_M+ \tau$ and 
$g'=g_M'+ \tau'$. Here $g_M$ and $g_M'$ are Morse functions on $M$ and $\tau$ and 
$\tau'$ Morse functions on $T$, each with a unique maximum $\xi \in Crit(\tau)$ and 
$\xi'\in Crit(\tau')$ respectively.
In particular $\pi(x)=\pi(z)=\xi$.
Conclude that the part of the tree, corresponding to the negative gradient flow of 
$g$, also has to be completely contained in $\pi^{-1}(\xi)$.
We may assume that $\xi$ is not a critical point of $\tau'$, then the gradient flow line 
of $-\nabla g'$ projects under $\pi$ to the unique negative gradient flow of 
$\tau'$ connecting $\xi'$ to $\xi$.
Let $x=(\xi, a)$, $y=(\xi', b)$ and $z=(\xi, c)$. 
The projection $\pi$ induces a bijection between $\mathcal{M}_{prod}(x,y,z,\lambda)$ and 
$\mathcal{M}(a,b,c,\lambda)$.
This proves that the identity~(\ref{Phi_square}) holds.

For the proof of the second part of the proposition, consider two functions $h$ 
and $h'$ in general position.
Since the single critical point $\xi'$ of $\tau'$ has index one, we may choose the 
critical point $\xi$ of $\tau$ such that it is not contained in the unstable manifold of 
$\xi'$.
By this choice of $\tau$ and $\tau'$, the moduli space 
$\mathcal{M}_{prod}(x,y,z,\lambda;h,h',h)$ is empty. 
\end{proof}

It is easy to see that the inclusion $H_*(\tilde{M}_H,\partial\tilde{M}_H)\hookrightarrow 
H_*(\tilde{M}_{\square},\partial \tilde{M}_{\square})$ in singular homology is trivial.
The relation between the quantum homologies $Q^+H_*(\tilde{M}_H,\partial \tilde{M}_H)$ and 
$Q^+H_*(\tilde{M}_{\square},\partial \tilde{M}_{\square})$ is given by the following two corollaries.
\begin{corollary}
The inclusion map
\begin{equation*}
\begin{array}{ccc}
 Q^+H_*(\tilde{M}_H,\partial \tilde{M}_H) & \rightarrow & Q^+H_*(\tilde{M}_{\square},\partial 
\tilde{M}_{\square})\\
\end{array}
\end{equation*}
is trivial.
\end{corollary}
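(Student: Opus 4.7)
The plan is to reduce the statement to the fact that the base singular inclusion $H_*(H, \partial H) \to H_*(\square, \partial \square)$ is zero, by exploiting the trivial product structure $\tilde M_H = H \times M$, $\tilde M_\square = \square \times M$ together with the split almost complex structure $J = i \oplus J_M$. The strategy mirrors the factorization argument used in the proof of Proposition~\ref{prop:Kunneth}.

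First I would set up the quantum inclusion map at the chain level by choosing split Morse data: $h = g_M + \tau_H$ on $\tilde M_H$ and $g = g_M + \tau_\square$ on $\tilde M_\square$, where $\tau_H$ has a unique index-$1$ critical point $\xi_H$ in $H$ with $-\nabla \tau_H$ pointing outward on $\partial H$, and $\tau_\square$ has a unique index-$2$ critical point $\xi_\square$ in $\square$ with $-\nabla \tau_\square$ pointing outward on $\partial \square$. The chain-level inclusion morphism would be realized by a continuation-type count of pearly configurations analogous to those defining $\mathcal{P}_{inc}$ in section~\ref{sec:inclusion}, connecting critical points of $h$ to critical points of $g$ via a Morse cobordism interpolating between $h$ and $g$ on $\tilde M_\square$, together with $J$-holomorphic spheres.

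The crucial reduction step is the following. By Lemma~\ref{lem:curve_openmapping}, every $J$-holomorphic sphere occurring in such a configuration is constant under $\pi$, hence contained in a single fibre. Consequently each pearly configuration decomposes under $\pi$ into a configuration in the base $(H, \partial H) \hookrightarrow (\square, \partial \square)$ consisting only of gradient trajectories of $\tau_H$ and $\tau_\square$, and a fibrewise configuration in $M$ consisting of flow lines of $g_M$ joined at $J_M$-holomorphic spheres. This factorization identifies the chain-level inclusion morphism with the tensor product of the classical Morse continuation morphism $CM_*(H, \partial H;\tau_H) \to CM_*(\square, \partial \square; \tau_\square)$ with the identity on the pearl complex of $M$.

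The Morse continuation in the base induces the singular inclusion $H_*(H, \partial H) \to H_*(\square, \partial \square)$, which vanishes since $H_1(H, \partial H) \cong \mathbb{Z}$, generated by a horizontal interval joining the two vertical sides of $H$, maps into the zero group $H_1(\square, \partial \square) = 0$, and these groups are concentrated in different degrees. Hence the induced map on $Q^+H_*$ is zero. The main obstacle will be making the factorization of pearly configurations under $\pi$ fully rigorous: this requires achieving transversality for the split almost complex structure on moduli spaces of virtual dimension at most one, and choosing the interpolating Morse cobordism on $\tilde M_\square$ so that its gradient remains compatible with the product structure throughout the homotopy, so that the projection argument applies uniformly.
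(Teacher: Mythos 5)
Your proposal is correct and takes essentially the same route as the paper: the paper's one-line proof invokes the K\"unneth-type identification of $Q^+H_*(\tilde{M}_H,\partial \tilde{M}_H)$ with classes of the form $I\times a$ (established via the same split-$J$ projection argument you describe) and simply observes that $I$ vanishes in $H_*(\square,\partial\square)$. Your explicit factorization of the pearly configurations into a base Morse continuation and a fibrewise pearl part is just the detail the paper leaves implicit when quoting its K\"unneth lemma, not a genuinely different argument.
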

\begin{proof}
Clearly $I$ is zero inside $H_*(R,\partial R)$ which proves the corollary.
\end{proof}
\begin{lemma}
There exists a bilinear map (which we also denote by $\oast$ for simplicity)
\begin{equation}\label{eq:incl_ambient}
 \oast:Q^+H_*(\tilde{M}_H,\partial \tilde{M}_H)\otimes Q^+H_*(\tilde{M}_{\square},\partial 
\tilde{M}_{\square}) 
\rightarrow Q^+H_*(\tilde{M}_H,\partial \tilde{M}_H),
\end{equation}
which endows $Q^+H_*(\tilde{M}_H,\partial \tilde{M}_H)$ with the structure of a module over $Q^+H_*(\tilde{M}_{\square},\partial \tilde{M}_{\square})$.
Moreover, this product can be described as follows.
For every $(I\times a)\in Q^+H_*(\tilde{M}_H,\partial \tilde{M}_H)$ and $(\square\times b)\in 
Q^+H_*(\tilde{M}_{\square},\partial \tilde{M}_{\square})$ we have
\begin{equation}
 (I \times a) \oast (\square\times b)=(I\times (a\oast b)).
\end{equation}
\end{lemma}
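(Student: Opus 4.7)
The plan is to construct the map in~(\ref{eq:incl_ambient}) by counting configurations that are the obvious hybrid of the two quantum product moduli spaces of Section~\ref{sec:ambient_QH}, one on $\tilde M_H$ and one on $\tilde M_{\square}$. Choose an $\omega$-compatible almost complex structure of product form $J = i \oplus J_M$, a Morse function $h$ on $\tilde M_H$ and a Morse function $h''$ on $\tilde M_H$ with $-\nabla h$ and $-\nabla h''$ pointing outward along $\partial \tilde M_H$ and inward along $\partial\overline{\tilde M_H}\setminus\partial\tilde M_H$, and a Morse function $g$ on $\tilde M_{\square}$ with $-\nabla g$ pointing outward along $\partial\tilde M_{\square}$. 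For $x\in Crit(h)$, $y\in Crit(g)$, $z\in Crit(h'')$ and $\lambda\in \pi_2(E)$, let $\mathcal{M}(x,y,z,\lambda)$ denote the moduli space of tuples consisting of a $J$-holomorphic sphere $v:S^2\to E$ in class $\lambda$ with three marked points $p_1,p_2,p_3$, together with a flow line of $-\nabla h$ from $W^u_x(h)$ to $v(p_1)$, a flow line of $-\nabla g$ from $W^u_y(g)$ to $v(p_2)$, and a flow line of $-\nabla h''$ from $v(p_3)$ to $W^s_z(h'')$. Set
\begin{equation*}
x\oast y := \smashoperator{\sum_{\substack{z,\lambda\\ \delta=0}}}\, \#_{\mathbb Z_2}\mathcal{M}(x,y,z,\lambda)\,z\,t^{\overline\mu(\lambda)},
\end{equation*}
where $\delta$ is the virtual dimension.

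Next I would establish the standard three steps from Section~\ref{sec:main_floer_techniques}. Transversality and the compactness and gluing descriptions are identical to those for $\mathcal{M}_{prod}$ in Section~\ref{sec:ambient_QH}, since the open mapping theorem (Lemma~\ref{lem:curve_openmapping}) forces $\pi\circ v$ to be constant, so $v$ stays in a single fiber of $\pi$, and Proposition~\ref{prop:transversality_general} together with Theorem~\ref{thm:moduli_as_in_closed_setting} applies. This shows that $\oast$ descends to a bilinear map on homology and, by the usual boundary-of-one-dimensional-moduli-space argument applied to trees with one additional input, that it turns $Q^+H_*(\tilde M_H,\partial\tilde M_H)$ into a module over $Q^+H_*(\tilde M_{\square},\partial\tilde M_{\square})$.

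The heart of the statement is the explicit formula $(I\times a)\oast(\square\times b) = I\times(a\oast b)$, and here I would mimic the argument of Lemma~\ref{prop:Kunneth}. Choose $h = \tau_H + g_M$ and $h'' = \tau_H + g_M''$ on $\tilde M_H$ and $g = \tau_{\square} + g_M'$ on $\tilde M_{\square}$, where $\tau_H$ has a unique critical point $\xi$ of index $1$ representing $I$ and $\tau_{\square}$ has a unique critical point $\eta$ of index $2$ representing $\square$, and where $g_M,g_M',g_M''$ are generic Morse functions on $M$. Since $\pi\circ v$ is holomorphic from $S^2$ to $\mathbb C$, it is constant, so $v$ lies in a single fiber $\pi^{-1}(t_0)\cong M$. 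The three flow lines split with respect to the product structure; on the $T$- and $R$-factors the $-\nabla\tau_H$ flow from $\xi$ must reach $t_0=\xi$ and the corresponding flow from $\xi$ to a critical point of $\tau_H$ must return to $\xi$, while the $-\nabla\tau_{\square}$ flow from $\eta$ descends uniquely to $\xi$ once $\xi$ is placed outside $W^u_\eta(\tau_{\square})$. Thus the $\mathbb C$-factor contributes a unique rigid incidence, and the remaining data is precisely an element of the moduli space defining $a\oast b$ in $QH(M)$. This bijection of moduli spaces yields the formula.

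The main technical obstacle is the same delicate transversality issue that appears for $\mathcal{P}_{mod}$ in the case $\delta = 1$, $N_V = 2$: constant spheres $v$ force the three flow lines to meet at a single point, and in the low-dimensional stratum one must be careful that a generic choice of Hamiltonian perturbation on the Morse data in the $M$-factor is enough to achieve transversality without destroying the product form of $h, g, h''$. Once this is handled exactly as in Section~\ref{sec:transversality}, the verification of the module axioms proceeds by the usual one-parameter-family argument, using that the quantum product $\oast$ on $QH(M)$ is itself associative and unital, and the isomorphism $\Phi_\square$ from Lemma~\ref{prop:Kunneth} identifies the $\square$-unit on the left with the unit of $QH(M)$ on the right.
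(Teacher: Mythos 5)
Your proposal follows essentially the same route as the paper: define the hybrid moduli spaces $\mathcal{M}_{prod}(x,y,z,\lambda;h,g,h'')$ with two Morse functions on $\tilde{M}_H$ and one on $\tilde{M}_{\square}$, invoke Lemma~\ref{lem:curve_openmapping} and Theorem~\ref{thm:moduli_as_in_closed_setting} to get regularity and to force the core sphere (and the $-\nabla h$ flow lines) into a single fiber, take split Morse data $\tau_H+g_M$, $\tau_{\square}+g_M'$, and use the unique rigid flow line of $-\nabla\tau_{\square}$ in the $\mathbb{C}$-factor to exhibit a bijection with the moduli spaces computing $a\oast b$ in $Q^+H(M)$, exactly as in the paper's proof. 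One small slip: you should require $\xi$ to be \emph{in} the unstable manifold of the index-$2$ critical point of $\tau_{\square}$ (equivalently, $\xi$ is not that critical point, which holds automatically since $W^u$ of the unique maximum is the whole interior), not outside it, as otherwise no connecting flow line in the $\mathbb{C}$-factor would exist.
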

\begin{proof}
Consider the moduli spaces $\mathcal{M}_{prod}(x,y,z,\lambda;h, 
g,h')$, which are defined in a similar way as the moduli spaces 
$\mathcal{M}_{prod}$ in the beginning of the section  with the only difference that the 
edges corresponding to the flow lines of $-\nabla h$, $-\nabla g$ and 
$-\nabla h'$ for Morse functions $h$ and $h'$ on $\tilde{M}_H$ 
and $g$ on $\tilde{M}_{\square}$.
We may assume that $h=h'$.
By the same argument as in the proofs earlier in this thesis we see that the flow line 
corresponding to $-\nabla h$ and the $J$-holomorphic sphere in the core all 
map to the same point under the projection $\pi$.
In other words, they are all contained in one fiber.
Suppose $h=\tau_K+ h_M$, $h=\tau_{\square} + h_M'$ and $x=(\xi, a)$, $y=(\xi', b)$ 
and $z=(\xi, c)$, where $\xi$ is the critical point of $\tau_K$ and $\xi'$ the critical 
point of $\tau_{\square}$.
There exists a unique flow line of 
$\tau_{\square}$ from $\pi(y)=\xi$ to the fiber containing the core and the flow lines of 
$-\nabla h$.
Hence, there exists a bijection between the elements in 
$\mathcal{M}_{prod}(x,y,z,\lambda;h,g,h')$ and the elements in 
the moduli space $\mathcal{M}_{prod}(a,b,c,\lambda;h_M,g_M',h_M)$, which proves the lemma.
\end{proof}

\subsubsection{The inclusion}

\begin{lemma}\label{lem:inc_delta_commute}
 The diagram
\begin{equation*}
\xymatrix{
  C^+_i(V, \partial V) \ar[r]^(0.45){i_{(V, \partial V)}} \ar[d]^{\delta}& 
C^+_{i-1}(\tilde{M}_H, \partial \tilde{M}_H) \ar[d]^{\delta}\\
 C^+_i(\partial V) \ar[r]^{i_{\partial V}} & C^+_{i-1}(\partial \tilde{M}_H)\\
}
\end{equation*}
is commutative, where $i_{\partial V}:=\oplus i_{L_i^+}\oplus i_{L_j^-}$.
\end{lemma}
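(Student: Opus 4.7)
The plan is to verify the identity at the chain level on each generator by a careful analysis of the underlying moduli spaces, exploiting the product structure of the geometry near $\partial V$ and $\partial \tilde{M}_H$. Choose $f$ on $V_{R+\epsilon}$ adapted to the exit region $S=\partial V$ as in Definition~\ref{def:f_adapted_S}, and choose $h$ on $\tilde{M}_H$ of split form $h = \tau_H + h_M$, where $\tau_H$ is a Morse function on the strip $H$ whose negative gradient points out along the vertical boundary $\{\pm(R+\epsilon)\}\times(-c,c)$ with its critical points located at $(\pm(R+\epsilon/2),0)$, and $h_M$ is a generic Morse function on the fiber $M$. Take the almost complex structure $J$ on $\tilde{M}_H$ of the split form $i\oplus J_M$ near the vertical boundary. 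Both maps $\delta$ in the diagram are the restriction maps at the chain level that keep critical points on the respective exit slice $S_{R+\epsilon/2}$ (resp.\ its analogue for $\tilde{M}_H$) and set all others to zero, so it suffices to check the identity on each generator $x\in\mathrm{Crit}(f)$.

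First I would treat $x\notin S_{R+\epsilon/2}$, where $\delta(x)=0$; the task is to show $\delta\circ i_{(V,\partial V)}(x)=0$. A pearly tree in $\mathcal{P}_{inc}(x,a,\lambda;f,h)$ whose terminal critical point $a$ of $h$ lies on the analogous slice of $\partial\tilde{M}_H$ must terminate by a flow line of $-\nabla h$ ending at $a=(\pm(R+\epsilon/2),0,q)$. By the product $h=\tau_H+h_M$, the stable manifold $W^s_a(h)$ decomposes accordingly, so tracing $-\nabla h$ backward forces the interior point $u_l(0)$ of the last disk to lie in a fiber over a point near $\partial H$. By Lemma~\ref{lem:curve_openmapping}, any $J$-holomorphic disk with boundary on the cylindrical part of $V$ is constant under $\pi$, hence contained in a single fiber; coupled with the product form $f=f_\iota+\sigma_\iota$ near the ends, the $-\nabla f$ trajectories along the ends preserve the $\pi$-level. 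Iterating back through the tree forces $x$ to lie on $S_{R+\epsilon/2}$, a contradiction. Thus $\mathcal{P}_{inc}(x,a,\lambda)$ is empty for every such $a$, giving $\delta(i_{(V,\partial V)}(x))=0$.

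Next I would treat $x=(t_{\iota,1},a_\iota,p)\in \mathrm{Crit}(f)\cap S_{R+\epsilon/2}$, where $\delta(x)$ is the corresponding critical point $p$ of $f_\iota$ on $L_\iota\subset\partial V$. I claim there is an orientation-preserving bijection
\[
\mathcal{P}_{inc}\bigl(x,\,(\pm(R+\epsilon/2),0,q),\,\lambda;\,f,h\bigr)\;\longleftrightarrow\;\mathcal{P}_{inc}\bigl(p,q,\lambda;\,f_\iota,h_M\bigr),
\]
via the projection that forgets the base direction. Indeed, all disks in a pearly tree on the left lie in one fiber (Lemma~\ref{lem:curve_openmapping}); the split forms $f=f_\iota+\sigma_\iota$ and $h=\tau_H+h_M$ decouple the gradient flow into a base part and a fiber part; the unique gradient line of $\sigma_\iota$ starting at $t_{\iota,1}$ together with the unique flow line of the base part of $\tau_H$ ending at $(\pm(R+\epsilon/2),0)$ provide the $\pi$-direction data canonically, and they account for the index shift $|x|-|a|=|p|-|q|+1$. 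Orientations match by Corollary~\ref{cor:orientationsmoduli} applied to $V\cap\pi^{-1}(\text{end}) = [\,\cdot\,]\times L_\iota$. Summing over $q$ and $\lambda$ yields $\delta\circ i_{(V,\partial V)}(x) = i_{\partial V}\circ \delta(x)$.

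The main obstacle is the analytical setup around the boundary: the split Morse data and split almost complex structure used for the bijection are not generic, and one must verify that transversality is preserved for the relevant low-dimensional moduli spaces. This is handled by invoking Lemma~\ref{lem:automatic_transversality} (automatic transversality for $\pi$-constant configurations) combined with a generic perturbation of $(f_\iota,h_M,J_M)$ in the fiber together with a generic perturbation of the base data supported away from the collar, following the scheme of Section~\ref{sec:main_floer_techniques}. Once the moduli-space bijection and orientation comparison are in place, the chain-level identity follows generator by generator.
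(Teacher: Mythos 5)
Your argument is correct and is essentially the paper's proof: the same split choices ($f$ adapted to the exit region $\partial V$, $h=\tau_H+h_M$ with base critical points over $(\pm(R+\epsilon/2),0)$), the same identification of both maps $\delta$ as restriction of generators, and the same key step, namely a bijection between the relevant zero-dimensional $\mathcal{P}_{inc}$ moduli spaces forced by $\pi$-constancy of the disks (Lemma~\ref{lem:curve_openmapping}) with orientations matched via Corollary~\ref{cor:orientationsmoduli}, while you additionally spell out the case $x\in\ker\delta$, which the paper leaves implicit. Two cosmetic points: the index relation should read $|x|-|a|=|p|-|q|$ (both $t_{\iota,1}$ and the base critical point of $\tau_H$ contribute one to the index, so the two moduli spaces have equal dimension), and, as the paper does, you should state explicitly that $\tau_H$ is arranged so that the negative gradient flow along the lines $s=\pm(R+\epsilon/2)$ converges to its index-one critical points, since this is what traps the configurations over the cylindrical region where the disks are fiberwise.
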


\begin{proof}[Proof of lemma~\ref{lem:inc_delta_commute}]
We choose a Morse function $h_M$ on $M$ and a special Morse function $\tau$ on 
$\mathbb{R}^2$ with the property that $h:=\tau+ h_M$ fulfills the necessary 
requirements to define the quantum homology of $\tilde{M}_H$ (see 
section~\ref{sec:module_structure}).
Suppose the function $\tau$ has two critical 
points at $(-R - \epsilon/2,0)$ and $(R + \epsilon/2,0)$ of index $1$ and a unique critical point of index $2$ at $(0,0)$.
We may assume that the unstable manifolds of $(-R - \epsilon/2,0)$ and $(R + 
\epsilon/2,0)$ are vertical, such that all together the negative gradient of $\tau$ points 
out along the boundary of $T$ and in along its complement.
Let $f$ be a Morse function on $V$ adapted to the exit region $\partial V$. (See Defintion~\ref{def:f_adapted_S}.)
The map $\delta: C^+_i(\tilde{M}_H, \partial \tilde{M}_H) \rightarrow C^+_{i-1}(\partial 
\tilde{M}_H)$ is given by restricting of the generators of $h$ to the 
generators in the fiber of $(-R - \epsilon,0)$ and $(R + \epsilon,0)$.
For any $x\in Crit(f)\setminus Ker(\delta)$ and any $a\in Crit(h)\setminus 
Ker(\delta)$ the map $\delta$ induces a bijection between the elements in the space 
$\mathcal{P}_{inc}(\delta(x),\delta(a),\lambda;f,h)$ and 
$\mathcal{P}_{inc}(x,a,\lambda;f|_{S_{R+\epsilon/2}},h|_{(-R - 
\epsilon/2,0)\cap (R + \epsilon/2,0)})$.
\end{proof}

\subsubsection{Module structures and the long exact sequence}

Once again consider the long exact sequence
\begin{equation*}
\xymatrix{
 \dots \ar[r]^-{i_*} & Q^+H_*(V)\ar[r]^-{j_*} & 
Q^+H_*(V,\partial V) \ar[r]^-{\delta} & Q^+H_{*-1}(\partial V) \ar[r]^-{i_*} &\dots
}
\end{equation*}
Notice that each of the quantum homologies in this sequence is a module over a version of the ambient 
quantum homology.
We examine the compatibility of the quantum products with these module 
structures.
Consider the sequence
\begin{equation*}
\xymatrix{
 \dots \ar[r]^-{\Phi}& Q^+H_{*+2}(\tilde{M}_{\square},\partial \tilde{M}_{\square}) 
\ar[r]^-{ id} & Q^+H_{*+2}(\tilde{M}_{\square},\partial \tilde{M}_{\square}) \ar[r]^-{\Phi^{-1}} & 
Q^+H_*(M) \ar[r] & \cdots, \\
}
\end{equation*}
where $\Phi$ is the ring isomorphism $\Phi:Q^+H_*(M) \rightarrow 
Q^+H_{*+2}(\tilde{M}_{\square},\partial \tilde{M}_{\square}) : b \mapsto R\times b$.
\begin{lemma}
 The module structures of $Q^+H_*(\partial V)$,  $Q^+H_*(V)$ and $Q^+H_*(V,\partial V)$ over 
the ring $Q_*^+H(M)$ and $Q^+H_*(\tilde{M}_{\square},\partial \tilde{M}_{\square})$ respectively fulfil the following identities:
\begin{enumerate}
 \item[(i)] $i_*(a \star x)= \Phi(a) \star i_*(x)$, $\forall x\in Q^+H_*(\partial V)$ and $\forall a\in Q^+H_*(M)$.
 \item[(ii)] $j_*(a\star x)=a\star j_*(x),$ $\forall x\in Q^+H_*(V)$ and every $\forall a\in 
Q^+H_*(\tilde{M}_{\square},\partial \tilde{M}_{\square})$.
 \item[(iii)] $\delta(a\star x)=\Phi^{-1}(a)\star \delta(x)$, $\forall x\in 
Q^+H_*(V,\partial V)$ and $\forall a\in Q^+H_*(\tilde{M}_{\square},\partial \tilde{M}_{\square})$.
\end{enumerate}
i.e. the maps in the long exact sequence are module maps.
\end{lemma}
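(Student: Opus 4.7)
The plan is to prove all three identities by a common strategy: choose split Morse functions on $\tilde M_\square$ (and on the cylindrical ends of $V$) and exploit Lemma~\ref{lem:curve_openmapping}, which in the trivial case $E=\mathbb{C}\times M$ forces every $J$-holomorphic disk with boundary on $V$ to lie inside a single fiber of $\pi$. This rigidity will reduce pearly trees for the module action near the cylindrical ends of $V$ to pearly trees in a single closed Lagrangian $L_\iota\subset M$, and the comparison with the $Q^+H(M)$-module structure on $Q^+H(L_\iota)$ will follow by projection to $M$.

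Identity (ii) is essentially tautological. Using any Morse function $f$ on $V_{R+\epsilon}$ adapted to $\partial V$ (Definition~\ref{def:f_adapted_S}), the map $j$ is realized as the subcomplex inclusion $C^+(U;f|_U,J)\hookrightarrow C^+((V,\partial V);f,J)$. A pearly tree contributing to $a\star x$ with $x\in Crit(f|_U)$ has all its flow lines of $-\nabla f$ confined to $U$ (since $-\nabla f$ points inwards along $\partial U$) and all its disks confined to fibers; hence it cannot exit $U$. The moduli spaces defining $a\star x$ in $C^+(U)$ and in $C^+((V,\partial V))$ coincide, and thus $j(a\star x)=a\star j(x)$ on the chain level.

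For (iii), take $f$ on $V_{R+\epsilon}$ as in Definition~\ref{def:special_MF} and $g=\tau_\square+h_M$ on $\tilde M_\square$, with $\tau_\square$ having a unique index-$2$ critical point $\xi_\square$ in the interior of $\square$ chosen so that every $(t_{\iota,j},a_\iota)$ lies in $W^u_{\xi_\square}(\tau_\square)$. Under $\Phi$ of Lemma~\ref{prop:Kunneth}, a critical point $q$ of $h_M$ lifts to $(\xi_\square,q)\in Crit(g)$. For $x=(t_{\iota,1},a_\iota,p)$ and $y=(t_{\iota',1},a_{\iota'},p')$ in $S_{R+\epsilon/2}$, consider an element of $\mathcal{P}_{mod}((\xi_\square,q),x,y,\lambda)$: since $t_{\iota',1}$ is a local maximum of $\sigma_{\iota'}$ whose stable manifold $W^s_{t_{\iota',1}}(\sigma_{\iota'})=\{t_{\iota',1}\}$, the one-dimensional analysis of $-\nabla\sigma_\iota$-trajectories forces the entire tree to lie in the single fiber $\{(t_{\iota,1},a_\iota)\}\times M$; in particular $\iota'=\iota$. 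Projecting to $M$ identifies the tree with an element of $\mathcal{P}_{mod}(q,p,p',\lambda)$ computing $q\star p$ in $Q^+H(L_\iota)$: the condition $u_k(0)\in W^u_{(\xi_\square,q)}(g)=W^u_{\xi_\square}(\tau_\square)\times W^u_q(h_M)$ splits, with the first factor automatic by the choice of $\tau_\square$ and the second factor giving exactly the marked-point condition in $L_\iota$. A direct count yields $\delta^V_{mod}=\delta^{L_\iota}_{mod}$, and Lemma~\ref{lem:orientation_match} ensures the spin-structure-induced orientations match, so $\delta(a\star x)=\Phi^{-1}(a)\star\delta(x)$ on chains; the statement in homology for general $S\subset\partial V$ follows by naturality.

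The proof of (i) is analogous. By Lemma~\ref{lem:sum_of_units} and Lemma~\ref{lem:prod_and_i}, a cycle $p\in C^+(S_{R+\epsilon/2})$ satisfies $d(x_{\iota,1})=x_{\iota,0}$ for $x_{\iota,j}=(t_{\iota,j},a_\iota,p)$, so $i_*([p])=[x_{\iota,0}]$ in $Q^+H(V)=H(C^+(U;f|_U,J))$. Since $t_{\iota,0}$ is a local minimum of $\sigma_\iota$ with $W^u_{t_{\iota,0}}=\{t_{\iota,0}\}$, every element of $\mathcal{P}_{mod}((\xi_\square,q),x_{\iota,0},y,\lambda)$ starts in a single fiber, and by the same analysis as above the entire tree lies in $\{(t_{\iota,0},a_\iota)\}\times M$; projection yields the bijection with $\mathcal{P}_{mod}(q,p,p',\lambda)$ in $L_\iota$, and hence $\Phi(a)\star i_*([p])=i_*(a\star[p])$. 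The main obstacle throughout is verifying orientation compatibility when $\mathcal R\neq\mathbb Z_2$: the natural splitting of $TV$ over the cylindrical ends must interact correctly with the chosen spin structure, and the index-$2$ shift contributed by $\xi_\square$ in $\Phi$ must enter with the correct sign. This is handled exactly as in the orientation discussion preceding Proposition~\ref{prop:long_exact_sequence}.
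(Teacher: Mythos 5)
Your proposal is correct and follows essentially the same route as the paper's proof: realize $i$, $j$, $\delta$ at chain level using the special Morse functions of Definition~\ref{def:special_MF} together with a split function $\tau_{\square}+g_M$ on $\tilde{M}_{\square}$, use Lemma~\ref{lem:curve_openmapping} and the one-dimensional behaviour of the $\sigma_{\iota}$ to confine the relevant $\mathcal{P}_{mod}$-trees to a single fiber over $(t_{\iota,1},a_{\iota})$ resp. $(t_{\iota,0},a_{\iota})$, and then project to obtain bijections with the fiber moduli spaces, with (ii) amounting to the module action preserving the subcomplex $C^+(U;f|_U,J)$. Your extra remarks on the unstable manifold of $\xi_{\square}$ and on orientations only make explicit what the paper leaves implicit via Lemma~\ref{lem:orientation_match}.
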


\begin{proof}
\begin{enumerate}
 \item[(i)]We choose a Morse function $f$ on $V$ as in 
Definition~\ref{def:special_MF} and a Morse function $g:=\tau_{\square} + g_M$ on $\tilde{M}$ as in Section~\ref{sec:module_structure}.
Let $\xi$ denote the unique critical point of $\tau_{\square}$. 
Let $(\tau_{\iota,1},a_{\iota},x)$ and $(\tau_{\kappa,1},a_{\kappa},y)\in Crit(f|_{\partial V_{R+\epsilon/2}})$ and $a\in Crit(g_M)$. 
We know that $i(y)=(\tau_{\kappa,0},a_{\kappa},y)\in Crit(f|_U)$ and $i(x)=(\tau_{\iota,0},a_{\iota},x)$. (See Lemma~\ref{lem:prod_and_i}.)
Any pearly trajectory in $V$ starting at the critical point in $Crit(f)\cap \partial V_{R+\epsilon/4}$ ends at a point in $Crit(f)\cap \partial V_{R+\epsilon/4}$ and thus there exists a bijection between $\mathcal{P}_{mod}(a,x,y;g_M,f|_{\partial V_{R+\epsilon/2}})$ and $\mathcal{P}_{mod}((\xi,a),(\tau_{\iota,0},a_{\iota},x),(\tau_{\kappa,0},a_{\kappa},y); g,f)$.
 \item[(ii)] 
For $x\in im(j)$ the moduli space 
$\mathcal{P}_{mod}(a,x,y,\lambda;g,f)$ is empty if $y \notin 
im(j)$.
 \item[(iii)]
Recall that the map $\delta$ on the chain level, is defined by restricting 
$Crit(f)$ to $Crit(f|_{S_{R+\epsilon/2}})$.
The expression $\delta(a\star x)$ is defined by counting the elements in the moduli 
spaces of the type $\mathcal{P}_{mod}(a,x,y,\lambda;g,f)$ with $x\in 
Crit(f)$, $a\in  Crit(g)$ and 
$y=(t_{{\iota},1},a_{\iota},q)\in Crit(f)\cap S_{R+\epsilon/2}$.
As before, the critical point $x$ has to lie in $S_{R+\epsilon/2}$, since otherwise there 
exists no pearly trajectory from $x$ to $y$ by the choice of $f$. 
Notice also that this space is empty whenever $y$ is not contained in the same connected 
component of the boundary as $x$.
Therefore, we can write $x:=(t_{{\iota},1},a_{\iota},p)$, where $p\in 
Crit(f|_{S_{R+\epsilon/2}})$.
Suppose $a=(\xi,a_M)$, then $\Phi^{-1}(a)\star \delta(x)$ we count elements in the moduli space of the 
form $\mathcal{P}_{mod}(a_M,p,q,\lambda;g,f|_{S_{R+\epsilon/2}})$.
Again, this space is empty if $p$ and $q$ are not contained in the same connected component of 
$\partial V_{R+\epsilon/2}$.
Clearly there exists a unique negative gradient flow line of $\tau_{\square}$ from $\pi(a)$ to 
$\xi=\pi(y)=(t_{{\iota},1},a_{\iota})$.
This shows that there is a bijection between the moduli space 
$\mathcal{P}_{mod}(a,x,y,\lambda;g,f)$ and 
$\mathcal{P}_{mod}(a_M,p,q,\lambda;g_M,f|_{\partial V_{R+\epsilon/2}})$, which proves the last part of the lemma.
\end{enumerate}
\end{proof}

% !TEX root = main_arXiv.tex

\section{Discriminants and the quantum homology ring}\label{sec:disc_lagrangians}

One can define discriminants for rings in general. The discriminants of the quantum homology rings are invariants of the Lagrangian cobordism or of a closed Lagrangian. 
In the following we are interested in $QH_1(V,\partial V;\mathbb{Q})$, respectively $QH_0(L;\mathbb{Q})$.

\subsection{The ring $QH_1(V,\partial V;\mathbb{Q})$}

Assume that $V$ is a Lagrangian cobordism in a Lefschetz fibration fulfiling the assumptions of section~\ref{sec:quantum_homology}.
Moreover, we have:
\begin{assumption}[\cite{BM15}]\label{assum:for_discriminant_cobordism}
 \hspace{2em}
\begin{enumerate}
 \item $V$ is a connected, monotone, Lagrangian cobordism with $N_V |n$, where $n=dim(V)-1$.
 \item $V$ is oriented, spin and its ends are endowed with the spin structure induced by the spin structure on $V$.
\end{enumerate}
\end{assumption}
Set $\nu:=\frac{n}{N_V}$.
In particular, $et^{\nu}\in QH_1(V,\partial V;\mathbb{Q}[t])$, where $e$ denotes the unit in 
$QH_{n+1}(V,\partial V;\mathbb{Q}[t])$.
Moreover, for any elements $\alpha$, $\beta\in QH_1(V,\partial V;\mathbb{Q}[t])$ the product
$\alpha*\beta$ lies in $QH_{1-n}(V,\partial V;\mathbb{Q}[t])$, which is isomorphic to $QH_1(V,\partial V;\mathbb{Q}[t])t^{\nu}$.
In particular the quantum product defines a ring structure on $QH_1(V,\partial V;\mathbb{Q})$, which is obtained by setting $t=1$ on the chain level and inducing a grading $\mod N_V$.
We say that  $\mathcal{B}:=\{\gamma_0, \gamma_1,\ldots,\gamma_d\}$ is a $\mathbb{Z}$-basis of 
$QH_1(V,\partial V;\mathbb{Q})$ if it is maximally linearly independent and each $\gamma_i$ maps to an element of $\bigoplus_{*=1,N_V+1,\ldots, n+1} H_*(V,\partial V;\mathbb{Z})$ under the natural map
\begin{equation}\label{map:ttozero}
 \xymatrix{
QH_1(V,\partial V;\mathbb{Q}) \ar[r] & 
\bigoplus_{*=1,N_V+1,\ldots, n+1}H_*(V,\partial V; \mathbb{Q}).\\
 & \bigoplus_{*=1,N_V+1,\ldots, n+1} H_*(V,\partial V;\mathbb{Z}) \ar@{^{(}->}[u]}
\end{equation}
The quantum products on this basis of $QH_1(V,\partial V;\mathbb{Q})$ can be expressed by
\begin{equation}
 \gamma_i*\gamma_j=\sum_{k=0,\ldots,d} \mu_{ijk} \gamma_k,
\end{equation}
with $\mu_{ijk}\in \mathbb{Z}$.
Let $f=\sum_{ijk} \mu_{ijk} \gamma_i^*\otimes \gamma_j^* \otimes \gamma_k^* 
\in QH_1(V,\partial V;\mathbb{Q})^{*\otimes 3}$.

\begin{definition}
Call $f$ the tensor associated to the quantum product on $QH_1(V,\partial V;\mathbb{Q})$. 
Call $A=(\mu_{ijk})_{ijk}\in \mathbb{Z}^{(d+1)^3}$ the hypermatrix associated 
to the quantum product on $QH_1(V,\partial V;\mathbb{Q})$ and the basis $\mathcal{B}$.
\end{definition}

\begin{definition} 
Define $\Delta_{QH_1(V,\partial V;\mathbb{Q})}:=Det(A)$.
Equivalently, $\Delta_{QH_1(V,\partial V;\mathbb{Q})}:=\Delta_{X}(f)$,
where $X$ is the image of the Segre embedding 
$$S:\mathbb{P}(QH_1(V,\partial V;\mathbb{C})^*)^{\times 3}\hookrightarrow 
\mathbb{P}(QH_1(V,\partial V;\mathbb{C})^{*\otimes 3}).$$
\end{definition}

\begin{remark}
The hyperdeterminant, which is the $X$-discriminant of the Segre variety (see Appendix~\ref{appendix:hyperdets}) is defined uniquely up to sign if we require it to have integral coefficients and to be irreducible over $\Z$. 
In particular, the hyperdeterminant of the hypermatrix $A$ is also defined uniquely up to sign. 
\end{remark}

\begin{lemma}
The hyperdeterminant of format $(d+1)\times (d+1)\times (d+1)$ evaluated on 
$A:=(\mu_{ijk})_{ijk}$ is an invariant (up to sign) of $QH_1(V,\partial V;\mathbb{Q})$.
\end{lemma}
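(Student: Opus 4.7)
The plan is to show that any two $\mathbb{Z}$-bases of $QH_1(V,\partial V;\mathbb{Q})$ are related by a matrix in $GL_{d+1}(\mathbb{Z})$, and that under such a change of basis the hyperdeterminant of the associated structure hypermatrix is invariant up to a sign, using the well-known transformation law of the hyperdeterminant under the action of $GL(d+1)^{\times 3}$.

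First I would set up the change-of-basis formula. Suppose $\mathcal{B}=\{\gamma_0,\ldots,\gamma_d\}$ and $\mathcal{B}'=\{\gamma_0',\ldots,\gamma_d'\}$ are two $\mathbb{Z}$-bases of $QH_1(V,\partial V;\mathbb{Q})$. By the defining condition, each $\gamma_i$ and each $\gamma_i'$ maps under the augmentation~\eqref{map:ttozero} to an element of the integer lattice $\bigoplus_{*} H_*(V,\partial V;\mathbb{Z})$, and the two families of images are each maximally linearly independent. It follows that the change of basis matrix $P=(P_{ij})$ defined by $\gamma_i'=\sum_a P_{ia}\gamma_a$ has integer entries, and the same holds for its inverse, so $P\in GL_{d+1}(\mathbb{Z})$. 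In particular $\det(P)=\pm 1$.

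Next, I would compute how the structure constants $\mu_{ijk}$ transform. A direct calculation using bilinearity of the quantum product yields
\begin{equation*}
\gamma_i'*\gamma_j'=\sum_{a,b,c,k}P_{ia}P_{jb}\,\mu_{abc}\,(P^{-1})_{ck}\,\gamma_k',
\end{equation*}
so the new hypermatrix $A'=(\mu_{ijk}')$ is obtained from $A=(\mu_{ijk})$ by the action of the triple $(P,P,P^{-T})\in GL_{d+1}(\mathbb{Q})^{\times 3}$ on the three tensor factors of $QH_1(V,\partial V;\mathbb{Q})^{*\otimes 3}$. Equivalently, if $f$ and $f'$ denote the tensors associated to $\mathcal{B}$ and $\mathcal{B}'$, then $f'=(P,P,P^{-T})\cdot f$.

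Finally I would invoke the transformation law of the hyperdeterminant for the Segre variety (see Appendix~\ref{appendix:hyperdets}): under the natural action of $(g_1,g_2,g_3)\in GL_{d+1}(\mathbb{C})^{\times 3}$ on tensors of format $(d+1)\times(d+1)\times(d+1)$, the hyperdeterminant satisfies
\begin{equation*}
\mathrm{Det}\bigl((g_1,g_2,g_3)\cdot A\bigr)=(\det g_1)^{N}(\det g_2)^{N}(\det g_3)^{N}\,\mathrm{Det}(A),
\end{equation*}
where $N$ is the common degree of $\mathrm{Det}$ in each slot. Applying this with $(g_1,g_2,g_3)=(P,P,P^{-T})$ and using $\det(P)=\det(P^{-T})^{-1}=\pm 1$ gives $\mathrm{Det}(A')=(\pm 1)^{N}\mathrm{Det}(A)=\pm\mathrm{Det}(A)$. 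Hence $\Delta_{QH_1(V,\partial V;\mathbb{Q})}=\mathrm{Det}(A)$ is independent of the choice of $\mathbb{Z}$-basis up to sign, proving the lemma. The main point to verify carefully is the first step, namely that any two $\mathbb{Z}$-bases really are related by an integer matrix of determinant $\pm 1$; the rest is purely formal once the transformation law for the hyperdeterminant is in hand.
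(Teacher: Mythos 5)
Your proposal is correct and follows essentially the same route as the paper: a change of $\mathbb{Z}$-basis is given by a matrix in $GL(d+1,\mathbb{Z})$, the structure hypermatrix transforms under the corresponding triple action (your $(P,P,P^{-T})$ is the paper's $(T^T,T,T^{-1})$ up to convention), and Proposition~\ref{prop:SL-invariance} with $\det=\pm1$ yields invariance up to sign. The step you flag for careful verification (that two $\mathbb{Z}$-bases are related by an integer matrix with integer inverse) is likewise asserted without further argument in the paper's own proof.
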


\begin{proof}
Let $\beta=\{\gamma_0,\cdots,\gamma_d\}$ and 
$\tilde{\beta}=\{\tilde{\gamma}_0,\cdots,\tilde{\gamma}_d\}$ be two $\mathbb{Z}$-bases of 
$QH_1(V,\partial V;\mathbb{Q})$ and $M=(\mu_{ijk})$ and $\tilde{M}=(\tilde{\mu}_{ijk})$ 
the corresponding tensors defined by
\begin{equation*}
 \tilde{\gamma}_i*\tilde{\gamma}_j=\sum_k \mu_{ijk} \tilde{\gamma}_k
\end{equation*}
\begin{equation*}
 \tilde{\gamma}_i*\tilde{\gamma}_j=\sum_k \tilde{\mu}_{ijk} \tilde{\gamma}_k;
\end{equation*}
A base change between two $\Z$-bases $\tilde{\beta}$ and $\beta$ is described by a matrix in $T\in GL(d+1,\mathbb{Z})$,
i.e. $T=T^{\beta}_{\tilde{\beta}}=((\tilde{\gamma_0})^{\beta}\cdots 
(\tilde{\gamma_d})^{\beta})$ and $det(T)=\pm 1$.
Clearly
\begin{equation*}
\begin{array}{ccc}
(\tilde{\gamma_i})^{\beta}&=&T (\gamma_i)^{\beta}\\
(\gamma_i)^{\tilde{\beta}}&=&T^{-1} (\tilde{\gamma_i})^{\tilde{\beta}}.\\
\end{array}
\end{equation*}
We use the relation
\begin{equation*}
\gamma_i^{\tilde{\beta}}*\gamma_j^{\tilde{\beta}}=(\gamma_i*\gamma_j)^{
\tilde{\beta}}.
\end{equation*}
The left-hand side is:
\begin{equation*}
 \begin{array}{ccc}
  \gamma_i^{\tilde{\beta}}*\gamma_j^{\tilde{\beta}}&=&(T^{-1}_{li}e_i)^T 
\tilde{M}_{lmk}(T^{-1}_{mj}e_j)(\tilde{\gamma}_k)^{\tilde{\beta}}.
 \end{array}
\end{equation*}
The right-hand side gives
\begin{equation*}
 \begin{array}{ccc}
(\gamma_i*\gamma_j)^{\tilde{\beta}} &=&(e_i^TM_{ijk}e_j(\gamma_k)^{\beta})^{
\tilde{\beta}}\\
&=& e_i^T M_{ijk}e_j (\gamma_k)^{\tilde{\beta}}\\
&=& e_i^T M_{ijk}e_j T^{-1}_{kl}(\tilde{\gamma}_l)^{\tilde{\beta}}.\\
\end{array}
\end{equation*}
This implies 
\begin{equation*}
 {T^{T}}^{-1}_{li}\tilde{M}_{lmk}T^{-1}_{mj}=M_{ijr}T^{-1}_{rk},
\end{equation*}
and therefore
\begin{equation*}
 \tilde{M}_{lmr}=T^{T}_{il}M_{ijr}T_{jm}T^{-1}_{rk}.
\end{equation*}
Hence, a change of the basis corresponds to an action by the element $(T^T,T,T^{-1})$.
Proposition~\ref{prop:SL-invariance} completes the proof.
\end{proof}

\subsection{The ring $QH_0(L;\mathbb{Q})$}

There is a similar notion for closed Lagrangians.
\begin{assumption}\label{assum:on_closed_Lagrangian}
Suppose $L$ is a Lagrangian and
 \begin{enumerate}
  \item $L$ is closed and monotone with minimal Maslov number $N_{L}$ and such that 
$N_{L}|n$, where $n=dim(L)$.
  \item $L$ is oriented. Moreover, we assume that $L$ is spin with a fixed spin structure.
\end{enumerate}
\end{assumption}

\begin{remark}
 If $V$ is a cobordism fulfiling assumption~\ref{assum:for_discriminant_cobordism} then its ends automatically satisfy assumption~\ref{assum:on_closed_Lagrangian}.
\end{remark}

For a Lagrangian $L$ as in~\ref{assum:on_closed_Lagrangian}, let $\{\xi_0,\ldots, \xi_{r}\}$ be a $\mathbb{Z}$-basis of $QH_0(L;\mathbb{Q})$.
Suppose
$$\xi_i* \xi_j= \sum_k \lambda_{ijk} \xi_k $$
and set
$$g:=\sum_k \lambda_{ijk} \xi_i^*\otimes\xi_j^*\otimes\xi_k^* \text{ and } B:=(\lambda_{ijk})_{ijk}.$$

\begin{definition}\label{def:disc_closed_Lagrangian}
Define $\Delta_{L}:=\Delta_{QH_0(L;\mathbb{Q})}=Det(B)$, or equivalently $\Delta_{L}:=\Delta_X(g)$,
where $X$ is the image of the Segre embedding 
$$S:\mathbb{P}(QH_0(L;\mathbb{C})^*)^{\times 3}\hookrightarrow \mathbb{P}(QH_0(L;\mathbb{C})^{*\otimes 3}).$$
\end{definition}

If we assume moreover 
\begin{assumption}\label{assum:rank2}
 $$\rank(QH_0(L;\mathbb{Q}))=2.$$
\end{assumption}
Then $\Delta_{QH_0(L;\mathbb{Q})}$ coincides with the definition of the discriminant given by Biran and Membrez in~\cite{BM15}.
\begin{lemma}
 The definition of the discriminant in~\ref{def:disc_closed_Lagrangian} of a Lagrangian 
subspace fulfilling assumptions~\ref{assum:on_closed_Lagrangian} and~\ref{assum:rank2} is equivalent to Definition~\ref{def:discriminant_old}. (See also~\cite{BM15}.)
\end{lemma}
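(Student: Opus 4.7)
The plan is to reduce this verification to a direct computation of the $2 \times 2 \times 2$ hyperdeterminant in a particularly convenient basis. Under Assumption~\ref{assum:rank2}, the short exact sequence
\begin{equation*}
0 \to \ker(\epsilon) \to QH_0(L;\Z) \to H_0(L;\Z) \to 0
\end{equation*}
together with the fact that $e_L$ generates $\ker(\epsilon)$ gives $QH_0(L;\Z) \cong \Z e_L \oplus \Z x$ for any chosen lift $x$ of the point class. After tensoring with $\Q$, the pair $\{\xi_0 := e_L,\ \xi_1 := x\}$ is a $\Z$-basis in the sense required by Definition~\ref{def:disc_closed_Lagrangian}.

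In this basis the structure tensor is particularly sparse. Since $e_L$ is the unit, one has
\begin{equation*}
\xi_0 * \xi_0 = \xi_0, \qquad \xi_0 * \xi_1 = \xi_1 * \xi_0 = \xi_1,
\end{equation*}
and the quadratic-algebra relation gives $\xi_1 * \xi_1 = \tau\,\xi_0 + \sigma\,\xi_1$ for the integers $\sigma = \sigma(x,p)$, $\tau = \tau(x,p)$ of Definition~\ref{def:discriminant_general}. Hence the hypermatrix $B = (\lambda_{ijk})$ has all entries zero except $\lambda_{000} = 1$, $\lambda_{011} = \lambda_{101} = 1$, $\lambda_{110} = \tau$ and $\lambda_{111} = \sigma$.

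Next I would substitute these values into Cayley's explicit formula for the $2\times 2\times 2$ hyperdeterminant. Every monomial that contains at least one of the vanishing entries $\lambda_{001}, \lambda_{010}, \lambda_{100}$ drops out, and the only surviving contributions are $(\lambda_{000}\lambda_{111})^2 = \sigma^2$ and $4\,\lambda_{000}\lambda_{011}\lambda_{101}\lambda_{110} = 4\tau$, yielding
\begin{equation*}
\Det(B) = \sigma^2 + 4\tau,
\end{equation*}
which is exactly the integer $\Delta_{QH_0(L;\Z)}$ of Definition~\ref{def:discriminant_general}, and hence agrees with $\Delta_L$ of Definition~\ref{def:discriminant_old}. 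Independence of the identification from the chosen lift $x$ follows on the classical side from Definition~\ref{def:discriminant_general} and on the hyperdeterminant side from the $SL$-invariance established in the preceding lemma, so the two prescriptions agree as invariants of $QH_0(L;\Q)$.

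The only real obstacle is bookkeeping: one must fix a convention for Cayley's $2\times 2\times 2$ hyperdeterminant (there are several equivalent displayed forms in the literature), and check that the sign conventions inherited from the earlier $SL$-invariance statement remove any residual ambiguity between $\Det(B)$ and $\sigma^2 + 4\tau$. Once the correct normalization is pinned down, the rest is a single elementary substitution.
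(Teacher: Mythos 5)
Your proof is correct, and the computation checks out: with $\lambda_{000}=\lambda_{011}=\lambda_{101}=1$, $\lambda_{110}=\tau$, $\lambda_{111}=\sigma$ and all other entries zero, direct substitution into the Cayley formula of Proposition~\ref{prop:det222} leaves only $a_{000}^2a_{111}^2 = \sigma^2$ from the first group and $4\,a_{000}a_{011}a_{101}a_{110}=4\tau$ from the last, so $\Det(B)=\sigma^2+4\tau$.

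Your route differs from the paper's. The paper instead invokes Schl\"afli's method (Theorem~\ref{thm:SchlaefliRatio}): one checks that the codimension $c(2,2)=4$ of the singular stratum of $2\times 2$ degenerate matrices exceeds $k_1+1=2$, so the ratio $G$ is a nonzero constant and the $2\times 2\times 2$ hyperdeterminant of $g$ equals the ordinary discriminant of $\det f_A(x)$. Slicing $B$ in the first direction produces the $2\times 2$ matrix $\bigl(\begin{smallmatrix}\xi_0 & \xi_1\\ \xi_1 & \sigma\xi_1+\tau\xi_0\end{smallmatrix}\bigr)$, whose determinant is the binary quadratic $-\xi_1^2+\sigma\xi_0\xi_1+\tau\xi_0^2$ with discriminant $\sigma^2+4\tau$. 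Your approach is more elementary -- a single substitution into a closed formula -- and bypasses the codimension verification; the paper's approach is illustrative in a way that scales, since Schl\"afli's technique is what one would reach for at higher format where no closed formula is practical. The note you raise about sign normalization is handled identically in both: as you say, $SL$-invariance pins down $\Det$ up to an overall sign, and the convention in Proposition~\ref{prop:det222} (coefficient $+1$ on $a_{000}^2a_{111}^2$) fixes it so that $\Det(B)=\sigma^2+4\tau$ rather than its negative, matching the classical discriminant of Definition~\ref{def:discriminant_general}.
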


\begin{proof}
We used Schl\"aflis method. (See Appendix~\ref{appendix:hyperdets} and chapter 14.4 in~\cite{GKZ94}.) Suppose that $\xi_0=e$ and $\xi_1$ is the lift of a point.
The singular locus $\nabla_{sing}\subset \nabla$ consists of $3\times3$ matrices with rank $\leq 1$. This has codimension $4$ and hence, $c(2,2)=4>3$.
By Theorem~\ref{thm:SchlaefliRatio} the hyperdiscriminant of the format $2\times2\times2$ evaluated on $f$ is the 
discriminant of the determinant of the matrix
\begin{equation*}
\left(
 \begin{array}{ccc}
  \xi_0* \xi_0 & \xi_0* \xi_1\\
  \xi_1 * \xi_0 & \xi_1 * \xi_1\\
 \end{array}\right).
\end{equation*}
Suppose that $\xi_1* \xi_1=\sigma \xi_1 + \tau \xi_0$.
Then this matrix is
\begin{equation*}
\left(
 \begin{array}{ccc}
  \xi_0 & \xi_1 \\
  \xi_1 & \sigma \xi_1 + \tau \xi_0 \\
 \end{array}\right),
\end{equation*}
and hence, the determinant is $-\xi_1^2+\sigma \xi_1 + \tau \xi_0 $ which has 
discriminant $\sigma^2+4\tau$. 
\end{proof}

\begin{lemma}
 Let $L$ be a Lagrangian as in~\ref{assum:on_closed_Lagrangian} with $\rank(QH_0(L;\mathbb{Q}))=2$ and $f$ the tensor associated to the quantum product 
on $QH_0(L;\mathbb{Q})$. Let $e\in QH_0(L;\mathbb{Q})$ denote the unit and $X$ be the corresponding Segre 
variety. 
Then the following are equivalent
\begin{enumerate}
 \item $\{f=0\}$ is tangent to $X$ at some point 
$p\in \{e\otimes e\otimes e\neq0\}$.
 \item $\Delta_L=0$ and 
$\frac{\partial \Delta_x}{\partial a_{000}}(f)=0$.
 \item There exists $x\in QH_0(L)$ such that $x*x=0$.
\end{enumerate}
\end{lemma}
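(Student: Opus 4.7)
My plan is to use the rank-two hypothesis to reduce $QH_0(L;\mathbb{Q})$ to the explicit quadratic algebra $\mathbb{Q}[T]/(T^2 - \sigma T - \tau)$ with $\Delta_L = \sigma^2 + 4\tau$, and then to combine direct algebraic manipulations with the Schl\"afli reduction of the $2 \times 2 \times 2$ hyperdeterminant from the proof of the preceding lemma together with the classical conormal/biduality picture of the dual variety of the Segre embedding. Fixing the basis $\{e, \xi_1\}$ with $\xi_1$ a lift of the point class, the quantum product tensor $f$ has coefficients $f_{000} = 1$, $f_{011} = f_{101} = 1$, $f_{110} = \tau$, $f_{111} = \sigma$, and zeros elsewhere.

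First I would dispatch (3): for $x = ae + b\xi_1$ one computes
\[
x * x = (a^2 + \tau b^2)\, e + (2ab + \sigma b^2)\, \xi_1 ,
\]
so a nonzero $x$ with $x * x = 0$ exists iff the system $a^2 + \tau b^2 = 0$, $b(2a + \sigma b) = 0$ has a nonzero solution. A direct elimination produces $\sigma^2 + 4\tau = 0$, and conversely this equation realizes the explicit zero-divisor $x_0 = \xi_1 - (\sigma/2)\, e$, which will serve as the bridge to the other two conditions.

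Next I would treat (1). Using that $T_{v_1 \otimes v_2 \otimes v_3} X = v_1 \otimes v_2 \otimes V + v_1 \otimes V \otimes v_3 + V \otimes v_2 \otimes v_3$, the hyperplane $\{f = 0\}$ is tangent to $X$ at $v_1 \otimes v_2 \otimes v_3$ precisely when the three partial contractions $\bar f(v_1, v_2, -)$, $\bar f(v_1, -, v_3)$, $\bar f(-, v_2, v_3)$ vanish in $V^*$. Parameterizing points of the chart $\{e \otimes e \otimes e \neq 0\}$ as $v_i = e + y_i \xi_1$, the resulting six scalar equations force $y_1 = y_2 = y_3 =: y$ with $1 + \tau y^2 = 0$ and $2y + \sigma y^2 = 0$; eliminating reproduces $\sigma^2 + 4\tau = 0$, and the common root $y$ is exactly (up to scalar) the coefficient of $\xi_1$ in the zero-divisor $x_0$ of (3), establishing (1) $\Leftrightarrow$ (3).

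Finally I would establish (1) $\Leftrightarrow$ (2) by invoking reflexivity of the pair $(X, X^\vee)$ as recalled in Appendix~\ref{appendix:hyperdets}: at a smooth point $f \in X^\vee$ the associated tangent point of $X$ is the projective class $[\nabla \Delta_X(f)]$, whose $(0,0,0)$-coordinate is, up to a nonzero scalar, exactly $\partial \Delta_X / \partial a_{000}(f)$, so the derivative condition in (2) translates directly into a statement about the position of the tangent point relative to the distinguished affine chart $\{e \otimes e \otimes e \neq 0\}$. The main technical obstacle is the singular stratum of $X^\vee$, where several partial derivatives of $\Delta_X$ vanish simultaneously and the Gauss map degenerates; here I would use the explicit $2 \times 2 \times 2$ Schl\"afli formula combined with the rank-two structure constants above to check the equivalence by hand in the degenerate cases, completing the argument.
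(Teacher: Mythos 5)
Your reduction of (3) to $\sigma^2+4\tau=0$, i.e.\ $\Delta_L=0$, is correct, and your general strategy (direct coordinates for (1)~$\Leftrightarrow$~(3), biduality plus the Schl\"afli/$2\times2\times2$ formula for (1)~$\Leftrightarrow$~(2)) is the same route the paper takes. However, your coordinate analysis of (1) contains a concrete error that hides the fact that the three conditions are not actually equivalent as written. The six contraction equations do \emph{not} force $y_1=y_2=y_3$: with $v_i=(1,y_i)$ and the structure constants $\lambda_{000}=\lambda_{011}=\lambda_{101}=1$, $\lambda_{110}=\tau$, $\lambda_{111}=\sigma$, they read $1+y_2y_3=0$, $1+y_1y_3=0$, $1+\tau y_1y_2=0$, $y_1+y_2+\sigma y_1y_2=0$, $y_3+\tau y_1+\sigma y_1y_3=0$, $y_3+\tau y_2+\sigma y_2y_3=0$. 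The first two give $y_1=y_2=:y$; the third then gives $y^2=-1/\tau$ and $y_3=-1/y=\tau y$, which equals $y$ only when $\tau=1$. Carrying the elimination to the end, $2+\sigma y=0$ forces $y=-2/\sigma$ (so $\sigma\neq 0$) and then $\tau y^2=-1$ yields $\sigma^2+4\tau=0$. Thus (1) holds iff $\Delta_L=0$ \emph{and} $\sigma\neq 0$, strictly stronger than (3). The biduality argument for (1)~$\Leftrightarrow$~(2) has the same difficulty: by Theorem~\ref{preliminaries-thm:smooth-point-of-the-hyperplane-section} the unique tangent point has $e\otimes e\otimes e$-coordinate $\partial\Delta_X/\partial a_{000}(f)=2\sigma^2+4\tau=\Delta_L+\sigma^2$, so on the locus $\Delta_L=0$ it lies in the chart $\{e\otimes e\otimes e\neq 0\}$ precisely when this derivative is \emph{nonzero}, i.e.\ when $\sigma\neq 0$ --- the opposite of what condition (2) asserts.

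This is not just a slip in your proposal. The paper's own proof of (3)~$\Rightarrow$~(1) literally ends by concluding that $\{f=0\}$ is \emph{not} tangent to $X$ at any point of $\{e\otimes e\otimes e\neq 0\}$, i.e.\ it derives the negation of~(1), and its (1)~$\Rightarrow$~(2) step carries the same sign reversal that yours does. The plausible intended reading of (1) is that $\{f=0\}$ is tangent to $X$ but only at points with $e\otimes e\otimes e=0$; under that correction (1) and (2) each become equivalent to $\sigma=\tau=0$, but (3) as stated is still equivalent to the weaker $\Delta_L=0$ and would also need to be tightened. Before you can produce a correct proof you must abandon the unfounded $y_1=y_2=y_3$ shortcut, record the extra constraint $\sigma\neq 0$ that elimination actually produces, and chase the sign in the biduality step; doing so will expose that the lemma cannot be proved in its present form and needs to be restated.
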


\begin{proof}
\item[1. $\Rightarrow$ 2.]
If 
$f=\sum_{i_1,i_2,i_3=0,1}a_{i_1i_2i_3}\gamma_{i_1}^*\otimes\gamma_{i_2}^*\otimes\gamma_{i_3}^* $
Theorem~\ref{preliminaries-thm:smooth-point-of-the-hyperplane-section} implies that 
$\{f=0\}$ is tangent to $X$ at $(\frac{\partial \Delta_X}{\partial 
a_{i_1i_2i_3}}(f):\ldots:\frac{\partial \Delta_X}{\partial 
a_{i_1i_2i_3}}(f)).$
Then $\{f=0\}$ is tangent to $X$ at a point 
$p\in \{e\otimes e\otimes e\neq0\}$ if and only if $\frac{\partial 
\Delta_X}{\partial a_{000}}(f)=0$ and $\Delta_L=0$.
\item[ 2. $\Rightarrow$ 3.] 
Using the formula for the hyperdeterminant of format 
$2\times2\times 2$ described in Proposition~\ref{prop:det222}
\begin{equation*}
 \begin{split}
   \frac{\partial\Delta_X}{\partial a_{000}}(f)=2 
a_{000}a_{111}^2-2a_{001}a_{110}a_{111}-2a_{010}a_{101}a_{111}-2a_{011}a_{100}a_{111}+4a_{
011}a_{101}a_{110}\\
=2\sigma^2+4\tau. \\
 \end{split}
\end{equation*}
Now $\frac{\partial \Delta_X}{\partial a_{000}}(f)=0$ and $\Delta_L=0$ implies that 
$\sigma=\tau=0$. Hence, with $x:=\gamma_1$ it follows $x*x=0$.
\item[ 3. $\Rightarrow$ 1.]
If $x*x=0$ then $f=x^*\otimes e^* \otimes x^* + e^*\otimes x^* \otimes x^* + 
e^*\otimes e^*\otimes e^*$ and $\Delta_L=0$. One can see that $\{f=0\}$ is 
not tangent to $X$ at a point in 
$\{e \otimes e \otimes e\neq 0\}$. 
\end{proof}

\subsection{Discriminants of the ends of a cobordism}
Let $V:L_0 \rightarrow (L_1, \cdots, L_r)$ be a Lagrangian cobordism, with ends $L_0, \ldots, L_r$.
Throughout this section assume that $V$ fulfils~\ref{assum:for_discriminant_cobordism} (and thus its ends fulfil~\ref{assum:on_closed_Lagrangian}) and moreover  $\rank(QH_0(L_i;\mathbb{Q}))=2$ for all $i=0,\ldots,r$.
Denote by 
$$\delta:QH_1(V,\partial V;\mathbb{Q}) \rightarrow QH_0(\partial V;\mathbb{Q})\cong 
\oplus_{i=0,\ldots,r} QH_0(L_i;\mathbb{Q})$$
the boundary map and by 
$$\pi_i:\oplus_{i=0,\ldots,r} QH_0(L_i;\mathbb{Q})\rightarrow QH_0(L_i;\mathbb{Q})$$
the projection to the $i$-th end.
Let $\{ e_{L_i},p_i\}$ be a basis of $QH_0(L_i;\Q)$, where $e_{L_i}$ is the unit and $p_i$ 
is the lift of a point. 
In particular, if $p_i^2=\sigma_i p_i + \tau_i e_{L_i}$, then the 
discriminant of $L_i$ is $\Delta_{L_i}=\sigma_i^2+4\tau_i$.

\begin{lemma}\label{lem:lifts_of_path_basis}
Assume that $V$ is a cobordism as above.
If $\rank(QH_1(V,\partial V);\mathbb{Q})=r+1$,
then there exists a $\mathbb{Z}$-basis $\{e, \gamma_1, \ldots, \gamma_r \}$ 
of $QH_1(V,\partial V;\mathbb{Q})$ with the properties:
\begin{enumerate}
  \item $e$ is the unit of $QH_*(V,\partial V;\mathbb{Q})$.
  \item $\pi_i\circ\delta(\gamma_i)= p_i$ and $\pi_0\circ \delta(\gamma_i)=p_0$ for $i=1,\ldots,r$.
  \item If $i\neq j$, $i, j\in \{1,\cdots, r\}$ then $\pi_i\circ\delta(\gamma_j)= 0$.
\end{enumerate}
\end{lemma}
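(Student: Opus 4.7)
The strategy is to construct each $\gamma_i$ as a pre-image of an explicit element under the connecting homomorphism
\begin{equation*}
\delta\colon QH_1(V,\partial V;\Q)\longrightarrow QH_0(\partial V;\Q)\cong\bigoplus_{j=0}^{r}QH_0(L_j;\Q)
\end{equation*}
of Theorem~\ref{thm:longexact}, and then to verify that together with the unit $e=e_{(V,\partial V)}$ they form a $\Z$-basis. By exactness at $QH_0(\partial V;\Q)$, one has $\im\delta=\ker i_\ast$, and Lemma~\ref{lem:sum_of_units} already identifies one element of this kernel, namely $\delta(e)=\bigoplus_{j}\pm e_{L_j}$.

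For each $i\in\{1,\ldots,r\}$ the target class I aim for is $y_i\in QH_0(\partial V;\Q)$ with $\pi_0(y_i)=\pm p_0$, $\pi_i(y_i)=p_i$ and $\pi_j(y_i)=0$ for $j\notin\{0,i\}$ (signs being dictated by the orientation convention of Lemma~\ref{lem:sum_of_units}). I will show $y_i\in\ker i_\ast$ and then set $\gamma_i\in\delta^{-1}(y_i)$. For the verification I work on the level of the degree-filtered complex: the chain map inducing $i_\ast$ respects the degree filtration of Theorem~\ref{thm:quantum_structures_for_cobordism}(iv), so on the $E^1$-page it reduces to the classical singular inclusion $H_0(\partial V;\Q)\to H_0(V;\Q)$. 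Since $V$ is connected, each $[\mathrm{pt}_{L_j}]$ maps to the single generator $[\mathrm{pt}_V]$, so the classical projection of $y_i$ vanishes. The possible higher-Maslov quantum corrections to $i_\ast(y_i)$ live in $\bigoplus_{k\ge 1}H_{kN_V}(V;\Q)t^{k}$, and I will absorb them using the rank hypothesis $\rank QH_1(V,\partial V;\Q)=r+1$ together with the multiplicativity of $\delta$ (Theorem~\ref{thm:longexact}(2)): since $\im\delta$ is a subring of the componentwise ring $\bigoplus_j QH_0(L_j;\Q)$ and contains $\delta(e)$, successive multiplication with $\delta(e)$ isolates slots and allows one to modify the initial ansatz $p_i-p_0$ (by adding suitable $\delta$-images of elements proportional to $e$) so that both the classical vanishing and the slotwise conditions are retained simultaneously.

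Given the family $\{e,\gamma_1,\ldots,\gamma_r\}$, linear independence over $\Q$ follows by post-composing with $\delta$ and then the slotwise projections $\pi_j$: the matrix $(\pi_j\circ\delta(\gamma_k))$ has $\pm e_{L_j}$ on the column of $e$, $\pm p_i$ on the diagonal entry $(i,\gamma_i)$, zero in the off-diagonal entries $(j,\gamma_i)$ for $j\notin\{0,i\}$, and $\pm p_0$ in the $0$-th row of every $\gamma_i$-column. Since $\{e_{L_j},p_j\}$ is a $\Q$-basis of $QH_0(L_j;\Q)$ by the rank-$2$ hypothesis~\ref{assum:rank2}, a straightforward block/triangular rank computation shows the matrix has maximal rank $r+1$, which proves $\Q$-linear independence. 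Combined with the rank hypothesis, the family is a $\Q$-basis of $QH_1(V,\partial V;\Q)$. The integrality condition of~\eqref{map:ttozero} then holds because $e$ corresponds to the integral fundamental class $[V,\partial V]$, and each $\gamma_i$ may be adjusted by a rational multiple of $e$ (which changes $\delta(\gamma_i)$ only in the $e_{L_j}$-directions and therefore preserves the identities $\pi_i\circ\delta(\gamma_i)=p_i$ and $\pi_j\circ\delta(\gamma_i)=0$ up to $e_{L_j}$-terms that can be re-absorbed) so that its image in $\bigoplus_\ast H_\ast(V,\partial V;\Q)$ lands in the image of the integral lattice.

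The main obstacle is the second paragraph: establishing $y_i\in\ker i_\ast$ rigorously. One must rule out surviving quantum corrections to $i_\ast(p_i-p_0)$ that could sit in higher-Maslov graded pieces of $QH_0(V;\Q)$, and this is where the multiplicativity of $\delta$, the ring structure of Theorem~\ref{thm:longexact}, and the rank hypothesis all enter together. Once this is settled, the remaining bookkeeping (independence, spanning, integrality) is routine.
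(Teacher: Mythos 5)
Your overall strategy is the paper's: show that $p_0 - p_i$ lies in $\ker i_\ast = \operatorname{im}\delta$, pull back through $\delta$, and use the rank hypothesis only to promote $r+1$ linearly independent classes to a basis of $QH_1(V,\partial V;\mathbb{Q})$. Your independence bookkeeping via the matrix $(\pi_j\circ\delta(\gamma_k))$ is also fine. But the step you mark at the end as the ``main obstacle'' --- proving $p_i - p_0 \in \ker i_\ast$ --- is precisely where the content sits, and the tools you propose to close the gap are the wrong ones.

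Neither multiplicativity of $\delta$ nor the rank hypothesis plays any role at this step: multiplicativity enters only later (in Lemma~\ref{lem:isombasistoQH}), and the rank hypothesis gives only spanning. What the paper uses instead is a short chain-level index count after a careful choice of Morse data. Take $f$ as in Definition~\ref{def:special_MF} with each $f_i$ on $L_i$ perfect, and let $\xi_i$ be the unique index-$0$ critical point of $f_i$, so that $[\xi_i]=p_i$. Under the chain-level connecting homomorphism, $i_\ast(p_i)$ is represented by the index-$0$ generator $x_{i,0}=(t_{i,0},a_i,\xi_i)\in U$. Now any nonempty zero-dimensional $\mathcal{P}_{prl}(y,x_{i,0};\lambda)$ satisfies $|y|-0+\mu(\lambda)-1=0$, and since $N_V\geq 2$ this forces $\mu(\lambda)=0$ and $|y|=1$; in other words, the index-$0$ slot of the pearl differential is purely the Morse differential, with no quantum contribution. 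Hence $[x_{i,0}]\in QH_0(V;\mathbb{Q})$ agrees with the Morse-theoretic class of a point, which is independent of $i$ because $V$ is connected, giving $i_\ast(p_i)=i_\ast(p_0)$ and the lemma by exactness. There is nothing to ``absorb'' --- the higher-Maslov corrections you worry about are excluded outright by dimension, not by a filtration or multiplicativity argument. In particular, multiplying by $\delta(e)=\oplus_j\pm e_{L_j}$, which is essentially a unit, would not isolate slots as you suggest, so that part of your plan would not succeed even as a fallback.
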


\begin{proof}
Consider the sequence 
$$\xymatrix{\ldots \ar[r] & QH_1(V,\partial V;\Q)\ar[r]^-{\delta} & QH_0(\partial V;\Q) 
\ar[r]^-{i_*} & QH_0(V;\Q)\ar[r] & \ldots}$$
If we show that $i_*(p_i)=i_*(p_0)$ for all $i$ then $p_0-p_i\in ker(i_*)=im(\delta)$. Choose 
$\alpha_i$ such that $\delta(\alpha_i)=p_0-p_i$ and the lemma is proven.
Choose a Morse function on $(V,\partial V)$ as in Definition~\ref{def:special_MF}, where 
the underlying Morse functions $f_i$ on $L_i$ may be chosen to be perfect. 
We also adopt the notation from Definition~\ref{def:special_MF}.
Let $\xi_i$ be the unique critical point of index zero of $f_i$. 
Then the class $p_i$ is represented by $\xi_i$, ie. $[\xi_i]=p_i$.
Let $x_{i,\epsilon}:=(t_{i,\epsilon},a_i,\xi_i)$ be the corresponding critical points of $f$ 
for $\epsilon=0,1$. Let $\partial_{L_i}$ denote the differential of the Morse complex 
corresponding to $f_i$ and $\partial_{(V,\partial V)}$ and $\partial_{V}$ the differential to the Morse 
complexes corresponding to $f$ and $f|_U$. 
Clearly, $\partial_{(V,\partial V)}(x_{i,1})=x_{i,0}$ and $\partial_V(x_{i,0})=0$. Moreover, 
$\delta(x_i)=\xi_i$ and $i(\xi_i)=x_{i,0}$. 
Since $i(\xi_i)=x_{i,0}$ is a cycle and has index zero it is either null-homologous or it lifts the class of a point in $QH_0(V;\Q)$ (under the augmentation). 
Suppose there exists a pearly 
trajectory from some critical point $y$ to $x_{i,0}$. Then 
$$0=|x_{i,0}|=|y|-1+\mu(\lambda),$$ which implies $\mu(\lambda)=0$ and $|y|=1$. We 
therefore only have to consider the classical part of the differential, which is the 
Morse differential. Since we already know from singular homology that $[x_{i,0}]=p\neq 
0$ is the class of a point, we conclude that $i_*(p_i)=[i(\xi_i)]=[i(\xi_0)]=i_*(p_0)$ 
and the lemma follows.
\end{proof}

We know from Lemma~\ref{lem:j_and_prod} that the image $I:=im(j_*)$ of the map $j_*$ in the exact sequence
$$\xymatrix{\ldots \ar[r] & QH_1(V;\Q) \ar[r]^{j_*} & QH_1(V,\partial V;\Q)\ar[r]^-{\delta} & QH_0(\partial V;\Q) 
\ar[r] &  \ldots}$$
is an ideal in $QH_1(V,\partial V)$. The map $\delta$ descends to the quotient
$$\overline{\delta}:QH_1(V,\partial V;\Q)/I \rightarrow QH_0(\partial V;\Q)$$
and is injective.

\begin{lemma}\label{lem:lifts_of_path_basis_quotient}
Assume that $V$ is a cobordism as above.
Then there exists a $\mathbb{Z}$-basis $\{e, \gamma_1, \ldots, \gamma_r \}$ 
of $QH_1(V,\partial V;\Q)/I$ with the properties:
\begin{enumerate}
  \item $e$ is the unit of $QH_*(V,\partial V;\Q)$.
  \item $\pi_i\circ\overline{\delta}(\gamma_i)= p_i$ and $\pi_0\circ \overline{\delta}(\gamma_i)=p_0$ for $i=1,\ldots,r$.
  \item If $i\neq j$, $i, j\in \{1,\cdots, r\}$ then $\pi_i\circ\overline{\delta}(\gamma_j)= 0$.
\end{enumerate}
\end{lemma}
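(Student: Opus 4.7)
The plan is to reduce to the previous lemma's mechanism but bypass the rank hypothesis on $QH_1(V,\partial V;\Q)$ by working in the quotient, where the induced map $\overline{\delta}$ is injective and fits into an isomorphism $QH_1(V,\partial V;\Q)/I \cong \ker(i_*)$ coming from the exact sequence. So the task splits into producing $r+1$ elements with the right $\overline{\delta}$-images and then verifying they give a basis of $\ker(i_*)$.

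First I would rerun the argument from the proof of Lemma~\ref{lem:lifts_of_path_basis} to show $i_*(p_i) = i_*(p_0)$ in $QH_0(V;\Q)$ for every $i$. That argument did not actually use the rank hypothesis: one picks a Morse function on $(V,\partial V)$ as in Definition~\ref{def:special_MF} whose restriction $f_i$ to each end $L_i$ is perfect with unique minimum $\xi_i$ representing $p_i$. The corresponding critical point $x_{i,0}=(t_{i,0},a_i,\xi_i)$ lies in the interior region $U$, represents $i(p_i)$, and a dimension count on $\mathcal{P}_{prl}$ kills all the pearly contributions to both $d(x_{i,0})$ and $d(y)$ for $|y|=1$, so only the Morse part contributes and $[x_{i,0}]$ is the class of a point in $V$, independently of $i$. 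Exactness then yields $p_0-p_i\in\ker(i_*)=\mathrm{im}(\delta)=\mathrm{im}(\overline{\delta})$, so we can pick $\alpha_i\in QH_1(V,\partial V;\Q)$ with $\delta(\alpha_i)=p_0-p_i$, and set $\gamma_i:=[\alpha_i]\in QH_1(V,\partial V;\Q)/I$ (adjusting by a sign if the statement's sign convention demands it). Conditions (2) and (3) of the lemma then hold by construction, and condition (1) is tautological.

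Next I would verify that $\{e,\gamma_1,\dots,\gamma_r\}$ is a $\Z$-basis of the quotient. Linear independence is easy: by injectivity of $\overline{\delta}$ it suffices to check that $\overline{\delta}(e)=\oplus_j \pm e_{L_j}$ and $\overline{\delta}(\gamma_i)=p_0-p_i$ are linearly independent in $QH_0(\partial V;\Q)=\bigoplus_j QH_0(L_j;\Q)$, which follows by first projecting to the $\{e_{L_j}\}$-span (killing the $\gamma_i$-contributions and forcing the coefficient of $e$ to vanish) and then reading off the $p_j$-coordinates. For spanning, I use the isomorphism $QH_1(V,\partial V;\Q)/I\cong\ker(i_*)$ and compute $\ker(i_*)$ directly inside $\bigoplus_j(\Q\{e_{L_j}\}\oplus\Q\{p_j\})$: since all $p_j$ map to the same class in $QH_0(V;\Q)$, the kernel of $i_*|_{\Q\langle p_0,\dots,p_r\rangle}$ has rank $r$ and is generated by $p_0-p_i$; and since $V$ is connected with $\delta(e_{(V,\partial V)})=\oplus_j\pm e_{L_j}$, the kernel of $i_*|_{\Q\langle e_{L_0},\dots,e_{L_r}\rangle}$ is the single line spanned by $\oplus_j \pm e_{L_j}$. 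Adding the ranks gives $\dim\ker(i_*)=r+1$.

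The main obstacle will be the last step, showing that $i_*$ has only the one expected relation among the $\{e_{L_j}\}$ so that $\ker(i_*|_{\text{unit-span}})$ is exactly one-dimensional rather than larger. The inequality "$\geq 1$" is free from Theorem~\ref{thm:longexact}(1), but the reverse inequality uses the connectedness of $V$ and the fact that each $e_{L_j}$ is the unit of $QH_*(L_j;\Q)$ mapped under inclusion—essentially that $V$ being a connected cobordism forces the classes $i_*(e_{L_j})$ to determine a unique element of $QH_*(V;\Q)$ up to the sign prescribed by the orientation of the end. All other steps (producing the $\alpha_i$, the sign bookkeeping between $p_0-p_i$ and the statement, verifying the $\Z$-lift property required by the definition of a $\Z$-basis on p.~\pageref{map:ttozero}) are bookkeeping on top of the exact sequence of Theorem~\ref{thmA}.
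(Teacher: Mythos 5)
Your overall plan tracks the paper's: the paper proves Lemma~\ref{lem:lifts_of_path_basis_quotient} by referring back to the proof of Lemma~\ref{lem:lifts_of_path_basis}, which establishes $i_*(p_i)=i_*(p_0)$ via a pearl-complex dimension count and then chooses lifts $\alpha_i$ with $\delta(\alpha_i)=p_0-p_i$. You reproduce this correctly, observe (rightly) that that part never used the rank hypothesis, and then go further than the paper by trying to verify the spanning claim explicitly, since in the quotient statement the hypothesis $\rank QH_1(V,\partial V;\Q)=r+1$ is dropped. That extra ambition is the right instinct, but the verification has a genuine gap.

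The problem is the step ``adding the ranks''. You decompose $QH_0(\partial V;\Q)$ as $\bigl(\bigoplus_j\Q\,p_j\bigr)\oplus\bigl(\bigoplus_j\Q\,e_{L_j}\bigr)$, compute the kernel of $i_*$ restricted to each summand, and add. But for a linear map $f\colon A\oplus B\to C$,
$$\dim\ker(f)=\dim\ker(f|_A)+\dim\ker(f|_B)+\dim\bigl(\mathrm{im}(f|_A)\cap\mathrm{im}(f|_B)\bigr),$$
so the sum of the restricted kernel dimensions is only a lower bound on $\dim\ker(i_*)$. To get equality you would also have to show the intersection term vanishes, i.e.\ that the common image $i_*(p_0)\in QH_0(V;\Q)$ does not lie in the span of $\{i_*(e_{L_j})\}_j$. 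This is not bookkeeping: it is precisely where a nontrivial fact about $QH_0(V;\Q)$ is needed, and nothing in your write-up addresses it. Separately, your assertion that $\ker\bigl(i_*|_{\text{unit-span}}\bigr)$ is exactly one line is also not proved; you flag it as ``the main obstacle'' but leave it unresolved. Until both of these are supplied, the spanning step --- and hence the claim that $\{e,\gamma_1,\dots,\gamma_r\}$ is a basis of $QH_1(V,\partial V;\Q)/I$ rather than merely a linearly independent subset --- is not established.
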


\begin{proof}
 Same as the proof of~\ref{lem:lifts_of_path_basis}.
\end{proof}

\begin{example}
 Suppose $L_1$ and $L_2$ are Lagrangian spheres intersecting at one point and $V$ the 
cobordism obtained from Lagrangian surgery.
Take $\gamma_1:=\frac{1}{-3}(([L_1]+2[L_2])\times R )*e_{(V,\partial V)}$ and 
$\gamma_2:=\frac{1}{-3}(([L_2]+2[L_1])\times R )*e_{(V,\partial V)}$.
\end{example}

\begin{example}
A similar construction as in the previous example works also if $V$ is the cobordism 
obtained from Lagrangian surgery of many Lagrangian spheres. However, if there are more than three spheres, none of which has non-trivial intersection product with more than one other sphere. Then all discriminants vanish.
See Proposition~\ref{prop:special-intersection-graph-gromov-witten}.
\end{example}

Let $\{\gamma_e:=e, \gamma_1, \ldots, \gamma_r \}$ a basis 
of $QH_1(V,\partial V;\Q)/I$ as above. 
Define the vector space $W:= \Span_{\C}\{\gamma_e:=e, \gamma_1, \ldots, \gamma_r \}$ and 
$W_i:=\Span_{\C}\{\gamma_e, \gamma_i \}$.
We fix some notations. 
Suppose that 
\begin{equation}
 \gamma_i*\gamma_j=\sum_{k=e,1,\ldots,r} \mu_{ijk} \gamma_k 
\end{equation}
and set $A=(\mu_{ijk})$.
Let $f=\sum_{i,j,k\in\{e,1,\ldots,n\}} \mu_{ijk}\gamma^i\otimes \gamma^j\otimes 
\gamma^k\subset W^{*\otimes 3}$ be the tensor defined by the quantum 
product.
Similarly,  $f_{W_i}=\sum_{i,j,k\in\{e,i\}} \mu_{ijk}\gamma^i\otimes \gamma^j\otimes 
\gamma^k\subset W_i^{*\otimes 3}$.
Let $X_i$ denotes the Segre variety $\mathbb{P}(W_i)^{\times 3}\subset \mathbb{P}(W_i^{\otimes 3})$.

\begin{lemma}\label{lem:isombasistoQH}
The vector spaces $W_i$ and $QH_0(L_i;\Q)$ are isomorphic as rings, i.e. the product is preserved.
In particular $\Delta{L_i}=\Delta_{X_i}(f_{W_i})$.
\end{lemma}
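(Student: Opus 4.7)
The plan is to construct a surjective ring homomorphism
\[
\phi_i := \pi_i \circ \overline{\delta}\colon QH_1(V,\partial V;\Q)/I \longrightarrow QH_0(L_i;\Q),
\]
whose kernel has $W_i$ as a linear complement; the transported ring structure from the quotient will then identify $W_i$ with $QH_0(L_i;\Q)$ as rings. Here $\overline{\delta}$ is the map induced on the quotient by the boundary map of the long exact sequence in Theorem~\ref{thm:longexact}, which is well-defined and injective since $I = \mathrm{im}\,j_* = \ker\delta$ by exactness. The map $\pi_i\colon QH_0(\partial V;\Q) \to QH_0(L_i;\Q)$ is the projection onto the $i$-th summand in the decomposition $QH_0(\partial V;\Q) = \bigoplus_{j=0}^r QH_0(L_j;\Q)$, which splits as a direct sum of rings: any pseudo-holomorphic disk contributing to the quantum product on the disjoint union $\partial V$ has connected boundary and hence lies on a single component $L_j$, so the product does not mix summands. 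In particular $\pi_i$ is multiplicative, and combined with the multiplicativity of $\overline{\delta}$ (property~(2) of Theorem~\ref{thm:longexact}) this makes $\phi_i$ a ring homomorphism.

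Next I would evaluate $\phi_i$ on the basis $\{\gamma_e,\gamma_1,\ldots,\gamma_r\}$ of Lemma~\ref{lem:lifts_of_path_basis_quotient}. Property~(1) of Theorem~\ref{thm:longexact} gives $\overline{\delta}(\gamma_e)=\sum_{j=0}^r \epsilon_j\, e_{L_j}$ with $\epsilon_j\in\{\pm1\}$, so $\phi_i(\gamma_e)=\epsilon_i\,e_{L_i}$; the defining properties of the $\gamma_j$ give $\phi_i(\gamma_i)=\pm p_i$ and $\phi_i(\gamma_j)=0$ for $j\in\{1,\ldots,r\}\setminus\{i\}$ (since $\overline{\delta}(\gamma_j)$ has only $p_0$- and $p_j$-components, both killed by $\pi_i$). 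Thus $\phi_i$ is surjective, and a dimension count (domain of dimension $r+1$, codomain of dimension $2$) gives
\[
\ker\phi_i = \Span_{\Q}\{\gamma_j : j \in \{1,\ldots,r\},\ j\neq i\}.
\]
So $W_i$ is a linear complement of $\ker\phi_i$ and $\phi_i|_{W_i}$ is a linear isomorphism onto $QH_0(L_i;\Q)$.

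Since $\phi_i$ is a surjective ring homomorphism, $\ker\phi_i$ is a two-sided ideal of $QH_1(V,\partial V;\Q)/I$ and $\phi_i$ factors through a ring isomorphism $(QH_1(V,\partial V;\Q)/I)/\ker\phi_i \xrightarrow{\sim} QH_0(L_i;\Q)$. Identifying $W_i$ with this quotient via $\phi_i|_{W_i}$ transports the ring structure to $W_i$: for $a,b\in W_i$, writing $a*b=\sum_k \mu_k\gamma_k$ in $QH_1(V,\partial V;\Q)/I$, the induced product on $W_i$ is $a *_{W_i} b = \mu_e\gamma_e+\mu_i\gamma_i$ (the $\ker\phi_i$-components are discarded), which is precisely the product encoded by the restricted tensor $f_{W_i}$. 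Hence $\phi_i|_{W_i}\colon W_i\to QH_0(L_i;\Q)$ is a ring isomorphism, and the equality $\Delta_{L_i}=\Delta_{X_i}(f_{W_i})$ follows from the invariance of the hyperdeterminant under change of basis.

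The main obstacle is the consistent bookkeeping of signs in $\overline{\delta}(e_{(V,\partial V)})=\sum\pm e_{L_j}$ and $\overline{\delta}(\gamma_j)=\pm p_0\pm p_j$; however, such signs merely rescale the chosen basis elements of $W_i$ and $QH_0(L_i;\Q)$ and hence preserve the ring isomorphism type and the hyperdeterminant (up to the overall sign ambiguity already built into its definition).
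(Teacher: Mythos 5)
Your map $\phi_i=\pi_i\circ\overline{\delta}$ and its evaluation on the basis of Lemma~\ref{lem:lifts_of_path_basis_quotient} are the right ingredients, and your argument does prove the ``in particular'' clause: the $2\times2\times2$ hypermatrix $(\mu_{abc})_{a,b,c\in\{e,i\}}$ is the structure tensor of a two-dimensional algebra isomorphic to $QH_0(L_i;\Q)$ in a basis obtained from $\{e_{L_i},p_i\}$ by a determinant-$\pm1$ change, so $\Delta_{X_i}(f_{W_i})=\Delta_{L_i}$. However, there is a genuine gap with respect to the first assertion of the lemma: you never show that $W_i$ is closed under the quantum product of $QH_1(V,\partial V;\Q)/I$. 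Your construction quietly replaces the quantum product on $W_i$ by its projection along $\ker\phi_i$ (the ``discard the other $\gamma_j$-components'' product); these two products agree only if $\mu_{iij}=0$ for all $j\in\{1,\ldots,r\}\setminus\{i\}$, and that vanishing is exactly what your argument does not establish. This is not cosmetic: closure is what ``the product is preserved'' means here (so that $\pi_i\circ\overline{\delta}|_{W_i}$ is an isomorphism of subalgebras, the multiplicativity itself being automatic), and it is the form in which the lemma is used later --- the proof of Theorem~\ref{thm:equal_discr_rank2} quotes $\gamma_i^2=\mu_{iii}\gamma_i+\mu_{iie}\gamma_e$ and the relation~(\ref{eq:proj_vanish}) coming from this lemma.

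The missing step is precisely the paper's proof, and it fits inside your own framework: you note that $\pi_j\circ\overline{\delta}$ is a ring homomorphism, but you only ever apply it for the fixed index $j=i$. Apply it instead for every $j\neq i,0$ to the equation $\gamma_i*\gamma_i=\sum_k\mu_{iik}\gamma_k$: the left-hand side is $\bigl(\pi_j\circ\overline{\delta}(\gamma_i)\bigr)^2=0$, while the right-hand side is $\mu_{iij}\,p_j\pm\mu_{iie}\,e_{L_j}$, and since $\{e_{L_j},p_j\}$ is a basis of the rank-two ring $QH_0(L_j;\Q)$ this forces $\mu_{iij}=0$ (and $\mu_{iie}=0$ as soon as $r\geq2$). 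With this vanishing, $W_i$ is a subring, your transported product coincides with the restriction of the quantum product, and the rest of your argument goes through verbatim.
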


\begin{proof}
 Fix an $i\in\{1, \cdots ,r \}$. For every $j\in\{0,\ldots,r\}$ apply the map $\pi_j\circ \overline{\delta}$ to the equation
 \begin{equation*}
 \gamma_i* \gamma_i=\sum_{k=e,1,\ldots,r} \mu_{iik} \gamma_k .
\end{equation*}
We get
$$p_i^2=\mu_{iii}p_i +\mu_{iie} e_{L_i}$$
and for $j\neq i,0$
\begin{equation}\label{eq:proj_vanish}
 0=\mu_{iij}p_j +\mu_{iie} e_{L_j}.
\end{equation}
This shows that $\mu_{iij}=0$ for all $i\neq j$ with $i,j\in \{1,\cdots, r\}$. (And if $r\geq 2$ then also $\mu_{iie}$ vanishes for all $i$.) In particular $$\pi_i\circ\overline{\delta}(\gamma_i^2)=\pi_i\circ\overline{\delta}(\gamma_i)^2$$ 
and
$\xymatrix{
  \pi\circ \overline{\delta}_i|_{W_i}:W_i \ar[r]& QH_0(L_i;\Q)}$
is an isomorphism  of algebras.
\end{proof}

In the following we prove Theorem~\ref{thmB}.
\begin{theorem}\label{thm:equal_discr_rank2}
Let $V$ be a be a spin, monotone Lagrangian cobordism in a Lefschetz fibration fulfiling assumption~\ref{assum:for_discriminant_cobordism} and such that $\rank(QH_0(L_i);\mathbb{Q})=2$ for all its ends $L_i$.
Then
$$\Delta_{L_i}=\Delta_{L_j} \text{ for all } i, j.$$
If in addition $r\geq 2$, then this number is a square.
\end{theorem}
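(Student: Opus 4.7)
The plan is to translate both discriminants $\Delta_{L_i}$ into the language of the quantum product on the quotient ring $W := QH_1(V,\partial V;\mathbb{Q})/I$ (with $I := j_*(QH_1(V;\mathbb{Q}))$), using the basis $\{e, \gamma_1, \ldots, \gamma_r\}$ from Lemma~\ref{lem:lifts_of_path_basis_quotient}, and then to read off each $\Delta_{L_j}$ by applying the multiplicative map $\pi_j \circ \overline{\delta}: W \to QH_0(L_j;\mathbb{Q})$ that projects onto each end.

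First I would expand $\gamma_i * \gamma_i = \mu_{iie}\,e + \sum_{k=1}^r \mu_{iik}\,\gamma_k$ with integer coefficients $\mu_{iik}\in\mathbb{Z}$. Applying $\pi_j \circ \overline{\delta}$ (which is multiplicative by Theorem~\ref{thm:longexact}(2)) to both sides, and using the identifications $\pi_j\overline{\delta}(e)=e_{L_j}$, $\pi_i\overline{\delta}(\gamma_i)=p_i$, $\pi_0\overline{\delta}(\gamma_k)=p_0$ for every $k\geq 1$, and $\pi_j\overline{\delta}(\gamma_k)=0$ for $j\notin\{0,i\}$, $k\in\{1,\ldots,r\}\setminus\{j\}$ (all coming from Lemma~\ref{lem:lifts_of_path_basis_quotient}), three families of relations emerge. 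The case $j=i$ gives $p_i^2 = \mu_{iii}p_i + \mu_{iie}e_{L_i}$, whence $\Delta_{L_i} = \mu_{iii}^2 + 4\mu_{iie}$. The case $j=0$ gives $p_0^2 = \bigl(\sum_{k=1}^r \mu_{iik}\bigr)p_0 + \mu_{iie}e_{L_0}$. Finally, for any third end $L_j$ with $j\notin\{0,i\}$ (which exists only when $r\geq 2$) we obtain $0 = \mu_{iij}p_j + \mu_{iie}e_{L_j}$, and the $\mathbb{Q}$-linear independence of $p_j$ and $e_{L_j}$ in the rank-two algebra $QH_0(L_j;\mathbb{Q})$ forces $\mu_{iij}=0$ for every $j\neq 0,i$ and $\mu_{iie}=0$.

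For $r\geq 2$ these vanishings collapse the first two expressions to $p_i^2 = \mu_{iii}p_i$ and $p_0^2 = \mu_{iii}p_0$, so that $\Delta_{L_i} = \Delta_{L_0} = \mu_{iii}^2$, a perfect square independent of $i$. For $r=1$ no third projection is available, but the relations from $j=i$ and $j=0$ alone already force $\sigma_0=\sigma_1=\mu_{111}$ and $\tau_0=\tau_1=\mu_{11e}$, so $\Delta_{L_0}=\Delta_{L_1}=\mu_{111}^2+4\mu_{11e}$. The main delicate point will be the sign bookkeeping in $\delta(e_{(V,\partial V)})=\oplus_i\pm e_{L_i}$ (compare Lemma~\ref{lem:sum_of_units}) together with the signed identifications $\pi_j\overline{\delta}(\gamma_i)=\pm p_j$ used above: multiplicativity of $\overline{\delta}$ forces $\pi_j\overline{\delta}(e)$ to be an idempotent equal to the unit of $QH_0(L_j;\mathbb{Q})$, so the signs in Lemma~\ref{lem:sum_of_units} must ultimately be absorbed into the choice of lifts $p_j$ so that each $\pi_j\circ\overline{\delta}$ is a genuine unital ring homomorphism. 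Once this bookkeeping is fixed consistently for all ends, the algebraic extraction of structure constants above goes through and the theorem follows.
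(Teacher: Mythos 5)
Your proof is correct and takes essentially the same route as the paper: expand $\gamma_i*\gamma_i$ in the basis $\{e,\gamma_1,\ldots,\gamma_r\}$ of $QH_1(V,\partial V;\mathbb{Q})/I$ from Lemma~\ref{lem:lifts_of_path_basis_quotient}, apply each multiplicative projection $\pi_j\circ\overline{\delta}$, and compare the resulting structure constants --- this is exactly the content of Lemma~\ref{lem:isombasistoQH} (including equation~\eqref{eq:proj_vanish}) and the paper's proof of Theorem~\ref{thm:equal_discr_rank2}. Your caution about signs is warranted (the paper is not entirely consistent: compare $\delta(e_{(V,\partial V)})$ in Theorem~\ref{thm:longexact}(1) with Lemma~\ref{lem:sum_of_units}, and $\pi_i\circ\overline\delta(\gamma_i)=p_i$ in Lemma~\ref{lem:lifts_of_path_basis_quotient} with the lift $\delta(\alpha_i)=p_0-p_i$ used in its proof), but as you observe the signs can be absorbed into the choice of lifts $p_j$ without affecting any discriminant.
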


\begin{proof}
We know from Lemma~\ref{lem:isombasistoQH} that 
$$\gamma_i^2=\mu_{iii}\gamma_i+\mu_{iie}\gamma_e.$$
Applying $\pi_0\circ\overline{\delta}$ and $\pi_i\circ\overline{\delta}$ yields
$$p_0^2=\mu_{iii}p_0+\mu_{iie}e_{L_0}$$
$$p_i^2=\mu_{iii}p_i+\mu_{iie}e_{L_i},$$
which implies $\Delta_{L_0}=\Delta_{L_i}$ for all $i$.

If $r\geq2$ then~\ref{eq:proj_vanish} proves that $\mu_{iie}=0$ for all $i$ and thus the discriminants are squares. 
\end{proof}

\begin{theorem}\label{thm:delta_equal_general}
 Let $V:L_1\to L_2$ be an elementary spin and monotone Lagrangian cobordism in a Lefschetz fibration. Assume that $\pi_1\circ 
\overline{\delta}$ and $\pi_2\circ \overline{\delta}$ are surjective and that $ker(\pi_1\circ 
\overline{\delta})=ker(\pi_2\circ \overline{\delta})$. 
Then
$$\Delta_{L_1}=\Delta_{L_2}.$$
\end{theorem}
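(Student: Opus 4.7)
The plan is to leverage the two hypotheses to collapse the multiplicative maps $\psi_i:=\pi_i\circ\overline{\delta}\colon W\to QH_0(L_i;\mathbb{Q})$ into ring isomorphisms, where $W:=QH_1(V,\partial V;\mathbb{Q})/I$ and $I=\mathrm{im}(j_*)$. The composition $\phi:=\psi_2\circ\psi_1^{-1}$ will then be a unital ring isomorphism $QH_0(L_1;\mathbb{Q})\cong QH_0(L_2;\mathbb{Q})$ carrying a $\mathbb{Z}$-basis to a $\mathbb{Z}$-basis, from which $\Delta_{L_1}=\Delta_{L_2}$ follows immediately by identification of the hypermatrices.

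First I will assemble the ring-theoretic setup. Because $I$ is a two-sided ideal (Lemma~\ref{lem:j_and_prod}), the quotient $W$ inherits a unital ring structure, and the multiplicativity of $\delta$ in Theorem~\ref{thm:longexact}.2 together with the unit formula $\delta(e_{(V,\partial V)})=e_{L_1}\oplus e_{L_2}$ from Theorem~\ref{thm:longexact}.1 make $\overline{\delta}\colon W\to QH_0(L_1;\mathbb{Q})\oplus QH_0(L_2;\mathbb{Q})$ a unital ring homomorphism, and projecting to each summand turns $\psi_1,\psi_2$ into unital ring homomorphisms. The crucial observation is that $\overline{\delta}(w)=(\psi_1(w),\psi_2(w))$, so
\[
\ker(\overline{\delta})=\ker(\psi_1)\cap\ker(\psi_2).
\]
Since $\overline{\delta}$ is injective (as noted just before Lemma~\ref{lem:lifts_of_path_basis_quotient}) and $\ker(\psi_1)=\ker(\psi_2)$ by hypothesis, their common value equals $\ker(\overline{\delta})=0$. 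Combined with the surjectivity hypothesis, both $\psi_i$ are bijective unital ring homomorphisms, hence ring isomorphisms, and $\phi=\psi_2\circ\psi_1^{-1}$ is a unital ring isomorphism.

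The final step will be to translate the ring isomorphism into equality of discriminants. I will fix a $\mathbb{Z}$-basis $\{\gamma_0=e_W,\gamma_1,\ldots,\gamma_d\}$ of $W$, write $\gamma_i*\gamma_j=\sum_k\mu_{ijk}\gamma_k$, and apply $\psi_\ell$ to obtain
\[
\psi_\ell(\gamma_i)*\psi_\ell(\gamma_j)=\sum_k\mu_{ijk}\,\psi_\ell(\gamma_k)\qquad(\ell=1,2),
\]
so that the hypermatrices $B_1$ and $B_2$ representing the quantum products of $QH_0(L_1;\mathbb{Q})$ and $QH_0(L_2;\mathbb{Q})$ in the bases $\{\psi_\ell(\gamma_k)\}$ coincide, yielding $\Delta_{L_1}=\mathrm{Det}(B_1)=\mathrm{Det}(B_2)=\Delta_{L_2}$. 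The hardest part will be verifying that these images really are $\mathbb{Z}$-bases and not merely $\mathbb{Q}$-bases, since the hyperdeterminant is only $GL_\mathbb{Z}$-invariant and a $\mathbb{Q}$-linear ring isomorphism that scales the integral lattice could in principle alter the discriminant. I expect to discharge this via Theorem~\ref{thm:longexact}.7: because $V$ is spin, the whole long exact sequence is defined over any commutative unital ring, so the integral lattice of $W$ inherited from $QH_1(V,\partial V;\mathbb{Z})/\mathrm{im}(j_*^{\mathbb{Z}})$ is carried by each $\psi_\ell$ into $QH_0(L_\ell;\mathbb{Z})$, and rerunning the same kernel argument over $\mathbb{Z}$ promotes the isomorphisms to the integral level.
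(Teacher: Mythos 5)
Your proposal is correct and follows essentially the same route as the paper: the paper also lifts a $\mathbb{Z}$-basis of $QH_0(L_1;\mathbb{Q})$ through $\pi_1\circ\overline{\delta}$ (surjectivity), pushes it to $QH_0(L_2;\mathbb{Q})$ via $\pi_2\circ\overline{\delta}$ using $\ker(\pi_1\circ\overline{\delta})=\ker(\pi_2\circ\overline{\delta})$, and invokes multiplicativity of $\overline{\delta}$ to identify the structure constants and hence the hyperdeterminants; your packaging of this as $\phi=\psi_2\circ\psi_1^{-1}$ with both $\psi_i$ isomorphisms (via injectivity of $\overline{\delta}$ on the quotient) is only a minor restructuring. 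The integrality of the transferred basis, which you rightly single out as the delicate point and propose to settle with the integral ($\mathcal{R}=\mathbb{Z}$) version of the long exact sequence, is precisely the step the paper compresses into ``an easy calculation,'' so your treatment is, if anything, more explicit there.
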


\begin{proof}
Let $\{x_e:=e_{L_1},x_1,\ldots,x_2\}$ be a $\mathbb{Z}$-basis of $QH_0(L_1;\Q)$ and let 
$\alpha_i\in QH_1(V,\partial V;\Q)$ such that $\pi_1\circ\overline{\delta}(\alpha_i)=x_i$ for all $i$. 
Since $ker(\pi_1\circ \overline{\delta})=ker(\pi_2\circ \overline{\delta})$ and $\pi_2\circ \overline{\delta}$ 
is surjective, an easy calculation shows that $\{e_{L_2},\pi_2\circ \overline{\delta}(\alpha_i)\}$ forms a 
$\Z$-basis of $QH_0(L_2;\Q)$. 
Using the fact that $\overline{\delta}$ is multiplicative the theorem follows directly 
from the definition of the hyperdeterminants.
\end{proof}

\subsubsection{If the product respects a split structure}

Let $R^*$ be an associative algebra over $\mathbb{C}$, i.e. a ring over the field 
$\mathbb{C}$, such that it is also a vector space over $\mathbb{C}$. 
Assume further that $R^*=\mathbb{C} 
e^*\oplus V^*\oplus W^* $, where $e$ is the unit in $R$.
A particularly nice situation would be to assume that the product on $R^*$ has the 
following property:
\begin{assumption}\label{properties_for_splitting}
 \hspace{2em}
 \begin{enumerate}
 \item $\forall v, v' \in \mathbb{C}e^*\oplus V^*$ we have $v*v'\in \mathbb{C}e^*\oplus 
V^*$
 \item $\forall w, w' \in \mathbb{C}e^* \oplus W^*$ we have $w*w'\in \mathbb{C}e^*\oplus 
W^*$
 \item $\forall v\in V^*$ and $\forall w \in W^*$ the 
product vanishes, i.e. $v*w=0$.
\end{enumerate}
\end{assumption}
Let $f$ denote the tensor in $R^{\otimes 3}=(V\oplus W\oplus \mathbb{C}e)^{\otimes 3}$ 
representing the 
product structure in $R^*$ and let $f_V:=f|_{(V\oplus \mathbb{C}e)^{\otimes 3}}$ and 
$f_W:=f|_{(W\oplus \mathbb{C}e)^{\otimes 3}}$ be the restrictions. 
Let $X$, $X_V$ and $X_W$ denote the corresponding Segre-varieties.
Denote $\mathbb{P}:=\mathbb{P}(R^{*\otimes 3})$ and similarly 
$\mathbb{P}_V:=\mathbb{P}((V\oplus \mathbb{C}e)^{*\otimes 3})$ and 
$\mathbb{P}_W:=\mathbb{P}((W\oplus \mathbb{C}e)^{*\otimes 3})$.
Let $L$ denote the subspace of $R^{*\otimes 3}$ spanned by elements of the form $x\otimes y\otimes z$ with $x,y,z$ not all contained in either $V^*\oplus \mathbb{C}e^{*} \text{ or } W^*\oplus \mathbb{C}e^{*}$.
% Let $O\subset \mathbb{P}$ be the Zariski open set defined by $\{e\otimes e\otimes e\neq 
% 0\}$. The Zariski open sets $O_{V}\subset \mathbb{P}_V$ and $O_{W}\subset \mathbb{P}_W$ 
% are defined analogously.

\begin{theorem}\label{thm:split_ring_structure}
Under the assumptions~\ref{properties_for_splitting}:
If the $X$-discriminants $\Delta_{X_V}(f_V)$ or 
$\Delta_{X_W}(f_W)$ vanishe, then also $\Delta_X(f)$ vanishes.
\end{theorem}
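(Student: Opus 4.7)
The plan is to apply the tangency characterization of the hyperdeterminant (see the appendix on hyperdeterminants): on a dense open subset of the hypersurface $\{\Delta_X = 0\}$, vanishing of the $X$-discriminant is equivalent to tangency of the hyperplane $\{g=0\}$ to the Segre variety $X$ at a smooth point. For the Segre variety, such tangency amounts to the existence of nonzero $a_1, a_2, a_3 \in R^*$ at which all three slot-contractions
\begin{equation*}
f(\,\cdot\,, a_2, a_3),\quad f(a_1,\,\cdot\,, a_3),\quad f(a_1, a_2,\,\cdot\,)
\end{equation*}
vanish identically (these force $f(a_1,a_2,a_3)=0$ as well). The same criterion applies to $f_V$ on $V^*\oplus\mathbb{C} e^*$ and to $f_W$ on $W^*\oplus\mathbb{C} e^*$. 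A short Zariski-closure argument then reduces the implication $\Delta_{X_V}(f_V)=0\Rightarrow\Delta_X(f)=0$ to the case in which $f_V$ actually admits such a tangent triple, since $\Delta_X(\cdot)$ is a polynomial and its vanishing locus is closed.

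Assume therefore that $f_V$ admits a tangent triple $a_1, a_2, a_3 \in V^*\oplus\mathbb{C} e^*$, each nonzero. I claim that the same triple, viewed inside $R^* = V^*\oplus W^* \oplus \mathbb{C} e^*$, is a tangent triple for $f$. Given $\xi \in R^*$, decompose $\xi = \xi_0 + \xi_W$ with $\xi_0 \in V^*\oplus\mathbb{C} e^*$ and $\xi_W \in W^*$. In each of the three slot-contractions of $f$ at $(a_1, a_2, a_3)$, the $\xi_0$-part agrees with the corresponding contraction of $f_V$ and vanishes by hypothesis. Everything reduces to verifying that the $\xi_W$-part also vanishes.

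This is precisely what assumption~\ref{properties_for_splitting} provides. Writing $f$ via the chosen basis as the trilinear form $(x,y,z)\mapsto\sum_k z^k(x\ast y)^k$, each slot is handled by one clause of the assumption. For the first slot, assumption~(3) together with the unit axiom gives $\xi_W \ast a_2 = \lambda_2 \xi_W \in W^*$ (where $\lambda_2$ is the $e^*$-coefficient of $a_2$), and since $a_3 \in V^*\oplus\mathbb{C} e^*$ has trivial $W^*$-coordinate, the pairing $f(\xi_W, a_2, a_3)$ vanishes; the second slot is symmetric. For the third slot, assumption~(1) places $a_1 \ast a_2$ in $V^*\oplus\mathbb{C} e^*$, and pairing this against $\xi_W \in W^*$ produces zero. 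The case $\Delta_{X_W}(f_W)=0$ is completely analogous after swapping the roles of $V^*$ and $W^*$ and using~(2) in place of~(1). The main obstacle, such as it is, is purely organizational: matching each slot-contraction with the appropriate clause of~\ref{properties_for_splitting} and keeping the Zariski-closure remark in mind. Once this correspondence is set up, the verification is a one-line calculation in each slot, and no further appeal to the structure of the subspace $L \subset R^{*\otimes 3}$ is required.
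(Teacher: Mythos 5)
Your proposal is correct and follows essentially the same route as the paper's proof: extract a tangent decomposable point $a_1\otimes a_2\otimes a_3$ of $X_V$ from $\Delta_{X_V}(f_V)=0$, use the kernel/tangency characterization of the hyperdeterminant (Remark~\ref{rmk:equivalent_properties_to_deltazero}), and then use assumptions~\ref{properties_for_splitting} to promote the three slot-contraction identities from $\mathbb{C}e^*\oplus V^*$ to all of $R^*$, which the paper states in one line and you spell out slot by slot. The preliminary Zariski-closure reduction is superfluous --- the Segre variety is smooth and projective, so every hyperplane in its dual variety is honestly tangent at some decomposable point --- but it is harmless.
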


\begin{remark}
 Notice that assumption~\ref{properties_for_splitting} is equivalent to the assumption 
that $f\in (\mathbb{P}/L)^*$.
\end{remark}

\begin{proof}[Proof of Theorem~\ref{thm:split_ring_structure}]
 Suppose $\Delta_{X_V}(f_V)=0$. Then $\{f_V=0\}$ is tangent to $X_V$ at some point 
$p=x^1\otimes x^2\otimes x^3\in (\mathbb{C}\cdot e^*\oplus 
V^*)$. Let $A$ be the hypermatrix corresponding to the 
tensor $f_V$. By remark~\ref{rmk:equivalent_properties_to_deltazero}
this is equivalent to saying that $A(x^1\otimes x^2\otimes (\mathbb{C}\cdot e^*\oplus 
V^*))=A(x^1\otimes (\mathbb{C}\cdot e^*\oplus 
V^*)\otimes x^3)=A((\mathbb{C}\cdot e^*\oplus 
V^*)\otimes x^2\otimes x^3)=0$. 
By the properties~\ref{properties_for_splitting} this implies that also 
$A(x^1\otimes x^2\otimes (R^*))=A(x^1\otimes (R^*)\otimes x^3)=A((R^*)\otimes x^2\otimes 
x^3)=0$, which proves that $\{f=0\}$ is tangent to $X$ at $p$.
\end{proof}

% \begin{theorem}
% Let $V:(L_1, \cdots,L_{k_-})\to (L_1,\cdots,L_{k_+})$ be a spin, monotone Lagrangian cobordism in a Lefschetz fibration and assume that $\pi_i\circ \overline{\delta}$ is surjective.
% Then $\Delta_{L_i}$ devides $\Delta_{QH_1(V,\partial V;\Q)/I}$.
% \end{theorem}
% 
% \begin{proof}
% Since $\pi_i\circ \overline{\delta}$ is surjective we find linearly independent elements $\{e,\gamma_i, \cdots ,\gamma_k\}=:\mathcal{B}\subset QH_1(V,\partial V;\Q)/I$ such that $\pi_i\circ \overline{\delta}(\mathcal{B})$ is a basis of $QH_0(L_i;\Q)$ and where $e$ denotes the unit. 
% Let $f$ be a Morse function on $V$ adapted to the exit region $\partial V$.
% The set $N$ (see Definition~\ref{def:f_adapted_S}) consists of $k_-+k_+$ connected components, one for each end of $V$. Let $N_i$ denote the connected component of $N$ corresponding to $L_i$.
% Each element $\gamma_j\in \mathcal{B}$ is represented by a sum $y_j+x_j$, where $y_j\in C(f|_U)$ and $x_j\in C(f|_{N_i})$.
% By standard arguments the element $(y_j+x_j)*(y_k+x_k)$ must lie in $C(f|_U)\cup C(f|_{N_i})$.
% Hence, it is clear that $\gamma_i*\gamma_j\in \Span \{e,\gamma_1, \cdots, \gamma_k\}$.
% Let $\{\eta_1, \cdots, \eta_l\}\subset QH_1(V,\partial V;\Q)/I$ be elements that complete $\mathcal{B}$ to a basis of $\subset QH_1(V,\partial V;\Q)/I$.
% As above one checks that $\gamma_i*\eta_j\in I$ and moreover, $\eta_i*\eta_j\in \Span\{e,\eta_1,\cdots,\eta_l\}$.
% By Theorem~\ref{thm:split_ring_structure} the statement is proven.
% \end{proof}

\subsection{Examples}

\begin{lemma}[\cite{BM15}]\label{lem:relation_quantum_product_and_GW}
Let $L$ be a Lagrangian submanifold satisfying condition $(1)$ - $(3)$ of 
Assumption~\ref{assum:on_closed_Lagrangian}.
Let $c\in H_n(M;\mathbb{Z})$ be a class satisfying $\xi:=\sharp(c\cdot [L])\neq 0$. 
 Then the augmentation $\epsilon_L(c*c*e_L)$ is given by
$$\frac{1}{\xi^2}\sum_{A\in H_2(M), \langle c_1,A \rangle =\frac{n}{2}} GW_{A,3}(c,c,[L]).$$
\end{lemma}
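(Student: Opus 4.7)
The plan is to reduce $\epsilon_L(c\ast c\ast e_L)$ to a closed three-pointed Gromov-Witten count on $M$ by combining the duality identity of Theorem~\ref{thm:quantum_structures_closed_Lagrangians}(iii) with the GW-theoretic expansion of the ambient quantum product.

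First, I would interpret $c\ast c\ast e_L$ as $(c\ast c)\star e_L$, where $c\ast c\in QH_0(M;\Q)$ is the ambient quantum intersection product and $\star$ is the module action of $QH_\ast(M)$ on $QH_\ast(L)$. Theorem~\ref{thm:quantum_structures_closed_Lagrangians}(iii) then gives
$$\epsilon_L\bigl((c\ast c)\star e_L\bigr)\ =\ \langle (c\ast c)^{\ast},\, i_L(e_L)\rangle,$$
the Kronecker pairing in the ambient quantum homology. Since $i_L$ extends the classical inclusion, $i_L(e_L)=[L]\in H_n(M;\Q)$ modulo quantum corrections in strictly lower degree; by degree reasons, only the classical summand $[L]$ contributes to the pairing with $c\ast c\in QH_0(M;\Q)$ in the relevant $t$-degree.

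Second, I would unfold $c\ast c$ via the moduli spaces $\mathcal{M}_{prod}$ of Section~\ref{sec:ambient_QH}. Choosing a basis $\{B_i\}$ of $H_\ast(M;\Q)$ with dual basis $\{B_i^{\vee}\}$ under the Poincar\'e intersection pairing, the chain-level definition of the quantum intersection product reads
$$c\ast c\ =\ \sum_{A,\,i} GW_{A,3}(c,c,B_i)\, B_i^{\vee}\, t^{\overline{\mu}(A)}.$$
Taking the Kronecker pairing with $[L]$ and invoking linearity of the GW invariants in each input, $\sum_i GW_{A,3}(c,c,B_i)(B_i^{\vee}\!\cdot [L]) = GW_{A,3}(c,c,[L])$, collapses the sum over the basis $\{B_i\}$ into a single Gromov-Witten number for each $A$. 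A virtual-dimension count on $\mathcal{M}_{0,3}(A)$ (namely $2n+2\langle c_1,A\rangle = 3n$) forces $\langle c_1,A\rangle = n/2$ for any non-trivial contribution, recovering precisely the sum in the statement.

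Finally, the factor $\xi^{-2}$ must be traced to the normalization of the Poincar\'e-dual basis vector associated with $c$: because $c\cdot[L]=\xi$ is the only non-trivial pairing entering the expansion of $[L]$ in the dual basis $\{B_i^{\vee}\}$, the component of $[L]$ along the dual of $c$ carries a factor $\xi^{-1}$, and this factor is inserted once for each copy of $c$ in the product $c\ast c$. The main obstacle is the careful bookkeeping needed to identify the chain-level count of figure-$Y$ pearly configurations defining $(c\ast c)\star e_L$ with the closed three-pointed GW count on $M$ and to justify the identification $i_L(e_L)=[L]$ at the chain level in the pearl complex; once these matchings are set up, the formula is a direct consequence of the duality pairing and the definition of the quantum intersection product.
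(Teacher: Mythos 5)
The paper itself does not prove this lemma (it is imported from~\cite{BM15}), so there is no in-text proof to compare with; judging your argument on its own terms, the skeleton is right (module associativity, the duality identity $\epsilon_L(h\star x)=\langle h^*,i_L(x)\rangle$ from item (iii) of Theorem~\ref{thm:quantum_structures_closed_Lagrangians}, the GW expansion of the ambient quantum product, and the index count forcing $\langle c_1,A\rangle=n/2$), but two steps are genuinely broken. First, the treatment of $i_L(e_L)$: its quantum corrections are classes $a_j\in H_{n+jN_L}(M)$ appearing with $t^j$ -- they are not ``in strictly lower degree'', and they are not eliminated ``by degree reasons''. In the pairing of $c\ast c$ with $i_L(e_L)$, every cross term pairing the class-$A$ component of $c\ast c$ with $a_j$, for $2\langle c_1,A\rangle+jN_L=n$, lands in the same graded piece $\Z\, t^{n/N_L}$ of $\Lambda^+$ (equivalently, all survive at $t=1$); for instance the classical part $c\cdot c\in H_0(M)$ pairs with a possible $[M]$-component of $i_L(e_L)$. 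So your formula needs the nontrivial statement that $i_L(e_L)=[L]$ with no quantum corrections (or another mechanism killing these cross terms); you flag this as bookkeeping but give no argument. The thesis establishes precisely this kind of fact only in the cobordism setting (the lemma showing $i_{(V,\partial V)}^{(E,\partial^v E)}(e_{(V,\partial V)})=[V,\partial V]$, by reversing the final flow line into a $\mathcal{P}_{mod}$-configuration of negative virtual dimension), and an argument of that type is exactly what is missing in your step.

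Second, the factor $1/\xi^2$. Your own basis collapse gives $\sum_i GW_{A,3}(c,c,B_i)\,(B_i^{\vee}\cdot[L])=GW_{A,3}(c,c,[L])$ with coefficient exactly $1$, so the method, if completed, produces $\epsilon_L(c\star c\star e_L)=\sum_A GW_{A,3}(c,c,[L])$ with no prefactor. The mechanism you invoke -- a normalization $\xi^{-1}$ attached to ``the dual basis vector of $c$'', inserted once per copy of $c$ -- does not correspond to anything in the computation: GW invariants are linear in each entry, and the expansion of $[L]$ in a dual basis is independent of how many times $c$ occurs in the product. The $1/\xi^2$ belongs to the normalized point lift $\frac{1}{\xi}\,c\star e_L$, whose quantum square is $\frac{1}{\xi^2}\,c\star c\star e_L$; the unnormalized identity is also the form in which the lemma is actually used later in the thesis (in the proof of Theorem~\ref{thm:discriminant_GW} and in the three-sphere computation). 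Rather than inventing a bookkeeping device to reproduce the displayed factor, you should either derive it from that normalization or flag the discrepancy with the stated formula.
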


Another useful fact about the quantum product of Lagrangian spheres is described in the 
following Lemma.
The idea of working with elements of the form $[L]*e_{L}$ was introduced 
in~\cite{BM15}. We will make use of such elements and similar constructions in 
what follows.

\begin{lemma}[\cite{BM15}]\label{lem:prodzero}
 Let $L_1$ and $L_2$ be two Lagrangian spheres of even dimension.
If $[L_1]\cdot[L_2]=0$ then ${L_1}*{L_2}=0$.
\end{lemma}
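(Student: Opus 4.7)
My plan is to reduce the identity $[L_1]*[L_2]=0$ in $QH_*(M)$ to a statement about the element $[L_1]\star e_{L_2}\in QH_0(L_2)$, where the rank-two structure of $QH_0$ of an even-dimensional Lagrangian sphere forces things to collapse.

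First I pull the product back to $L_2$. By Theorem~\ref{thm:quantum_structures_closed_Lagrangians}, the map $i_{L_2}\colon QH_*(L_2)\to QH_*(M)$ is a morphism of $QH(M)$-modules and sends the unit $e_{L_2}$ to $[L_2]$. Setting $y:=[L_1]\star e_{L_2}$, which by degree counting ($n+n-2n=0$) lies in $QH_0(L_2)$, I get
\[
[L_1]*[L_2]\;=\;[L_1]*i_{L_2}(e_{L_2})\;=\;i_{L_2}\bigl([L_1]\star e_{L_2}\bigr)\;=\;i_{L_2}(y).
\]
Hence it is enough to control $y$ inside $QH_0(L_2;\Q)$. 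I then apply the duality relation $\epsilon_{L_2}(h\star x)=\langle h^{*},i_{L_2}(x)\rangle$ from the same theorem, with $h=[L_1]$, $x=e_{L_2}$, to obtain
\[
\epsilon_{L_2}(y)\;=\;\langle [L_1]^{*},[L_2]\rangle\;=\;[L_1]\cdot[L_2]\;=\;0
\]
by hypothesis. Since $L_2$ is an even-dimensional Lagrangian sphere, it is wide (standard result of Biran--Cornea for Lagrangian spheres under the present monotonicity and spin assumptions): $QH_*(L_2;\Q)\cong H_*(L_2;\Q)\otimes\Lambda$. In particular $QH_0(L_2;\Q)$ has rank two, and $\ker(\epsilon_{L_2})\cap QH_0(L_2;\Q)=\Q\cdot e_{L_2}$. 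Thus $y=c\, e_{L_2}$ for some $c\in\Q$, and $[L_1]*[L_2]=c\,i_{L_2}(e_{L_2})=c[L_2]$.

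Now I run the symmetric argument with the roles of $L_1$ and $L_2$ exchanged, yielding $c'\in\Q$ with $[L_2]*[L_1]=c'[L_1]$. For even $n$ the classes $[L_i]\in H_n(M)$ have even degree, so the quantum product is commutative on them: $[L_1]*[L_2]=[L_2]*[L_1]$, forcing $c[L_2]=c'[L_1]$. The two classes are linearly independent in $H_n(M;\Q)$: if $[L_1]=\lambda[L_2]$, then $[L_1]\cdot[L_2]=\lambda[L_2]\cdot[L_2]=\pm 2\lambda$ (using $[L_i]\cdot[L_i]=\chi(L_i)=\pm 2$ for an even-dimensional sphere), which together with $[L_1]\cdot[L_2]=0$ gives $\lambda=0$; but then $[L_1]=0$ contradicts $[L_1]\cdot[L_1]=\pm 2$. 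Therefore $c=c'=0$ and $[L_1]*[L_2]=0$.

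The main subtlety I anticipate is justifying that an even-dimensional Lagrangian sphere is wide, so that $\ker(\epsilon_{L_i})\cap QH_0(L_i;\Q)$ is one-dimensional over $\Q$; this is where the hypothesis ``Lagrangian sphere of even dimension'' really enters, via the known computation of the pearl differential on $S^n$ in characteristic zero. Everything else in the argument is a formal consequence of the module/algebra compatibilities in Theorem~\ref{thm:quantum_structures_closed_Lagrangians} and Picard--Lefschetz type arithmetic on the intersection form.
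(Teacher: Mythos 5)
Your argument is correct and follows essentially the same route as the paper's proof: decompose $[L_1]\star e_{L_2}$ in the rank-two ring $QH_0(L_2;\Q)$, kill the point component using $\epsilon_{L_2}([L_1]\star e_{L_2})=[L_1]\cdot[L_2]=0$, push forward by the module morphism $i_{L_2}$ to get $[L_1]*[L_2]=c\,[L_2]$, symmetrize and use commutativity, and then kill the constants via the nonvanishing self-intersection $[L_2]\cdot[L_2]=\pm 2$. The only cosmetic differences are that you invoke wideness of even-dimensional Lagrangian spheres where the paper simply uses the standing rank-two basis $\{e_{L_2},\overline{p_2}\}$, and you phrase the final step as linear independence of $[L_1],[L_2]$ rather than pairing the identity $c[L_2]=c'[L_1]$ with $[L_2]\cdot(-)$, which is the same computation.
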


\begin{proof}
 Suppose $[L_1]\cdot[L_2]=0$.
Let $\overline{p_i}$ be a lift of a point in $L_i$ under the augmentation and $e_{L_i}$ 
the unit. The elements $\overline{p_i}$ and $e_{L_i}$ form a basis of $QH_0(L_i;\Q)$ 
and therefore we can write $[L_1]*e_{L_2}=\alpha_1 \overline{p_2} +\beta_1 e_{L_2}$ 
as well as $[L_2]*e_{L_1}=\alpha_2 \overline{p_1} +\beta_2 e_{L_1}$.
Applying the augmentation to both sides yields
\begin{equation*}
\begin{array}{ccc}
[L_1]\cdot[L_2]&=&\alpha_1 p_2 \\
\end{array}
\end{equation*}
\begin{equation*}
\begin{array}{ccc}
[L_2]\cdot[L_1]&=&\alpha_2 p_1 \\
\end{array}
\end{equation*}
hencec $\alpha_1=\alpha_2=0$ and $[L_1]*e_{L_2}=\beta_1 e_{L_2}$ 
and $[L_2]*e_{L_1}=\beta_2 e_{L_1}$.
Now we take the inclusions $i_{L_1}$ and $i_{L_2}$ of $[L_2]*e_{L_1}$ and 
$[L_1]*e_{L_2}$ respectively, which shows that 
\begin{equation*}
 \beta_1 [L_2]=[L_1]*[L_2]=[L_2]*[L_1]=\beta_2 [L_1].
\end{equation*}
Apply $[L_2]\cdot$ to both sides of the equation, which gives
$\beta_1 [L_2]\cdot [L_2]=\beta_2 [L_2]\cdot [L_1]=0.$
This shows that $\beta_1=0$. Switching the role of $L_1$ and $L_2$ proves that $\beta_2=0$.
\end{proof}

These lemmas motivate the study of properties of the three-point Gromov-Witten invariant 
associated to three Lagrangian spheres.

\begin{proposition}\label{prop:properties-of-gw}
Let $L_1$, $L_2$ and $L_3$ be Lagrangian spheres of dimension $n$, where $n$ is even. 
\begin{enumerate}
  \item $\sum_{A, \langle c_1,A \rangle=n/2} GW_{A,3}([L_i],[L_i],[L_i])==0$ $\forall i=1,2,3$.
  \item Suppose that $L_1$ and $L_2$ intersect transversally at one point. Then 
  $$\sum_{A, \langle c_1,A \rangle =n/2} GW_{A,3}([L_1],[L_1],[L_2])=
\varepsilon\sum_{A, \langle c_1,A \rangle =n/2} GW_{A,3}([L_1],[L_2],[L_2]),$$
where $$\varepsilon:=(-1)^{\frac{n(n+1)}{2}}=\begin{cases} -1 & n=2 \mod 4\\
                                         1 & n=0\mod 4
                                        \end{cases}.$$
  \item Assume that 
$L_1\cdot L_2=L_2\cdot L_3=1$ and $L_1\cdot L_3=0$. Then
$$\sum_{A, \langle c_1,A \rangle =n/2} GW_{A,3}([L_1],[L_2],[L_2])= -\sum_{A, \langle c_1,A \rangle =n/2} GW_{A,3}([L_3],[L_2],[L_2]).$$
\item Assume again that $L_1$ and $L_3$ do not intersect. Then 
$$\sum_{A, \langle c_1,A \rangle =n/2} GW_{A,3}([L_1],[L_3],\cdot)=\sum_{A, \langle c_1,A \rangle =n/2} GW_{A,3}([L_3],[L_1],\cdot)=0.$$
\end{enumerate}
\end{proposition}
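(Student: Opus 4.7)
The plan is to deduce all four identities by exploiting the symplectic Dehn twist $\tau_L$ associated to each Lagrangian sphere $L\subset M$ of even dimension. Its action on $H_n(M;\Z)$ is given by the Picard--Lefschetz formula $(\tau_L)_*c=c-\varepsilon(c\cdot[L])[L]$ (cf.~Seidel), with the same $\varepsilon=(-1)^{n(n+1)/2}$ as in the statement of (2); in particular $(\tau_L)_*[L]=-[L]$. Write $f(L_a,L_b,L_c):=\sum_{A,\,\langle c_1,A\rangle=n/2} GW_{A,3}([L_a],[L_b],[L_c])$; since all three classes have even degree, $f$ is fully symmetric in its arguments. Because $\tau_L$ is a symplectomorphism and preserves $c_1$, $f$ is invariant under the simultaneous Picard--Lefschetz substitution in its three slots, extended multilinearly. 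Each part will now follow by applying a well-chosen $\tau_{L_i}$.

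For (1), $(\tau_{L_i})_*[L_i]=-[L_i]$ immediately gives $f(L_i,L_i,L_i)=-f(L_i,L_i,L_i)$, so $f(L_i,L_i,L_i)=0$. For (4), Lemma~\ref{lem:prodzero} applied to the disjoint pair $L_1,L_3$ shows $[L_1]*[L_3]=0$ in $QH_*(M)$; from the definition of the ambient quantum product, the component $\sum_{\langle c_1,A\rangle=n/2} GW_{A,3}([L_1],[L_3],\gamma)$ vanishes for every $\gamma\in H_*(M)$.

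For (2), apply $\tau_{L_2}$ to the triple $([L_1],[L_1],[L_2])$: using $(\tau_{L_2})_*[L_1]=[L_1]-\varepsilon[L_2]$ and $(\tau_{L_2})_*[L_2]=-[L_2]$, multilinear expansion combined with the full symmetry of $f$ yields
\begin{equation*}
f(L_1,L_1,L_2)=-f(L_1,L_1,L_2)+2\varepsilon f(L_1,L_2,L_2)-\varepsilon^{2} f(L_2,L_2,L_2).
\end{equation*}
Part (1) kills the last term, and solving gives $f(L_1,L_1,L_2)=\varepsilon f(L_1,L_2,L_2)$. For (3), apply the same twist $\tau_{L_2}$ to $([L_1],[L_3],[L_2])$: now both $(\tau_{L_2})_*[L_1]=[L_1]-\varepsilon[L_2]$ and $(\tau_{L_2})_*[L_3]=[L_3]-\varepsilon[L_2]$ by the intersection hypotheses, and an analogous expansion produces
\begin{equation*}
f(L_1,L_3,L_2)=-f(L_1,L_3,L_2)+\varepsilon f(L_1,L_2,L_2)+\varepsilon f(L_3,L_2,L_2)-\varepsilon^{2} f(L_2,L_2,L_2).
\end{equation*}
Part (1) eliminates the last term and part (4) forces the left-hand side to vanish, so $\varepsilon(f(L_1,L_2,L_2)+f(L_3,L_2,L_2))=0$, which is (3).

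The only non-routine input is the Picard--Lefschetz action of the symplectic Dehn twist on a Lagrangian sphere of even dimension; this is standard and consistent with the conventions already invoked in the excerpt (in particular the Corollary from~\cite{BM15} stating that $\sigma_L=0$ for Lagrangian spheres, which is a consequence of the same $(\tau_L)_*[L]=-[L]$ symmetry). The remainder of the argument is an invocation of Lemma~\ref{lem:prodzero} and careful bookkeeping of signs in the multilinear expansions of $f$ under $\tau_{L_2}$.
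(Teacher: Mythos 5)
Your proof is correct, and for parts (1), (2) and (4) it is essentially the paper's own argument: the same Picard--Lefschetz substitution $\phi_*(a)=a-\varepsilon(a\cdot[\Delta])[\Delta]$, $\phi_*[\Delta]=-[\Delta]$, combined with invariance of the (fully symmetric) three-point invariants under the Dehn twist, and Lemma~\ref{lem:prodzero} (via Lemma~\ref{lem:relation_quantum_product_and_GW}) for (4). Where you genuinely diverge is part (3): the paper constructs, by Lagrangian surgery, a Lagrangian sphere in the class $[L_1]+[L_2]+[L_3]$, notes $[L_2]\cdot([L_1]+[L_2]+[L_3])=0$, and then applies Lemma~\ref{lem:prodzero} and linearity; you instead apply the twist $\tau_{L_2}$ to the triple $([L_1],[L_3],[L_2])$ and close the computation using (1) and (4). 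Your route buys two things: it needs no auxiliary Lagrangian (so no surgery input at all beyond what is already used for (4)), and it is uniform in $n\bmod 4$ --- note that with the paper's sign conventions $[L_2]\cdot([L_1]+[L_2]+[L_3])=2+2(-1)^{n(n-1)/2}$, which vanishes only for $n\equiv 2\ (\mathrm{mod}\ 4)$, so the surgery argument as written implicitly relies on that case (or on an adjusted class such as $[L_1]-[L_2]+[L_3]$), whereas your twist computation does not. The paper's route, on the other hand, exercises the surgery construction that it reuses later (e.g.\ in Proposition~\ref{prop:special-intersection-graph-gromov-witten}). The only bookkeeping point to keep in mind is the order of dependencies: you must establish (4) before (3), which is fine since (4) rests only on Lemma~\ref{lem:prodzero} with $[L_1]\cdot[L_3]=0$.
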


The proof of the proposition involves Dehn twists around spheres. For this we recall the Picard Lefschetz formula. 
The self-interesection number of a Lagrangian $n$-sphere is $(-1)^{\frac{n(n-1)}{2}}(1+(-1)^n)$.

\begin{proposition}[Picard-Lefschetz formula]
 Let $a$ be a class in the $n$-dimensional homology group of the ambient manifold $H_n(M)$. Let $\phi$ denote the Dehn twist around the Lagrangian sphere $\Delta\cong S^n$. The map induced in homology is 
 $$\phi_*(a)= a+(-1)^{\frac{(n+1)(n+2)}{2}}(a\cdot [\Delta])[\Delta].$$
\end{proposition}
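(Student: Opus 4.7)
The plan is to exploit the fact that the Dehn twist $\phi$ is supported in a Weinstein neighborhood $U$ of $\Delta$ symplectomorphic to the unit disk cotangent bundle $DT^*S^n$. Since $\phi|_{M\setminus U}=\mathrm{id}$, the difference $\phi_*(a)-a$ is realized by a cycle supported in $U$. The Thom isomorphism gives $H_n(U,\partial U)\cong H_0(S^n)\cong\mathbb{Z}$, whose generator lifts in $H_n(U)$ to a multiple of $[\Delta]$. Combined with excision and the long exact sequence of $(M,M\setminus U)$, this shows $\phi_*(a)-a=\lambda(a)[\Delta]$ for some linear map $\lambda\colon H_n(M)\to\mathbb{Z}$.

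Next I will identify $\lambda$. The long exact sequence above identifies the restriction map $H_n(M)\to H_n(M,M\setminus U)\cong\mathbb{Z}$ with intersection against $[\Delta]$, via Poincar\'e--Lefschetz duality composed with the Thom isomorphism. Therefore any $a$ with $a\cdot[\Delta]=0$ admits a representative disjoint from a neighborhood of $\Delta$, on which $\phi$ acts as the identity; hence $\lambda(a)=0$. Since $\lambda$ vanishes on $\ker(-\cdot[\Delta])$, it must take the form $\lambda(a)=c\,(a\cdot[\Delta])$ for a universal constant $c$ depending only on $n$ and the local model.

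It remains to pin down $c$. For $n$ even, evaluating at $a=[\Delta]$ is enough: the Dehn twist restricts to the antipodal map on $\Delta\cong S^n$, so $\phi_*[\Delta]=(-1)^{n+1}[\Delta]=-[\Delta]$, while $[\Delta]\cdot[\Delta]=2(-1)^{n(n-1)/2}$ by the self-intersection formula recalled just above the proposition. Solving gives $c=(-1)^{n(n-1)/2+1}$, and a parity check using $(n+1)(n+2)/2-n(n-1)/2-1=2n$ shows this agrees with $(-1)^{(n+1)(n+2)/2}$. For $n$ odd the self-intersection vanishes and this shortcut is degenerate; instead I will argue inside the local model $DT^*S^n$ directly, tracking the image of a single cotangent fiber $F_x$ (for which $F_x\cdot[\Delta]=\pm1$) under the Hamiltonian isotopy defining $\phi$, built from a normalized geodesic flow interpolating between the identity at $|p|=1$ and the antipodal map at $|p|=0$. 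The main obstacle will be exactly this sign calculation in the odd case: reconciling the orientation conventions on the zero section, on the cotangent fiber, and on the ambient symplectic manifold, and extracting from the normalized half-period of the geodesic flow the precise exponent $(n+1)(n+2)/2$. The resulting sign can be cross-checked against the classical treatments (Arnold--Gusein-Zade--Varchenko, Seidel), where the same exponent appears as the sum of the degree $(-1)^{n+1}$ of the antipodal map on $S^n$ and the Maslov-type contribution coming from the half-turn of the geodesic flow in a cotangent fiber.
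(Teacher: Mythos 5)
Your localization strategy is sound, and for even $n$ your argument is essentially complete: the difference $\phi_*(a)-a$ is supported in a Weinstein neighbourhood $U\cong DT^*S^n$ of $\Delta$, the long exact sequence of $(M,M\setminus U)$ together with the Thom isomorphism shows it equals $c\,(a\cdot[\Delta])[\Delta]$, and since $[\Delta]\cdot[\Delta]=2(-1)^{n(n-1)/2}\neq0$ you can solve for $c$ by plugging in $a=[\Delta]$ and using $\phi_*[\Delta]=-[\Delta]$; your parity check $(n+1)(n+2)/2-n(n-1)/2-1=2n$ is correct. For comparison, the paper offers no proof at all: the proposition is quoted as a classical fact with a pointer to Lefschetz and Arnold--Gusein-Zade--Varchenko, and it is only ever applied later (Proposition~\ref{prop:properties-of-gw}) in the even-dimensional case, which is exactly the case you have nailed down.

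The genuine gap is the odd-dimensional case, and you have flagged it yourself without closing it. When $n$ is odd, $[\Delta]\cdot[\Delta]=0$, so evaluating at $a=[\Delta]$ gives no information, and your structural argument only yields $\phi_*(a)=a+c\,(a\cdot[\Delta])[\Delta]$ for some undetermined constant $c$ (a priori only rational, since the image of $-\cdot[\Delta]$ in $\mathbb{Z}$ need not be all of $\mathbb{Z}$, and your ``universality'' of $c$ is asserted rather than derived). But the precise value $(-1)^{(n+1)(n+2)/2}$, i.e.\ the dependence of the sign on $n\bmod 4$, is the actual content of the Picard--Lefschetz formula, and the sketched computation --- tracking a cotangent fibre $F_x$ with $F_x\cdot[\Delta]=\pm1$ through the model twist while reconciling the orientations of the zero section, the fibre and the ambient manifold --- is exactly the step you have not carried out. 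To finish one must do the classical local computation for the ordinary double point (the variation map of the Milnor fibre of $z_0^2+\cdots+z_n^2$, equivalently the monodromy of the model Dehn twist acting on $H_n(DT^*S^n,\partial)\to H_n(DT^*S^n)$), which handles all $n$ uniformly and produces the exponent $(n+1)(n+2)/2$; this is precisely what the references cited in the paper do, so as written your proposal proves the statement only for even $n$.
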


For details see for example~\cite{Lef24} or~\cite{AGZV88}.

\begin{proof}[Proof of Proposition~\ref{prop:properties-of-gw}]
The Picard-Lefschetz formula for the Dehn twist around a Lagrangian sphere $\Delta$ becomes:
$\phi_*(a)=a-\varepsilon(a\cdot [\Delta])[\Delta].$
Moreover, $[\Delta]\cdot [\Delta]=(-1)^{\frac{n(n-1)}{2}}2$ and $\phi([\Delta])=-[\Delta]$.
\begin{enumerate}
 \item To prove the first identity take $\phi$ to be the Dehn twist around $L_i$. Then
 Let $\phi$ be the Dehn twist around the sphere $L_2$. Then
\begin{eqnarray*}
 \phi_*([L_j])=[L_j]-\varepsilon[L_i]\\
 \phi_*([L_i])=-[L_i].\\
\end{eqnarray*}
\begin{equation*}
\begin{split}
   &\sum_{A, C_1(A)=n/2} GW_{A,3}([L_i],[L_i],[L_i])\\
  &= \sum_{\phi^*A, C_1(\phi*A)=n/2} GW_{\phi^*A,3}(\phi^*[L_i],\phi^*[L_i],\phi^*[L_i])\\
&=   \sum_{A, C_1(A)=n/2} GW_{A,3}(-[L_i],-[L_i],-[L_i])\\
&=-  \sum_{A, C_1(A)=n/2} GW_{A,3}([L_i],[L_i],[L_i]),
\end{split}
\end{equation*}
and hence, $  \sum_{A, C_1(A)=n/2} GW_{A,3}([L_i],[L_i],[L_i])=0$.
\item Now let $\phi$ be the Dehn around $L_j$.
Then we have
\begin{equation*}
\begin{split}
   &\sum_{A} GW_{A,3}([L_i],[L_i],[L_j])\\
   &= \sum_{\phi^*A, C_1(\phi*A)=n/2} GW_{\phi^*A,3}(\phi^*[L_i],\phi^*[L_i],\phi^*[L_j])\\
&=   \sum_{A} GW_{A,3}([L_i]-\varepsilon [L_j],[L_i]-\varepsilon[L_j],-[L_j])\\
&=   \sum_{A} GW_{A,3}([L_i],[L_i],-[L_j])-\varepsilon\sum_{A}GW_{A,3}( 
[L_j],[L_i],-[L_j])\\
&-\varepsilon\sum_{A}GW_{A,3}([L_i], [L_j],-[L_j])+\sum_{A}GW_{A,3}( [L_j], [L_j],-[L_j]).
\end{split}
\end{equation*}
The last summand is zero by the calculation above.
Hence,
\begin{equation*}
\begin{array}{ccl}
  2 \sum_{A} GW_{A,3}([L_i],[L_i],[L_j])&=& 2\varepsilon 
GW_{\phi^*A,3}([L_i], [L_j],[L_j]),\\
\end{array}
\end{equation*}
and thus also
\begin{equation*}
\begin{array}{ccl}
 \sum_{A} GW_{A,3}([L_i],[L_i],[L_j])&=& \varepsilon GW_{\phi^*A,3}([L_i], [L_j],[L_j]),\\
\end{array}
\end{equation*}
\item Using Lagrangian surgery, we see that there exists a Lagrangian 
sphere in the class $[L_1+L_2+L_3]$. As $[L_2]\cdot[L_1+L_2+L_3]=0$ 
Lemma~\ref{lem:prodzero} implies that $[L_2]*[L_1+L_2+L_3]=0$. By the previous identity 
we know 
\begin{eqnarray*}
 0=\sum_{A, \langle c_1,A \rangle =n/2} GW_{A,3}([L_1+L_2+L_3],[L_2],[L_2])\\
= \sum_{A, \langle c_1,A \rangle =n/2} GW_{A,3}([L_1],[L_2],[L_2])+\sum_{A, \langle c_1,A \rangle =n/2} 
GW_{A,3}([L_2],[L_2],[L_2])\\
+\sum_{A, \langle c_1,A \rangle =n/2} GW_{A,3}([L_3],[L_2],[L_2])\\
= \sum_{A, \langle c_1,A \rangle =n/2} GW_{A,3}([L_1],[L_2],[L_2]) +\sum_{A, \langle c_1,A \rangle =n/2} 
GW_{A,3}([L_3],[L_2],[L_2]),\\
\end{eqnarray*}
where the last equality follows from the first identity.
\item As a consequence of Lemma~\ref{lem:relation_quantum_product_and_GW} and ~\ref{lem:prodzero} we see 
$$\sum_{A, \langle c_1,A \rangle =n/2} GW_{A,3}([L_1],[L_3],\cdot)=\sum_{A, \langle c_1,A \rangle =n/2} GW_{A,3}([L_3,[L_1],\cdot)=0.$$
\end{enumerate}
\end{proof}
 
\begin{theorem}\label{thm:discriminant_GW}
 Let $L_1$ and $L_2$ be even dimensional Lagrangian spheres as above, intersecting transversally at one point.
Then
\begin{equation}
  \Delta_{L_1}=\Delta_{L_2}= (\sum_{A} GW_{A,3}(L_1,L_1,L_2) )^2=(\sum_{A} GW_{A,3}(L_2,L_2,L_1) )^2
\end{equation}
\end{theorem}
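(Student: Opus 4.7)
The plan is to derive $\Delta_{L_1}=G_{12}^2$ directly, where $G_{12}:=\sum_A GW_{A,3}([L_1],[L_1],[L_2])$, by combining the Lagrangian cubic equation for the Lagrangian sphere $L_1$ with the compatibility between the quantum product on $QH_0(L_2;\Q)$ and its $QH(M;\Q)$-module action. Set $v:=[L_1]\star e_{L_2}\in QH_0(L_2;\Q)$. A direct application of the module axioms $(a\star x)*y=a\star(x*y)$ and $(a*b)\star x=a\star(b\star x)$, together with $e_{L_2}*z=z$, yields $v^{*k}=[L_1]^{*k}\star e_{L_2}$ for every $k\geq 1$. Because $L_1$ is a Lagrangian sphere, $\chi(L_1)=2$ and $\sigma_{L_1}=0$, so the Lagrangian cubic equation reduces to $[L_1]^{*3}=4\tau_{L_1}[L_1]q^n$; specializing $q=1$ and transporting through the module action gives $v^{*3}=4\tau_{L_1}\,v$ in $QH_0(L_2;\Q)$.

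Next, I expand $v$ in a $\Z$-basis $\{e_{L_2},\bar{p}_2\}$ of $QH_0(L_2;\Q)$. Since $\epsilon_{L_2}(v)=\langle [L_1]^*, i_{L_2}(e_{L_2})\rangle=[L_1]\cdot [L_2]=:\xi\in\{\pm 1\}$ by the transverse one-point intersection hypothesis, we can write $v=\xi\bar{p}_2+c\,e_{L_2}$ for some $c$. Using $\bar{p}_2^{*2}=\tau_{L_2}e_{L_2}$ (valid because $L_2$ is also a sphere, so $\sigma_{L_2}=0$), a short computation produces $v^{*2}=2\xi c\,\bar{p}_2+(\tau_{L_2}+c^2)e_{L_2}$. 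Applying $\epsilon_{L_2}$ and invoking Lemma~\ref{lem:relation_quantum_product_and_GW} (with the prefactor $1/\xi^2=1$) pins down $2\xi c=G_{12}$, whence $c=\xi G_{12}/2$.

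With $c$ determined, I expand $v^{*3}=v*v^{*2}$ in the same basis and equate it with $4\tau_{L_1}\xi\bar{p}_2+4\tau_{L_1}c\,e_{L_2}$. The $\bar{p}_2$-coefficient yields $\tau_{L_2}+3c^2=4\tau_{L_1}$, while the $e_{L_2}$-coefficient yields $c(3\tau_{L_2}+c^2)=4\tau_{L_1}c$; for $c\neq 0$ these combine to force $\tau_{L_1}=\tau_{L_2}=c^2=G_{12}^2/4$, so $\Delta_{L_1}=\Delta_{L_2}=4\tau_{L_1}=G_{12}^2$. The remaining equality $G_{12}^2=G_{21}^2$ is then immediate from Proposition~\ref{prop:properties-of-gw}(2), which gives $G_{12}=\varepsilon G_{21}$ with $\varepsilon=\pm 1$ since $n$ is even.

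The one genuine obstacle is the degenerate case $c=0$ (equivalently $G_{12}=0$), where the $e_{L_2}$-equation becomes vacuous and only $\tau_{L_2}=4\tau_{L_1}$ remains. I would close this case by running the argument symmetrically with $v':=[L_2]\star e_{L_1}\in QH_0(L_1;\Q)$: since $G_{21}=\varepsilon G_{12}=0$ as well, the analogous computation gives $\tau_{L_1}=4\tau_{L_2}$, and together with $\tau_{L_2}=4\tau_{L_1}$ this forces $\tau_{L_1}=\tau_{L_2}=0=G_{12}^2/4$. Alternatively, one may invoke Theorem~\ref{thm:equal_discr_rank2} applied to a Lagrangian cobordism between $L_1$ and $L_2$ (for instance the one produced by Lagrangian surgery) to obtain $\tau_{L_1}=\tau_{L_2}$ unconditionally, which combined with $\tau_{L_2}=4\tau_{L_1}$ again closes the case.
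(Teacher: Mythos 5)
Your argument is correct, and it reaches the statement by a genuinely different route than the paper. The paper works inside $QH_0(L_1;\Q)$ with the lift $x_1=[L_2]\star e_{L_1}$: squaring gives $x_1^{*2}=([L_2]*[L_2])\star e_{L_1}=\sigma_1 x_1+\tau_1 e_{L_1}$, the augmentation together with Lemma~\ref{lem:relation_quantum_product_and_GW} identifies $\sigma_1$ with $\pm\sum_A GW_{A,3}([L_2],[L_2],[L_1])$, and the proof then asserts $\tau_1=0$, which yields $\Delta_{L_1}=\sigma_1^2$; note that this last assertion is exactly equivalent to the conclusion and is not actually visible to the augmentation (which only computes $\sigma_1$), so it is the step the paper leaves terse and effectively imports from~\cite{BM15}. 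You instead transport the Lagrangian cubic equation of the sphere $L_1$ through the two-sided algebra structure, obtaining $v^{*3}=4\tau_{L_1}v$ for $v=[L_1]\star e_{L_2}\in QH_0(L_2;\Q)$, and coefficient matching in the basis $\{e_{L_2},\bar p_2\}$ (with $\bar p_2^{*2}=\tau_{L_2}e_{L_2}$, and $c=\xi G_{12}/2$ fixed by the augmentation and the GW lemma) pins down $\tau_{L_1}=\tau_{L_2}=G_{12}^2/4$, giving both discriminants and the perfect-square value in one stroke; in effect your cubic relation supplies the missing equation that the paper's claim $\tau_1=0$ encodes. The price is the degenerate case $c=0$, which your symmetric run with $[L_2]\star e_{L_1}$ closes correctly, since $\tau_{L_2}=4\tau_{L_1}$ and $\tau_{L_1}=4\tau_{L_2}$ force both to vanish over $\Q$. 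Two minor points: your justification of $\bar p_2^{*2}=\tau_{L_2}e_{L_2}$ via $\sigma_{L_2}=0$ conflates the cubic-equation constant with the lift-dependent structure constant --- over $\Q$ this is harmless (complete the square and use $\Delta_{L_2}=4\tau_{L_2}$ from the sphere corollary), but it deserves a sentence; and, exactly like the paper's own proof, you implicitly use that $QH_0(L_i;\Q)$ has rank two and that $\epsilon_{L_2}([L_1]\star e_{L_2})=[L_1]\cdot[L_2]$, identities the paper also invokes without further comment (e.g.\ in Lemma~\ref{lem:prodzero}).
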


\begin{proof}[Proof of theorem~\ref{thm:discriminant_GW}]
Let us fix the constant 
$$k:=\sum_{A} 
GW_{A,3}(L_1,L_1,L_2)=\varepsilon\sum_{A} GW_{A,3}(L_2,L_2,L_1),$$
where $\varepsilon=\pm1$, depending on the value of $n\mod 4$.
The element
$$x_1:=[L_2]*e_{L_1}$$ is a lift of a point as $\varepsilon_{L_1}([L_2]*e_{L_1})=[L_2]\cdot[L_1]=1$.
Squaring $x_1$ yields:
$$[L_2]*[L_2]*e_{L_1}=\sigma_1 x_1 +\tau_2 e_{L_1}.$$
Applying the augmentation to both sides shows that $k=\sigma_1$ and $\tau_1=0$.
Similarly, $x_2:=[L_1]*e_{L_2}$ lifts a point of $L_2$ and a similar computation shows 
$\sigma_2=\varepsilon k$ and $\tau_2=0$.
\end{proof}

\begin{proposition}\label{prop:special-intersection-graph-gromov-witten}
 Let $L_1$ $L_2$, $L_3$ and $L_4$ be three Lagrangian spheres of even dimension and suppose that their intersection graph has one of the following two forms:
 \begin{figure}[H]
 \centering
\includegraphics[scale=0.3]{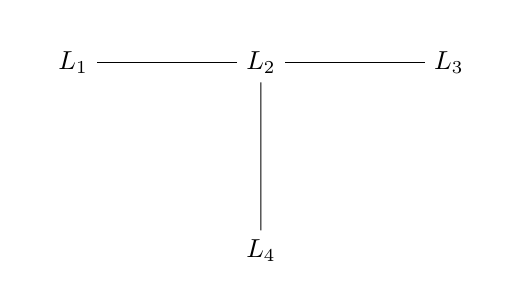}
\caption{Intersection graph of Lagrangian spheres}\label{fig:T-intersection}
\end{figure}
 \begin{figure}[H]
 \centering
\includegraphics[scale=0.3]{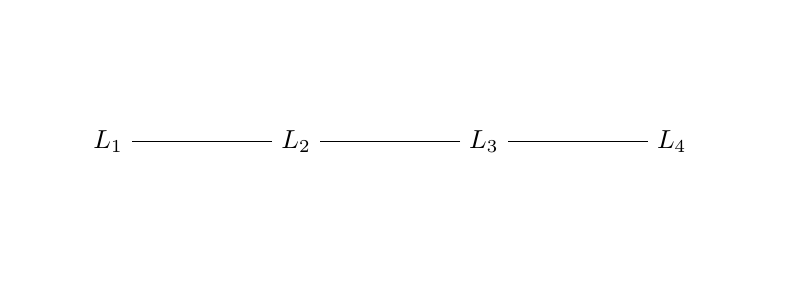}
\caption{Intersection graph of Lagrangian spheres}\label{fig:A1_2}
\end{figure}
Then
$$\Delta_{L_1}=\cdots=\Delta_{L_4}=0.$$
\end{proposition}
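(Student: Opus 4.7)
The plan is to reduce both cases to vanishing statements for three-point genus-zero Gromov--Witten invariants and then invoke Theorem~\ref{thm:discriminant_GW}. Throughout, write $g(X,Y,Z) := \sum_{A,\,\langle c_1,A\rangle=n/2} GW_{A,3}([X],[Y],[Z])$ for the relevant sum.

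For the T-intersection configuration, understood as $L_2$ meeting each of $L_1,L_3,L_4$ transversally at a single point while $L_i\cdot L_j=0$ for distinct $i,j\in\{1,3,4\}$, the hypotheses of part~(3) of Proposition~\ref{prop:properties-of-gw} hold for each ordered triple $(L_i, L_2, L_j)$ with $i\neq j$ in $\{1,3,4\}$. This produces three identities:
\begin{equation*}
g(L_1,L_2,L_2) = -g(L_3,L_2,L_2), \quad g(L_3,L_2,L_2) = -g(L_4,L_2,L_2), \quad g(L_1,L_2,L_2) = -g(L_4,L_2,L_2).
\end{equation*}
The first two identities force $g(L_1,L_2,L_2)=g(L_4,L_2,L_2)$, and combining this with the third gives $g(L_1,L_2,L_2) = -g(L_1,L_2,L_2)$, hence $g(L_1,L_2,L_2) = 0$ (we are working over $\mathbb{Q}$), and therefore $g(L_3,L_2,L_2) = g(L_4,L_2,L_2) = 0$ as well. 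Theorem~\ref{thm:discriminant_GW} applied to the pair $(L_1,L_2)$ yields $\Delta_{L_2} = g(L_1,L_2,L_2)^2 = 0$. For each leaf $L_i$ with $i\in\{1,3,4\}$, part~(2) of Proposition~\ref{prop:properties-of-gw} gives $g(L_i,L_i,L_2) = \varepsilon\,g(L_i,L_2,L_2) = 0$, and a second application of Theorem~\ref{thm:discriminant_GW} to the pair $(L_i,L_2)$ shows $\Delta_{L_i} = g(L_i,L_i,L_2)^2 = 0$.

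For the second configuration (A1\_2), the strategy is analogous: I would catalogue every identity forced upon the sums $g(\cdot,\cdot,\cdot)$ by parts~(2)--(4) of Proposition~\ref{prop:properties-of-gw} given the intersection pattern, and then exhibit a system of relations whose only solution has the relevant sums all equal to zero. The principal obstacle is that if the intersection graph does not contain a vertex of degree at least three (for instance, if A1\_2 is two disjoint edges), then part~(3) alone yields only linear chains of relations with no closing identity to force vanishing. In that situation I would invoke Lagrangian surgery on a pair of intersecting $L_i$'s---exactly the device used inside the proof of part~(3) of Proposition~\ref{prop:properties-of-gw}---to produce an auxiliary Lagrangian sphere whose homology class $[L_a+L_b]$ intersects non-trivially with enough of the original $L_i$'s so that the augmented intersection graph does acquire a trivalent vertex. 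The argument then reduces to the T-intersection analysis above, completing the proof.
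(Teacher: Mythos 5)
Your treatment of the T-shaped configuration is correct and is essentially the paper's own argument: part (3) of Proposition~\ref{prop:properties-of-gw} applied to the three pairs of leaves gives a closed cycle of sign relations among the sums $g(L_i,L_2,L_2)$, which forces them all to vanish, and Theorem~\ref{thm:discriminant_GW} (together with part (2)) then kills all four discriminants, exactly as in the paper.

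The second configuration is where your proposal has a genuine gap. You neither identify the configuration (admittedly the figure is not reproduced in the source) nor give an argument for it; what you offer is a contingency plan, and that plan as sketched would fail. Producing a \emph{single} auxiliary sphere in a class $[L_a]+[L_b]$ cannot reduce to the T-case: the T-argument needs the three leaves to be pairwise homologically orthogonal, since that is what allows part (3) to close the triangle of sign relations, whereas $[L_a]+[L_b]$ pairs nontrivially with $[L_a]$ and $[L_b]$ themselves (for instance $[L_a]\cdot([L_a]+[L_b])=\pm 2+[L_a]\cdot[L_b]$), so the new degree-three vertex has intersecting leaves and no relation of type (3) is available between them. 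The paper's actual device is to surger $L_3$ and $L_4$ in \emph{two} ways, obtaining Lagrangian spheres in both classes $[L_3]+[L_4]$ and $[L_3]-[L_4]$; because $n$ is even one has $[L_3]^2=[L_4]^2$, so these two classes are orthogonal to each other (and to $[L_1]$), while each meets $[L_2]$ once. Together with $L_1$ they form the three leaves of a T centered at $L_2$, the first case applies, and the vanishing of $\Delta_{L_3}$ and $\Delta_{L_4}$ then follows from $\Delta_{L_2}=0$ by propagating equality of discriminants along intersecting pairs via Theorem~\ref{thm:discriminant_GW}. That two-sphere trick, and the use of evenness of the dimension to make $[L_3]+[L_4]$ and $[L_3]-[L_4]$ orthogonal, is the missing idea in your proposal.
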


\begin{proof}
If the intersection graph is as in figure~\ref{fig:T-intersection} then we know by Proposition~\ref{prop:properties-of-gw} that
\begin{equation*}
  \sum_{A, \langle c_1,A \rangle =n/2} GW_{A,3}([L_i],[L_i],[L_2]) = -\sum_{A, \langle c_1,A \rangle =n/2}GW_{A,3}([L_j],[L_j],[L_2])
\end{equation*}
for every $i\neq j$ with $i,j\in \{1,3,4\}$.
Since $i$ and $j$ can take three values, this implies that
$\sum_{A, \langle c_1,A \rangle =n/2} GW_{A,3}([L_i],[L_i],[L_2])=0$ for every $i$. By Theorem~\ref{thm:discriminant_GW} the statement follows.

If the intersection graph looks like in figure~\ref{fig:A1_2} we can use Lagrangian surgery to find Lagrangian spheres in the classes $[L_3]+[L_4]$ and $[L_3]-[L_4]$. We get the graph
\begin{figure}[H]
 \centering
\includegraphics[scale=0.3]{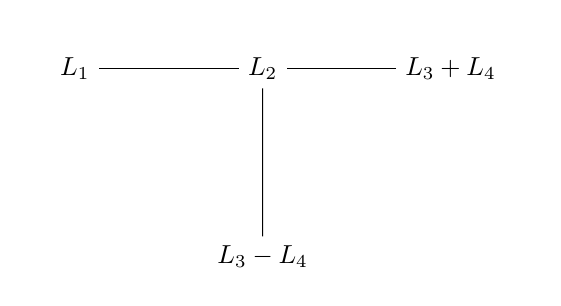}\label{fig:T-intersection2}
\end{figure}
and the rest follows as for the previous case.
\end{proof}

\paragraph{Surgery of three spheres}
Let $L_1$, $L_2$ and $L_3$ be three Lagrangian spheres of even dimension in $M^{2n}$ and suppose their 
intersection graph looks like 
\begin{figure}[H]
 \centering
\includegraphics[scale=0.3]{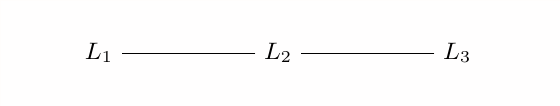}\label{fig:3spheres}
\end{figure}
Let $A$ be the matrix
$$
  \begin{bmatrix}
    \varepsilon2 & 1 & 0 \\
    1 & \varepsilon2 & 1 \\
    0 & 1 & \varepsilon2 \\
  \end{bmatrix},
$$ where $\varepsilon=(-1)^{n(n-1)/2}$ and which encodes the intersection product of the Lagrangian spheres. Then 
$$A^{-1}=
  \frac{-\varepsilon}{4}\begin{bmatrix}
    3 & -\varepsilon2 & 1 \\
    -\varepsilon2 & 4 & -\varepsilon2 \\
    1 & -\varepsilon2 & 3 \\
  \end{bmatrix}.
$$
We define the three elements 
$$\begin{array}{ccc}
   \gamma_1 &:=& (\square \times \varepsilon[3L_1-\varepsilon2L_2+L_3]))*e_{(V,\partial V)}\\
   \gamma_2 &:=& (\square \times\varepsilon[-\varepsilon2L_1+4L_2-\varepsilon2L_3]))*e_{(V,\partial V)}\\
   \gamma_3 &:=& (\square \times\varepsilon[L_1-\varepsilon2L_2+3L_3]))*e_{(V,\partial V)},\\
  \end{array}
$$
where $\square$ was the set obtained from $[-R-\epsilon,R+\epsilon]\times[c,c]$ by smoothening its boundary. (See section~\ref{sec:QH_trivial_LF}.)
Together with $\gamma_e:=e_{(V,\partial V)}$ they form a basis of $QH_1(V,\partial V;\Q)$.
Let 
$$\begin{array}{ccc}
   \overline{p_1} &:=& \frac{-1}{4} \pi_1\circ \delta(\gamma_1)\\
   \overline{p_2} &:=& \frac{-1}{4} \pi_2\circ \delta(\gamma_2)\\
   \overline{p_3} &:=& \frac{-1}{4} \pi_3\circ \delta(\gamma_3).\\
  \end{array}
$$
The elements $\overline{p_i}\in QH_0(L_i;\Q)$ are lifts of points in $H_0(L_i;\mathbb{Z})$.
Notice that $\epsilon_{L_i}\circ \pi_i\circ \delta 
(\gamma_j)=[A(j,1)L_1+A(j,2)L_2+A(j,3)L_3]\cdot[L_j]$ is the classical part of 
$\pi_i\circ 
\delta(\gamma_j)$.
Hence, it follows from the definition of the $\gamma_i$ that 
\begin{center}
\begin{table}[H]
 \begin{tabular}{clccc}
&\multicolumn{1}{c|}{}&$i=1$&$i=2$&$i=3$\\ \toprule
&\multicolumn{1}{c|}{$\pi_1\circ \delta(\gamma_i)$}&$-4 
\overline{p_1}$&$\beta_{21}e_{L_1}$&$\beta_{31}e_{L_1}$\\
&\multicolumn{1}{c|}{$\pi_2\circ \delta(\gamma_i)$}&$\beta_{12}e_{L_2}$&$-4 
\overline{p_2}$&$\beta_{32}e_{L_2}$\\
&\multicolumn{1}{c|}{$\pi_3\circ 
\delta(\gamma_i)$}&$\beta_{13}e_{L_3}$&$\beta_{23}e_{L_3}$&$-4 \overline{p_3}$\\
\end{tabular}
\end{table}
\end{center}
for some $\beta_{ij}\in \mathbb{Z}$.
We will compute the $\beta_{ij}$ in terms of Gromov-Witten invariants.
For this we proceed by applying the map $\pi_i\circ \delta$ to $\gamma_j$, then we take 
the quantum product with $[L_i]\in QH_n(M;\Q)$ from the left. 
The augmentation of $[L_i]*\pi_i\circ \delta (\gamma_j)$ is given by the Gromov-Witten 
invariant $\sum GW(L_i,A(j,1)L_1+A(j,2)L_2+A(j,3)L_3,L_j)$. (See Lemma~\ref{lem:relation_quantum_product_and_GW}.)

Let $\alpha:=\sum_{A, \langle c_1,A \rangle =n/2} GW_{A,3}([L_1],[L_1],[L_2])$. Using
Lemma~\ref{lem:relation_quantum_product_and_GW} and the relations described in~\ref{prop:properties-of-gw} a careful examination yields,

\begin{center}
\begin{table}[H]
 \begin{tabular}{clccc}
&\multicolumn{1}{c|}{}&$i=1$&$i=2$&$i=3$\\ \toprule
&\multicolumn{1}{c|}{$\pi_1\circ \delta(\gamma_i)$}&$-4 
\overline{p_1}$&$-2\alpha e_{L_1}$&$-\alpha e_{L_1}$\\
&\multicolumn{1}{c|}{$\pi_2\circ \delta(\gamma_i)$}&$\alpha e_{L_2}$&$-4 
\overline{p_2}$&$-\alpha e_{L_2}$\\
&\multicolumn{1}{c|}{$\pi_3\circ 
\delta(\gamma_i)$}&$\alpha e_{L_3}$&$2\alpha e_{L_3}$&$-4 \overline{p_3}$\\
\end{tabular}
\end{table}
\end{center}
We define a new basis of $QH_1(V,\partial V;\Q)$ as follows
$$\begin{array}{ccc}
   x_0&:=&e_{(V,\partial V)}\\
   x_1&:=& \gamma_1 -\alpha e_{(V,\partial V)}\\

   x_2&:=& \gamma_2 +2\alpha e_{(V,\partial V)}\\
   x_3&:=& \gamma_3 +\alpha e_{(V,\partial V)}\\
  \end{array}
$$
and we see that 
\begin{center}
\begin{table}[H]
 \begin{tabular}{clccc}
&\multicolumn{1}{c|}{}&$i=1$&$i=2$&$i=3$\\ \toprule
&\multicolumn{1}{c|}{$\pi_1\circ \delta(x_i)$}&$-4 
\overline{p_1}$&$0$&$0$\\
&\multicolumn{1}{c|}{$\pi_2\circ \delta(x_i)$}&$0$&$-4 
\overline{p_2}$&$0$\\
&\multicolumn{1}{c|}{$\pi_3\circ 
\delta(x_i)$}&$0$&$4\alpha e_{L_3}$&$-4 \overline{p_3}$\\
\end{tabular}
\end{table}
\end{center}
Consider the product 
$x_1*x_2=\mu_{121}x_1+\mu_{122}x_2+\mu_{123}x_3+\mu_{120}x_0$.
Applying $\pi_1\circ \delta$ gives
\begin{eqnarray*}
  0=\mu_{121}(-4\overline{p_1})+\mu_{120}e_{L_1},
\end{eqnarray*}
as $\pi_1\circ\delta(x_2)=0$ and thus also $\pi_1\circ\delta(x_1*
x_2)=\pi_1\circ\delta(x_1)*\pi_1\circ\delta(x_2)=0$.
Similarly, by applying $\pi_2\circ\delta$ and $\pi_3\circ\delta$ respectively we see
    \begin{eqnarray*}
      0=\mu_{122}(-4\overline{p_2})\\
      0=\mu_{122}4\alpha e_{L_3}+\mu_{123}(-4\overline{p_3}).\\
    \end{eqnarray*}
Solving these equations gives $\mu_{12i}=0$ for all $i$, or equivalently $x_1*x_2=0$.
A similar computation shows that $x_2*x_1=0$, $x_3*x_1=0$ and $x_1*x_3=0$.

Consider now 
$x_1*x_1=\mu_{111}x_1+\mu_{112}x_2+\mu_{113}x_3+\mu_{110}e_{L_1}$.
Applying $\pi_1\circ\delta$ gives
\begin{eqnarray*}
  (-4)^2\overline{p_1}^2=\mu_{111}(-4\overline{p_1})+\mu_{110}e_{L_1}.
\end{eqnarray*}
As $\overline{p_1}\in QH_0(L_1)$ is the lift of a point, we can write 
$\overline{p_1}^2=\sigma_1\overline{p_1}+\tau_1 e_{L_1}$ and thus
\begin{eqnarray*}
 16(\sigma_1\overline{p_1}+\tau_1 
e_{L_1})=\mu_{111}(4\overline{p_1})+\mu_{110}e_{L_1}.
\end{eqnarray*}
Applying $\pi_2\circ \delta$ and $\pi_3\circ \delta$ gives two other equations, from which 
we conclude that $\mu_{111}=-4\sigma_1$ and $16\tau_1=\mu_{110}=0$,
$\mu_{112}=\mu_{113}=0$. Hence
\begin{eqnarray*}
 x_1*x_1=-4\sigma_1 x_1.
\end{eqnarray*}
A similar procedure for $x_2*x_2$ and $x_3*x_3$ gives
\begin{eqnarray*}
 x_2*x_2=-4\sigma_2 x_2\\
 x_3*x_3=-4\sigma_3 x_3.\\
\end{eqnarray*}
An easy calculation in a similar spirit as above shows that 
\begin{eqnarray*}
 x_2*x_3=x_3*x_2=4\alpha x_3\\
\end{eqnarray*}
Now it is easy to check that $x_1\otimes x_2\otimes x_3\in \ker(A)$, where $A$ is the 
hypermatrix corresponding to this quantum structure.
Therefore the discriminant $\Delta_V$ of this cobordism vanishes.
However, we know from~\cite{BM15} and Theorem~\ref{thm:equal_discr_rank2} (or~\ref{thm:discriminant_GW})
that $\Delta_{L_1}=\Delta_{L_2}=\Delta_{L_3}$. In particular 
$-\sigma_1=\sigma_2=-\sigma_3=:\sigma$.
Notice that $\sigma$ does not need to be zero and hence, also the discriminants of the 
ends of the cobordism $V$ need not vanish.

% !TEX root = main_arXiv.tex

\section{Quantum invariants of Lagrangians in quadrics}

\subsection{Real Lefschetz fibrations of quadrics}\label{sec:Lefschetz_quadric}
Parts of this section follows the construction in section 6.5 in~\cite{BC15} and it generalizes example 6.5.3 in~\cite{BC15}. We borrow some of their notation.

Given a Lefschetz pencil of complex quadric hypersurfaces in $\mathbb{C}P^{n+1}$ one can 
associate to it a real Lefschetz fibration (i.e. a Lefschetz fibration 
endowed with a real structure).
Consider $\mathbb{C} P^{n+1}$ and the very ample line bundle 
$\mathscr{L}=\mathcal{O}_{\mathbb{C} P^{n+1}}(2)$, $pr:\mathscr{L}\to \mathbb{C}P^{n+1}$ 
and endow it with the real structure $c_{\mathscr{L}}$ induced from complex conjugation.
(More precisely, for an open subset $U\subset \mathbb{C}P^{n+1}$ the real structure on 
$\mathcal{O}_{\mathbb{C} P^{n+1}}(2)|_{U}$ is given by $c_{\mathscr{L}}:(z,\lambda)\mapsto 
(\overline{z},\overline{\lambda})$.)
Let $H^0(\mathscr{L})$ be the space of holomorphic sections of 
$\mathscr{L}$. The real structure on $\mathscr{L}$ induces a real structure $c_H$ on 
$H^0(\mathscr{L})$ (defined by $c_H(s):=c_{\mathscr{L}}\circ s \circ c_{\mathbb{C}P^{n+1}} $ for a section $s\in 
H^0(\mathscr{L})$ and where $c_{\mathbb{C}P^{n+1}}$ is the complex conjugation). This descends to a real structure on 
$\mathbf{P}^*:=\mathbb{P}(H^0(\mathscr{L}))$ and 
$\mathbf{P}:=\mathbb{P}^*(H^0(\mathscr{L}))$. By abuse of notation we continue to denote 
these real structures by $c_H$.
Let $\mathbf{P}^*_{\mathbb{R}}\subset \mathbf{P}^*$ and 
$\mathbf{P}_{\mathbb{R}}\subset \mathbf{P}$ be the corresponding fixed point loci of $c_H$.
Using the coordinates $[x_0:\ldots: x_{n+1}]$ on $\mathbb{C}P^{n+1}$, $H^0(\mathscr{L})$ 
can be identified with the space of quadratic homogeneous polynomials 
$\lambda(\underline{X})$ in $\underline{X}=(X_0,\ldots,X_{n+1})$.
Let $$ \lambda(\underline{X})=\sum_{0\leq i \leq j \leq {n+1}}a_{i,j}X_iX_j$$ be a 
section of $H^0(\mathscr{L})$.
The choice of a basis $\{X_iX_j\}_{i\leq j}$ yields an identification 
$\mathbf{P}=\mathbb{P}(H^0(\mathscr{L}))\cong \mathbb{C}P^{N}$, where 
$N=\frac{(n+3)(n+2)}{2}$. 
The projective embedding 
$$\mathbb{C}P^{n+1}\hookrightarrow \mathbb{C}P^{N-1}$$
is given by 
$$[X_0:\ldots:X_{n+1}]\mapsto [X_0^2:X_0X_1:\ldots:X_iX_j:\ldots:X_{n+1}^2], \quad i\leq 
j.$$
A polynomial $\lambda\in H^0(\mathscr{L})$ 
corresponds to the hyperplane section
$$\Sigma^{(\lambda)}:=\{[X_0:\ldots:X_{n+1}]\in \mathbb{C}P^{n+1} | \lambda(X_0,\ldots, 
X_{n+1})=0\}.$$

Recall that the discriminant locus $\Delta(\mathscr{L})$ of $\mathbb{C}P^{n+1}$, which is 
the dual variety of $\mathbb{C}P^{n+1}$, can be characterized by the determinant of the 
matrix associated to the bilinear from $\lambda$.
More precisely, $\lambda(\underline{X})=\sum_{0\leq i\leq j\leq {n+1}} a_{i,j} X_iX_j$ 
belongs to $\Delta(\mathscr{L})$ if and only if
\begin{equation}\label{eq:det_quadric}
\det \left(
\begin{matrix}
    2a_{00} & a_{01} & a_{02} & \dots  & a_{0(n+1)} \\
    a_{10} & 2a_{11} & a_{12} & \dots  & a_{1(n+1)} \\
    \vdots & \vdots & \vdots & \ddots & \vdots \\
    a_{(n+1)0} & a_{(n+1)1} & a_{(n+1)2} & \dots  & 2a_{(n+1)(n+1)}
\end{matrix}
\right)=0.
\end{equation}
Moreover, it belongs to the smooth strata of $\Delta(\mathscr{L})$ if and only if the 
$(n+2)\times(n+2)$ matrix in~(\ref{eq:det_quadric}) has rank $n+1$ (i.e. one rank less 
than the maximal 
rank).
Endow $\Sigma^{(\lambda)}$ with the symplectic structure induced by the symplectic 
structure $\omega_{FS}$ of $\mathbb{C}P^{n+1}$ and denote it by $\omega_{\lambda}$.
Let $Q\subset \mathbb{C}P^{n+1}$ be the smooth complex $n$-dimensional quadric
$$Q:=\{Z\in \mathbb{C}P^{n+1} | Z_0^2+\ldots +Z_{n+1}^2=0\},$$ 
where $n\geq 2$.
Endowed $Q$ with the symplectic structure $\omega_Q$ induced from $\mathbb{C}P^{n+1}$.
Since the complement of $\Delta(\mathscr{L})$ is path-connected, the non-singular 
varieties $(\Sigma^{(\lambda)}, \omega_{\lambda})$ are all symplectomorphic and hence, 
they are all symplectomorphic to $(Q,\omega_Q)$ (via symplectic parallel transport).
Let $\Delta_{\mathbb{R}}(\mathscr{L}):=\Delta(\mathscr{L})\cap 
\mathbf{P}^*_{\mathbb{R}}$ be the real part of the discriminant locus.
For every $\lambda\in \mathbf{P}_{\mathbb{R}}^*\setminus 
\Delta_{\mathbb{R}}(\mathscr{L})$ the manifold $\Sigma^{(\lambda)}$ has a 
real structure $c_{\Sigma^{\lambda}}$, which is induced by the real structure on 
$\mathbb{C}P^{n+1}$. It 
is not hard to show that the real part $\Sigma_{\mathbb{R}}^{(\lambda)}$ is a Lagrangian 
and that its diffeomorphism type depends only on the connected component of 
$\mathbf{P}_{\mathbb{R}}^*\setminus \Delta_{\mathbb{R}}(\mathscr{L})$ that contains 
$\lambda$.

\begin{lemma}
Assume that $n$ is even. There are exactly $n/2+1$ connected components of $\mathbf{P}_{\mathbb{R}}^*\setminus 
\Delta_{\mathbb{R}}(\mathscr{L})$ and two real quadrics lie in the same component if and 
only if they have the same signature. The hyperplane sections corresponding to 
the polynomials $\lambda_i:=X_0^2+\ldots +X_{i}^2-X_{i+1}^2-\ldots -X_{(n+1)}^2$ for 
$i=-1,0\ldots, n/2$ each lie in distinct connected components of 
$\mathbf{P}_{\mathbb{R}}^*\setminus \Delta_{\mathbb{R}}(\mathscr{L})$.
The real parts of these components are either empty or Lagrangians and have the following diffeomorphism types:
\begin{enumerate}
 \item $\Sigma_{\mathbb{R}}^{(\lambda_i)}\cong \emptyset$ if 
$i=-1$,
 \item $\Sigma_{\mathbb{R}}^{(\lambda_i)}\cong S^n$ if $i=0$ and
 \item $\Sigma_{\mathbb{R}}^{(\lambda_i)}\cong S^i\times S^{n-i}/\mathbb{Z}_2$ if $1\leq 
i\leq n/2$, where $\mathbb{Z}_2$ acts on both factors of $S^i\times S^{n-i}$ by the antipodal map. 
\end{enumerate}
\end{lemma}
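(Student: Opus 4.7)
The plan is to classify the connected components of $\mathbf{P}_{\mathbb{R}}^*\setminus\Delta_{\mathbb{R}}(\mathscr{L})$ by the signature of the underlying real symmetric bilinear form, using Sylvester's law of inertia. Each $[\lambda]\in\mathbf{P}_{\mathbb{R}}^*$ is represented by a real symmetric $(n+2)\times(n+2)$ matrix $M_\lambda$ (the one appearing in~(\ref{eq:det_quadric})) well-defined up to a nonzero real scalar, and $[\lambda]\notin\Delta_{\mathbb{R}}(\mathscr{L})$ precisely when $\det M_\lambda\neq 0$. Since scaling $\lambda$ by a negative real reverses the sign of every eigenvalue and hence exchanges $(p,q)$ with $(q,p)$, the signature descends to an unordered pair $\{p,q\}$ with $p+q=n+2$. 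The representatives $\lambda_i$ realize the signatures $\{i+1,n+1-i\}$, exhausting the possibilities for $i=-1,0,\ldots,n/2$ and being pairwise distinct as unordered pairs.

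To show that signature is a complete invariant of connected components I would first establish local constancy: along a continuous path in $\mathbf{P}_{\mathbb{R}}^*\setminus\Delta_{\mathbb{R}}(\mathscr{L})$ the eigenvalues of $M_\lambda$ vary continuously, and none can cross zero without $\det M_\lambda$ vanishing. Conversely, any two forms of the same signature are congruent to the same normal form $\mathrm{diag}(I_p,-I_q)$, and hence lie in a single orbit of the congruence action $B\mapsto A^t B A$ of $GL(n+2,\mathbb{R})$ on nondegenerate symmetric forms. Connectedness of that orbit follows because the stabilizer $O(p,q)$ contains elements of determinant $-1$ and therefore meets both components of $GL(n+2,\mathbb{R})$; after projectivizing, the strata of signatures $(p,q)$ and $(q,p)$ are identified into a single component.

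The diffeomorphism types are then computed in Sylvester normal form by presenting $\mathbb{R}P^{n+1}$ as $S^{n+1}/\{\pm 1\}$. For $\lambda_{-1}=-\sum X_j^2$ there are no nonzero real solutions, so $\Sigma^{(\lambda_{-1})}_{\mathbb{R}}=\emptyset$. For $\lambda_i$ with $i\ge 0$, intersecting the equation $\sum_{j\le i}X_j^2=\sum_{j>i}X_j^2$ with $S^{n+1}$ forces $\sum_{j\le i}X_j^2=\sum_{j>i}X_j^2=\tfrac12$, so after rescaling each block by $\sqrt{2}$ the solution locus in $S^{n+1}$ is identified with $S^i\times S^{n-i}$. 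The antipodal involution of $S^{n+1}$ restricts to the simultaneous antipodal action on both factors, yielding the real quadric $(S^i\times S^{n-i})/\mathbb{Z}_2$; in the degenerate case $i=0$ the factor $S^0$ consists of two points interchanged by the $\mathbb{Z}_2$-action, so the quotient collapses to $S^n$.

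The main obstacle I expect is the careful bookkeeping for the connectedness argument: $O(p,q)$ has up to four components when $p,q>0$, and one must verify that these fit together with the two components of $GL(n+2,\mathbb{R})$ so as to yield a single connected congruence orbit. A cleaner alternative, which I would use if the group-theoretic version becomes cumbersome, is to construct an explicit path between any two forms of the same signature via a Gram--Schmidt-type procedure (rotating one Sylvester basis to another through a path of bases in which no eigenvalue of the induced Gram matrix vanishes). Apart from this connectedness step and a check of the stated component count against the $n/2+2$ signatures $\{i+1,n+1-i\}$ arising from $i\in\{-1,\ldots,n/2\}$, the remainder is a direct normal-form computation.
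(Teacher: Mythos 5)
Your proof is correct and follows essentially the same route as the paper: both classify points of $\mathbf{P}_{\mathbb{R}}^*\setminus\Delta_{\mathbb{R}}(\mathscr{L})$ by the projective (hence unordered) signature of the associated real symmetric form via Sylvester's law of inertia, establish connectedness of each signature stratum, and read off the diffeomorphism type by realizing the real quadric inside $S^{n+1}/\{\pm1\}$, where the constraint $\sum_{j\le i}X_j^2=\sum_{j>i}X_j^2$ together with $\sum X_j^2=1$ gives $S^i\times S^{n-i}$ before taking the antipodal quotient. The one place you go beyond the paper is the connectedness step: the paper asserts ``it is easy to show that there exists a path in $Sym(n+2;\mathbb{R})\cap GL(n+2,\mathbb{R})$'' from any nondegenerate symmetric matrix to its Sylvester normal form, whereas you supply the argument — the congruence orbit is $GL(n+2,\mathbb{R})/O(p,q)$ and is connected because $O(p,q)$ contains $\mathrm{diag}(-1,1,\ldots,1)$ of determinant $-1$, so the images of both components of $GL(n+2,\mathbb{R})$ under the orbit map coincide. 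That is correct, and the Gram--Schmidt path alternative you mention works as well.

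One substantive point you flag deserves a definite answer: your signature count is right, and the lemma's first sentence has an off-by-one error. For $n$ even the unordered signatures $\{p,q\}$ with $p+q=n+2$ are $\{0,n+2\},\{1,n+1\},\ldots,\{n/2+1,n/2+1\}$, which is $n/2+2$ classes; these are precisely the ones realized by $\lambda_i$ for $i=-1,0,\ldots,n/2$, a set of $n/2+2$ indices. The lemma's own enumeration of representatives is therefore internally inconsistent with its stated count of ``$n/2+1$ connected components,'' and the paper's proof never actually verifies the count — it only proves that equal signature implies same component. The correct number of components is $n/2+2$; this does not affect the rest of the section, which only uses the component containing each $\lambda_i$.
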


\begin{proof}
Let $\eta \in \mathbf{P}_{\mathbb{R}}^*\setminus \Delta_{\mathbb{R}}(\mathscr{L})$ be 
an arbitrary, non-degenerate, real hyperplane section and let $M$ be the matrix associated to $\eta$.
Suppose $M$ has signature $(k,n+2-k)$. It is easy to show that there exists a path in $Sym(n+2;\mathbb{R})\cap GL(n+2,\mathbb{R})$ (i.e. symmetric, invertible matrices) from $M$ to 
$diag(1,\ldots,1,-1,\ldots,-1)$, the diagonal matrix with the first $k$ diagonal elements equal to $1$ and the other diagonal entries equal to $-1$.

Notice that $(Q,\omega_Q)=(\Sigma_{\lambda_{n+1}},\omega_{\lambda_{n+1}})$.
Let $\phi_{(\lambda_i)}:(\Sigma^{(\lambda_i)},\omega_{\lambda_i})\to 
(Q,\omega_Q)$ be a symplectomorphism and let $K_i$ be the Lagrangian in $Q$ corresponding 
to $\Sigma_{\mathbb{R}}^{(\lambda_i)}$ in $\Sigma^{(\lambda_i)}$, i.e. 
$\phi(\Sigma_{\mathbb{R}}^{(\lambda_i)})=K_i$.
Then, 
\begin{equation*}
 \begin{array}{cccc}
  K_i &\cong& \{Z\in Q| Z_0,\ldots,Z_i\in \mathbb{R}, Z_{i+1},\ldots, 
Z_{n+1}\in i\mathbb{R}\}\\
&\cong& \{X\in \mathbb{R}P^{n+1}| X_0^2+\ldots+ X_i^2=X_{i+1}^2+ 
\ldots + X_{n+1}^2\}\\
&\cong& \begin{cases}
         \Sigma_{\mathbb{R}}^{(\lambda_i)}\cong S^i\times 
S^{n-i}/\mathbb{Z}_2 & 1\leq i\leq n/2\\
S^n & i=0\\
\emptyset & i=-1
        \end{cases}
 \end{array}
\end{equation*}

\end{proof}

The main result of this section is Theorem~\ref{thmC} from the introduction.

\begin{theorem}\label{thm:discs_real_parts}
\hspace{2em}
\begin{enumerate}
 \item If $n=2 \mod 4$ the discriminants of the Lagrangians 
$\Sigma_{\mathbb{R}}^{(\lambda_i)}$ are $4$ for all $0\leq i\leq n$.
 \item If $n=0 \mod 4$ the discriminants of the Lagrangians 
$\Sigma_{\mathbb{R}}^{(\lambda_i)}$ are $-4$ for $0\leq i\leq 1$ or $ n-1\leq i \leq n$.
\end{enumerate}
\end{theorem}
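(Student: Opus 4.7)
The plan is to reduce the computation to the case $i=0$, for which $\Sigma_{\mathbb{R}}^{(\lambda_0)}\cong S^n$ is the Lagrangian sphere in the quadric $(Q,\omega_Q)$, and where Biran and Membrez have established
\begin{equation*}
\Delta_{\Sigma^{(\lambda_0)}}=(-1)^{n(n-1)/2+1}\cdot 4,
\end{equation*}
which equals $4$ when $n\equiv 2\pmod 4$ and $-4$ when $n\equiv 0\pmod 4$. The value is then propagated to the remaining $\Sigma_{\mathbb{R}}^{(\lambda_i)}$ by chaining Theorem~\ref{thmB} along the cobordisms $V^{(i,i+1)}$.

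First I would construct the cobordism $V^{(i,i+1)}:\Sigma_{\mathbb{R}}^{(\lambda_i)}\to\Sigma_{\mathbb{R}}^{(\lambda_{i+1})}$. Take the Lefschetz pencil $t\lambda_i+(1-t)\lambda_{i+1}=X_0^2+\cdots +X_i^2+(1-2t)X_{i+1}^2-X_{i+2}^2-\cdots -X_{n+1}^2$; blowing up the base locus gives a real Lefschetz fibration $E^{(i,i+1)}\to\mathbb{C}P^1$ with a single critical point at $t=1/2$. Restricting to $\mathbb{C}$ and invoking Proposition~\ref{prop:real_part_is_cobordism} on the anti-symplectic involution inherited from complex conjugation, its fixed-point locus is a connected monotone Lagrangian cobordism between $\Sigma_{\mathbb{R}}^{(\lambda_i)}$ and $\Sigma_{\mathbb{R}}^{(\lambda_{i+1})}$, and the minimal Maslov number $N_V$ divides $n$ because the minimal Chern number of $E^{(i,i+1)}$ coincides with that of the quadric.

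Next I would verify the remaining hypotheses of Assumption~\ref{assum:main_theorem}. The homotopy type of $V^{(i,i+1)}$ relative to the thimble is that of the singular fibre relative to its vanishing cycle, which I analyse via the Serre fibration $S^i\to\Sigma_{\mathbb{R}}^{(\lambda_i)}\to S^{n-i}/\mathbb{Z}_2$ (and its counterpart at level $i+1$), singular homology with local coefficients, and the Gysin sequence; orientability of the ends follows from a parity count of antipodal actions, and orientability of the cobordism is then controlled by the vanishing cycle data. Spinnability is established via relative Stiefel--Whitney classes, their naturality under the inclusions $\Sigma_{\mathbb{R}}^{(\lambda_i)},\Sigma_{\mathbb{R}}^{(\lambda_{i+1})}\hookrightarrow V^{(i,i+1)}$, and the long exact sequences of the pairs; this works uniformly in $i$ when $n\equiv 2\pmod 4$, and for $i=0$ (together with $i=n-1$ by the symmetry $\lambda\leftrightarrow -\lambda$ which projectively sends $\lambda_i$ to $\lambda_{n-i}$) when $n\equiv 0\pmod 4$. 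For the rank condition $\rank QH_0(\Sigma_{\mathbb{R}}^{(\lambda_i)};\mathbb{Z})=2$, I would show that the standard complex structure on $Q$ is regular: reflecting a $J$-holomorphic disk by the anti-holomorphic involution produces a holomorphic sphere $u:\mathbb{C}P^1\to Q$, and the Birkhoff splitting of $u^*TQ$ into line bundles yields summands that are nef because $TQ$ is nef (the quadric is homogeneous). Feeding regularity into the pearl spectral sequence, it collapses, so $QH_*(\Sigma_{\mathbb{R}}^{(\lambda_i)})$ is wide; combined with orientability and $N_V\mid n$, this gives $QH_0$ of rank two.

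With these hypotheses in place I would apply Theorem~\ref{thmB} to each $V^{(i,i+1)}$ where the preceding step is available, obtaining $\Delta_{\Sigma^{(\lambda_i)}}=\Delta_{\Sigma^{(\lambda_{i+1})}}$. When $n\equiv 2\pmod 4$ the chain runs across all $i$ from $0$ to $n$, propagating the value $4$ from the Lagrangian sphere $\Sigma_{\mathbb{R}}^{(\lambda_0)}$ to every Lagrangian in the list. When $n\equiv 0\pmod 4$ the chain is available only across $V^{(0,1)}$, yielding $\Delta_{\Sigma^{(\lambda_1)}}=\Delta_{\Sigma^{(\lambda_0)}}=-4$; the symmetry $\Sigma_{\mathbb{R}}^{(\lambda_i)}\cong\Sigma_{\mathbb{R}}^{(\lambda_{n-i})}$ coming from replacing a defining polynomial by its negative (which gives the same projective point and hence the same hyperplane section) then supplies the values for $i=n-1,n$. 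The main obstacle I expect to be Step 2: the simultaneous verification of spinnability of $V^{(i,i+1)}$ (where the parity assumption on $n$ enters decisively through the relative Stiefel--Whitney calculation) and of wideness of the ends (requiring the nef splitting argument and a spectral-sequence collapse); Steps 1 and 3 are then essentially formal once Step 2 is in hand.
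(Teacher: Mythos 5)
Your proposal follows essentially the same strategy as the paper: it pins the value of the discriminant down at the sphere end $\Sigma_{\mathbb{R}}^{(\lambda_0)}\cong S^n$ via the Biran--Membrez computation $(-1)^{n(n-1)/2+1}\cdot 4$, builds the real Lefschetz fibrations and the cobordisms $V^{(i,i+1)}$, verifies monotonicity, orientability, spinnability (relative Stiefel--Whitney classes) and wideness (nef tangent bundle of the quadric, reflection of disks into spheres, Grothendieck splitting, collapsing pearl spectral sequence), and then propagates $\Delta$ by chaining Theorem~\ref{thmB}. This is exactly what the paper's two-line proof defers to the surrounding sections, so the argument is correct.

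Two small remarks. First, your description ``blowing up the base locus gives a real Lefschetz fibration $E^{(i,i+1)}\to\mathbb{C}P^1$ with a single critical point at $t=1/2$'' is imprecise as stated: over $\mathbb{C}P^1$ the pencil has \emph{two} singular members, a Lefschetz critical value at $[1:1]$ (your $t=1/2$) and a highly degenerate fiber at $[1:-1]$ (the quadratic form collapses to $2X_{i+1}^2$, a matrix of rank one, which is not a Morse singularity). Only after restricting to an affine chart that omits $[1:-1]$ does one obtain an honest Lefschetz fibration over $\mathbb{C}$ with a single critical value; the paper implements precisely this by removing a disk $D$ containing $\lambda_{[1:-1]}$. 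Second, for the $n\equiv 0\pmod 4$ case the paper works directly with the second spin cobordism $V^{(n-1,n)}$ (Proposition~\ref{prop:cobordism_spin} establishes spinnability for $i=0$ \emph{and} $i=n-1$), whereas you invoke the symmetry $\lambda\mapsto -\lambda$, which sends the signature $(i+1,n+1-i)$ to $(n-i+1,i+1)$ and thus identifies $\Sigma_{\mathbb{R}}^{(\lambda_i)}$ with $\Sigma_{\mathbb{R}}^{(\lambda_{n-i})}$ up to a real coordinate permutation. Both routes are valid; yours has the minor advantage of showing directly that the statement of the theorem is redundant in $i$ (each diffeomorphism type appears twice in the range $0\le i\le n$), but it buys nothing that the cobordism $V^{(n-1,n)}$ does not already give.
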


\subsection{Topology of the real parts}

We summarize results from~\cite{McC01} that compute the homology groups of fibrations.

\begin{definition}
 A local system  on a topological space $X$ is a locally constant sheaf $\mathcal{H}$ of discrete
abelian groups on $X$. 
If $X$ is path-connected then all fibers $\mathcal{H}_x$ are isomorphic to a discrete abelian 
group $H$. In this case a local system is equivalent to giving a homomorphism 
$\pi_1(X,x)\to Aut(H)$. 
\end{definition}

\begin{theorem}\label{thm:Leray-Serre_spectral}
 Let $G$ be an abelian group. Given a fibration, $\xymatrix{F \ar@{^{(}->}[r] & E 
\ar[r]^{\pi} & B}$, where $B$ is 
path-connected and $F$ is connected, there is a first quadrant spectral sequence, 
$\{E^r_{*,*},d^r\}$, converging to $H_*(E;G)$, with 
$$E^2_{p,q}\cong H_p(B;\mathcal{H}_q(F;G)),$$
the homology of the space $B$ with local coefficients in the homology $\mathcal{H}_q(F;G)$ of the fibre of 
$\pi$. 
\end{theorem}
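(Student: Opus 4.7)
The plan is to construct the spectral sequence from a suitable filtration of the singular chain complex of $E$ and identify the low pages explicitly. I will assume that $B$ admits a CW structure (otherwise replace it up to weak equivalence, which does not affect singular homology with local coefficients). Set $B^{(p)}$ to be the $p$-skeleton of $B$ and let $E_p := \pi^{-1}(B^{(p)})$. This produces an increasing filtration
\begin{equation*}
\emptyset = E_{-1} \subset E_0 \subset E_1 \subset \cdots \subset E
\end{equation*}
of the total space, and hence an increasing filtration $F_p C_*(E;G) := \operatorname{im}\bigl(C_*(E_p;G) \to C_*(E;G)\bigr)$ of the singular chain complex. The general machinery of filtered chain complexes (cf. the appendix on spectral sequences referenced in the introduction) then yields a spectral sequence with
\begin{equation*}
E^1_{p,q} \cong H_{p+q}(E_p, E_{p-1}; G),
\end{equation*}
converging to $H_*(E;G)$, provided I verify that the filtration is bounded below and exhaustive, which is immediate here.

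The heart of the argument is the identification of $E^1_{p,q}$ with the cellular cochains of $B$ with local coefficients. For each $p$-cell $e^p_\alpha$ of $B$ with characteristic map $\Phi_\alpha \colon (D^p, \partial D^p) \to (B^{(p)}, B^{(p-1)})$, local triviality of $\pi$ over the contractible cell gives a trivialization $\pi^{-1}(e^p_\alpha) \simeq D^p \times F$. By excision and a Mayer--Vietoris / wedge argument,
\begin{equation*}
H_{p+q}(E_p, E_{p-1}; G) \;\cong\; \bigoplus_{\alpha} H_{p+q}\bigl(D^p \times F, \partial D^p \times F; G\bigr) \;\cong\; \bigoplus_{\alpha} H_q(F_{\alpha}; G),
\end{equation*}
where $F_\alpha$ is the fiber over a chosen basepoint of $e^p_\alpha$ and the last isomorphism is the K\"unneth formula (or the suspension isomorphism for the pair $(D^p, \partial D^p)$). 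This is precisely the group of cellular $p$-chains of $B$ valued in the locally constant system $\mathcal{H}_q(F;G)$; the monodromy action comes from comparing the trivializations over adjacent cells via paths in $B$, and this comparison is independent of choices by the homotopy lifting property of fibrations.

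The next step is to identify the differential $d^1 \colon E^1_{p,q} \to E^1_{p-1,q}$ with the cellular boundary map with local coefficients. This is done by unwinding the connecting homomorphism in the triple $(E_p, E_{p-1}, E_{p-2})$ and comparing it cell-by-cell with the standard description of the boundary operator for cellular homology with local coefficients, where the attaching maps of the $p$-cells of $B$ determine both the incidence numbers and the parallel transport along paths in $B^{(p-1)}$. Once this match is established, passing to homology of $(E^1_{*,q}, d^1)$ produces
\begin{equation*}
E^2_{p,q} \cong H_p(B; \mathcal{H}_q(F;G)),
\end{equation*}
and the convergence statement follows from standard filtered complex arguments, using that $B$ is path-connected so the coefficient system is well-defined up to canonical isomorphism after choosing a basepoint.

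The main obstacle is the cellular identification together with the compatibility of $d^1$ with parallel transport: this requires carefully keeping track of how the local trivializations over different cells glue along $(p-1)$-cells, and showing that the resulting gluing data realize precisely the monodromy action $\pi_1(B) \to \operatorname{Aut}(H_q(F;G))$. All other steps (construction of the filtration, formal properties of the resulting spectral sequence, and convergence) are routine applications of the general theory of filtered chain complexes once this geometric input is in place.
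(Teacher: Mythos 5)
The paper does not prove this theorem: it is quoted verbatim from McCleary's book (the citation \cite{McC01} accompanies the statement), and the thesis simply uses it as a black box in the homology computations for $K_i$. So there is no ``paper's own proof'' to compare against. That said, your sketch is the standard skeletal-filtration construction that one would indeed find in McCleary (or in the usual references such as Hatcher's spectral-sequence notes), and the overall strategy --- filter $E$ by preimages of skeleta, identify $E^1_{p,q}$ with cellular $p$-chains of $B$ valued in the local system $\mathcal{H}_q(F;G)$, check $d^1$ is the cellular boundary, invoke the general filtered-complex machinery for convergence --- is correct.

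One point worth sharpening: you invoke ``local triviality of $\pi$ over the contractible cell.'' The hypothesis is only that $\pi$ is a (Serre or Hurewicz) fibration, and such maps need not be locally trivial. What is true, and what the argument actually requires, is that a fibration over a contractible (or CW) base is fiber-homotopy equivalent to a product; this is enough to identify $H_{p+q}\bigl(\pi^{-1}(\bar e^p_\alpha),\pi^{-1}(\partial e^p_\alpha);G\bigr)$ with $H_q(F_\alpha;G)$ via excision, the long exact sequence of the pair, and the fiber-homotopy trivialization, even though there is no honest product decomposition $D^p\times F$. You should also be slightly more careful with excision: $E_{p-1}\hookrightarrow E_p$ is not a CW pair unless you first replace the fibration by a fiberwise CW approximation, so one typically excises down to $\pi^{-1}$ of a closed neighborhood of the open cells and uses homotopy invariance rather than literally wedging off cells. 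With those adjustments your outline is a complete and correct proof; the heavy lifting, as you say, is matching $d^1$ with the local-coefficient cellular boundary via parallel transport along attaching maps.
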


If the action of $\pi_1(B)$ on the homology of the fibre $H_*(F;G)$ is trivial, also called a simple local system, this can 
be simplified to:

\begin{theorem}\label{thm:Leray-Serre_spectral_simple}
 Let $G$ be an abelian group. Given a fibration,  $\xymatrix{F \ar@{^{(}->}[r] & E 
\ar[r]^{\pi} & B}$, where $B$ is 
path-connected, $F$ is connected, and the system of local coefficients on $B$ determined 
by the fibre is simple, there is a spectral sequence, $\{E^r_{*,*},d^r\}$, with 
$$E^2_{p,q}\cong H_p(B;H_q(F;G)),$$
and converging to $H_*(E;G)$.
\end{theorem}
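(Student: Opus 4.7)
The plan is to deduce Theorem~\ref{thm:Leray-Serre_spectral_simple} directly from Theorem~\ref{thm:Leray-Serre_spectral}, which is stated just above it and which provides the full Leray--Serre spectral sequence with local coefficients. The only work required is to identify the $E^2$-page in the stated simple-coefficient form.

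First I would invoke Theorem~\ref{thm:Leray-Serre_spectral} verbatim: under the hypotheses that $B$ is path-connected and $F$ is connected, there is a first-quadrant spectral sequence converging to $H_*(E;G)$ with
\begin{equation*}
E^2_{p,q} \;\cong\; H_p(B;\mathcal{H}_q(F;G)),
\end{equation*}
where $\mathcal{H}_q(F;G)$ denotes the local system on $B$ whose stalk at $b\in B$ is $H_q(\pi^{-1}(b);G)$, with monodromy given by the canonical action of $\pi_1(B,b)$ on $H_q(F;G)$ induced by the fibration. The convergence and the differentials $d_r$ are provided by that theorem; none of this needs to be revisited.

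Next I would use the simplicity hypothesis to identify the local system with a constant one. By assumption the monodromy representation $\pi_1(B,b)\to \operatorname{Aut}(H_q(F;G))$ is trivial for every $q$. Since $B$ is path-connected, a locally constant sheaf on $B$ with trivial monodromy is canonically isomorphic to the constant sheaf with stalk $H_q(F;G)$. Therefore $\mathcal{H}_q(F;G)$ is isomorphic (as a local system on $B$) to the constant local system $H_q(F;G)$, and homology with local coefficients in a constant system coincides with ordinary singular homology of $B$ with coefficients in that abelian group:
\begin{equation*}
H_p(B;\mathcal{H}_q(F;G)) \;\cong\; H_p(B;H_q(F;G)).
\end{equation*}
Substituting this identification into the $E^2$-page from Theorem~\ref{thm:Leray-Serre_spectral} yields the desired form.

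There is essentially no obstacle: the content of the simple case is entirely captured by the general case together with the standard equivalence between locally constant sheaves on a path-connected space with trivial monodromy and constant sheaves. The only point worth double-checking is naturality, namely that the identification $\mathcal{H}_q(F;G)\cong H_q(F;G)$ is compatible with the construction of $E^2$ in Theorem~\ref{thm:Leray-Serre_spectral}, but this is automatic from the functorial construction of homology with local coefficients. Convergence to $H_*(E;G)$ is inherited without any additional argument from Theorem~\ref{thm:Leray-Serre_spectral}.
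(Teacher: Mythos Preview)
Your proposal is correct. The paper does not give its own proof of this statement at all: both Theorem~\ref{thm:Leray-Serre_spectral} and Theorem~\ref{thm:Leray-Serre_spectral_simple} are simply quoted as results from~\cite{McC01}, so your deduction of the simple-coefficient case from the local-coefficient case is more than the paper itself provides, and it is the standard argument.
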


For explicit computations it is useful to use equivariant homology groups. Let $\Pi$ be a 
group acting on a space $X$. Then $\Pi$ also acts on the chains $C_*(X)$ by sending a 
chain $\sigma$ to the composition $\xi\circ \sigma$ for $\xi \in \Pi$ and $C_*(X)$ 
becomes a left-module over the group ring $\mathbb{Z}[\Pi]$. We can turn $C_*(X)$ into a right-module by defining the action
$$C_*(X) \times \mathbb{Z}[\Pi] \to C_*(X): (c,\xi)\to c\cdot \xi:=\xi^{-1}\cdot c.$$
Moreover, it is easy to see that this descends to $H_*(X)$. 
Now suppose that $G$ is an abelian group on which $\Pi$ acts on the left. 
Consider the chain complex $(C_*(X)\otimes G,d_C\otimes Id_{G})$ 
and quotient it by the subgroup $Q(X,G)$ generated by all elements of the form 
$c\xi\otimes a-c\otimes \xi a=\xi^{-1}c\otimes a-c\otimes \xi a$ for $a\in G,\xi\in \Pi, c\in C_*(X)$. Denote the 
complex obtained in this way by $C_*(X)\otimes_{\Z[\Pi]}G$. In other words tensoring over $\mathbb{Z}[\Pi]$ corresponds to modding out the action of $\Pi$, since 
$$\xi \cdot (c\otimes a) =  c \xi^{-1}\otimes \xi a= c\otimes a.$$
Since the boundary operator of $C_*(X)$ maps $Q(X,G)$ to itself 
this induces an endomorphism $\delta$ on $C_*(X)\otimes_{\Pi}G$ which turns 
the latter into a chain complex. (I.e. $\delta\circ\delta=0$.)
The homology groups of this chain complex
$$E_q(X;G):=H_q(C_*(X)\otimes_{\Pi} G)$$ 
are called the \emph{equivariant homology groups} of $X$ with coefficients in the 
$\Pi$-module $G$.

Now let $\xymatrix{F \ar@{^{(}->}[r] & E 
\ar[r]^{\pi} & X}$ be a fibration, where $X$ is 
path-connected, $F$ is connected, and the system of local coefficients $\mathcal{H}$ on 
$X$ determined by the fibre. I.e. $\mathcal{H}_x:=H_q(\pi^{-1}(x);\mathbb{Z})$ for 
every $x\in X$.
Choose $\Pi:=\pi_1(X;x_0)$ and let $H:=H_q(\pi^{-1}(x_0);\mathbb{Z})$. 
Let $\tilde{X}\to X$ be the universal cover. Clearly $\Pi$ acts on $\tilde{X}$ by deck 
tranformations and moreover, it also acts on the local system $\mathcal{H}$ on $X$. 
The equivariant homology of the covering space $\tilde{X}$ is isomorphic to the homology of $X$ with local coefficients. 

\begin{theorem}[Eilenberg]\label{thm:equivariant=local_coeff}
The  homology groups $H_*(X; \mathcal{H})$ with respect to the system 
$\mathcal{H}=H_q(F;G)$ of local coefficients are isomorphic to the equivariant homology 
groups $E_*(\tilde{X};H)$, where $H=H_q(F_0;G)$ is the 
homology group of the fiber $F_0$ over the base point $x_0\in X$. 
\end{theorem}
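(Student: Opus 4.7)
The strategy is to exhibit a single chain complex that, on the one hand, tautologically computes the equivariant homology $E_*(\tilde{X};H)$ and, on the other hand, agrees with one of the standard models for homology with local coefficients $H_*(X;\mathcal{H})$. Once this identification of chain complexes is in place, the theorem follows by passing to homology.

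First I would recall the definition of $E_*(\tilde{X};H)$ given in the excerpt: $C_*(\tilde{X})$ is a right $\mathbb{Z}[\Pi]$-module with action $c\cdot \xi := \xi^{-1}\cdot c$, $H$ is a left $\Pi$-module, and
\begin{equation*}
E_q(\tilde{X};H) \;=\; H_q\bigl(C_*(\tilde{X})\otimes_{\mathbb{Z}[\Pi]} H,\, \partial\otimes \mathrm{id}_H\bigr),
\end{equation*}
where the differential is well-defined on the quotient because the singular boundary $\partial$ on $\tilde{X}$ commutes with deck transformations. Since $\tilde{X}\to X$ is a covering and $\Pi$ acts freely and properly, $C_*(\tilde{X})$ is a free right $\mathbb{Z}[\Pi]$-module, with one generator for each singular simplex of $X$ (after choosing a lift).

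Next I would set up the comparison with homology with local coefficients. For a simplex $\sigma\colon \Delta^n \to X$ choose a lift $\tilde\sigma\colon \Delta^n \to \tilde{X}$; any two lifts differ by a unique deck transformation. Declaring $\Phi(\sigma\otimes h) := \tilde\sigma\otimes h$ gives a map $C_*(X;\mathcal{H}) \to C_*(\tilde{X})\otimes_{\mathbb{Z}[\Pi]} H$, where $C_*(X;\mathcal{H})$ is the standard chain complex whose $n$-chains are formal sums $\sum \sigma\otimes h_\sigma$ with $h_\sigma \in \mathcal{H}_{\sigma(v_0)}$ and whose differential twists by parallel transport along faces. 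The key verifications are: (i) $\Phi$ is independent of the choice of lift, because $\tilde\sigma\xi\otimes h = \tilde\sigma\otimes \xi h$ in the quotient; (ii) the differential on $C_*(X;\mathcal{H})$ matches $\partial\otimes \mathrm{id}_H$, which follows from the fact that, under the lift, parallel transport along an edge corresponds exactly to the deck transformation needed to connect two chosen lifts of adjacent faces; and (iii) $\Phi$ is an isomorphism of chain complexes, since picking a set of lifts provides an inverse. Applying $H_*$ then gives the claimed isomorphism $H_*(X;\mathcal{H})\cong E_*(\tilde{X};H)$.

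The main obstacle is purely bookkeeping: one must keep track of the conversion between the left $\Pi$-action on $H$ and the right action on $C_*(\tilde{X})$ (via the inversion $c\cdot \xi = \xi^{-1}\cdot c$), and verify that the twisted differential on $C_*(X;\mathcal{H})$ matches the untwisted boundary after transport to $\tilde{X}$. No deep input is needed beyond the universal-cover construction and the freeness of $C_*(\tilde{X})$ as a $\mathbb{Z}[\Pi]$-module; the content of the theorem is that the local-coefficient twisting on $X$ and the equivariant identifications on $\tilde{X}$ are two descriptions of the same quotient chain complex.
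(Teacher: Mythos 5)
The paper does not supply its own proof of this theorem; it attributes the result to Eilenberg and refers the reader to Whitehead's textbook (\cite{Whi12}) for the argument. So there is nothing in the paper to compare your proposal against line by line.

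That said, your plan is the standard proof and it is sound. The chain complex $C_*(\tilde X)\otimes_{\mathbb Z[\Pi]} H$ is exactly the definition of equivariant homology given in the paper, and your map $\Phi$ (choose a lift per simplex, use the trivialization of $p^*\mathcal H$ over the simply connected $\tilde X$ to identify $\mathcal H_{\sigma(v_0)}$ with $H$, then send $\sigma\otimes h\mapsto\tilde\sigma\otimes h$) is the standard comparison isomorphism. The well-definedness check you flag in (i) is where the $c\cdot\xi=\xi^{-1}c$ convention earns its keep: changing the lift $\tilde\sigma$ to $\xi\tilde\sigma$ changes the identification of the stalk by $\xi$, and $\xi\tilde\sigma\otimes\xi h=\tilde\sigma\xi^{-1}\otimes\xi h=\tilde\sigma\otimes h$ in the tensor product over $\mathbb Z[\Pi]$, so the two conventions cancel. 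Your point (ii) — that the monodromy twist in the local-coefficient boundary on the $0$th face matches the deck transformation needed to reconcile lifts of adjacent faces — is the other place care is needed, and you have correctly identified it as bookkeeping rather than a conceptual obstacle. One small thing worth making explicit in a full write-up: the element $h$ on the right of $\Phi(\sigma\otimes h)=\tilde\sigma\otimes h$ is not literally the same object as the one on the left; it is its image under the stalk identification determined by $\tilde\sigma(v_0)$, and this dependence is exactly what makes the well-definedness in (i) a genuine (if easy) check rather than a tautology. With that caveat, the plan would compile into a complete proof.
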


For a proof and more explanations on local coefficients and equivariant 
homology groups see~\cite{Whi12}.

\begin{remark}
 If we choose a $CW$-structure on $\tilde{X}$ that is invariant under deck 
transformations we can instead compute the equivariant homology of the cellular chain 
complex. The homology groups obtained in this way are isomorphic to $H_*(X; \mathcal{H})$.
\end{remark}

\subsubsection{The singular homologies}

Consider the Serre fibration
$$S^{n-i}\to (S^i\times S^{n-i})/\mathbb{Z}_2\to \mathbb{R}P^i$$
with fiber $F\cong S^{n-i}$.
We may assume w.l.o.g that $i\leq n-i$.

\begin{lemma}\label{lem:HforK_i}
Suppose that $n$ is even and $i\leq n-i$. Denote by $\mathcal{H}_{n-i}$ the local 
system on $X$ defined by $\mathcal{H}_{n-i,x}:=H_{n-i}(F_x;\mathbb{Z})$.
The Lagrangians $K_i\cong (S^i\times S^{n-i})/\mathbb{Z}_2$ have the homology 
groups:
\begin{equation}
 H_*(K_i;\mathbb{Z})\cong \begin{cases}  

H_*(\mathbb{R}P^{i};\mathbb{Z})\oplus H_{*-(n-i)}(\mathbb{R}P^i;\mathbb{Z}) & i \text{ 
and } n-i \text{ are odd}\\
H_*(\mathbb{R}P^{i};\mathbb{Z})\oplus H_{*-(n-i)}(\mathbb{R}P^i;\mathcal{H}_{n-i}) & 
i \text{ and } n-i \text{ are even},
\end{cases}
\end{equation}
where 
$$H_{*-(n-i)}(\mathbb{R}P^i;\mathcal{H}_{n-i}) = \begin{cases} \mathbb{Z}_2 & n-i\leq 
* 
<n \text{ and $*$ is even}\\
\mathbb{Z} & *=n\\
0 & \text{ otherwise}
\end{cases}$$
is the homology of $\mathbb{R}P^i$ with local coefficients in the homology of the fibre.
\end{lemma}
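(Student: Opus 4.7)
The plan is to run the Serre spectral sequence of the fibration $S^{n-i} \hookrightarrow (S^i \times S^{n-i})/\mathbb{Z}_2 \to \mathbb{R}P^i$ (Theorem~\ref{thm:Leray-Serre_spectral}). First I would identify the local system $\mathcal{H}_q$ on the base. A loop in $\mathbb{R}P^i$ generating $\pi_1(\mathbb{R}P^i) = \mathbb{Z}_2$ lifts to a path from $x$ to $-x$ in $S^i$; transporting along this path and using the diagonal identification $(x,y)\sim(-x,-y)$ identifies the fiber over $[x]$ with itself via the antipodal map $y\mapsto -y$ on $S^{n-i}$. Since the antipodal map on $S^{n-i}$ has degree $(-1)^{n-i+1}$, the generator of $\pi_1(\mathbb{R}P^i)$ acts on $H_{n-i}(S^{n-i};\mathbb{Z})=\mathbb{Z}$ by $(-1)^{n-i+1}$. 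Hence $\mathcal{H}_{n-i}$ is the trivial system $\mathbb{Z}$ when $n-i$ is odd and the twisted system $\tilde{\mathbb{Z}}$ (the orientation sheaf of $\mathbb{R}P^i$, when $i$ is also even) when $n-i$ is even; $\mathcal{H}_0$ is always trivial, and $\mathcal{H}_q = 0$ for $q\notin\{0,n-i\}$.

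Next I would compute $H_p(\mathbb{R}P^i;\tilde{\mathbb{Z}})$ in the twisted case by lifting the standard $CW$-structure of $\mathbb{R}P^i$ to the equivariant $CW$-structure of $S^i$ and tensoring over $\mathbb{Z}[\mathbb{Z}_2]$ with $\tilde{\mathbb{Z}}$. The cellular boundary becomes $\partial_k = 1+(-1)^{k+1}$, which equals $2$ for $k$ odd and $0$ for $k$ even. A direct read-off gives $H_p(\mathbb{R}P^i;\tilde{\mathbb{Z}}) = \mathbb{Z}_2$ for $p$ even with $0\le p<i$, equals $\mathbb{Z}$ for $p=i$ (which forces $i$ even), and vanishes otherwise. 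Under the reindexing $p=\ast-(n-i)$ and using that $n-i$ is even, this is exactly the table asserted for $H_{\ast-(n-i)}(\mathbb{R}P^i;\mathcal{H}_{n-i})$.

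Finally I would establish degeneration at $E^2$. Since only the rows $q=0$ and $q=n-i$ are nonzero, the sole differential that could be nontrivial is $d_{n-i+1}\colon E^{n-i+1}_{p,n-i}\to E^{n-i+1}_{p-(n-i+1),0}$. But the source is supported in $0\le p\le i$, while the target column $p-(n-i+1)\le i-(n-i+1)<0$ by the assumption $i\le n-i$. So $d_{n-i+1}=0$ and $E^\infty=E^2$. Assembling the $E^\infty$-pieces then gives
\begin{equation*}
H_k(K_i;\mathbb{Z}) \;\cong\; H_k(\mathbb{R}P^i;\mathbb{Z}) \oplus H_{k-(n-i)}(\mathbb{R}P^i;\mathcal{H}_{n-i}),
\end{equation*}
which is the asserted formula. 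The main obstacle is making sure that there are no extension problems in this reassembly; the bottom row lives in total degrees $0,\dots,i$ and the top row in total degrees $n-i,\dots,n$, so these ranges are disjoint unless $i=n-i$. In the boundary case $n=2i$, the only overlap is in total degree $i$, where the extension is either $0\to\mathbb{Z}\to H_i\to\mathbb{Z}\to 0$ (both rows with trivial coefficients, $i$ odd — splits because $\mathbb{Z}$ is free) or $0\to 0\to H_i\to \mathbb{Z}_2\to 0$ ($i$ even — trivial), so the direct-sum decomposition holds in every case.
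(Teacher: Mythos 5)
Your proof is correct and follows essentially the same route as the paper: the Serre spectral sequence of $S^{n-i}\hookrightarrow K_i\to\mathbb{R}P^i$, identification of the local system via the degree of the antipodal map, the twisted computation via the equivariant cell structure on the double cover $S^i$, and collapse at $E^2$ from the column range together with $i\le n-i$ (you additionally settle the extension question, which the paper leaves implicit). One small slip: in the homological Serre spectral sequence $d_{n-i+1}$ has bidegree $(-(n-i+1),\,n-i)$, so it maps $E_{p,0}$ to $E_{p-(n-i+1),\,n-i}$ rather than the other way around; your vanishing argument applies verbatim to the correctly oriented differential, since the target column is again negative.
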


\begin{proof}
We abbreviate $X:=(S^i\times S^{n-i})/\mathbb{Z}_2$.
\begin{enumerate}
 \item Assume first that $i$ and $n-i$ are both odd. 
A generator of the group $\pi_1(\mathbb{R}P^i)\cong \mathbb{Z}_2$ acts on the fiber 
$F\cong S^{n-i}$ as the antipodal map. Since $n-i$ is odd the antipodal is homotopic to 
the identity and the action of $\pi_1(\mathbb{R}P^i)\cong \mathbb{Z}_2$ on
$H_*(F;\mathbb{Z})$ is trivial.
By Theorem \ref{thm:Leray-Serre_spectral_simple} there exists a spectral sequence with
$$E^2_{p,q}=H_p(\mathbb{R}P^i;H_q(F,\mathbb{Z}))$$
and that converges to $H_*(X;\mathbb{Z})$.
Since 
$$H_q(S^{n-i};\mathbb{Z})\cong \begin{cases} \mathbb{Z}& q=0,n-i\\
                                0 & \text{otherwise}
                               \end{cases}$$
it follows that
$$E^2_{p,q}=\begin{cases} H_p(\mathbb{R}P^i) & q= 0, n-i \\
             0 & \text{otherwise}
            \end{cases}.$$
Recall that $d_2:E^2_{p,q}\to E^2_{p-2,q+1}$. We claim that $d_2$ vanishes. 
Suppose first that $n-i=1$. The fact that $n$ is even and $i\leq n-i$ implies $i=1$. Notice that 
$d_2$ could be nonzero if and only if $q=0$. 
Hence, suffices to look at $d_2:E^2_{p,0}\to E^2_{p-2,1}$, which is 
$d_2:H_p(\mathbb{R}P^1;\mathbb{Z})\to H_{p-2}(\mathbb{R}P^1;\mathbb{Z})$. Clearly this 
vanishes. All higher differentials 
vanish, since the degree shift for the index $p$ is bigger than the dimension of 
$\mathbb{R}P^1$.
If $n-i\neq 1$, at least one of $E^2_{p,q}$ and $E^2_{p-2,q+1}$ is zero and hence, $d_2$ 
vanishes.
For the same reason all differentials $d_k$ vanish for $k\leq n-i$.
In particular, $E^2_{p,q}=E^3_{p,q}=\ldots=E^{n-i+1}_{p,q}$ and on the $n-i+1$-th page
\begin{equation*}\label{eq:d_n-i+1}
 d_{n-i+1}:E^{n-i+1}_{p,q}\to E^{n-i+1}_{p-n+i-1,q+n-i}.
\end{equation*}
Since $E^{n-i+1}_{p,q}=E^2_{p,q}=H_p(\mathbb{R}P^i;H_q(F;\mathbb{Z}))$ we  need to treat the case $q=0$. 
Hence
\begin{equation*}
 d_{n-i+1}:H_p(\mathbb{R}P^i;\mathbb{Z})\to H_{p-n+i-1}(\mathbb{R}P^i;\mathbb{Z}).
\end{equation*}
and this is zero if either $p> i$ or $p< n-i+1$.
As $i\leq n-i$ the latter happens for all $p$. 
All higher differentials vanish, since the degree shift for the index $p$ is bigger than 
the dimension of $\mathbb{R}P^{i}$.

It follows that the spectral sequence collapses on the second page and 
\begin{equation*}
 \begin{array}{cll}
  H_*(X,\mathbb{Z})&=&\bigoplus_{p+q=*}E_{p,q}^2\\
 &=&E_{*,0}^2\oplus E_{*-(n-i),n-i}^2\\
 &=& H_*(\mathbb{R}P^i;\mathbb{Z})\oplus H_{*-(n-i)}(\mathbb{R}P^i;\mathbb{Z}))\\
 \end{array}
\end{equation*}
\item Now we assume that $i$ and $n-i$ are both even. Then the action of 
$\pi_2(\mathbb{R}P^i)$ is trivial on $H_0(F;\mathbb{Z})$ and $-Id$ on 
$H_{n-i}(F;\mathbb{Z})$.
By Theorem~\ref{thm:Leray-Serre_spectral} there exists a spectral 
sequence with 
$$E_{p,q}^2\cong H_p(\mathbb{R}P^i;\mathcal{H}_q),$$
where $\mathcal{H}_q$ is the local system $H_q(F_x;\mathbb{Z})$ and $F_x\cong 
S^{n-i}$ is the fiber over $x$.
Since the action is trivial on $H_0(F_x;\mathbb{Z})$ we have 
$H_p(\mathbb{R}P^i;\mathcal{H}_0)\cong H_p(\mathbb{R}P^i;\mathbb{Z})$.
For he computation of $H_p(\mathbb{R}P^i;\mathcal{H}_{n-i})$ 
we apply Eilenberg's Theorem~\ref{thm:equivariant=local_coeff}.
The universal cover of $\mathbb{R}P^i$ is $S^i$.
We start with a cell complex on $S^i$, which has two cells in each dimension, say $E^k_+$ 
and $E^k_-$. More precisely,
$$E^k_+:=\{(x_1,\ldots, x_{k+1},0,\ldots,0)\in S^i| x_{k+1}\geq 0\}$$
$$E^k_-:=\{(x_1,\ldots, x_{k+1},0,\ldots,0)\in S^i| x_{k+1}\leq 0\}.$$
Moreover, we orient these cells in such a way that $A(E^k_+)=E^k_-$, where $A$ denotes 
the antipodal map of $S^i$.
We then have
$$\partial^{cell}_{2k}(E^{2k}_+)=E^{2k-1}_++E^{2k-1}_-=\partial^{cell}_{2k}(E^{2k}_-)$$
and
$$\partial^{cell}_{2k+1}(E^{2k+1}_+)=E^{2k}_+-E^{2k}_-=-\partial^{cell}_{2k+1}(E^{2k+1}_-).$$
Now $C_*^{cell}(S^i)$ is 
$$\xymatrix{
0 \ar[r] & \mathbb{Z}^2 \ar[r]^{\partial^{cell}_{i}} & 
\mathbb{Z}^2 \ar[r]^{\partial^{cell}_{i-1}}& \ldots \ar[r]^{\partial^{cell}_2} & 
\mathbb{Z}^2 
\ar[r]^{\partial^{cell}_1} & \mathbb{Z}^2 \ar[r] & 0
}$$
and we tensor this with $\otimes_{\pi_1(\mathbb{R}P^i)} H_{n-i}(F;\mathbb{Z})$.
Let $e\in H_{n-i}(S^{n-i};\mathbb{Z})$ be the fundamental class.
The action of a generator $g\in\pi_1(\mathbb{R}P^i)$ on the cells is the antipodal 
map, i.e.
$$g\cdot E^k_+=E^k_- \text{ and } g\cdot E^k_-=E^k_+.$$
On $H_q(F;\mathbb{Z})$ the generator $g\in\pi_1(\mathbb{R}P^i)$ acts by the map induced 
from the antipodal map. Since $F\simeq S^{n-i}$ and $n-i$ is even, this map is equal to 
$-Id$. 
Thus
$$g\cdot (E^k_+\otimes e)=g\cdot E^k_+\otimes g \cdot e=E^k_+\otimes g\cdot e=E^k_-\otimes (-e)=-E^k_-\otimes e$$
and similarly
$$g\cdot (E^k_-\otimes e)=-E^k_+\otimes e.$$
The chain complex $C_*^{cell}(S^i)\otimes_{\pi_1(\mathbb{R}P^i)} 
H_{n-i}(S^{n-i};\mathbb{Z})$ has one cell $[E^k_+\otimes e]=-[E^k_-\otimes e]$ in each 
dimension:
$$\xymatrix{
0 \ar[r] & \mathbb{Z} \ar[r]^{d_i} & \mathbb{Z} \ar[r]^{d_{i-1}}& 
\ldots \ar[r]^{d_2} & \mathbb{Z} \ar[r]^{d_1} & \mathbb{Z} \ar[r] & 0
},$$
where $d_k=\partial^{cell}_k\otimes Id$.
Hence, 
$$d_{2k}([E^{2k}_+\otimes e])=([E^{2k-1}_+]+[E^{2k-1}_-])\otimes e=0$$
and
$$d_{2k+1}([E^{2k+1}_+\otimes e])=([E^{2k}_+]-[E^{2k}_-])\otimes e=2[E^{2k}_+\otimes e].$$
We conclude that 
$$H_{*-(n-i)}(\mathbb{R}P^i;\mathcal{H}_{n-i}) = \begin{cases} \mathbb{Z}_2 & 
n-i\leq * 
<n \text{ and $*$ is even}\\
\mathbb{Z} & *=n\\
0 & \text{ otherwise}.
\end{cases}$$
\end{enumerate}

\end{proof}

\subsubsection{Orientability}

\begin{corollary}
 If $n$ is even $K_i$ is orientable for all $i$.
\end{corollary}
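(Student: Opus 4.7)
The plan is to deduce orientability directly from the description $K_i \cong (S^i \times S^{n-i})/\mathbb{Z}_2$ in which the generator acts by the diagonal antipodal map $(x,y) \mapsto (-x,-y)$. The case $i = 0$ (and by symmetry $i = n$) is trivial since $K_0 \cong S^n$ is a sphere, so I would assume $1 \leq i \leq n-1$, in which case the diagonal antipodal action on $S^i \times S^{n-i}$ is free and $K_i$ is a closed smooth $n$-manifold covered by the orientable manifold $S^i \times S^{n-i}$ via a double cover.

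The key observation is then the standard fact that a quotient of a connected, orientable manifold by a free finite group action is orientable if and only if every element of the group acts by an orientation-preserving diffeomorphism. So it suffices to compute the degree of the diagonal antipodal involution on $S^i \times S^{n-i}$. The antipodal map on $S^k$ has degree $(-1)^{k+1}$, hence the diagonal antipodal map on the product has degree
\begin{equation*}
(-1)^{i+1}\cdot (-1)^{n-i+1} \;=\; (-1)^{n+2} \;=\; (-1)^n.
\end{equation*}
Since $n$ is assumed to be even, this degree is $+1$, so the involution preserves orientation and the quotient $K_i$ inherits an orientation.

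As a cross-check, one could instead invoke Lemma~\ref{lem:HforK_i} directly: in the case $i, n-i$ both odd the top-degree piece of the formula gives $H_n(K_i;\mathbb{Z}) \cong H_i(\mathbb{R}P^i;\mathbb{Z}) = \mathbb{Z}$ (since $i$ is odd, so $\mathbb{R}P^i$ is orientable), while in the case $i, n-i$ both even the formula for $H_{*-(n-i)}(\mathbb{R}P^i;\mathcal{H}_{n-i})$ at $*=n$ gives $\mathbb{Z}$; either way $H_n(K_i;\mathbb{Z}) \cong \mathbb{Z}$, which for a closed connected $n$-manifold is equivalent to orientability. There is no real obstacle here; the only point requiring care is that the two parities of $i$ yield two different-looking homology computations and one has to extract the top class from each, but the degree argument above bypasses this entirely and is the most natural approach.
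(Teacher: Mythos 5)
Your argument is correct, and it takes a genuinely different route from the paper. The paper never argues via the covering involution at all: it records the corollary as an immediate consequence of Lemma~\ref{lem:HforK_i}, i.e.\ of the Leray--Serre spectral sequence computation with local coefficients, which in both parity cases yields $H_n(K_i;\mathbb{Z})\cong\mathbb{Z}$, and top integral homology $\mathbb{Z}$ for a closed connected $n$-manifold is equivalent to orientability --- exactly your ``cross-check.'' Your main argument instead uses the free $\mathbb{Z}_2$-action on the orientable double cover $S^i\times S^{n-i}$ and the standard criterion that the quotient is orientable if and only if the deck transformation preserves orientation, computing the degree of the diagonal antipodal map as $(-1)^{i+1}(-1)^{n-i+1}=(-1)^n=+1$ for $n$ even (with $K_0\cong K_n\cong S^n$ handled trivially). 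This is more elementary and self-contained: it needs no spectral sequence, treats both parities of $i$ uniformly, and makes transparent exactly where the hypothesis that $n$ is even enters (for $n$ odd and $1\le i\le n-1$ the same computation gives degree $-1$, consistent with the paper's later lemma on non-orientability in that range). What the paper's route buys is that the full homology of $K_i$, including the local-coefficient computation, is needed anyway for the subsequent wideness and spin arguments, so orientability comes for free once that work is done. Both proofs are complete; yours is a clean shortcut for this particular statement.
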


\begin{corollary}
 The homology groups with coefficients in $\mathbb{Z}_2$ are
\begin{equation}
 H_*(K_i;\mathbb{Z}_2)\cong H_*(\mathbb{R}P^{i};\mathbb{Z}_2)\oplus 
H_{*-(n-i)}(\mathbb{R}P^i;\mathbb{Z}_2).
\end{equation}
\end{corollary}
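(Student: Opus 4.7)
The plan is to re-run the Leray--Serre spectral sequence argument from the proof of Lemma~\ref{lem:HforK_i} for the Serre fibration $S^{n-i}\to K_i\to \mathbb{R}P^i$, but now with $\mathbb{Z}_2$ coefficients. The decisive simplification is that over $\mathbb{Z}_2$ the local system is always simple, regardless of the parities of $i$ and $n-i$: the generator of $\pi_1(\mathbb{R}P^i)\cong \mathbb{Z}_2$ acts on the fiber $S^{n-i}$ by the antipodal map, which induces multiplication by $(-1)^{n-i+1}\equiv 1 \pmod 2$ on $H_{n-i}(S^{n-i};\mathbb{Z}_2)$ and trivially on $H_0(S^{n-i};\mathbb{Z}_2)$. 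Hence Theorem~\ref{thm:Leray-Serre_spectral_simple} applies uniformly and gives
$$E^2_{p,q}\cong H_p(\mathbb{R}P^i; H_q(S^{n-i};\mathbb{Z}_2)),$$
concentrated in the two rows $q=0$ and $q=n-i$.

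Next I would copy essentially verbatim the differential-vanishing argument from the odd-odd case of Lemma~\ref{lem:HforK_i}. The differentials $d_r$ for $2\leq r\leq n-i$ vanish because either the source or the target sits in a zero row. The only potentially nontrivial differential is
$$d_{n-i+1}\colon E^{n-i+1}_{p,0}\longrightarrow E^{n-i+1}_{p-n+i-1,\, n-i},$$
and under the standing assumption $i\leq n-i$ (inherited from Lemma~\ref{lem:HforK_i}), a nonzero target forces $p\geq n-i+1>i$, so the source $H_p(\mathbb{R}P^i;\mathbb{Z}_2)$ vanishes. All higher differentials $d_r$ with $r>n-i+1$ vanish for the same degree-shift reason. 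Therefore the spectral sequence collapses at the $E^2$-page.

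Since $\mathbb{Z}_2$ is a field, the homology is a vector space and no extension problems arise, so one reads off directly
$$H_*(K_i;\mathbb{Z}_2)\cong E^2_{*,0}\oplus E^2_{*-(n-i),\, n-i}\cong H_*(\mathbb{R}P^i;\mathbb{Z}_2)\oplus H_{*-(n-i)}(\mathbb{R}P^i;\mathbb{Z}_2).$$
I do not expect serious obstacles: the whole point of switching to $\mathbb{Z}_2$-coefficients is that the twisted local system $\mathcal{H}_{n-i}$ which complicated the even-even case over $\mathbb{Z}$ becomes untwisted, and the rest is the (easy) odd-odd analysis that we already carried out. A conceivable alternative would be to apply the universal coefficient theorem to the integral result of Lemma~\ref{lem:HforK_i}, but in the even-even case this would require computing $\mathrm{Tor}(H_*(\mathbb{R}P^i;\mathcal{H}_{n-i}),\mathbb{Z}_2)$, which is strictly more bookkeeping than simply rerunning the spectral sequence mod $2$.
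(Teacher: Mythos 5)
Your argument is correct, and it is in fact the second of the two routes the paper itself indicates: the paper's stated proof is the one-line application of the universal coefficient theorem to Lemma~\ref{lem:HforK_i}, with the mod~$2$ Leray--Serre spectral sequence mentioned only as an alternative, which is exactly what you carry out. The comparison is as you anticipate. The UCT route is shortest on paper but, in the even-even case, does require matching $H_{*-(n-i)}(\mathbb{R}P^i;\mathcal{H}_{n-i})\otimes\mathbb{Z}_2$ together with the $\mathrm{Tor}$ terms against $H_{*-(n-i)}(\mathbb{R}P^i;\mathbb{Z}_2)$: from the explicit computation in the lemma ($\mathbb{Z}_2$ in the even degrees $n-i\leq *<n$ and $\mathbb{Z}$ in degree $n$), the $\mathrm{Tor}$ contributions fill in precisely the odd degrees, so the bookkeeping closes, but it is a genuine (if small) check. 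Your rerun of the spectral sequence with $\mathbb{Z}_2$ coefficients sidesteps local coefficients altogether, since the antipodal map acts by $(-1)^{n-i+1}\equiv 1$ on $H_{n-i}(S^{n-i};\mathbb{Z}_2)$, and your differential analysis is the same as in the odd-odd case of the lemma: for $2\leq r\leq n-i$ source or target lies in a zero row, the only candidate $d_{n-i+1}\colon E_{p,0}\to E_{p-n+i-1,\,n-i}$ dies because a nonzero target needs $p\geq n-i+1>i$ while $H_p(\mathbb{R}P^i;\mathbb{Z}_2)=0$ there, and over the field $\mathbb{Z}_2$ there are no extension problems, so the collapse at $E^2$ gives the claimed splitting. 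Both arguments are sound; yours trades the paper's brevity for uniformity across the parity cases.
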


\begin{proof}
 This follows from~\ref{lem:HforK_i} with the universal coeffient theorem. Alternatively, we could use Theorem~\ref{thm:Leray-Serre_spectral_simple}
 with coefficients in $\mathbb{Z}_2$. With $\mathbb{Z}_2$ coefficients the action of $\pi_1(\mathbb{R}P^{n-i})$ is always trivial on 
the fiber.
\end{proof}

\begin{lemma}
 If $n$ is odd, the Lagrangians $K_i$ in the quadric $Q\subset \mathbb{C}P^{n+1}$ are 
non-orientable for $2\leq i\leq n-2$.
\end{lemma}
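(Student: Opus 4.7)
The plan is to reduce the question of orientability to a degree computation for the deck transformation of the double cover $S^{i}\times S^{n-i}\to K_{i}$. Recall from the identification established earlier in the section that $K_{i}\cong (S^{i}\times S^{n-i})/\mathbb{Z}_{2}$, where the $\mathbb{Z}_{2}$-action is the diagonal antipodal map $\alpha\colon (x,y)\mapsto (-x,-y)$. For $2\leq i\leq n-2$ both spheres have dimension at least $2$, so the antipodal map on each factor acts freely, the diagonal action is free, and the quotient $K_{i}$ is a smooth closed manifold with the projection $q\colon S^{i}\times S^{n-i}\to K_{i}$ a smooth double cover.

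Next I would invoke the standard criterion: for a free action of $\mathbb{Z}_{2}$ by orientation-preserving/reversing diffeomorphisms on an orientable manifold, the quotient is orientable if and only if the nontrivial deck transformation preserves the chosen orientation. Hence the task reduces to computing the degree of $\alpha$ on $S^{i}\times S^{n-i}$. Using that the antipodal map on $S^{k}$ has degree $(-1)^{k+1}$, the Künneth isomorphism
\begin{equation*}
H_{n}(S^{i}\times S^{n-i};\mathbb{Z}) \cong H_{i}(S^{i};\mathbb{Z}) \otimes H_{n-i}(S^{n-i};\mathbb{Z})
\end{equation*}
identifies $\alpha_{*}$ on the top homology with multiplication by
\begin{equation*}
(-1)^{i+1}\cdot (-1)^{n-i+1} \;=\; (-1)^{n+2} \;=\; (-1)^{n}.
\end{equation*}

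When $n$ is odd this degree equals $-1$, so $\alpha$ reverses orientation, and therefore $K_{i}$ is non-orientable for every $2\leq i\leq n-2$. There is no real obstacle in this argument; the only point requiring care is confirming that the action is free in the stated range (guaranteed by $i,n-i\geq 2$) so that $K_{i}$ is a manifold and the orientability criterion via deck transformations applies. As a sanity check, one recovers the orientable corollary above for $n$ even from the same formula, since then $(-1)^{n}=+1$ and $\alpha$ preserves orientation.
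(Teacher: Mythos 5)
Your argument is correct, but it takes a genuinely different route from the paper. The paper proves this lemma by referring back to the Leray--Serre spectral sequence computation of Lemma~\ref{lem:HforK_i}: one computes $H_*(K_i;\mathbb{Z})$ with local coefficients from the fibration $S^{n-i}\hookrightarrow K_i\to \mathbb{R}P^i$, and for $n$ odd one finds $H_n(K_i;\mathbb{Z})\neq\mathbb{Z}$, which is the non-orientability criterion. You instead use the two-to-one cover $S^i\times S^{n-i}\to K_i$ directly: $K_i$ is orientable iff the nontrivial deck transformation $\alpha=(\text{antipodal})\times(\text{antipodal})$ preserves orientation, and its degree is $(-1)^{i+1}(-1)^{n-i+1}=(-1)^n$, so $\alpha$ is orientation-reversing exactly when $n$ is odd. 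This is more elementary and avoids the local-coefficient spectral sequence machinery; it also proves the stronger statement that $K_i$ is non-orientable for the full range $1\leq i\leq n-1$ (not just $2\leq i\leq n-2$), since the degree computation is independent of $i$. One very minor quibble: you state that the antipodal action is free because $i,n-i\geq 2$; in fact the diagonal antipodal action is free on $S^i\times S^{n-i}$ for all $i\geq 0$, so no dimension hypothesis is needed for the quotient to be a manifold. This does not affect the validity of the argument.
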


\begin{proof}
 This follows directly from a similar computation as in the proof of 
Lemma~\ref{lem:HforK_i}.
\end{proof}

\subsection{The cobordisms}\label{sec:cobordism}

For a fixed $i$ between $-1$ and $n+1$ there exists a cobordism between 
$\Sigma_{\mathbb{R}}^{\lambda_i}$ and 
$\Sigma_{\mathbb{R}}^{\lambda_{i+1}}$ inside the Lefschetz fibration described in~\ref{sec:Lefschetz_quadric}.
Let $\ell \subset 
\mathbf{P}^*$  be the pencil that passes through $\lambda_i$ and $\lambda_{i+1}$. 
\begin{remark}
This pencil as well as some of the subsequent notations clearly depend on $i$. 
For the sake of simplicity however and readability of this text, we choose to omit the 
index $i$ from the notation.
In the end of this paragraph, we will reintroduce the index $i$ in the notation of the so 
obtained Lefschetz fibration and the corresponding cobordism.
\end{remark}
The pencil $\ell$ can be parametrized by
$$\mathbb{C}P^1 \ni [t_0,t_1]\mapsto \lambda_{[t_0:t_1]}:=t_0 
\lambda_i +t_1 \lambda_{i+1}.$$
It is easy to see that $\ell\cap \Delta(\mathscr{L})= 
\{\lambda_{[1:1]},\lambda_{[1:-1]}\}$,
where $\lambda_{[1:1]}$ is a smooth point of $\Delta(\mathscr 
{L})$ and the intersection is transverse. The point $\lambda_{[1:-1]}$ does not lie in 
the smooth strata of $\Delta(\mathscr {L})$.
Recall that $\lambda_i$ and $\lambda_{i+1}$ lie inside $\mathbf{P}_{\mathbb{R}}^*$.
Thus also $\ell\subset \mathbf{P}_{\mathbb{R}}^*$ and it is invariant under the 
anti-holomorphic involution.
Choose a disk $D\subset \ell$ that contains the point 
$\lambda_{[1:-1]}$, but not the points $\lambda_{[1:1]}$, $\lambda_{[1:0]}=\lambda_i$ and 
$\lambda_{[0:1]}=\lambda_{i+1}$.
Then $\ell\setminus D\cong \mathbb{C}$. Fix an orientation 
preserving diffeomorphism
$\beta:\ell\setminus D \to \mathbb{C},$
such that $\beta(\lambda_{[1:1]})=(0,0)$, 
$\beta(\lambda_{[1:0]})=(-1,0)$ and 
$\beta(\lambda_{[0:1]})=(1,0)$. 
Set $E:=\{(\lambda,z)|\lambda\in \ell\setminus D, z\in 
\Sigma^{(\lambda)})$.
This yields a Lefschetz fibration
$$\pi:E\to \mathbb{C}.$$
By Proposition 6.5.1 in~\cite{BC15} the real part of 
$E$ is a cobordism between $\Sigma_{\mathbb{R}}^{\lambda_i}$ 
and $\Sigma_{\mathbb{R}}^{\lambda_{i+1}}$. Let us denote this cobordism by 
$V^{(\lambda_i,\lambda_{i+1})}$. If $i=-1$ or $i=n+1$ this cobordism has only 
one end and it is contractible.

Let $\Sigma^{sing}$ denote the real part of the singular fiber of $E$.  
Let $C_-$ be the intersection of the vanishing cycle in $\Sigma^{(\lambda_i)}$ 
with the Lagrangian $\Sigma_{\mathbb{R}}^{(\lambda_i)}$. Similarly, $C_+$ is the 
intersection of the vanishing cycle of $\Sigma^{(\lambda_{i+1})}$ with the Lagrangian
$\Sigma_{\mathbb{R}}^{(\lambda_{i+1})}$.
Under the symplectomorphism 
$\phi_{(\lambda_i)}:(\Sigma^{(\lambda_i)},\omega_{\lambda_i})\to 
(Q,\omega_Q)$ the cycle $C_-$ is mapped to the set $S^i\times 
\{(1,0,\ldots,0)\}\subset (S^i\times S^{n-i})/\mathbb{Z}_2$.
Similarly, $C_+$ is mapped to $\{(0,\ldots,0,1)\}\times 
S^{n-i-1}\subset (S^{i+1}\times S^{n-i-1})/\mathbb{Z}_2$ by the symplectomorphism 
$\phi_{(\lambda_{i+1})}$.
Moreover, $(\Sigma^{(\lambda_i)},C_-)$ and $(\Sigma^{(\lambda_{i+1})},C_+)$ are good pairs and 
$\tilde{H}_*(\Sigma^{sing})\cong 
H_*(\Sigma^{(\lambda_i)},C_-)$ as well as $\tilde{H}_*(\Sigma^{sing})\cong H_*(\Sigma^{(\lambda_{i+1})},C_+)$.
Thus 
$$\tilde{H}_*(\Sigma^{sing})\cong H_*((S^i\times S^{n-i})/\mathbb{Z}_2,S^i\times \{pt\}) 
$$
or 
$$\tilde{H}_*(\Sigma^{sing})\cong H_*((S^{i+1}\times S^{n-i-1}/\mathbb{Z}_2,\{pt\}\times 
S^{n-i-1}) 
.$$

\begin{lemma}
 The cobordism $V^{(\lambda_i,\lambda_{i+1})}$ has the same homotopy type as 
$\Sigma^{sing}$. It is orientable if $n$ is even and in particular $H_{n+1}(V,\partial V)\cong\mathbb{Z}$.
\end{lemma}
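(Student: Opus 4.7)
The plan has three stages: (i) identify the homotopy type of $V$ with $\Sigma^{sing}$ via Morse theory on $\pi|_V$, (ii) establish orientability of $V$ when $n$ is even, and (iii) deduce the top relative homology.

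For stage (i) I will apply Morse theory to $\pi|_V \colon V \to \mathbb{R}$. By Lemma~\ref{lem:crit_pt_on_lagrangian} the critical points of $\pi|_V$ are precisely the Lefschetz critical points that lie on $V$, so by construction there is a unique such point, namely the real part of the Lefschetz critical point of the pencil between $\lambda_i$ and $\lambda_{i+1}$. The local holomorphic normal form of $\pi$, restricted to the real locus, exhibits this point as a non-degenerate Morse critical point. Standard Morse/Lefschetz theory then produces a deformation retraction of $V$ onto $\Sigma_{\R}^{(\lambda_i)}\cup_{C_-}D^{i+1}$, where $D^{i+1}$ is the real Lefschetz thimble attached along the real vanishing cycle $C_-\cong S^i$. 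Since $(\Sigma_{\R}^{(\lambda_i)},C_-)$ is a good pair, this mapping cone is homotopy equivalent to $\Sigma_{\R}^{(\lambda_i)}/C_-$, which is identified with $\Sigma^{sing}$ as recalled earlier in the excerpt.

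For stage (ii) I view the same Morse function as giving $V$ a handle decomposition: $V$ is built from a collar $\Sigma_{\R}^{(\lambda_i)}\times [0,\tfrac{1}{2}]$ by attaching a single handle of index $k$ equal to the Morse index of the critical point. Both the collar and the handle are orientable when $n$ is even, using the fact that the ends $\Sigma_{\R}^{(\lambda_i)}$ and $\Sigma_{\R}^{(\lambda_{i+1})}$ are orientable for $n$ even as established in the excerpt. Away from the critical point, the restriction of $\pi|_V$ to each of the two open arcs is a trivial bundle with orientable fibre and orientable base, so $V$ minus the critical point is orientable with an orientation coming from the product of the fibre orientation and $\partial_t$. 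Near the critical point, the local model identifies a neighbourhood of that point in $V$ with an open subset of $\mathbb{R}^{n+1}$ (cut out by a non-degenerate real quadratic form in a holomorphic chart), which carries a canonical orientation. The main technical obstacle is verifying that these two orientations agree up to a consistent global sign, i.e.\ that the attaching framing of the handle is orientation-preserving. I plan to reduce this to a parity count of negative eigenvalues in the Hessian of the local real Morse model and to check that, combined with the orientation convention on the fibres $\Sigma_{\R}^{(\lambda)}$, the required parity holds precisely when $n$ is even.

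For stage (iii), once orientability is established, the isomorphism $H_{n+1}(V,\partial V;\mathbb{Z})\cong\mathbb{Z}$ follows from standard manifold topology: $V$ is connected (both ends are connected and joined through the thimble), compact, orientable, and an $(n+1)$-manifold with non-empty boundary, so the relative fundamental class $[V,\partial V]$ generates $H_{n+1}(V,\partial V;\mathbb{Z})$. The crux of the argument is therefore stage (ii), and specifically the compatibility of local and global orientations at the Lefschetz critical point, which is where the parity hypothesis on $n$ enters.
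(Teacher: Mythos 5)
Your stages (i) and (iii) are sound: the Morse-theoretic retraction of $V$ onto $\Sigma_{\mathbb{R}}^{(\lambda_i)}\cup_{C_-}D^{i+1}\simeq \Sigma^{sing}$ is exactly what underlies the assertion in the paper, and once orientability of the connected compact $(n+1)$-manifold $V$ with non-empty boundary is known, $H_{n+1}(V,\partial V)\cong\Z$ is immediate. The gap is in stage (ii), which you yourself identify as the crux but do not carry out, and the tool you propose there is not the right one. Orientation-compatibility of the handle attachment is not detected by a parity count of negative Hessian eigenvalues at the real critical point: whenever the handle has index $i+1\geq 2$ the attaching region $S^{i}\times D^{n-i}$ is connected, and a Mayer--Vietoris (connected-overlap) argument shows that orientability of the collar $\Sigma_{\mathbb{R}}^{(\lambda_i)}\times[0,\tfrac12]$ and of the handle already forces $w_1(V)=0$; no local check is needed, and the hypothesis ``$n$ even'' enters only through orientability of the closed fibres (Lemma~\ref{lem:HforK_i}). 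The only case in which a genuine compatibility question arises is the index-one handle ($i=0$), and there a local computation at the critical point cannot decide it: whether the two feet of a $1$-handle are attached orientation-compatibly is a global matter, depending on how the fibre orientations over the two arcs match along the whole attaching region, not on the signature of the local quadratic form. As written, the claim that ``the required parity holds precisely when $n$ is even'' is unsubstantiated, so the orientability statement --- and with it stage (iii) --- is not yet proved.

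There are two natural repairs. Either fix the handle direction: for $i\geq 1$ the index is $i+1\geq 2$ and the argument above applies, while for $i=0$ run the Morse function backwards and build $V$ from $\Sigma_{\mathbb{R}}^{(\lambda_{i+1})}\times[0,\tfrac12]$ by attaching a handle of index $n-i=n\geq 2$ (the one-ended cases $i=-1,n$ are real thimbles, hence disks). Or argue as the paper does, in the opposite logical order: it first computes $H_n(V)\cong\tilde H_n(\Sigma^{sing})\cong H_n(\Sigma_{\mathbb{R}}^{(\lambda_i)},C_-)\cong\Z$ from the long exact sequence of the pair (using orientability of the closed ends for $n$ even), then uses exactness of $0\to H_{n+1}(V,\partial V)\to H_n(\partial V)\cong\Z\oplus\Z\to H_n(V)\cong\Z$ together with the fact that $H_{n+1}(V,\partial V)$ is either $0$ or $\Z$ to conclude $H_{n+1}(V,\partial V)\cong\Z$, and only then deduces orientability as a consequence rather than taking it as an input. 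Your geometric route is viable, but only with the first repair replacing the proposed Hessian-parity step.
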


\begin{proof}
If $i=-1$ or $i=n$ the cobordism is contractible and the map $H_{n+1}(V,\partial V)\to H_n(\partial V)$ in the long exact sequence of the pair $(V,\partial V)$ becomes an isomorphism. We have seen that the end $\partial V\cong S^n$ and it follows that $V$ is orientable.

In any case, the cobordism $V$ deformation retracts onto $\Sigma^{sing}$. The long exact sequence of the pair $(\Sigma_{\lambda_i},C_-)$ for $i\in 
\{0, \ldots, n-2\}$ in degree $n$ is 
$$\xymatrix{ 0 \ar[r] & H_n(\Sigma_{\lambda_i}) \ar[r]^{j_*} & H_n(\Sigma_{\lambda_i}, C_-) \ar[r] & H_{n-1}(C_-).} $$
If $i\neq n-1$ and since $H_n(\Sigma_{\lambda_i})\cong \mathbb{Z}$, this implies that $\mathbb{Z}\cong H_n(\Sigma_{\lambda_i}, 
C_-)\cong \tilde{H}_n(\Sigma^{sing})\cong \tilde{H}_n(V)$.
If $i=n-1$ do the same calculation using $(\Sigma^{(\lambda_{i+1})}, C_+)\cong(S^n,S^0)$
Since $V$ is a manifold with boundary of dimension $n+1$ the top homology group 
$H_{n+1}(V,\partial V)$ is either $0$ or $\mathbb{Z}$. Consider the long exact sequence 
of the pair $(V,\partial V)$:
$$\xymatrix{ 0 \ar[r] & H_{n+1}(V,\partial V) \ar[r] & H_n(\partial V) 
\ar[r] \ar[d]^{\cong} & H_n(V)\ar[r] \ar[d]^{\cong} & \cdots\\
& & H_n(\Sigma^{(\lambda_0)})\oplus H_n(\Sigma^{(\lambda_1)}) \ar[d]^{\cong}& \mathbb{Z}\\
& & \mathbb{Z}\oplus \mathbb{Z} & &}$$
It follows $H_{n+1}(V,\partial V)\cong \mathbb{Z}$. 
\end{proof}

\subsection{The spin structures}

In this subsection all cohomology groups are taken with coefficients in $\mathbb{Z}_2$.
The manifolds $K_i\simeq S^i\times S^{n-i}/\mathbb{Z}_2$ can be expressed as the total 
space of a fibration 
$$S^i\hookrightarrow S^i\times S^{n-i}/\mathbb{Z}_2\rightarrow \mathbb{R}P^{n-i}.$$ 
In fact, we have a commutative diagram of the form 
\begin{equation}\label{diag:manyfibrations}
\xymatrix{ &  S^i\times S^{n-i}/\mathbb{Z}_2 \ar[d]^{q} \ar@/_/[ldd]_{\pi_1} 
\ar@/^/[rdd]^{\pi_2} & \\
& \mathbb{R}P^i\times \mathbb{R}P^{n-i} \ar[ld]^{pr_1} \ar[rd]_{pr_2} & \\
\mathbb{R}P^i &  & \mathbb{R}P^{n-i}}
\end{equation}
and in cohomology:
\begin{equation}\label{diag:manyfibrations_cohomology}
\xymatrix{ &  H^*(S^i\times S^{n-i}/\mathbb{Z}_2) & \\
& H^*(\mathbb{R}P^i\times \mathbb{R}P^{n-i}) \ar[u]^{q^*} & \\
H^*(\mathbb{R}P^i) \ar@/^/[uur]^{{\pi_1}^*} \ar[ur]_{{pr_1}^*}&  & 
H^*(\mathbb{R}P^{n-i}) \ar@/_/[luu]_{{\pi_2}^*} \ar[lu]^{{pr_2}^*}.}
\end{equation}

%\begin{theorem}[The Gysin sequence \cite{Bre13}]\label{thm:Gysin_sequence}
%Suppose that $\xymatrix{ S^{k-1} \ar@{^{(}->}[r] & E \ar[r]^-{p} & B}$ is an 
%$(k−1)$-sphere bundle over $B$. Then there is a long exact sequence
%\begin{equation}\xymatrix{ \cdots \ar[r] & H^{*−k}(B) \ar[r]^-{\cup e} & H^*(B) 
%\ar[r]^-{p^∗} & H^*(E)\ar[r] & H^{*−(k-1)}(B) \ar[r] & \cdots ,}\end{equation}
%where $e\in H_k(B)$ is the Euler class of the fiber bundle.
%\end{theorem}

\begin{proposition}\label{prop:Lagrangians_are_spin}
 \hspace{2em}
 \begin{enumerate}
  \item If $n=2 \mod 4$ then the Lagrangians $K_i$ are spin for all $0\leq i \leq n$.
  \item If $n=0 \mod 4$ then the Lagrangian $K_i$ is spin if and only if $i \in 
\{0,1,n-1,n\}$.
\end{enumerate}
\end{proposition}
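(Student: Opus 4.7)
} The plan is to realize each $K_i = (S^i \times S^{n-i})/\Z_2$ as a sphere bundle over a projective space, extract $w(TK_i)$ in closed form via the standard tangent-bundle splitting for a sphere bundle, and then read off when $w_2$ vanishes using the Gysin sequence.

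First I would identify $K_i$ with the unit sphere bundle $S((i+1)\xi)$ of $(i+1)$ copies of the tautological line bundle $\xi$ over $B := \mathbb{R}P^{n-i}$. One presents $K_i$ as the associated bundle $S^{n-i} \times_{\Z_2} S^i$, and extending the antipodal action on $S^i$ linearly to $-\mathrm{Id}$ on $\R^{i+1}$ exhibits the corresponding rank-$(i+1)$ vector bundle $S^{n-i}\times_{\Z_2}\R^{i+1}$ as $(i+1)\xi$. Under this identification, the projection $\pi_2$ of diagram~\eqref{diag:manyfibrations} coincides with the bundle map $\pi : K_i \to \mathbb{R}P^{n-i}$.

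Using the standard splitting $TS(E) \oplus \underline{\R} \cong \pi^*TB \oplus \pi^*E$ for the unit sphere bundle of a vector bundle $E \to B$, together with $w(T\mathbb{R}P^m) = (1+a)^{m+1}$ where $a \in H^1(\mathbb{R}P^m;\Z_2)$ is the canonical generator, I obtain
$$w(TK_i) \;=\; \pi^*\bigl((1+a)^{i+1}(1+a)^{n-i+1}\bigr) \;=\; \pi^*\bigl((1+a)^{n+2}\bigr).$$
Consequently $w_1(K_i) = (n+2)\,\pi^*a = 0$ (recovering orientability for $n$ even) and $w_2(K_i) = \binom{n+2}{2}\,\pi^*a^2$. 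A direct parity check gives that $\binom{n+2}{2}$ is even when $n \equiv 2 \pmod 4$ (write $n+2 = 4k$, so $\binom{4k}{2} = 2k(4k-1)$) and odd when $n \equiv 0 \pmod 4$ (write $n+2 = 2(2k+1)$, so $\binom{n+2}{2} = (2k+1)(4k+1)$). Case~(1) follows immediately.

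For case~(2) it remains to decide for which $i$ the class $\pi^*a^2$ vanishes in $H^2(K_i;\Z_2)$. I would feed this into the Gysin sequence of the sphere bundle $S((i+1)\xi) \to \mathbb{R}P^{n-i}$,
$$H^{1-i}(\mathbb{R}P^{n-i};\Z_2) \;\xrightarrow{\,\cup\, a^{i+1}\,}\; H^2(\mathbb{R}P^{n-i};\Z_2) \;\xrightarrow{\pi^*}\; H^2(K_i;\Z_2),$$
whose connecting homomorphism is cup product with the mod-$2$ Euler class $a^{i+1}$ of $(i+1)\xi$. For $i = 0,1$ the image of $\cup a^{i+1}$ already contains $a^2$, so $\pi^*a^2 = 0$ and $K_i$ is spin. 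For $i \geq 2$ the source $H^{1-i}$ vanishes, so $\pi^*a^2 = 0$ iff $a^2 = 0$ already in $H^2(\mathbb{R}P^{n-i})$, which forces $n - i \leq 1$, i.e.\ $i \in \{n-1,n\}$. Combining the two subcases yields the spin locus $\{0,1,n-1,n\}$ claimed in~(2). The only delicate point in this plan is verifying the bundle-theoretic identification $K_i \cong S((i+1)\xi)$ in the first step; once that is in place, everything else is a formal application of the Whitney sum formula and of the Gysin sequence.
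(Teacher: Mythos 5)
Your proof is correct, and it takes a genuinely different and more streamlined route than the paper's. The paper proceeds by cases: it handles $i\in\{0,1,n-1,n\}$ directly, then for $2\le i\le n-2$ compares $w_2(K_i)$ with $w_2(\mathbb{R}P^i\times\mathbb{R}P^{n-i})$ by pulling back along the double cover $q$, using a Gysin computation to relate $H^2(K_i)$ to $H^2(\mathbb{R}P^{n-i})$, and finally invokes Wu's formula separately for the edge case $n=4$, $i=2$. Your approach instead identifies $K_i$ once and for all as the unit sphere bundle $S((i+1)\xi)\to\mathbb{R}P^{n-i}$ and feeds this into the standard splitting $TS(E)\oplus\underline{\R}\cong\pi^*TB\oplus\pi^*E$ to get the closed-form expression $w(TK_i)=\pi^*(1+a)^{n+2}$, whence $w_2(K_i)=\binom{n+2}{2}\pi^*a^2$. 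The parity of $\binom{n+2}{2}$ immediately decides case~(1), and a single Gysin-sequence argument with Euler class $a^{i+1}$ settles which $i$ make $\pi^*a^2$ vanish in case~(2). This buys you several things: the case split into $n>4$, $n=4$, $n\le3$ disappears; the Wu-formula computation for $S^2\times S^2/\Z_2$ becomes unnecessary; and the answer $\binom{n+2}{2}$ exposes the dependence on $n\bmod 4$ transparently. It also sidesteps a slight overstatement in the paper's argument (the claim that $q^*\colon H^2(\mathbb{R}P^i\times\mathbb{R}P^{n-i})\to H^2(K_i)$ is an \emph{isomorphism} cannot be right as stated for $i,n-i\ge2$, since the source has rank $3$ and the target rank $1$; the paper's conclusion survives because $q^*w_2$ still equals $w_2(K_i)$ by naturality, but your formulation avoids relying on this). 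The one thing you flagged yourself as delicate --- the identification $K_i\cong S((i+1)\xi)$ --- is indeed the key step, and it is correct: extending the antipodal $\Z_2$-action on $S^i$ to $-\mathrm{Id}$ on $\R^{i+1}$ exhibits $S^{n-i}\times_{\Z_2}\R^{i+1}$ as $\bigoplus_{i+1}(S^{n-i}\times_{\Z_2}\R)=(i+1)\xi$, which is the standard description of the tautological line bundle.
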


\begin{proof}
We will prove the proposition in three steps. First we prove that for arbitrary $n$ 
the Lagrangians $K_i$ are spin if $i\in \{0,1,n-1,n\}$.
Then we will consider the Lagrangians $K_i$ with $2\leq i \leq n-2$ and prove that 
they are spin if and only if $n=2\mod 4$. The case $S^2\times S^2/\mathbb{Z}_2$ has to be 
considered separately. 
\begin{enumerate}
 \item Clearly $K_0$ and $K_{n}$ are diffeomorphic to $S^n$ and if $i\in \{1,n-1\}$ then $K_i$ is diffeomorphic to $S^1\times S^{n-1}$.
\item Now let us assume that $2\leq i \leq n-2$. We fix an $i$ and abbreviate 
$X:=S^i\times S^{n-i}/\mathbb{Z}_2$.
\begin{enumerate}
\item Assume now that $n>4$. Then either $i$ or $n-i$ is greater than $2$, and we may 
assume w.l.o.g. that $i>2$. 
The Gysin sequence (see~\ref{thm:Gysin_sequence}) of the fibration $S^i\hookrightarrow 
X\rightarrow \mathbb{R}P^{n-i}$ is
$$\scalebox{0.9}{
\xymatrix{ \ldots \ar[r] &H^{1-i}(\mathbb{R}P^{n-i})\ar[r]^-{\cup e} 
&H^2(\mathbb{R}P^{n-i}) \ar[r]^-{\pi_2^*} & H^2(X) \ar[r] & 
H^{2-i}(\mathbb{R}P^{n-i}) \ar[r]& \ldots}
}$$
Since $i>2$ this yields an isomorphism
$$\xymatrix{ \mathbb{Z}_2\cong H^2(\mathbb{R}P^{n-i})\ar[r]^-{\pi_2^*}_-{\cong} & H^2(X)}.$$
Diagram~(\ref{diag:manyfibrations_cohomology}) forces $q^*:H^2(\mathbb{R}P^{i}\times \mathbb{R}P^{n-i})\to H^2(X)$ to be an isomorphism. 
Hence, $w_2(X)=0$ if and only if $w_2(\mathbb{R}P^{i}\times \mathbb{R}P^{n-i})=0$.
Moreover, $w_1(\mathbb{R}P^j)=0 \iff \text{j odd}$ and $w_2(\mathbb{R}P^j)=0\iff 
j=3 \mod 4$. (See for example~\cite{GH14}.)
It is known (see for example~\cite{MilS16}) that the second 
Stiefel-Whitney class 
of $\mathbb{R}P^i\times\mathbb{R}P^{n-i}$ is $w_2(\mathbb{R}P^i)\otimes1 + 1\otimes 
w_2(\mathbb{R}P^{n-i}) + w_1(\mathbb{R}P^i)\otimes w_1(\mathbb{R}P^{n-i})$.
Combining this we find that $w_2(\mathbb{R}P^i\times\mathbb{R}P^{n-i})=0 \iff i+(n-i)=2 
\mod 4 \iff n=2\mod 4$.
\item We need to treat the case $n=4$ and $i=n-i=2$ separately.
The Gysin sequence of the fibration $\xymatrix{S^0 \ar@{^{(}->}[r] & X \ar[r]^-{q} & 
\mathbb{R}P^2\times \mathbb{R}P^2}$ gives:
$$\xymatrix{ \cdots \ar[r] & H^4(X) \ar[r] & H^4(\mathbb{R}P^2\times \mathbb{R}P^2) \ar[r]^-{\cup e} & H^5(\mathbb{R}P^2\times \mathbb{R}P^2)\ar[r] & \cdots \\}.$$
Since $H^5(\mathbb{R}P^2\times \mathbb{R}P^2)=0$ and $H^4(\mathbb{R}P^2\times \mathbb{R}P^2)\cong \mathbb{Z}_2 $ and also $H^4(X)\cong 
\mathbb{Z}_2$ this implies that $q^*: H^4(\mathbb{R}P^2\times \mathbb{R}P^2) \to H^4(X)$ 
is an isomorphism. 
Now consider the fibrations $\xymatrix{S^2 \ar@{^{(}->}[r] & X \ar[r]^-{\pi_i} & 
\mathbb{R}P^2}$ for $i=1,2$. The Gysin sequence yields
$$\xymatrix{0 \ar[r] & H^2(\mathbb{R}P^2) \ar[r]^-{\pi_{i}^*} & 
H^2(X) \ar[r] & H^{0}(\mathbb{R}P^2) \ar[r] & 0},$$
and in particular $\pi_i^*: H^2(\mathbb{R}P^2)\to H^2(X)$ is injective. 
Moreover, we know that the two maps 
$$\xymatrix{H^2(\mathbb{R}P^2) \ar[r]^-{pr_1^*} & H^2(\mathbb{R}P^2\times \mathbb{R}P^2)}\text{ and }
\xymatrix{H^2(\mathbb{R}P^2) \ar[r]^-{pr_2^*} & H^2(\mathbb{R}P^2\times \mathbb{R}P^2)}$$
are also injective by the K\"unneth Theorem.
I.e. we have a commutative diagram 
\begin{equation}\label{diag:manyfibrations_cohom}
\xymatrix{ &  H^2(X)  & \\
& H^2(\mathbb{R}P^2\times \mathbb{R}P^2) \ar[u]_{q^*}^{\cong}  & \\
H^2(\mathbb{R}P^2) \ar@{^{(}->}[ru]_{pr_1^*} \ar@{^{(}->}@/^/[ruu]^{\pi_1^*}&  & 
H^2(\mathbb{R}P^2) \ar@{_{(}->}[lu]^{pr_2^*} \ar@{_{(}->}@/_/[luu]_{\pi_2^*}.}
\end{equation}
Let $x$ denote the generator of the cohomology of the first $\mathbb{R}P^2$ factor, i.e.  
$H^*(\mathbb{R}P^2)\cong \mathbb{Z}_2[x]/(x^3)$, $|x|=1$. Similarly, let $y$ be the 
generator of the cohomology of the second $\mathbb{R}P^2$ factor, i.e.  
$H^*(\mathbb{R}P^2)\cong \mathbb{Z}_2[y]/(y^3)$, $|y|=1$.
Denote $\alpha:=pr_1^*(x)$ and $\beta:=pr_2^*(y)$. They are generators for the cohomology of $\mathbb{R}P^2\times 
\mathbb{R}P^2$. Then $\alpha^2\cup \alpha^2=\beta^2 
\cup\beta^2=0$ and $\alpha^2\cup \beta^2 =PD(pt)\in H^4(\mathbb{R}P^2\times 
\mathbb{R}P^2)$ is the 
unit. 
Set also $a:=\pi_1^*(x)$ and $b:=\pi_2^*(y)$. 
We see that 
\begin{equation}
 \begin{array}{ccccc}
  a^2\cup a^2 &=& q^*(0) &=& 0\\
  b^2\cup b^2 &=& q^*(0) &=& 0\\
  a^2\cup b^2 &=& q^*(PD(pt)) &=& PD(pt).\\
 \end{array}
\end{equation}

The following criterion is useful.

\begin{lemma}[\cite{GSS99}]
Let $X$ be a compact, oriented topological $4$-manifold and let $[X]$ denote the 
fundamental class in $H_4(X,\partial X;\mathbb{Z})$.
Let 
$$\begin{array}{rccc}
   Q_X:& H^2(X,\partial X;\mathbb{Z})\otimes H^2(X,\partial X; \mathbb{Z})&\to & 
\mathbb{Z}\\
	& (a,b) & \mapsto & \langle a \cup b,[X]\rangle
  \end{array}$$
denote the intersection form of $X$.
Then Wu's formula reads
$$\langle w_2(X),a\rangle =Q_X(PD(a),PD(a))\mod 2$$
for all $a\in H^2(X,\partial X)$ and where $PD(-)$ denotes the Poincar\'e dual. 
In particular $X$ is spin if and only if $Q_X$ is even.
\end{lemma}

In our case $Q_X$ corresponds to the matrix 
$\begin{pmatrix}
    0 & 1 \\
    1 & 0 \\
\end{pmatrix}$, which is not even and hence, $X$ is not spin.
\item If $n\leq 3$ then necessarily $i\in\{0,1,n-1,n\}$ and all $K_i$'s are spin by case 
$1$.
\end{enumerate}
\end{enumerate}
\end{proof}

\begin{proposition}\label{prop:cobordism_spin}
 \hspace{2em}
 \begin{enumerate}
  \item If $n=2 \mod 4$ then the cobordism $V^{(\lambda_i,\lambda_{i+1})}$ is spin for 
all $0\leq i \leq n$.
  \item If $n=0 \mod 4$ then the cobordism $V^{(\lambda_i,\lambda_{i+1})}$ is spin if and 
only if $i=0$ or $i=n-1$.
\end{enumerate}
\end{proposition}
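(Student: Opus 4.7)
The plan is to reduce spinnability of $V := V^{(\lambda_i,\lambda_{i+1})}$ to spinnability of its boundary components $K_i, K_{i+1}$ via naturality of Stiefel--Whitney classes, and then control the discrepancy through the long exact sequence of the pair $(V, K_k)$ in $\mathbb{Z}_2$-cohomology. By the preceding lemma $V$ is orientable (for $n$ even), so $w_1(V) = 0$ and spinnability reduces to the vanishing of $w_2(V) \in H^2(V;\mathbb{Z}_2)$. Each boundary inclusion $\iota_k : K_k \hookrightarrow V$ has a trivial collar normal bundle, so $TV|_{K_k} \cong TK_k \oplus \underline{\mathbb{R}}$ and hence $\iota_k^* w_2(V) = w_2(K_k)$. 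This naturality already yields the ``only if'' direction of~(2): if $V$ is spin then both $K_i$ and $K_{i+1}$ must be spin, which by Proposition~\ref{prop:Lagrangians_are_spin} forces $\{i, i+1\} \subset \{0,1,n-1,n\}$ when $n \equiv 0 \pmod 4$, so $i \in \{0, n-1\}$. The same argument handles the non-spin half: for $1 \le i \le n-2$ with $n \equiv 0 \pmod 4$, at least one of $K_i, K_{i+1}$ fails to be spin, so $\iota_k^* w_2(V) \neq 0$ and $V$ is not spin.

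For the remaining direction I want to show $w_2(V) = 0$ whenever both boundary components are spin. The main topological input is the surgery description of $V$: it is, up to smooth diffeomorphism, the trace of Picard--Lefschetz surgery on the vanishing cycle $C_- \cong S^i \subset K_i$ (equivalently on $C_+ \cong S^{n-i-1}\subset K_{i+1}$). Hence $V$ is homotopy equivalent as a pair to $(K_i \cup_{C_-} D^{i+1}, K_i)$, giving $V/K_i \simeq S^{i+1}$ and
$$
H^j(V, K_i; \mathbb{Z}_2) \;\cong\; \widetilde{H}^j(S^{i+1}; \mathbb{Z}_2),
$$
which is $\mathbb{Z}_2$ in degree $i+1$ and zero otherwise. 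The symmetric description yields $H^j(V, K_{i+1}; \mathbb{Z}_2) = \mathbb{Z}_2$ in degree $n-i$ and zero otherwise.

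Feeding this into the long exact sequence of $(V, K_k)$, the restriction $\iota_k^* : H^2(V;\mathbb{Z}_2) \to H^2(K_k;\mathbb{Z}_2)$ is injective whenever $H^2(V, K_k;\mathbb{Z}_2)$ vanishes, which holds for $(V,K_i)$ as soon as $i \neq 1$ and for $(V, K_{i+1})$ as soon as $n - i \neq 2$. In every remaining case under consideration---all $0 \le i \le n$ in~(1), and $i \in \{0, n-1\}$ in~(2)---at least one of these injectivities is available, so $w_2(V) = \iota_k^* w_2(V) = w_2(K_k) = 0$, completing both claims. The main obstacle will be to rigorously justify the surgery description of $V$ inside the real Lefschetz fibration---that the real thimble embeds as a disk with boundary exactly $C_-$ and with the framing making the smooth cobordism realize the claimed cell attachment up to homotopy equivalence of pairs---and to check by hand the borderline small-dimensional cases where both $i = 1$ and $n - i = 2$ are nearly simultaneous (e.g.\ $n = 4$ with $i = 1$ or $i = 2$), falling back on the explicit diffeomorphism types of $K_i$ provided by Lemma~\ref{lem:HforK_i} and the computation of $H^*(V;\mathbb{Z}_2)$ via $V \simeq \Sigma^{sing} \simeq \Sigma_{\mathbb{R}}^{(\lambda_i)}/C_-$.
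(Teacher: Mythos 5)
Your proposal is correct, and it reaches the same two pillars as the paper's proof---the ``only if'' half of (2) via restriction of $w_2$ to the (spin or non-spin) ends, and the ``if'' half via an injectivity statement in degree-two $\mathbb{Z}_2$-cohomology that fails only when the relevant vanishing sphere is a circle---but it gets the injectivity by a different mechanism. The paper trivializes $TV$ over the vanishing cycle $C_-\cong S^i$ using the thimble, works with Steenrod's \emph{relative} Stiefel--Whitney class $w_2(TV|_{K_i},TV|_{C_-})$, kills it using the exact sequence of the pair $(K_i,C_-)$ (needing $H^1(C_-)=0$, i.e.\ $i\neq 1$), and then transfers the vanishing to $w_2(V)$ through the relative homotopy equivalence of $(V,V\cap T)$ with the singular fiber rel its cone point, invoking naturality (Lemma~\ref{lem:naturality_SWclasses}); for $i=1$ it switches to $(K_{i+1},C_+)$, needing $n-i-1\neq 1$. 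You instead stay with the absolute class $w_2(V)$ and use the exact sequence of $(V,K_k)$, where the vanishing of $H^2(V,K_k;\mathbb{Z}_2)$ comes from the handle-attachment (trace-of-surgery) description, i.e.\ $H^*(V,K_i;\mathbb{Z}_2)\cong\widetilde H^*(S^{i+1};\mathbb{Z}_2)$ and $H^*(V,K_{i+1};\mathbb{Z}_2)\cong\widetilde H^*(S^{n-i};\mathbb{Z}_2)$; your exceptional conditions $i\neq1$ resp.\ $n-i\neq2$ are literally the same as the paper's, and both can only fail simultaneously when $n=3$, so since $n$ is even your flagged ``borderline'' worry (e.g.\ $n=4$, $i=1,2$) is vacuous---those cases are already settled as non-spin by the restriction argument. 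What your route buys is the elimination of the relative Stiefel--Whitney machinery and of the naturality lemma, at the price of needing the pair-level statement that $V$ retracts onto $K_i\cup_{C_-}B^{i+1}$ rel $K_i$; this is of exactly the same nature as (and follows from) what the paper already sets up in Section~\ref{sec:cobordism} (the deformation retraction onto the real singular fiber together with $T\cap V\cong B^{i+1}$ attached along $C_-$), so you are working at the same level of rigor as the paper's own argument. The only shared blemish is the degenerate one-ended cobordism $i=n$, where both your restriction argument and the paper's give no obstruction (that cobordism is contractible, hence spin), so the ``only if'' in (2) should implicitly be read for cobordisms with two nonempty ends.
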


Relative Stiefel-Whitney classes were discussed in~\cite{Ker57} and~\cite{Ste51}.
Let $B$ be a CW-complex and $E\to B$ a vector bundle. We associate to it the bundle 
$V_{n-1}(\mathbb{R}^n)\to V_{n-1}(E)\to B$ with fibre the Stiefel manifold 
$V_{n-1}(\mathbb{R}^n)$ of $n-1$-frames in 
$\mathbb{R}^n$.
Suppose $L$ is a subcomplex of $B$ and $E|_L$ is trivializable over $L$. Fix a 
trivialization $\theta$, which can also be seen as a section of $V_{n-1}(E)$. Then a 
representative of the Stiefel manifold may be obtained as follows.
Choose a cross-section $\theta$ of $V_{n-1}(E)$ over $L$ and extend it stepwise to 
$L\cup B^{(0)}$ and $L\cup B^{(1)}$. By definition this representative takes value zero 
on each $2$-simplex of $L$ and hence, defines a relative homology class $w_2(\theta)\in 
H^2(B,L;\mathbb{Z}_2)$ that depends on $\theta$ but not on the choice of extension to 
$L\cup B^{(2)}$ (see~\cite{Ste51}). In fact Steenrod shows that if $\theta'$ is another 
section then $w_2(\theta)-w_2(\theta')$ lies in the image of the connecting homomorphism 
$\delta$ of the sequence
$$\xymatrix{\cdots &H^2(B;\mathbb{Z}_2) \ar[l] & H^2(B,L;\mathbb{Z}_2)\ar[l]_{\phi^*} & 
H^1(L;\mathbb{Z}_2)\ar[l]_-{\delta} & \cdots \ar[l].}$$
Let us call the class $w_2(\theta)$ the relative Stiefel-Whitney class of $B \mod L$ 
associated to $\theta$. In particular $\phi^*(w_2(\theta))=w_2(E)$ is the usual second 
Stiefel-Whitney class of $E$ and it is independent of the choice of $\theta$.
Also relative Stiefel-Whiney classes fulfil naturality. The next lemma is a special case 
of Theorem 35.7 in~\cite{Ste51}.
\begin{lemma}[Steenrod]\label{lem:naturality_SWclasses}
 Let $f:(B',L')\to (B,L)$ be a map with $f(L')\subset L$ and $E\to B$ a vector bundle.
Then 
$$w_2(f^*E)=f^*(w_2(E)).$$
\end{lemma}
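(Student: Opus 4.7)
The plan is to prove this naturality statement directly from the obstruction-theoretic construction of the relative Stiefel--Whitney class that was recalled in the paragraph immediately preceding the lemma. By cellular approximation, I may assume $f:(B',L') \to (B,L)$ is cellular, so that $f^{(k)}(B'^{(k)}) \subset B^{(k)}$ for every skeleton dimension $k$ and $f(L') \subset L$. Fix a CW-structure on $B$ with $L$ a subcomplex and a trivialization $\theta$ of $E|_L$ (i.e.\ a section of $V_{n-1}(E)|_L$). Pull everything back: $f^{*}E \to B'$ inherits the trivialization $f^{*}\theta$ over $L'$, and $V_{n-1}(f^{*}E) = f^{*}V_{n-1}(E)$ as bundles with fibre $V_{n-1}(\mathbb{R}^n)$.

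First I would extend $\theta$ to a section $\tilde{\theta}$ of $V_{n-1}(E)$ over $L \cup B^{(1)}$, which is possible because $V_{n-1}(\mathbb{R}^n)$ is $(n-2)$-connected and $n \geq 2$. The obstruction cocycle $c(\tilde\theta) \in C^{2}(B, L;\, \pi_{1}(V_{n-1}(\mathbb{R}^n))) = C^{2}(B,L;\mathbb{Z}_2)$ is then defined by evaluating the homotopy class obtained on the boundary of each $2$-cell of $B$ not lying in $L$, and by construction its cohomology class is $w_2(\theta)$. On the primed side, $f^{*}\tilde\theta$ is a section of $V_{n-1}(f^{*}E)$ over $L' \cup B'^{(1)}$ extending $f^{*}\theta$, and its obstruction cocycle $c(f^{*}\tilde\theta)$ represents $w_2(f^{*}\theta) = w_2(f^{*}E)$ (using the canonical trivialization class).

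The key observation is that the obstruction cocycle is natural at the cochain level: for each $2$-cell $\sigma: D^{2} \to B'$ of $B'$ not in $L'$, either $f\circ \sigma$ is a $2$-cell of $B$ (possibly in $L$) or collapses onto a lower skeleton, and in either case the attaching-map computation shows
\begin{equation*}
\langle c(f^{*}\tilde\theta), \sigma \rangle \;=\; \langle c(\tilde\theta), f_{\#}\sigma \rangle \;=\; \langle f^{\#} c(\tilde\theta), \sigma \rangle
\end{equation*}
in $\mathbb{Z}_2$. Hence $c(f^{*}\tilde\theta) = f^{\#} c(\tilde\theta)$ as cellular cochains in $C^{2}(B',L';\mathbb{Z}_2)$, and passing to cohomology gives $w_2(f^{*}E) = f^{*}w_2(E)$ in $H^{2}(B',L';\mathbb{Z}_2)$.

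The only genuine subtlety is independence of choices: a different extension $\tilde\theta'$ of $\theta$ changes the cocycle by $\delta \eta$ for some $\eta \in C^{1}(B,L;\mathbb{Z}_2)$, and since $f^{\#}\delta = \delta f^{\#}$ this difference is absorbed in the cohomology class; similarly, the pullback of extensions is an extension of the pullback up to coboundary. The hardest point is thus not the naturality identity itself but verifying that the cellular-approximation replacement of $f$ does not alter $f^{*}w_2(E)$, which follows from the standard fact that homotopic maps of CW pairs induce the same map on relative cohomology, together with the fact that the pullback bundle and trivialization are well-defined up to homotopy.
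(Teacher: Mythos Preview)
The paper does not actually give a proof of this lemma; it simply records it as a special case of Theorem~35.7 in Steenrod's book \cite{Ste51}. Your proposal is therefore not competing with a proof in the paper but rather supplying the obstruction-theoretic argument that Steenrod carries out. The overall strategy---cellularly approximate $f$, pull back the trivialization $\theta$ and its extension $\tilde\theta$ over the $1$-skeleton, and compare obstruction cocycles---is exactly the standard one and is correct.

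One small imprecision: your description ``either $f\circ\sigma$ is a $2$-cell of $B$ or collapses onto a lower skeleton'' is not quite how cellular maps behave on cells. A cellular map sends a $2$-cell $\sigma$ of $B'$ into $B^{(2)}$, but $f\circ\sigma$ can cover several $2$-cells of $B$ with various degrees. The cochain identity $c(f^*\tilde\theta)=f^{\#}c(\tilde\theta)$ really follows because the obstruction on $\sigma$ is the homotopy class in $\pi_1(V_{n-1}(\mathbb{R}^n))\cong\mathbb{Z}_2$ of the pulled-back section along $\partial\sigma$, and this class is additive under the cellular chain map $f_{\#}$ (each $2$-cell of $B$ contributes according to the local degree). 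This is the content of Steenrod's Theorem~35.7, and once stated correctly your argument goes through. The independence-of-choices discussion at the end is accurate.
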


\begin{proof}[Proof of~\ref{prop:cobordism_spin}]
Let $\pi:E\to C$ be the Lefschetz fibration with real part 
$V^{(\lambda_i,\lambda_{i+1})}$. The set $C_-:=Z_-\cap \Sigma^{(\lambda_i)}$, is 
diffeomorphic to $S^{i}$.
Let $T^i$ denote the corresponding thimble of $Z_-$. The vector bundle $TV|_{C_-}$ is trivializable 
as it extends to $T^i\cap V\cong B^{i+1}$.
Hence, $$w_i(TV|_{C_-})=0 \text{ for all i.}$$
 \begin{enumerate}
  \item 
If $n=2 \mod 4$ the Lagrangian $\Sigma_{\mathbb{R}}^{(\lambda_i)}$ is spin by 
Proposition~\ref{prop:Lagrangians_are_spin} and thus its Stiefel-Whitney class $w_2(\Sigma_{\mathbb{R}}^{(\lambda_i)})$ vanishes. 
Since the normal bundle of $\Sigma_{\mathbb{R}}^{(\lambda_i)}$ in 
$V^{(\lambda_i,\lambda_{i+1})}$ is 
trivial also $w_2(TV|_{\Sigma_{\mathbb{R}}^{(\lambda_i)}})=0$.
Choose a trivialization $\theta$ of $TV|_{C_-}$.
By~\ref{lem:naturality_SWclasses} and the discussion above
\begin{equation}\label{eq:w_2}
j^*(w_2(TV|_{\Sigma_{\mathbb{R}}^{(\lambda_i)}},TV|_{C_-}))=w_2(TV|_{\Sigma_{\mathbb{R}}^{
(\lambda_i) } } )=0 ,
\end{equation}
where $j^*:H^2(\Sigma_{\mathbb{R}}^{(\lambda_i)},C_-)\to 
H^2(\Sigma_{\mathbb{R}}^{(\lambda_i)})$ is induced by the 
inclusion $j:(\Sigma_{\mathbb{R}}^{(\lambda_i)},\emptyset) \hookrightarrow 
(\Sigma_{\mathbb{R}}^{(\lambda_i)},C_-)$.
The map $j^*$ is part of the long exact sequence
$$\xymatrix{ \cdots \ar[r] & H^1(C_-) \ar[r]^-{\delta} & 
H^2(\Sigma_{\mathbb{R}}^{(\lambda_i)},C_-) 
\ar[r]^-{j^*} & H^2(\Sigma_{\mathbb{R}}^{(\lambda_i)}) \ar[r] &\cdots }$$
If $i\neq 1$ then $H^1(C_-)=0$ and $j^*$ is injective, which implies 
$w_2(TV|_{\Sigma_{\mathbb{R}}^{(\lambda_i)}},TV|_{C_-})=0$ by~(\ref{eq:w_2}). 
As the normal bundle of $\Sigma_{\mathbb{R}}^{(\lambda_i)}$ is trivial also
$w_2(\Sigma_{\mathbb{R}}^{(\lambda_i)}, C_-)=0$.

Let $\Sigma_{\mathbb{R}}^{sing}$ denote  the real part of the singular fiber of $E$. Let $x_0\in \Sigma_{\mathbb{R}}^{sing}$ be the 
singular point.
Let $F:(V,V\cap T)\times I\to (V,V\cap T)$ be the obvious 
deformation retraction of $(V,V\cap T)$ onto $(\Sigma_{\mathbb{R}}^{sing},x_0)$. 
In particular $F(\cdot, 1)=k\circ r:(V,V\cap T)\to (V,V\cap T)$, where $r:(V,V\cap T)\to 
(\Sigma_{\mathbb{R}}^{sing},x_0) $ is a retraction and $k:(\Sigma^{sing}_{\mathbb{R}},x_0)\hookrightarrow (V,V\cap T)$ the inclusion.
Define the map 
$f:=r|_{(\Sigma_{\mathbb{R}}^{(\lambda_i)},C_-)}:(\Sigma_{\mathbb{R}}^{(\lambda_i)},
C_-)\to 
(\Sigma_{\mathbb{R}}^{sing},x_0)$.
The maps $i$ and $k\circ f$ in the diagram
$$\xymatrix{ (\Sigma_{\mathbb{R}}^{(\lambda_i)},C_-) \ar@{^{(}->}@/^/[r]^{i} \ar[d]^{f} & 
(V,V\cap T)\\
(\Sigma_{\mathbb{R}}^{sing},x_0) \ar@{^{(}->}@/^/[ru]^{k}}$$
are homotopic (via $F|_{(\Sigma_{\mathbb{R}}^{(\lambda_i)},C_-)}$).
As $f$ and $k$ induce isomorphism in homology, so does the inclusion 
$i:(\Sigma_{\mathbb{R}}^{(\lambda_i)},C_-)\hookrightarrow (V,T\cap V)$:
$$\xymatrix{i^*:H^*(V,T\cap V)\ar[r]^{\cong} & 
H^*(\Sigma_{\mathbb{R}}^{(\lambda_i)},C_-)}.$$
By naturality $0=w_2(\Sigma_{\mathbb{R}}^{(\lambda_i)},C_-)=i^*(w_2(V,T\cap V))$ and 
since $i^*$ is an isomorphism $w_2(V,T\cap V)=0$. 
Now, by definition $w_2(V)=0$ and hence, $V$ is spin.

If $i=1$ then since $n\neq 3$ by assumption we can repeat the same argument for the 
pair $(\Sigma_{\mathbb{R}}^{(\lambda_{i+1})},C_+)$, with $C_+\cong S^{n-i-1}$ and conclude 
analogously 
that $V^{(\lambda_i,\lambda_{i+1})}$ is spin.

\item A spin structure on a cobordism induces a spin 
structure on its boundaries. Hence, by~\ref{prop:Lagrangians_are_spin} the cobordism 
$V^{(\lambda_i,\lambda_{i+1})}$ is not spin if $n=0 \mod 4$ and $i\neq 0, n-1$.
If $i\in\{0,n-1\}$ apply the methods from part 1 to show that 
$V^{(\lambda_i,\lambda_{i+1})}$ is spin.
\end{enumerate}
\end{proof}

\subsection{Regularity of the standard complex structure}

By the standard complex structure on $Q$ we mean the complex structure induced by the 
standard complex structure $J_0$ on $\mathbb{C}P^{n+1}$.
Endow $\mathbb{C}P^{n+1}$ with the Fubini-Study K\"ahler form 
$\omega_{FS}$, then also $\Sigma^{(\lambda)}$ is K\"ahler with the induced complex 
structure and symplectic form. With respect to this K\"ahler structure 
$\mathbb{R}P^{n+1}$ is a totally real submanifold of $\mathbb{C}P^{n+1}$ and 
similar for $\Sigma^{(\lambda)}_{\mathbb{R}}\subset \Sigma^{(\lambda)}$.
We briefly discuss the regularity of the linearized $\overline{\partial}$-operator.
The aim of this section is to prove:
\begin{proposition}\label{prop:D_u_is_onto}
 For every $\lambda\in \mathbf{P}_{\mathbb{R}}\setminus 
\Delta_{\mathbb{R}}(\mathscr{L})$ and every $J_0$-holomorphic disk 
$$u:(D^2,\partial D^2)\to (\Sigma^{(\lambda)},\Sigma_{\mathbb{R}}^{(\lambda)}),$$
the operator $D_u:=D_u(\overline{\partial} _{J_0})$ is onto. 
\end{proposition}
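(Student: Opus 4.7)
My plan is to execute the reflection trick outlined in the introduction, reducing regularity on the disk to a Grothendieck splitting computation on $\mathbb{C}P^1$.

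First, I would use the anti-holomorphic involution $c=c_{\Sigma^{(\lambda)}}$ (whose fixed locus is $\Sigma_{\mathbb{R}}^{(\lambda)}$) to Schwarz reflect $u$: define $\bar u: D^2 \to \Sigma^{(\lambda)}$ by $\bar u(z):=c(u(\bar z))$. Since $c$ and $z\mapsto \bar z$ are both anti-holomorphic, $\bar u$ is $J_0$-holomorphic, and it coincides with $u$ on $\partial D^2$ because $u(\partial D^2)\subset \mathrm{Fix}(c)$. Gluing $u$ and $\bar u$ along the boundary yields a $J_0$-holomorphic sphere $\tilde u:\mathbb{C}P^1\to \Sigma^{(\lambda)}$ satisfying $c\circ \tilde u=\tilde u\circ c_{\mathbb{C}P^1}$. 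By Proposition~\ref{prop:real_part_is_cobordism} one has $c_1(\tilde u)=\mu(u)$.

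Second, I consider the holomorphic vector bundle $E:=\tilde u^*T\Sigma^{(\lambda)}$ on $\mathbb{C}P^1$. The derivative $dc$ lifts to an anti-holomorphic involution of $E$ covering $c_{\mathbb{C}P^1}$, equipping $E$ with the structure of a real holomorphic bundle over $(\mathbb{C}P^1,c_{\mathbb{C}P^1})$. By the real (equivariant) version of Grothendieck's splitting theorem, $E$ decomposes as $\bigoplus_{k=1}^n \mathcal{O}(a_k)$ with each summand preserved by the real involution. Restricting this splitting to the upper hemisphere $D^2\subset \mathbb{C}P^1$ produces a direct sum decomposition of the bundle pair $\bigl(u^*T\Sigma^{(\lambda)},\,(u|_{\partial D^2})^*T\Sigma_{\mathbb{R}}^{(\lambda)}\bigr)$ into line bundle pairs $(L_k,F_k)$, where the Maslov index of the totally real line $F_k\subset L_k$ equals the degree $a_k$ of the corresponding summand on $\mathbb{C}P^1$ (the standard Maslov--Chern identification for real holomorphic line bundles on $(\mathbb{C}P^1,c)$).

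Third, I would invoke nef-ness of $T\Sigma^{(\lambda)}$. The smooth quadric $\Sigma^{(\lambda)}\cong Q$ is a homogeneous variety $SO(n+2)/(SO(n)\times SO(2))$; its tangent bundle is generated by the Lie algebra of Killing vector fields, hence globally generated and nef. Consequently the pullback $E=\tilde u^*TQ$ is globally generated, so $a_k\geq 0$ for every $k$, and in particular each $(L_k,F_k)$ has non-negative Maslov index. A standard dimension count (automatic transversality for the Dolbeault operator on a disk with totally real boundary of Maslov index $\mu\geq -1$) shows that each $\bar\partial_{(L_k,F_k)}$ is surjective; since $D_u$ splits as the direct sum of these operators, $D_u$ is surjective as well.

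The routine steps are the reflection and the splitting of the operator; the main point requiring care is the compatibility of the Grothendieck decomposition with the real structure and the identification $a_k=\mu(F_k)$ — both are now classical (see e.g. Fukaya--Oh--Ohta--Ono or Biswas--Huisman--Hurtubise). The genuine geometric input that makes the argument work is the nef-ness of $TQ$, which as indicated follows immediately from homogeneity of the quadric.
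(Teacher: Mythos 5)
Your proposal is correct, and it shares the paper's two essential inputs: the reflection (doubling) of the disk through the anti-symplectic involution to a holomorphic sphere, and the nef-ness of $T\Sigma^{(\lambda)}$ coming from homogeneity of the smooth quadric (Corollary~\ref{col:globally_gen_nef}), which forces all Grothendieck summands of the pulled-back tangent bundle to have degree $\geq 0$. Where you diverge is in how surjectivity on the disk is extracted from the sphere. The paper never splits the boundary value problem on the disk: it writes down the adjoint problem for $D_u$, reflects a putative nonzero cokernel element $\eta$ to a vector field $X$ along the doubled sphere $v$ solving the reflected adjoint equation, so $X\in\ker D_v^*$, and then gets a contradiction with Proposition~\ref{prop:D_v_is_onto}, which follows from the ordinary Grothendieck splitting together with the integrable-case sphere criterion of Lemma~\ref{lem:regularity_critirion} (all summands of $v^*T\Sigma^{(\lambda)}$ have $c_1\geq -1$). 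You instead invoke an equivariant (real) Grothendieck splitting of $\tilde u^*T\Sigma^{(\lambda)}$, restrict it to the disk to split the bundle pair $\bigl(u^*T\Sigma^{(\lambda)},(u|_{\partial D^2})^*T\Sigma^{(\lambda)}_{\mathbb{R}}\bigr)$ into line bundle pairs whose partial (Maslov) indices equal the degrees $a_k\geq 0$, and then use automatic surjectivity of $\overline{\partial}$ for line bundle pairs of index $\geq -1$. Both routes are sound; yours needs the extra classical input that the splitting can be chosen compatibly with the real structure and that the partial indices are the degrees of the double (Oh, Biswas--Huisman--Hurtubise, FOOO), but in exchange it avoids the adjoint/cokernel computation and gives slightly finer information (the partial indices themselves, hence surjectivity statements summand by summand), whereas the paper's cokernel-reflection argument only needs the non-equivariant splitting on $\mathbb{C}P^1$ plus the standard sphere regularity lemma.
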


We will describe a method of reflecting the holomorphic disk using the real 
structure $c_{\Sigma^{(\lambda)}}$,which then yields a holomorphic sphere.
This analysis is a modification of those in~\cite{Oh93b}.
Recall the equation of the holomophic disk $u$ in holomorphic coordinates $z=s+it$:

\begin{equation*}\label{eq:hol_disk}
\begin{array}{ccc}
 \overline{\partial}_{J_0}u= \partial_s u + J_0(u)\partial_t u = 0 &&\\
 u(z)\in \Sigma^{(\lambda)}_{\mathbb{R}}\subset \mathbb{R}P^{n+1} & & \forall 
z\in \partial D^2\\
 \int |du|_{J_0}^2dvol < \infty &&\\
\end{array}
\end{equation*}

Now $u$ is smooth by elliptic regularity. Consider the vector field $\xi\in W^{k,2}$ 
along $u$ with $\xi(z)\in T_{u(z)}\Sigma^{(\lambda)}_{\mathbb{R}}$. 
Let $\tau \mapsto u_{\tau}$ be a curve with $u_0=u$ and 
$\frac{d}{d\tau}|_{\tau=0}u_{\tau}=\xi$. 
The linearization of the 
$\overline{\partial}_{J_0}$-operator is given by 
$$D_u\overline{\partial}_{J_0}:=\nabla_{\tau}|_{\tau=0}\overline{\partial}_{J_0}(u_{\tau}
).$$
We get 
\begin{equation}\label{eq:delbar_J_0}
\begin{array}{ccl}
 \nabla_{\tau}|_{\tau=0}\overline{\partial}_{J_0}(u_{\tau}) &=& 
\nabla_{\tau}|_{\tau=0}(\partial_su_{\tau}+J_0(u_{\tau})\partial_t u_{\tau})\\
	    &=& 
\nabla_{\tau}\partial_s u_{\tau}|_{\tau=0}+\nabla_{\tau}J_0(u_{\tau})|_{\tau=0}\partial_t 
u +J_0(u)\nabla_{\tau}\partial_t u_{\tau}|_{\tau=0}\\
	    &=& \nabla_s\partial_{\tau}u_{\tau}|_{\tau=0}+J_0(u)\nabla_t 
\partial_{\tau}u_{\tau}|_{\tau=0}\\
	    &=& \nabla_s \xi +J_0(u)\nabla_t \xi,
\end{array}
\end{equation}
where we used the fact that $\nabla J_0=0$ characterizes the 
K\"ahler case (see~\cite{McS12}). Moreover, the identity 
$\nabla_{s}\partial_{\tau}=\nabla_{\tau}\partial_s$ holds since 
$[\partial_{\tau},\partial_s]=0$ and similar 
$\nabla_{t}\partial_{\tau}=\nabla_{\tau}\partial_t$.
This proves that 
$$D_u\overline{\partial}_{J_0}=\nabla_s+J_0(u)\nabla_t.$$
Let $D_u\overline{\partial}_{J_0}^*$ denote the adjoint of 
$D_u\overline{\partial}_{J_0}$, i.e. the operator defined by the equation
$$\langle \xi, D_u\overline{\partial}_{J_0}^*\eta\rangle_2 =\langle 
D_u\overline{\partial}_{J_0}\xi, \eta\rangle_2.$$
Here we use the $L^2$-norm $\langle\cdot,\cdot \rangle_2$ and $\xi,\eta\in W^{k,2}$.
To show that $coker D_u\overline{\partial}_{J_0}$ is trivial and the range of 
$D_u\overline{\partial}_{J_0}$ is closed it suffices to prove instead that $ker 
D_u\overline{\partial}_{J_0}^*$ is trivial.
We compute 
\begin{equation*}
 \begin{array}{ccl}
  \langle D_u\overline{\partial}_{J_0}\xi,\eta\rangle_2 &=& \int_{D^2} \langle 
\nabla_s\xi+J_0(u)\nabla_t\xi,\eta\rangle\\
      &=& \int_{D^2}\langle \nabla_s\xi,\eta\rangle+\langle J_0(u)\nabla_t\xi, 
\eta\rangle.
 \end{array}
\end{equation*}
Integration by parts yields
\begin{equation*}
 \begin{array}{ccl}
  &=&-\int_{D^2}\langle \xi, \nabla_s \eta \rangle +\int_{\partial D^2}\langle \xi(z), 
\eta(z)\rangle Re(z) d\gamma\\
  & & -\int w(\xi,\nabla_t\eta) + \int_{\partial D^2} w(\xi(z),\eta(z))Im(z) d\gamma\\
  &=& \int_{D^2}\langle \xi, -\nabla _s\eta+J_0(u)\nabla_t\eta\rangle\\
  & & +\int_{\partial D^2}\langle\xi(x),(Re(z)\eta(z)-Im(z)J_0(u)\eta(z)\rangle d\gamma\\
  &=& \int_{D^2}\langle \xi, -\nabla_s\eta+J_0(u)\nabla_t\eta\rangle+\int_{\partial 
D^2}\langle \xi(z),\overline{z}\eta(z)\rangle d\gamma,
 \end{array}
\end{equation*}
where $d\gamma$ is the length measure of $\partial D^2$.
The adjoint problem is
\begin{equation}\label{eq:adjoint}
 \begin{array}{cc}
  (-\nabla_s+J_0(u)\nabla_t)\eta=0 & \forall \eta\in W^{k,2}\\
  \overline{z}\eta(z)\in (T_{u(z)}\Sigma^{(\lambda)}_{\mathbb{R}})^{\perp} & \forall 
z\in\partial D^2,
 \end{array}
\end{equation}
and where $()^{\perp}$ denotes the orthogonal complement with respect to the Fubini-Study 
metric.
Eliptic regularity ensures again that $\eta\in coker(D_u\overline{\partial}_{J_0})$ is 
smooth. After reflecting the disk by the anti-holomorphic involution we obtain a 
holomorphic sphere 
$$v:\mathbb{C}P^1\to \Sigma^{(\lambda)}$$
and a smooth vector field $X\in W^{1,2}$:
$$X:\mathbb{C}P^1 \to T\Sigma^{(\lambda)},$$
such that $\pi(X(z))=v(z)$ $\forall z\in \mathbb{C}P^1$, where $\pi:T\mathbb{C}P^1\to 
\mathbb{C}P^1$ is the projection.
Equation~\ref{eq:adjoint} becomes
\begin{equation}\label{eq:adjoint_reflected}
 -\nabla_sX+J_0\nabla_tX=0.
\end{equation}
% The complexified tangent bundle 
% $T_{\mathbb{C}}\Sigma^{(\lambda)}:=T\Sigma^{(\lambda)}\otimes \mathbb{C}$ decomposes as 
% the direct sum of holomophic vector bundles
% $$T_{\mathbb{C}}\Sigma^{(\lambda)}\cong T\Sigma^{(\lambda)}\oplus 
% \overline{T\Sigma^{(\lambda)}},$$
% where $\overline{T\Sigma^{(\lambda)}}$ is the conjugate bundle with opposite complex 
% structure.
% The Hermitian structure gives an isomorphism $T_{\mathbb{C}}\Sigma^{(\lambda)}\cong 
% T^*\Sigma^{(\lambda)}$.
% Then also the pullback 
% $$v^*T_{\mathbb{C}}\Sigma^{(\lambda)}\cong 
% v^*T\Sigma^{(\lambda)}\oplus v^*\overline{T\Sigma^{(\lambda)}}\cong 
% v^*T\Sigma^{(\lambda)}\oplus v^*T^*\Sigma^{(\lambda)}$$
% splits.
Grothendieck proved (see~\cite{Gro57}) that any holomorphic bundle over $\mathbb{C}P^1$ 
is isomorphic to a direct sum of holomorphic line bundles. Moreover, this splitting is 
unique up to permutation of the summands. Thus we have

$$v^*T\Sigma^{(\lambda)}\cong L_1\oplus L_2\oplus \cdots \oplus L_n.$$

\begin{lemma}[\cite{McS12} Lemma 3.3.2]\label{lem:regularity_critirion}
 Assume that $J$ is an integrable almost complex structure on a symplectic manifold 
$(M,\omega)$ and let $v:\mathbb{C}P^1\to M$ be a $J$-holomorphic sphere. If every summand 
of $v^*TM$ has Chern number $c_1\geq -1$, then $D_v$ is onto.
\end{lemma}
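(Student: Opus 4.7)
The plan is to reduce the surjectivity of $D_v$ to a standard vanishing theorem on $\mathbb{C}P^1$ via two observations: integrability of $J$ identifies $D_v$ with a Dolbeault operator on a holomorphic bundle, and Grothendieck's splitting theorem (already invoked in the surrounding text) lets us compute the relevant sheaf cohomology line bundle by line bundle.

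First I would recall the general formula for the linearization of $\bar\partial_J$ at a $J$-holomorphic map: for $\xi \in W^{1,p}(v^*TM)$,
\[
D_v \xi \;=\; \bar\partial \xi + \tfrac{1}{4}\, N_J(\xi, \partial v),
\]
where $N_J$ is the Nijenhuis tensor of $J$. Since $J$ is integrable, $N_J\equiv 0$ by the Newlander–Nirenberg theorem, so $D_v$ equals the standard Dolbeault operator $\bar\partial$ on sections of the holomorphic vector bundle $v^*TM \to \mathbb{C}P^1$. In particular, elliptic regularity and Dolbeault's theorem yield an identification
\[
\mathrm{coker}(D_v) \;\cong\; H^{0,1}(\mathbb{C}P^1,\, v^*TM) \;\cong\; H^1(\mathbb{C}P^1,\, v^*TM).
\]

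Next, by Grothendieck's splitting theorem, $v^*TM \cong L_1 \oplus \cdots \oplus L_n$ as holomorphic bundles, where each $L_i$ is a holomorphic line bundle with $L_i \cong \mathcal{O}(k_i)$ for $k_i := c_1(L_i)$. The hypothesis is exactly that $k_i \geq -1$ for every $i$. Since sheaf cohomology commutes with finite direct sums,
\[
H^1(\mathbb{C}P^1,\, v^*TM) \;\cong\; \bigoplus_{i=1}^{n} H^1(\mathbb{C}P^1,\, \mathcal{O}(k_i)).
\]

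Finally I would apply the standard cohomology computation (e.g.\ via Serre duality, $H^1(\mathbb{C}P^1, \mathcal{O}(k)) \cong H^0(\mathbb{C}P^1, \mathcal{O}(-k-2))^*$): one has $H^1(\mathbb{C}P^1, \mathcal{O}(k)) = 0$ whenever $k \geq -1$. Applying this to each summand yields $H^1(\mathbb{C}P^1, v^*TM) = 0$, hence $\mathrm{coker}(D_v) = 0$ and $D_v$ is surjective. The only delicate point in the argument is the identification $D_v = \bar\partial$, which relies on integrability; once that is in hand, the rest is pure algebraic geometry of line bundles on $\mathbb{C}P^1$ and does not involve any analytic work.
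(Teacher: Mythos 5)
Your argument is correct and is essentially the standard one from the cited source: the paper itself states this lemma as a quotation of McDuff--Salamon (Lemma 3.3.2) and gives no independent proof, and your route --- integrability kills the Nijenhuis (complex-antilinear) term so that $D_v$ becomes the Dolbeault operator on the holomorphic bundle $v^*TM$, after which Grothendieck splitting and $H^1(\mathbb{C}P^1,\mathcal{O}(k))=0$ for $k\geq -1$ (Serre duality) give vanishing of the cokernel --- is exactly that argument. No gaps to report; only make sure the identification $\mathrm{coker}(D_v)\cong H^1(\mathbb{C}P^1,v^*TM)$ is stated for the Sobolev completions actually used, which elliptic regularity justifies.
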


\begin{definition}
\begin{enumerate}
 \item A line bundle on a complete algebraic variety is said to be numerically effective (or nef) if 
$$\langle c_1(L), C\rangle \geq 0$$
for every complete curve $C\subset X$.
\item A vector bundle $E \to X$ on a complete algebraic variety is called numerically 
effective (or nef) if $\mathcal{O}_{\mathbf{P}(E)}$ is nef.
\end{enumerate}
\end{definition}

We are going to show that quadrics have nef tangent bundles. Since pullback bundles as 
well as quotient bundles of nef bundles are nef, this also implies that the line 
bundles $L_i\to \mathbb{C}P^1$ are nef. However, for line bundles over $\mathbb{C}P^1$ 
being nef is equivalent to having non-negative Chern number. Hence:

\begin{proposition}\label{prop:D_v_is_onto}
For every $J_0$-holomorphic sphere $$v:\mathbb{C}P^1\to 
\Sigma^{(\lambda)},$$
the operator $D_v$ is onto.
\end{proposition}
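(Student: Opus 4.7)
The plan is to combine the Grothendieck splitting of $v^*T\Sigma^{(\lambda)}$ with the fact that smooth quadrics have nef tangent bundle, and then conclude by Lemma~\ref{lem:regularity_critirion}.

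First I would recall that all smooth quadrics $\Sigma^{(\lambda)}$ are symplectomorphic (hence biholomorphic as K\"ahler manifolds of this particular complex structure) to the standard quadric $Q \subset \mathbb{C}P^{n+1}$, and that $Q$ is a homogeneous variety: the complex orthogonal group $SO(n+2,\mathbb{C})$ acts transitively on $Q$ by linear substitutions preserving the defining quadratic form. It is a classical fact (due to Campana--Peternell) that the tangent bundle of any rational homogeneous variety $G/P$ is generated by global sections, and in particular is \emph{nef}. I would therefore cite this and conclude that $TQ$, and hence $T\Sigma^{(\lambda)}$, is nef.

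Next I would invoke the two elementary permanence properties of nef bundles: if $E \to X$ is nef and $f: Y \to X$ is any morphism, then $f^*E$ is nef; and any quotient bundle of a nef bundle is nef. Applied to the holomorphic map $v:\mathbb{C}P^1 \to \Sigma^{(\lambda)}$, this shows that $v^*T\Sigma^{(\lambda)}$ is a nef holomorphic vector bundle over $\mathbb{C}P^1$. By the Grothendieck splitting theorem recalled in the excerpt, we may write
\begin{equation*}
v^*T\Sigma^{(\lambda)} \cong L_1 \oplus L_2 \oplus \cdots \oplus L_n,
\end{equation*}
with each $L_i$ a holomorphic line bundle on $\mathbb{C}P^1$. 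Each $L_i$ is a quotient of the direct sum (project onto the $i$-th factor), hence nef. On $\mathbb{C}P^1$, however, a line bundle is nef if and only if $c_1(L_i) \geq 0$, since the only irreducible curve is $\mathbb{C}P^1$ itself.

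Finally, I would apply Lemma~\ref{lem:regularity_critirion}: since $J_0$ is integrable and every Grothendieck summand of $v^*T\Sigma^{(\lambda)}$ has Chern number $\geq 0 \geq -1$, the linearized operator $D_v$ is surjective, which is the desired conclusion. The only substantive step is the nefness of $TQ$; the main obstacle (if one wanted a self-contained proof rather than citing Campana--Peternell) would be to exhibit sufficiently many global sections of $TQ$ coming from the $\mathfrak{so}(n+2,\mathbb{C})$-action, or equivalently to verify that the vertical bundle of the evaluation map $SO(n+2,\mathbb{C}) \to Q$ at a basepoint spans $T_{[pt]}Q$, which is immediate from transitivity.
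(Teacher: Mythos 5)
Your argument is correct and follows essentially the same route as the paper: homogeneity of the quadric gives a globally generated, hence nef, tangent bundle, its pullback under $v$ splits by Grothendieck into line bundles which are nef as quotients and therefore have $c_1\geq 0$ on $\mathbb{C}P^1$, and Lemma~\ref{lem:regularity_critirion} then gives surjectivity of $D_v$. The only cosmetic difference is that the paper acts with the orthogonal group $O(\lambda)$ of the form $\lambda$ directly on $\Sigma^{(\lambda)}$ and deduces nefness from its own Corollary~\ref{col:globally_gen_nef} (via Ramanujam's theorem) rather than passing to the standard quadric and citing Campana--Peternell, which also sidesteps your slightly imprecise ``symplectomorphic hence biholomorphic'' step.
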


\subsubsection{Homogeneous manifolds have nef tangent bundle}

These subsections relate to a discussion in~\cite{JMP12} and~\cite{MOC+14}.
Let $G$ be a connected algebraic group over $\mathbb{C}$, and $\mathfrak{g}:=T_{e}G$ its 
Lie algebra.

\begin{definition}
 A $G$-variety is an algebraic variety $X$ with a $G$-action 
$$G\times X\to X: (g,x)\mapsto g\cdot x,$$
which is a morphism of algebraic varieties.
\end{definition}

Let $\mu_x:G\to X: g\mapsto gx$ be the orbit map. An element of the Lie algebra 
$\mathfrak{g}$ defines a vector field on $X$ by
\begin{equation*}
 \begin{array}{rlcc}
  D_{e}\mu_x:&T_{e}G &\to & T_xX\\
  & \xi & \mapsto & D_{e}\mu_x(\xi).
 \end{array}
\end{equation*}
Thus, the Lie algebra $\mathfrak{g}$ of a $G$-variety $X$ acts on $X$ by vector fields. 
If $X$ is smooth, let $\mathcal{T}_X$ denote the tangent sheaf, and we have a 
homomorphism of Lie algebras
\begin{equation*}
 \begin{array}{rlcc}
  \Theta:&\mathfrak{g} &\to& \Gamma(X,\mathcal{T}_X)\\
  & \xi &\mapsto & D_e\mu_{(\cdot)}(\xi)
 \end{array}
\end{equation*}
and, at the level of sheaves,
\begin{equation*}
 \begin{array}{rlcc}
ev:&\mathcal{O}_X\otimes \mathfrak{g} &\to& \mathcal{T}_X\\
  & ((x, (f|_U)_U), \xi) &\to & (x, f_U(x)D_e\mu_x(\xi)).
 \end{array}
\end{equation*}

\begin{definition}
 A $G$-variety $X$ is called homogeneous if the action of $G$ on $X$ is transitive.
\end{definition}

For every point $x\in X$, let $G_x:=Stab_G(x)$ denote the stabilizer of $x$ in $G$.
$G_x$ is closed, since $G_x$ is the fiber of the orbit map $\mu_x:G\to X:g\to gx$.
Suppose that $X$ is homogeneous. The orbit map factors through $G/G_x$ and we get an 
isomorphism of $G$-varieties with base points: $(G/G_x,eG_x)\cong (X,x)$.
Moreover, if $X$ is homogeneous then the map $ev$ is surjective. 

Let $Aut^0(X)$ denote the identity component of $Aut(X)$.
The next theorem identifies the Lie algebra of $Aut^0(X)$.

\begin{theorem}[Ramanujam~\cite{Ram64}]\label{thm:ramanu}
 If $X$ is a complete, irreducible algebraic variety, then $Aut^0(X)$ is a connected 
algebraic group 
with Lie algebra $\Gamma(X,\mathcal{T}_X)$.
\end{theorem}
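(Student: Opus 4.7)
The plan is to realize $\mathrm{Aut}(X)$ as an open subscheme of a Hilbert scheme, use this to endow $\mathrm{Aut}^0(X)$ with the structure of a connected algebraic group, and then identify its Lie algebra with $\Gamma(X,\mathcal{T}_X)$ by a standard deformation-theoretic computation.

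First, I would give $\mathrm{Aut}(X)$ an algebraic structure. Because $X$ is complete, any automorphism $\phi\colon X\to X$ has a closed graph $\Gamma_\phi\subset X\times X$, and the assignment $\phi\mapsto \Gamma_\phi$ is injective. Applying Grothendieck's construction of the Hilbert scheme of $X\times X$ (which exists because $X\times X$ is projective if $X$ is projective, and which one handles in the general complete case by Matsusaka--Mumford-type arguments via Chow schemes), the locus of closed subschemes $Z\subset X\times X$ for which both projections $Z\to X$ are isomorphisms is open. This locus represents the functor $T\mapsto \mathrm{Aut}_{T}(X\times T)$, and hence endows $\mathrm{Aut}(X)$ with the structure of a locally algebraic group scheme over $\mathbb{C}$. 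Composition of automorphisms corresponds to composition of correspondences on $X\times X$, which is a morphism of schemes, and inversion corresponds to the involution swapping the two factors; so the group law is algebraic.

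Next, I would take $\mathrm{Aut}^0(X)$ to be the identity component. By general theory, every connected component of a locally algebraic group scheme over a field is of finite type, so $\mathrm{Aut}^0(X)$ is a connected algebraic group. The only subtle point is to argue that this component has finite dimension at all; but its Zariski tangent space at the identity injects into $\Gamma(X,\mathcal{T}_X)$ (see below), and this space is finite-dimensional because $\mathcal{T}_X$ is coherent and $X$ is complete, so coherent cohomology is finite-dimensional over $\mathbb{C}$.

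The remaining step is the computation of the Lie algebra. By definition, $\mathrm{Lie}(\mathrm{Aut}^0(X))$ is the tangent space at the identity, i.e.\ the set of morphisms $\mathrm{Spec}\,\mathbb{C}[\varepsilon]/(\varepsilon^2)\to \mathrm{Aut}^0(X)$ whose restriction to the closed point is the identity. By the functorial description, such a morphism is the same as an automorphism of $X\times \mathrm{Spec}\,\mathbb{C}[\varepsilon]/(\varepsilon^2)$ restricting to the identity modulo $\varepsilon$, i.e.\ a derivation $\mathcal{O}_X\to \mathcal{O}_X$, i.e.\ a global vector field. This produces a canonical isomorphism $\mathrm{Lie}(\mathrm{Aut}^0(X))\cong \Gamma(X,\mathcal{T}_X)$, and one checks directly that the induced bracket matches the Lie bracket of vector fields.

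The main obstacle is the first step: providing $\mathrm{Aut}(X)$ with a scheme structure for arbitrary complete (not necessarily projective) $X$. For projective $X$ this is immediate from the Hilbert scheme, but in the general complete case one must replace the Hilbert scheme by the Chow variety or the cycle space and verify representability of the relevant open subfunctor; this is essentially Matsusaka's theorem together with a careful openness argument for the locus where both projections are isomorphisms. Once that machinery is in place, the rest of the argument is formal.
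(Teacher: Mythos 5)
The paper does not prove this statement at all: it is imported verbatim as a citation to Ramanujam's 1964 note, and is then used only as a black box in Corollary~\ref{col:globally_gen_nef}. So there is no "paper proof" to compare against; what I can assess is whether your sketch is a sound proof of the quoted result, and it essentially is. Your route — realize $\mathrm{Aut}(X)$ inside the Hilbert scheme of $X\times X$ via graphs, observe that the locus where both projections are isomorphisms is open and represents the automorphism functor, take the identity component (open, of finite type, by the general theory of group schemes locally of finite type over a field), and compute the Lie algebra by evaluating the functor on dual numbers, where an automorphism of $X\times\mathrm{Spec}\,\mathbb{C}[\varepsilon]/(\varepsilon^2)$ reducing to the identity is exactly $\mathrm{id}+\varepsilon D$ for a global derivation $D$, i.e.\ a section of $\mathcal{T}_X$ — is the standard modern argument and is correct as stated over $\mathbb{C}$ for projective $X$. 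You also correctly identify the one genuinely nontrivial input for the general complete (non-projective) case: representability of the automorphism functor, which is precisely what Matsusaka's Chow-theoretic work (used by Ramanujam) or, in modern language, Matsumura--Oort/Artin-type representability for proper schemes supplies. Ramanujam's original proof has the same architecture: Matsusaka gives $\mathrm{Aut}(X)$ the structure of a locally algebraic group, and the content of the note is the identification of its Lie algebra with $\Gamma(X,\mathcal{T}_X)$, done exactly by the infinitesimal-automorphism computation you describe. Two small points worth making explicit if you write this up: the dual-numbers computation identifies the tangent space at the identity with $\Gamma(X,\mathcal{T}_X)$, and one should invoke characteristic zero (Cartier's theorem) to know the group scheme is reduced/smooth so that this tangent space is the Lie algebra of the algebraic group $\mathrm{Aut}^0(X)$ in the naive sense; in positive characteristic the scheme-theoretic statement survives but the reduced automorphism group can have smaller Lie algebra. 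Since the paper works over $\mathbb{C}$, this is harmless here.
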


\begin{corollary}\label{col:globally_gen_nef}
 Let $X$ be a complete, irreducible variety. Then $X$ is homogeneous if and only if 
$\mathcal{T}_X$ is 
generated by global sections, i.e. if $ev: \mathcal{O}_X\otimes 
\Gamma(X,\mathcal{T}_X)\to\mathcal{T}_X$ is surjective.
In particular if $X$ is a smooth homogeneous complex manifold its tangent bundle is nef.
\end{corollary}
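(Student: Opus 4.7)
The plan is to deduce both implications directly from Theorem~\ref{thm:ramanu}. The key observation, implicit in the discussion preceding the corollary, is that under the Lie algebra isomorphism $\Theta:\mathrm{Lie}(Aut^0(X))\xrightarrow{\cong}\Gamma(X,\mathcal{T}_X)$ provided by Ramanujam's theorem, the fiberwise evaluation $ev_x:\Gamma(X,\mathcal{T}_X)\to T_xX$ of the sheaf map $ev$ corresponds exactly to the differential $D_e\mu_x$ of the orbit map $\mu_x:Aut^0(X)\to X$. This is immediate from the definition $\Theta(\xi)(x)=D_e\mu_x(\xi)$ given in the excerpt. With this dictionary in hand, the whole statement becomes a translation between ``$D_e\mu_x$ surjective for all $x$'' and ``$ev$ surjective as a sheaf map''.

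For the forward direction, suppose $X$ is homogeneous under a connected algebraic group $G$. The action morphism $G\to Aut(X)$ factors through the identity component $Aut^0(X)$, so $Aut^0(X)$ itself acts transitively on $X$. Transitivity forces each $\mu_x$ to be surjective as a morphism of smooth algebraic varieties over $\mathbb{C}$. By generic smoothness combined with translation by $G$, this implies that $D_e\mu_x$ is surjective at every $x$. Via the identification above, $ev_x$ is then surjective for every $x$, which is equivalent to $ev$ being surjective as a map of coherent sheaves.

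Conversely, suppose $ev$ is surjective. Then $D_e\mu_x$ is surjective for every $x\in X$, so $\mu_x$ is a submersion at $e$; by left translation in $Aut^0(X)$ it is then submersive everywhere, so each $Aut^0(X)$-orbit is open in $X$. The orbits partition $X$ into non-empty open subsets, and since $X$ is irreducible (hence connected), there can be only one orbit. Thus $Aut^0(X)$ acts transitively and $X$ is homogeneous.

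For the final sentence, global generation of $\mathcal{T}_X$ presents it as a quotient of a trivial bundle $\mathcal{O}_X^{\oplus N}$; since trivial bundles are nef and nefness is preserved under quotient, $\mathcal{T}_X$ is nef. The main obstacle is not substantial: both implications are formal consequences of Ramanujam's theorem once the evaluation map and the differential of the orbit map are identified. The only technical point requiring care is the passage from the set-theoretic transitivity of the $Aut^0(X)$-action to the surjectivity of $D_e\mu_x$, which is where the hypothesis of working over $\mathbb{C}$ (i.e.\ in characteristic zero) enters in an essential way.
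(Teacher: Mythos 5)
Your proof is correct and follows essentially the same route as the paper: both directions hinge on identifying the fiberwise evaluation $ev_x$ with $D_e\mu_x$ via Ramanujam's theorem, and both deduce homogeneity in the converse by showing orbits are open and using irreducibility. You additionally spell out the forward direction (generic smoothness plus translation in characteristic zero) and the final nefness claim (trivial bundle surjects onto a globally generated $\mathcal{T}_X$), neither of which the paper makes explicit, but the core argument is the same.
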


\begin{proof}
We have already noticed that for a homogeneous space the homomorphism $ev$ is 
surjective. 

For the other direction, let $G:=Aut^0(X)$ and from theorem~\ref{thm:ramanu} we know that 
the Lie algebra $\mathfrak{g}$ is isomorphic to $\Gamma(X,\mathcal{T}_X)$. 
The tangent bundle $\mathcal{T}_X$ of an algebraic variety is $G$-linearized in 
the sense that $D_xg:T_xX\to T_{gx}X$ is linear and the diagram
$$\xymatrix{T_xX \ar[r]^-{D_xg} \ar[d]^{p} & T_{gx}X \ar[d]^{p}\\
X\ar[r]^-{g}&X}$$
is commutative for all $x\in X$ and $g\in G$.
Notice that $G$ itself is a $G$-variety and moreover it is homogenous. Let 
$$\begin{array}{rlcc}
   \sigma_g:& G &\to & G\\
    & h &\mapsto &hg
  \end{array}$$
denote the orbit map. We also have
$$\begin{array}{rlcc}
   D_e\sigma_g:& T_eG &\to & T_gG\\
    & \xi &\mapsto &D_e\sigma_g(\xi).
  \end{array}$$
Notice that $(ev)_x$ is surjective if and only if $D_e\mu_x$ is surjective, which holds by assumption.
Since $\mu_x\circ \sigma_g=\mu_{gx}$ we have
$$D_e(\mu_x\circ \sigma_g)=D_g\mu_x\circ D_e \sigma_g.$$
The equivariance of the map $\mu_x:g\mapsto gx$ and the fact that $G$ itself is 
homogeneous shows that $D\mu_x$ is everywhere surjective.
In other words, $\mu_x$ is a submersion, and therefore $Im(\mu_x)=G\cdot x$ is open in $X$.
Since $X$ is a variety and the orbit $G\cdot x$ is open in $X$ for all $x\in X$, we 
conclude that $X=G\cdot x$, i.e. $X$ is homogeneous.
\end{proof}

\subsubsection{Proof of Proposition~\ref{prop:D_v_is_onto} and 
Proposition~\ref{prop:D_u_is_onto}}

\begin{proof}
 Define the orthogonal group of the quadratic form $\lambda$ by
$$O(\lambda):=\{A\in Mat(n+2,\mathbb{C})|\lambda(Ax)=\lambda(x)\},$$
where $Mat(n+2,\mathbb{C})$ denotes the $(n+2)\times (n+2)$ matrices with coefficients in 
$\mathbb{C}$. 
% Suppose $Q$ is the invertible matrix corresponding to the non-degenerate 
% quadratic form $\lambda$. Then we can also view the orthogonal group as
% $$O(\lambda):=\{A\in Mat(n+2,\mathbb{C})|A^TQA=Q\}.$$
Clearly this group acts on the smooth quadric $\Sigma^{(\lambda)}$ and the action is 
transitive. By Corollary~\ref{col:globally_gen_nef} $T\Sigma^{(\lambda)}$ is 
nef and thus also $v^*T\Sigma^{(\lambda)}\cong L_1\oplus\cdots\oplus L_n$.
Since $L_i\subset v^*T\Sigma^{(\lambda)}$ is a quotient bundle $\forall i$, it is nef and 
thus has Chern number $c_1\geq 0$.
By Lemma~\ref{lem:regularity_critirion} $D_v$ is onto, which proves 
Proposition~\ref{prop:D_v_is_onto}. 

Now assume that $0\neq \eta\in coker (D_u)$, i.e. $\langle D_u\xi, \eta\rangle=0 \forall 
\xi$. Then the reflection $X$ of $\eta$ is non-trivial and satisfies 
equation~\ref{eq:adjoint_reflected}, which means $X\in Ker D_v^*$. 
Hence, $Coker D_v$ is non-trivial, which contradicts Proposition~\ref{prop:D_v_is_onto}.
\end{proof}

\subsection{The quantum homology groups}

\paragraph{Wideness and the quantum homologies of the Lagrangians}

Throughout this section we assume that $n=2\mod 4$. It is known that the minimal Chern 
number of the quadric $Q^n\subset \mathbb{C}P^{n+1}$ is $n$. 
(As the minimal Chern number of $\mathbb{C}P^{n+1}$ is $n+2$ and the normal bundle of $Q$ in $\mathbb{C}P^{n+1}$ is $\mathcal{O}(2)$.)
In particular the minimal Maslov number of the Lagrangians $K_i$ is also $n$ for $i\neq 0,n$. The Lagrangians $K_0$ and $K_n$ have minimal Maslov number $2n$.
\begin{proposition}
The Lagrangians $K_i$ are wide for all possible choices of spin 
structures.
\end{proposition}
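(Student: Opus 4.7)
The plan is to use the spectral sequence $\{E^r_{p,q},d_r\}$ associated to the degree filtration on the pearl complex of $K_i$, which converges to $QH(K_i)$ with $E^1$-page $E^1_{p,q}\cong H_{p+q-pN}(K_i;\mathcal{R})\,t^{-p}$, where $N:=N_{K_i}$, and whose differential $d_r$ shifts the underlying Morse degree $p+q-pN$ by $rN-1$. Wideness of $K_i$ is equivalent to the statement that this spectral sequence collapses at $E^1$, so the plan is to show that every $d_r$ vanishes. By Lemma~\ref{lem:HforK_i} (combined with the universal coefficient theorem), the support of $H_*(K_i;\mathcal{R})$ is contained in $\{0,\ldots,i\}\cup\{n-i,\ldots,n\}$.

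For the Lagrangian spheres $K_0$ and $K_n$ one has $N=2n$, so every $d_r$ with $r\geq 1$ would shift the Morse degree by at least $2n-1>n=\dim K_i$; since the Morse degree lives in $[0,n]$ this is impossible and all $d_r$ vanish for dimensional reasons, making $K_0$ and $K_n$ wide for every spin structure. For $1\leq i\leq n-1$ one has $N=n$; the same dimensional argument gives $d_r=0$ for all $r\geq 2$ since $rn-1\geq 2n-1>n$, and $d_1\colon H_j\to H_{j+n-1}$ can only be non-zero when both source and target lie in the support of $H_*(K_i;\mathcal{R})$, which forces $j\in\{0,1\}$. Wideness for the remaining $K_i$ is therefore equivalent to the vanishing of
\[
d_1\colon H_0(K_i;\mathcal{R})\longrightarrow H_{n-1}(K_i;\mathcal{R}),\qquad d_1\colon H_1(K_i;\mathcal{R})\longrightarrow H_n(K_i;\mathcal{R}).
\]

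The hard part, and the main obstacle, will be killing these two remaining $d_1$'s. The plan is to exploit the regularity of the standard integrable complex structure $J_0$ (Proposition~\ref{prop:D_u_is_onto}) together with the anti-holomorphic involution $c_{\Sigma^{(\lambda_i)}}$ whose fixed locus is $K_i$: pre-composing with complex conjugation on the disk, $c_{\Sigma^{(\lambda_i)}}$ induces a $\mathbb{Z}/2$-action on the moduli spaces of $J_0$-holomorphic Maslov-$n$ disks with boundary on $K_i$ and one marked point that are used to compute these $d_1$-counts. For a generic choice of the auxiliary incidence cycle (a point for the first $d_1$, a $1$-cycle for the second) this $\mathbb{Z}/2$-action will be free on these moduli spaces, so the unsigned count vanishes modulo $2$; in the oriented spin setting of Proposition~\ref{prop:Lagrangians_are_spin}, an argument analogous to Lemma~\ref{lem:orientation_match} shows that $c_{\Sigma^{(\lambda_i)}}$ reverses the orientation on the index bundle induced by any spin structure on $K_i$, so the two disks in each $\mathbb{Z}/2$-orbit contribute with opposite signs and the signed count also vanishes. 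Because this cancellation mechanism is insensitive to which spin structure is chosen, we obtain $d_1=0$ and hence wideness for every spin structure simultaneously. Should the explicit sign/involution analysis prove too delicate, a fallback is to invoke the Poincar\'e duality pairing on $QH(K_i)$ from Section~\ref{sec:duality} to identify the two $d_1$'s above as adjoints with respect to a non-degenerate pairing, reducing matters to a single vanishing statement, and then to propagate wideness from the already settled case $K_0=S^n$ through the cobordisms $V^{(\lambda_i,\lambda_{i+1})}$ of Section~\ref{sec:cobordism} and the long exact sequence of Theorem~\ref{thmA}.
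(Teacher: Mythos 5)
Your reduction to the vanishing of $d_1\colon H_0(K_i;\mathcal{R})\to H_{n-1}(K_i;\mathcal{R})$ and $d_1\colon H_1(K_i;\mathcal{R})\to H_n(K_i;\mathcal{R})$ via the degree filtration is correct and is exactly how the paper sets things up, and the case $i\in\{0,n\}$ (where $N_{K_i}=2n$) is dispatched as you say. After that, however, your plan diverges from the paper in a way that both misses an easy shortcut and introduces a real gap.

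First, the shortcut you miss: for $2\leq i\leq n-2$ (equivalently, after the symmetry $K_i\cong K_{n-i}$, for $2\leq i\leq n/2$), the computation in Lemma~\ref{lem:HforK_i} gives $H_1(K_i;\mathbb{Z})\cong\mathbb{Z}_2$ and $H_{n-1}(K_i;\mathbb{Z})=0$ over $\mathbb{Z}$. So $d_1\colon H_0\to H_{n-1}$ vanishes because the target is zero, and $d_1\colon H_1\to H_n$ vanishes because there is no nonzero homomorphism $\mathbb{Z}_2\to\mathbb{Z}$. This is a purely algebraic observation, entirely independent of any holomorphic geometry or choice of spin structure, and it handles every $K_i$ except $K_1$ (and $K_{n-1}\cong K_1$). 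Running your involution argument uniformly over all $1\leq i\leq n-1$ is therefore more work than needed, and obscures the fact that the only genuinely geometric case is $i=1$, where $H_1(K_1;\mathbb{Z})\cong H_{n-1}(K_1;\mathbb{Z})\cong\mathbb{Z}$ and both $d_1$'s are a priori $\mathbb{Z}\to\mathbb{Z}$.

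Second, and more seriously, in the one case where you do need the involution, your justification has a gap. You claim that ``for a generic choice of the auxiliary incidence cycle this $\mathbb{Z}/2$-action will be free.'' Freeness of the reflection $w\mapsto\overline{w}:=c_{\Sigma^{(\lambda)}}\circ w\circ c_{\mathbb{C}}$ on the relevant moduli spaces is not a transversality/genericity statement about the incidence constraints: a disk with $\overline{w}=w$ (an equivariant disk) is a geometric feature, not something perturbed away by moving a point constraint, and such disks certainly exist. What actually makes the cancellation work in the paper is different: it invokes Cho's theorem (stated as Lemma~\ref{lem:moduli_orientation_change}) that changing the homotopy class of the boundary trivialization of $w|_{\partial D}^*TK_1$ reverses the spin-induced orientation of the moduli space, combined with the observation that $\partial(\mathrm{image}\,w)=-\partial(\mathrm{image}\,\overline{w})$ so the two trivializations differ, and a further topological argument (using the exact sequence $0\to H_2(Q)\to H_2(Q,K_1)\to H_1(K_1)\to 0$ and the fact that $w\#\overline{w}$ is a holomorphic sphere with $c_1\neq 0$) to show the class $[w]+[\overline{w}]$ is nonzero in $H_2(Q,K_1)$. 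Your appeal to ``an argument analogous to Lemma~\ref{lem:orientation_match}'' does not supply this; Lemma~\ref{lem:orientation_match} is about compatibility of orientations under restriction to a boundary fiber, which is a different statement from Cho's orientation-change result. As written, your proof of $d_1=0$ for $K_1$ does not go through without importing the specific mechanism the paper uses.

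Your fallback suggestion (propagate wideness from $K_0$ through the cobordisms $V^{(\lambda_i,\lambda_{i+1})}$ via the long exact sequence of Theorem~\ref{thmA}) is appealing in spirit, but note that Theorem~\ref{thmA} is applied in the paper only after wideness is established, since one needs $QH_0(L_i;\mathbb{Z})$ to have rank two to even invoke Theorem~\ref{thmB}; using it to prove wideness would require a separate argument that the long exact sequence forces the spectral sequence of one end to degenerate given that the other end's does, and that is not an obvious consequence.
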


\begin{proof}
\begin{enumerate}
  \item If $i\in \{0,n\}$ then $K_i\cong S^n$ which is wide.
  \item Assume now that $2\leq i \leq n/2$ and w.l.o.g. $i\leq n-i$.
If $i$ and $n-i$ are odd then by~(\ref{lem:HforK_i})
$$H_1(K_i;\mathbb{Z})\cong H_1(\mathbb{R}P^{i};\mathbb{Z})\oplus 
H_{1-(n-i)}(\mathbb{R}P^i;\mathbb{Z})\cong 
H_1(\mathbb{R}P^1;\mathbb{Z})\cong\mathbb{Z}_2.$$
Similarly,
$$H_{n-1}(K_i;\mathbb{Z})\cong H_{n-1}(\mathbb{R}P^i;\mathbb{Z})\oplus 
H_{n-1-(n-i)}(\mathbb{R}P^i;\mathbb{Z}))\cong 0,$$
as $n-1-(n-i)=i-1$ is even and $n-1\geq i$. 
Moreover, $H_0(K_i;\mathbb{Z})\cong H_n(K_i;\mathbb{Z})\cong \mathbb{Z}$.
Consider the spectral sequence that comes from the degree filtration of the pearl complex.
(For more details on this spectral sequence see section 6.1 in~\cite{BC07}.)
The first page is the singular homology.
Since the minimal Maslov number is $n$ the differentials on the first page have degree 
$n-1$. 
Clearly the differentials $d_1:H_*(K_i;\mathbb{Z})\to H_{*+n-1}(K_i;\mathbb{Z})$ are all 
zero for $*\geq 2$ or $*\leq -1$.
The other differentials are 
$$d_1:H_0(K_i;\mathbb{Z})\to H_{n-1}(K_i;\mathbb{Z}) \text{ and } d_1:H_1(K_i;\mathbb{Z})\to H_n(K_i;\mathbb{Z}),$$
which also vanish since $H_{n-1}(K_i;\mathbb{Z})=0$ and because there exists no 
non-trivial homomorphism $\mathbb{Z}_2\to \mathbb{Z}$.

If $i$ and $n-i$ are both even then by~(\ref{lem:HforK_i})
$$H_1(K_i;\mathbb{Z})\cong H_1(\mathbb{R}P^{i};\mathbb{Z})\oplus 
H_{1-(n-i)}(\mathbb{R}P^i;\mathcal{H}_{n-i})\cong 
H_1(\mathbb{R}P^1;\mathbb{Z})\cong\mathbb{Z}_2$$
since $1-(n-i)<0$ if $2\leq i\leq n-i$. 
Moreover
$$H_{n-1}(K_i;\mathbb{Z})\cong H_{n-1}(\mathbb{R}P^i;\mathbb{Z})\oplus 
H_{n-1-(n-i)}(\mathbb{R}P^i;\mathcal{H}_{n-i}))\cong 0,$$
as $i< n-1$ and $n-1-(n-i)$ is odd.
As before $d_1:H_*(K_i;\mathbb{Z})\to H_{*+n-1}(K_i;\mathbb{Z})$ vanish for $*\geq 2$ or 
$*\leq -1$.
If $*=0$ or $1$ we have 
$$d_1:H_0(K_i;\mathbb{Z})\to H_{n-1}(K_i;\mathbb{Z}) \text{ and } d_1:H_1(K_i;\mathbb{Z})\to H_n(K_i;\mathcal{H}_{n-i}),$$
which also vanish since $H_{n-1}(K_i;\mathbb{Z})=0$ and because there exists no 
non-trivial homomorphism $\mathbb{Z}_2\to \mathbb{Z}$.
\item The case $i=1$ is more involved. 
Assume first that $n\geq 4$ and in particular $n-i>1$.
First notice that $K_1\cong S^1\times S^{n-1}/\mathbb{Z}_2$ and
$$H_*(K_1)\cong H_*(\mathbb{R}P^1;\mathbb{Z})\otimes 
H_{*-(n-1)}(\mathbb{R}P^1;\mathbb{Z})\cong \begin{cases} \mathbb{Z} & *=0,1,n-1,n\\
                                            0 & \text{otherwise.}
                                           \end{cases}
$$
As before we look at the spectral sequence that comes from the degree filtration of the pearl 
complex. The first page is the singular homology and $d_1:H_*(K_1)\to H_{*+n-1}(K_1)$ 
vanishes for $*\leq -1$ or $*\geq2$.
For the case $*=1,2$, the differential is a homomorphism $\mathbb{Z}\to \mathbb{Z}$.
We will show that the pearly trajectories in the pearl complex always come in pairs that 
cancel each other. The sign of the trajectories depends on the spin structure of $K_1$.

\begin{lemma}[\cite{Cho04} Theorem 6.4]\label{lem:moduli_orientation_change}
Let $L\subset M$ be a Lagrangian submanifold and $w:(D^2,\partial D^2)\to (M,L)$ 
a holomorphic disk in the class $B\subset H_2(M,L)$.
Let $\mathcal{M}(L;B)$ denote the moduli space of such discs.
A spin structure on $L$ induces an orientation on $\mathcal{M}(L;B)$.
The orientation of the moduli space 
$\mathcal{M}(L;B)$ changes if we change the homotopy class of trivialization of 
$w|_{\partial D^2}^*(TL)$.
\end{lemma}

By Proposition~\ref{prop:D_u_is_onto} all disk with $\mu\neq 0$ are regular for the standard 
complex structure. 
Given a holomorphic disk $w:(D^2,\partial D^2)\to (Q,K_1)$, the holomorphic disk $\overline{w}:=c_Q\circ w\circ 
c_{\mathbb{C}}:(D^2,\partial D^2)\to (Q,K_1)$ lies in $\mathcal{M}(K_1;c_{Q*}(B))$. 
Clearly $w$ and $\overline{w}$ have the same Maslov index.
Moreover, it is easy to see that $\partial(image(w))=-\partial(image(\overline{w}))$. 
Thus a given spin structure on $L$ induces opposite homotopy classes of trivializations 
on $w|_{\partial D^2}^*(TL)$ and $\overline{w}|_{\partial D^2}^*(TL)$.
By~\ref{lem:moduli_orientation_change} the moduli spaces $\mathcal{M}(L;B)$  
and $\mathcal{M}(L;c_{Q*}(B))$ come with opposite orientations.
For $n\neq 2$ the homology groups of $K_i$ are: $H_1(K_i)\cong\mathbb{Z}$, 
$H_2(K_i)\cong 0$. The ones of $Q$ are: $H_1(Q)\cong 0$, $H_2(Q)\cong \mathbb{Z}$. (See~\cite{BC07}.) We have the short exact sequence:
$$\xymatrix{ 0 \ar[r] & H_2(Q) \ar[r]^{j_*}  & H_2(Q,K_1) 
\ar[r]^{\delta}  & H_1(K_1) \ar[r]  & 0}.$$
Clearly, $\delta([w])=\delta(-[\overline{w}])$ and therefore $w+\overline{w}$ represents the 
class of a sphere in $H_2(Q)$ and $[w]+[\overline{w}]\in 
ker{\delta}=im{j_*}$.
Since the holomorphic sphere $\overline{w}+w$ has Chern class 
$c_1(\overline{w}+w)=\mu(\overline{w})=\mu(w)\neq 0$, it cannot be contractible in $Q$ 
and hence, represents a non-zero class in $H_2(Q)$. As $j_*$ is injective, the class $[w]+[\overline{w}]$ is 
non-zero in $H_2(Q,K_1)$. 

For every pearly trajectory between two critical points passing through a holomophic disk 
in the class $B$ we will find exactly one pearly trajectory between the same critical 
points passing through a disk in the class $c_{Q*}(B)$ and such that they contribute to 
the differential with opposite signs.
This shows that also 
$$d_1:H_0(K_1)\to H_{n-1}(K_1) \text{ and } d_1:H_1(K_1)\to H_n(K_1)$$ vanish.

The proof for the case $n=2$ is the same, except that now $K_1\cong S^1\times S^1$ and
$$H_*(K_1)\cong \begin{cases} \mathbb{Z} & *=0,2\\
			      \mathbb{Z}\oplus \mathbb{Z} & *=1\\
				0 & \text{otherwise.}
                                           \end{cases}
$$
Again $H_3(Q)=H_1(Q)=0$ and it is easy to see that the inclusion $\mathbb{R}P^1\times \R P^1 \hookrightarrow \C P^1 \times \C P^1$ induces the zero map in homology. We get an exact sequence
$$
\scalebox{0.9}{
\xymatrix{ 0 \ar[r] & H_3(Q,K_1) \ar[r]^-{\cong}  & H_2(K_1) 
\ar[r]^-{0}  & H_2(Q) \ar[r]^-{j_*} & H_2(Q,K_1) 
\ar[r]^-{\delta}  & H_1(K_1) \ar[r]  & 0}
}$$
This shows that the map $j_*$ is again injective and the same reasoning as above ends the 
proof. 
\end{enumerate}
\end{proof}

\subsection{Proof of Theorem~\ref{thm:discs_real_parts}}

\begin{proof}
 In~\cite{BM15} Biran and Membrez showed that the discriminant of a Lagrangian 
sphere $S^n\subset Q$ has discriminant $(-1)^{\frac{n(n-1)}{2}+1}4$. 
Theorem~\ref{thm:equal_discr_rank2} and the construction of the cobordisms above imply 
the result.
\end{proof}

%\chapterstyle{default}
\begin{appendix}
%  \renewcommand{\thethm}{\Alph{chapter}.\arabic{thm}}
% !TEX root = ../main.tex

\section{Filtered chain complexes and spectral sequences}

This subsection follows~\cite{McC01}, where the author learned about 
spectral sequences.
We begin with some definitions.
\begin{definition}
 A graded $R$-module $M$ is an indexed family $M=(M_p)_{p\in\mathbb{Z}}$ of $R$-modules.
\end{definition}

\begin{definition}
 A bigraded $R$-module $M$ is an indexed family $M=(M_{p,q})_{(p,q)\in\mathbb{Z}\times 
\mathbb{Z}}$ of $R$-modules.
\end{definition}

\begin{definition}
 A filtration $F_*$ on an $R$-module $A$ is a family of submodules $\{F_pA\}$ for $p\in 
\mathbb{Z}$ so that 
$$\ldots \subset F_{p-1}A\subset F_pA\subset F_{p+1}A\subset \ldots \subset A \text{ 
(increasing filtration).}$$ 
\end{definition}
Collapsing these filtered modules gives associated graded modules, denoted by $E_*^0(A)$ 
and defined by $E^0_p(A):=F_pA/F_{p-1}A$. 
\begin{remark}
 The construction of the associated graded module to a filtered module is 
straightforward. However, if the filtration is not bounded from below and above it is in 
general not easy to reconstruct the filtration from the associated graded module. 
Moreover, even if one can reconstruct such a filtration, it is in general not canonical.
See~\cite{McC01} for more details.
\end{remark}

Now suppose that we are given a graded $R$-module $H_*$ that is also filtered. Then we 
can define a filtration on each degree by $F_pH_n:=F_pH_*\cap H_n$ and set 
$$E^0_{p,q}(H_*,F):=F_pH_{p+q}/F_{p-1}H_{p+q},$$ 
which is a bigraded module.

\begin{definition}
 A (homological) spectral sequence is a sequence of differential bigraded $R$-modules, 
that is, for $r=1,2,\ldots$ and for $p$ and $q\geq 0$, we have an $R$-module $E_{p,q}^r$. 
Furthermore, each bigraded module, $E_{*,*}^r$ is equipped with a linear mapping 
$$d_r:E_{*,*}^r\to E_{*,*}^r,$$
which is a differential ($d_r\circ d_r=0$), of bidegree $(-r,+r-1)$,
$$d_r:E_{p,q}^r\to E_{p-r,q+r-1}^r.$$
Finally, for all $r\geq 1$, $E_{*,*}^{r+1}\cong H(E_{*,*}^r,d_r)$, that is,
$$E_{p,q}^{r+1}=\frac{\ker d_r:E_{p,q}^r\to E_{p-r,q+r-1}^r}{\im d_r:E_{p+r,q-r+1}^r\to 
E_{p,q}^r}.$$
\end{definition}

There is a notion of convergence of a spectral sequence.

\begin{definition}
 A spectral sequence $\{E_{p,q}^r,d_r\}$ is said to converge to a graded 
$R$-module $H_*$, if there exists a filtration $F$ of $H_*$ such that 
$$E_{p,q}^{\infty}\cong E_{p,q}^0(H_*,F),$$
where $E_{*,*}^{\infty}$ is the limit term of the spectral sequence. 
\end{definition}

We discuss an important family of examples in which spectral sequences arise and a 
sufficient property for them to converge.

\begin{definition}
 An $R$-module $A$ is a filtered differential graded module if
\begin{enumerate}
 \item $A$ is a direct sum of submodules, $A=\bigoplus_{n=0}^{\infty}A_n$.
 \item There is an $R$-linear mapping, $d:A_n\to A_{n-1}$ for all $n$, of degree $-1$, 
satisfying $d\circ d=0$.
 \item $A$ has a filtration $F$ and the differential $d$ respects the filtration, i.e. 
$d:F_pA\to F_pA$.
\end{enumerate}
\end{definition}

Because of point $(iii)$ in the previous definition, the filtration of $A$ induces a 
filtration on the homology $H(A,d):=\ker d/ \im d$, i.e.
$$F_pH(A,d)=\im(\xymatrix{H(F_pA,d)\ar[r]^-{H(inclusion)} & H(A,d))}.$$

The next theorem describes a sufficient condition for a special family of spectral 
sequences to converge. For a proof see~\cite{McC01} for example.

\begin{theorem}\label{thm:spectral_seq_from_filtered_complex}
 Each filtered differential graded module $(A,d,F_*)$ determines a spectral sequence, 
$\{E_{*,*}^r,d_r\}$, $r\in \mathbb{Z}$, with $d_r$ of bidegree $(-r,r-1)$ 
$$E_{p,q}^1\cong H_{p+q}(F_pA/F_{p-1}A).$$
Furthermore, suppose that the filtration is bounded, i.e. there exists $s=s(n)$ and 
$t=t(n)$, such that
$$\{0\}= F_{s-1}A_n\subset F_sA_n\subset F_{s+1}A_n\subset \ldots \subset 
F_{t-1}A_n\subset F_tA_n=A_n,$$
for all $n$.
Then the spectral sequence converges to $H(A,d)$, i.e. 
$$E_{p,q}^{\infty}\cong F_pH_{p+q}(A,d)/F_{p-1}H_{p+q}(A,d).$$
\end{theorem}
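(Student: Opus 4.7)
The plan is to implement the classical construction of the spectral sequence of a filtered chain complex, and then exploit boundedness of the filtration to identify the $E^{\infty}$-page with the associated graded of $H(A,d)$. First I would define, for each pair $(p,q)$ and each $r \geq 0$, the module of \emph{approximate $r$-cycles}
\begin{equation*}
Z^r_{p,q} := \{\, a \in F_p A_{p+q} : d a \in F_{p-r} A_{p+q-1} \,\}
\end{equation*}
and the module of \emph{approximate $r$-boundaries}
\begin{equation*}
B^r_{p,q} := F_p A_{p+q} \cap d\bigl(F_{p+r-1} A_{p+q+1}\bigr).
\end{equation*}
Intuitively, $Z^r_{p,q}$ consists of chains in filtration $p$ whose boundary has dropped at least $r$ steps in the filtration, while $B^r_{p,q}$ consists of chains that become boundaries of elements coming from at most $r-1$ filtration levels above. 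The $r$-th page is then defined as
\begin{equation*}
E^r_{p,q} := \frac{Z^r_{p,q}}{Z^{r-1}_{p-1,q+1} + B^{r-1}_{p,q}}.
\end{equation*}

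Once these are in place, the verifications proceed in three steps that I would carry out in order. First, the differential: since $d$ satisfies $d\circ d=0$ and respects $F$, it restricts to a map $Z^r_{p,q} \to Z^r_{p-r,q+r-1}$ which descends to a well-defined map $d_r : E^r_{p,q} \to E^r_{p-r,q+r-1}$ of bidegree $(-r, r-1)$, and $d_r \circ d_r = 0$ is immediate. Second, the page-passage: a short diagram chase identifies $\ker d_r$ with $Z^{r+1}_{p,q}$ modulo the denominators at level $r$, and $\operatorname{im} d_r$ with the contribution to $B^{r}_{p,q}$, yielding $H(E^r_{*,*},d_r) \cong E^{r+1}_{*,*}$. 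Third, the initial page: at $r=1$, one gets $Z^1_{p,q}/(Z^0_{p-1,q+1}+B^0_{p,q}) = H_{p+q}(F_pA/F_{p-1}A)$ directly from the definitions.

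For convergence, I would use the bounded filtration hypothesis. Fix $n$ and let $s=s(n)$, $t=t(n)$ bound $F_*A_n$. For each fixed $(p,q)$, once $r$ is large enough that $p-r < s(p+q-1)$ and $p+r-1 > t(p+q+1)$, the definitions force $Z^r_{p,q} = \ker d \cap F_p A_{p+q}$ and $B^r_{p,q} = F_p A_{p+q} \cap d(A)$, so the modules $E^r_{p,q}$ stabilize to a common value $E^\infty_{p,q}$. On the other hand, the induced filtration on $H(A,d)$ satisfies
\begin{equation*}
F_p H_{p+q}(A,d) = \frac{\ker d \cap F_p A_{p+q}}{\ker d \cap F_p A_{p+q} \cap d(A)},
\end{equation*}
and a direct comparison yields $E^\infty_{p,q} \cong F_p H_{p+q}(A,d)/F_{p-1}H_{p+q}(A,d)$, which is the desired convergence.

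The main obstacle, in my view, is purely bookkeeping rather than conceptual: correctly tracking the nested subquotients $Z^r, B^r$ through the identifications $E^{r+1} \cong H(E^r,d_r)$, and verifying that the natural maps between them are well-defined on the quotients. No hard analytic or geometric input is required, since the hypotheses already provide the algebraic structure (filtration, differential of degree $-1$, compatibility) and the boundedness assumption that controls the limit.
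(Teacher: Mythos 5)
The paper does not actually give a proof of this theorem; it cites McCleary's book. Your proposal follows exactly the standard Cartan--Eilenberg construction found there, so the approach is the right one. There is, however, a concrete off-by-one mismatch in your bookkeeping that breaks the identification of the first page. With your definitions,
\begin{equation*}
B^r_{p,q}=F_pA_{p+q}\cap d\bigl(F_{p+r-1}A_{p+q+1}\bigr),\qquad
E^r_{p,q}=\frac{Z^r_{p,q}}{Z^{r-1}_{p-1,q+1}+B^{r-1}_{p,q}},
\end{equation*}
the denominator at page $r$ involves $d(F_{p+r-2})$. At $r=1$ this is $d(F_{p-1})$, which is already contained in $F_{p-1}A_{p+q}$, so your $E^1_{p,q}$ collapses to $Z^1_{p,q}/F_{p-1}A_{p+q}$. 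That is the \emph{kernel} of $d_0$ on $E^0_{p,q}=F_pA_{p+q}/F_{p-1}A_{p+q}$, not the homology $H_{p+q}(F_pA/F_{p-1}A)$: the boundaries $d(F_pA_{p+q+1})\cap F_pA_{p+q}$ are missing from the denominator. For the same reason, the page-passage step $H(E^r,d_r)\cong E^{r+1}$ fails as stated. You need the denominator at page $r$ to be $Z^{r-1}_{p-1,q+1}+B^{r}_{p,q}$ with your current $B^r$, or, equivalently, keep $B^{r-1}$ in the denominator but redefine $B^r_{p,q}:=F_pA_{p+q}\cap d(F_{p+r}A_{p+q+1})$.

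Once this index is fixed, the rest of the plan goes through: the diagram chase for $H(E^r,d_r)\cong E^{r+1}$ is routine, and your convergence argument is correct as written. In fact the off-by-one does not affect the $E^\infty$ identification, because for fixed $(p,q)$ and $r\gg 0$ both $B^{r-1}_{p,q}$ and $B^r_{p,q}$ already stabilize to $F_pA_{p+q}\cap d(A_{p+q+1})$, and your description of the induced filtration on $H(A,d)$ and the resulting isomorphism $E^\infty_{p,q}\cong F_pH_{p+q}/F_{p-1}H_{p+q}$ is correct. So the only genuine defect is the mis-indexing of the boundary term, which would have to be repaired before the $r=1$ identification and the page-passage lemma hold.
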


% !TEX root = ../main.tex

\section{Spin structures}

\begin{definition}\label{def:spin_structure}
A spin structure of an oriented vector bundle $E$ over a manifold $X$ is a homotopy 
class of a trivialization of $E$ over the 1-skeleton of $X$ which can be extended to the 
2-skeleton of $X$.
\end{definition}
\begin{definition}\label{def:spin_manifold}
 A spin manifold is an oriented manifold with a spin structure on its tangent 
bundle.
\end{definition}

For a $n$-dimensional vector space $W$ let $V_k(W)$ be the Stiefel manifold, which is the set 
of orthonormal $k$-frames of $W$ with respect to some Riemannian metric. To a vector bundle $\mathbb{R}^n\to E\to B$ 
we can associate the fiber bundle 
$$V_k(\mathbb{R}^n)\to V_k(E)\to B.$$
A section of $V_k(E)\to B$ is the same thing as $k$ orthonormal sections of $E\to B$.
It is known that $\pi_{j}(V_k(\mathbb{R}^n))=0$ for all $j< n-k$. 
This implies the existence of a $k$-frame of $E$ on the $n-k$-skeleton.
Certainly a $k$-frame exist on $B^0$. Suppose it exists on $B^j$. Extending it to 
$B^{j+1}$ is possible if the maps $S^j\to V_k(\mathbb{R}^n)$, coming from the section 
on the $j$-skeleton, extend to disks $D^{j+1}\to V_k(\mathbb{R}^n)$. This depends on 
the homotopy class of the map $S^j\to V_k(\mathbb{R}^n)$, which lies in 
$\pi_{j}(V_k(\mathbb{R}^n))$.

The following is a well-known result from obstruction theory.
\begin{theorem}
 A $k$-frame of $E$ over the $n-k$-skeleton $B^{n-k}$ can be extended to the 
$n-k+1$-skeleton $B^{n-k+1}$ if and only if the $n-k+1$-th Stiefel Whitney class 
$w_{n-k+1}$ is zero. 
\end{theorem}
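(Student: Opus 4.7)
The plan is classical obstruction theory in the sense of Steenrod. Given the $k$-frame $s$ defined on $B^{n-k}$, for each oriented $(n-k+1)$-cell $e_\alpha$ with attaching map $\phi_\alpha \colon S^{n-k} \to B^{n-k}$ I would trivialize $E$ over the closure of $e_\alpha$ (possible since the cell is contractible) and interpret $s \circ \phi_\alpha$ as a map $S^{n-k} \to V_k(\mathbb{R}^n)$. Because $V_k(\mathbb{R}^n)$ is $(n-k-1)$-connected, this map represents a well-defined class
$$c(s)(e_\alpha) \in \pi_{n-k}(V_k(\mathbb{R}^n)),$$
and $s$ extends across $e_\alpha$ iff $c(s)(e_\alpha) = 0$, since $D^{n-k+1}$ then provides the required nullhomotopy. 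Linear extension on cells yields an obstruction cochain $c(s) \in C^{n-k+1}(B; \pi_{n-k}(V_k(\mathbb{R}^n)))$.

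Next I would establish two standard facts. First, $c(s)$ is a cocycle: evaluating $\delta c(s)$ on an $(n-k+2)$-cell reduces, via the cellular boundary formula, to the homotopy class of $s$ along a sphere that bounds a disc inside the cell on which $s$ can be extended, hence vanishes. Second, $[c(s)] \in H^{n-k+1}(B; \pi_{n-k}(V_k(\mathbb{R}^n)))$ depends only on $s|_{B^{n-k-1}}$: a different choice of extension over the $(n-k)$-cells modifies $c(s)$ by a coboundary. Combined, these yield that a frame on $B^{n-k-1}$ extends to $B^{n-k+1}$ iff $[c(s)] = 0$, which is the content of the theorem once $[c(s)]$ is identified with the Stiefel-Whitney class.

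That identification, namely that the mod-$2$ reduction of $[c(s)]$ equals $w_{n-k+1}(E)$, is the main obstacle. I would argue by the axiomatic characterization: the assignment $E \mapsto [c(s)] \bmod 2$ defines a characteristic class by naturality of the obstruction construction, vanishes on trivial bundles, and satisfies the Whitney sum formula --- the latter requires analyzing how obstruction cocycles behave under direct sum of bundles through the fibration $V_{k-1}(\mathbb{R}^{n-1}) \to V_k(\mathbb{R}^n) \to S^{n-1}$. Normalization on the tautological line bundle over $\mathbb{R}P^\infty$ then pins the class down uniquely, and the axiomatic uniqueness of the $w_i$ finishes the identification. The delicate point is that when $n-k$ is even one has $\pi_{n-k}(V_k(\mathbb{R}^n)) = \mathbb{Z}$, so only the mod-$2$ reduction of $[c(s)]$ is canonically the Stiefel-Whitney class; verifying the Whitney sum axiom mod $2$ in this case is where the bulk of the work lies.
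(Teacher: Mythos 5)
The paper does not actually prove this theorem; it sets up the obstruction-theoretic framework (the associated bundle $V_k(E)\to B$ and the $(n-k-1)$-connectivity of $V_k(\mathbb{R}^n)$) and then quotes the statement as well known, deferring to a reference. Your sketch is the classical obstruction-theory argument that fills in exactly this framework, and it is correct in outline. You also rightly flag the standard looseness in the statement: the class $[c(s)]$ governs whether a frame over $B^{n-k-1}$ extends to $B^{n-k+1}$, so the given frame on $B^{n-k}$ may need to be modified over the top $(n-k)$-cells before it extends; vanishing of $w_{n-k+1}$ guarantees a suitable modification, not extension of every chosen frame.

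The one genuine soft spot is the identification $[c(s)]\bmod 2=w_{n-k+1}(E)$, which you single out as the obstacle but do not resolve. Appealing to the axiomatic characterization is a legitimate route, but verifying the Whitney-sum axiom for the mod-$2$ total obstruction class is serious work, and the fibration $V_{k-1}(\mathbb{R}^{n-1})\to V_k(\mathbb{R}^n)\to S^{n-1}$ you invoke is the tool for computing $\pi_{n-k}(V_k(\mathbb{R}^n))$ inductively, not for analysing direct sums; the Whitney-sum step actually requires comparing Stiefel manifolds of $E\oplus E'$ with frames assembled from the two summands, and your sketch gives no hint of how that comparison goes. There is a shortcut worth noting that is consistent with the paper's conventions: the surrounding appendix works in Steenrod's framework and cites \cite{Ste51}, where the mod-$2$ primary obstruction class is taken as the \emph{definition} of $w_{n-k+1}$; with that convention the identification is tautological and only your first three steps (construction of the cochain, cocycle condition, dependence only on the restriction to $B^{n-k-1}$) remain. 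Either reading is consistent with the paper's decision to state this as a standard fact rather than prove it.
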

In particular, we have 
\begin{theorem}
 An manifold is orientable if and only if its first Stiefel-Whitney class vanishes.
\end{theorem}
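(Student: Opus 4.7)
The plan is to deduce this directly from the preceding obstruction-theoretic theorem applied to the tangent bundle $TM$ of an $n$-dimensional manifold $M$, with the specialization $k = n$. In that case the general statement reads: an $n$-frame of $TM$ defined on the $0$-skeleton $M^{(0)}$ extends to the $1$-skeleton $M^{(1)}$ if and only if $w_1(TM) = 0$. The substantive remaining task is to identify this extension condition with the classical notion of orientability of $M$.

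First I would fix a CW-structure on $M$ and observe that over the discrete $0$-skeleton an $n$-frame always exists, obtained simply by picking an ordered basis of $T_xM$ at each vertex $x$. Since the Stiefel manifold $V_n(\mathbb{R}^n) = O(n)$ has exactly two path components, distinguished by the sign of the determinant, the obstruction to extending two $n$-frames at the endpoints of a single $1$-cell across that cell is precisely the requirement that the two frames lie in the same component of $O(n)$, i.e.\ that they induce the same orientation on the adjacent tangent spaces under parallel transport along the cell.

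Next I would translate this cell-by-cell condition into a global statement. A coherent choice of component of $O(n)$ at each vertex, compatible along every $1$-cell, is precisely a section over $M^{(1)}$ of the orientation double cover of $M$. Because this is a locally trivial $\mathbb{Z}/2$-bundle, such a section exists if and only if the double cover is trivial, which is the standard definition of orientability of $M$. Combining the two steps, orientability of $M$ is equivalent to extendability of an $n$-frame from $M^{(0)}$ to $M^{(1)}$, which by the preceding theorem is equivalent to the vanishing of $w_1(TM)$.

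The main point to check carefully, and the only place where the argument does real work beyond unpacking definitions, is the identification of the cellular obstruction cocycle produced in the proof of the obstruction theorem with the class $w_1$ of the orientation cover. This comes down to a routine \v{C}ech/cellular cohomology calculation: the coboundary of a choice of $0$-frames on vertices records the sign changes of the determinant along each $1$-cell, and this $\mathbb{Z}/2$-cocycle is by construction a representative of $w_1(TM)$.
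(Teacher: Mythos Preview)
Your approach is correct and matches the paper's: the paper gives no proof beyond the phrase ``In particular, we have'', indicating that the statement is the specialization $k=n$ of the preceding obstruction-theoretic theorem. You have simply filled in the details the paper omits, correctly identifying that an $n$-frame of $TM$ is an ordered basis, that $V_n(\mathbb{R}^n)=O(n)$ has $\pi_0=\mathbb{Z}/2$ detecting orientation, and that extending such a frame from $M^{(0)}$ to $M^{(1)}$ is exactly the data of a consistent orientation.
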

Notice that if $E\to B$ is orientable then a section of $V_{n-1}(E)\to B$ uniquely 
defines a trivialization and vice versa. 
From this it is also not hard to see that
\begin{theorem}\label{thm:spin_stiefel12}
 $E\to B$ is spin if and only if $w_1(E)=w_2(E)=0$. 
\end{theorem}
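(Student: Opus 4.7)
My plan is to deduce the theorem from two applications of the obstruction result quoted just above, after translating spin structures into the language of $(n-1)$-frames. For an oriented rank-$n$ bundle $E$, a trivialization over a skeleton is equivalent to a section of $V_{n-1}(E)$ over that skeleton, since the orientation determines the $n$-th frame vector given the first $n-1$. Under this correspondence, the definition of a spin structure translates to the following: $E$ is oriented and admits a section of $V_{n-1}(E) \to B$ over $B^1$ which extends to a section over $B^2$.

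For the forward direction, suppose $E$ is spin. Then by definition $E$ is oriented, so $w_1(E) = 0$ by the preceding orientability criterion. Furthermore, by the reformulation above some $(n-1)$-frame on $B^1$ extends to $B^2$; applying the quoted obstruction theorem with $k = n-1$ (so that $n - k + 1 = 2$) forces $w_2(E) = 0$.

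For the converse, assume $w_1(E) = w_2(E) = 0$. The first equality gives that $E$ is orientable, and we fix an orientation. To build a spin structure I must produce an $(n-1)$-frame on $B^1$ that extends to $B^2$. An $(n-1)$-frame on $B^0$ exists trivially, and it extends over $B^1$ without obstruction, since the relevant obstruction takes values in $\pi_0(V_{n-1}(\mathbb{R}^n)) = 0$ (using the stated vanishing $\pi_j(V_k(\mathbb{R}^n)) = 0$ for $j < n-k$, applied with $k = n-1$, $j = 0$). Now the quoted obstruction theorem, again with $k = n-1$, shows that this $(n-1)$-frame on $B^1$ can be extended to $B^2$ precisely because $w_2(E) = 0$, which produces the desired spin structure.

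The one point requiring care — and it is hardly a genuine obstacle in this setting — is that the quoted theorem permits modifying the frame on the lower skeleton before extending, so that the global obstruction is the cohomology class $w_2(E)$ rather than any particular cocycle. This is already encoded in the statement of the obstruction theorem, so no further work is needed.
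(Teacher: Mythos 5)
Your argument is correct and follows exactly the route the paper itself sketches: identify trivializations over the low skeleta of an oriented bundle with sections of $V_{n-1}(E)$, and read off both directions from the quoted obstruction theorem with $k=n-1$ (the paper leaves these details to the cited reference). Your closing remark about the obstruction being a cohomology class rather than a cocycle is the right point of care and matches the intended reading of the quoted theorem.
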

For a nice elaboration of the subject and a proof of Theorem~\ref{thm:spin_stiefel12} see~\cite{Bos18}.

% !TEX root = ../main.tex

\section{Projective duality, discriminants and hyperdeterminants}\label{appendix:hyperdets}

This section summarizes some of the definitions and results given in~\cite{GKZ94}, 
~\cite{Tev03} and~\cite{JLLZF14} and basics can be found in~\cite{Harr13} and~\cite{Hart13}.

\section{Projective duality}

Let $V$ be a $n+1$-dimensional complex vector space and $\mathbb{P}(V):=V\setminus 
\{0\} / \sim$ its projectivization, where $x\sim y \text{ iff } \exists 0 \neq \lambda\in \mathbb{C} \text{ 
s.t. } x=\lambda y$.
The element $[x_0:\ldots:x_n]$ in $\mathbb{P}(V)$ denotes the equivalence classes of the point 
$(x_0,\ldots, x_n)\in V$.
Let $\{0\}\neq U\subset V$ be a non-trivial linear subspace of $V$. 
Then the projectivization $\mathbb{P}(U)$ of $U$ is a projective subspace of 
$\mathbb{P}(V)$.
Let $V^*$ be the dual vector space of $V$. 
We define the dual projective space $\mathbb{P}(V)^*:=\mathbb{P}(V^*)$,which can be thought of as the set of 
hyperplanes in $\mathbb{P}(V)$.
A point $h\in\mathbb{P}(V)^*$ corresponds to a hyperplane in $\mathbb{P}(V)$.
Conversely, to a point $p\in \mathbb{P}(V)$ we associate a hyperplane $h_p\in 
\mathbb{P}(V)^*$, namely the space of all hyperplanes in $\mathbb{P}(V)$ passing 
through $p$.
This gives a natural identification $\mathbb{P}(V)^{**}\cong \mathbb{P}(V)$.
Let $L$ be a projective subspace.
The dual projective subspace $L^*\subset\mathbb{P}(V)^*$ is parametrized by all hyperplanes in $\mathbb{P}(V)$ containing 
$L$.

For convenience, we will from now on write $\mathbb{P}$ or $\mathbb{P}^*$ instead of 
$\mathbb{P}(V)$ or $\mathbb{P}^*(V)$ if there is no 
ambiguity.
If we wish to indicate the dimension of the projective space we write $\mathbb{P}^n$ or 
${\mathbb{P}^*}^{n}$ respectively.

\begin{definition}
 Let $X\subset \mathbb{P}^n$ be a projective variety, possibly with singular points.
Let $I(X):=\{f\in \mathbb{C}[x_0,\ldots, x_n] \mid f \text{ is homogeneous and } f(p)=0 
\quad
\forall p\in X\}$ denote the homogeneous ideal of $X$.
The homogeneous coordinate ring $A(X)$ of $X$ is $A(X):=\mathbb{C}[x_0,\ldots, 
x_n]/I(X)$. 
For a point $x\in X$ the maximal ideal at $x$ is given by $m_x:=\{f\in 
A(X)|f(x)=0\}$.
The Zariski tangent space $\mathbb{T}_x(X)$ of $X$ at $x$ is the dual vector space of 
$m_x/m_x^2$, i.e.
$$\mathbb{T}_x(X):=(m_x/m_x^2)^*.$$
\end{definition}

In the affine setting:

\begin{definition}
 Let $Y\in \mathbb{C}^n$ be an affine variety. 
We define the affine coordinate ring of $Y$ by $A(Y):=\mathbb{C}[x_1,\ldots,x_n]/I(Y)$,
where $I(Y)$ is the ideal of $Y$, given by $I(Y):=\{f\in 
\mathbb{C}[x_1,\ldots,x_n] \mid f(y)=0 \quad \forall y\in Y\}$.
The maximal ideal of $Y$ at a point $y\in Y$ is
$m_y:=\{f\in A(Y) \mid f(y)=0\}$.
The Zariski tangent space of $Y$ at $y$ is
$$\mathbb{T}_y (Y):=(m_y/m_y^2)^*.$$
\end{definition}

In the affine case the Zariski tangent space can also be expressed in local coordinates.
$$\mathbb{T}_y(Y)=\{v\in \mathbb{T}_y(\mathbb{C}^n)\cong \mathbb{C}^n \mid df(v)=0 
\quad \forall f\in 
I(X)\}.$$

Following properties of the Zariski tangent spaces are well-known.

\begin{lemma}
 \begin{enumerate}
  \item $\dim\mathbb{T}_x(X)\geq \dim(X)$, where equality holds if and only if $x$ is a 
smooth point.
  \item For any morphism $\phi:X\rightarrow Y$, there is a natural induced 
$\mathbb{C}$-linear map $T_x(\phi):\mathbb{T}_x(X)\rightarrow \mathbb{T}_{\phi(x)}(Y)$.
 \end{enumerate}

\end{lemma}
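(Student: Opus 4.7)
The plan is to prove both parts by reducing to standard commutative-algebra statements about the local ring $\mathcal{O}_{X,x}$, whose maximal ideal $\mathfrak m$ is exactly $m_x$ localized at $x$, so that $m_x/m_x^2 \cong \mathfrak m/\mathfrak m^2$ as vector spaces over $k = \mathcal{O}_{X,x}/\mathfrak m = \mathbb C$. This identification is the key bookkeeping step, and once it is in place, (1) is Krull's height/regular-local-ring theorem and (2) is just functoriality of the vanishing ideal.

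For part (1), I would first show that $\dim_{\mathbb C}(\mathfrak m/\mathfrak m^2)$ equals the minimal number of generators of $\mathfrak m$ as an ideal of $\mathcal{O}_{X,x}$. This is a direct application of Nakayama's lemma: lift a $\mathbb{C}$-basis of $\mathfrak m/\mathfrak m^2$ to elements of $\mathfrak m$, and these elements generate $\mathfrak m$ because their images span $\mathfrak m/\mathfrak m^2$. Conversely, any set of generators of $\mathfrak m$ projects to a spanning set of $\mathfrak m/\mathfrak m^2$. Next, Krull's height theorem gives the general inequality $\dim \mathcal{O}_{X,x} \leq \mu(\mathfrak m)$, where $\mu$ denotes minimal number of generators, and since $\dim \mathcal{O}_{X,x} = \dim X$ (for $X$ irreducible at $x$; in general one replaces $\dim X$ by the local dimension at $x$, which I would note as a remark), this yields $\dim\mathbb T_x(X)\geq \dim(X)$. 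Equality is precisely the definition that $\mathcal{O}_{X,x}$ is a regular local ring, which by the classical Zariski/Jacobian criterion coincides with $x$ being a smooth (nonsingular) point of $X$.

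For part (2), I would take a morphism $\phi:X\to Y$ and use the induced pullback of regular functions $\phi^{\sharp}:A(Y)\to A(X)$, $f\mapsto f\circ\phi$. Since $f(\phi(x))=0$ implies $(f\circ\phi)(x)=0$, one has $\phi^{\sharp}(m_{\phi(x)})\subset m_x$, and consequently $\phi^{\sharp}(m_{\phi(x)}^2)\subset m_x^2$. This induces a well-defined $\mathbb C$-linear map $m_{\phi(x)}/m_{\phi(x)}^2 \longrightarrow m_x/m_x^2$ of cotangent spaces. Dualizing gives the desired natural linear map
\[
T_x(\phi):\mathbb T_x(X)=(m_x/m_x^2)^{*}\longrightarrow (m_{\phi(x)}/m_{\phi(x)}^2)^{*}=\mathbb T_{\phi(x)}(Y).
\]
Naturality (functoriality in $\phi$) is immediate because composition of morphisms pulls back to composition of ring maps, which dualizes to composition of tangent maps, and $\mathrm{id}_X$ clearly induces the identity.

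The only genuinely nontrivial ingredient is the algebraic input in part (1), namely Krull's bound $\dim \mathcal{O}_{X,x}\leq \dim_{\mathbb C}(\mathfrak m/\mathfrak m^2)$ together with the equivalence ``$\mathcal{O}_{X,x}$ regular $\Longleftrightarrow$ $x$ smooth on $X$''. I would invoke these as black-box results from commutative algebra rather than reprove them; the rest of the argument is routine bookkeeping between the graded picture on $X\subset\mathbb P^n$ and the local picture at $x$, plus dualization of a map of finite-dimensional vector spaces.
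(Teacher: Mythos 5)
The paper does not actually prove this lemma: it introduces it with the phrase ``Following properties of the Zariski tangent spaces are well-known'' and then moves on, so there is no proof to compare yours against. Your argument is the standard one and is correct: the identification $m_x/m_x^2\cong\mathfrak m/\mathfrak m^2$, Nakayama to equate $\dim_{\mathbb C}(\mathfrak m/\mathfrak m^2)$ with the minimal number of generators of $\mathfrak m$, Krull's height theorem for the inequality, the regular-local-ring characterization of smoothness (which over $\mathbb C$ coincides with geometric smoothness), and pullback of ideals followed by dualization for functoriality. One small remark, more about the paper than about you: the paper's definition uses the \emph{homogeneous} coordinate ring $A(X)$ of a projective variety, so $m_x$ as written really captures the tangent cone/affine cone rather than the intrinsic local ring of $X$ at $x$; your reduction to $\mathcal O_{X,x}$ (equivalently, passing to an affine chart) is the right way to make the statement precise, and you correctly flag that this is routine bookkeeping rather than reprove it.
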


For smooth points $x\in X$ there is another notion of tangent space.

\begin{definition}
 Let $X\subset \mathbb{P}^n(V)$ be a smooth irreducible projective variety and $x\in X$.
Let $C(X):=\{l \mid l\in V \text{ a $1$-dim vector subspace s.t. } 
\mathbb{P}(l)\subset 
X\}$ be the cone of $X$ in $V$.
For a smooth point $x\in X\subset \mathbb{P}^n(V)$ and for any non-zero 
vector $v\in V$ in the class 
$x$ the point $v$ is a smooth point of $C(X)$.
We define the embedded projective tangent space of $X$ at $x$ to be the projectivization 
of the 
tangent space of $C(X)$ at any point $v$.
I.e. 
$$\hat{T}_xX:=\mathbb{P}(T_v C(X)).$$
This is well-defined, since for $v\neq w$ both lying in the class $x$ we have $\mathbb{P}(T_v C(X))=\mathbb{P}(T_w C(X))$.
\end{definition}

\begin{definition}
 Let $X\subset \mathbb{P}^n(V)$ be a smooth irreducible projective variety and $x\in X$ 
a smooth point. We say that a hyperplane $H\subset \mathbb{P}(V)$ is tangent to $X$ at 
$x$ if it contains the tangent space $\hat{T}_xX$.
\end{definition}

\begin{definition}
 Let $X$ be a smooth projective algebraic variety in $\mathbb{P}(V)$.
We define its dual variety $X^{\vee}$ as the set of all 
hyperplanes $H\in \mathbb{P}(V)$ which are tangent to $X$ at some point $x$.
If $X$ is not smooth, we define the dual variety $X^{\vee}$ to be the 
Zariski closure of the union of all hyperplanes $H\in \mathbb{P}(V)$ which are tangent to 
$X$ at some smooth point $x$.
\end{definition}

We can reformulate the above definition.
A hyperplane $H$ is tangent to $X$ if and only if the intersection $H\cap X$ 
is singular.
Notice also that, even if $X$ is smooth, its dual variety $X^{\vee}$ could be singular. 
It actually turns out, that $X^{\vee}$ is singular in many cases.

\begin{definition}
 Let $X_{sm}$ be the smooth locus of a projective variety $X\subset \mathbb{P}^n$. 
Let $I^0_X\subset \mathbb{P}\times \mathbb{P}^*$ be the set of pairs $(x,H)$, where $x\in 
X$ and $H \in X^{\vee}$ a hypersurface tangent to $X$ at $x$. 
The conormal variety $I_X$ of $X$ is the Zariski closure of $I^0_X$.
\end{definition}

\begin{proposition}\label{prop:irreducible}
 If $X$ is irreducible then $X^{\vee}$ is irreducible.
\end{proposition}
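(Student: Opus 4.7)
The plan is to exploit the conormal variety $I_X$ and its two projections to $X$ and $X^{\vee}$, using the classical principle that a morphism with irreducible base and irreducible equidimensional fibers has irreducible total space.

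First I would consider the two projections
\begin{equation*}
\xymatrix{ & I_X \ar[dl]_{p_1} \ar[dr]^{p_2} & \\ X & & X^{\vee},}
\end{equation*}
and record that $p_2$ is surjective by the very definition of $X^{\vee}$ as the Zariski closure of the hyperplanes tangent to $X$ at some smooth point. Hence $X^{\vee} = p_2(I_X)$, and since the image of an irreducible variety under a morphism is irreducible, it will suffice to show that $I_X$ is irreducible.

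Next I would analyze $p_1$ on the open subset $I_X^{0} \subset I_X$ sitting over the smooth locus $X_{sm}$. For a smooth point $x \in X_{sm}$ with $\dim X = d$, the fiber $p_1^{-1}(x) \cap I_X^{0}$ is precisely the set of hyperplanes $H \in \mathbb{P}^*$ containing the projective tangent space $\hat{T}_xX \subset \mathbb{P}^n$. Since $\hat{T}_xX$ is a linear subspace of dimension $d$, the hyperplanes containing it form a projective subspace of $\mathbb{P}^*$ of dimension $n-d-1$, in particular an irreducible variety of constant dimension. Locally in $X_{sm}$ this identifies $p_1|_{I_X^{0}}$ with a projective bundle (the projectivization of the conormal bundle of $X_{sm}$ in $\mathbb{P}^n$), so $I_X^{0}$ is irreducible as soon as $X_{sm}$ is.

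The remaining steps are standard. Since $X$ is irreducible, $X_{sm}$ is a nonempty open subset and hence irreducible. Combining this with the previous paragraph shows that $I_X^{0}$ is irreducible, and therefore its Zariski closure $I_X$ is irreducible. Applying $p_2$ yields the irreducibility of $X^{\vee}$. The only point that requires some care is justifying that $I_X^{0}$ is irreducible rather than merely connected or equidimensional; this is where I would either invoke the local triviality of the conormal $\mathbb{P}^{n-d-1}$-bundle or else appeal to the standard lemma that a dominant morphism to an irreducible base whose fibers over an open set are irreducible of the same dimension has an irreducible source. That fiber-irreducibility statement is the main technical ingredient, but in our situation it is immediate since the fibers are explicit linear projective subspaces of $\mathbb{P}^*$.
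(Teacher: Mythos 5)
Your proposal is correct and follows essentially the same route as the paper: both view $I_X^0$ as a projective bundle (the projectivized conormal bundle) over $X_{sm}$ with fibers the linear subspaces of hyperplanes containing $\hat{T}_xX$, deduce irreducibility of $I_X^0$ and hence of its closure $I_X$, and then obtain irreducibility of $X^{\vee}$ as the image under the second projection. Your write-up merely makes explicit the fiber-irreducibility lemma that the paper leaves implicit.
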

\begin{proof}
Consider the projection
\begin{equation}
\xymatrix{
&I_X^0\ar[r]^{pr_1} &X_{sm}
},
\end{equation}
which defines $I_X^0$ as a bundle over $X_{sm}$. The fibers are 
$n-\dim(X)-1$-dimensional projective subspaces of $\mathbb{P}^*$.
More precisely, the fiber of $I_X^0$ at a point $x\in X$ is the projective subspace 
$\mathbb{P}/T^{\vee}_xX$ of dimension $n-dimX-1$.
As $X$ is irreducible also $I_X^0$ and $I_X$ are irreducible varieties and their 
dimension is $n-1$.
The image of the projection
\begin{equation}
\xymatrix{
&I_X\ar[r]^{pr_2} &\mathbb{P}^*
}
\end{equation} is$X^{\vee}$. Hence, also $X^{\vee}$ is irreducible.
\end{proof}

\subsection{General discriminants}

For convenience, we assume that $\mathbb{P}=\mathbb{P}(V^*)$ for some complex vector 
space $V$. Let $X\subset \mathbb{P}$ be a projective variety and $X^{\vee}$ its dual in 
$\mathbb{P}^*=\mathbb{P}(V)$.
Analysing the proof of Proposition~\ref{prop:irreducible} we see that, since 
$\dim I_X=n-1$, we would in most cases except that $X^{\vee}$ is a hypersurface.

\begin{definition}
 The defect $\defe(X)$ of a projective algebraic variety $X\in \mathbb{P}$ is defined 
to be the number $\codim_{\mathbb{P}^*} X^{\vee}-1$.
I.e. $\defe(X)$ is zero if and only if $X^{\vee}$ is a hypersurface.
\end{definition}
 
Suppose that $X^{\vee}$ is a hypersurface in $\mathbb{P}^*$.

\begin{definition}
Suppose $\codim X^{\vee}=1$.
The defining polynomial of $X^{\vee}$ in $\mathbb{P}(V)$ is called the $X$-discriminant and is denoted by $\Delta_X$.
It is defined uniquely up to multiplication by a non-zero constant.
If $\defe(X)>0$ or equivalently $\codim X^{\vee}>1$ we set $\Delta_X=1$.
\end{definition}

\subsection{The reflexivity theorem and some applications}

The next observation is a well-known result for projective duality.
\begin{theorem}[Reflexivity Theorem]\label{thm:reflex}
 \hspace{2em}
 \begin{enumerate}
  \item Let $X\subset \mathbb{P}^n$ be an irreducible projective variety. Then
$${X^{\vee}}^{\vee}=X.$$
  \item More precisely, for any smooth points $z\in X$ and $H\in X^{\vee}$ we have the 
following relation:
$H$ is tangent to $X$ at $z$ if and only if $z$ is tangent to $X^{\vee}$ at $H$.
 \end{enumerate}
\end{theorem}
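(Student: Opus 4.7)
The plan is to prove the sharpened statement (2) first, since (1) follows by projecting. The main idea is that the conormal variety $I_X\subset\mathbb{P}\times\mathbb{P}^*$ is symmetric under the swap combined with the canonical identification $\mathbb{P}^{**}\cong\mathbb{P}$: after applying the swap one recovers precisely $I_{X^\vee}$. Granted this symmetry, the first projection of $I_X$ is (the closure of) $X_{sm}$, and by symmetry the first projection of the swap of $I_{X^\vee}$ is (the closure of) $(X^\vee)^\vee_{sm}$; irreducibility of $X$ (hence of $I_X$ and $X^\vee$) forces these two closures to coincide with $X$, giving (1). The tangency statement (2) is just a restatement of the point-by-point symmetry of $I_X$.

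The heart of the argument is the symmetry of $I_X$, which I would establish on affine cones. Let $\hat X\subset V$ denote the affine cone over $X$ and set
\begin{equation*}
\hat I_X^{\,0} \;=\; \{(v,\phi)\in\hat X_{sm}\times(V^*\setminus\{0\}) \,:\, \phi|_{T_v\hat X}=0\},
\end{equation*}
a vector subbundle of $\hat X_{sm}\times V^*$ whose projectivization is dense in $I_X$. Note that $\phi(v)=0$ automatically, since $v\in T_v\hat X$ because $\hat X$ is a cone. Picking local defining equations $f_1,\ldots,f_r$ for $\hat X$ near $v$ with $T_v\hat X=\bigcap_i\ker df_i(v)$, one writes $\phi=\sum_i c_i\,df_i(v)$, and a short computation shows that the tangent space is
\begin{equation*}
T_{(v,\phi)}\hat I_X^{\,0} \;=\; \Bigl\{(a,b)\in V\oplus V^* \,:\, a\in T_v\hat X,\ b|_{T_v\hat X} + \sum_i c_i\, d^2f_i(v)(a,\cdot)\big|_{T_v\hat X}=0\Bigr\}.
\end{equation*}
Because each Hessian $d^2f_i(v)$ is symmetric, the defining relations of this tangent space are unchanged under swapping the roles of $(v,\phi)$ and $(a,b)$ interpreted via $V\cong V^{**}$; equivalently, the locus $\hat I_X^{\,0}$ is cut out (infinitesimally, and hence by analytic continuation along the irreducible bundle $\hat I_X^{\,0}$) by a condition symmetric in the two factors. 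This gives the desired identification of $\hat I_X$ with $\hat I_{X^\vee}$ after the swap, and projectivizing yields $I_X = \sigma(I_{X^\vee})$ where $\sigma$ is the natural swap.

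With this in hand I would conclude as follows. For part (2), if $z\in X_{sm}$ is smooth and $H\in X^\vee_{sm}$ is smooth and tangent to $X$ at $z$, then $(z,H)\in I_X$, and by the symmetry $(H,z)\in I_{X^\vee}$, which means exactly that the hyperplane in $\mathbb{P}^*$ corresponding to $z\in\mathbb{P}^{**}$ contains $\hat T_H X^\vee$, i.e.\ $z$ is tangent to $X^\vee$ at $H$. For part (1), the projection $\pi_1:I_X\to\mathbb{P}$ has image $\overline{X_{sm}}=X$ because $\hat I_X^{\,0}\to\hat X_{sm}$ is a vector bundle; applying the same to $I_{X^\vee}$ and using the symmetry $I_X\cong I_{X^\vee}$ yields $(X^\vee)^\vee=X$, using irreducibility of $X$ (and hence of $X^\vee$, by Proposition~\ref{prop:irreducible}) to ensure the Zariski closures match up.

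The main obstacle is the tangent-space computation and, more importantly, arguing that its local symmetry globalizes to the genuine equality $\sigma(I_X)=I_{X^\vee}$ rather than merely an inclusion of one irreducible variety into another of possibly larger dimension. The slick way to see this is to identify $I_X$ with a projectivized conormal Lagrangian inside $\mathbb{P}(T^*\mathbb{P}^n)$: conormal varieties of irreducible subvarieties are always Lagrangian for the canonical symplectic structure, and the symplectic form is visibly symmetric under the swap $T^*\mathbb{P}^n\leftrightarrow T^*(\mathbb{P}^n)^*$, so the Lagrangian $I_X$ is forced to coincide with $\sigma(I_{X^\vee})$ rather than be merely contained in it. Avoiding the symplectic framework, one instead has to verify by a direct dimension count that $\dim I_X=\dim I_{X^\vee}=n-1$ and invoke irreducibility to upgrade the infinitesimal symmetry to a global one.
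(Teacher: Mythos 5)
Your proposal is correct in outline, and in its rigorous form it ultimately rests on the same mechanism as the paper: the paper proves the equivalent statement $I_X=I_{X^{\vee}}$ by passing to cones, observing that $\Lag(C)=\overline{N^*_{C_{sm}}V}\subset T^*V=V\times V^*=T^*V^*$ is a conical Lagrangian for either symplectic structure (the swap only changes the sign of the form), invoking Theorem~\ref{thm:Lagrangian} that every such Lagrangian is $\Lag(Z)$ for some irreducible $Z$, and then pinning down $Z=pr_2(\Lag(C))=C^*$. Your ``slick way'' is exactly this argument, but be aware that ``the Lagrangian $I_X$ is forced to coincide with $\sigma(I_{X^{\vee}})$'' does not follow from Lagrangian-ness alone: the forcing comes from the second half of that classification (conical Lagrangian $\Rightarrow$ conormal variety of its projection), which is precisely the nontrivial input the paper cites rather than proves.

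Your primary route, the Hessian computation on the affine cone, is the classical characteristic-zero biduality argument and is genuinely more elementary than the paper's, but the key step is misphrased: ``the defining relations of this tangent space are unchanged under swapping the roles of $(v,\phi)$ and $(a,b)$'' is not a symmetry of a variety and by itself proves nothing. What your formula actually gives, after choosing the $f_i$ homogeneous (possible since $\hat X$ is a cone) and pairing the $V^*$-component $b$ of a tangent vector against the base point $v$, is $\langle b,v\rangle=0$, using the Euler relation $d^2f_i(v)(v,\cdot)=(\deg f_i-1)\,df_i(v)$ together with the symmetry of $d^2f_i$; combined with generic smoothness of $pr_2:I_X\to X^{\vee}$ (this is where characteristic $0$ enters, and where the argument fails in positive characteristic) this shows $v$ annihilates $T_{\phi}\hat X^{\vee}$ at generic points, i.e.\ only the inclusion $\sigma(I_X)\subseteq I_{X^{\vee}}$. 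Equality then needs exactly your fallback: both conormal varieties are irreducible (via Proposition~\ref{prop:irreducible} and the bundle description) of dimension $n-1$, so the inclusion is an equality. With that repair your elementary route is complete and independent of the symplectic machinery; your symplectic route, once the conical-Lagrangian classification is stated explicitly, is literally the paper's proof.
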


In order to prove this theorem we will recall some facts about normal and conormal 
bundles.

\begin{definition}
 Let $X$ be a smooth algebraic variety and $TX:=\bigcup_{x\in X}T_xX$ its 
tangent bundle.
For a smooth subvariety $Y\subset X$ we have that $TY\subset TX|_Y$ is a subbundle.
We define the normal bundle of $Y$ in $X$ by
$$N_XY:=TX|_Y/TY.$$
Similarly, the conormal bundle of $Y$ in $X$ is defined by
$$N_X^*Y:=T^*X|_Y/T^*Y.$$
\end{definition}

Recall that $I_X:=\overline{\{(x,H)\mid H \text{ tangent to } X \text{ 
at } x\}}\subset \mathbb{P}\times \mathbb{P}^*$ is the conormal variety and 
$$X^{\vee}=pr_2(I_X)$$ and $$pr_1:I_X^0\rightarrow X_{sm}$$ is a projective bundle.
The choice of a hyperplane $H$ tangent to $X$ at a smooth point $x$ is 
equivalent to the choice of a hyperplane $H\subset T_x\mathbb{P}^n$ containing $T_xX$.
Hence, we have the identification
$$I_X^0=\mathbb{P}(N^*_{X_{sm}}\mathbb{P}).$$
We therefore have the following reformulation of the Reflexivity Theorem:

\begin{theorem}[Reflexivity Theorem']
 $$I_X=I_{X^*}$$
\end{theorem}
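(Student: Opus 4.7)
The plan is to prove $I_X=I_{X^\vee}$ by passing to affine cones and invoking the symplectic characterization of conormal varieties, which makes the swap symmetry transparent.

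First I would set up the affine picture. Let $Y:=C(X)\subset V^*$ be the affine cone over $X\subset\mathbb{P}=\mathbb{P}(V^*)$, and $Y^\vee:=C(X^\vee)\subset V$ the affine cone over $X^\vee\subset\mathbb{P}^*=\mathbb{P}(V)$. Define the affine conormal variety
\[
\mathcal{N}(Y):=\overline{\{(y,\xi)\in Y_{sm}\times V : \xi|_{T_yY}=0\}}\ \subset\ V^*\times V.
\]
Because $Y$ is a cone, the Euler vector field (which at $y$ is $y$ itself in $T_yV^*\cong V^*$) is tangent to $Y$, so every $(y,\xi)\in\mathcal{N}(Y)$ automatically satisfies $\langle y,\xi\rangle=0$. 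Hence projectivizing factor-by-factor lands $\mathbb{P}(\mathcal{N}(Y))$ in the flag incidence $\{([y],[\xi]) : [y]\in H_\xi\}\subset\mathbb{P}\times\mathbb{P}^*$, and unwinding the definition of $\hat T_{[y]}X$ identifies this projectivization with $I_X$. The analogous identification holds between $\mathbb{P}(\mathcal{N}(Y^\vee))$ and $I_{X^\vee}\subset\mathbb{P}^*\times\mathbb{P}^{**}=\mathbb{P}^*\times\mathbb{P}$.

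Next I would invoke the key symplectic input. Identify $V^*\times V$ with the cotangent bundle $T^*V^*$ via the canonical pairing, equipped with the standard symplectic form $\omega=\sum dq_i\wedge dp_i$. Then $\mathcal{N}(Y)$ is a conical Lagrangian subvariety of $T^*V^*$, which is the standard fact that the conormal variety to any irreducible subvariety is Lagrangian of the right dimension $\dim V^*=n+1$. Now consider the swap
\[
\sigma:V^*\times V\longrightarrow V\times V^*,\qquad (y,\xi)\longmapsto (\xi,y),
\]
where we use $V^{**}=V$ to view the target as $T^*V$. A direct check in coordinates shows $\sigma^*\omega_{T^*V}=-\omega_{T^*V^*}$, so $\sigma$ is an anti-symplectomorphism and $\sigma(\mathcal{N}(Y))$ is again a conical Lagrangian subvariety, now of $T^*V$. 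By the very definition of $X^\vee$, its projection onto $V$ is precisely $Y^\vee=C(X^\vee)$.

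To close the argument I would invoke the uniqueness of conical Lagrangian subvarieties: any irreducible conical Lagrangian $L\subset T^*V$ whose base projection is an irreducible conical subvariety $Y'\subset V$ must equal $\mathcal{N}(Y')$. (In local coordinates adapted to $Y'_{sm}$, the Lagrangian condition combined with $Y'$-conicality forces the fiber directions to be exactly the annihilator of $T_{y'}Y'$, and both $\sigma(\mathcal{N}(Y))$ and $\mathcal{N}(Y^\vee)$ have the same dimension $n+1$ and are irreducible.) Applying this with $Y'=Y^\vee$ yields $\sigma(\mathcal{N}(Y))=\mathcal{N}(Y^\vee)$. Projectivizing each factor and using the Paragraph~1 identifications gives $I_X=I_{X^\vee}$, which is the reformulated reflexivity theorem, and unpacking the smooth tangency condition recovers the pointwise statement in part (2).

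The main obstacle will be the uniqueness statement in the last paragraph: showing that a conical Lagrangian subvariety projecting onto $Y'$ must coincide with $\mathcal{N}(Y')$. The slick route is to argue that at a general smooth point the fiber has dimension $\mathrm{codim}\,Y'$ and is isotropic with respect to $\omega$ inside the fiber $T^*_{y'}V$, together with conicality, forces it to be $\mathrm{Ann}(T_{y'}Y')$; irreducibility then propagates this to the whole variety. Verifying the sign convention for $\sigma^*\omega$ is a minor bookkeeping task that should not pose real difficulty.
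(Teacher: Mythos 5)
Your proof is correct and follows essentially the same route as the paper: pass to affine cones, identify the conormal variety as a conical Lagrangian in $T^*V^*\cong V^*\times V$, apply the anti-symplectic swap to $T^*V$, and use uniqueness of conical Lagrangians (the paper's Theorem~\ref{thm:Lagrangian}) to identify the image with $\Lag(C^*)$. The ``main obstacle'' you flag at the end --- that an irreducible conical Lagrangian of $T^*V$ projecting onto an irreducible conical base must be the conormal variety of that base --- is precisely what the paper quotes as a known result (Theorem~\ref{thm:Lagrangian}, with a reference to~\cite{Tev03}), so no gap remains once that citation is in place; your additional remark about the Euler vector field forcing $\langle y,\xi\rangle=0$ is a useful explicit justification for why the projectivization lands in the incidence variety, which the paper leaves implicit.
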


For the proof of the Reflexivity theorem it is convenient to work with vector spaces 
instead of projective spaces.
Let $C:=C(X)\subset V$ be the cone of $X$ and similarly $C^*:=C(X^{\vee})\subset V^*$ the 
cone of $X^{\vee}$.
We define $\Lag(C):=\overline{N^*_{C_{sm}}V}$ the closure of the conormal bundle of 
$C_{sm}$ in the cotangent bundle $T^*V$ of $V$.
We identify the cotangent bundles $T^*V$ as well as $T^*V^*$ with the space $V\times 
V^*$. 

Let $M$ be a smooth algebraic variety and $K\subset M$ be any irreducible subvariety.
Then $T^*M$ carries a canonical symplectic structure. Consider 
$\Lag(K):=\overline{N^*_{K_{sm}}M}$.
The following result is well-known. 
\begin{theorem}\label{thm:Lagrangian}
\hspace{2em}
\begin{enumerate}
  \item $\Lag(K)$ is a Lagrangian subvariety of $T^*M$.
  \item Any canonical Lagrangian subvariety of $T^*M$ has the form $\Lag(K)$ for some 
irreducible 
subvariety $K\subset M$.
\end{enumerate}
\end{theorem}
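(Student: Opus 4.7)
My plan is to treat the two assertions separately, establishing (1) by a direct local computation with the Liouville form and extending by closure, and (2) by reconstructing the base variety $K$ from a given conical Lagrangian subvariety $\Lambda$ and verifying equality of dimensions.

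For part (1), I would first note that $\Lag(K)$ is the Zariski closure of $N^*_{K_{sm}}M$, and since the latter is a vector bundle over $K_{sm}$ of rank $\dim M - \dim K$, its total space has dimension exactly $\dim M$, which is half of $\dim T^*M$. It therefore suffices to check that $\omega|_{\Lag(K)} = 0$ on the smooth open dense subset $N^*_{K_{sm}}M$, since then by continuity $\omega$ vanishes on the smooth locus of the closure. Recall the canonical symplectic form on $T^*M$ is $\omega = d\theta$, where $\theta$ is the Liouville one-form defined by $\theta_{(x,\xi)}(w) = \xi(D_{(x,\xi)}\pi(w))$ with $\pi\colon T^*M \to M$ the projection. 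At a point $(x,\xi) \in N^*_{K_{sm}}M$ and a tangent vector $w \in T_{(x,\xi)}N^*_{K_{sm}}M$, the image $D_{(x,\xi)}\pi(w)$ lies in $T_xK_{sm}$, and $\xi$ annihilates $T_xK_{sm}$ by definition of the conormal bundle; hence $\theta|_{\Lag(K)} \equiv 0$ on the smooth locus, and therefore $\omega|_{\Lag(K)} \equiv 0$ there as well. This proves $\Lag(K)$ is Lagrangian.

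For part (2), let $\Lambda \subset T^*M$ be an irreducible conical (i.e.\ $\mathbb{C}^*$-invariant under fiber scaling) Lagrangian subvariety, and set $K := \overline{\pi(\Lambda)} \subset M$, which is irreducible since $\Lambda$ is. The strategy is to show $\Lambda \subseteq \Lag(K)$ over the smooth locus $K_{sm}$, and then to conclude equality by comparing dimensions. Let $(x,\xi) \in \Lambda$ be a smooth point with $x \in K_{sm}$, and pick any tangent vector $v \in T_xK_{sm}$; lifting $v$ to a tangent vector $w \in T_{(x,\xi)}\Lambda$ and pairing with the Euler vector field $E$ (which is tangent to $\Lambda$ by conicity), the Lagrangian condition $\omega(w,E) = 0$ combined with the explicit formula $\theta_{(x,\xi)}(w) = \iota_E\omega|_{(x,\xi)}(w) = \xi(v)$ forces $\xi(v) = 0$. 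Thus $\xi$ annihilates $T_xK_{sm}$, so $(x,\xi) \in N^*_{K_{sm}}M \subset \Lag(K)$. This shows a dense open subset of $\Lambda$ lies in $\Lag(K)$, and by closure $\Lambda \subseteq \Lag(K)$; irreducibility and the coincidence of dimensions ($\dim \Lambda = \dim M = \dim \Lag(K)$) then force $\Lambda = \Lag(K)$.

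The main technical obstacle is the argument in part (2) that $\xi$ annihilates $T_xK$: one must genuinely use both the conical hypothesis (to have the Euler field tangent to $\Lambda$) and the Lagrangian hypothesis (isotropy plus maximality of dimension) to identify the fiber of $\Lambda$ over a generic point of $K$ with a subspace of $N^*_{x}K$. A secondary subtlety is handling the points of $\Lambda$ lying over the singular locus $K \setminus K_{sm}$, but since these form a proper closed subvariety of $\Lambda$ and $\Lambda$ is irreducible, the dense-open argument suffices and extending by Zariski closure completes the proof.
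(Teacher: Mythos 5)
Your proof is correct, and here it is worth noting that the paper itself gives \emph{no} proof of this statement: it is declared well-known and deferred to~\cite{Tev03}. Your argument is the standard one found in such references, so you are filling the gap rather than taking a different route. Two small points. First, you correctly read the paper's word \emph{canonical} as \emph{conical}, i.e.\ $\mathbb{C}^*$-invariant under fiber scaling; this is what is actually needed for the application in the proof of the Reflexivity Theorem (where $\Lag(C)$, viewed inside $T^*V^*$, must be fiber-conical because $C$ is a cone). Second, in part (2) the step ``lifting $v$ to a tangent vector $w\in T_{(x,\xi)}\Lambda$'' presupposes that $D(\pi|_{\Lambda})_{(x,\xi)}$ surjects onto $T_xK$. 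This need not hold at every smooth point of $\Lambda$ lying over $K_{sm}$; it does hold on a dense open subset, by generic smoothness of the dominant morphism $\pi|_{\Lambda}\colon\Lambda\to K$ in characteristic zero. That is all you need, since the inclusion $\Lambda\cap N^*_{K_{sm}}M\supseteq\{\text{generic points}\}$ is a closed condition on the irreducible open piece $\Lambda\cap\pi^{-1}(K_{sm})$, hence holds there everywhere, and taking Zariski closures gives $\Lambda\subseteq\Lag(K)$; equality then follows from irreducibility and equality of dimensions. You should make the appeal to generic smoothness explicit rather than implying an arbitrary $v$ can be lifted at an arbitrary smooth point.
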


See~\cite{Tev03} for a proof.
With this we can prove Theorem~\ref{thm:reflex}.
\begin{proof}[Proof of the Reflexivity Theorem']
 By the above discussion it suffices to show $\Lag(C)=\Lag(C^*)$.
The identification $T^*V=V\times V^*=T^*V^*$ takes the canonical symplectic structure of 
$T^*V$ to minus the canonical symplectic structure of $T^*V^*$.
Then $\Lag(C)$ as a subvariety of $T^*V^*$ is also a canonical Lagrangian.
By theorem~\ref{thm:Lagrangian} $\Lag(C)\subset T V^*$ is the Lagrangian $\Lag(Z)$ 
of some irreducible subvariety $Z\subset V^*$.
But $pr_2(\Lag(Z))=pr_2(\Lag(C))=C^*$.
Hence, $Z=C^*$ and $\Lag(C)=\Lag(C^*)$.
\end{proof}

In what follows we list some important implications of the Reflexivity theorem.
% Given $X\subset \mathbb{P}(V)$ and $X^*\subset \mathbb{P}(V^*)$ let $I_X=I_{X^*}\subset 
% \mathbb{P}(V)\times \mathbb{P}^*(V)$ denote the conormal varieties.
% Recall that we have the following diagram of projections
% $$X \leftarrow_{pr_1} I_X \rightarrow_{pr_2} X^*.$$
% 
% \begin{theorem}
%  \begin{enumerate}
%   \item If $X$ is smooth then $I_X$ is smooth.
%   \item If $X^*$ is a hypersurface then $pr_2$ is a birational isomorphism.
%   \item If $X$ is smooth and $X^*$ is a hypersurface then $pr_2$ is a resolution of 
% singularities.
%  \end{enumerate}
% 
% \end{theorem}
\begin{definition}\label{def:ruled}
 A projective subvariety $X\subset \mathbb{P}$ is ruled in projective space of dimension 
$r$ if for any point $x\in X$ there exists an $r$-dimensional projective subspace $L$, 
such that $x\in L\subset X$.
\end{definition}

\begin{remark}
 It is enough to show that the condition in Definition~\ref{def:ruled} is satisfied in a 
Zariski open dense subset of $X$.
\end{remark}

The following result describes the dual variety $X^*$ in the case that it is not a 
hypersurface, i.e. if $\defe(X)>0$.

\begin{theorem}
 Suppose that $\defe(X)=r>0$. Then
$X$ is ruled in projective space of dimension $r$.
\end{theorem}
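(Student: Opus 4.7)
The plan is to exploit the symmetry between $X$ and $X^\vee$ provided by the Reflexivity Theorem, together with a dimension count on the conormal variety $I_X$. Recall that $I_X \subset \mathbb{P} \times \mathbb{P}^*$ is irreducible of dimension $n-1$, and that the first projection $pr_1 : I_X \to X$ realizes $I_X^0$ as a projective bundle over $X_{sm}$ whose fiber over $x$ is $\mathbb{P}(N^*_{X_{sm}}\mathbb{P})_x$, a projective subspace of $\mathbb{P}^*$ of dimension $n - \dim X - 1$. By hypothesis $\dim X^\vee = n - 1 - r$, so the second projection $pr_2 : I_X \to X^\vee$ has generic fiber of dimension $r$.

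The main step is to identify this fiber. Pick a smooth point $H \in X^\vee$ at which $pr_2$ has fiber of the expected dimension $r$. By the second part of the Reflexivity Theorem, a smooth point $x \in X$ belongs to $pr_2^{-1}(H)$ if and only if $H$ is tangent to $X$ at $x$, which is in turn equivalent to $x$ being tangent to $X^\vee$ at $H$, i.e. to the hyperplane in $\mathbb{P}^*$ corresponding to $x$ containing the embedded projective tangent space $\hat{T}_H X^\vee$. Thus the fiber $pr_2^{-1}(H)$ is contained in the linear locus
\begin{equation*}
L_H := \{\, x \in \mathbb{P} : \text{the hyperplane of } x \text{ contains } \hat{T}_H X^\vee \,\} \subset \mathbb{P}.
\end{equation*}
Since $\hat{T}_H X^\vee$ is a projective subspace of $\mathbb{P}^*$ of dimension $\dim X^\vee = n-1-r$, the annihilator $L_H$ is a projective subspace of $\mathbb{P}$ of dimension
\begin{equation*}
n - (n-1-r) - 1 = r.
\end{equation*}

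Now I combine the two facts: $pr_2^{-1}(H) \subseteq L_H$ and $\dim pr_2^{-1}(H) = r = \dim L_H$. Taking the fiber over a generic $H$ we may assume $pr_2^{-1}(H)$ is irreducible of dimension $r$, hence equals $L_H$ on an irreducible component, and in particular $L_H \subset X$ (using that the fiber lies in $X$ via $pr_1$). Finally, as $H$ varies over the smooth locus of $X^\vee$ the union of these fibers $pr_2^{-1}(H)$ sweeps out a Zariski dense subset of $X$ (because $pr_1(I_X) = X$ and $pr_2$ is dominant onto $X^\vee$ with the fibers accounting for all of $I_X$ up to lower dimensional loci). Thus through every point $x$ in a Zariski open dense subset of $X$ there passes an $r$-dimensional projective subspace $L_H \subset X$, which by the remark following Definition~\ref{def:ruled} suffices to conclude that $X$ is ruled in projective subspaces of dimension $r$.

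The main technical obstacle is the last step: verifying that $L_H$ actually lies inside $X$ (not just that the fiber is contained in it) and that the resulting $r$-planes cover a dense subset of $X$. This is a standard but delicate incidence/dimension argument: one works with the closure $I_X$ rather than $I_X^0$, shrinks to a dense open set over which both projections have fibers of the expected dimension, and then uses that $pr_1$ is dominant onto $X$ to conclude. Once this is done, irreducibility of $X$ ensures the generic $r$-plane condition propagates to give the ruling.
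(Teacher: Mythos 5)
Your proof is correct and follows essentially the same route as the paper: both rest on the Reflexivity Theorem, with the ruling given by the $r$-dimensional linear spaces $L_H$ of hyperplanes tangent to $X^{\vee}$ at its smooth points $H$. The only real difference is how $L_H\subset X$ is obtained: the paper gets it at once from $X=(X^{\vee})^{\vee}$, since each such linear space lies in the dual of $X^{\vee}$ by definition, whereas you recover it from the fiber-dimension count on the conormal variety $I_X$ — and the closure subtlety you flag can be bypassed entirely by invoking $I_X=I_{X^{\vee}}$, which gives the reverse inclusion $\{H\}\times L_H\subseteq pr_2^{-1}(H)$ directly for every smooth $H\in X^{\vee}$.
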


\begin{proof}
By theorem~\ref{thm:reflex} we can switch the roles of $X$ and $X^*$.
Suppose that $\codim(X)=r>1$. We want to show that $X^*$ is ruled in projective space of 
dimension $r$.
Let $H$ be tangent to $X$ at $x$, then by definition $\hat{T}_xX\subset H$.
The space of all hyperplanes in $\mathbb{P}$ tangent to $X$ at $x$ can be identified with 
the 
space $\mathbb{P}/\hat{T}_xX$, which is a projective subspace of $\mathbb{P}$ of 
dimension $r$.
\end{proof}

\subsection{The hyperdeterminant}

\paragraph{Geometric interpretation}

Consider the complex vector spaces $V_j^*=\mathbb{C}^{k_j+1}$ and their projectivizations 
$\mathbb{P}^{k_j}:=\mathbb{P}(V_j^*)$ for $j=1,\cdots, r$.
The product $\mathbb{P}^{k_1}\times\cdots \times \mathbb{P}^{k_r}$ can be embedded into 
$\mathbb{P}^{(k_1+1)(k_2+1)\cdots (k_r+1)-1}:=\mathbb{P}(V_1^*\otimes \cdots \otimes 
V_r^*)$ via the Segre 
embedding:
\begin{equation*}
 \begin{array}{ccc}
  S:\mathbb{P}^{k_1}\times\cdots \times \mathbb{P}^{k_r} &\rightarrow& 
\mathbb{P}^{(k_1+1)(k_2+1)\cdots (k_r+1)-1}\\
 {[}x^1{]}, \ldots , {[}x^r{]} &\rightarrow & 
{[}x^1\otimes \ldots \otimes x^r{]}\\
 \end{array}
\end{equation*}
or in coordinates
\begin{equation*}
 \begin{array}{ccc}
  S:\mathbb{P}^{k_1}\times\cdots \times \mathbb{P}^{k_r} &\rightarrow& 
\mathbb{P}^{(k_1+1)(k_2+1)\cdots (k_r+1)-1}\\
 {[}x^1_{0}: \ldots :x^1_{k_1}{]}, \ldots , {[}x^r_0: \ldots :x^r_{k_r}{]} &\rightarrow & 
{[}\ldots : 
x^1_{i_1} \cdots  x^r_{i_r}: \ldots{]}\\
 \end{array}
\end{equation*}
For simplicity we denote $\mathbb{P}:=\mathbb{P}^{(k_1+1)(k_2+1)\cdots (k_r+1)-1}$. 
Set $X:= Im(S) \subset \mathbb{P}$ a closed 
algebraic subvariety. Notice that $X$ is the set of all decomposable elements 
in $\mathbb{P}(V_1^*\otimes \cdots\otimes V_r^*)$.
Suppose $X$ is irreducible in $\mathbb{P}$, then Proposition~\ref{prop:irreducible} implies that its 
dual $X^{\vee}$ is irreducible in $\mathbb{P}^*$.

\begin{definition}
The hyperdeterminant $\Det$ of the format $(k_1+1)\times \cdots \times (k_r+1)$ is the 
$X$-discriminant of $X:= Im(S) \subset \mathbb{P}^{(k_1+1)(k_2+1)\cdots (k_r+1)-1}$.
I.e.
\begin{equation}
 \Det_{(k_1+1)\times \cdots \times (k_r+1)}:=\Delta_X.
\end{equation}
It is a homogeneous polynomial on $V_1\otimes \ldots \otimes V_r$ and moreover it is defined uniquely up to sign if we require it to have integral coefficients and to be irreducible over $\mathbb{Z}$.
\end{definition}

\begin{theorem}
 The hyperdeterminant is a non-trivial polynomial (i.e. $X^{\vee}$ is a hypersurface) if and only if 
$$k_i\leq \sum_{j\neq i} k_j,$$ for all $j\in\{1,\ldots,r\}$.
\end{theorem}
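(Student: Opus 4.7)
The plan is to translate the statement into a dimension count for the projection from the conormal variety onto the dual. By the proof sketched earlier for the irreducibility of $X^\vee$, the conormal variety $I_X\subset\mathbb{P}\times\mathbb{P}^*$ is always an $(N-1)$-dimensional irreducible bundle over the smooth locus $X_{sm}$, where $N=\prod_{i}(k_i+1)-1$ is the ambient dimension. Since $X^\vee = pr_2(I_X)$, upper-semicontinuity of fiber dimension gives that $X^\vee$ is a hypersurface if and only if the projection $pr_2\colon I_X\to \mathbb{P}^*$ is generically finite. So the theorem reduces to showing that this generic finiteness is equivalent to the inequalities $k_i\le\sum_{j\ne i}k_j$.

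The next step is to describe $I_X$ in coordinates. A point $x=[x^1\otimes\cdots\otimes x^r]$ lies in $X_{sm}$, and its embedded tangent space $\hat T_x X$ is spanned by tensors of the form $x^1\otimes\cdots\otimes y^i\otimes\cdots\otimes x^r$, $y^i\in V_i^*$. A form $f\in V_1\otimes\cdots\otimes V_r$ is tangent to $X$ at $x$ precisely when, for every index $i$, the partial contraction
\[
C_i(f;x^1,\dots,\widehat{x^i},\dots,x^r)\ :=\ f\cdot\bigl(\textstyle\bigotimes_{j\ne i}x^j\bigr)\in V_i
\]
vanishes. Thus the fiber of $pr_2$ over $H_f\in X^\vee$ is cut out in $\mathbb{P}^{k_1}\times\cdots\times\mathbb{P}^{k_r}$ by the system $\{C_i=0\}_{i=1}^r$.

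With this reformulation I would then run the two implications as a dimension count. For the easy direction, suppose the balance condition fails, say $k_1>\sum_{i\ge 2}k_i$. For generic $f$ and generic $(x^2,\dots,x^r)$, the contraction
\[
\mu_f(x^2,\dots,x^r)\colon V_1^*\to \mathbb{C},\qquad x^1\mapsto f\cdot(x^1\otimes x^2\otimes\cdots\otimes x^r),
\]
already kills $x^1$ on a subspace of $V_1^*$ whose dimension is bounded below by $k_1+1-\prod_{i\ge 2}(k_i+1)$ when the remaining equations $C_2=\cdots=C_r=0$ are imposed; the parameter count for the $(x^2,\dots,x^r)$ satisfying these shows the fibers of $pr_2$ are positive-dimensional, forcing $\defe(X)>0$. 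For the converse, assuming $k_i\le\sum_{j\ne i}k_j$ for every $i$, I would produce an explicit $f$ whose tangency locus on $X$ is finite; upper-semicontinuity of fiber dimension then upgrades this to generic finiteness. A convenient choice is a sufficiently generic sum $f=\sum_\alpha v^1_\alpha\otimes\cdots\otimes v^r_\alpha$ of decomposable tensors, for which the system $\{C_i=0\}$ reduces to linear equations in each $x^i$ whose matrices are of full rank under the balance hypothesis.

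The main obstacle is the finiteness check in the balanced case: while the unbalanced direction is a straightforward kernel calculation, showing that some $f$ has only finitely many tangency points requires either an explicit non-degenerate model (for instance a sum of $1+\sum_i k_i$ sufficiently general decomposable tensors, paralleling the generic tensor rank analysis in GKZ Ch.~14) or a deformation/specialization argument that does not destroy the balance inequalities. Once this is in place the theorem follows immediately from the dictionary $\defe(X)=0\iff\dim X^\vee=N-1\iff X^\vee\text{ is a hypersurface}\iff \Delta_X=\operatorname{Det}_{(k_1+1)\times\cdots\times(k_r+1)}$ is a non-trivial polynomial.
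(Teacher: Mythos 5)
The paper offers no proof of this theorem: it states explicitly that the proof is beyond its scope and refers the reader to \cite{GKZ94}, so there is no in-paper argument to compare against. Your high-level framing---translate ``$X^\vee$ is a hypersurface'' into the statement that $pr_2\colon I_X\to\mathbb{P}^*$ is generically finite, then analyse the fibre over a generic $f$ via the contraction equations $C_i=0$ on $\mathbb{P}^{k_1}\times\cdots\times\mathbb{P}^{k_r}$---is indeed the structure of the argument in GKZ Chapter~14, so the setup is sound and this is a sensible way to approach the claim.

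That said, what you have written does not close the argument. In the unbalanced direction you claim the fibre contains a linear family of $x^1$'s of dimension at least $k_1+1-\prod_{i\ge 2}(k_i+1)$; this bound is off (it should involve $\sum_{i\ge 2}(k_i+1)$, not a product---the equations $C_2=\cdots=C_r=0$ impose at most $\sum_{i\ge 2}(k_i+1)$ scalar linear conditions on $x^1\in V_1^*$, with overlaps), and even with the corrected count, a naive dimension estimate only produces a positive-dimensional fibre when $k_1\ge \sum_{i\ge 2}k_i+2$; the borderline case $k_1=\sum_{i\ge 2}k_i+1$ requires identifying the redundancy among the conditions (the shared scalar $f(x^1,\dots,x^r)=0$) precisely, which you do not carry out. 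More seriously, you acknowledge that the balanced direction---producing an $f$ with only finitely many tangency points and concluding via upper-semicontinuity---is left to an unspecified ``non-degenerate model'' or deformation argument. This is the actual content of the theorem, and a vague appeal to generic sums of $1+\sum_i k_i$ decomposable tensors does not supply it; one must either exhibit a concrete tensor and verify finiteness of its kernel $K(A)$ directly, or run a more structured dimension computation on $I_X$ as in GKZ. As it stands the proposal is a reasonable plan of attack with the key step missing.
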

The proof of this theorem is beyond the scope of this thesis. The reader is referred to~\cite{GKZ94}.

To a hypermatrix $A=(a_{i_1,\cdots, i_r})\in \mathbb{C}^{(k_1+1)(k_2+1)\cdots (k_r+1)}$ 
we can associate a linear form 
\begin{equation}
\begin{array}{ccc}
  A:V_1^*\otimes \cdots \otimes V_r^* & \rightarrow & \mathbb{C}.\\
\end{array}
\end{equation}
 We abuse notation and denote this linear form again by $A$. It is induced by 
the multilinear form
\begin{equation}
\begin{array}{ccc}
  V_1^*\times \cdots \times V_r^* & \rightarrow & \mathbb{C},\\
(\gamma^1,\cdots,\gamma^r)&\mapsto & \sum_{i_1,\cdots,i_r} a_{i_1,\cdots,i_r} 
\gamma^1_{i_1}\dots \gamma^r_{i_r}.
\end{array}
\end{equation}
Note that $A\in (V_1^*\otimes \cdots \otimes V_r^*)^*=V_1\otimes \cdots \otimes 
V_r$.
Recall that $X\subset \mathbb{P}(V_1^*\otimes \cdots\otimes V_r^*)$ and 
$X^{\vee}\subset \mathbb{P}(V_1\otimes \cdots\otimes V_r)$.
Therefore, the hyperdeterminant is a polynomial on $V_1\otimes \cdots\otimes V_r$.

\begin{definition}
 We call $\Det(A):=\Delta_X(A)$ the hyperdeterminant of the hypermatrix $A\in V_1\otimes 
\cdots\otimes V_r$.
\end{definition}

\begin{definition}
 A slice of $A\in V_1\otimes \cdots\otimes V_r$ in the $j$-th direction is 
the hypermatrix $(a_{i_1,\cdots,i_{j-1},l,i_{j+1}, i_r})\in 
V_1\otimes \cdots \otimes \hat{V_j}\otimes  \cdots \otimes V_r$ for some fixed 
$l\in\{1,\cdots, 
k_j+1\}$.
\end{definition}

Let $M\in GL(V_j)$ be an invertible matrix of dimension $(k_j+1)\times (k_j+1)$.
Consider the action 
\begin{equation}
 \begin{array}{ccc}
  GL(V_j)\times V_1\otimes \ldots \otimes V_r & \rightarrow & V_1\otimes \ldots 
\otimes V_r\\
 (M,A) & \mapsto & M*_j A,
 \end{array}
\end{equation}
where 
$$M*_j A:=(\sum_{l=1, \ldots, r_j+1} 
m_{l,i_j}\cdot a_{i_1,\ldots,i_{j-1},l,i_{j+1},\ldots,i_r}).$$
More generally we define the action 
\begin{equation}
 \begin{array}{ccc}
  (GL(V_1)\times \cdots \times Gl(V_r))\times V_1\otimes \cdots \otimes V_r & 
\rightarrow & V_1\otimes \cdots 
\otimes V_r\\
 ((M_1,\cdots, M_r),A) & \mapsto & (M_1,\cdots, M_r)*A,
 \end{array}
\end{equation}
where $$(M_1,\cdots, M_r)*A:=M_1*_1 M_2 *_2 \cdots M_r*_r A.$$

\begin{proposition}\label{prop:SL-invariance}
The hyperdeterminant is invariant under the action of 
the group $SL(V_1)\times \cdots \times SL(V_r)$.
It is relatively invariant under the action of the group 
$GL(V_1)\times \cdots\times GL(V_r)$ in the following sense.
Let $A\in V_1\otimes\ldots\otimes V_r$ and $(M_1,\ldots,M_r)\in GL(V_1)\times 
\cdots\times GL(V_r)$. Then:
\begin{equation}
 Det((M_1,\ldots,M_r)* A)= 
det(M_1)^{\frac{d}{k_1+1}}\cdots det(M_r)^{\frac{d}{k_r+1}} Det_{(k_1+1)\times \cdots 
\times (k_r+1)}(A),
\end{equation}
where $d$ is the degree of $Det_{(k_1+1)\times \cdots \times (k_r+1)}$.
\end{proposition}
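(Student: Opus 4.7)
The plan is to exploit the geometric definition of the hyperdeterminant: it is (up to scalar) the defining equation of the dual variety $X^\vee$ of the Segre variety $X\subset\mathbb{P}(V_1^*\otimes\cdots\otimes V_r^*)$. The strategy is to show that the group $G:=GL(V_1)\times\cdots\times GL(V_r)$ acts on the ambient space preserving $X^\vee$, hence acts on the line of polynomials spanned by $\Det$ by a character, and then to identify this character by testing on scalar matrices.

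First I would observe that $G$ acts on $V_1\otimes\cdots\otimes V_r$ (and dually on $V_1^*\otimes\cdots\otimes V_r^*$) and that the Segre variety $X$ is $G$-invariant: if $x^j\in V_j^*$ then $(M_1,\ldots,M_r)\cdot(x^1\otimes\cdots\otimes x^r)=(M_1^{*-1}x^1)\otimes\cdots\otimes(M_r^{*-1}x^r)$, which is again decomposable. Consequently $X^\vee\subset\mathbb{P}(V_1\otimes\cdots\otimes V_r)$ is $G$-invariant as well. Since $\Det$ is (up to scalar) the unique irreducible defining polynomial of $X^\vee$, the pullback $A\mapsto\Det((M_1,\ldots,M_r)*A)$ must equal $\chi(M_1,\ldots,M_r)\cdot\Det(A)$ for some function $\chi:G\to\mathbb{C}^*$. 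Standard arguments (the map is polynomial in the entries of the $M_i$ and multiplicative in the group variable) show that $\chi$ is a regular character of $G$. Since $G$ is connected and its character group is freely generated by the determinants $\det_i$, we can write
\[
\chi(M_1,\ldots,M_r)=\det(M_1)^{a_1}\cdots\det(M_r)^{a_r}
\]
for uniquely determined integers $a_1,\ldots,a_r$.

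Next I would pin down the exponents $a_i$ by plugging in scalar matrices $M_i=\lambda_i\,\mathrm{Id}_{V_i}$. On the one hand $\det(M_i)=\lambda_i^{k_i+1}$, so $\chi(M_1,\ldots,M_r)=\lambda_1^{(k_1+1)a_1}\cdots\lambda_r^{(k_r+1)a_r}$. On the other hand $(M_1,\ldots,M_r)*A=(\lambda_1\lambda_2\cdots\lambda_r)A$, and since $\Det$ is homogeneous of degree $d$,
\[
\Det((M_1,\ldots,M_r)*A)=(\lambda_1\cdots\lambda_r)^d\,\Det(A)=\lambda_1^{d}\cdots\lambda_r^{d}\,\Det(A).
\]
Comparing, and using the fact that these must agree as polynomials in the independent variables $\lambda_1,\ldots,\lambda_r$, I get $(k_i+1)a_i=d$, i.e.\ $a_i=d/(k_i+1)$. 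The $SL$-invariance is then automatic, since $\det(M_i)=1$ when $M_i\in SL(V_i)$ makes $\chi$ identically $1$ on $SL(V_1)\times\cdots\times SL(V_r)$.

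The main technical point to be careful about is the step that identifies $\Det((M_1,\ldots,M_r)*A)$ with a scalar multiple of $\Det(A)$: this rests on the irreducibility of $\Det$ (already built into its definition as the defining equation of the irreducible hypersurface $X^\vee$) together with the invariance of $X^\vee$ under $G$. The only remaining subtle point is that the formula $a_i=d/(k_i+1)$ yields an integer, which must be checked for consistency. This is forced by the argument itself: $\chi$ is an honest character of $G$ taking values in $\mathbb{C}^*$, so the $a_i$ are integers, and the scalar computation shows that $d$ is divisible by each $k_i+1$ (a known combinatorial property of the degree of the hyperdeterminant, but one we never need to compute explicitly here).
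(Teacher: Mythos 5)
Your proof is correct and follows essentially the same route as the paper: both derive the relative $GL(V_1)\times\cdots\times GL(V_r)$-invariance from the fact that the Segre variety (hence its dual hypersurface, hence its irreducible defining polynomial $\Det$, up to scalar) is preserved by the group action, and both then identify the exponent as $d/(k_i+1)$ by a degree count. The only cosmetic difference is that you pin down the character by evaluating on scalar matrices, whereas the paper argues via the equal homogeneity degree of $\Det$ on the $(k_i+1)$ parallel slices --- these are equivalent computations.
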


\begin{proof}
 Note that the Segre embedding is canonical, i.e. independent 
of the choice of the basis. 
It follows from the definition that the hyperdeterminant is
invariant under the action of the group $SL(V_1)\times \cdots\times SL(V_r)$.
Moreover, it is relatively invariant under the action of $GL(V_1)\times \cdots\times 
GL(V_r)$, i.e. invariant up to multiplication by some non-zero constant.
We see that for some $M,N\in GL(V_j)$ we have 
$$M*_j(N*_j A)=(MN)*_jA,$$
where $MN$ denotes the usual matrix multiplication.
Moreover, if $\det(M)=1$ we have that $
Det(M*_jA)= Det(A)$.
It follows that $Det(M*_jA)=det(M)^p\Det(A)$ for some integer $p\in \mathbb{Z}$.
By definition the hyperdeterminant is homogeneous on each slice.
The degree has to be the same on parallel slices. 
Since there are $(k_j+1)$ parallel slices, the hyperdeterminant is homogeneous of degree 
$\frac{d}{k_j+1}$.
Thus
$$Det(M_j*_jA)=det(M_j)^{\frac{d}{k_j+1}} Det(A)$$ and hence, also
$$Det((M_1,\cdots, 
M_r)*A)=det(M_1)^{\frac{d}{k_1+1}}\cdots det(M_r)^{\frac{d}{k_r+1}} 
Det_{(k_1+1)\times \cdots \times (k_r+1)}(A).$$
\end{proof}

\begin{corollary}
\hspace{2em}
 \begin{enumerate}
\item $\frac{d}{k_i+1}$ is an integer.
\item Swapping to parallel slices in the $i$th direction changes the hyperdeterminant by 
$(-1)^{\frac{d}{k_i+1}}$.
\item A hypermatrix with linearly dependent slices has vanishing hyperdeterminant.
 \end{enumerate}
\end{corollary}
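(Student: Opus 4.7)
The plan is to deduce all three items directly from Proposition~\ref{prop:SL-invariance}, which asserts $\Det((M_1,\ldots,M_r)*A)=\prod_{j=1}^r\det(M_j)^{d/(k_j+1)}\Det(A)$ for $(M_j)\in GL(V_1)\times\cdots\times GL(V_r)$, together with a degree-counting argument using the homogeneity statement from part (1).

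I would first dispatch (1) and (2), which are essentially immediate. For (1), take $M\in GL(V_i)$ to be the diagonal matrix with $M_{ll}=\lambda$ and all other diagonal entries equal to $1$, so that $M*_iA$ differs from $A$ only in that the $l$-th slice in direction $i$ is scaled by $\lambda$. The proposition then gives the identity $\Det(M*_iA)=\lambda^{d/(k_i+1)}\Det(A)$. Since $\Det(M*_iA)$ is manifestly a polynomial in $\lambda$ (and in the entries of $A$), the exponent $d/(k_i+1)$ must be a non-negative integer. For (2), take $M\in GL(V_i)$ to be the transposition matrix swapping the $l$-th and $l'$-th basis vectors, so that $M*_iA$ is exactly the hypermatrix obtained from $A$ by swapping the two parallel slices in direction $i$. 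Since $\det M=-1$, the proposition directly yields the claimed sign factor $(-1)^{d/(k_i+1)}$.

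Part (3) is the real content, and my strategy is a two-step reduction: first reduce to the case of a hypermatrix with an actual zero slice via an $SL$-transformation, then deduce vanishing from the homogeneity in (1). For the reduction, suppose the slices $A_0,\ldots,A_{k_i}$ of $A$ in direction $i$ (viewed as elements of $V_1\otimes\cdots\otimes\hat{V_i}\otimes\cdots\otimes V_r$) satisfy a non-trivial relation $\sum_lc_lA_l=0$; after relabelling we may assume $c_{k_i}=1$. Let $M\in GL(V_i)$ be the upper-triangular matrix obtained from the identity by replacing its last column with $(-c_0,\ldots,-c_{k_i-1},1)^T$, so that $\det M=1$ and $M\in SL(V_i)$. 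A direct computation using the recipe $(M*_iA)_m=\sum_lM_{l,m}A_l$ shows that the $k_i$-th slice of $M*_iA$ equals $-\sum_{l<k_i}c_lA_l+A_{k_i}=0$, using the dependence relation. By $SL$-invariance, $\Det(A)=\Det(M*_iA)$, so it suffices to show that $\Det$ vanishes on any hypermatrix with a zero slice. For this I would invoke (1): since $\Det$ is homogeneous of degree $d_i:=d/(k_i+1)$ in the entries of each single slice in direction $i$, every monomial of $\Det$ contains exactly $d_i$ factors drawn from the entries of the given slice, and hence vanishes whenever that slice is zero---provided $d_i\geq 1$.

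The main obstacle is establishing $d_i\geq 1$ in this last step. My argument would be that $\Det$ depends symmetrically on all $(k_i+1)$ parallel slices in direction $i$, via the $S_{k_i+1}$-action embedded in $GL(V_i)$ by permutation matrices, so $d_i=0$ in one slice would force $d_i=0$ in every slice of direction $i$, making $\Det$ independent of every entry of $A$ and hence a constant. This rules out the pathology whenever $\Det$ is non-trivial, i.e. whenever $\defe(X)=0$; in the degenerate case $\defe(X)>0$, where $\Det\equiv 1$ by convention, the statement of (3) is vacuous, which is a minor issue of interpretation rather than a genuine obstruction.
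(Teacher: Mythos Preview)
The paper states this corollary without proof, so there is no reference argument to compare against; your task is simply to supply a correct derivation from Proposition~\ref{prop:SL-invariance}. Your arguments for (1) and (2) do exactly that and are clean: scaling one slice corresponds to a diagonal $M$ with $\det M=\lambda$, and swapping two slices corresponds to a transposition matrix with $\det M=-1$, so both follow immediately from the relative-invariance formula. Incidentally, the proof of Proposition~\ref{prop:SL-invariance} itself already records that $\Det$ is homogeneous of degree $d/(k_j+1)$ in each slice, which gives (1) directly; your polynomiality argument is an equally valid alternative.

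For (3) your strategy---reduce by an $SL(V_i)$-move to a hypermatrix with a zero slice, then invoke slice-homogeneity---is correct, but there is a sign slip in the matrix you write down. With the relation $\sum_l c_lA_l=0$ and $c_{k_i}=1$, the $k_i$-th slice of $M*_iA$ is $\sum_l M_{l,k_i}A_l$, so you want the last column of $M$ to be $(c_0,\ldots,c_{k_i-1},1)^T$, not $(-c_0,\ldots,-c_{k_i-1},1)^T$. With your choice the computation gives $-\sum_{l<k_i}c_lA_l+A_{k_i}=2A_{k_i}$, not zero. This is a trivial fix and does not affect the argument: the corrected $M$ is still unipotent upper-triangular with $\det M=1$. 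Your handling of the residual issue $d_i\geq 1$ is fine---if $d_i=0$ then by the permutation symmetry $\Det$ is independent of every entry and hence constant, contradicting the standing hypothesis that $X^\vee$ is a hypersurface; the convention $\Det\equiv 1$ when $\defe(X)>0$ indeed makes (3) false rather than vacuous, but that case is not the intended scope of the corollary.
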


For the case of a hypermatrix of format $2\times 2\times 2$ there is a simple formula for 
the discriminant.

\begin{proposition}\label{prop:det222}
 The hyperdeterminant of a hypermatrix $A=(a_{ijk})$ with $i,j,k=0,1$ is given by
\begin{equation}
 \begin{matrix}\label{det222}
Det(A) &=& (a_{000}^2a_{111}^2+a_{001}^2a_{110}^2+a_{010}^2a_{101}^2+a_{011}^2a_{100}^2)\\
 
&&-2(a_{000}a_{001}a_{110}a_{111}+a_{000}a_{010}a_{101}a_{111}+a_{000}a_{011}a_{100}a_{111
}\\
&& 
+a_{001}a_{010}a_{101}a_{110}+a_{001}a_{011}a_{110}a_{100}+a_{010}a_{011}a_{101}a_{100})\\
&&+4(a_{000}a_{011}a_{101}a_{110}+a_{001}a_{010}a_{100}a_{111}).\\
 \end{matrix}
\end{equation}
\end{proposition}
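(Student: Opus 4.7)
The plan is to apply Schl\"afli's method, which is referenced as Theorem~\ref{thm:SchlaefliRatio} earlier in the paper and was already used to recover the classical formula $\sigma^2+4\tau$ for the discriminant of a rank-two quadratic algebra. Recall that Schl\"afli's theorem identifies, in good cases, the hyperdeterminant of a format $(k_1{+}1)\times\cdots\times(k_r{+}1)$ tensor with the $X$-discriminant of the binary form obtained by taking the ordinary determinant along one of the directions. For the format $2\times 2\times 2$ the relevant codimension estimate $c(2,2)=4>3$ is exactly the one verified in the proof of the rank-two case, so the method applies without further adjustment.

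Concretely, I would proceed as follows. Slice the hypermatrix $A=(a_{ijk})$ in the third direction, obtaining the two $2\times 2$ matrices $A^{(0)}=(a_{ij0})_{i,j}$ and $A^{(1)}=(a_{ij1})_{i,j}$. Form the pencil
\begin{equation*}
M(z_0,z_1) := z_0 A^{(0)} + z_1 A^{(1)},
\end{equation*}
and set $f(z_0,z_1):=\det M(z_0,z_1)$. By Schl\"afli's theorem, $\mathrm{Det}_{2\times 2\times 2}(A)$ coincides (up to sign, which is fixed by the irreducibility and integrality convention for the hyperdeterminant) with the discriminant of the binary quadratic form $f$.

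The remainder is a direct calculation. Expanding the $2\times 2$ determinant
\begin{equation*}
 f(z_0,z_1) = (a_{000}z_0+a_{001}z_1)(a_{110}z_0+a_{111}z_1) - (a_{010}z_0+a_{011}z_1)(a_{100}z_0+a_{101}z_1),
\end{equation*}
one collects coefficients to write $f=\alpha z_0^2+\beta z_0 z_1+\gamma z_1^2$ with
\begin{align*}
 \alpha &= a_{000}a_{110}-a_{010}a_{100},\\
 \beta  &= a_{000}a_{111}+a_{001}a_{110}-a_{010}a_{101}-a_{011}a_{100},\\
 \gamma &= a_{001}a_{111}-a_{011}a_{101}.
\end{align*}
The discriminant $\beta^2-4\alpha\gamma$ of this binary quadratic form, once expanded and simplified, reproduces exactly the right-hand side of~(\ref{det222}): the four squared terms come from the diagonal of $\beta^2$, the six $-2$-terms from the cross terms of $\beta^2$ together with the two contributions of $-4\alpha\gamma$ that have the same index pattern, and the two $+4$-terms from the remaining mixed products in $-4\alpha\gamma$.

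There is no serious obstacle here beyond bookkeeping; the hard part is purely notational, namely keeping track of the eight quadruples of indices and checking signs. The only conceptual input is the applicability of Schl\"afli's method, and this has already been justified (in the proof comparing $\Delta_{QH_0(L;\mathbb{Q})}$ with the classical discriminant $\sigma^2+4\tau$) via the codimension bound on the singular locus of the relevant dual variety.
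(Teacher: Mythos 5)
Your proposal is correct, and it differs from the paper in that the paper gives no argument at all for Proposition~\ref{prop:det222} --- it simply cites~\cite{GKZ94} --- whereas you supply a complete derivation using the Schl\"afli machinery (Theorem~\ref{thm:SchlaefliRatio}) that the paper itself sets up. Your computation checks out: with $\alpha=a_{000}a_{110}-a_{010}a_{100}$, $\beta=a_{000}a_{111}+a_{001}a_{110}-a_{010}a_{101}-a_{011}a_{100}$, $\gamma=a_{001}a_{111}-a_{011}a_{101}$, the expansion of $\beta^2-4\alpha\gamma$ reproduces the right-hand side of~(\ref{det222}) term by term, including the two $-2$-terms that arise as $+2$ from $\beta^2$ minus $4$ from $\alpha\gamma$. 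Two small points deserve care. First, the relevant case of Theorem~\ref{thm:SchlaefliRatio} here is the strict inequality $k_1+1=2<4$, where $4$ is the codimension of the singular locus of the $2\times2$ determinantal hypersurface (only the zero matrix); the inequality ``$c(2,2)=4>3$'' you quote from the earlier lemma is the paper's (somewhat loosely indexed) version of the same bound, and in either reading you land in case (1), where the ratio $\Delta(F_A)/\Det(A)$ is a nonzero constant --- not case (2), so no Chow-form factors appear. Second, to pin the constant down to $\pm1$ you should say explicitly that both polynomials have degree $4$ and that $\Delta(F_A)=\beta^2-4\alpha\gamma$ has integer coefficients with a coefficient equal to $1$ (e.g.\ on $a_{000}^2a_{111}^2$), so primitivity of $\Det$ together with the integrality/irreducibility normalization forces the ratio to be $\pm1$, with the sign fixed by that same convention; your parenthetical remark gestures at this, and making it explicit closes the argument. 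With these clarifications your proof is a self-contained alternative to the citation, at the cost of the index bookkeeping you acknowledge and the (standard) input that the $2\times2\times2$ hyperdeterminant is irreducible over $\mathbb{Z}$.
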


See~\cite{GKZ94} for a proof.

\paragraph{Algebraic interpretation}

\begin{definition}
 A hypermatrix $A\in \mathbb{C}^{(k_1+1)\times \cdots \times (k_r+1)}$ is called 
degenerate if there exists a non-zero vector $(x^1, \cdots, x^r)\in 
\mathbb{P}^{k_1}\times\cdots \times \mathbb{P}^{k_r}$ such that
\begin{equation}
A(x^1,\cdots,x^{j-1},(\mathbb{C}^{k_j+1})^*,x^{j+1},\cdots,x^r)=0 \text{ } \forall j=1,\cdots, r.
\end{equation}
The set of all non-zero vectors $(x^1,\cdots,x^r)\in 
\mathbb{P}^{k_1}\times\cdots \times \mathbb{P}^{k_r}$ with this property is the 
kernel $K(A)$ of $A$.
\end{definition}

Notice that $X$ is the set of all decomposable elements in $\mathbb{P}$.
The tangent spaces of $X\subset \mathbb{P}(V_1^*\otimes \cdots, \otimes V_k^*)$ at a 
point $x=[(x^1\otimes \cdots \otimes x^r)]$ is described in the following proposition.
We leave the verification to the reader.

\begin{proposition}
 Let $x=[(x^1\otimes \cdots \otimes x^r)]$ be a point in $X$. 
\begin{enumerate}
 \item The tangent space of $X$ at $x$ in $\mathbb{P}^{(k_1+1)\cdots(k_r+1)-1}$ is the 
span of the $r$ linear subspaces
$\mathbb{P}^{k_j}_x$, which are defined as the projectivization of $x^1\otimes \cdots 
\otimes x^{j-1}\otimes V_j^*\otimes x^{j+1}\otimes \cdots\otimes x^r$.
I.e. 
$$T_xX=\mathbb{P}(\bigoplus_{j=1,\cdots, r} x^1\otimes \cdots \otimes x^{j-1}\otimes 
V_j^* 
\otimes x^{j+1}\otimes \cdots\otimes x^r).$$
\item $X\cap T_x X=\cup_j \mathbb{P}^{k_j}_x$.
\item Any linear space in $X$ passing through $x$ is contained in one of the linear 
subspaces $\mathbb{P}^{k_j}_x$.
\end{enumerate}
\end{proposition}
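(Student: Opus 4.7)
The plan is to reduce all three statements to a direct computation on the affine cone $C(X)\subset V_1^{*}\otimes\cdots\otimes V_r^{*}$, exploiting the fact that the Segre embedding is a closed immersion of the smooth variety $\mathbb{P}^{k_1}\times\cdots\times\mathbb{P}^{k_r}$. Thus $X$ is smooth at $x$ and the embedded projective tangent space satisfies $\hat{T}_xX=\mathbb{P}(T_vC(X))$ for any non-zero $v=x^1\otimes\cdots\otimes x^r$ representing $x$, so the problem is to understand the vector subspace $T_vC(X)\subset V_1^{*}\otimes\cdots\otimes V_r^{*}$.

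For part (1), I will compute $T_vC(X)$ by differentiating one-parameter families. Given arbitrary $w^j\in V_j^{*}$, the curve $t\mapsto(x^1+tw^1)\otimes\cdots\otimes(x^r+tw^r)$ lies in $C(X)$ with derivative at $0$ equal to $\sum_j x^1\otimes\cdots\otimes w^j\otimes\cdots\otimes x^r$. Hence $T_vC(X)\supseteq\sum_j W_j$, where $W_j:=x^1\otimes\cdots\otimes x^{j-1}\otimes V_j^{*}\otimes x^{j+1}\otimes\cdots\otimes x^r$. To promote this to equality I will compare dimensions: $\dim C(X)=\dim X+1=\sum k_j+1$, and, choosing bases $\{e_i^j\}_{i=0}^{k_j}$ of $V_j^{*}$ with $e_0^j=x^j$, the space $\sum_j W_j$ is spanned by $v$ together with the pure tensors $e_0^1\otimes\cdots\otimes e_i^j\otimes\cdots\otimes e_0^r$ for $j\in\{1,\ldots,r\}$ and $i\in\{1,\ldots,k_j\}$, which are distinct elements of the tensor basis and therefore linearly independent; this gives $\dim\sum_j W_j=1+\sum k_j$.

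The main obstacle, and the heart of the argument, is part (2): showing that any decomposable tensor $y=y^1\otimes\cdots\otimes y^r$ lying in $T_vC(X)$ must already belong to one of the $W_j$. I plan to fix complements $V_j^{*}=\langle x^j\rangle\oplus U_j$, write $y^j=\alpha_j x^j+z^j$ with $z^j\in U_j$, and expand
\begin{equation*}
y=\sum_{S\subseteq[r]}\Bigl(\prod_{i\notin S}\alpha_i\Bigr)\cdot\bigotimes_{i\in S}z^i\otimes\bigotimes_{i\notin S}x^i.
\end{equation*}
In the adapted tensor basis the term indexed by $S$ involves exactly $|S|$ coordinates in the $U_i$-directions, while the description from part (1) shows that $T_vC(X)$ is the span of tensor basis elements with at most one coordinate in a $U_i$-direction. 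Hence membership in $T_vC(X)$ forces every term with $|S|\geq 2$ to vanish. Setting $A:=\{i:z^i\neq 0\}$ and applying this constraint to $S=A$ and then to $S=[r]\setminus\{j\}$ quickly forces either $A$ to have at most one element (so $y\in W_j$) or $y=0$; the case $A=[r]$ collapses because $y$ would reduce to $z^1\otimes\cdots\otimes z^r\in\bigotimes U_i$, which meets $T_vC(X)$ only at zero, and the case $A\subsetneq[r]$ with $\alpha_{i^{*}}=0$ for some $i^{*}\notin A$ forces $y^{i^{*}}=0$. Projectivizing yields $X\cap\hat{T}_xX\subseteq\bigcup_j\mathbb{P}^{k_j}_x$; the reverse inclusion is immediate since each $\mathbb{P}^{k_j}_x$ consists of decomposable tensors lying in $W_j\subset T_vC(X)$.

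For part (3), any linear subspace $L\subseteq X$ through $x$ coincides with its own embedded tangent space at $x$, so $L\subseteq\hat{T}_xX$ and therefore $L\subseteq X\cap\hat{T}_xX=\bigcup_j\mathbb{P}^{k_j}_x$ by part (2). Since $L$ is irreducible and the right-hand side is a finite union of closed subvarieties, $L$ must be contained in a single $\mathbb{P}^{k_j}_x$, which is the claim.
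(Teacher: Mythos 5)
The paper does not actually prove this proposition: it is stated with the remark ``We leave the verification to the reader,'' so there is no in-paper argument to compare against. Your write-up is a correct and complete verification, and it follows the standard route: pass to the affine cone $C(X)$, get the inclusion $\sum_j W_j\subseteq T_vC(X)$ by differentiating the curves $(x^1+tw^1)\otimes\cdots\otimes(x^r+tw^r)$, and upgrade it to equality by the dimension count $\dim\sum_j W_j=1+\sum_j k_j=\dim C(X)$ (which also implicitly corrects the paper's abusive ``$\bigoplus_j$'' notation, since the $W_j$ all contain the line through $v$); part (2) via the adapted decomposition $V_j^{*}=\langle x^j\rangle\oplus U_j$ and the observation that $T_vC(X)$ is the span of basis tensors with at most one index in a $U$-direction is exactly the right mechanism, and part (3) follows from (2) by irreducibility of a linear subspace inside a finite union of closed sets. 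Two small points of precision: in the case $A=[r]$ it is not that $y$ ``reduces to'' $z^1\otimes\cdots\otimes z^r$, but that this tensor is the (nonzero) component of $y$ in the summand $\bigotimes_i U_i$, which meets $T_vC(X)$ only in $0$ -- your conclusion is right, just state it as a statement about that component; and in the subcase $A\subsetneq[r]$, $|A|\ge 2$, the single choice $S=A$ already yields $\alpha_{i^*}=0$ for some $i^*\notin A$ and hence $y^{i^*}=0$, so the extra test set $S=[r]\setminus\{j\}$ is not needed.
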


Here is another corollary of the Reflexivity theorem.

\begin{theorem}\label{preliminaries-thm:smooth-point-of-the-hyperplane-section}
 Let $X\subset \mathbb{P}^*$ be smooth and $X^{\vee}\subset \mathbb{P}$ a 
hypersurface. Let $z_1,\ldots,z_n$ be homogeneous coordinates of $\mathbb{P}^{n*}$ and 
$a_1,\ldots, a_n$ the dual homogeneous coordinates on $\mathbb{P}^n$.
Suppose that $f=(a_1:\ldots:a_n)$ is a smooth point of $X^{\vee}$.
Then the hyperplane section $\{f=0\}\cap X$ has a unique singular point with coordinates
$$(\frac{\partial \Delta_X}{\partial a_1}(f):\ldots:\frac{\partial \Delta_X}{\partial 
a_n}(f)).$$
\end{theorem}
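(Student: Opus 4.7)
The plan is to extract the statement from the Reflexivity Theorem (Theorem~\ref{thm:reflex}) combined with an explicit description of the projective tangent hyperplane to the hypersurface $X^{\vee}=\{\Delta_X=0\}$ at its smooth point $f$. First I would recall the key equivalence: a point $z\in X$ is a singular point of the hyperplane section $\{f=0\}\cap X$ precisely when the hyperplane $\{f=0\}$ contains the embedded projective tangent space $\hat{T}_z X$, i.e.\ precisely when $\{f=0\}$ is tangent to $X$ at $z$. So the task reduces to describing the set of $z\in X$ at which $f$ is a tangent hyperplane, and showing it consists of one explicit point.

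Next I would invoke the Reflexivity Theorem in its precise form: $f$ (viewed as a hyperplane in $\mathbb{P}^{n*}$) is tangent to $X$ at $z$ if and only if $z$ (viewed as a hyperplane in $\mathbb{P}^n$) is tangent to $X^{\vee}$ at $f$. Since $f$ is a smooth point of the hypersurface $X^{\vee}$, the embedded projective tangent space $\hat{T}_f X^{\vee}$ is itself a hyperplane, so the \emph{only} hyperplane containing $\hat{T}_f X^{\vee}$ is $\hat{T}_f X^{\vee}$. This already yields both existence and uniqueness of the singular point of $\{f=0\}\cap X$, and reduces the task to identifying the hyperplane $\hat{T}_f X^{\vee}\subset \mathbb{P}^n$ as a point of $\mathbb{P}^{n*}$.

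For that identification I would use Euler's relation for the homogeneous polynomial $\Delta_X$: the projective tangent hyperplane to the hypersurface $\{\Delta_X=0\}$ at the smooth point $f=(a_1:\ldots:a_n)$ is cut out by the linear form
\begin{equation*}
\sum_{i=1}^{n}\frac{\partial \Delta_X}{\partial a_i}(f)\,a_i = 0,
\end{equation*}
and the corresponding point of the dual $\mathbb{P}^{n*}$ therefore has coordinates
\begin{equation*}
z=\Bigl(\tfrac{\partial \Delta_X}{\partial a_1}(f):\ldots:\tfrac{\partial \Delta_X}{\partial a_n}(f)\Bigr).
\end{equation*}
Smoothness of $f$ on $X^{\vee}$ is exactly the statement that not all partials $\partial \Delta_X/\partial a_i(f)$ vanish, so this is a well-defined point of $\mathbb{P}^{n*}$. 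By the previous paragraph this is the unique singular point of $\{f=0\}\cap X$.

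The main obstacle (and really the only subtle point) is handling the interplay between the two spaces cleanly: one must keep track that $\hat{T}_f X^{\vee}\subset \mathbb{P}^n$ is a hyperplane whose \emph{point-in-}$\mathbb{P}^{n*}$ incarnation is precisely the candidate singular point of the hyperplane section, and that the Reflexivity Theorem applies at the smooth points on both sides (smoothness of $X$ at the prospective $z$, and smoothness of $X^{\vee}$ at $f$). The existence of $z$ as a genuine smooth point of $X$ can be argued by noting that the projection $I_X\to X^{\vee}$ from the conormal variety is surjective and, near a smooth point $f$ of the irreducible hypersurface $X^{\vee}$, the fiber $\mathbb{P}(\hat{T}_f X^{\vee}/\hat{T}_f X^{\vee})$ (a point) provides exactly one preimage whose $\mathbb{P}^{n*}$-coordinate lies in the smooth locus $X_{sm}$; the rest is the explicit gradient computation above.
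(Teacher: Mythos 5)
Your proposal is correct and takes the same route as the paper: both appeal to the Reflexivity Theorem to translate ``$\{f=0\}$ is tangent to $X$ at $z$'' into ``$z$ is tangent to $X^{\vee}$ at $f$,'' and then read off $z$ as the gradient point of the hypersurface $X^{\vee}=\{\Delta_X=0\}$ at the smooth point $f$. The paper states this in one line without the Euler-relation computation, which you correctly supply as the missing detail.
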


\begin{proof}
 Let $H\in \mathbb{P}^n$ be the hyperplane corresponding to $f$. By the Reflexivity 
theorem~\ref{thm:reflex} it is tangent to $X$ at $z$ if and only if the hyperplane 
corresponding to $z$ is tangent to $X^{\vee}$ at $H$. Since $X^{\vee}$ is a hypersurface 
and smooth at $f$ such a point $z$ is unique with coordinates $(\frac{\partial 
\Delta_X}{\partial 
a_1}(f):\ldots:\frac{\partial \Delta_X}{\partial a_n}(f))$.
\end{proof}

\begin{corollary}\label{cor:smoothptofdegmatrices}
 Let $A_0=(a_{i_1,\ldots,i_r})$ be a degenerate matrix that is a smooth point of the 
variety of degenerate matrices. Then the kernel $K(A_0)$ consists of a unique point 
$x=(x^1,\ldots,x^r)\in \mathbb{P}^{k_1}\times\cdots \times \mathbb{P}^{k_r}$.
Let $S$ denote the Segre embedding.
The coordinates of $S(x)=x^1\otimes \cdots \otimes x^r$ are up to normalization given by
$$x^1_{i_1}\dots x^r_{i_r}=\frac{\partial \Delta(A)}{\partial 
a_{i_1,\ldots,i_r}}|_{A=A_0}$$ for all $i_1,\ldots,i_r$.
\end{corollary}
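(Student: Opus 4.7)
The plan is to reduce the corollary directly to Theorem~\ref{preliminaries-thm:smooth-point-of-the-hyperplane-section} by reinterpreting the kernel $K(A_0)$ as the locus of tangency of the hyperplane $H_{A_0}:=\{A_0=0\}\subset \mathbb{P}(V_1^*\otimes\cdots\otimes V_r^*)$ with the Segre variety $X=\mathrm{Im}(S)$. First I would recall from the proposition just before the corollary that for any decomposable tensor $S(x)=x^1\otimes\cdots\otimes x^r\in X$, the embedded projective tangent space $T_{S(x)}X$ is spanned by the $r$ linear subspaces
\[
\mathbb{P}\bigl(x^1\otimes\cdots\otimes x^{j-1}\otimes V_j^*\otimes x^{j+1}\otimes\cdots\otimes x^r\bigr), \quad j=1,\ldots,r.
\]
Consequently $H_{A_0}\supset T_{S(x)}X$ if and only if
\[
A_0\bigl(x^1,\ldots,x^{j-1},(\mathbb{C}^{k_j+1})^*,x^{j+1},\ldots,x^r\bigr)=0\quad \text{for every } j,
\]
which is exactly the condition that $x\in K(A_0)$. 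In particular, the points of $K(A_0)$ are in bijection (via $S$) with the points at which $H_{A_0}$ is tangent to $X$.

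Next I would invoke Theorem~\ref{preliminaries-thm:smooth-point-of-the-hyperplane-section}. The Segre variety $X$ is smooth, and by hypothesis its dual $X^{\vee}$ (which coincides with the variety of degenerate hypermatrices) is a hypersurface, with $A_0$ a smooth point of it. The theorem then yields a unique point $y\in X$ at which $H_{A_0}$ is tangent to $X$, whose homogeneous coordinates are
\[
y_{i_1,\ldots,i_r}=\frac{\partial \Delta_X}{\partial a_{i_1,\ldots,i_r}}(A_0).
\]
Since $y$ lies on $X$, it is of the form $S(x)=x^1\otimes\cdots\otimes x^r$ for some $x=(x^1,\ldots,x^r)\in \mathbb{P}^{k_1}\times\cdots\times\mathbb{P}^{k_r}$; uniqueness of $x$ follows from injectivity of $S$, and this gives uniqueness of $K(A_0)$. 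Comparing coordinate expressions, $S(x)_{i_1,\ldots,i_r}=x^1_{i_1}\cdots x^r_{i_r}$ agrees with the partial derivatives above up to a common scalar, which is the desired formula.

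The main obstacle, such as it is, lies in the first paragraph: one has to verify carefully that the algebraic degeneracy condition on $A_0$ translates exactly into the geometric tangency condition at the decomposable tensor $S(x)$, using the explicit description of $T_{S(x)}X$. Once this identification is in place, the rest is a clean application of the reflexivity-based formula for smooth points of the dual hypersurface $X^{\vee}$ already established earlier in this appendix, and no further computation is required.
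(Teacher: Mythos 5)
Your proposal is correct and follows the route the paper implicitly intends: the corollary is stated directly after Theorem~\ref{preliminaries-thm:smooth-point-of-the-hyperplane-section} with no separate proof, and your argument supplies precisely the missing translation, namely that the algebraic kernel condition on $A_0$ is equivalent (via the explicit description of $T_{S(x)}X$ as the span of the $r$ subspaces $\mathbb{P}(x^1\otimes\cdots\otimes V_j^*\otimes\cdots\otimes x^r)$) to $H_{A_0}$ being tangent to the Segre variety $X$ at $S(x)$.
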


\begin{remark}\label{rmk:equivalent_properties_to_deltazero}
Summarizing the properties from the preceding chapter: for a hypermatrix 
$A\in  \mathbb{C}^{(k_1+1)\times \cdots \times (k_r+1)}$ we have
\begin{equation*}
\begin{array}{ccc}
  Det(A)=\Delta_X(A)=0 &\iff&\\
 \text{ hyperplane section } \{A=0\}\cap X \text{ is singular} &\iff&\\
  \{A=0\} \text{ belongs to } X^{\vee} &\iff&\\
  \exists x\in X : A(x)=\frac{\partial A}{\partial x_i}(x)=0 \forall i\\
\end{array}.
\end{equation*}
\end{remark}

\subsection{Schl\"aflis technique of computation}

There is a useful technique to compute the hyperdeterminant for some hypermatrices of low 
degrees.
This method is due to~\cite{Sch52}.

Given a hypermatrix $A=(a_{i_1,\ldots,i_r})\in \mathbb{C}^{(k_1+1)\cdots (k_r+1)}$, 
consider the linear operator
\begin{equation*}
 \begin{array}{ccc}
  f_A:\mathbb{C}^{k_1+1}&\rightarrow & \mathbb{C}^{(k_2+1)\cdots (k_r+1)}\\
x &\mapsto & \sum_{i_1=0}^{k_1} a_{i_1,\ldots,i_r} x_{i_1}.\\
 \end{array}
\end{equation*}
This is a family of $r$-dimensional matrices linearly depending on $x$.
The function $F_A(x):=Det(f_A)(x)$ is a homogeneous polynomial in the $x_{i_1}$. Hence, we 
can compute its discriminant $\Delta(F_A)$.
(Here $\Delta$ is just the usual discriminant of a quadratic polynomial, which is also the same as the discriminant of the Segre variety
$S(\mathbb{P}^1\times \mathbb{P}^1)\subset \mathbb{P}^3$.)
In particular, $\Delta(F_A)$ can be viewed as a homogeneous polynomial in the matrix 
entries of $A$ which is zero if and only if there exists an $x$ such that $F_A(x)$ is 
degenerate.

The first crucial observation is the following theorem

\begin{theorem}
 The polynomial $\Delta(F_A)$ is divisible by the hyperdeterminant $Det(A)$ of $A$.
\end{theorem}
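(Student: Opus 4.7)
The plan is to show $\mathrm{Det}(A)\mid \Delta(F_A)$ by proving the inclusion of zero loci $V(\mathrm{Det})\subseteq V(\Delta(F_A))$ on a Zariski-dense open subset, and then invoking the irreducibility of the hyperdeterminant (which is the defining polynomial of the irreducible hypersurface $X^{\vee}\subset\mathbb{P}(V_1\otimes\cdots\otimes V_r)$, cf.\ Proposition~\ref{prop:irreducible} together with the assumption that $X^{\vee}$ is a hypersurface). Once the inclusion $V(\mathrm{Det})\subseteq V(\Delta(F_A))$ is established, the Nullstellensatz and irreducibility of $\mathrm{Det}$ force $\mathrm{Det}\mid\Delta(F_A)$.

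The geometric step is the following. Let $A_0$ be a smooth point of $\{\mathrm{Det}=0\}$. By Corollary~\ref{cor:smoothptofdegmatrices}, the kernel $K(A_0)$ consists of a single point $(x^1,\ldots,x^r)$, and moreover the partial derivatives of the $r$-hyperdeterminant at $A_0$ reproduce the Segre coordinates of $x^1\otimes\cdots\otimes x^r$, up to a common scalar $c$. I would then take $x = x^1$ as candidate singular point of $F_A$. Degeneracy at directions $j=2,\ldots,r$ says exactly that the $(r-1)$-hypermatrix $f_{A_0}(x^1)$ has $(x^2,\ldots,x^r)\in K(f_{A_0}(x^1))$, so $F_{A_0}(x^1)=\mathrm{Det}(f_{A_0}(x^1))=0$. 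Applying the chain rule together with the Segre-coordinate formula of Corollary~\ref{cor:smoothptofdegmatrices} for the $(r-1)$-dimensional hyperdeterminant at $f_{A_0}(x^1)$ yields
\begin{equation*}
\frac{\partial F_{A_0}}{\partial x_{i_1}}(x^1) \;=\; \sum_{i_2,\ldots,i_r}a_{i_1,i_2,\ldots,i_r}\,\frac{\partial\mathrm{Det}}{\partial a_{i_2,\ldots,i_r}}\bigl(f_{A_0}(x^1)\bigr) \;=\; c\sum_{i_2,\ldots,i_r}a_{i_1,\ldots,i_r}\,x^2_{i_2}\cdots x^r_{i_r}\;=\;0,
\end{equation*}
the last equality being the degeneracy condition of $A_0$ in the direction $j=1$. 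Thus $x^1$ is a critical point of $F_{A_0}$, which forces $\Delta(F_{A_0})=0$.

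To organize this into a valid proof I would proceed in four short steps. First, verify that $F_A$ is a nonzero polynomial in $x$ for generic $A$ (so that $\Delta(F_A)$ is a genuine nonzero polynomial in the entries of $A$); this follows because $f_A$ is surjective in $x$ for generic $A$ and the generic $(r-1)$-hypermatrix is nondegenerate. Second, restrict to the Zariski-open locus $U\subset\{\mathrm{Det}=0\}$ consisting of smooth points $A_0$ of $\{\mathrm{Det}=0\}$ whose unique kernel element $(x^1,\ldots,x^r)$ additionally satisfies that $f_{A_0}(x^1)$ is a smooth point of the $(r-1)$-hyperdeterminantal variety (nonemptiness of $U$ is a transversality-type statement that I would verify by a dimension count using the Segre description). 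Third, run the chain-rule computation above on $U$ to conclude $\Delta(F_A)=0$ on $U$, hence on $\overline{U}=\{\mathrm{Det}=0\}$ by Zariski density. Fourth, invoke irreducibility of $\mathrm{Det}$ and the Nullstellensatz to conclude $\mathrm{Det}\mid\Delta(F_A)$.

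The main obstacle I anticipate is the second step: ensuring that the open set $U$ on which both the chain-rule computation and Corollary~\ref{cor:smoothptofdegmatrices} apply simultaneously (for $A_0$ and for $f_{A_0}(x^1)$) is nonempty and dense inside $\{\mathrm{Det}=0\}$. This essentially amounts to an inductive statement on $r$: for generic degenerate $r$-hypermatrices, the associated $(r-1)$-slice through the kernel element is again a smooth point of the $(r-1)$-hyperdeterminantal variety. I would handle this by an explicit parameter count using the Segre embedding and the bundle structure $I_X^0\to X_{sm}$ from the proof of Proposition~\ref{prop:irreducible}, isolating the locus where things go wrong as a proper subvariety of $\{\mathrm{Det}=0\}$. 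Everything else in the argument is essentially the chain rule and a standard Nullstellensatz deduction.
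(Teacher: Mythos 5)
Your proposal's main idea --- the chain-rule computation at the kernel vector $x^1$, Corollary~\ref{cor:smoothptofdegmatrices} for expressing the partials of the slice hyperdeterminant in Segre coordinates, then irreducibility of $\mathrm{Det}$ plus the Nullstellensatz --- is the same as the paper's. But the step you flag as the ``main obstacle,'' namely showing that the locus $U\subset\{\mathrm{Det}=0\}$ on which both $A$ and the slice $f_A(x^1)$ are simultaneously smooth points of their respective hyperdeterminantal hypersurfaces is nonempty and dense, is a real gap in your argument as written, and the paper's proof does not fill that gap; it avoids needing it at all.

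The paper treats an \emph{arbitrary} degenerate $A$, takes \emph{any} $(x^1\otimes\cdots\otimes x^r)\in K(A)$, and splits on whether the gradient vector $c=(c_{i_2,\ldots,i_r})$ of the $(r-1)$-dimensional hyperdeterminant at $B:=f_A(x^1)$ vanishes. If $c$ is identically zero, the chain rule already gives $\partial_{i_1}F_A(x^1)=\sum a_{i_1,\ldots,i_r}\,c_{i_2,\ldots,i_r}=0$ with nothing more to check. If $c$ is not identically zero, then --- since $F_A(x^1)=\mathrm{Det}(B)=0$ is already known --- nonvanishing of the gradient is precisely the statement that $B$ is a smooth point of the degeneracy hypersurface, so Corollary~\ref{cor:smoothptofdegmatrices} applies and your computation goes through verbatim. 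In both branches $x^1$ is a singular point of $\{F_A=0\}$, so $\Delta(F_A)=0$ holds for \emph{every} degenerate $A$, not just on a dense open set, and the divisibility follows by irreducibility of $\mathrm{Det}$. Observe also that the paper never needs $A$ itself to be a smooth point of $\{\mathrm{Det}=0\}$, so uniqueness of the kernel element (the part of Corollary~\ref{cor:smoothptofdegmatrices} you invoke for $A_0$) is not used. Beyond being extra work, the density argument you sketch would risk reproving the codimension analysis that underlies Theorem~\ref{thm:SchlaefliRatio}, which appears after the present statement in the paper; the case split is the cleaner route.
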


\begin{remark}
 Note that $\Delta(F_A)$ could be identically zero.
\end{remark}

\begin{proof}
Suppose that $Det(A)$ is zero, i.e. the hypermatrix $A$ is degenerate. 
Then $K(A)$ is non empty. Let $(x^1\otimes \ldots \otimes x^r)\in K(A)$.
We want to show that $F_A$ vanishes at $x^1$ with all its first derivatives.
We use the notation $$B(x)={(b_{i_2\ldots i_r})}_{i_2,\ldots,i_r}=f_A(x) \text{ , } 
c_{i_2,\ldots,i_{r}}={\frac{\partial 
Det}{\partial b_{i_2,\ldots,i_{r}}}}|_{B}.$$
Then $$F_A(x)=Det_{(k_2+1)\times\ldots \times (k_r+1)}(B)(x)$$ and
$$\frac{\partial F_A(x)}{\partial x_{i_1}}|_{x=x^1}=\sum_{i_2,\ldots,i_r} 
a_{i_1,i_2,\ldots,i_r}c_{i_2,\ldots,i_r}$$ for $i_1=1,\ldots,k_1$.
Since $(x^1\otimes \ldots \otimes x^r)\in K(A)$ also $(x^2\otimes \ldots \otimes x^r)\in K(A_0)$, where $A_0=B(x^1)$. Hence, $F_A(x^1)=0$.
If the $c_{i_2,\ldots,i_r}$ are all zero 
then $\frac{\partial F_A(x)}{\partial x_{i_1}}|_{x=x^1}=0$, which implies $\Delta(F_A)=0$  
and the theorem is proven.
Suppose now that the $c_{i_2,\ldots,i_r}$ are not all zero. 
Then the matrix $B$ is a smooth point of the variety of degenerate matrices. 
By Corollary~\ref{cor:smoothptofdegmatrices} $c_{i_2,\ldots,i_r}=x^2_{i_2}\cdots 
x^r_{i_r}$, and we get 
$$\frac{\partial F_A(x)}{\partial x_{i_1}}|_{x=x^1}=\sum_{i_2,\ldots,i_r} 
a_{i_1,i_2,\ldots,i_r}x^2_{i_2}\cdots x^r_{i_r}.$$ 
Since $(x^1,\ldots,x^r)\in K(A)$ this shows that 
$$\frac{\partial F_A(x)}{\partial x_{i_1}}|_{x=x^1}=\sum_{i_2,\ldots,i_r} 
a_{i_1,i_2,\ldots,i_r}x^2_{i_2}\cdots x^r_{i_r}=A((1,\ldots,1)\otimes 
x^2\otimes\ldots\otimes x^r)=0.$$
\end{proof}

The above theorem can be refined.
Let $\nabla=\nabla(k_2,\ldots,k_r)$ be the variety of degenerate hypermatrices of format 
$(k_2+1)\times\ldots\times (k_r+1)$.
The projectivization of $\nabla$ is by definition the dual variety $X^{\vee}$ 
of the Segre variety $\mathbb{P}^{k_2}\times \ldots \times\mathbb{P}^{k_r}\subset 
\mathbb{P}^{(k_2+1)\cdots(k_r+1)-1}$.
Furthermore, let $\nabla_{sing}$ denote the variety of singular points of $\nabla$ and 
$X^{\vee}_{sing}$ its projectivization.
Let $c(k_2,\ldots,k_r)$ be the minimal codimension of all irreducible parts of $\nabla_{sing}$.
Then there is a more precise description of the ratio of $\Delta(F_A)/Det(A)$.

\begin{theorem}\label{thm:SchlaefliRatio}
 The ratio $G(A):=\Delta(F_A)/Det(A)$ has the following form:
 \begin{enumerate}
  \item if $k_1+1<c(k_2,\ldots,k_r)$ then $G$ is a non-zero constant
  \item if $k_1+1=c(k_2,\ldots,k_r)$ then 
  $$G(A)=\prod R_Z^{m_z}(Im(f_A)),$$
  where $Z$ ranges over the irreducible components of $\nabla$, 
  $R_Z$ is the chow form of $Z$ and $m_Z$ are some multiplicities.
  \item if $k_1+1>c(k_2,\ldots,k_r)$ then $G(A)$ and hence, also $\Delta(F_A)$ are 
identically zero.
\end{enumerate}
\end{theorem}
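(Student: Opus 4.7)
The plan is to translate the vanishing of $\Delta(F_A)$ into a geometric statement about the intersection of the linear subspace $L_A := \mathrm{Im}(f_A) \subset \mathbb{C}^N$ (with $N = (k_2+1)\cdots(k_r+1)$) with the discriminant variety $\nabla \subset \mathbb{C}^N$, and then to match codimension counts against the number $c = c(k_2,\ldots,k_r)$. The starting observation is that $\Delta(F_A) = 0$ precisely when the projective hypersurface $\{F_A = 0\} = \mathbb{P}(L_A) \cap X^{\vee}$ in $\mathbb{P}(L_A) \cong \mathbb{P}^{k_1}$ is singular, and such a singularity at $H$ forces either $H \in X^{\vee}_{\mathrm{sing}}$, or $\mathbb{P}(L_A) \subset \widehat{T}_H X^{\vee}$ (tangency with the smooth locus). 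Crucially, by the Reflexivity Theorem~\ref{thm:reflex} together with the proof of the preceding divisibility theorem, tangency of $\mathbb{P}(L_A)$ with $X^{\vee}_{\mathrm{sm}}$ at $H = [f_A(x^1)]$ is exactly the condition $A \in \nabla_{\text{hyp}}$, i.e.\ $\mathrm{Det}(A) = 0$. So on the open set $\{\mathrm{Det}(A) \neq 0\}$, the only possible source of vanishing of $\Delta(F_A)$ is intersection with $\nabla_{\mathrm{sing}}$.

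With this dictionary in place, Part (3) is immediate by dimension counting: if $k_1 + 1 > c$, then $\dim \mathbb{P}(L_A) + \dim \nabla_{\mathrm{sing}} \geq N - 1$, so every $(k_1+1)$-dimensional linear subspace of $\mathbb{C}^N$ must meet each codimension-$c$ irreducible component of $\nabla_{\mathrm{sing}}$ in positive dimension, and in particular $\mathbb{P}(L_A) \cap X^{\vee}$ always contains a point of $X^{\vee}_{\mathrm{sing}}$. Hence $\{F_A = 0\}$ is singular for every $A$, and $\Delta(F_A) \equiv 0$.

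For Part (1), where $k_1 + 1 < c$, I would first argue that $G(A) := \Delta(F_A)/\mathrm{Det}(A)$ is a polynomial that does not vanish identically: for generic $A$ the subspace $\mathbb{P}(L_A)$ misses $\nabla_{\mathrm{sing}}$ (by the transversality version of the dimension inequality) and meets $X^{\vee}_{\mathrm{sm}}$ transversally (by Bertini applied to the generic linear pencil), so $\{F_A = 0\}$ is smooth and $\Delta(F_A) \neq 0$. Then the main obstacle is to rule out that $G$ vanishes on any hypersurface. The zero locus of $G$ is contained in $\{A : \Delta(F_A) = 0\} \setminus \{A : \mathrm{Det}(A) = 0\}$; by the dictionary above, any $A_0$ in this set would have to produce either a point in $\mathbb{P}(L_{A_0}) \cap X^{\vee}_{\mathrm{sing}}$ or a genuine tangency with $X^{\vee}_{\mathrm{sm}}$ while $A_0$ itself is non-degenerate. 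The first is excluded by the strict dimension inequality $k_1 + 1 < c$ combined with an upper-semicontinuity argument for the intersection dimension, and the second is excluded by the Reflexivity interpretation above, which identifies such tangency with $\mathrm{Det}(A_0) = 0$. Hence $\{G = 0\} = \emptyset$, forcing $G$ to be a nonzero constant.

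Part (2), the boundary case $k_1 + 1 = c$, is where the main technical work lies and I expect it to be the hardest step. Here $\mathbb{P}(L_A)$ meets each irreducible component $Z$ of $\nabla_{\mathrm{sing}}$ of codimension $c$ in a finite nonempty set for generic $A$, and each such intersection point contributes a singularity of $\{F_A = 0\}$, hence a factor to $\Delta(F_A)$. The geometric content is that the vanishing of $G(A)$ is now supported on the locus $\{A : \mathbb{P}(L_A) \cap Z \neq \emptyset\}$ as $Z$ ranges over codimension-$c$ components of $\nabla_{\mathrm{sing}}$, and this incidence locus is cut out exactly by the Chow form $R_Z$ evaluated on the subspace $\mathrm{Im}(f_A)$ (by the defining property of the Chow form for subvarieties of $\mathbb{P}^{N-1}$). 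The plan is therefore to (i) identify the set-theoretic equality $\{G = 0\} = \bigcup_Z \{R_Z(\mathrm{Im}(f_A)) = 0\}$ via the tangency analysis above, (ii) compute the vanishing order $m_Z$ of $\Delta(F_A)$ along the generic point of each such component by a local analytic model of the simplest singularity that appears on the corresponding branch of $\nabla_{\mathrm{sing}}$, and (iii) match degrees to conclude that no other factors appear. The degree-matching step is the subtle part: it requires knowing the degree of $\Delta(F_A)$ as a polynomial in $A$ (via the universal formula $(k_1+1)(d-1)^{k_1}$ for the discriminant of a degree $d$ form in $k_1 + 1$ variables, where $d = \deg \mathrm{Det}_{(k_2+1)\times\cdots\times(k_r+1)}$) and comparing it with $\deg \mathrm{Det} + \sum_Z m_Z \deg R_Z$.
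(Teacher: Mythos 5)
Your overall route is the same as the paper's: translate the vanishing of $\Delta(F_A)$ into the alternative ``$\mathrm{Im}(f_A)$ meets $\nabla_{\mathrm{sing}}$, or $\mathbb{P}(\mathrm{Im} f_A)$ is tangent to $X^{\vee}$ at a smooth point (which forces a kernel vector of $A$, hence $Det(A)=0$)'', then compare $k_1+1$ with $c(k_2,\ldots,k_r)$, and in the boundary case recognize the incidence locus with the codimension-$c$ components $Z$ of $\nabla_{\mathrm{sing}}$ as the zero locus of the Chow forms $R_Z(\mathrm{Im} f_A)$. Your case (3) is essentially correct (minor slip: for $k_1+1=c+1$ the intersection with a codimension-$c$ component is only guaranteed to be nonempty, not positive-dimensional, but nonemptiness is all you use).

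The genuine gap is that you never control the \emph{order} of vanishing of $\Delta(F_A)$ along the hypersurface $\{Det(A)=0\}$. Your assertion that the zero locus of $G$ is contained in $\{\Delta(F_A)=0\}\setminus\{Det(A)=0\}$ is not a consequence of your dictionary; it is exactly the statement that $Det(A)$ does not divide $G$, i.e.\ that $Det(A)^2\nmid\Delta(F_A)$. The set-theoretic implication you invoke (tangency at a smooth point of $X^{\vee}$ implies $Det(A_0)=0$) says nothing about multiplicities, so it cannot rule out that $\{Det=0\}$ is a component of $\{G=0\}$. The paper isolates this as Lemma~\ref{lem:notdivisiblebyDetDet} and proves it by a real argument: using reflexivity (Theorem~\ref{thm:reflex}) and genericity it produces a one-parameter family $A_t$ whose image plane is \emph{simply} tangent to $X^{\vee}$ at a single smooth point, so that $t\mapsto\Delta(F_{A_t})$ has a simple zero, contradicting divisibility by $Det^2$. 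Without this input, your case (1) only shows that the zero divisor of $G$ is supported on $\{Det=0\}$, so $G$ could still be a positive power of $Det$ up to a constant; and in case (2) extra factors of $Det$ in $G$ are not excluded --- the degree-matching you propose cannot do it, since the multiplicities $m_Z$, the degrees of the Chow forms, and the degree of the big hyperdeterminant are all unknown (and the theorem deliberately leaves the $m_Z$ unspecified). You need to add the multiplicity-one lemma (or an equivalent local computation along the generic point of $\{Det=0\}$) before the rest of your argument goes through.
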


The first step to prove the theorem is the next lemma.
\begin{lemma}\label{lem:notdivisiblebyDetDet}
 If $\Delta(F_A):\mathbb{C}^{(k_1+1)}\rightarrow 
\mathbb{C}$ is not identically zero, then it is not divisible by $Det(A)^2$.
\end{lemma}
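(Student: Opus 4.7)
The plan is to show that the discriminant polynomial $\Delta(F_A)$ vanishes to order exactly one along the irreducible hypersurface $\nabla = \{Det(A)=0\}$ of degenerate hypermatrices. Once this is established, the factorization $\Delta(F_A) = Det(A)\cdot G(A)$ from the previous theorem must have $G(A) \not\equiv 0$ on $\nabla$, and hence $Det(A)^2$ cannot divide $\Delta(F_A)$. Irreducibility of $\nabla$ (which follows from Proposition~\ref{prop:irreducible} applied to the irreducible Segre variety) means it suffices to check this at a single generic smooth point $A_0 \in \nabla$. I would then pick a transverse deformation $A_t := A_0 + tB$ with $Det(A_t) = ct + O(t^2)$, $c\neq 0$, and show that $\Delta(F_{A_t})$ also vanishes to exactly first order in $t$.

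The heart of the argument is a local Morse-theoretic computation at $A_0$. By Corollary~\ref{cor:smoothptofdegmatrices}, the kernel $K(A_0)$ is a single point $(x_0^1,\ldots,x_0^r)$, and as already computed in the proof of Theorem~\ref{thm:SchlaefliRatio}, $x_0^1$ is a singular point of the hypersurface $\{F_{A_0}=0\}$ in $\mathbb{P}^{k_1}$. I would then show that for generic such $A_0$ this singularity is an ordinary node. Writing $B(x) = f_A(x)$, the point $B(x_0^1)$ is itself a smooth point of the discriminantal hypersurface for $(k_2+1)\times\cdots\times(k_r+1)$ hypermatrices, so near it one has $Det(B(x_0^1) + w) = L(w) + Q(w) + O(w^3)$ with $L$ linear and $Q$ a well-defined quadratic form on the quotient by $\ker L$. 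A Taylor expansion gives $F_A(x_0^1 + v) = Q(B'(v)) + O(v^3)$, so the Hessian of $F_A$ at $x_0^1$ equals $Q\circ B'$. A parameter count shows that for $A$ varying in a Zariski-open subset of the smooth locus of $\nabla$, the linear map $B'$ is sufficiently transverse to make this composite non-degenerate on the normal projective directions, establishing the Morse property.

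Once the singularity is known to be an ordinary double point, the standard versal-deformation picture for $A_1$ singularities applies: for a generic transverse perturbation, the node smooths into a non-singular hypersurface with the critical value moving linearly off zero, and the discriminant of the deformed polynomial vanishes to exactly first order in $t$. Combined with $Det(A_t) = O(t)$, this gives that $G(A) = \Delta(F_A)/Det(A)$ restricted to the transverse line is regular and non-vanishing at $A_0$, finishing the proof. The main obstacle will be verifying the genericity of the Morse condition, i.e.\ that $Q\circ B'$ is non-degenerate for a Zariski-open set of $A_0$; this is essentially a transversality statement relating the projective second fundamental form of the Segre variety to generic linear slices, and may require case-by-case attention when the format $(k_1+1)\times\cdots\times(k_r+1)$ is close to the boundary of non-triviality $k_i \leq \sum_{j\neq i} k_j$.
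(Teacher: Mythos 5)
Your proposal is correct in spirit but takes a genuinely different — essentially \emph{dual} — route from the paper's proof, and the crucial genericity claim you flag as an obstacle is precisely the step the paper resolves by a cleaner projective-duality argument. You argue locally: show that, at a generic smooth point $A_0\in\nabla$, the unique singular point $x_0^1$ of $\{F_{A_0}=0\}$ (guaranteed by Corollary~\ref{cor:smoothptofdegmatrices}) is an $A_1$ singularity by computing the Hessian $Q\circ B'$, and then invoke versal deformation of nodes to get first-order vanishing of $\Delta(F_{A_t})$ along a transversal line. The paper instead argues globally in the dual projective space: it uses the ruled structure of $X^{\vee}$ as the union of linear fibres $P_x$ of the Gauss map, observes that a generic $\xi=[B]$ lies in exactly one $P_x$ (else $X^{\vee}$ would not be a hypersurface), and then a dimension count under the non-triviality condition $k_1\le k_2+\cdots+k_r$ shows that a generic $k_1$-dimensional tangent subspace $L$ through $\xi$ meets $P_x$ only at $\xi$, hence has simple tangency with $X^{\vee}$. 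It then derives a contradiction: if $Det(A)^2\mid\Delta(F_A)$, every admissible one-parameter family would have $\Delta(F_{A_t})=O(t^2)$, while the constructed $L$ yields a family with a simple zero. These are two formulations of the same obstruction: $T_\xi P_x$ is the kernel of the second fundamental form of $X^{\vee}$ at $\xi$, so ``$L$ meets $P_x$ only at $\xi$'' is equivalent to your condition that $B'$ carries the projective tangent space at $[x_0^1]$ into a subspace on which $Q$ is non-degenerate. The practical advantage of the paper's version is that it replaces a non-degeneracy-of-a-composite-quadratic-form verification (which you correctly identify as the delicate point, and leave open) by a transparent transversality statement about linear subspaces meeting the fibre $P_x$ of a ruled variety; you would gain the same purchase by explicitly invoking the contact-locus description of $\ker Q$ rather than attempting the Hessian computation from scratch.
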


\begin{proof}
Recall that the dual variety $X^{\vee}$ is the union of the projective spaces $P_x$, 
$x\in X$ where $P_x$ is the set of hyperplanes tangent to $X$ at $x$.
The codimension of $P_x$ is $\dim(X)+1=k_1+\ldots + k_r+1$.
Suppose that $\Delta(F_A)$ is not identically zero.
The vanishing of $\Delta(F_A)$ means that $F_A$ vanishes at some points with 
all its derivatives.
In other words, $Im(f_A)=f_A(\mathbb{C}^{k_1+1})$ is tangent to $\nabla$ at some non-zero 
point.
Suppose that $Det(A)^2$ divides $\Delta(F_A)$ and that $\Delta(F_A)$ vanishes at some 
point $x$ 
but is not identically zero.
For every one-parameter algebraic family $A_t$ of $r$-dimensional hypermatrices with 
$A_0=A$ and such that $Im(f_{A_0})$ is tangent to $\nabla$, the function $t\mapsto 
\Delta(F_{A_t})$ is divisible by $t^2$.
Indeed, if $\Delta(F_A)=0$, then either $Det(f_A)(x)=0$ for all $x$ or there exists an 
$x_0$ such that $F_A(x_0)$ has a multiple root. 
If $Det(f_A)(x)$ vanishes for all $x$ then also $Det(A)$ vanishes and thus 
$Det(A_t)=o(t)$ and $\Delta(F_A)$ is divisible by $t^2$. 
The function $F_A(x_0)$ having a multiple root on the other hand means that $Im(f_A)$ is 
tangent to $\nabla^{sing}$ at some non-zero point. Hence, the tangent is of order two, 
which again implies that $\Delta(F_A)$ is divisible by $t^2$.
We will show that this is impossible for a suitable choice of a generic family $A_t$.

Let $B\in\nabla$ and $\xi:=[B]\in X^{\vee}$ .
We may assume that $\xi$ is contained in exactly one $P_x$.
If this was not the case for generic points $\xi\in X^{\vee}$ then $X^{\vee}$ would not 
be a hypersurface.
Now consider the space $Z$ of $k_0$-dimensional projective subspaces of $P$ that are 
tangent to $X^{\vee}$ at $\xi$.
Since $k_0\le k_1+\ldots +k_r$ we know that a dense open subset of $Z$ is formed by 
projective subspaces that meet $P_x$ only at the point $\xi$.
Let $L\in Z$ be such a generic point.
Hence, $L$ has a simple tangency with $X^{\vee}$.
Therefor we can find an algebraic family $A_t$ of $r$-dimensional hypermatrices 
with $\mathbb{P}(Im(F_{A_0}))=L$.
The simple tangency condition of $L$ implies that $t\mapsto \Delta(F_{A_t})$ has a simple 
zero at $t=0$.
This contradicts the fact that $\Delta(F_{A_t})$ is divisible by $t^2$.
\end{proof}

We finish this section with the proof of Theorem~\ref{thm:SchlaefliRatio}.

\begin{proof}[Proof of Theorem~\ref{thm:SchlaefliRatio}]
 We have seen in the proof of Lemma~\ref{lem:notdivisiblebyDetDet} that $\Delta(F_A)=0$ 
if and only if either $Det(A)=0$ or $f_A(x_0)\in \nabla^{sing}$ for some non-zero $x_0\in 
\mathbb{C}^{k_1+1}$. 
Let $W$ be the variety of matrices $A$ such that $Im(f_A)$ intersects $\nabla_{sing}$ at 
a non-zero point. By Lemma~\ref{lem:notdivisiblebyDetDet} we see that the ratio $G(A)$ 
vanishes if and only if $A\in W$. 
Notice that $codim(W)>1$ if $ k_1+1<c(k_2,\dots,k_r)$, $codim(W)=1$ if $ 
k_1+1=c(k_2,\dots,k_r)$ and if $ k_1+1>c(k_2,\dots,k_r)$ then $W$ coincides with the 
whole matrix space. The Theorem follows from the definition of the Chow form.
\end{proof}

\subsection{Duality, discriminants and projections}\label{subsec:projections}

Let $\mathbb{P}$ be an $n$-dimensional projective space and $K\subset \mathbb{P}$ a 
$k$-dimensional subspace, where $k>0$.
We can identify the quotient space $\mathbb{P}/K$ with the set of $(k+1)$-dimensional 
projective subspaces in $\mathbb{P}$ containing $K$.
Suppose $H$ is a $(n-k-1)$-dimensional projective subspace in $\mathbb{P}$ not 
intersecting $K$.
Then every $(k+1)$-dimensional plane containing $K$ intersects $H$ in exactly one point. 
Hence, we can identify $H$ with $\mathbb{P}/K$.
Moreover, the dual space $(\mathbb{P}/K)^*$ in $\mathbb{P}^*$ is identified with the 
space 
of hyperplanes 
in $\mathbb{P}$ containing $K$.
We use the following notation. 
The projection from $K$ is the map
\begin{equation*}
 \pi_K:\mathbb{P}\setminus K \rightarrow \mathbb{P}/K,
\end{equation*}
defined by sending a point $x\in \mathbb{P}\setminus K$ to the $(k+1)$-dimensional 
projective 
subspace spanned by $x$ and $K$.

\begin{remark}\label{rmk:proj}
 Notice that if $K\subset K'$ are projective subspace of a projective space $\mathbb{P}$ 
then 
$\mathbb{P}/K'\subset \mathbb{P}/K$ and $K'/K$ is a subset of $\mathbb{P}/K$ such that 
$(\mathbb{P}/K)/(K'/K)=\mathbb{P}/K'$ and
\begin{equation}
 \pi_{K'/K}\circ \pi_K=\pi_{K'}
\end{equation}
\end{remark}

The following proposition and its proof can be found in~\cite{GKZ94} and~\cite{Tev03}. 
\begin{proposition}\label{prop:pi_K}
 Let $X\subset \mathbb{P}$ be an algebraic subvariety not intersecting the projective 
subspace $K$ 
and such that $\dim(X)<\dim(\mathbb{P}/K)$.
Then
\begin{equation}
 \pi_K(X)^{\vee}\subset K^{\vee}\cap X^{\vee}.
\end{equation}
Moreover, equality hold if $\pi_K:X\rightarrow \pi_K(X)$ is an isomorphism of algebraic 
varieties.
\end{proposition}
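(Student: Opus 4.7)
The plan is to use the natural identification $K^{\vee} \cong (\mathbb{P}/K)^{*}$, which views $K^{\vee}$ as the set of hyperplanes in $\mathbb{P}$ that contain $K$. Under this identification, $\pi_K(X)^{\vee}$ (computed inside $(\mathbb{P}/K)^{*}$) sits tautologically inside $K^{\vee}$, so the non-trivial content of the first inclusion is $\pi_K(X)^{\vee} \subset X^{\vee}$. The hypothesis $\dim X < \dim(\mathbb{P}/K)$ is used precisely to ensure that $\pi_K(X)$ is a proper subvariety of $\mathbb{P}/K$, so that $\pi_K(X)^{\vee}$ is a genuine variety rather than empty by default.

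To verify $\pi_K(X)^{\vee} \subset X^{\vee}$, since both sides are Zariski closed it is enough to argue on the dense open subset $U \subset \pi_K(X)^{\vee}$ of hyperplanes $H$ that are tangent to $\pi_K(X)$ at an actual smooth point $y$. First I would note that $\pi_K|_X : X \to \pi_K(X)$ is dominant, so by generic smoothness there is a Zariski open $V \subset X_{sm}$ mapping to a dense open of $\pi_K(X)_{sm}$ on which $d\pi_K$ restricts to a surjection $\hat{T}_x X \twoheadrightarrow \hat{T}_{\pi_K(x)} \pi_K(X)$. For a generic $H \in U$ one picks a tangency point $y$ in this image together with a lift $x \in V$ of $y$. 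The key observation is then purely linear-algebraic: $H \supset K$ and $H/K \supset \hat{T}_y \pi_K(X)$ imply that $H$ contains the full preimage $\pi_K^{-1}(\hat{T}_y \pi_K(X)) \subset \mathbb{P}$; since $d\pi_K$ maps $\hat{T}_x X$ into $\hat{T}_y \pi_K(X)$, this preimage contains $\hat{T}_x X$. Hence $H \supset \hat{T}_x X$, so $H$ is tangent to $X$ at the smooth point $x$ and $H \in X^{\vee}$.

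For the equality statement, suppose $\pi_K : X \to \pi_K(X)$ is an isomorphism of algebraic varieties, and take $H \in K^{\vee} \cap X^{\vee}$. Then $H \supset K$ and $H \supset \hat{T}_x X$ for some smooth $x \in X$. Since $\pi_K|_X$ is an isomorphism, $d\pi_K$ identifies $\hat{T}_x X$ with $\hat{T}_{\pi_K(x)} \pi_K(X)$ and $\pi_K(x)$ is smooth in $\pi_K(X)$; quotienting by $K$ gives $H/K \supset \hat{T}_{\pi_K(x)} \pi_K(X)$, so $H \in \pi_K(X)^{\vee}$. The main obstacle will be making the generic lifting argument in the first half fully rigorous, i.e.\ showing that the locus of $H \in \pi_K(X)^{\vee}$ admitting a tangency point that lifts to a good smooth point of $X$ is dense in $\pi_K(X)^{\vee}$. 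This amounts to a dimension count on the incidence variety $I_{\pi_K(X)}$ combined with generic smoothness of $\pi_K|_X$, and it is here that the assumption $\dim X < \dim(\mathbb{P}/K)$ plays its essential role by confining the entire argument to a proper subvariety of $\mathbb{P}/K$.
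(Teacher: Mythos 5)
Your treatment of the first inclusion $\pi_K(X)^{\vee}\subset K^{\vee}\cap X^{\vee}$ is correct, and in fact more careful than the paper's own one-line argument: you correctly note that one should restrict to the dense locus of $H\in\pi_K(X)^{\vee}$ whose tangency point on $\pi_K(X)$ lifts (via generic smoothness) to a smooth point of $X$, and then close up. The paper simply takes a preimage $p$ of the tangency point and asserts $H$ is tangent to $X$ at $p$ without worrying whether $p$ is a smooth point of $X$.

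The gap is in the reverse inclusion. You write: ``take $H\in K^{\vee}\cap X^{\vee}$. Then $H\supset K$ and $H\supset\hat{T}_x X$ for some smooth $x\in X$.'' This last claim is not justified. By the paper's own definition (Definition after the irreducibility proposition), $X^{\vee}$ is the Zariski \emph{closure} $\overline{X_0^{\vee}}$ of the set $X_0^{\vee}$ of hyperplanes tangent to $X$ at a smooth point. A hyperplane $H\in X^{\vee}\cap K^{\vee}$ may therefore be a boundary point of $X_0^{\vee}$ that is not literally tangent to $X$ at any smooth point, and your argument says nothing about such $H$. In effect you are assuming $X^{\vee}\cap K^{\vee}=X_0^{\vee}\cap K^{\vee}$, i.e.\ that intersecting $\overline{X_0^{\vee}}$ with the closed set $K^{\vee}$ commutes with taking closure of $X_0^{\vee}\cap K^{\vee}$ --- which is false for general closed sets and is precisely the nontrivial step. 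The paper isolates exactly this: it shows cleanly that $X_0^{\vee}\cap K^{\vee}=\pi_K(X)_0^{\vee}$ (essentially your argument), and then proves a separate limiting lemma --- given a one-parameter family $H(t)$ with $H(t)\in X_0^{\vee}$ for $t\neq 0$ and $H(0)\in K^{\vee}$, one can deform to a family $H'(t)$ with $H'(0)=H(0)$, $H'(t)$ still tangent to $X$ at a smooth point, and $H'(t)\supset K$ for all $t$ --- to conclude $\overline{X_0^{\vee}}\cap K^{\vee}\subset\overline{X_0^{\vee}\cap K^{\vee}}$. Your proof proposal is missing this limit argument entirely, and without it the equality statement does not follow.
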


\begin{proof}
 Let $H$ be a hyperplane tangent to $\pi_K(X)$ at a point $\pi_K(p)$. 
 By definition $H$ is a hyperplane in $\mathbb{P}/K$ and therefore it can be regarded as 
a 
hyperplane in $\mathbb{P}$ containing $K$.
 Moreover, as a hyperplane in $\mathbb{P}$ it is tangent to $X$ at $p$.
 This proves the first part of Proposition~\ref{prop:pi_K}.
 
 Now suppose that $\pi_K: X \rightarrow \pi_K(X)$ is an isomorphism of algebraic 
varieties.
Let $X_0^{\vee}$ denote the set of hyperplanes tangent to $X$ at a smooth point $p$ of 
$X$. 
Similarly, $\pi_K(X)_0^{\vee}$ denotes the set of hyperplanes tangent to $\pi_K(X)$ at a 
smooth point.
The isomorphism $\pi_K: X \rightarrow \pi_K(X)$ induces an isomorphism of the smooth loci.
Let $H$ be a hyperplane tangent to $X$ at a smooth point $p$, i.e. $H\in 
X_0^{\vee}$.
Then $H\cap K^{\vee}$ is tangent to $\pi_K(p)$, since $K^{\vee}\cong \mathbb{P}/K$.
Hence, $X_0^{\vee}\cap K^{\vee}=  \pi_K(X)_0^{\vee}$.
Clearly $\overline{X_0^{\vee}}=X^{\vee}$ and 
$\overline{\pi_K(X)_0^{\vee}}=\pi_K(X)^{\vee}$.
Hence, it is left to show that also $\overline{X_0^{\vee}\cap 
K^{\vee}}=\overline{X_0^{\vee}}\cap 
K^{\vee}$.

\begin{lemma}
 Let $H(t)$ be a $1$-parameter family of hyperplanes depending smoothly on $t$, tangent 
to $X$ at smooth points $x(t)$ for every $t\neq 0$ and such that $H(0)$ contains $K$. In 
other words $H(0)\in \overline{X_0^{\vee}}\cap K^{\vee}$. 
Then there exists another $1$-parameter family $H'(t)$, such that 
\begin{enumerate}
 \item $H'(0)=H(0)$
 \item $H'(t)$ is a hyperplane tangent to $X$ at $x(t)$ and containing $K$. 
In other words, $H'(0)\in \overline{X_0^{\vee}\cap 
K^{\vee}}$.
\end{enumerate}
\end{lemma}

\begin{proof}
Let $x(0)\in X$ be the point $lim_{t\to 0} x(t)$. 
This point exists, since $\mathbb{P}$ is compact, but it could  be a singular 
point of $X$.
Let $\hat{T}_{x(0)}(X)$ denote the embedded tangent space of $X$ at $x(0)$.
Then it is well-known that $\pi_K$ induces an isomorphism of tangent spaces.
It follows that $\hat{T}_{x(0)}(X)$ is a projective subspace of 
$\mathbb{P}$ not intersecting $K$.
For $t\neq0$ the Zariski tangent spaces $T(t):=\hat{T}_{x(t)}(X)$ form an 
analytic $1$-parameter family of projective subspaces of dimension $\dim (X)$ and they do not intersect $K$. 
The limit projective subspace $T(0)$ of the family $T(t)$ is contained in the 
tangent space $\hat{T}_{x(0)}$ and hence, does also not intersect $K$. 
By assumption the hyperplanes $H(t)$ contain $T(t)$ for every $t\neq 0$ and since the 
family is analytic it also holds that $T(0)\subset H(0)$.
Now consider the projective subspaces $<K,T(t)>$ for $t\neq0$. 
They form an analytic family of projective subspaces of dimension $\dim(K)+\dim(X)+1$. 
Clearly for $t=0$ we have that $<K,T(0)>\subset H(0)$.
We can therefore construct an analytic family $H'(t)$ of hyperplanes s.t. $ H'(0)=H(0)$ 
and $<K,T(t)>\subset H'(t)$.
This proves the lemma.
\end{proof}
By the above lemma we conclude the proof of Proposition~\ref{prop:pi_K}.
\end{proof}

The next result gives a relation between the discriminants of $X$ and $\pi_K(X)$.
It is a consequence of Proposition~\ref{prop:pi_K}.
Before stating it, we fix some notations.
Let 
\begin{equation*}
 0 \rightarrow U\rightarrow V \rightarrow W \rightarrow 0
\end{equation*}
be an exact sequence of vector spaces. 
Let $\mathbb{P}=\mathbb{P}(V^*)$ and $K=\mathbb{P}(W^*)$. Then we also have 
$\mathbb{P}/K=\mathbb{P}(U^*)$ and the projection $\pi_K$ 
is induced by the map $V^*\rightarrow U^*$.

\begin{corollary}\label{cor:projectiondisc}
 Let $X\subset \mathbb{P}$ be a projective subvariety not intersecting $K$ and such that 
$\dim(X)<\dim(\mathbb{P}/K)$.
Then $\Delta_{\pi_K(X)}$ is a factor of the restriction of $\Delta_X$ to $U$.
Moreover, if $\pi_K:X\rightarrow \pi_K(X)$ is an isomorphism of algebraic 
varieties then $\Delta_X|_U=\Delta_{\pi_K(X)}$.
\end{corollary}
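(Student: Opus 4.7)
My plan is to translate the geometric inclusion of dual varieties provided by Proposition~\ref{prop:pi_K} into a divisibility statement between the defining polynomials, using the standard Nullstellensatz dictionary between vanishing loci and radical ideals. The first step is to set up the identification of the ambient spaces. Since $K=\mathbb{P}(W^{*})\subset\mathbb{P}=\mathbb{P}(V^{*})$, the dual subspace $K^{\vee}\subset\mathbb{P}^{*}=\mathbb{P}(V)$ parametrizes hyperplanes of $\mathbb{P}$ containing $K$; these are the linear forms on $V^{*}$ vanishing on $W^{*}$, i.e.\ the subspace $U\subset V$. Thus $K^{\vee}=\mathbb{P}(U)$, and since $(\mathbb{P}/K)^{*}=\mathbb{P}(U)$ as well, both $\Delta_{X}|_{U}$ and $\Delta_{\pi_{K}(X)}$ naturally live on the same vector space $U$. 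Under these identifications, the zero locus of $\Delta_{X}|_{U}$ in $\mathbb{P}(U)$ is precisely $X^{\vee}\cap K^{\vee}$, while the zero locus of $\Delta_{\pi_{K}(X)}$ is $\pi_{K}(X)^{\vee}\subset\mathbb{P}(U)$.

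With this dictionary in place, Proposition~\ref{prop:pi_K} yields
\[
V(\Delta_{\pi_{K}(X)})\;=\;\pi_{K}(X)^{\vee}\;\subset\;K^{\vee}\cap X^{\vee}\;=\;V(\Delta_{X}|_{U})
\]
inside $\mathbb{P}(U)$. Irreducibility of $X$ forces irreducibility of $\pi_{K}(X)$ (continuous image of an irreducible), and then of $\pi_{K}(X)^{\vee}$ by Proposition~\ref{prop:irreducible}; assuming $\pi_{K}(X)^{\vee}$ is a hypersurface (otherwise $\Delta_{\pi_{K}(X)}=1$ by convention and the claim is vacuous), its defining polynomial $\Delta_{\pi_{K}(X)}$ is irreducible by the normalization convention of the discriminant. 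The Nullstellensatz then gives $\Delta_{X}|_{U}\in\sqrt{(\Delta_{\pi_{K}(X)})}=(\Delta_{\pi_{K}(X)})$, so $\Delta_{\pi_{K}(X)}\mid\Delta_{X}|_{U}$. For the equality clause, the second part of Proposition~\ref{prop:pi_K} upgrades the containment to $\pi_{K}(X)^{\vee}=K^{\vee}\cap X^{\vee}$; comparing the unique (up to sign) irreducible integral defining polynomials of equal irreducible hypersurfaces in $\mathbb{P}(U)$ then forces $\Delta_{X}|_{U}=\Delta_{\pi_{K}(X)}$ up to sign.

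The main obstacle I foresee is the degenerate possibility that $\Delta_{X}|_{U}\equiv 0$, equivalently $K^{\vee}\subset X^{\vee}$, in which case the divisibility statement holds vacuously but the equality statement requires exclusion. One has to show that when $\pi_{K}\colon X\to\pi_{K}(X)$ is an isomorphism, the image $\pi_{K}(X)\subset\mathbb{P}/K$ is a proper subvariety (which follows from the hypothesis $\dim X<\dim(\mathbb{P}/K)$), so that $\pi_{K}(X)^{\vee}$ is nonempty and hence $K^{\vee}\not\subset X^{\vee}$; this ensures the comparison is between nonzero polynomials on $U$ and the equality statement is substantive rather than trivial.
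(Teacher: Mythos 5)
Your argument is the intended one: the paper offers no proof of this corollary beyond the remark that it ``is a consequence of Proposition~\ref{prop:pi_K}'', and you correctly supply the details, namely the identification $K^{\vee}=\mathbb{P}(U)=(\mathbb{P}/K)^{*}$ so that both polynomials live on $U$, the translation of Proposition~\ref{prop:pi_K} into the containment $V(\Delta_{\pi_K(X)})\subset V(\Delta_X|_U)$, and the passage via irreducibility of $\pi_K(X)^{\vee}$ and the Nullstellensatz to divisibility. Flagging the degenerate case $K^{\vee}\subset X^{\vee}$ and ruling it out in the isomorphism case via the dimension hypothesis is a sensible addition that the paper does not make explicit.

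One step in the equality clause deserves more care than you (or the paper) give it. From $\pi_K(X)^{\vee}=K^{\vee}\cap X^{\vee}$ and irreducibility of $\Delta_{\pi_K(X)}$, the Nullstellensatz only yields $\Delta_X|_U=c\cdot\Delta_{\pi_K(X)}^{m}$ for some $m\geq 1$ and a constant $c$: equality of zero loci does not by itself give equality of polynomials. Restricting an irreducible polynomial to a linear subspace can produce a power of an irreducible (for instance, restricting a conic to a tangent line gives a squared linear form). To pin down $m=1$ one needs an additional argument, for example that $K^{\vee}$ is transverse to $X^{\vee}$ at the generic point of their intersection, or a degree count. This is a genuine loose end in the equality statement, but it is not specific to your proposal; the paper's terse statement has the same gap, and the divisibility part of the corollary, which is all that is used downstream, is fully justified by your argument.
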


Let $X$ be a projective variety embedded into a projective space $\mathbb{P}$ and 
$\mathbb{P}$ is embedded 
as a projective subspace into a projective space $\mathbb{M}$.
Then $X$ can also be considered as a subvariety of $\mathbb{M}$. 
Let $(X^{\vee})_\mathbb{P}$ denote the dual variety of $X$ in $\mathbb{P}^*$ and 
$(X^{\vee})_\mathbb{M}$ the dual 
variety of $X$ in $\mathbb{M}^*$.
Notice that $\mathbb{P}^*$ can be identified with 
$\mathbb{M}^*/(\mathbb{P}^{\vee})_\mathbb{M}$, and we have a projection 
\begin{equation*}
 \pi:\mathbb{M}^*\setminus(\mathbb{P}^{\vee})_\mathbb{M} \rightarrow \mathbb{P}^*
\end{equation*}
with has as centre the space $\mathbb{P}^{\vee}$.
Let $Z\subset \mathbb{P}^*$ be a subvariety. The cone over $Z$ with apex 
$(\mathbb{P}^{\vee})_\mathbb{M}$ is defined 
to be the union of $(\mathbb{P}^{\vee})$ and the fibers $\pi^{-1}(z)$ for all $z\in Z$.
Consequently we have the following results.
\begin{corollary}
 The variety $(X^{\vee})_\mathbb{M}$ is the cone over $(X^{\vee})_\mathbb{P}$ with apex 
$(\mathbb{P}^{\vee})_\mathbb{M}$.
\end{corollary}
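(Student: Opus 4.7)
The plan is to unpack the definitions and track what it means for a hyperplane of $\mathbb{M}$ to be tangent to $X$, using the fact that the embedded tangent spaces $\hat{T}_xX$ of $X\subset \mathbb{P}$ already live inside $\mathbb{P}\subset \mathbb{M}$. All the work happens on the open stratum of hyperplanes not containing $\mathbb{P}$; the rest is a Zariski closure argument.

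First I would fix notation. Any hyperplane $H\subset \mathbb{M}$ satisfies one of two mutually exclusive conditions: either $\mathbb{P}\subset H$, i.e.\ $H\in (\mathbb{P}^{\vee})_{\mathbb{M}}$, or $H\cap \mathbb{P}$ is a hyperplane of $\mathbb{P}$, in which case $\pi(H)=H\cap \mathbb{P}\in \mathbb{P}^{*}$. This is exactly the content of the identification $\mathbb{P}^{*}\cong \mathbb{M}^{*}/(\mathbb{P}^{\vee})_{\mathbb{M}}$ from the discussion just preceding the statement. Writing $(X^{\vee})^{0}_{\mathbb{M}}$ and $(X^{\vee})^{0}_{\mathbb{P}}$ for the sets of hyperplanes tangent to $X$ at a \emph{smooth} point (whose Zariski closures are, by definition, the dual varieties), the main observation is the following. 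If $x\in X_{sm}$, then $\hat T_xX$ is a projective subspace of $\mathbb{P}$, and a hyperplane $H\subset \mathbb{M}$ satisfies $\hat T_xX\subset H$ if and only if either $\mathbb{P}\subset H$ or $\hat T_xX\subset H\cap \mathbb{P}$.

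From this I get two immediate inclusions. (a) $(\mathbb{P}^{\vee})_{\mathbb{M}}\subset (X^{\vee})_{\mathbb{M}}$, because any $H\supset \mathbb{P}$ contains every $\hat T_xX$, so lies already in $(X^{\vee})^{0}_{\mathbb{M}}$. (b) For $H\in \mathbb{M}^{*}\setminus (\mathbb{P}^{\vee})_{\mathbb{M}}$ we have $H\in (X^{\vee})^{0}_{\mathbb{M}}$ iff $\pi(H)\in (X^{\vee})^{0}_{\mathbb{P}}$. Combining these,
\begin{equation*}
(X^{\vee})^{0}_{\mathbb{M}}=(\mathbb{P}^{\vee})_{\mathbb{M}}\,\cup\,\pi^{-1}\bigl((X^{\vee})^{0}_{\mathbb{P}}\bigr),
\end{equation*}
which is already the statement "cone over $(X^{\vee})^0_{\mathbb{P}}$ with apex $(\mathbb{P}^{\vee})_{\mathbb{M}}$" at the level of the open strata.

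The remaining step is to pass to Zariski closures on both sides. By definition the cone $\mathcal{C}$ over $(X^{\vee})_{\mathbb{P}}$ with apex $(\mathbb{P}^{\vee})_{\mathbb{M}}$ is $\pi^{-1}\bigl((X^{\vee})_{\mathbb{P}}\bigr)\cup (\mathbb{P}^{\vee})_{\mathbb{M}}$; it is closed in $\mathbb{M}^{*}$ since $\pi$ is a morphism on its domain of definition and $(\mathbb{P}^{\vee})_{\mathbb{M}}$ is closed. Because $\mathcal{C}$ contains $(X^{\vee})^{0}_{\mathbb{M}}$, it contains $(X^{\vee})_{\mathbb{M}}=\overline{(X^{\vee})^{0}_{\mathbb{M}}}$. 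Conversely, $(X^{\vee})_{\mathbb{M}}$ is closed and contains $(\mathbb{P}^{\vee})_{\mathbb{M}}$ and $\pi^{-1}((X^{\vee})^{0}_{\mathbb{P}})$; to conclude $\mathcal{C}\subset (X^{\vee})_{\mathbb{M}}$ I need $\pi^{-1}((X^{\vee})_{\mathbb{P}})\subset (X^{\vee})_{\mathbb{M}}$. This is where I would argue slightly more carefully: given $H\in \pi^{-1}((X^{\vee})_{\mathbb{P}})$, choose an algebraic curve $t\mapsto \pi(H_t)$ in $(X^{\vee})_{\mathbb{P}}$ with $\pi(H_0)=\pi(H)$ and $\pi(H_t)\in (X^{\vee})^{0}_{\mathbb{P}}$ for $t\neq 0$ (possible since $(X^{\vee})^{0}_{\mathbb{P}}$ is dense in $(X^{\vee})_{\mathbb{P}}$), and lift it through $\pi$ to a curve of hyperplanes in $\mathbb{M}$ ending at $H$; each $H_t$ lies in $(X^{\vee})^{0}_{\mathbb{M}}$ by step (b), so the limit $H$ lies in $(X^{\vee})_{\mathbb{M}}$.

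The only genuinely delicate point is this lifting/closure step; the rest is a direct unwinding of definitions. I expect that step to be routine once the projection $\pi$ is realized as a morphism of projective varieties (with the exceptional locus $(\mathbb{P}^{\vee})_{\mathbb{M}}$ explicitly included as the apex), since then surjectivity of $\pi$ onto $\mathbb{P}^{*}$ and the density of $(X^{\vee})^{0}_{\mathbb{P}}$ in $(X^{\vee})_{\mathbb{P}}$ do all the work.
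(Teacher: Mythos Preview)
Your argument is correct and is precisely the direct unwinding of definitions that the paper has in mind: the paper gives no explicit proof, merely writing ``Consequently we have the following results'' after setting up the projection $\pi:\mathbb{M}^{*}\setminus(\mathbb{P}^{\vee})_{\mathbb{M}}\to\mathbb{P}^{*}$ and the definition of the cone. Your observation that $\hat T_xX\subset\mathbb{P}$ forces the dichotomy $H\supset\mathbb{P}$ or $\hat T_xX\subset H\cap\mathbb{P}$ is exactly the content. Two minor remarks: the closedness of the cone $\mathcal{C}$ is immediate once you write it in coordinates (the defining equations of $(X^{\vee})_{\mathbb{P}}$ in $\mathbb{P}^{*}$ are homogeneous and simply cut out $\mathcal{C}$ in $\mathbb{M}^{*}$), and your curve-lifting step can be replaced by the cleaner observation that $\pi$ is an open morphism (its fibers are affine spaces), so $\pi^{-1}$ of a dense subset is dense in $\pi^{-1}$ of the closure.
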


Given a vector space surjection $\pi:E\rightarrow V$ and let 
$\mathbb{P}:=\mathbb{P}(V^*)$ 
and $\mathbb{M}:=\mathbb{P}(E^*)$.
We have an embedding $j:\mathbb{P} \hookrightarrow \mathbb{M}$ and for every subvariety 
$X\subset \mathbb{P}$ we can also regard 
$X$ as a subvariety $j(X)\subset \mathbb{M}$.
Then the discriminants $\Delta_X$ and $\Delta_{j(X)}$ are polynomial functions on $V$ 
and $E$ respectively. Moreover,
\begin{corollary}\label{cor:Deltajx}
 The discriminants $\Delta_X$ and $\Delta_{j(X)}$ satisfy for every 
$f\in E$:
\begin{equation*}
 \Delta_{j(X)}(f)=\Delta_X(\pi(f)).
\end{equation*}
\end{corollary}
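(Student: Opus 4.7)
The plan is to combine the cone description of $(j(X))^{\vee}$ supplied by the preceding corollary with an irreducibility argument for the pullback polynomial $\Delta_X \circ \pi$. First I would unwind the duality to identify the apex $(\mathbb{P}^{\vee})_{\mathbb{M}} \subset \mathbb{M}^* = \mathbb{P}(E)$ of the cone. Hyperplanes in $\mathbb{M} = \mathbb{P}(E^*)$ containing $\mathbb{P} = \mathbb{P}(V^*)$ correspond to codimension-one subspaces of $E^*$ containing $V^*$; dualizing, these are in bijection with lines in $\ker \pi$, so $(\mathbb{P}^{\vee})_{\mathbb{M}} = \mathbb{P}(\ker \pi)$. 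The projection $\mathbb{M}^* \setminus (\mathbb{P}^{\vee})_{\mathbb{M}} \to \mathbb{P}^*$ from this apex is then precisely the projectivization of the linear surjection $\pi \colon E \to V$. By the preceding corollary, $(j(X))^{\vee} = (X^{\vee})_{\mathbb{M}}$ is the cone over $(X^{\vee})_{\mathbb{P}}$ with apex $\mathbb{P}(\ker \pi)$, which set-theoretically is $\{f \in E : \Delta_X(\pi(f)) = 0\}$ (the homogeneity of $\Delta_X$ ensures it vanishes on $\ker \pi$ as well).

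Next I would argue that the pullback $\Delta_X \circ \pi$ is itself an irreducible polynomial on $E$ (whenever $\Delta_X$ is nonconstant). Choose a splitting $E \cong V \oplus \ker \pi$ under which $\pi$ becomes projection onto the first factor; then $\mathbb{C}[E] \cong \mathbb{C}[V] \otimes_{\mathbb{C}} \mathbb{C}[\ker \pi]$, and $\Delta_X \circ \pi$ corresponds to $\Delta_X \otimes 1$. Since $\Delta_X$ is irreducible in $\mathbb{C}[V]$ (it defines the irreducible hypersurface $(X^{\vee})_{\mathbb{P}}$), and a prime element of a UFD remains prime in any polynomial-ring extension, $\Delta_X \circ \pi$ is irreducible in $\mathbb{C}[E]$. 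Thus $\Delta_{j(X)}$ and $\Delta_X \circ \pi$ are both irreducible homogeneous polynomials on $E$ whose vanishing loci coincide with the cone $(X^{\vee})_{\mathbb{M}}$, so by Hilbert's Nullstellensatz they agree up to a nonzero scalar — and that scalar is absorbed into the intrinsic ambiguity in the normalization of the discriminant.

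The case $\Delta_X = 1$ (that is, positive defect, so $(X^{\vee})_{\mathbb{P}}$ has codimension larger than $1$) must be treated separately, but it reduces to a simple dimension count: the cone over $(X^{\vee})_{\mathbb{P}}$ with apex $\mathbb{P}(\ker \pi)$ has dimension $\dim (X^{\vee})_{\mathbb{P}} + \dim \mathbb{P}(\ker \pi) + 1$, while $\dim \mathbb{M}^* = \dim \mathbb{P}^* + \dim \mathbb{P}(\ker \pi) + 1$, so codimension is preserved; hence $j(X)$ also has positive defect, $\Delta_{j(X)} = 1$, and the identity holds trivially. I expect the main subtlety to be bookkeeping about the scaling: ensuring that the equality holds literally (and not only up to scalar) by checking that both sides have the same degree — which follows because the cone construction does not alter the degree of the defining polynomial — and tracking the normalization ($\Delta_X$ and $\Delta_{j(X)}$ are each determined up to sign by requiring integral, irreducible defining equations, and these normalizations are compatible under pullback by the linear map $\pi$).
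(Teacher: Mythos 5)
Your proof is correct and follows the route the paper intends: the corollary is stated there without proof as an immediate consequence of the cone description of $(X^{\vee})_{\mathbb{M}}$, and your argument (identifying the apex as $\mathbb{P}(\ker\pi)$, irreducibility of $\Delta_X\circ\pi$ via a splitting $E\cong V\oplus\ker\pi$, the Nullstellensatz comparison of the two irreducible equations of the cone, and the separate positive-defect case) is exactly the natural way to fill in that step. The only caveat, which you already note, is that the identity is to be read up to the nonzero scalar inherent in the normalization of a general $X$-discriminant, consistent with the paper's own definition.
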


\end{appendix}

% bibtex
\bibliographystyle{alpha}
\bibliography{main_arXiv.bbl}

 \end{document}